\DeclareMathOperator*{\sign}{sign}
\renewcommand{\hat}{\widehat}
\newtheorem{theorem}{Theorem}
\newtheorem{lemma}{Lemma}
\newtheorem{remark}{Remark}
\newcommand{\E}{\mathbb{E}}
\newcommand{\Var}{\mathbb{V}\mathrm{ar}}
\newenvironment{proof}[1][Proof]{\noindent\textit{#1.} }{\ \rule{0.5em}{0.5em}}
\begin{document}

\title{A Weighted Likelihood Approach Based on Statistical Data Depths}
\author[1]{Claudio Agostinelli}
\author[2]{Ayanendranath Basu}
\author[3]{Giulia Bertagnolli}
\author[4]{Arun Kumar Kuchibhotla}

\affil[1]{Department of Mathematics, University of Trento, Trento, Italy. claudio.agostinelli@unitn.it}
\affil[2]{Interdisciplinary Statistical Research Unit, Indian Statistical Institute, Kolkata, India. ayanbasu@isical.ac.in}
\affil[3]{Faculty of Economics and Management, Free University of Bozen, Bozen, Italy. giulia.bertagnolli@unibz.it}
\affil[4]{Department of Statistics and Data Science, Carnegie Mellon University, Pittsburgh, PA, USA. arunku@stat.cmu.edu}

\maketitle

\begin{abstract}
We propose a general approach to construct weighted likelihood estimating equations with the aim of obtaining robust parameter estimates. 
We modify the standard likelihood equations by incorporating a weight that reflects the statistical depth of each data point relative to the model, as opposed to the sample.
An observation is considered regular when the corresponding difference of these two depths is close to zero. When this difference is large the observation score contribution is downweighted. 
We study the asymptotic properties of the proposed estimator, including consistency and asymptotic normality, for a broad class of weight functions. In particular, we establish asymptotic normality under the standard regularity conditions typically assumed for the maximum likelihood estimator (MLE). Our weighted likelihood estimator achieves the same asymptotic efficiency as the MLE in the absence of contamination, while maintaining a high degree of robustness in contaminated settings.
In stark contrast to the traditional minimum divergence/disparity estimators, our results hold even if the dimension of the data diverges with the sample size, without requiring additional assumptions on the existence or smoothness of the underlying densities.
We also derive the finite sample breakdown point of our estimator for both location and scatter matrix in the elliptically symmetric model.
Detailed results and examples are presented for robust parameter estimation in the multivariate normal model. Robustness is further illustrated using two real data sets and a Monte Carlo simulation study.
\noindent {\it Keywords:} Asymptotic Efficiency; Estimating Equations; Robustness; Statistical Data Depth.
\end{abstract}

\section{Introduction}
Weighted Likelihood Estimating Equations (WLEEs) are often used with the aim of obtaining robust estimators. \citet{green1984} is perhaps one of the earliest example, \citet{field1994} proposes a WLEE with weights that depend on the tail behavior of the distribution function, \citet{markatou1997,markatou1998} define a WLEE with weights derived from the estimating equations of a disparity minimization problem.
By providing a strict connection between WLEE and disparity minimization problems, \citet{kuchibhotla2017} further improve \citeauthor{markatou1998}'s approach.
\citet{biswas2015} get ideas from both \citet{field1994} and \citet{markatou1998} and provide a similar procedure based on distribution functions. This is very natural and easy to implement, with the drawback, however, that the resulting estimators are not affine equivariant.
Moreover, the simplicity and appeal of these estimators in the univariate setting is quickly lost in multivariate setups, where $2^p$ comparisons are needed to construct the Pearson residual when the data dimension is $p$.

Statistical data depths offer a multivariate generalization of quantiles and, consequently, a measure of outlyingness. See \citet{liu2006} and the references therein for a comprehensive review. The primary objective of a depth function is to induce a center-outward ordering of multivariate observations, which serves several purposes, including: (i) finding centers of the distribution---depth maximizers; (ii) identifying regions containing the bulk of the data---depth regions and quantile depth based regions; (iii) comparing distributions using tools such as Depth-Depth plots and tests based on depth, among others.
A key property of depth functions is their invariance under affine transformations. Furthermore, for certain statistical depth functions and distributions, it has been shown that both the empirical distribution and the distribution function can be fully characterized by the depth contours \citep{struyf1999,kong2010,nagy2023}.

Our main goal is to propose a simple WLEE whose weights are based on statistical data depths, instead of densities or distribution functions.

Our methodology is partly motivated by the lack of generic robust fitting of parametric models for multivariate or infinite-dimensional data. Given that data depths can be defined for any data set with observations from a metric space \citep{nagy2020, nietoreyes2021, geenens2023, dai2023}, our proposal provides a simple technique to obtain robust and efficient estimators in multivariate or even infinite-dimensional data. To illustrate this fact, we use the wind speed data obtained from the Col De la Roa (Italy) meteorological station and fit a Gaussian process model. The data set constitutes daily data on wind speed recorded at regular intervals of $15$ min in years $2001-2004$, see Figure \ref{fig:wind-data}. Our analysis, as summarized in Figures \ref{fig:wind-l2} and \ref{fig:wind-e1p} implies that our proposed estimator is computationally fast and is significantly more robust compared to the MLE.

Following a review of weighted likelihood approach in Section \ref{sec:wlee}, we illustrate our proposal in Section \ref{sec:pearsonresiduals}. The asymptotic properties of the proposed estimators are discussed in Section \ref{sec:asymptotics}, while Section \ref{sec:breakdown} studies its finite sample breakdown point for location and scatter parameters within elliptically symmetric families. Section \ref{sec:examples} provides two real-data examples to illustrate the methodology. Section \ref{sec:montecarlo} reports the results of an extensive Monte Carlo experiment. Comments and conclusions can be found in Section \ref{sec:conclusions}. Appendix contains in Section \ref{sup:sec:proofs} detailed proofs of the theoretical results together with some auxiliary results. Section \ref{sup:sec:wind} provides some extra figures for the example presented in Section \ref{secsub:wind}, while Section \ref{sup:sec:monte} reports complete results of the Monte Carlo experiments.

\section{Weighted Likelihood Estimating Equations}
\label{sec:wlee}

Let $\mathbf{X} = (X_1, \ldots, X_n)$ be an i.i.d. random sample from a $p$-random vector $X$ with unknown distribution function $G$ and corresponding density function $g$, with respect to some sigma-finite measure. We assume a model for $X$ by $\mathcal{F} = \{F_\theta(x); \theta \in \Theta \subset \mathbb{R}^q, q \geq 1 \}$, and we denote by $f_\theta(x)$ the corresponding probability density function. 
Performing maximum likelihood estimation (MLE) on the data $\mathbf{X}$ with the parametric family $\mathcal{F}$ asymptotically yields the ``closest'' member of $\mathcal{F}$ to the distribution $G$, where the ``closeness'' is measured in terms of the Kullback--Leibler (KL) divergence. 
Formally, the MLE targets $\theta^*$ defined by
\[
  \theta^* := \arg\min_{\theta\in\Theta}\,\mathbb{E}_G[-\log f_{\theta}(X)].
\]  
Assuming classical regularity conditions that permit the interchange of derivative and integral, $\theta^*$ can also be written as a solution to the equation
\begin{equation}\label{eq:MLE-equation}
  \int u(x; \theta)dG(x) = 0\ ,
\end{equation}
where $u(x; \theta) = \partial \log f_{\theta}(x)/\partial \theta$.
To impart robustness, one may consider alternate ``targets'' that solve the weighted equation $\int w(x; \theta)u(x; \theta)dG(x) = 0$, where the weight function $w(\cdot; \cdot)$ downweights observations that deviate from the parametric family $\mathcal{F}$~\citep{basu2011statistical}. In the literature on the minimum distance approach, several such weight functions have been derived based on the so-called Pearson residuals introduced by~\cite{lindsay1994}. 
Note that, when the true distribution belongs to the parametric model, i.e., $g(x) = f_{\theta^*}(x)$ (a.s.) for some $\theta^* \in \Theta$, full asymptotic efficiency will require $w(x; \theta^*) = 1$ for all $x$.

Let $\hat{G}_n$ be the empirical distribution function. The Pearson residual at $x$, denoted by $\delta$, is defined by comparing the true density to the model density at $x$, as
\begin{equation*}
\delta(x; \theta) = \delta(x; F_\theta, G) = \frac{g(x)}{f_{\theta}(x)} - 1 \ ,
\end{equation*}
so that, when $g = f_\theta$  (a.s.) for a given $\theta \in \Theta$, the Pearson residuals are identically equal to zero for all $x$, whereas if $g \neq f_\theta$, in regions where the density $g$ exceeds $f_\theta$, the Pearson residuals are large positive values indicating a greater concentration of observations in those regions than is expected number under the model.
The finite sample version of the Pearson residuals is given by $\delta_n(x; \theta) = \delta(x; F_\theta, \hat{G}_n) = \frac{\hat{g}_n(x)}{f_{\theta}(x)} - 1$, which compares $\hat{g}_n(x)$, a non-parametric estimate of $g(x)$, to the model density $f_{\theta}(x)$.
\cite{lindsay1994} studied a class of estimators based on the Pearson residuals for discrete models, while \citet{basu1994} and \citet{markatou1998} discussed proposals for continuous ones. In particular, \citet{markatou1997,markatou1998} introduced weights $w(x; F_\theta, \hat{G}_n)=w(\delta_n(x; \theta))$ defined by
\begin{equation}
\label{equ:weights:markatou}
w(\delta_n(x; \theta)) = \frac{A(\delta_n(x; \theta)) + 1}{\delta_n(x; \theta) + 1} \ ,
\end{equation}
where $A(\cdot)$ is a Residual Adjustment Function \citep[RAF,][]{lindsay1994,park2002} of an appropriate disparity, thus obtaining a Weighted Likelihood Estimating Equation (WLEE)
\begin{equation}
\label{equ:wleeold}
\frac{1}{n} \sum_{i=1}^n w(X_i; F_\theta, \hat{G}_n) \ u(X_i; \theta) = 0\ ,
\end{equation}
where $u(X_i; \theta)$ denotes the contribution of the $i$-th observation to the score function.
Although these estimating equations are motivated by a disparity measure, there is no exact correspondence between the two approaches. 
\citet{kuchibhotla2017} proposed a WLEE in the same spirit, exactly corresponding to a disparity measure, and formally established its asymptotic and robustness properties. 
In the attempt to avoid the use of non-parametric density estimators, \citet{biswas2015} propose weights defined using the cumulative distribution function. Their methodology can be understood in terms of the following residual:
\begin{equation}
\label{eq:Pearson-residual-CDFs}
\delta_n(x; \theta) = \frac{\sign(2 F_\theta(x) - 1) \left[ \hat{G}_n(x) - F_\theta(x) \right]}{\min\{F_\theta(x), 1 - F_\theta(x)\}} \ ,
\end{equation}
where $\sign(x) = 2 \mathbf{1}(x \ge 0) - 1$ is a sign function. Note that $\min\{F_{\theta}(x), 1 - F_{\theta}(x)\}$ is a notion of depth \citep{tukey1975}, maximum at the median and minimum at the extremes. Instead of defining weights in the form of \eqref{equ:weights:markatou} --- which remains a valid option --- \citet{biswas2015} propose the following weight function
\begin{equation*}
w(x; F_\theta, G) = 1 + (H(\delta(x;\theta)) - 1)\mathbf{1}\{F_\theta(x) \notin[p,1-p]\} \ ,
\end{equation*}
for some $p \in [0, 0.5)$. Here, $H(\delta)$ is a smooth function defined on $[-1,\infty)$, which attains its maximum value of $1$ at $\delta=0$ and decreases smoothly in both tails as $\delta$ moves away from $0$. Specifically, $H(0) = 1$, $H'(0) = 0$ and the next higher non-zero derivative of $H(\delta)$ at $\delta=0$ is negative. One example of such a function is $H(\delta) = \exp(-a\delta^2)$ for some non-negative constant $a$.

Their approach is general and can be applied in the multivariate setting, as they demonstrate in their bivariate example. However, the definition in Equation~\eqref{eq:Pearson-residual-CDFs} effectively restricts their methodology to the univariate or, at most low-dimensional settings, and, more importantly, results in Pearson residuals which are not affine invariant. As a consequence, the resulting estimators are also not affine equivariant.

The goal of the present paper is to propose a general framework for constructing weights in the same spirit, but the residuals here are based on statistical data depths. This approach is broadly applicable and naturally leads to affine equivariant estimators. 

Another important contribution of our paper is that it works for multivariate distributions supported on all of $\mathbb{R}^p$. This is a significant achievement in the literature of the minimum distance approach for two reasons. 
\begin{enumerate}
  \item The Person residuals, as defined above, involve a ratio of densities which can be highly sensitive, especially as $x$ diverges.
  For example, consider the normal location model $F_{\theta} = N(\theta, 1)$ on the real line. If the true distribution is $G = N(0, 1)$, then $\delta(x; F_0, G) = 0$ for all $x$; however the empirical residual $\delta_n(x; \theta) = \widehat{g}_n(x)/f_{\theta}(x) - 1$ may not be close to zero for large values of $|x|$. For any $x$ outside the support of the data, any reasonable density estimator $\widehat{g}_n(x)$ would be close to zero, while $f_{\theta}(x) > 0$. This implies that Pearson residuals cannot be uniformly consistently estimated for distributions supported on the entire real line. Similar issues arise with ``modified'' residuals defined by \citet{biswas2015}. To address this problem, we consider residuals of the form $\delta(x; \theta) = (g(x) - f_{\theta}(x))/f_\theta^{\alpha}(x)$ for some constant $\alpha < 1$; see Equation~\eqref{equ:dpr} below.
  \item Traditional minimum disparity estimation has not been very successful with multivariate data, in the context of general parametric families. This is primarily because estimating Pearson residuals by directly plugging-in a density estimator introduces significant bias in the resulting estimator; see, for example,~\citet{krishnamurthy2014nonparametric} and references therein. 
  Moreover, multivariate density estimation is inherently challenging due to the curse of dimensionality, often requiring strong smoothness assumptions to achieve reasonable convergence rates. Even under such assumptions, the plug-in estimator tends to exhibit significant bias, preventing the estimator from attaining a mean-zero normal limiting distribution. See \citet[Thm 4.1]{tamura1986minimum} and~\citet[Sec. 3.3]{basu2011statistical} for more details. Smoothness conditions can be avoided defining Pearson residuals through distribution functions as in~\eqref{eq:Pearson-residual-CDFs}. This, as already noted, breaks the affine invariance in the multivariate setting. In contrast, our approach uses statistical depths which are a natural generalization of quantiles to the multivariate case and even applies to infinite-dimensional data. More importantly, we can estimate depths at a parametric rate for $p > 1$ and obtain an asymptotically normal estimator in the multivariate case without asymptotic bias. 
\end{enumerate}
\section{Depth based Pearson Residuals}
\label{sec:pearsonresiduals}
Let $d(x; G)$ be a statistical data depth \citep{zuo2000a,liu2006} for the point $x \in \mathbb{R}^p$ according to the distribution $G$ of the random variable $X \in \mathbb{R}^p$. Let $d_n(x; \hat{G}_n)$ be the finite sample version based on the empirical distribution function $\hat{G}_n$ of the sample $\mathbf{X}$. Denote by $\mathcal{G}$ the class of distributions in $\mathbb{R}^p$. 
Traditionally, depth functions $d(x; G)$ are assumed to satisfy the following desirable properties~\citep{liu1990,zuo2000a}. None of these are required for our theoretical results; see Remark~\ref{rem:choice-of-depth}.
\begin{enumerate}[label=\textbf{(P\arabic*)}]
\item \textit{Affine Invariance}. $d(A x + b; G_{A X + b}) = d(x; G)$ for any distribution function $G \in \mathcal{G}$, any $p \times p$ nonsingular matrix $A$ and any $p$-vector $b$. \label{eq:depth-invariance}
\item \textit{Maximality at Center}. For any distribution $G$ having ``center'' $\mu$ (e.g. the point of symmetry relative to some notion of symmetry), $d(\mu;G) = \sup\limits_{x \in \mathbb{R}^p} d(x; G)$. \label{eq:depth-maximality}
\item \textit{Monotonicity Relative to Deepest Point}. For any $G$ having deepest point $\mu$ (i.e., point of maximal depth), and any $x$, the function $a \mapsto d(\mu + a(x - \mu); G)$ is non-decreasing on $[0, 1]$. \label{eq:depth-monotonicity}
\item \textit{Vanishing at Infinity}. $d(x; G) \rightarrow 0$ as $\Vert x \Vert \rightarrow \infty$, for each $G \in \mathcal{G}$, where $\Vert \cdot \Vert$ is the $L_2$-norm. \label{eq:depth-vanishing}
\end{enumerate}
For any given $x$ in the support of $G$, we define the analogue of the Pearson residual function in the spirit of \citet{lindsay1994} as
\begin{equation} \label{equ:dpr}
\tau_n(x; \theta) = \tau(x; F_\theta, \hat{G}_n) = \frac{d_n(x;\hat{G}_n) - d(x;F_\theta)}{d(x;F_\theta)^\alpha} \ ,
\end{equation}
for some $0 < \alpha < 3/4$ to be specified later. The intuition for raising the denominator to an exponent of $\alpha$ can be understood by considering the Pearson residual with distribution functions as in~\eqref{eq:Pearson-residual-CDFs}. Note that
\begin{equation*}
\Var\left(\frac{\hat{G}_n(x) - G(x)}{\min\{G(x), 1 - G(x)\}} \right) = \frac{\max\{G(x), 1 - G(x)\}}{n \min\{G(x), 1 - G(x)\}},
\end{equation*}
while
\begin{equation*}
\Var\left(\frac{\hat{G}_n(x) - G(x)}{\sqrt{\min\{G(x), 1 - G(x)\}}} \right) = \frac{\sqrt{\max\{G(x), 1 - G(x)\}}}{n} \le \frac{1}{n},
\end{equation*}
which is independent of $G(x)$. Note that $\min\{G(x), 1 - G(x)\}$ serves as a notion of depth; it attains its minimum at the endpoints of the support and its maximum at the center (median) of the distribution. The two equalities above show that a direct analogue of Pearson residuals is not a ``stable'' function of the empirical distribution function near the endpoints of the support of the distribution. Raising the denominator to an appropriate exponent stabilizes the residuals across all values of $x$. A formal result in this direction for~\eqref{equ:dpr} is provided as Lemma \ref{lemma:ratescaleddepth} in the Appendix \ref{sup:sec:proofs}.
This change in the residual still retains the downweighting property of the usual Pearson residuals. If, at the observation $x$, the depths $d_n(x;\hat{G}_n)$ and $d(x;F_\theta)$ are substantially different, then $\tau_n(x; \theta)$ will be significantly different from $0$, suggesting the need for downweighting.
We refer to the quantity $\tau_n$, as defined through Equation \eqref{equ:dpr}, as the Depth Pearson Residuals (DPRs).

We apply a weight function $w(\cdot)$ to the DPRs $\tau_n(x; \theta)$, which is designed to attain its maximum at $0$ and descend smoothly on both sides of $0$. This ensures that the good observations are given proper importance while incompatible ones are downweighted. The weighted likelihood estimator is the solution of
\begin{equation} \label{equ:wlee}
\frac{1}{n} \sum_{i=1}^n w(\tau_n(X_i;\theta)) u(X_i;\theta) = 0 \ .
\end{equation}
The true parameter $\theta_0 \in \Theta$ is defined as the solution of the following equation
\begin{equation} \label{equ:wleetheoretical}
\int w(\tau(x;\theta)) u(x;\theta) \ dG(x) = 0 \ ,
\end{equation}
which is the theoretical version of the WLEE~\eqref{equ:wlee} with $\tau(x;\theta) = (d(x;G) - d(x;F_\theta))/d^\alpha(x;F_{\theta})$. 
Using weights based on \eqref{equ:weights:markatou} or on the proposal by \citet{biswas2015}, leads to a WLEE that can be solved via an iterative reweighting algorithm.

The DPRs have the desired behavior of being equal to $0$ whenever $G = F_{\theta^*}$ identically for some $\theta^* \in \Theta$, and of attaining large values in regions where the two distributions differ. 
Furthermore, due to the invariance property~\ref{eq:depth-invariance} of the depth function $d$, the DPR is also invariant to affine transformations. 
Hereafter, we employ the half-space depth, although other depth functions could also be considered. 
Our asymptotics depend only on the rate of convergence of the $\sup_x|d_n(x;\hat{G}_n) - d(x;G)|$, which can be derived for many depths. For instance, the simplicial depth of \citet{liu1990} enjoys the same rate of convergence as the half-space depth \citep[see][]{arcones1993} and the asymptotics work similarly. 

\begin{remark}[Choice of Depth.]\label{rem:choice-of-depth} Suppose the true data generating distribution $G$ belongs to the parametric family $\mathcal{F}$, i.e., $G = F_{\theta^*}$ for all $x$ for some $\theta^*\in\Theta$. Then the population version $\theta_0$ of our weighted likelihood estimator equals $\theta^*$ (i.e., it is Fisher consistent) so that $\tau(x; \theta^*) = 0$ for all $x$ and $w(0) = 1$. 
  This condition on $\tau$ is equivalent to $d(x; G) = d(x; F_{\theta^*})$ whenever $G = F_{\theta^*}$. The weighted likelihood estimator is consistent for $\theta^*$ if $d_n(x; \hat{G}_n)$ is uniformly (in $x$) close to $d(x; F_{\theta^*})$ as the sample size increases. 
  It is important to emphasize that this condition is not related to the ability of the data depth function to characterize probability distributions. Recall that a data depth $d(\cdot; \cdot)$ is said to characterize distributions if $d(x; P) = d(x; Q)$ for all $x$ for two distributions $P$ and $Q$ implies $P = Q$. While this is a natural condition to impose on a data depth, there is a rich literature on this subject showing that some natural data depths do not satisfy this characterization condition. In particular, \cite{nagy2021halfspace} has shown that the half-space depth does not characterize distributions, in general. \cite{koshevoy1998lift} and \cite{koshevoy2002tukey} have shown that the zonoid depth and simplicial volume depth both characterize distributions under mild conditions. See also~\cite{laketa2023simplicial} for more details and references. Another natural condition on data depths is that the contours produced by the data depth should match the high probability regions of the distributions, i.e., for every $c > 0$, there exists some $l = l(c) > 0$ such that $\{x:\, d(x; G) \le c\} = \{x:\, g(x) \ge l\}$. This, in particular, implies that if $G$ is a unimodal distribution, then the mode is the highest depth point (cf. property~\ref{eq:depth-maximality}) and the level sets of the density correspond to level sets of the data depth. We can call this condition ``contour characterization'', which is slightly weaker than distribution characterization. Contour characterization may also be violated by many data depths. In particular,~\cite{dutta2011intriguing} have shown that the half-space depth does not satisfy this condition, in general. Neither the Fisher consistency nor the asymptotic normality/efficiency results depend on the two cited characterization properties of the depth function used in the construction of the weights.
\end{remark}

\section{Asymptotics}
\label{sec:asymptotics}
In this section, we present the consistency and the asymptotic normality of the proposed weighted likelihood estimator.

For any $K_0, K_1 > 0$, define the class of weight functions
\begin{align*}
\mathcal{W}(K_0, K_1) &:= \bigg\{w(\cdot):\,0 \le w(t) \le 1\mbox{ for all }-1 \le t < \infty,\, w(0) = 1,\,w'(0) = 0,\,\\
&\qquad\qquad\sup_t|w'(t)(t + 1)| \le K_0,\,\sup_t|w''(t)(t + 2)^2| \le K_1\bigg\},
\end{align*}
where we assume that $w$ admits at least two derivative for all $\tau \in [-1,\infty)$ and denote these by $w^\prime$ and $w^{\prime\prime}$. For any function $\gamma: \mathbb{R}^q \rightarrow \mathbb{R}$ and $j,k,h \in \{1, \ldots, q\}$, we write
\begin{equation*}
\nabla_j \gamma(\theta) = \frac{\partial}{\partial \theta_j} \gamma(\theta), \quad \nabla_{j,k} \gamma(\theta) = \frac{\partial^2}{\partial \theta_j \partial \theta_k} \gamma(\theta), \quad \nabla_{j,k,h} \gamma(\theta) = \frac{\partial^3}{\partial \theta_j \partial \theta_k \partial \theta_h} \gamma(\theta).
\end{equation*}  
We also set $\nu_k(x;\theta) = \nabla_k d(x;F_\theta)/d(x;F_\theta)$, $\nu_{j,k}(x;\theta) = \nabla_{j,k} d(x;F_\theta)/d(x;F_\theta)$. We let $u(x;\theta)$ be the usual likelihood score function and $u_j(x;\theta) = \nabla_j \log f_\theta(x)$, $u_{j,k}(x;\theta) = \nabla_{j,k} \log f_\theta(x)$ and $u_{j,k,h}(x;\theta) = \nabla_{j,k,h} \log f_\theta(x)$. Mathematically, we state the required conditions as follows.
\begin{enumerate}[label=\textbf{(A\arabic*)}]
\item \label{ass:4} The quantity $d(X;G)$ as a random variable on $[0,1/2)$ has a Lebesgue density in a right neighborhood of $0$.
\item \label{ass:5} The parameter space $\Theta$ is a convex subset of $\mathbb{R}^q$ with $\theta_0$ in its interior. Moreover, there exists an open neighborhood $N(\theta_0)$ of the true parameter $\theta_0 \in \Theta$ and integrable functions $M_{ijkh}(\cdot), 1\le i\le 4, 1\le j,k,h\le q$, such that for all $\theta \in N(\theta_0)$, we have
\begin{enumerate}
\item $|\nu_j(X;\theta) u_{k,h}(X;\theta)| \le M_{1jkh}(X)$,
\item $|\nu_j(X;\theta) \nu_k(X;\theta) u_h(X;\theta)|\le M_{2jkh}(X)$,
\item $|\nu_{j,k}(X;\theta) u_h(X;\theta)|\le M_{3jkh}(X)$,
\item $|u_{j,k,h}(X;\theta)| \le M_{4jkh}(X)$.
\end{enumerate}
\item \label{ass:6} $\E\left( u_{j,k}(X;\theta_0) \right)^2 < \infty$ and $\E\left( \nu_k(X;\theta_0) u_j(X;\theta_0) \right)^2 < \infty$ for all $j,k=1,\ldots,p$.
\item \label{ass:7} The Fisher information $I(\theta) = \E \left( u(X;\theta)u^\top(X;\theta) \right)$ is a finite positive definite matrix for all $\theta \in N(\theta_0)$.
\end{enumerate}

\begin{remark}[Randomness of weight function.]
The weight function $w$ might depend on the random variables $X_1, \ldots, X_n$ and hence it might be a random function, however constants $K_0$ and $K_1$ in the definition of $\mathcal{W}(K_0, K_1)$ must be non-stochastic. In this context, the weight function $w$ in \eqref{equ:wleetheoretical} should be interpreted as the limiting version as $n \rightarrow \infty$. Observe that $\theta_0 = \theta^*$ for any weight function in $\mathcal{W}(K_0, K_1)$ if $G = F_{\theta^*}$ identically.
\end{remark}

\begin{remark}[Uniform Convergence of Data Depth and Assumption~\ref{ass:4}]
\citet[Remark A.3 of][]{zuo2000b} provide some references for the uniform convergence of $d_n(x; \hat{G}_n)$ to $d(x; G)$ for various depth functions. Also see \citet{nagy2020uniform}.
\end{remark}

\begin{remark}[Elliptically symmetric distributions]
Given an elliptically symmetric distribution $F_{\mu,\Sigma}$ with location parameter $\mu$ and scatter matrix $\Sigma$, let $\Delta(x; \mu, \Sigma) = (x - \mu)^\top \Sigma^{-1}(x - \mu)$ be the squared Mahalanobis distance and $F_\Delta$ be the distribution function of $\Delta(X; \mu, \Sigma)$. Assumption \ref{ass:4} holds for the multivariate normal distribution since by \citet[][Corollary 4.3]{zuo2000b} we have, for the half-space depth
\begin{equation}
d(x;F_{\mu,\Sigma}) = \frac{1 - F_{\chi^2_q}(\Delta(x; \mu,\Sigma))}{2}
\end{equation}
where $F_{\chi^2_q}$ is the distribution function of a $\chi^2_q$ chi-squared with $q$ degrees of freedom random variable. Hence, $d(X; F_{\mu,\Sigma})$ has a density when $\Delta(X; \mu,\Sigma)$ has a density.

More generally, \citet[][Example 4.1]{masse2004} shows that for any class of elliptically symmetric distributions $F_{\mu, \Sigma}$ with a Lebesgue density, the level sets of the half-space depth are given by
\begin{equation*}
\mathcal{Q}_{t} = \left\{x:\, d(x; F_{\mu, \Sigma}) \ge t \right\} = \left\{x:\, \Delta(x; \mu,\Sigma) \le k(t) \right\} \ ,
\end{equation*}
for some strictly decreasing continuous function $k:\, (0, 1/2] \to [0, \infty)$ such that $k(1/2) = 0$. 
Then,
\begin{align*}
\Pr\left(d(X; F_{\mu_0, \Sigma_0}) \le t \right) = \Pr(X \in \mathcal{Q}_{t}^{c}) & = \Pr\left( \Delta(X, \mu_0;\Sigma_0) \ge k(t) \right) \\
& = \lim_{s \to 0}\, F_\Delta(k(s)) - F_\Delta(k(t)) \ .
\end{align*}
The quantity $F_\Delta(k(s)) - F_\Delta(k(t))$ behaves linearly for $t\approx 0$ if $F_\Delta$ and $k$ are differentiable. The function $F_\Delta(\cdot)$ is differentiable if $\Delta(X; \mu_0,\Sigma_0)$ is absolutely continuous with respect to the Lebesgue measure. This condition holds as long as at least one coordinate of $X$ is absolutely continuous with respect to the Lebesgue measure.

These calculations also imply that the DPRs can be efficiently computed for the class of elliptically symmetric parametric families. We note here that the finite sample half-space depth in dimension $p$ can be computed efficiently using the algorithms proposed in \citet{liu2017} and \citet{dyckerhoff2016}, which are available in the \texttt{R} package \texttt{ddalpha} by \citet{pokotylo2016}.
\end{remark}

\begin{theorem} \label{the:normalityresults}
Let the true distribution belong to the model (i.e., $G = F_{\theta_0}$ identically). Fix $0 < \alpha < 3/4$ and assume that the depth Pearson residuals are computed using the half-space depth. Under assumptions \ref{ass:4}-\ref{ass:7}, the following results hold.
\begin{enumerate}
\item Set $A_{n,j}^{(w)} =  n^{-1} \sum_{i=1}^n w(\tau_n(X_i, \theta_0)) u_j(X_i;\theta_0)$. Then as $n\to\infty$,
\begin{equation} \label{equ:normalityresultsA}
\sup_{w \in \mathcal{W}(K_0, K_1)} \sqrt{n}\left| A_{n,j}^{(w)} - \frac{1}{n} \sum_{i=1}^n u_j(X_i;\theta_0)\right| = o_p(1) \ , \qquad \forall j=1,\ldots,p \ .
\end{equation}
\item Set $B_{n,j,k}^{(w)} = n^{-1}\sum_{i=1}^n \nabla_k \left[w(\tau_n(X_i, \theta)) u_j(X_i;\theta) \right]_{\theta=\theta_0}$. Then as $n\to\infty$,
\begin{equation} \label{equ:normalityresultsB}
\sup_{w \in \mathcal{W}(K_0, K_1)} \left|B_{n,j,k}^{(w)} - \frac{1}{n} \sum_{i=1}^n u_{j,k}(X_i;\theta_0)\right| = o_p(1) \ , \qquad \forall j,k = 1,\ldots,p \ .
\end{equation}
\item Set $C_{n,j,k,h}^{(w)} = n^{-1} \sum_{i=1}^n \nabla_{k,h} \left[w(\tau_n(X_i, \theta)) u_j(X_i;\theta) \right]_{\theta=\bar{\theta}},$ for any (data-dependent) $\bar{\theta}\in N(\theta_0)$. Then as $n\to\infty$,
\begin{equation} \label{equ:normalityresultsC}
\sup_{w \in \mathcal{W}(K_0, K_1)} |C^{(w)}_{n,j,k,h}| = O_p(1) \ , \qquad \forall j,k,h=1, \ldots, p \ .
\end{equation}
\end{enumerate}
\end{theorem}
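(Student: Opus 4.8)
\noindent The plan is to build everything on the single structural fact that, since $G=F_{\theta_0}$, the population depth Pearson residual vanishes identically, $\tau(x;\theta_0)\equiv 0$, so that every admissible weight satisfies $w(\tau(x;\theta_0)) = w(0) = 1$ with $w'(0) = 0$. Each of the three assertions then amounts to showing that, after plugging in the empirical residuals $\tau_n(X_i;\theta_0)$ --- which are uniformly small by Lemma~\ref{lemma:ratescaleddepth} --- replacing the random weights $w(\tau_n(X_i;\theta_0))$ by $1$ disturbs the score sum by $o_p(n^{-1/2})$, the Hessian sum by $o_p(1)$, and the third-derivative array by $O_p(1)$, all \emph{uniformly} over $\mathcal W(K_0,K_1)$; uniformity is free because every error bound will involve only the universal constants $K_0,K_1$ and the fixed $\alpha$, never the particular $w$. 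Two preliminary tools are needed: (i) a second-order Taylor expansion of $w$ at $0$ --- using $w(0)=1$, $w'(0)=0$ and the constraints defining $\mathcal W(K_0,K_1)$, which on the range $t\ge -1$ give $|w(t)-1|\le\tfrac{K_1}{2}t^2$, $|w'(t)|\le K_1|t|$, $|w''(t)|\le K_1$, and also $|w'(t)|\le K_0/(t+1)$; and (ii) the rates from Lemma~\ref{lemma:ratescaleddepth}, namely $\max_{i\le n}|\tau_n(X_i;\theta_0)| = o_p(n^{-1/4})$ together with the cruder bound $\sup_x\sup_{\theta\in N(\theta_0)}|\tau_n(x;\theta)| = O_p(1)$. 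Since $\tau_n(x;\theta_0) \ge -d(x;F_{\theta_0})^{1-\alpha} \ge -(1/2)^{1-\alpha} > -1$, all of (i) applies at $t = \tau_n(X_i;\theta_0)$.

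Next I would record the $\theta$-derivatives of $\tau_n$. Because $d_n(x;\hat{G}_n)$ does not depend on $\theta$, writing $d = d(x;F_\theta)$ and using $\nabla_k d = \nu_k d$, one differentiation of $\tau_n = (d_n - d)d^{-\alpha}$ gives the identity $\nabla_k\tau_n = -\nu_k\,(d^{1-\alpha} + \alpha\tau_n)$, and a second differentiation writes $\nabla_{k,h}\tau_n$ as a linear combination, with coefficients depending only on $\alpha$, of $\nu_{k,h}d^{1-\alpha}$, $\nu_k\nu_h d^{1-\alpha}$, $\nu_{k,h}\tau_n$ and $\nu_k\nu_h\tau_n$; since $0<d\le 1/2$ and $\alpha<1$ the powers $d^{1-\alpha}$ are bounded. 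For part 1, $A_{n,j}^{(w)} - n^{-1}\sum_i u_j(X_i;\theta_0) = n^{-1}\sum_i (w(\tau_n(X_i;\theta_0))-1)u_j(X_i;\theta_0)$, and by $|w(t)-1|\le\tfrac{K_1}{2}t^2$ its $\sqrt n$-multiple is at most
\[
  \tfrac{K_1}{2}\,\sqrt n\,\bigl(\max_{i\le n}\tau_n(X_i;\theta_0)^2\bigr)\,\frac1n\sum_{i=1}^n|u_j(X_i;\theta_0)| = O_p(\sqrt n)\cdot o_p(n^{-1/2})\cdot O_p(1) = o_p(1),
\]
using Lemma~\ref{lemma:ratescaleddepth} and $\E|u_j(X;\theta_0)| < \infty$ (from \ref{ass:7}); this is \eqref{equ:normalityresultsA}. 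For part 2, the product rule gives $\nabla_k[w(\tau_n)u_j]_{\theta_0} = w'(\tau_n)\nabla_k\tau_n\,u_j + w(\tau_n)u_{j,k}$, so $B_{n,j,k}^{(w)} - n^{-1}\sum_i u_{j,k}(X_i;\theta_0)$ splits into $n^{-1}\sum_i w'(\tau_n)\nabla_k\tau_n u_j$ and $n^{-1}\sum_i (w(\tau_n)-1)u_{j,k}$. In the first sum I would use $|w'(\tau_n)|\le K_1|\tau_n|$, $|\nabla_k\tau_n|\le |\nu_k|(1 + \alpha\max_i|\tau_n|)$, Cauchy--Schwarz, $n^{-1}\sum_i\tau_n(X_i;\theta_0)^2 = o_p(1)$, and $\E(\nu_k u_j)^2 < \infty$ (\ref{ass:6}) to get $o_p(1)$; in the second sum, $|(w(\tau_n)-1)u_{j,k}| \le \tfrac{K_1}{2}(\max_i\tau_n^2)|u_{j,k}|$ with $\E|u_{j,k}| < \infty$ (\ref{ass:6}) gives $o_p(1)$. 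This is \eqref{equ:normalityresultsB}.

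For part 3 I would differentiate once more: $\nabla_{k,h}[w(\tau_n)u_j] = w''(\tau_n)\nabla_h\tau_n\nabla_k\tau_n u_j + w'(\tau_n)\nabla_{k,h}\tau_n u_j + w'(\tau_n)\nabla_k\tau_n u_{j,h} + w'(\tau_n)\nabla_h\tau_n u_{j,k} + w(\tau_n)u_{j,k,h}$. Evaluating at the (fixed, data-dependent) $\bar\theta \in N(\theta_0)$ and using $|w|\le 1$, $|w'(\tau_n)|\le K_0/(\tau_n+1) \le K_0(1-(1/2)^{1-\alpha})^{-1}$, $|w''(\tau_n)|\le K_1$, the identities above for $\nabla_\theta\tau_n$ and $\nabla^2_\theta\tau_n$, and $\sup_x\sup_{\theta\in N(\theta_0)}|\tau_n(x;\theta)| = O_p(1)$, each of the five terms is dominated, up to an $O_p(1)$ factor, by one of $|\nu_h\nu_k u_j|$, $|\nu_{k,h}u_j|$, $|\nu_k u_{j,h}|$ (or a permutation of the indices) or by $|u_{j,k,h}|$; these are bounded on $N(\theta_0)$ by the integrable functions $M_{2\cdot}$, $M_{3\cdot}$, $M_{1\cdot}$, $M_{4\cdot}$ of \ref{ass:5}, whose sample averages are $O_p(1)$. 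Hence $\sup_{w\in\mathcal W(K_0,K_1)}|C_{n,j,k,h}^{(w)}| = O_p(1)$, which is \eqref{equ:normalityresultsC}; since $\bar\theta$ is given and \ref{ass:5} holds throughout $N(\theta_0)$, no uniformity in $\theta$ is required beyond what Lemma~\ref{lemma:ratescaleddepth} already provides.

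The substantive obstacle is not in any of these expansions --- which are mechanical once the weight bounds and the residual rates are at hand --- but in Lemma~\ref{lemma:ratescaleddepth} itself: showing that the self-normalized empirical depth process $\tau_n(\cdot;\theta_0) = (d_n(\cdot;\hat{G}_n) - d(\cdot;F_{\theta_0}))/d(\cdot;F_{\theta_0})^{\alpha}$ is $o_p(n^{-1/4})$ on the data. The heuristic is that the smallest depth among $n$ observations is of order $1/n$ by \ref{ass:4}, that the half-space defining that depth has probability of order $1/n$ and hence empirical mass of order $1/n$ with fluctuation of order $1/n$, so $\max_i|\tau_n(X_i;\theta_0)|$ is of order $(1/n)/(1/n)^{\alpha} = n^{\alpha-1}$ (up to logarithmic factors); then $\sqrt n\,\max_i\tau_n(X_i;\theta_0)^2$ is of order $n^{1/2+2(\alpha-1)} = n^{2\alpha-3/2} \to 0$ exactly when $\alpha < 3/4$, which is where the restriction on $\alpha$ comes from and why the half-space depth, with its parametric $O_p(n^{-1/2})$ VC rate for $\sup_H|\hat{G}_n(H) - G(H)|$, suffices. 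Turning this into a genuine uniform bound (and likewise controlling $\nabla_\theta\tau_n$, $\nabla^2_\theta\tau_n$) requires a peeling argument over the level sets of the depth together with a maximal inequality for the half-space empirical process on each slab; that is the real content, and I would carry it out separately and invoke it here as a black box.
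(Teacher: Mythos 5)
Your overall skeleton (second-order Taylor expansion of $w$ at $0$, the identity $\nabla_k\tau_n=-\nu_k(d^{1-\alpha}+\alpha\tau_n)$, Lemma~\ref{lemma:ratescaleddepth} as the probabilistic engine, and \ref{ass:5}--\ref{ass:7} to control the sample averages) matches the paper's proof. But two of your inputs are not what you claim they are. First, for parts 1 and 2 you replace the paper's argument by the single bound $\max_{i\le n}|\tau_n(X_i;\theta_0)|=o_p(n^{-1/4})$, attributed to Lemma~\ref{lemma:ratescaleddepth}. The lemma does not say this: it bounds the supremum only over $\{x:\,d(x;F_{\theta_0})\ge\delta\}$, and nothing controls $\tau_n$ at sample points whose population depth falls below $\delta$. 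To upgrade to a max over the data you would additionally need $\min_i d(X_i;F_{\theta_0})\ge\delta_n$ with high probability, which by \ref{ass:4} and a union bound forces $\delta_n=o(1/n)$, and you would then have to re-check that the lemma's rate at $\delta=\delta_n$ is still $o(n^{-1/4})$ for every $\alpha<3/4$. That extra step is exactly what the paper's device avoids: it truncates each sum at a deterministic level $a_n$, bounds the low-depth piece crudely by $|w-1|\le 1$ times $\frac1n\sum_i|u_j(X_i;\theta_0)|\mathbbm{1}\{d(X_i;F_{\theta_0})\le a_n\}$ (which is $o_p(n^{-1/2})$ by Cauchy--Schwarz and \ref{ass:4}), and applies the lemma only on $\{d\ge a_n\}$. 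Your route can be made to work, but as written the key estimate is asserted rather than derived, and it is not the statement of the lemma you cite.

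Second, and more seriously, part 3 rests on ``$\sup_x\sup_{\theta\in N(\theta_0)}|\tau_n(x;\theta)|=O_p(1)$'', which is false: at points $x$ inside the convex hull of the data where $d_n(x;\hat G_n)\ge 1/n$ but $d(x;F_{\theta})$ is arbitrarily small, $\tau_n(x;\theta)\approx n^{-1}d(x;F_\theta)^{-\alpha}$ is unbounded; even restricted to the sample and to $\theta=\bar\theta\ne\theta_0$ it is unproven. The paper needs no such claim, because the tail conditions in the definition of $\mathcal{W}(K_0,K_1)$ are designed precisely to absorb the $\tau_n$ factors pointwise: $\sup_t|w'(t)(t+1)|\le K_0$ gives $|w'(\tau)(\alpha\tau+d^{1-\alpha})|\le C(K_0\vee K_1)$ for every value of $\tau\ge -(1/2)^{1-\alpha}$, and $\sup_t|w''(t)(t+2)^2|\le K_1$ gives $|w''(\tau)\tau^2|\le K_1$ since $t^2\le(t+2)^2$ for $t\ge-1$. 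With these, every term of $\nabla_{k,h}[w(\tau_n)u_j]$ is dominated by an absolute constant times one of $|\nu_k\nu_hu_j|$, $|\nu_{k,h}u_j|$, $|\nu_ku_{j,h}|$, $|u_{j,k,h}|$, and \ref{ass:5} finishes the proof with no probabilistic control of $\tau_n$ at $\bar\theta$ whatsoever. You list these weight conditions among your tools but then bound $|w'|$ and $|w''|$ by constants and push the whole burden onto an unbounded $\tau_n$ --- which is the one place the argument genuinely breaks.
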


\begin{remark} \label{remark:2}
Theorem~\ref{the:normalityresults} verifies assumptions 1-3 of~\cite{yuan1998asymptotics}. Hence, by Theorems 1 and 4 of~\cite{yuan1998asymptotics}, we get that there exists a root to the weighted likelihood estimating equation (WLEE) in the neighborhood of $\theta_0$ that is consistent and asymptotically normal for $\theta_0$. 
Convergence in Equation \eqref{equ:normalityresultsB} establishes that the matrix $B_n = ((B_{n,j,k} ))_{p \times p}$ converges in probability to the negative of the Fisher information matrix $I(\theta_0)$, which implies that the root of the WLEE in the neighborhood of $\theta_0$ is an asymptotically efficient estimator. It should be stressed here that the WLEE need not have a unique root and in general, under contamination, one would expect at least two roots: a stable robust root (the desirable one); and a root that behaves like the MLE (somewhere between
the truth and the outlier data). Depending on the distribution of the outliers and the level of contamination the WLEE might exhibits other roots that might provide further insight into the data, e.g. one root that fits only to the outliers. This is illustrated in the Vowel Recognition Data analysed in Section \ref{secsub:vowel}.
\end{remark}

\begin{remark}
\label{remark:alpha}  
The assumption $0 < \alpha < 3/4$ comes from second moment assumption on $u_j(X; \theta_0)$. This can be relaxed if we can assume more moments for the score function. For example, if $u_j(X; \theta_0)$ has $q$-moments, we get that the largest value of $\alpha$ allowed is the one that satisfies
\begin{equation*}
  \min_{\alpha \in [0,1]} \{3/(4 \alpha), 0.5/(2 \alpha - 1)\} = 0.5/(1 - q^{-1}),
\end{equation*}  
which leads to $3/4$ for $q=2$, $7/8$ for $q=4$, $11/12$ for $q=6$, $15/16$ for $q=8$ and $19/20$ for $q=10$. 
Also, note that for the normal distribution, all moments are finite, so that the largest allowed value for $\alpha$ is $1$.
\end{remark}
The above results might also be used to obtain the asymptotic normality convergence.
\begin{theorem} \label{the:normality}
Under assumptions of Theorem \ref{the:normalityresults} we have
\begin{equation}
\sqrt{n}( \hat{\theta}_w - \theta_0) \stackrel{d}{\rightarrow} N_p(0, I^{-1}(\theta_0)) \ .
\end{equation}
\end{theorem}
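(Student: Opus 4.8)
The plan is to derive the limit law from a coordinatewise second-order Taylor expansion of the weighted estimating function, invoking the three convergence statements of Theorem~\ref{the:normalityresults}. Write $\Psi_n(\theta) := n^{-1}\sum_{i=1}^n w(\tau_n(X_i;\theta))\,u(X_i;\theta)$, and let $\hat{\theta}_w$ be the consistent root in a neighbourhood of $\theta_0$ whose existence is asserted in Remark~\ref{remark:2} (via Theorems~1 and~4 of \citet{yuan1998asymptotics}), so that $\Psi_n(\hat{\theta}_w) = 0$ and $\hat{\theta}_w \stackrel{p}{\rightarrow} \theta_0$. Expanding each coordinate $\Psi_{n,j}$ about $\theta_0$ and collecting terms yields the identity
\[
0 \;=\; A_n^{(w)} + \bigl(B_n^{(w)} + R_n\bigr)(\hat{\theta}_w - \theta_0),
\]
where $A_n^{(w)} = (A_{n,j}^{(w)})_j$ and $B_n^{(w)} = ((B_{n,j,k}^{(w)}))$ are as in Theorem~\ref{the:normalityresults}, and $R_n$ is the $p\times p$ matrix with $(j,k)$ entry $\tfrac12\sum_h C_{n,j,k,h}^{(w)}(\hat{\theta}_{w,h} - \theta_{0,h})$, the third-order terms in row $j$ being evaluated at an intermediate point $\bar{\theta}_n^{(j)}$ on the segment joining $\hat{\theta}_w$ and $\theta_0$. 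By consistency, all these points lie in $N(\theta_0)$ with probability tending to one, so part~3 of Theorem~\ref{the:normalityresults} bounds each $C_{n,j,k,h}^{(w)}$ by $O_p(1)$, whence $R_n = O_p(1)\cdot o_p(1) = o_p(1)$ entrywise.

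Next I would identify the limits of the two leading pieces. By part~1 of Theorem~\ref{the:normalityresults}, $\sqrt{n}\,A_n^{(w)} = n^{-1/2}\sum_{i=1}^n u(X_i;\theta_0) + o_p(1)$; since $G = F_{\theta_0}$ the score has mean zero and covariance $I(\theta_0)$, finite and positive definite by~\ref{ass:7}, so the multivariate central limit theorem gives $\sqrt{n}\,A_n^{(w)} \stackrel{d}{\rightarrow} N_p(0, I(\theta_0))$. By part~2, $B_n^{(w)} = n^{-1}\sum_{i=1}^n ((u_{j,k}(X_i;\theta_0))) + o_p(1)$, which converges in probability to $((\E\, u_{j,k}(X;\theta_0)))$ by the law of large numbers (integrability coming from~\ref{ass:6}), and this limit equals $-I(\theta_0)$ by the second Bartlett identity, valid under the classical regularity conditions assumed throughout --- the same ones underlying the score identity~\eqref{eq:MLE-equation}. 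Since $I(\theta_0)$ is positive definite, $B_n^{(w)} + R_n \stackrel{p}{\rightarrow} -I(\theta_0)$ and is invertible on an event of probability tending to one.

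On that event I would solve the displayed identity for the estimator, obtaining
\[
\sqrt{n}(\hat{\theta}_w - \theta_0) \;=\; -\bigl(B_n^{(w)} + R_n\bigr)^{-1}\sqrt{n}\,A_n^{(w)}.
\]
Continuity of matrix inversion at $-I(\theta_0)$ together with Slutsky's theorem gives $-\bigl(B_n^{(w)} + R_n\bigr)^{-1} \stackrel{p}{\rightarrow} I^{-1}(\theta_0)$, so that
\[
\sqrt{n}(\hat{\theta}_w - \theta_0) \;=\; I^{-1}(\theta_0)\,\sqrt{n}\,A_n^{(w)} + o_p(1) \;\stackrel{d}{\rightarrow}\; N_p\!\bigl(0,\, I^{-1}(\theta_0)\,I(\theta_0)\,I^{-1}(\theta_0)\bigr) \;=\; N_p\!\bigl(0,\, I^{-1}(\theta_0)\bigr),
\]
the last equality using the symmetry of $I(\theta_0)$. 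This is the claim of Theorem~\ref{the:normality}.

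The main obstacle is not the expansion itself but the two facts it rests on: that the selected root $\hat{\theta}_w$ is consistent, and that the remainder matrix $R_n$ is asymptotically negligible. The former is delegated to the Yuan--Jennrich machinery invoked in Remark~\ref{remark:2}; the latter hinges precisely on the uniform-in-$w$ control of the Hessian term in part~3 of Theorem~\ref{the:normalityresults} at the data-dependent points $\bar{\theta}_n^{(j)}$ --- this is where the supremum over $w \in \mathcal{W}(K_0, K_1)$ in that theorem is doing essential work whenever the weight function is itself sample-dependent. A minor point worth spelling out is the validity of the information identity $\E\, u_{j,k}(X;\theta_0) = -I_{jk}(\theta_0)$ under assumptions~\ref{ass:5}--\ref{ass:7}; beyond that, every step is a routine application of the CLT, the LLN, and Slutsky's theorem.
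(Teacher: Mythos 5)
Your proposal is correct and follows essentially the same route as the paper: the paper's proof simply cites the Taylor-expansion argument of Theorem 4.1(iii) in \citet{lehmann1983}, combined with the convergences in Theorem~\ref{the:normalityresults}, which is exactly the expansion, CLT/LLN identification of the leading terms, and Slutsky step you spell out. The only difference is that you write out in full the details the paper delegates to Lehmann's textbook.
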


\begin{proof}[Proof of Theorem \ref{the:normality}]
The proof follows exactly as in the proof of Theorem 4.1 (iii) of \citet[][pp. 429-435]{lehmann1983} by considering a Taylor expansion of the estimating equations in \eqref{equ:wlee} and by using the convergences in Theorem \ref{the:normalityresults}.
\end{proof}

\section{Finite Sample Breakdown Point}
\label{sec:breakdown}
For a given fixed $\theta$ let us consider $\tau_i = \tau_n(X_i; \theta)$ $i=1,\ldots,n$ and let $\tau^\ast_\theta = \operatorname{median}(\tau_1, \ldots, \tau_n)$ be the median DPR. We then define
\begin{equation} \label{equ:wstar}
w^\ast(\tau) =
\begin{cases}
w(\tau) & \text{if } \tau \le \tau^\ast_\theta + \xi \ ,\\
0 & \text{otherwise} \ ,
\end{cases}
\end{equation}
as our working weight function, where $\xi > 1$ is a suitable constant.

We consider the finite breakdown point for location $\mu$ and scale $\Sigma$ parameters for the elliptically symmetric parametric family with density of the form
\begin{equation*}
f_X(x) = |\Sigma|^{-1/2} h((x-\mu)^\top \Sigma^{-1} (x-\mu))
\end{equation*}
where $h \ge 0$ such that $f_X$ is a density, and we also assume that the function $h^\ast(y) = h'(y)/h(y)$ is such that $\sup_{y \ge 0} |h^\ast(y)| < \infty$. This implies that the solutions of the weighted likelihood estimating equations can be written in the following form
\begin{align} \label{equ:formoffunctional}
\sum_{i=1}^n W(X_i; \mu, \Sigma) (X_i - \mu) & = 0 \ ,\\
-\frac{2}{n} \sum_{i=1}^n W(X_i; \mu, \Sigma) (X_i - \mu)(X_i - \mu)^\top & = \Sigma \ ,
\end{align}
where
\begin{equation}
W(x; \mu, \Sigma) = w^\ast(\tau_n(x;\mu,\Sigma)) h^\ast((x - \mu)^\top \Sigma^{-1}(x - \mu)) \ ,
\end{equation}
is a weight function obtained as the product of our working weight function $w^\ast$ with the function $h^\ast$, which is specific to the assumed elliptically symmetric parametric family.

\begin{remark}
For multivariate normal distribution, $h^\ast(y) = -1/2$ while for the $t$ distribution on $\mathbb{R}^q$ with a known degrees of freedom $v$, we have
\begin{equation*}
h^\ast(y) = - \frac{q+v}{2 (y + v)} \ .
\end{equation*}
Hence, both of them satisfy the boundness of $h^\ast$ function.
\end{remark}

\subsection{Finite Sample Location Breakdown Point}
\label{sec:locationbreakdown}

Given two samples $x^{(n)} = (x_1, \ldots, x_n)$ and $y^{(m)} = (y_1, \ldots, y_m)$ we consider the functionals $T(x^{(n)})$ and $T(x^{(n)} \cup y^{(m)})$, and we assume that the sample $x^{(n)}$ is such that:
\begin{enumerate}[label=\textbf{(B\arabic*)}]
\item The observations $x_1, \ldots, x_n \in \mathbb{R}^q$ are in general position, i.e., no hyperplane contains more than $q$ points. \label{ass:b:1}
\item $\max\limits_{i=1,\ldots,n} \tau_n(x_i;\theta) \le K < \infty$ for all fixed $\theta$ with $\Vert \theta \Vert < \infty$. \label{ass:b:2}
\item $n > 2 \times q$. \label{ass:b:3}
\end{enumerate}
We also assume that $w(\tau) \ge c$ for some fixed constant $0 < c < 1$ and for all $\tau \in [-1,\infty)$.
Notice that, under these assumptions $\Vert x_i \Vert < \infty$ for all $i=1,\dots,n$. Because of the form of the estimating equation assumed in \eqref{equ:formoffunctional}, we also have that $\Vert T(x^{(n)})\Vert < \infty$ whenever $\Vert x_i \Vert < \infty$ for all $i=1,\dots,n$.
The additive finite sample breakdown point for the location parameter $T$ is given by
\begin{equation}
\varepsilon^\ast = \inf_{m}\left\{ \frac{m}{n+m}: \Vert T(x^{(n)} \cup y^{(m)}) - T(x^{(n)}) \Vert = \infty \right\}
\end{equation}
hence, breakdown occurs when  $\Vert T(x^{(n)} \cup y^{(m)}) \Vert = \infty$. For this to be the case we need to choose $y_j \in y^{(m)}$ ($j=1,\ldots,m$) in a sequence of observations $\{y_{j,k}\}_{k=1}^\infty$ such that $\Vert y_{j,k} \Vert \stackrel{k \rightarrow \infty} \longrightarrow \infty$. Regarding our location functional, for each of these sequences, we observe that
\begin{equation}
\lim_{k \rightarrow \infty} \tau_{n+m}(y_{j,k}; \theta) = \lim_{k \rightarrow \infty} \frac{d_{n+m}(y_{j,k};\hat{G}_{n+m})}{d(y_{j,k};F_\theta)} - 1 = +\infty
\end{equation}
since $d_{n+m}(y_{j,k};\hat{G}_{n+m}) \ge 1/{n+m}$ and $\lim_{k \rightarrow \infty} d(y_{j,k};F_\theta) = 0$. Consider the case $m < n$, then by the definition
\begin{equation}
\tau^\ast_\theta = \operatorname{median}(\tau_{n+m}(x_1;\theta), \ldots, \tau_{n+m}(x_n;\theta), \tau_{n+m}(y_{1,k};\theta), \ldots, \tau_{n+m}(y_{m,k};\theta)),
\end{equation}
would be finite for all $k$. 
Hence, there exists a $k_0$ such that, for all $k > k_0$, $\min_{j=1,\ldots,m} \tau_{n+m}(y_{j,k};\theta) > \tau^\ast_\theta + \xi$, where $\xi$ is as in~\eqref{equ:wstar}, implying that, for all $k > k_0$, $w^\ast(\tau(y_{j,k};\theta)) = 0$ for all $j=1, \ldots, m$. 
The weighted likelihood estimating equations reduce to
\begin{equation}
\frac{1}{n+m} \sum_{i=1}^n w^\ast(\tau_{n+m}(x_i;\theta)) u(x_i;\theta) = 0,
\end{equation}
but at least half of these weights are ensured to be strictly positive. 
Since all the observations involved are finite, the solution of the estimating equations is finite as well, and no breakdown occurs. 
On the other hand, let $j^\ast = \arg\min_{j=1,\ldots,m}\tau_{n+m}(y_{j,k};\theta)$, if $m > n$ then there exists a $k_0$ such that for all $k > k_0$, $\tau^\ast_\theta \ge \tau_{n+m}(y_{j^\ast,k};\theta)$, which implies that $\lim_{k \rightarrow \infty} \tau^\ast_\theta = \infty$, and that at least the observation $y_{j^\ast,k}$ will have a strictly positive weight. Hence, breakdown occurs.
For the case $m = n$ the behavior of the functional $T$ depends on the definition of the median for an even number of observations, and  thus we will not this case in detail. In any case, we have proved the following result.

\begin{theorem}
Under assumptions \ref{ass:b:1}--\ref{ass:b:3}, the additive finite sample location breakdown point of our estimator is no smaller than $0.5$.
\end{theorem}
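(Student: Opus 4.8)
The plan is to formalize the argument already sketched before the statement: to show that contamination by fewer than $n$ points can never force $\Vert T(x^{(n)}\cup y^{(m)})\Vert=\infty$, so any breakdown-inducing $m$ must satisfy $m\ge n$, whence $\varepsilon^\ast=\inf\{m/(n+m):\text{breakdown}\}\ge 1/2$.

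First I would fix $m<n$ and, since bounded contamination keeps $T$ finite through the form \eqref{equ:formoffunctional} together with \ref{ass:b:1} (as already noted in the text), restrict attention to contaminating sequences $\{y_{j,k}\}_k$ with $\Vert y_{j,k}\Vert\to\infty$ for at least one $j$. For any such diverging contaminant I would show $\tau_{n+m}(y_{j,k};\theta)\to+\infty$: the finite-sample half-space depth at any point is at least $1/(n+m)$, while $d(y_{j,k};F_\theta)\to 0$ by property \ref{eq:depth-vanishing} (so its $\alpha$-power denominator tends to $0$ as well), making the ratio diverge. Dually, for each fixed good point $x_i$ the DPR $\tau_{n+m}(x_i;\theta)$ stays uniformly bounded in $k$, because its numerator is at most $1$ in absolute value while $\inf_i d(x_i;F_\theta)>0$ for bounded $x_i$ and bounded $\theta$.

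The crux is a counting step. Since $\tau^\ast_\theta$ is the median of the $n+m$ DPRs and the $m$ contaminant DPRs are eventually the largest, for $m<n$ the median is attained at a good point, and at most about $(n-m)/2$ good points can have DPR exceeding $\tau^\ast_\theta$, hence (as $\xi>1$) exceeding $\tau^\ast_\theta+\xi$. Thus at least $\lceil n/2\rceil\ge q+1$ good points survive with working weight $w^\ast(\tau_i)=w(\tau_i)\ge c>0$, using $n>2q$ from \ref{ass:b:3}. For $k$ large every contaminant has $\tau_{n+m}(y_{j,k};\theta)>\tau^\ast_\theta+\xi$, so $w^\ast(\tau(y_{j,k};\theta))=0$ and the estimating equations \eqref{equ:formoffunctional} collapse to a system in the bounded good observations, at least $q+1$ of which carry strictly positive weight; by general position \ref{ass:b:1} the resulting scatter estimate is nonsingular and the solution $(\mu,\Sigma)$ is finite, so $\Vert T(x^{(n)}\cup y^{(m)})\Vert<\infty$ for all $m<n$. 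The complementary case $m>n$, already indicated in the text (the contaminants drag $\tau^\ast_\theta$ to $\infty$, so at least one outlier keeps positive weight), does exhibit breakdown, and $m=n$ is set aside because it depends on the even-sample median convention; combining the cases gives $\varepsilon^\ast\ge 1/2$.

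The main obstacle is making the last reduction rigorous, i.e.\ showing that the fixed-point system \eqref{equ:formoffunctional} restricted to the surviving good observations has only bounded solutions. This splits into ruling out $\Vert\mu\Vert\to\infty$ — which follows since the positive-weight observations are bounded and the weights $W$ stay bounded away from $0$ via $w(\tau)\ge c$ and the boundedness of $h^\ast$ — and ruling out $\Vert\Sigma\Vert\to\infty$ or $\Sigma$ becoming singular, which is exactly where \ref{ass:b:1} and the lower bound $q+1$ on the number of survivors enter, keeping $\sum_i W(x_i;\mu,\Sigma)(x_i-\mu)(x_i-\mu)^\top$ positive definite and bounded. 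One further point of care is that the DPRs, the median $\tau^\ast_\theta$, and hence $w^\ast$ all depend on $(\mu,\Sigma)$; this is handled by noting that, consistently with the definition \eqref{equ:wstar}, $\theta$ is held fixed when the weights are evaluated throughout the breakdown analysis.
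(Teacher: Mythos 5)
Your proposal follows essentially the same route as the paper's own argument: diverging contaminants have DPRs tending to $+\infty$ (since $d_{n+m}(y_{j,k};\hat{G}_{n+m})\ge 1/(n+m)$ while $d(y_{j,k};F_\theta)\to 0$), the median DPR stays finite for $m<n$, so $w^\ast$ eventually annihilates all outliers and the estimating equations reduce to the bounded good observations, at least half of which retain strictly positive weight, yielding a finite solution. The additional care you take with the counting step (at least $q+1$ survivors, general position for nonsingularity of the scatter) and with the boundedness of the reduced system only makes explicit what the paper states more briefly.
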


\subsection{Finite Sample Scatter Breakdown Point}
\label{scatterbreakdown}

Let now $V$ be a scatter functional and, as before, we consider the two samples $x^{(n)}$ and $y^{(m)}$. Let $\lambda^+(m)$ and $\lambda^-(m)$ be the largest and smallest eigenvalues of $V(x^{(n)} \cup y^{(m)})$ respectively, then breakdown cannot occur if there exists two finite positive constants $a$ and $b$, such that $0 < a \le \lambda^-(m) \le \lambda^+(m) \le b < \infty$. 
The additive finite sample breakdown point for a scatter matrix $V$ is given by
\begin{equation}
\varepsilon^\ast = \inf_{m}\left\{ \frac{m}{n+m}: \frac{1}{\lambda^-(m)} + \lambda^+(m) = \infty \right\} \ .
\end{equation}
If the largest eigenvalue is unbounded, the explosion finite sample breakdown (EFSB) occurs, whereas if the smallest eigenvalue is zero, the implosion finite sample breakdown (IFSB) occurs. We will now examine the two cases in detail.

Denote by $v_{jk}$ be the entries of the matrix $V(x^{(n)} \cup y^{(m)})$, from the total variance equality we have $\sum_{j=1}^q v_{jj} = \sum_{j=1}^q \lambda_j$, so that EFSB occurs if and only if at least one of the variances is unbounded. However, the form of the estimating equations assumed in \eqref{equ:formoffunctional} implies $\lambda^+(m) < \infty$ whenever the involved observations are bounded. Following the same reasoning as for the location finite sample breakdown point, we can conclude that the EFSB point is no smaller than $0.5$.

For the IFSB point, consider the subspace generated by $q$ observations from $x^{(n)}$, without loss of generality, take the first $q$ observations. Place all the $y^{(m)}$ in this subspace.
Let $u$ be the unit vector orthogonal to this subspace, i.e., $u^\top y_j = 0$ for all $j=1,\ldots, m$, $u^\top x_i = 0$ for all $i=1,\ldots, q$, and $u^\top x_i \neq 0$ for all $i=(q+1),\ldots,n$. The IFSB occurs if more than half of the points are projected to $0$, which happens only if $m > n + 2 q$. This implies that the IFSB point of our scatter functional is no smaller than $0.5 - q/n$.
Notice also that the maximum possible finite sample breakdown point of any equivariant scatter estimate cannot be larger than this quantity \citep[][Theorem 6]{davies1987}.

\section{Examples}
\label{sec:examples}

We apply our method to two data sets, a multivariate and a functional one, and compare it with usual estimation techniques.

\subsection{Vowel Recognition Data}
\label{secsub:vowel}

We consider the data set \texttt{pb52} available in the \texttt{R} package \texttt{phonTools} \citet{barreda2015} which contains the Vowel Recognition Data considered in \citet{peterson1952}, see also \citet{boersma2012}. A multivariate normal model is assumed. In the first example a bivariate data set is illustrated, where we consider the vowels ``u'' (close back rounded vowel) and ``\ae'' (near-open front unrounded vowel, ``\{'' as x-sampa symbol) with a sample of size $304$ equally divided for each vowel and the log transformed F1 and F2 frequencies measured in Hz. Our procedure use the following settings $\alpha=0.5$, $a=0.05$, $c=200$, and uses $500$ subsamples of size $6$ as starting values for finding the roots. We also consider the Maximum Likelihood (MLE), the Minimum Covariance Determinant (MCD), the Minimum Volume Ellipsoid (MVE) and the S-Estimates (S) as implemented in the \texttt{R} \citep{cran} package \texttt{rrcov} by \citet{todorov2009}, the last three procedures were used with the exhaustive subsampling explorations.  Figure \ref{fig:x1} in the left panel reports the ellipsoids generated by the three estimates. While the first root coincides with the MLE, the other two nicely identify the two subgroups. This is not the case for all the other investigated methods, see Figure \ref{fig:x1} right panel, where the estimates are approximately all coincident with the MLE.

In the second example a tri-variate data set is used considering vowels ``u'' (close back rounded vowel) and ``\textrhookrevepsilon'' (open-mid central unrounded vowel, ``3'' or ``$3^\backprime$'' as x-sampa symbol) with a sample of size $304$ equally divided for each vowel and the log transformed F1, F2 and F3 frequencies measured in Hz. Figure (\ref{fig:x3}) shows the results, with the following setting for the roots search: $\alpha=0.25$, $a=0.1$, $c=30$, and $1000$ subsamples of size $6$ as starting values.
For the classical robust procedures only MCD is able to somehow recover the structure of the observations for the vowels ``\textrhookrevepsilon'', while the other behave like the MLE. Our procedure correctly finds the two substructure.
Observe that using the same setting of the first example we find only the first two roots.

\begin{figure}
\centering
\includegraphics[width=0.31\textwidth]{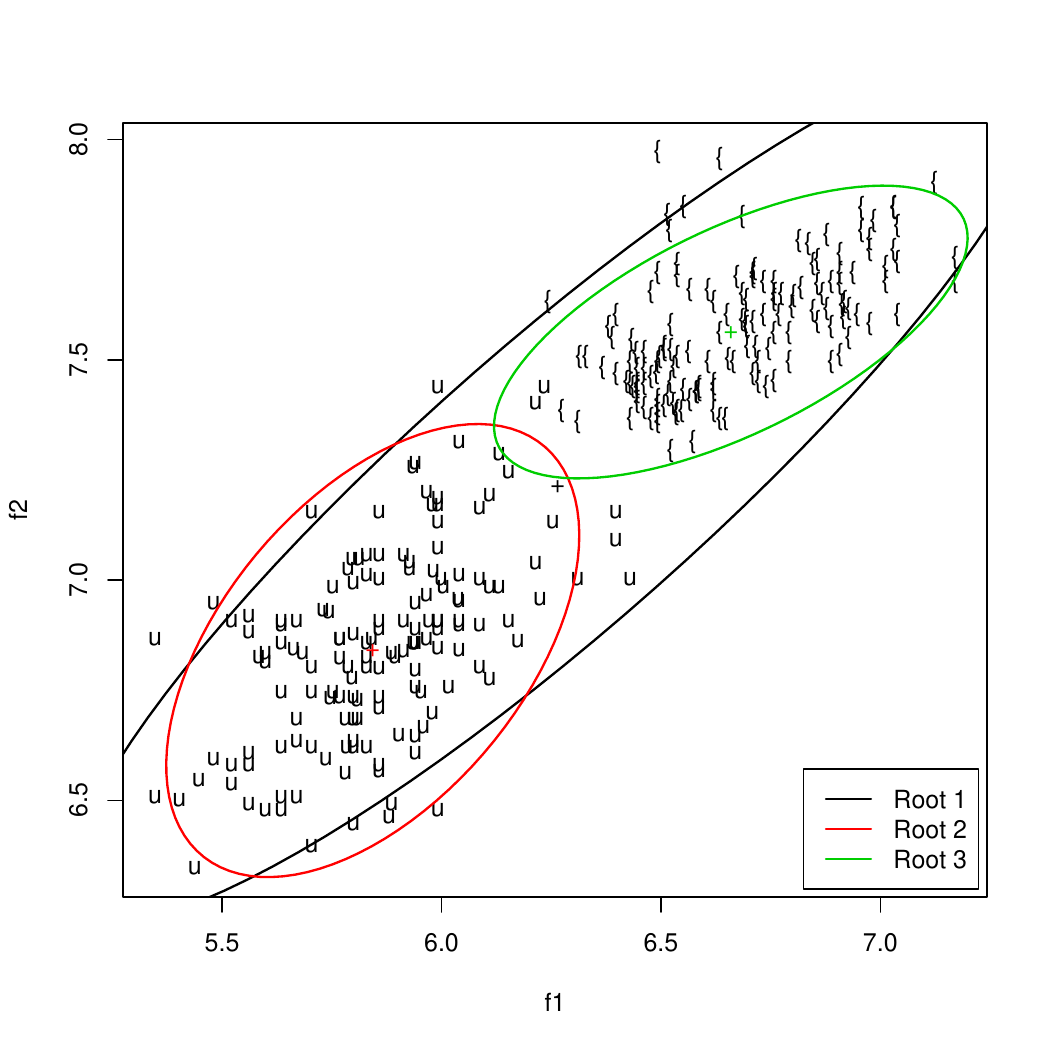}
\includegraphics[width=0.31\textwidth]{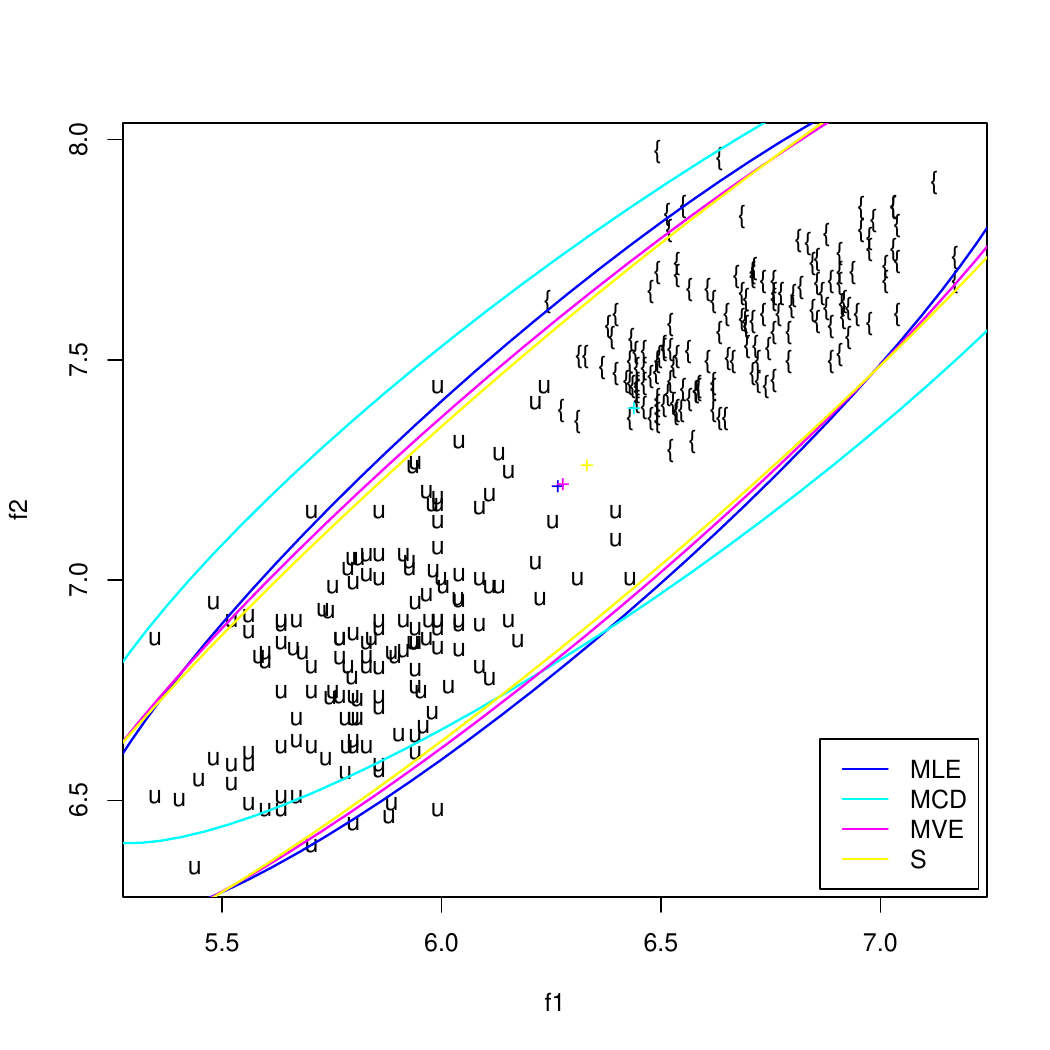}
\caption{Vowel Recognition Data. Bivariate example, vowels ``u'' and ``\ae''. Left: the three roots of the proposed method. Right: estimates provided by MLE and robust procedures. Ellipses are $95\%$ confidence regions.}
\label{fig:x1}
\end{figure}

\begin{figure}
\centering
\includegraphics[width=0.31\textwidth]{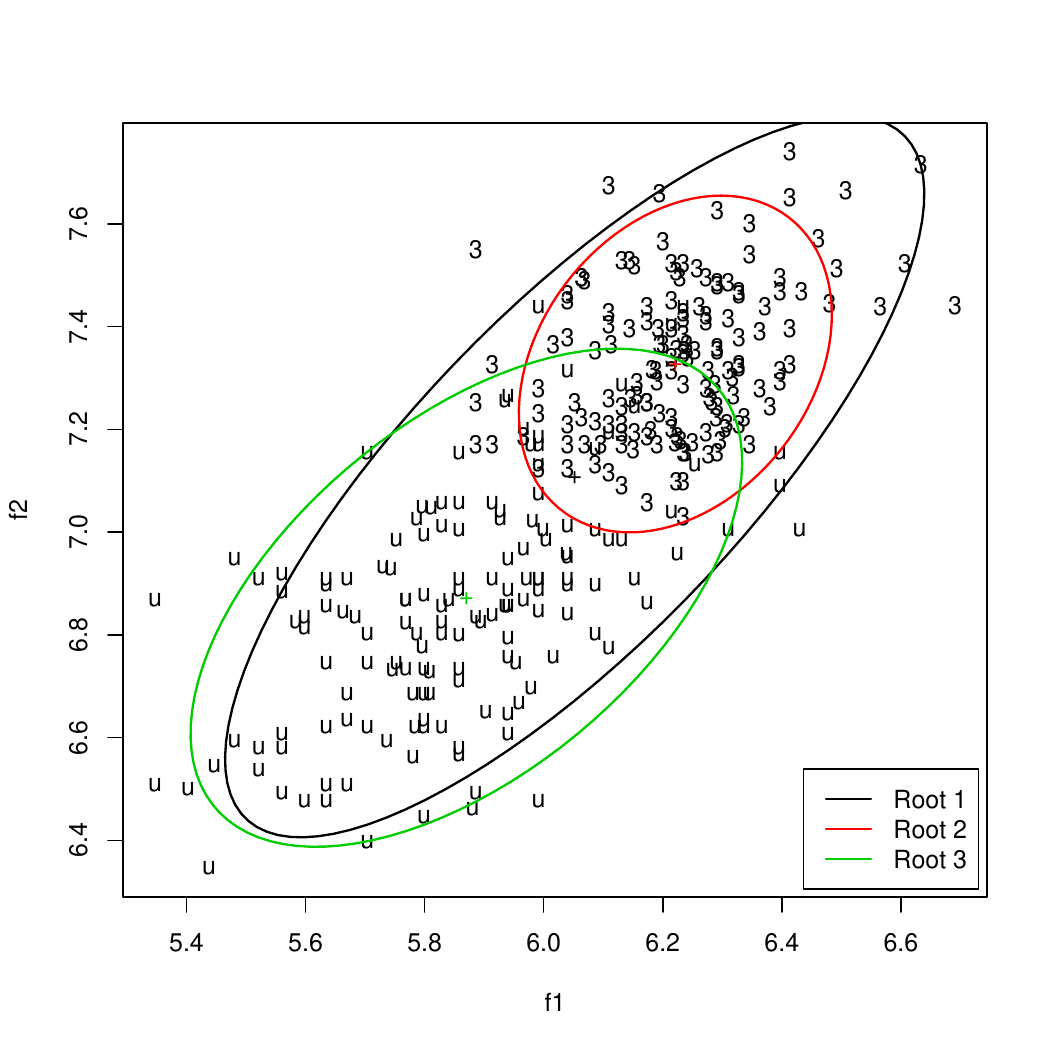}
\includegraphics[width=0.31\textwidth]{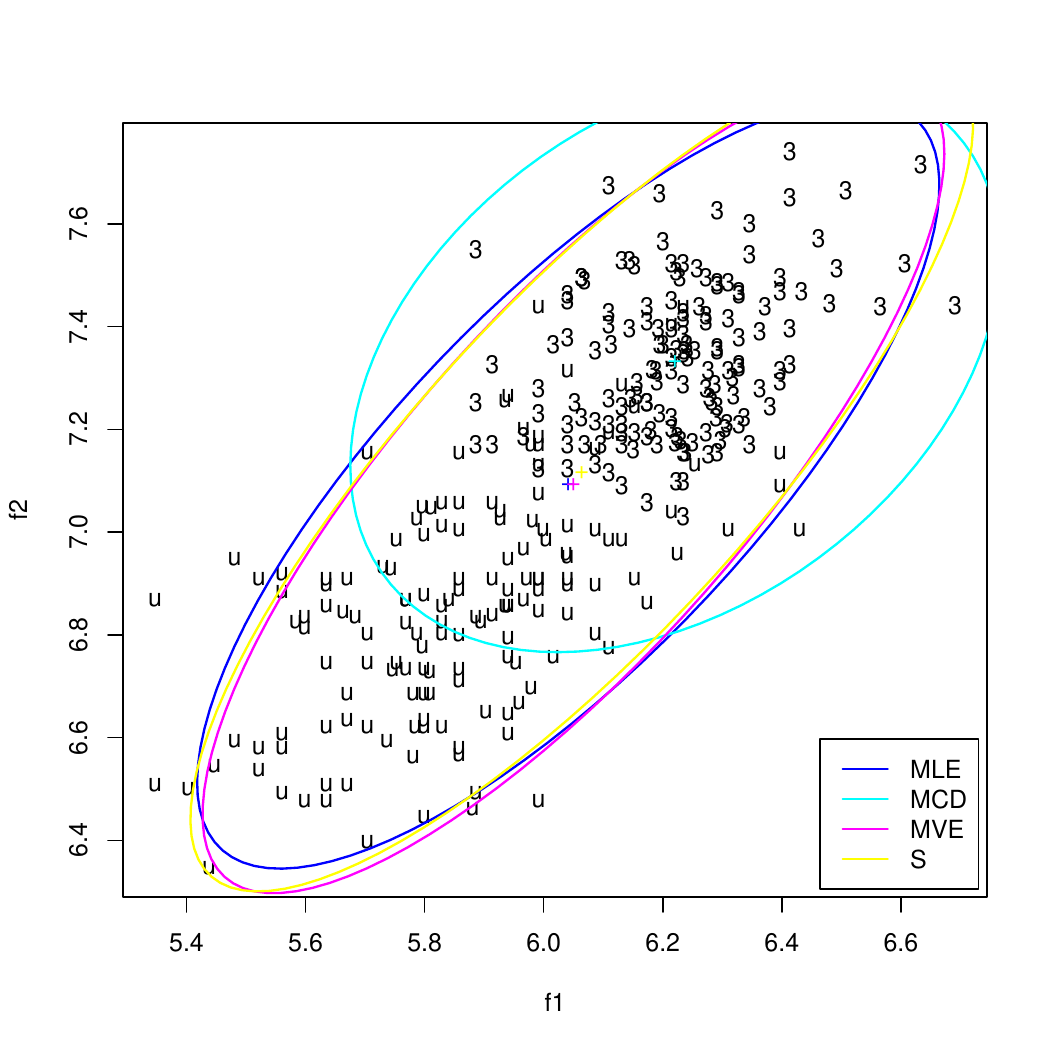} \\
\includegraphics[width=0.31\textwidth]{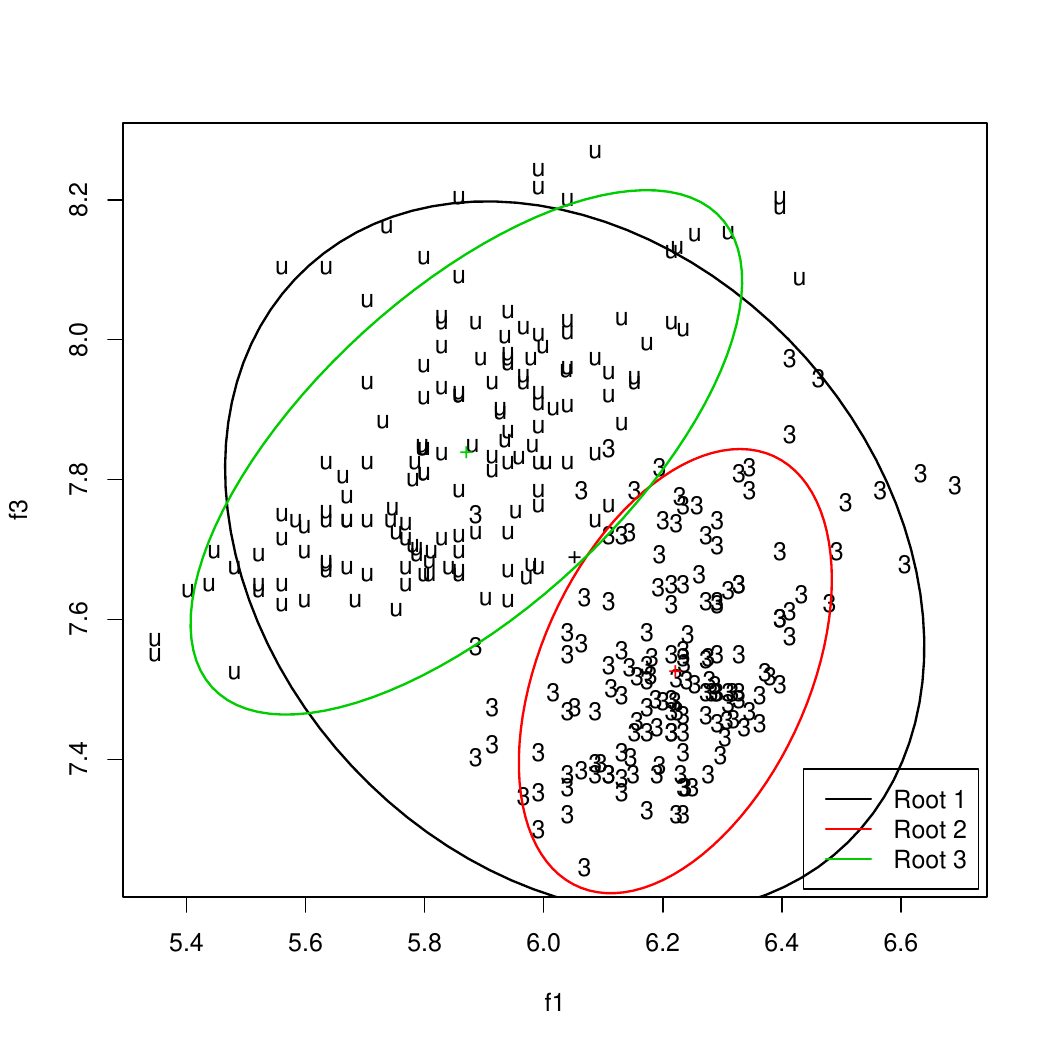}
\includegraphics[width=0.31\textwidth]{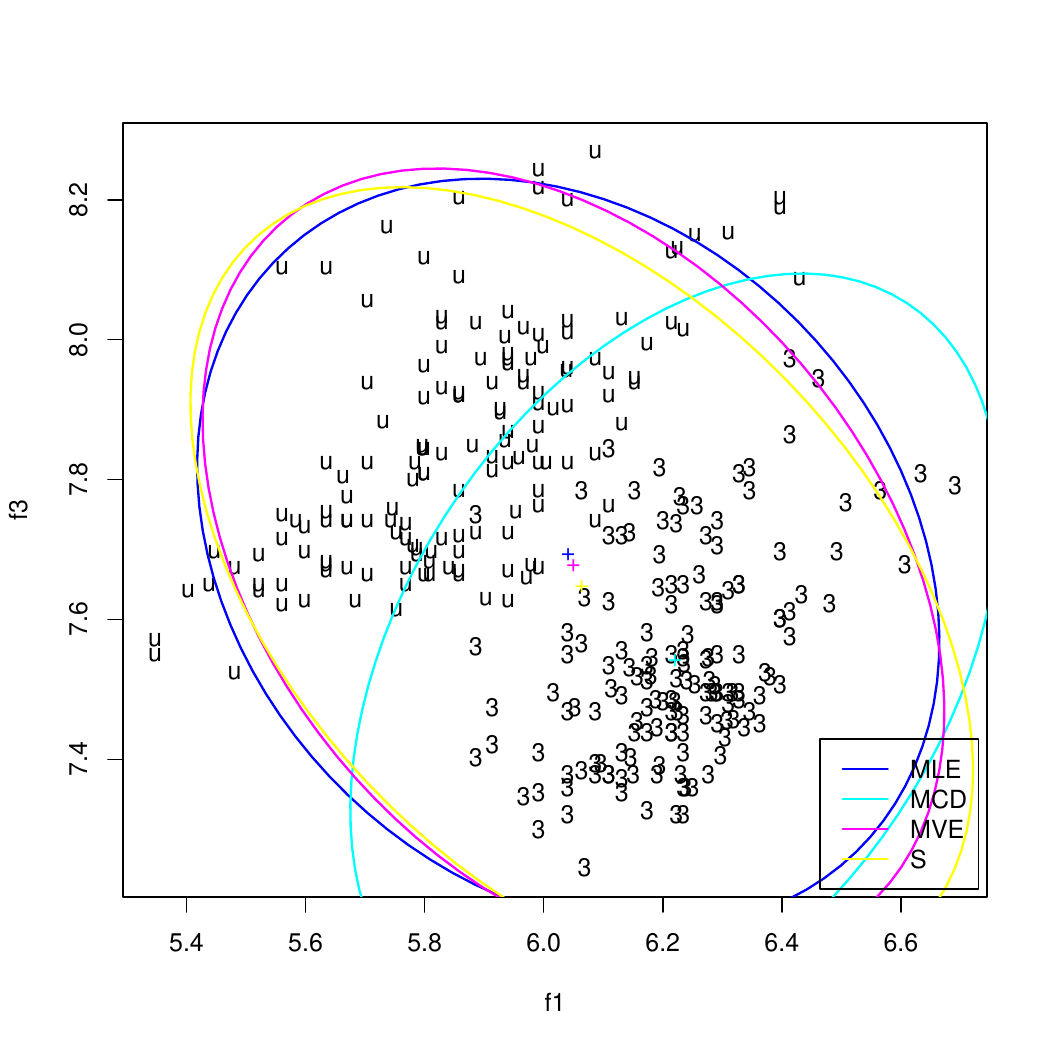} \\
\includegraphics[width=0.31\textwidth]{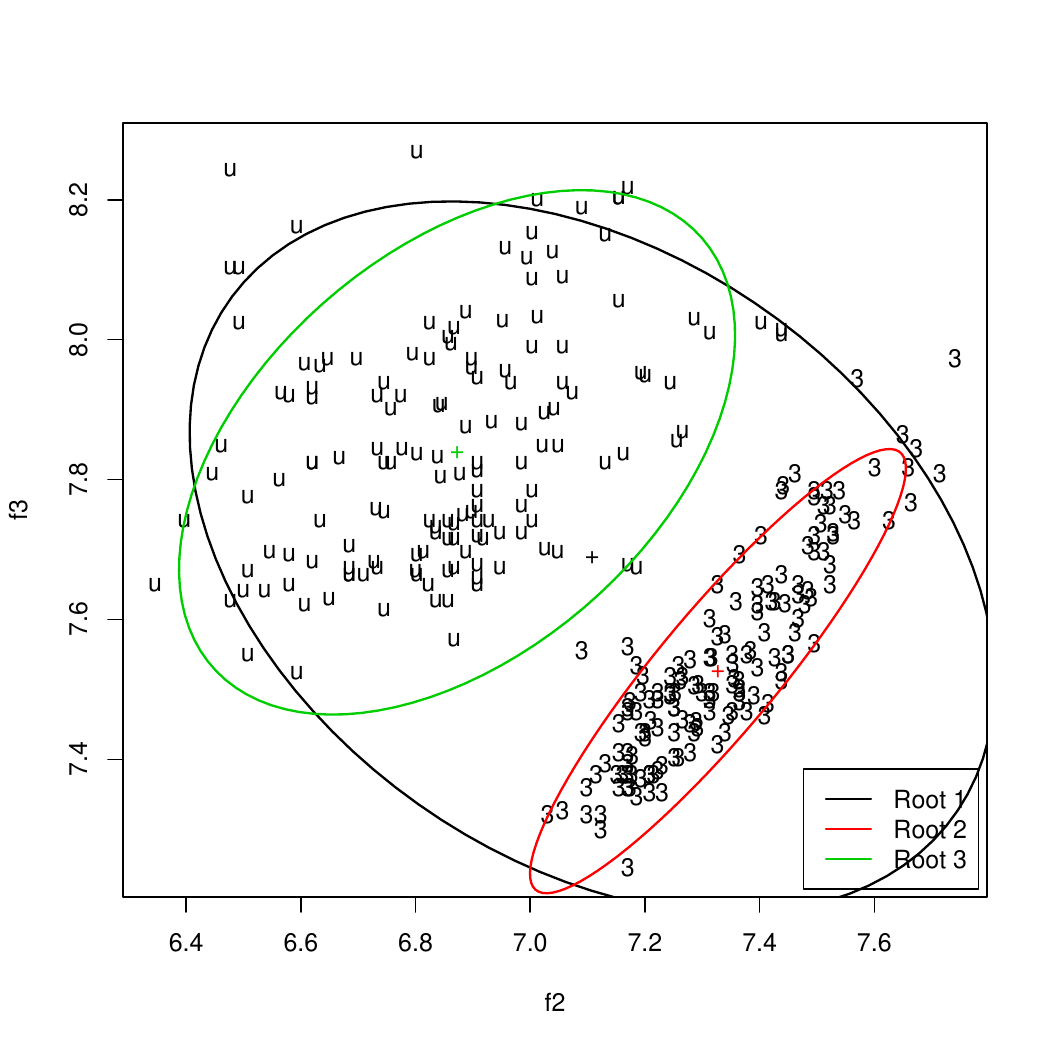}
\includegraphics[width=0.31\textwidth]{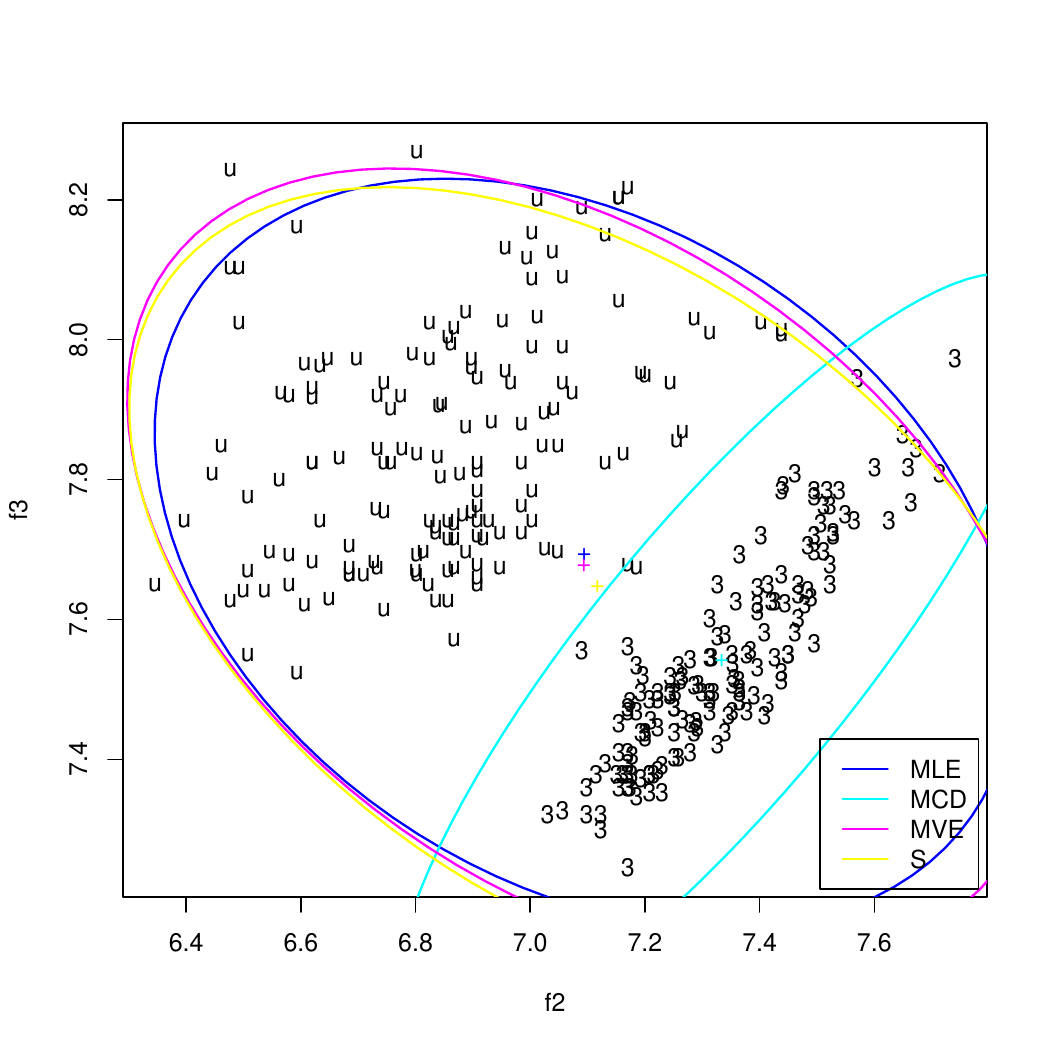} \\
\caption{Vowel Recognition Data. Trivariate example, vowels ``u'' and ``\textrhookrevepsilon''. Left: the three roots of the proposed method. Right: estimates provided by MLE and robust procedures. First row: $f_1$ vs $f_2$, second row: $f_1$ vs $f_3$ and third row: $f_2$ vs $f_3$. Ellipses are $95\%$ confidence regions.}
\label{fig:x3}
\end{figure}

\subsection{Wind Speed Data}
\label{secsub:wind}

We consider a data set obtained from the Col De la Roa (Italy) meteorological station available at \texttt{intra.tesaf.unipd.it/Sanvito}. We concentrate on the wind speed recorded regularly every $15$ minutes in the years $2001$-$2004$. After removing $42$ incomplete days we obtain $1420$ daily time series of length $4 \times 24 = 96$. The statistical analysis based on the modified local half-region depth performed in \citet{agostinelli2018} identified $3$ groups: first group ($536$ days) corresponds to the cold season (circular mean around December), the second group ($415$ days) corresponds to the hot season (circular mean around June), while the third group ($469$ days), given its nature, is spread over the whole year, with some predominance in the hot season. For the present analysis we consider the second group and a selection of the third group for a total of $548$ days that we split into two groups with size $413$ and $135$ respectively. Then, we consider $10$ data sets obtained by adding to the first group recursively $15$ observations from the other so that we should have approximately a level of contamination ranging form $0$ to approximately $25\%$ ($0.00$, $0.035$, $0.068$, $0.098$, $0.127$, $0.154$, $0.179$, $0.203$, $0.225$, $0.246$). For each data set we run our procedure using the modified half-region depth \citep{lopez-pintado2011} and the maximum likelihood estimator on a Gaussian process model. Figure \ref{fig:wind-data} shows the data set with the observations form the first group in green-blue and those from the second group in purple. The superimposed thicker lines represent the estimate mean curves from our procedure (green-blue) and maximum likelihood (purple) using observations from both groups. 
In fact, as it is shown in Figure \ref{fig:wind-l2} and also in Figure \ref{sup:fig:wind-mean} of the Appendix \ref{sup:sec:wind}, as we add observations from the second group to the first one, the maximum likelihood estimate of the mean curve is affected and lead to an increased mean at every hour of the day, while our procedure demonstrates strong stability. Similarly, Figure \ref{fig:wind-e1p} shows the behavior of the largest (left panel) and smallest (right panel) eigenvalues of the estimated covariance matrix by both methods, and we notice again how much the maximum likelihood estimate is affected. The estimated correlation structure (see Figures \ref{sup:fig:wind-cor-1} and \ref{sup:fig:wind-cor-2} of the Appendix \ref{sup:sec:wind}) is also enhanced for maximum likelihood, while remaining stable for our proposal.

\begin{figure}
\centering
\includegraphics[width=0.9\textwidth]{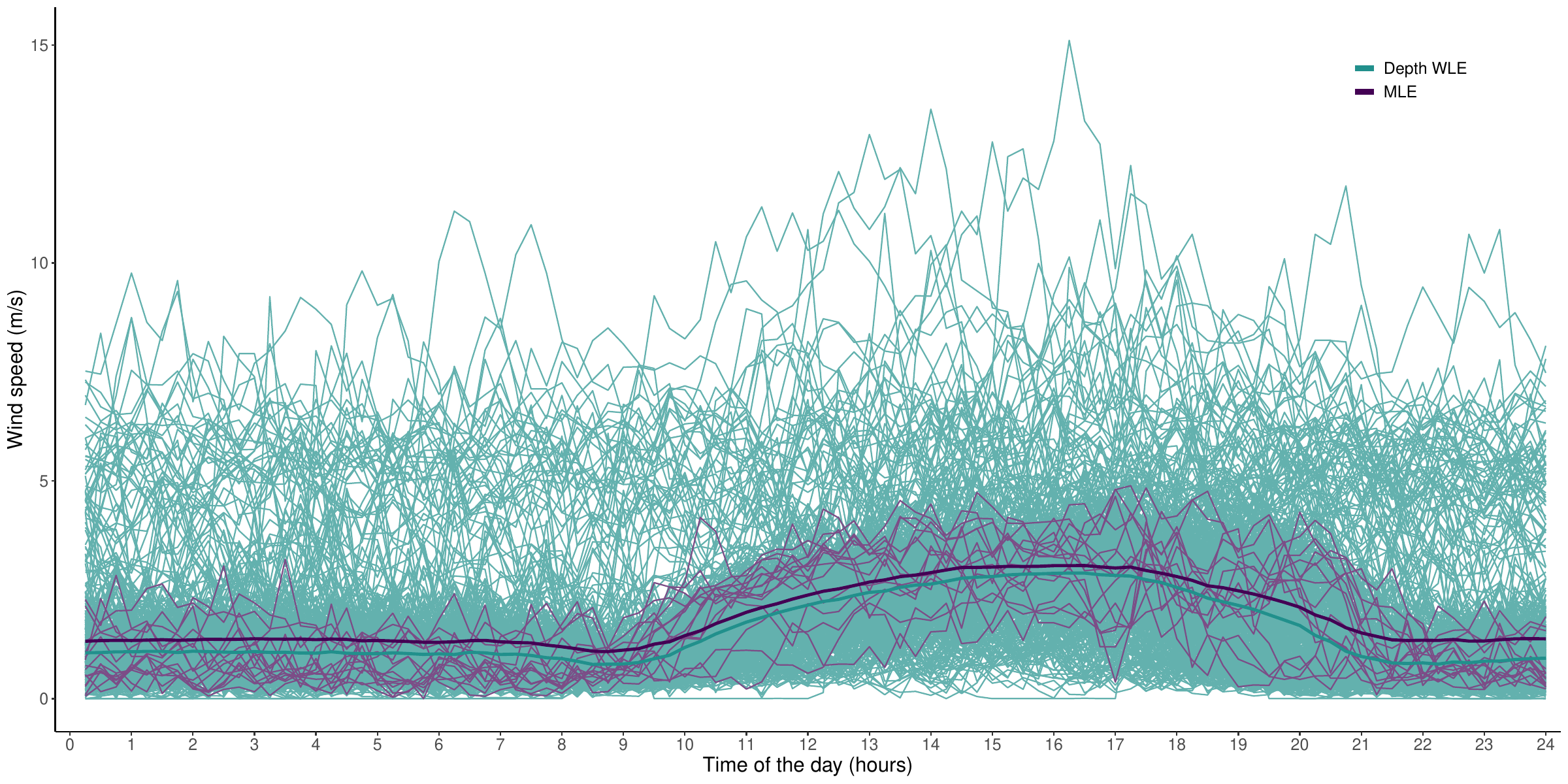}
\caption{Wind Data. First group of data are in green-blue, while the second set of observations are in purple. Estimate mean curves (thicker lines) provided by the MLE and the weighted likelihood estimates based on depth procedure are superimposed.}
\label{fig:wind-data}
\end{figure}

\begin{figure}
\centering
\includegraphics[width=0.9\textwidth]{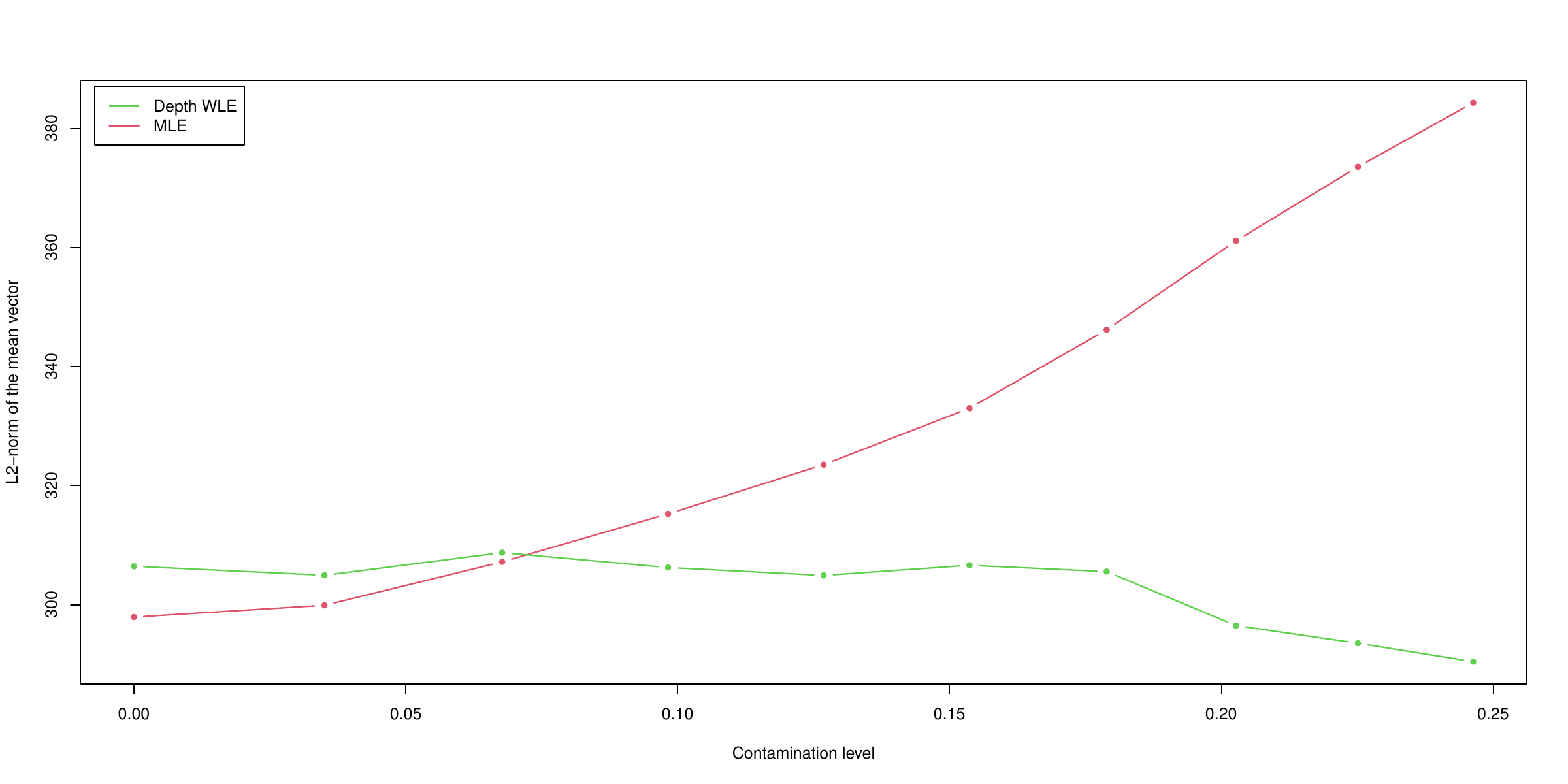}
\caption{Wind Data. $L_2$-norm of the estimate mean curves provided by the MLE (red) and the weighted likelihood estimates based on depth procedure (green). Contamination level ranges from $0$ to approximately $25\%$.}
\label{fig:wind-l2}
\end{figure}

\begin{figure}
\centering
\includegraphics[width=0.45\textwidth]{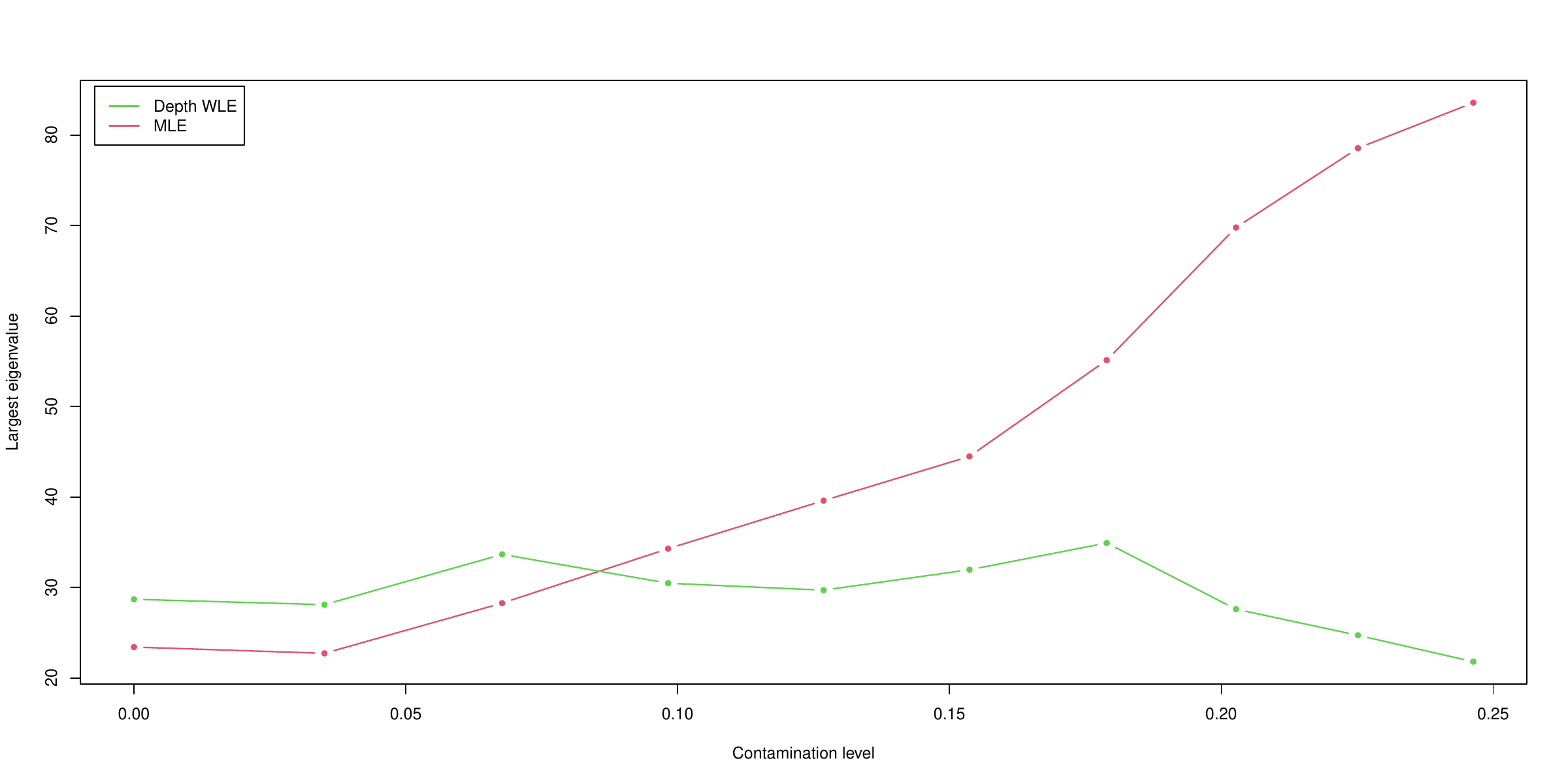}
\includegraphics[width=0.45\textwidth]{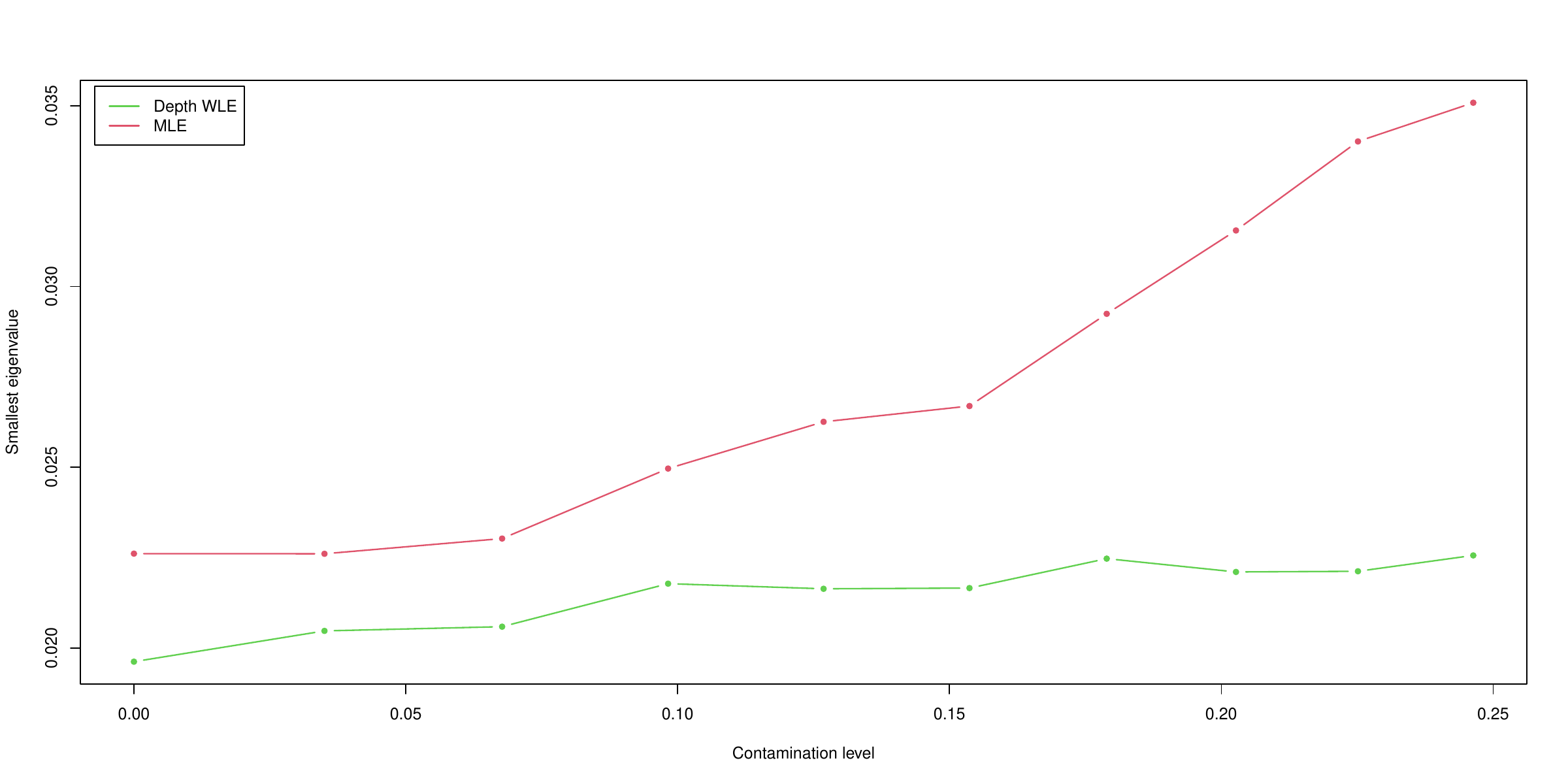}
\caption{Wind Data. Largest eigenvalue (left) and smallest eigenvalue (right) of the estimate covariance matrix provided by the MLE (red) and the weighted likelihood estimates based on depth procedure (green). Contamination level ranges from $0$ to approximately $25\%$.}
\label{fig:wind-e1p}
\end{figure}

\section{Monte Carlo Experiment}
\label{sec:montecarlo}

We run a Monte Carlo experiment with the following factors: dimensions $p=1, 2, 5, 10, 20$, sample sizes is $n=s (p (p+1)/2+p)$ with $s=2, 5, 10$. The data are simulated from a multivariate standard normal. We consider the levels of contamination $\varepsilon = 0, 0.05, 0.1, 0.2, 0.3, 0.4, 0.45$ where the contaminated data are simulated from a multivariate normal model $N_p(\boldsymbol{\mu}, \sigma^2 I)$, where $\boldsymbol{\mu} = (\mu, \ldots, \mu)^\top$, $\mu = 0, 1, 2, 3, 4, 5, 6, 7, 8, 9, 10$ and $\sigma = 0.001, 1, 2, 3, 4, 5$. For each combination of these factors we run $100$ replications. The constrained M-Estimate (covMest) of multivariate location and scatter based on the translated biweight function using a high breakdown point initial estimate \citep{woodruff1994,rocke1996a,rocke1996b} as implemented in function \texttt{covMest} of the \texttt{R} package \texttt{rrcov} \citep{todorov2009} is computed for comparison reasons. For the new procedure we consider the following settings: $\alpha= 1/4, 1/2, 3/4, 1$ and the following weight function $w(\tau, \delta_1, \delta_2, \gamma) = (h(\tau, \delta_1, \delta_2) + \gamma)/((1 + \gamma))$ where
\begin{equation*}
h(\tau, \delta_1, \delta_2) =
\begin{cases}
0 & \text{ if } \tau > \delta_2 \\
\frac{\delta_2 - \tau}{\delta_2 - \delta_1} & \text{ if } \delta_1 < \tau \le \delta_2 \\
1 & \text{ otherwise }
\end{cases}
\end{equation*}
with $\delta_1 = 0.5,1,2$, $\delta_2 = 3,6,9$ and $\gamma = 0.1,0.2,0.3$. Finally, the weight $w^\ast(\tau)$ as defined in \eqref{equ:wstar} is used with $\delta = 1,5,10,20$. To understand the impact on the performance of the new procedure of all the parameters used to define the weights, we initially run a simulation where the Iterative Re-Weighted Least Square (IRWLS) algorithm of new procedure is started from the true values. From these simulations (see Section \ref{sup:sec:monte} in the Appendix) it is evident that all these parameters have a very little influence on the performance of the proposed methods. 
Based on these results we decide to fix the weights parameters by minimizing the $95\%$ quantile of MSE/Kullback--Leibler divergence. These ``optimal'' values are reported in Table \ref{tab:monte:optimal}.

\begin{table}
\centering
\begin{tabular}{lrrrr}
\hline
$\alpha$ & 0.25 & 0.5 & 0.75 & 1 \\
\hline
$\gamma$ & 0.1 & 0.3 & 0.3 & 0.3 \\
$\delta_1$ & 2 & 2 & 2 & 2 \\
$\delta_2$ & 3 & 9 & 9 & 9 \\
$\delta$ & 1 & 1 & 5 & 5 \\
\hline
\end{tabular}
\caption{Monte Carlo simulation. ``Optimal values'' for the weight function parameters for different values of $\alpha$.}
\label{tab:monte:optimal}
\end{table}

Tables \ref{tab:MSE:max} and \ref{tab:DIV:max} report the estimated maximum Mean Square Error and estimated maximum Kullback-Leibler divergence for the proposed method with $\alpha=0.5$ starting at the true values and \texttt{CovMest} for contamination levels $0.05$, $0.1$ and $0.2$. Complete results are available in the Appendix \ref{sup:sec:monte}. These results show that the weighted likelihood estimating equations always have a ``robust'' root if the algorithm is appropriately initialized. The performance is extremely good even under high contamination with some deterioration for small value of $\alpha$ ($0.25$) and large value ($1$); in any case the method outperform the \texttt{CovMest} in every setting.

Since the true values are not available in a realistic setting, we then implement two strategies for obtaining data-driven initial values.
Firstly, we use a classic subsampling procedure with $B=500$ samples of size $p+p(p + 1)/2+1$, where $p$ is the sample size factor mentioned above. 
Alternatively, we initiate the proposed algorithm deterministically from the deepest observation for the location, and from the sample covariance matrix of the 50\% deepest observations, for the shape. 
We just mention some key insights, while detailed results are presented in the Appendix \ref{sup:sec:monte}. When the contamination level is moderate to high $\varepsilon \geq 0.2$, it might be difficult to obtain subsamples with only ``clean'' observations. 
In this setting, instead of increasing the number of subsamples, we found that starting from the deepest points achieves similar or better results (except for $p=1$, where the subsampling outdoes the depth initialization). 
As expected, the distance of the outliers from the data affects the weighted likelihood estimators. We observed that when the contamination location $\mu$ is near zero and the contamination scale $\sigma$ is small, our procedure usually identifies the robust root independently of the starting strategy, with subsampling performing better for small values of $p$ and $s$. As the contamination is moved away from zero it becomes more difficult to retrieve the robust root, especially when the variability of the contamination is small ($\sigma = 0.001$).
In conclusion, for every setting we observe that the weighted likelihood estimating equations have always a ``robust'' root; however initialization of the algorithm might be still an issue in particular settings: moderate/high level of contamination, high dimension and small sample sizes.

\begin{table}[ht]
\centering
\begin{tabular}{lrrrrrrrrr}
  \hline
  $\varepsilon$ & \multicolumn{3}{c}{0.05} & \multicolumn{3}{c}{0.1} & \multicolumn{3}{c}{0.2} \\
  \hline  
  $s$ & 2 & 5 & 10 & 2 & 5 & 10 & 2 & 5 & 10 \\
  \hline
  & \multicolumn{9}{c}{$\alpha=0.5$} \\
  \hline
$p=1$ & 0.10 & 0.06 & 0.05 & 0.10 & 0.10 & 0.06 & 0.15 & 0.15 & 0.13 \\ 
  2 & 0.09 & 0.04 & 0.02 & 0.09 & 0.05 & 0.05 & 0.11 & 0.17 & 0.17 \\ 
  5 & 0.03 & 0.01 & 0.01 & 0.03 & 0.03 & 0.04 & 0.06 & 0.16 & 0.17 \\ 
 10 & 0.01 & 0.01 & 0.00 & 0.02 & 0.01 & 0.04 & 0.05 & 0.16 & 0.16 \\ 
 20 & 0.01 & 0.00 & 0.00 & 0.01 & 0.01 & 0.01 & 0.04 & 0.04 & 0.04 \\
  \hline
  & \multicolumn{9}{c}{\texttt{CovMest}} \\
  \hline
$p=1$ & 0.21 & 0.06 & 0.04 & 0.21 & 0.11 & 0.04 & 0.21 & 0.14 & 0.13 \\ 
  2 & 0.12 & 0.06 & 0.03 & 0.16 & 0.06 & 0.04 & 0.29 & 0.34 & 0.37 \\ 
  5 & 0.04 & 0.01 & 0.01 & 0.05 & 0.02 & 0.02 & 0.78 & 0.46 & 0.47 \\ 
 10 & 0.01 & 0.01 & 0.00 & 0.02 & 0.01 & 0.01 & 0.50 & 0.44 & 0.43 \\ 
 20 & 0.01 & 0.00 & 0.00 & 0.01 & 0.01 & 0.04 & 0.45 & 0.42 & 0.19 \\
  \hline  
\end{tabular}
\caption{Monte Carlo simulation. Estimated maximum Mean Square Error for the proposed method with $\alpha=0.5$, starting from the true values and \texttt{CovMest}, under contamination ($\varepsilon = 0.05, 0.1, 0.2$) for different number of variables $p$ and sample size factor $s$.}
\label{tab:MSE:max}
\end{table}

\begin{table}[ht]
\centering
\begin{tabular}{rrrrrrrrrr}
  \hline
  $\varepsilon$ & \multicolumn{3}{c}{0.05} & \multicolumn{3}{c}{0.1} & \multicolumn{3}{c}{0.2} \\
  \hline  
  $s$ & 2 & 5 & 10 & 2 & 5 & 10 & 2 & 5 & 10 \\
  \hline
  & \multicolumn{9}{c}{$\alpha=0.5$} \\
  \hline
$p=1$ & 0.70 & 0.11 & 0.04 & 0.70 & 0.19 & 0.10 & 1.17 & 0.43 & 0.29 \\ 
  2 & 0.41 & 0.15 & 0.06 & 0.48 & 0.15 & 0.17 & 0.83 & 0.37 & 0.42 \\ 
  5 & 0.43 & 0.19 & 0.33 & 0.48 & 0.39 & 0.79 & 0.75 & 1.82 & 1.78 \\ 
 10 & 0.51 & 0.25 & 0.17 & 0.72 & 0.43 & 2.10 & 1.28 & 4.50 & 4.46 \\ 
 20 & 0.74 & 0.47 & 0.38 & 1.29 & 1.00 & 0.91 & 2.60 & 2.28 & 2.17 \\ 
  \hline
  & \multicolumn{9}{c}{\texttt{CovMest}} \\
  \hline
$p=1$ & 0.06 & 0.02 & 0.01 & 0.06 & 0.03 & 0.02 & 0.15 & 0.09 & 0.09 \\ 
  2 & 1.81 & 0.45 & 0.18 & 1.86 & 0.45 & 0.25 & 2.35 & 1.86 & 1.91 \\ 
  5 & 1.27 & 0.27 & 0.15 & 1.80 & 0.39 & 0.25 & 14.57 & 6.63 & 6.49 \\ 
 10 & 0.74 & 0.34 & 0.22 & 1.09 & 0.60 & 0.48 & 17.67 & 15.53 & 15.00 \\ 
 20 & 0.94 & 0.58 & 0.47 & 1.66 & 1.25 & 6.17 & 35.28 & 33.40 & 13.87 \\  
\hline
\end{tabular} 
\caption{Monte Carlo simulation. Estimated maximum Kullback--Leibler divergence for the proposed method, starting from the true values, under contamination ($\varepsilon = 0.05, 0.1, 0.2$) for different values of $\alpha$, number of variables $p$ and sample size factor $s$.}
\label{tab:DIV:max}
\end{table}

\section{Conclusions}
\label{sec:conclusions}
We have outlined a new form of weighted likelihood estimating equations where weights are based on comparing statistical data depth of the sample with that of the model. This approach avoids the use of non-parametric density estimates which can lead to problems for multivariate data, while retaining the nice characteristics of the classical WLEE approach, that possess high efficiency at the model, affine equivariance, and robustness. We establish under regular conditions the asymptotic normality of the parameters for a broad class of models, and we establish that the finite breakdown point for location and scatter parameters in a symmetric elliptical model is $50\%$. In this last context an iterative algorithm is developed and applied to a multivariate data set, and to show the broad applicability of the method we perform a robust analysis for a functional data set. By Monte Carlo experiments we confirm the good performance of the procedure, however the algorithm is sensitive to the initial values and in particular cases (moderate/high contamination, high dimension and small sample size) is not easy to obtain the ``robust'' root. In this respect we explore the performance of two approaches, a classic one based on subsampling method (also known as bootstrap approach) and a deterministic one based on depth values. Both methods show criticallity which are discussed in the Appendix \ref{sup:sec:monte}. A future research should address this point.

%\bibliography{arxiv}

\appendix

\section{Proofs}
\label{sup:sec:proofs}
To prove Theorem \ref{the:normalityresults} we first need a result about the rate of convergence of the scaled depth for $0 < \alpha \le 1/2$.
\begin{lemma}[Rate of convergence of scaled depth] \label{lemma:ratescaleddepth}
For $0 < \alpha \le 1/2$ and $0 < \delta \le 1/2$ we have 
\begin{equation*}
\sup_{x:\,\delta \le d(x; F_{\theta_0}) < 1/2}\,\left|\frac{d_n(x; \hat{G}_n) - d(x; F_{\theta_0})}{{d^{\alpha}(x; F_{\theta_0})}}\right| = O_p\left(\sqrt{\frac{\log(\delta^{-1})}{n}} + \frac{\log(\delta^{-1})}{\sqrt{\delta}n}\right) \ .
\end{equation*}
\end{lemma}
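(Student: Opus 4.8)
The plan is to express the half-space depth as an infimum over half-spaces, to reduce the scaled difference to a \emph{localized} maximal inequality for the empirical process indexed by half-spaces, and then to treat the two signs of the deviation $d_n(\cdot;\hat{G}_n) - d(\cdot;F_{\theta_0})$ separately. Write $H_{u,x} = \{y:\ u^\top y \le u^\top x\}$, so that $d(x;F_{\theta_0}) = \inf_{\|u\|=1} F_{\theta_0}(H_{u,x})$ and $d_n(x;\hat{G}_n) = \inf_{\|u\|=1}\hat{G}_n(H_{u,x})$. It suffices to treat the region $\{x:\ \delta \le d(x;F_{\theta_0}) \le 1/4\}$, since on $\{d(x;F_{\theta_0}) > 1/4\}$ the classical uniform rate $\sup_x|d_n(x;\hat{G}_n) - d(x;F_{\theta_0})| = O_p(n^{-1/2})$ already dominates the asserted bound after division by $d^{\alpha}(x;F_{\theta_0}) \ge 4^{-\alpha}$. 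Because half-spaces form a VC class, the localized entropy bound combined with Talagrand's concentration inequality (see, e.g., \citet[Ch.~3]{gine2016}) gives, simultaneously over all half-spaces $H$ with $F_{\theta_0}(H) \ge \delta$,
\[
|\hat{G}_n(H) - F_{\theta_0}(H)| \;=\; O_p\!\left(\sqrt{\frac{F_{\theta_0}(H)\,\log(1/\delta)}{n}} + \frac{\log(1/\delta)}{n}\right),
\]
where we used $\log(1/F_{\theta_0}(H)) \le \log(1/\delta)$. Dividing by $\sqrt{F_{\theta_0}(H)}$ and using $1/\sqrt{F_{\theta_0}(H)} \le 1/\sqrt{\delta}$ bounds the left side by $O_p(\sqrt{\log(1/\delta)/n} + \log(1/\delta)/(\sqrt{\delta}\,n))$, uniformly over such $H$; and since $0 < \alpha \le 1/2$ and $F_{\theta_0}(H) \le 1$, we have $F_{\theta_0}(H)^{1/2-\alpha} \le 1$, so the same rate holds with $F_{\theta_0}(H)^{\alpha}$ in the denominator. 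This is the only place the restriction $\alpha \le 1/2$ enters.

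For the \emph{upper} deviation, fix $x$ with $\delta \le d(x;F_{\theta_0}) \le 1/4$ and pick $u_x$ with $F_{\theta_0}(H_{u_x,x}) = d(x;F_{\theta_0})$ (up to $o(n^{-1})$ slack if the infimum is not attained). Then $d_n(x;\hat{G}_n) - d(x;F_{\theta_0}) \le \hat{G}_n(H_{u_x,x}) - F_{\theta_0}(H_{u_x,x})$, and since $F_{\theta_0}(H_{u_x,x}) = d(x;F_{\theta_0}) \ge \delta$, dividing by $d^{\alpha}(x;F_{\theta_0})$ and invoking the displayed bound yields the asserted rate for this direction, uniformly in $x$.

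The \emph{lower} deviation is the main obstacle. Choose a half-space $H_n = H_n(x)$ with $x \in \partial H_n$ and $\hat{G}_n(H_n) = d_n(x;\hat{G}_n)$ (again up to negligible slack); on $\{d_n \le d\}$ one has $\hat{G}_n(H_n) \le d(x;F_{\theta_0})$ and $d(x;F_{\theta_0}) - d_n(x;\hat{G}_n) \le F_{\theta_0}(H_n) - \hat{G}_n(H_n)$. The difficulty is that $F_{\theta_0}(H_n)$ is random and, unlike in the upper direction, cannot be pre-selected to have small measure. I would resolve this by a self-bounding argument: on the event behind the displayed bound,
\[
F_{\theta_0}(H_n) \;\le\; \hat{G}_n(H_n) + \big|\hat{G}_n(H_n) - F_{\theta_0}(H_n)\big| \;\le\; d(x;F_{\theta_0}) + c\left(\sqrt{\frac{F_{\theta_0}(H_n)\,\log(1/\delta)}{n}} + \frac{\log(1/\delta)}{n}\right),
\]
and solving this quadratic inequality in $\sqrt{F_{\theta_0}(H_n)}$ forces $F_{\theta_0}(H_n) \le 2\,d(x;F_{\theta_0}) + c'\log(1/\delta)/n$ for $n$ large (equivalently, the relative-deviation form of the VC inequality rules out any half-space of $F_{\theta_0}$-measure at least $\delta$ having empirical measure below half its true measure). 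Substituting this back into the displayed bound and dividing by $d^{\alpha}(x;F_{\theta_0}) \ge \delta^{\alpha} \ge \delta^{1/2}$ reproduces the two terms $\sqrt{\log(1/\delta)/n}$ and $\log(1/\delta)/(\sqrt{\delta}\,n)$, the residual $\log(1/\delta)/n$ contribution being absorbed into the second.

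Combining the two directions, uniformly over $\{x:\ \delta \le d(x;F_{\theta_0}) \le 1/4\}$ and merging with the easy region, would complete the proof. The genuinely delicate point is the control of the empirically optimal half-space $H_n(x)$ in the lower-deviation step, where the structure of the infimum (as opposed to a fixed functional) really bites; everything else is bookkeeping around a standard localized empirical-process estimate, and the factor $\log(1/\delta)$ is precisely the localized entropy of the half-space class at scale $\delta$.
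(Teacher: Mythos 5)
Your argument is correct in substance and rests on the same technical engine as the paper's proof --- the VC property of half-spaces combined with Talagrand's inequality, localized in the $F_{\theta_0}$-measure of the half-space, together with the observation that $F_{\theta_0}(H)^{1/2-\alpha}\le 1$ for $\alpha\le 1/2$ --- but it is organized differently. The paper peels the domain of $x$ into depth slices $[4^j\delta,4^{j+1}\delta)$, applies Talagrand plus the VC entropy bound on each slice, and takes a union bound with $t+2\log(j+1)$; your single displayed ``localized'' bound, uniform over all $H$ with $F_{\theta_0}(H)\ge\delta$, is exactly what that peeling establishes, so if you want a self-contained proof you must either reproduce that peeling or cite a ratio-type inequality for VC classes explicitly --- as written, the bulk of the paper's technical work is hidden inside your one sentence invoking \citet[Ch.~3]{gine2016}. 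Where your write-up genuinely adds value is the asymmetric treatment of the two deviation directions. The paper passes from $\sup_{H\in\mathcal{H}_x}|\hat{G}_n(H)-F_{\theta_0}(H)|$ to a supremum restricted to half-spaces with $F_{\theta_0}(H)$ in the same slice as $d(x;F_{\theta_0})$ without comment; this is immediate for the upper deviation (the population-optimal half-space has $F_{\theta_0}$-measure exactly $d(x;F_{\theta_0})$) but not for the lower deviation, where the empirically optimal half-space $H_n(x)$ satisfies only $F_{\theta_0}(H_n)\ge d(x;F_{\theta_0})$ a priori. Your self-bounding step, yielding $F_{\theta_0}(H_n)\le 2d(x;F_{\theta_0})+c'\log(\delta^{-1})/n$ and hence the same two-term rate after substitution, is precisely the justification that step needs, so your route both recovers the lemma and patches an elision in the paper's argument. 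The remaining details (splitting at depth $1/4$, the quadratic inequality, the power counting in $\alpha$) all check out.
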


\begin{proof}[Proof of Lemma \ref{lemma:ratescaleddepth}]
For any measure $\mu$, the half-space depth $d(x; \mu)$ is defined as
\begin{equation*}
d(x; \mu) = \min_{H\in\mathcal{H}_x}\,\mu(H),
\end{equation*}
where $\mathcal{H}_x$ is the class of all half-space containing $x$. Define for $0 \le \delta_1 < \delta_2 \le 1/2$,
\begin{equation*}
T_n([\delta_1, \delta_2)) = \sup_{x:\,\delta_1 \le d(x; F_{\theta_0}) < \delta_2}\left|\frac{d_n(x;\hat{G}_n) - d(x; F_{\theta_0})}{d^{\alpha}(x; F_{\theta_0})}\right|.
\end{equation*}
Let $0 < \delta \le 1/2$ and consider $T_n([\delta, 1/2))$, where quantities $\alpha$ and $\theta_0$ are suppressed in the notation. To control $T_n([\delta,1/2))$, note that for all $\varepsilon > 0$,
\begin{equation*}
\left\{T_n([\delta, 1/2)) \ge \varepsilon\right\} = \bigcup_{j = 0}^{\infty}\left\{T_n([q_j\delta, q_{j+1}\delta)) \ge \varepsilon\right\},
\end{equation*} 
for any sequence $\{q_j\}$ satisfying $1 = q_0 < q_1 < \cdots$ and let $J = \min\{j \ge 0:\, q_{j+1}\delta > 1/2 \}$. Since,
\begin{equation*}
T_n([q_j\delta, q_{j+1}\delta)) \le \sup_{x:\,q_j\delta \le d(x; F_{\theta_0}) < q_{j+1}\delta}\left|d_n(x; \hat{G}_n) - d(x; F_{\theta_0})\right|\frac{1}{q_{j}^{\alpha}\delta^{\alpha}},
\end{equation*}
it follows that
\begin{equation*}
\left\{T_n([q_j\delta, q_{j+1}\delta)) \ge \varepsilon\right\}~\subseteq~ \left\{\sup_{x:\,q_j\delta \le d(x; F_{\theta_0}) < q_{j+1}\delta}\left|d_n(x; \hat{G}_n) - d(x; F_{\theta_0})\right| \ge q_j^{\alpha}\delta^{\alpha}\varepsilon\right\}.
\end{equation*}
Also, note that
\begin{align*}
\left|d_n(x; \hat{G}_n) - d(x; F_{\theta_0})\right| &= \left|\inf_{H\in\mathcal{H}_x}\hat{G}_n(H) - \inf_{H\in\mathcal{H}_x}F_{\theta_0}(H)\right|\\
&\le \sup_{H\in\mathcal{H}_x}\left|\hat{G}_n(H) - F_{\theta_0}(H)\right|,
\end{align*}
and so,
\begin{equation*}
\sup_{x:\,q_j\delta \le d(x; F_{\theta_0}) < q_{j+1}\delta}\left|d_n(x; \hat{G}_n) - d(x; F_{\theta_0})\right| \le \sup_{\substack{H \in \mathcal{H} \\ q_j\delta \le F_{\theta_0}(H) < q_{j+1}\delta}} \left|\hat{G}_n(H) - F_{\theta_0}(H)\right|.
\end{equation*}
where $\mathcal{H}$ is the set of all half-spaces in $\mathbb{R}^q$. Combining the relations above, we get
\begin{equation*}
\left\{T_n([\delta, 1)) \ge \varepsilon\right\} {{\subseteq}} \bigcup_{j = 0}^{J}\left\{\sup_{\substack{H \in \mathcal{H} \\ q_j\delta \le F_{\theta_0}(H) < q_{j+1}\delta}}\left|\hat{G}_n(H) - F_{\theta_0}(H)\right| \ge q_j^{\alpha}\delta^{\alpha}\varepsilon\right\}.
\end{equation*}
For notational convenience, let for $j \ge 0$,
\begin{equation*}
\mathcal{A}_j := \left\{H \in \mathcal{H}: \, q_j \delta \le F_{\theta_0}(H) \le q_{j+1} \delta \right\} \ .
\end{equation*}
By the union bound, we get
\begin{equation*}
\Pr\left(T_n([\delta, 1/2)) \ge \varepsilon\right) {{\le}} \sum_{j = 0}^{J}\Pr\left(\sup_{H\in\mathcal{A}_j}\left|\hat{G}_n(H) - F_{\theta_0}(H)\right| \ge q_j^{\alpha}\delta^{\alpha}\varepsilon\right) \ .
\end{equation*}
To bound the summands on the right-hand side, we apply Talagrand's inequality. 
By Theorem 3.3.16 of \citet{gine2016}, we have for all $t \ge 0$,
\begin{equation*}
\Pr\left(\sup_{H\in\mathcal{A}_j}\sqrt{n}\left|\hat{G}_n(H) - F_{\theta_0}(H)\right| \ge 3E_j + \sqrt{2\sigma_j^2t} + \frac{5t}{2\sqrt{n}}\right) \le \exp(-t),
\end{equation*}
where
\begin{equation*}
E_j = \mathbb{E}\left[\sup_{H\in\mathcal{A}_j}\sqrt{n}\left|\hat{G}_n(H) - F_{\theta_0}(H)\right|\right],
\end{equation*}
and
\begin{equation*}
\sigma_j^2 = \sup_{H\in\mathcal{A}_j}\,\mbox{Var}\left(\sqrt{n} (\hat{G}_n(H) - F_{\theta_0}(H)) \right) \le \sup_{H\in\mathcal{A}_j}F_{\theta_0(H)} \le {q_{j+1}\delta}.
\end{equation*}
For bounding $E_j$, we note that the class $\mathcal{H}$ is a VC-class with VC dimension $p+2$ and by Corollary 3.5.8 of \citet{gine2016},
\begin{equation}\label{equ:ExpectationBound}
E_j \le \mathfrak{C} C_A\sigma_j\sqrt{\nu\log(A/\sigma_j)} + \mathfrak{C} C_A\frac{\nu\log(A/\sigma_j)}{\sqrt{n}} \ ,
\end{equation}
where $C_A = 2\log A/(2\log A - 1)$, for some $A \ge 2$, $\nu \ge 1$ and $\sup_QN(\mathcal{F}, L^2(Q), \epsilon) \le (A/\epsilon)^{\nu}$. Here $N(\mathcal{F}, L^2(Q), \epsilon)$ represents the minimal number of $L^2(Q)$ balls of radius $\epsilon$ that cover the class $\mathcal{F}$ and for our purpose $\mathcal{F} = \{\mathbbm{1}\{x \in H \}:\, H \in \mathcal{H} \}$. Because this is a VC class with VC index $p+2$ \citep[Example 3, page 833 of][]{wellner1986}, it follows from Theorem 2.6.7 of \citet{vardervaart1996} that
\begin{equation*}
\sup_{Q}N(\mathcal{F}, L^2(Q), \epsilon) \le K(p+2)(16e)^{p+2}\left(\frac{1}{\epsilon}\right)^{2(p+1)} \le \left(\frac{Ke^{1/e}(16e)}{\epsilon}\right)^{2(p+1)},
\end{equation*}
for $\epsilon > 0$ and a universal constant $K \ge 1$ \citep[also see inequality (3.235) of][]{gine2016}. Note that for our function class $\mathcal{F}$, the envelope function can be taken to be the constant function taking value $1$. The second inequality above follows from $(p+2)^{1/(d + 2)} \le e^{1/e}$ {{for $q \ge 1$}}. Substituting the inequality above in~\eqref{equ:ExpectationBound}, we get
\begin{equation*}
E_j \le \mathfrak{C}\left[\sqrt{{q_{j+1}\delta(p+1)\log(\mathfrak{C}/(q_{j+1}\delta))}} + \frac{2(p+1)\log(\mathfrak{C}/(q_{j+1}\delta))}{\sqrt{n}}\right].
\end{equation*}
Therefore, with probability at least $1 - \exp(-t)$,
\begin{align*}
\sup_{H\in\mathcal{A}_j}\sqrt{n}\left|\hat{G}_n(H) - F_{\theta_0}(H)\right| & \le \mathfrak{C}\left[\sqrt{{q_{j+1}\delta(p+1)\log(\mathfrak{C}/(q_{j+1}\delta))}} \right. \\
& \quad + \left. \frac{2(p+1)\log(\mathfrak{C}/(q_{j+1}\delta))}{\sqrt{n}}\right] \\ 
& \quad + \sqrt{2q_{j+1}\delta t} + \frac{5t}{2\sqrt{n}} \ .
\end{align*}
Thus with probability at least $1 - \exp(-t)$,
\begin{align*}
\frac{\sup_{H\in\mathcal{A}_j}\sqrt{n}\left|\hat{G}_n(H) - F_{\theta_0}(H)\right|}{(q_j\delta)^{\alpha}} & \le \mathfrak{C}\left[\frac{\sqrt{{q_{j+1}\delta(p+1)\log(\mathfrak{C}/(q_{j+1}\delta))}}}{(q_j\delta)^{\alpha}} \right. \\
& \quad + \left. \frac{2(p+1)\log(\mathfrak{C}/(q_{j+1}\delta))}{(q_j\delta)^{\alpha}\sqrt{n}}\right] \\ 
& \quad + \frac{\sqrt{2q_{j+1}\delta t}}{(q_j\delta)^{\alpha}} + \frac{5t}{2(q_j\delta)^{\alpha}\sqrt{n}} \ .
\end{align*}
Let this event be denoted by $\mathcal{E}_j(t)$. Then we proved, for each $0 \le j\le J$,
\begin{equation*}
\Pr\left(\mathcal{E}_j^c(t)\right) \le \exp(-t) \ .
\end{equation*} 
Therefore, by union bound
\begin{align*}
1 - \Pr\left(\bigcap_{j = 0}^{J}\mathcal{E}_j(t + 2\log(j + 1))\right) & = \Pr\left(\bigcup_{j = 0}^{J}\mathcal{E}_j^c(t + 2\log(j + 1))\right) \\ 
& \le \sum_{j = 0}^J\exp(-t - 2\log(j+1)) \\
& \le \sum_{j = 0}^{J}\frac{\exp(-t)}{(j+1)^2} \le \frac{\pi^2}{6}\exp(-t) \ .
\end{align*}
Therefore, with probability at least $1 - (\pi^2/6)\exp(-t)$, for all $0 \le j \le J$,
\begin{align} 
\sup_{H\in\mathcal{A}_j}\frac{\sqrt{n}\left|\hat{G}_n(H) - F_{\theta_0}(H)\right|}{(q_j\delta)^{\alpha}} & \le \mathfrak{C}\left[\frac{\sqrt{{q_{j+1}\delta(p+1)\log(\mathfrak{C}/(q_{j+1}\delta))}}}{(q_j\delta)^{\alpha}} \right. \nonumber \\
& \quad \left. + \frac{2(p+1)\log(\mathfrak{C}/(q_{j+1}\delta))}{(q_j\delta)^{\alpha}\sqrt{n}}\right] \nonumber \\ 
& \quad + \frac{\sqrt{2q_{j+1}\delta (t + 2\log(j+1))}}{(q_j\delta)^{\alpha}} \nonumber \\
& \quad + \frac{5(t + 2\log(j+1))}{2(q_j\delta)^{\alpha}\sqrt{n}} \ . \label{equ:FirstUnion}
\end{align}
Take $\alpha = 1/2$ and for $j\ge 0$, $q_j = 4^j.$
For this sequence, it is clear that $\sqrt{q_{j+1}} = 2q_j^{\alpha}$ and $J + 1 \le 0.5\log_2(\delta^{-1})$. So, the bound~\eqref{equ:FirstUnion} implies that with probability at least $1 - (\pi^2/6)\exp(-t)$, for all $j \ge 0$,
\begin{align*}
\frac{\sup_{H\in\mathcal{A}_j}\sqrt{n}\left|\hat{G}_n(H) - F_{\theta_0}(H)\right|}{(q_j\delta)^{\alpha}} & \le \mathfrak{C}\left[2\delta^{1/2 - \alpha}\sqrt{{(p+1)\log(\mathfrak{C}\delta^{-1})}} \right. \\
& \quad \left. + \frac{2(p+1)\log(\mathfrak{C}\delta^{-1})}{(q_j\delta)^{\alpha}\sqrt{n}}\right] \\ 
& \quad + {2\sqrt{2}\delta^{1/2 - \alpha} t} + \frac{5t}{2(q_j\delta)^{\alpha}\sqrt{n}} \ .
\end{align*}
Here we used two inequalities: $\log(\mathfrak{C}/(q_{j+1}\delta)) \le \log(\mathfrak{C}\delta^{-1})$ and $\log(j+1) \le \log(J+1) \le J \le 0.5\log_2(\delta^{-1})$. So, the inequality holds with a possibly increased $\mathfrak{C}$.

Taking the maximum over $0 \le j \le J$ on the right-hand side, we get with probability at least $1 - (\pi^2/6)\exp(-t)$,
\begin{align*}
\max_{0 \le j\le J}\sup_{H\in\mathcal{A}_j}\,\frac{\sqrt{n}|\hat{G}_n(H) - F_{\theta_0}(H)|}{(q_j\delta)^{\alpha}} & \le \mathfrak{C}\left[2\delta^{1/2 - \alpha}\sqrt{(p+1)\log(\mathfrak{C}\delta^{-1})} \right. \\
& \quad \left. + \frac{2(p+1)\log(\mathfrak{C}\delta^{-1})}{\delta^{\alpha}\sqrt{n}}\right] \\ 
& \quad + 2\sqrt{2}\delta^{1/2 - \alpha}t + \frac{5t}{2\delta^{\alpha}\sqrt{n}} \ .
\end{align*}
Hence,
\begin{equation*}
\max_{0 \le j\le J}\sup_{H\in\mathcal{A}_j}\,\frac{\sqrt{n}|\hat{G}_n(H) - F_{\theta_0}(H)|}{(q_j\delta)^{\alpha}} = O_p\left(\delta^{1/2 - \alpha}\sqrt{\log(\delta^{-1})} + \frac{\log(\delta^{-1})}{\delta^{\alpha}\sqrt{n}}\right) \ .
\end{equation*}
Note that since $\alpha = 1/2$, $\delta^{1/2 - \alpha} = 1$. So, for any $0 < \delta \le 1/2$,
\begin{equation*}
\sup_{x:\,\delta \le d(x; F_{\theta_0}) \le 1/2}\,\left|\frac{d_n(x; \hat{G}_n) - d(x; F_{\theta_0})}{\sqrt{d(x; F_{\theta_0})}}\right| = O_p\left(\sqrt{\frac{\log(\delta^{-1})}{n}} + \frac{\log(\delta^{-1})}{\sqrt{\delta}n}\right) \ .
\end{equation*}
This implies that for any $\alpha < 1/2$,
\begin{equation*}
\sup_{x:\,\delta \le d(x; F_{\theta_0}) \le 1/2}\,\left|\frac{d_n(x; \hat{G}_n) - d(x; F_{\theta_0})}{{d^{\alpha}(x; F_{\theta_0})}}\right| = O_p\left(\sqrt{\frac{\log(\delta^{-1})}{n}} + \frac{\log(\delta^{-1})}{\sqrt{\delta}n}\right) \ ,
\end{equation*}
since $(d(x; F_{\theta_0}))^{1/2 - \alpha} \le 1$.
\end{proof}

Now, we prove Theorem \ref{the:normalityresults}.

\begin{proof}[Proof of Theorem \ref{the:normalityresults}]
Before we start the proof, let us remark on the properties of weight functions in the class $\mathcal{W}(K_0, K_1)$.
\begin{remark}[Implications of derivative assumptions on the weight function.] \label{remark:1}
Note that every function $w\in\mathcal{W}(K_0, K_1)$ satisfies
\begin{align*}
\sup_{\tau} |w'(\tau)(\zeta \tau + 1)| & \le \sup_{\tau \in [-1,0)} |w'(\tau)(\zeta \tau + 1)| \vee \sup_{\tau \in [0,\infty)} |w'(\tau)(\zeta \tau + 1)| \\
& \le \sup_{\tau \in [-1,0)} |w'(\tau)| \vee K_0 \ ,
\end{align*}
but
\begin{align*}
\sup_{\tau \in [-1,0)} |w'(\tau)| & = \sup_{\tau \in [-1,0)} |w'(\tau) - w'(0)| \\
& \le \sup_{\tau \in [-1,0)} |w''(\tau)| \ |\tau| \\
& \le \sup_{\tau \in [-1,0)} |w''(\tau)| \le K_1 \ ,
\end{align*}
by the assumption on the second derivative of $w(\cdot)$. Moreover, by the definition of $\mathcal W(K_0, K_1)$, we also have
\begin{equation*}
|w''(t)| = |w''(t)(1 + 0)^2| \le |w''(t)(1 + t + 1)^2| \le K_1 \ ,
\end{equation*}
the first inequality above follows since $1 + t \ge 0$. It is also clear that $w''(\tau)(\zeta \tau + 1)^2$ is bounded by $2 (\zeta^2 + (1-\zeta)^2) K_1$ as well.
\end{remark}
\noindent \textbf{Convergence of  $A_{n,j}$}. We show a basic inequality about
\begin{equation*}
A_{n,j}^{(w)} - \frac{1}{n}\sum_{i=1}^n u_j(X_i;\theta_0) \ ,
\end{equation*}
where
\begin{equation*}
A_{n,j}^{(w)} = \frac{1}{n}\sum_{i=1}^n w(\tau_n(X_i;\theta_0))u_j(X_i; \theta_0) \ ,\end{equation*}
and
\begin{equation*}
\tau_n(x;\theta) := \left(\frac{d_n(x; \hat{G}_n) - d(x; F_{\theta})}{d^{\alpha}(x; F_{\theta})}\right) \ ,
\end{equation*}
the quantity $\alpha$ is suppressed in the notation of $\tau_n(x; \theta)$. Observe that for any $a_n > 0$, we have
\begin{align*}
A_{n,j}^{(w)} - \frac{1}{n}\sum_{i=1}^n u_j(X_i; \theta_0) & = \frac{1}{n}\sum_{i=1}^n \left\{w(\tau_n(X_i; \theta_0)) - 1\right\}u_j(X_i; \theta_0) \\
& = \frac{1}{n}\sum_{i=1}^n \left\{w(\tau_n(X_i; \theta_0)) - 1\right\}u_j(X_i; \theta_0)\mathbbm{1}\{d(X_i; F_{\theta_0}) \le a_n\} \\
& \quad + \frac{1}{n}\sum_{i=1}^n \left\{w(\tau_n(X_i; \theta_0)) - 1\right\}u_j(X_i; \theta_0)\mathbbm{1}\{d(X_i; F_{\theta_0}) \ge a_n\} \ .
\end{align*}
For the first term, note that, because $w \in [0,1]$ for all $w \in \mathcal{W}(K_0, K_1)$,
\begin{equation} \label{equ:FirstTermBound}
\begin{split}
& \left|\frac{1}{n}\sum_{i=1}^n \left\{w(\tau_n(X_i; \theta_0)) - 1\right\}u_j(X_i; \theta_0)\mathbbm{1}\{d(X_i; F_{\theta_0}) \le a_n\}\right| \\
& \qquad \le \frac{1}{n}\sum_{i=1}^n \left|u_j(X_i; \theta_0)\right|\mathbbm{1}\{d(X_i; F_{\theta_0}) \le a_n\}.
\end{split}
\end{equation}
For the second term, note that
\begin{align*}
w(\tau_n(X_i; \theta_0)) - 1 & = w(\tau_n(X_i; \theta_0)) - w(0) \\
& = w'(0)\tau_n(X_i; \theta_0) + \frac{w''(\bar{\tau})}{2} \tau_n^2(X_i; \theta_0) = \frac{w''(\bar{\tau}_i)}{2}\tau_n^2(X_i; \theta_0) \ ,
\end{align*}
for some $\bar{\tau}_i$ that lies on the line segment joining $0$ and $\tau_n(X_i; \theta_0)$. Therefore, using $\vert w'' \vert_{\infty} \le K_1$, we get
\begin{equation}\label{equ:SecondTermBound}
\begin{split}
& \left|\frac{1}{n}\sum_{i=1}^n \left\{w(\tau_n(X_i; \theta_0)) - 1\right\}u_j(X_i; \theta_0)\mathbbm{1}\{d(X_i; F_{\theta_0}) \ge a_n\}\right| \\
& \qquad \le \frac{K_1}{2n}\sum_{i=1}^n \tau_n^2(X_i; \theta_0) \mathbbm{1}\{d(X_i; F_{\theta_0}) \ge a_n\}|u_j(X_i; \theta_0)| \ .
\end{split}
\end{equation}
Combining the bounds~\eqref{equ:FirstTermBound} and~\eqref{equ:SecondTermBound}, we obtain
\begin{align*}
& \sup_{w\in\mathcal{W}(K_0, K_1)}\left|A_{n,j}^{(w)} - \frac{1}{n}\sum_{i=1}^n u_j(X_i; \theta_0)\right| \\ 
& \le \frac{1}{n}\sum_{i=1}^n |u_j(X_i; \theta_0)|\mathbbm{1}\{d(X_i; F_{\theta_0}) \le a_n\} + \frac{K_1}{2n}\sum_{i=1}^n \tau_n^2(X_i; \theta_0)\mathbbm{1}\{d(X_i; F_{\theta_0}) \ge a_n\}|u_j(X_i; \theta_0)| \\
& \le \frac{1}{n}\sum_{i=1}^n |u_j(X_i; \theta_0)|\mathbbm{1}\{d(X_i; F_{\theta_0}) \le a_n\} \\ 
& \quad + \frac{K_1}{2}\sup_{x:\,d(x; F_{\theta_0}) \ge a_n}\,\left|\frac{d_n(x; \hat{G}_n) - d(x; F_{\theta_0})}{d^{\alpha}(x; F_{\theta_0})}\right|^2\frac{1}{n}\sum_{i=1}^n |u_j(X_i; \theta_0)| \ .
\end{align*}
It is clear that the condition on $w'(t)(t + 1)$ is not relevant for the bound above. For the first term above, note that
\begin{align*}
\frac{1}{n}\sum_{i=1}^n |u_j(X_i; \theta_0)|\mathbbm{1}\{d(X_i; F_{\theta_0}) \le a_n\} & \le \left(\frac{1}{n}\sum_{i=1}^n |u_j(X_i; \theta_0)|^2\right)^{1/2}\left(\frac{1}{n}\sum_{i=1}^n \mathbbm{1}\{d(X_i; F_{\theta_0}) \le a_n\}\right)^{1/2} \\
& = \left(\mathbb{E}\left[u_j^2(X_1; \theta_0)\right]\right)^{1/2}O_p\left(\mathbb{P}(d(X_1; F_{\theta_0}) \le a_n)\right)^{1/2} \ .
\end{align*}
The result in Lemma~\ref{lemma:ratescaleddepth} shows that the second term satisfies (for $0 \le \alpha \le 1/2$),
\begin{equation*}
\sup_{x:d(x; F_{\theta_0}) \ge a_n}\left|\frac{d_n(x; \hat{G}_n) - d(x; F_{\theta_0})}{d^{\alpha}(x; F_{\theta_0})}\right|^2\frac{1}{n}\sum_{i=1}^n |u_j(X_i; \theta_0)| = \mathbb{E}\left[|u_j(X_1; \theta_0)|\right]O_p\left(\frac{\log(a_n^{-1})}{n} + \frac{\log^2(a_n^{-1})}{a_n n^2}\right) \ .
\end{equation*}
Therefore,
\begin{equation*}
\sup_{w\in\mathcal{W}(K_0, K_1)}\sqrt{n}\left|A_{n,j}^{(w)} - \frac{1}{n}\sum_{i=1}^n u_j(X_i; \theta_0)\right| = O_p\left(\sqrt{n\mathbb{P}(d(X_1; F_{\theta_0}) \le a_n)} + \frac{\log(a_n^{-1})}{\sqrt{n}} + \frac{\log^2(a_n^{-1})}{a_n n^{3/2}}\right) \ .
\end{equation*}
The left hand side is independent of $a_n$, and the right hand side can be made to converge to zero by choosing $a_n = 1/n$, for example, because by (A1), $\Pr(d(X; F_{\theta_0}) \le a_n) = O(a_n)$. This completes the proof for $0 \le \alpha \le 1/2$.

\noindent For $\alpha > 1/2$, note that
\begin{equation*}
\sup_{x:\,d(x; F_{\theta_0}) \ge a_n}\,\left|\frac{d_n(x; \hat{G}_n) - d(x; F_{\theta_0})}{d^{\alpha}(x; F_{\theta_0})}\right| = O_p\left(\frac{\sqrt{\log(a_n^{-1})}}{a_n^{\alpha - 1/2}\sqrt{n}} + \frac{\log(a_n^{-1})}{a_n^{\alpha}n}\right) \ .
\end{equation*}
Thus,
\begin{equation*}
\sup_{w\in\mathcal{W}(K_0, K_1)}\sqrt{n}\left|A_{n,j}^{(w)} - \frac{1}{n}\sum_{i=1}^n u_j(X_i; \theta_0)\right| = O_p\left(\sqrt{n\mathbb{P}(d(X_1; F_{\theta_0}) \le a_n)} + \frac{\log(a_n^{-1})}{a_n^{2\alpha - 1}\sqrt{n}} + \frac{\log^2(a_n^{-1})}{a_n^{2\alpha}n^{3/2}}\right)
\end{equation*}
Even here the right hand side can be made to converge to zero by choosing $a_n = n^{-\beta}$ for some $1 < \beta < \min\{3(4\alpha)^{-1}, 0.5(2\alpha - 1)^{-1}\}$. This completes the proof for $1/2 \le \alpha < 3/4$. So, the final result is that for $0 \le \alpha < 3/4$, as $n \to \infty$,
\begin{equation*}
\sup_{w\in\mathcal{W}(K_0, K_1)}\sqrt{n}\left|A_{n,j}^{(w)} - \frac{1}{n}\sum_{i=1}^n u_j(X_i; \theta_0)\right| = o_p(1).
\end{equation*}
\medskip
\noindent \textbf{Convergence for $B_{n,j,k}$}. We give a basic inequality for
\begin{equation*}
B_{n,j,k}^{(w)} - \frac{1}{n}\sum_{i=1}^n u_{j,k}(X_i; \theta_0) \ ,
\end{equation*}
where
\begin{equation*}
B_{n,j,k}^{(w)} := \frac{1}{n}\sum_{i=1}^n \nabla_k\left[w(\tau_n(X_i; \theta))u_j(X_i; \theta)\right]_{\theta = \theta_0} \ .
\end{equation*}
First note that
\begin{align*}
\left| B_{n,j,k}^{(w)} - \frac{1}{n} \sum_{i=1}^n u_{j,k}(X_i;\theta_0)  \right| & \le \frac{1}{n} \sum_{i=1}^n |w^{\prime}(\tau_n(X_i;\theta_0)) \nabla_k \tau_n(X_i;\theta_0) u_j(X_i;\theta_0)| \\
&\quad + \left|\frac{1}{n} \sum_{i=1}^n (w(\tau_n(X_i;\theta_0)) - 1) u_{j,k}(X_i;\theta_0) \right|\\
&\le \frac{1}{n}\sum_{i=1}^n |w^{\prime}(\tau_n(X_i;\theta_0)) \nabla_k \tau_n(X_i;\theta_0) u_j(X_i;\theta_0)|\\
&\quad+ \frac{1}{n}\sum_{i=1}^n |u_{j,k}(X_i; \theta_0)|\mathbbm{1}\{d(X_i; F_{\theta_0}) \le a_n\}\\ 
&\quad+ \frac{K_1}{2n}\sum_{i=1}^n \tau_n^2(X_i; \theta_0)\mathbbm{1}\{d(X_i; F_{\theta_0}) \ge a_n\}|u_{j,k}(X_i; \theta_0)| \ .
\end{align*}
For the first term, observe that
\begin{equation*}
\nabla_k\tau_n(x; \theta) = - \left(\alpha\tau_n(x; \theta) + \{d(x; F_{\theta})\}^{1 - \alpha}\right)\nu_k(x; \theta) \ .
\end{equation*}
Also, observe that
\begin{align*}
\left|w'(\tau_n(X_i; \theta_0))\right| &= \left|w'(\tau_n(X_i; \theta_0)) - w'(0)\right| = \left|w''(\bar{\tau_i})\tau_n(X_i; \theta_0)\right| \le K_1\left|\tau_n(X_i; \theta_0)\right| \ .
\end{align*}
Here $\bar{\tau}_i$ represents a real number that lies on the line segment joining $0$ and $\tau_n(X_i; \theta_0)$. Thus,
\begin{align*}
& \frac{1}{n} \sum_{i=1}^n |w^{\prime}(\tau_n(X_i;\theta_0)) \nabla_k \tau_n(X_i;\theta_0) u_j(X_i;\theta_0)| \\
& \qquad \le \frac{1}{n}\sum_{i=1}^n |w'(\tau_n(X_i;\theta_0))(\alpha\tau_n(X_i;\theta_0) + \{d(X_i;F_{\theta_0})\}^{1 - \alpha})u_j(X_i;\theta_0)|\\
& \qquad \le \frac{1}{n}\sum_{i=1}^n |w'(\tau_n(X_i;\theta_0))(\alpha\tau_n(X_i;\theta_0) + 1)u_j(X_i;\theta_0)| \\
& \qquad\le \frac{K_0\vee K_1}{n}\sum_{i=1}^n |u_j(X_i; \theta_0)|\mathbbm{1}\{d(X_i; F_{\theta_0}) \le a_n\} \\
& \qquad\quad + \frac{K_1}{n}\sum_{i=1}^n |\tau_n(X_i; \theta_0)(\alpha\tau_n(X_i;\theta_0) + 1)|\times|u_j(X_i; \theta_0)\nu_k(X_i; \theta_0)|\mathbbm{1}\{d(X_i;F_{\theta_0}) \ge a_n\} \\
& \qquad \le \frac{K_0\vee K_1}{n}\sum_{i=1}^n |u_j(X_i; \theta_0)|\mathbbm{1}\{d(X_i; F_{\theta_0}) \le a_n\} \\
& \qquad\quad +\sup_{x:\,d(x; F_{\theta_0}) \ge a_n}\,\left(|\tau_n(x; \theta_0)| + \tau_n^2(x; \theta_0)\right)\times\frac{K_1}{n}\sum_{i=1}^n |u_j(X_i; \theta_0)\nu_k(X_i; \theta_0)| \ . 
\end{align*}
Therefore, 
\begin{align*}
& \sup_{w\in\mathcal{W}(K_0, K_1)}\left| B_{n,j,k}^{(w)} - \frac{1}{n} \sum_{i=1}^n u_{j,k}(X_i;\theta_0)  \right| \\
& \qquad \le \frac{K_0\vee K_1}{n}\sum_{i=1}^n |u_j(X_i; \theta_0)|\mathbbm{1}\{d(X_i; F_{\theta_0}) \le a_n\} \\
& \qquad\quad + \sup_{x:\,d(x; F_{\theta_0}) \ge a_n}\,\left(|\tau_n(x; \theta_0)| + \tau_n^2(x; \theta_0)\right)\times\frac{K_1}{n}\sum_{i=1}^n |u_j(X_i; \theta_0)\nu_k(X_i; \theta_0)| \\
& \quad\qquad + \frac{1}{n}\sum_{i=1}^n |u_{j,k}(X_i; \theta_0)|\mathbbm{1}\{d(X_i; F_{\theta_0}) \le a_n\} \\ 
& \quad\qquad + \frac{K_1}{2n}\sum_{i=1}^n \tau_n^2(X_i; \theta_0)\mathbbm{1}\{d(X_i; F_{\theta_0}) \ge a_n\}|u_{j,k}(X_i; \theta_0)|.
\end{align*}
The last two terms converge to zero as $n\to\infty$ and are actually of order $o_p(n^{-1/2})$ using the calculations of previous part of the proof. The first term is of the order
\begin{equation*}
O_p\left(\mathbb{E}[|u_j(X_1;\theta_0)\nu_k(X_1;\theta_0)|]\sup_{x:d(x;F_{\theta_0}) \ge a_n}|\tau_n(x;\theta_0)|\right) = o_p(1) \ ,
\end{equation*}
where the $o_p(1)$ follows from the result in Lemma~\ref{lemma:ratescaleddepth}.
\medskip

\noindent \textbf{Convergence for $C_{n,j,k,h}$}. From the calculations in the above part of the proof it is clear that
\begin{align*}
\sup_{w\in\mathcal{W}(K_0, K_1)}\left|C_{n,j,k,h}^{(w)}\right| & \le \frac{1}{n}\sum_{i=1}^n |u_{j,k,h}(X_i; \bar{\theta})| + \frac{K_0\vee K_1}{n}\sum_{i=1}^n |\nu_k(X_i;\bar{\theta})u_{j,h}(X_i;\bar{\theta})| \\
& \quad + \frac{K_0\vee K_1}{n}\sum_{i=1}^n |\nu_{h}(X_i; \bar{\theta})u_{j,k}(X_i;\bar{\theta})| + \frac{K_0\vee K_1}{n}\sum_{i=1}^n |\nu_{k,h}(X_i;\bar{\theta})u_j(X_i; \bar{\theta})| \\
& \quad + \frac{K_0\vee K_1}{n}\sum_{i=1}^n |\nu_k(X_i;\bar{\theta})\nu_h(X_i;\bar{\theta})u_j(X_i;\bar{\theta})| \\
& \quad + \frac{K_1}{n}\sum_{i=1}^n |\nu_k(X_i;\bar{\theta})\nu_h(X_i;\bar{\theta})u_j(X_i;\bar{\theta})| \ .
\end{align*}
Each of the terms above can be bounded in terms of $M_{ijkh}$ for $i = 1,2,3,4$ for all $\bar{\theta} \in N(\theta_0)$, uniformly. This proves
\begin{equation*}
\sup_{w\in\mathcal{W}(K_0, K_1)}|C_{n,j,k,h}^{(w)}| = O_p(1) \ .
\end{equation*}
\end{proof}

\begin{remark}
  All the theoretical results have been stated and proved for $\alpha \in (0, 3/4)$. Nevertheless, in the simulations we consider also $\alpha = 3/4, 1$ since in the multivariate normal setting asymptotic results still hold (See Remark \ref{remark:alpha} in the main document).
\end{remark}

\section{Wind Speed}
\label{sup:sec:wind}

In Figure \ref{sup:fig:wind-mean} we report the behavior of our procedure (green-blue) and maximum likelihood (purple) in the estimation of the mean curve as the contamination level increases.

\begin{figure}
\centering
\includegraphics[width=0.9\textwidth]{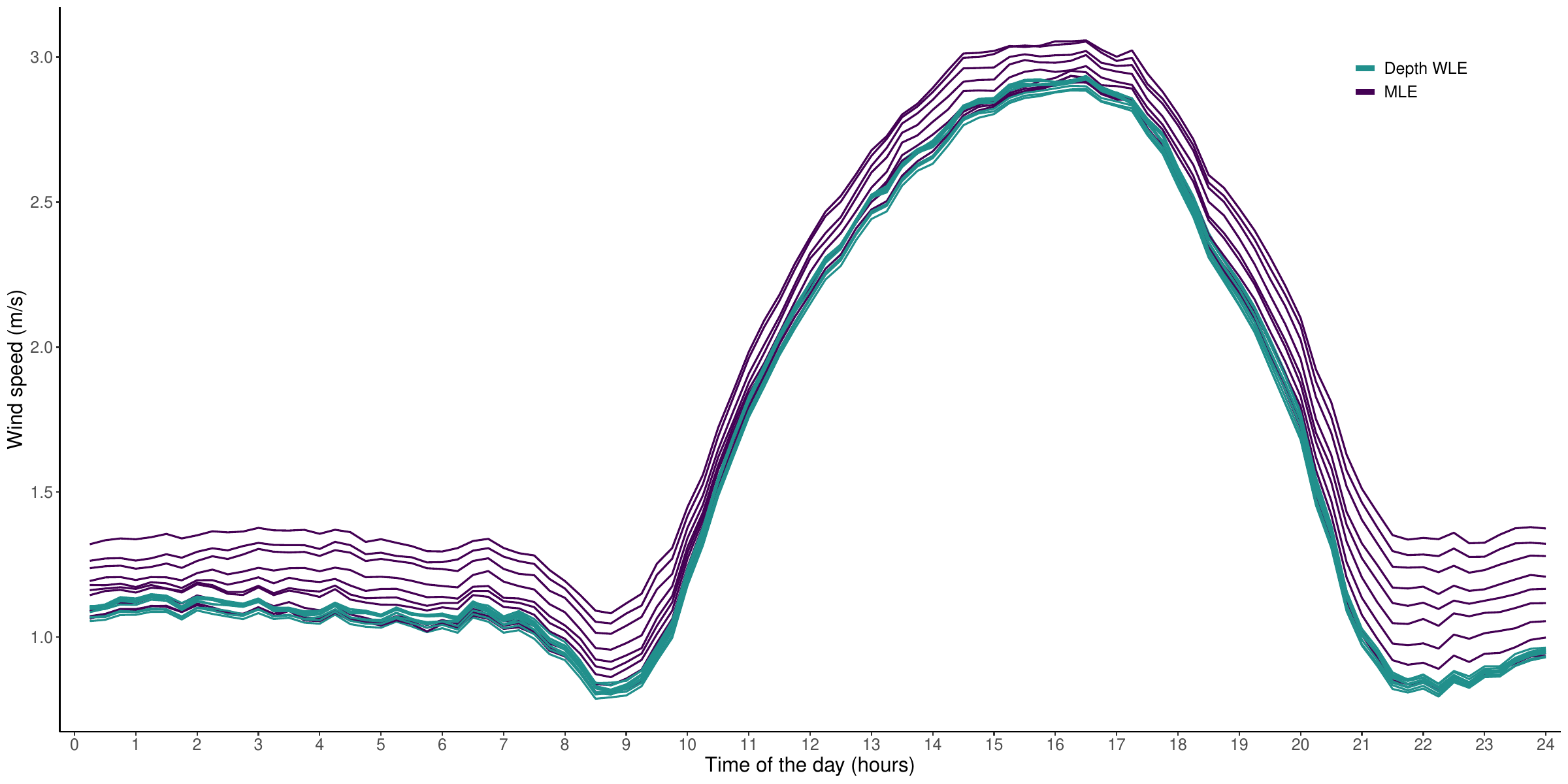}
\caption{Wind Data. Estimate mean vector provided by MLE (yellow) and weighted likelihood based on depth procedure (purple). Contamination level range from $0$ to $22.4\%$.}
\label{sup:fig:wind-mean}
\end{figure}  

The estimated correlation structure is reported in Figures \ref{sup:fig:wind-cor-1} and \ref{sup:fig:wind-cor-2} for MLE and our method respectively. While the estimated correlation is very stable for our procedure, this is not the case for MLE.

\begin{figure}
\centering
\includegraphics[width=0.4\textwidth]{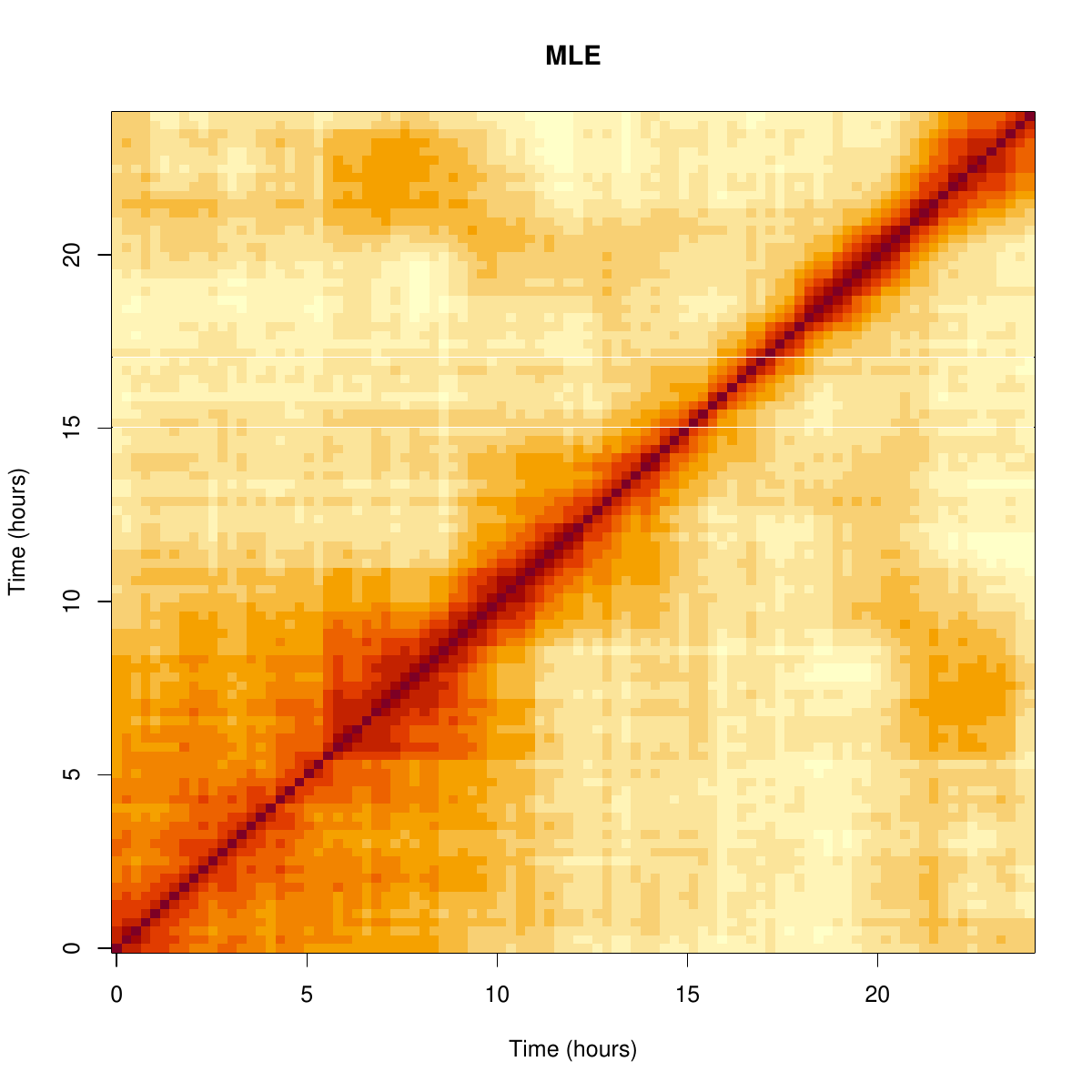}
\includegraphics[width=0.4\textwidth]{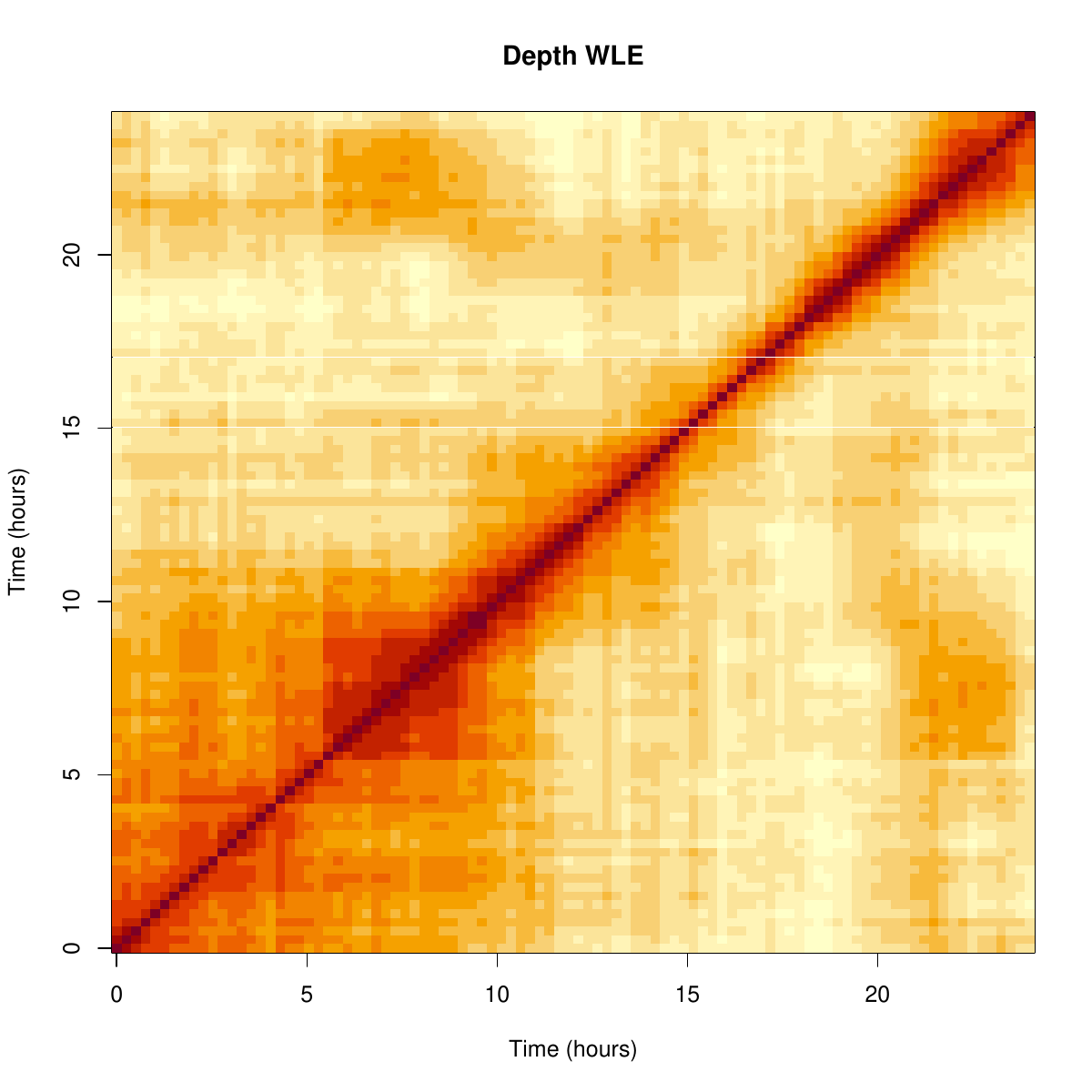} \\
\includegraphics[width=0.4\textwidth]{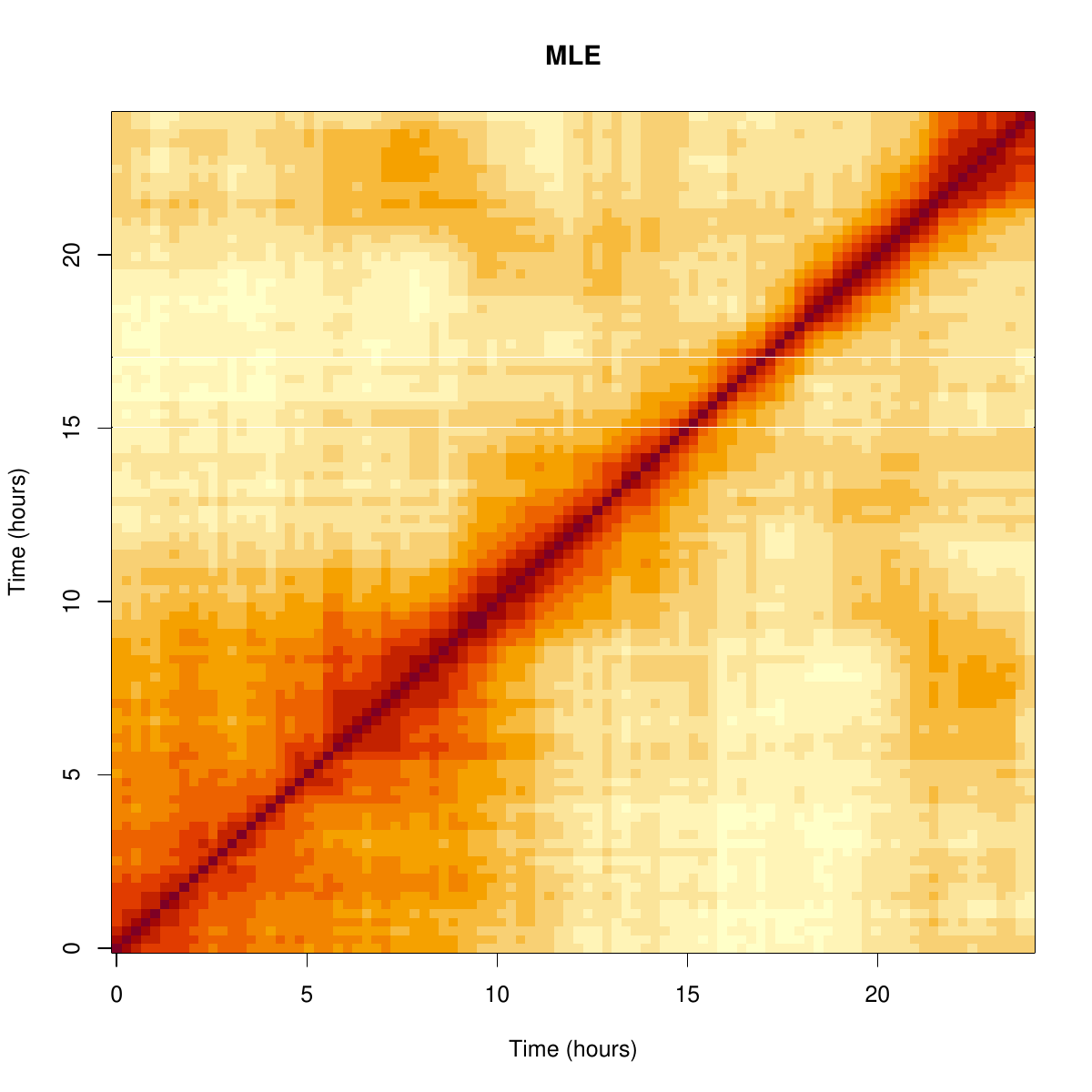}
\includegraphics[width=0.4\textwidth]{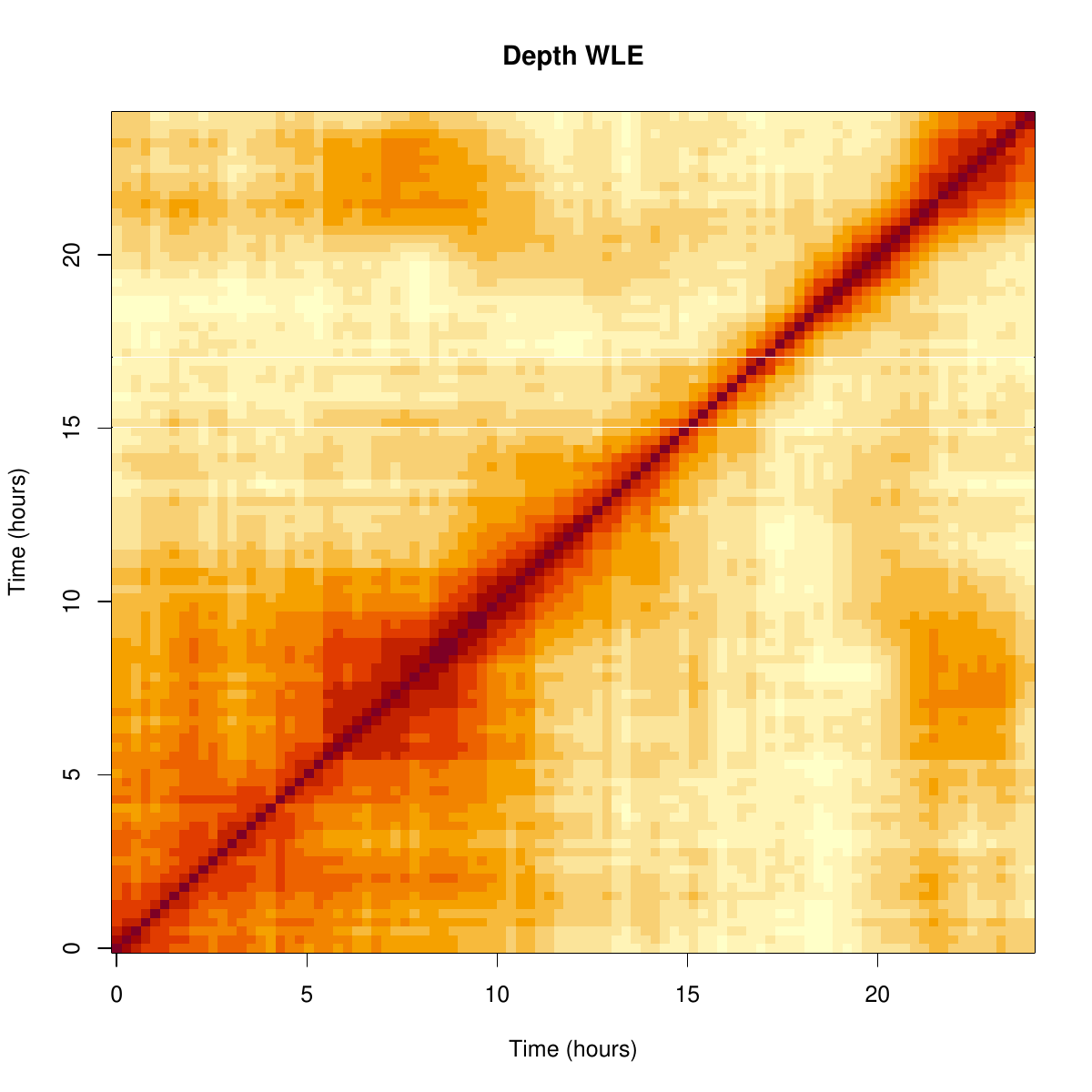} \\
\includegraphics[width=0.4\textwidth]{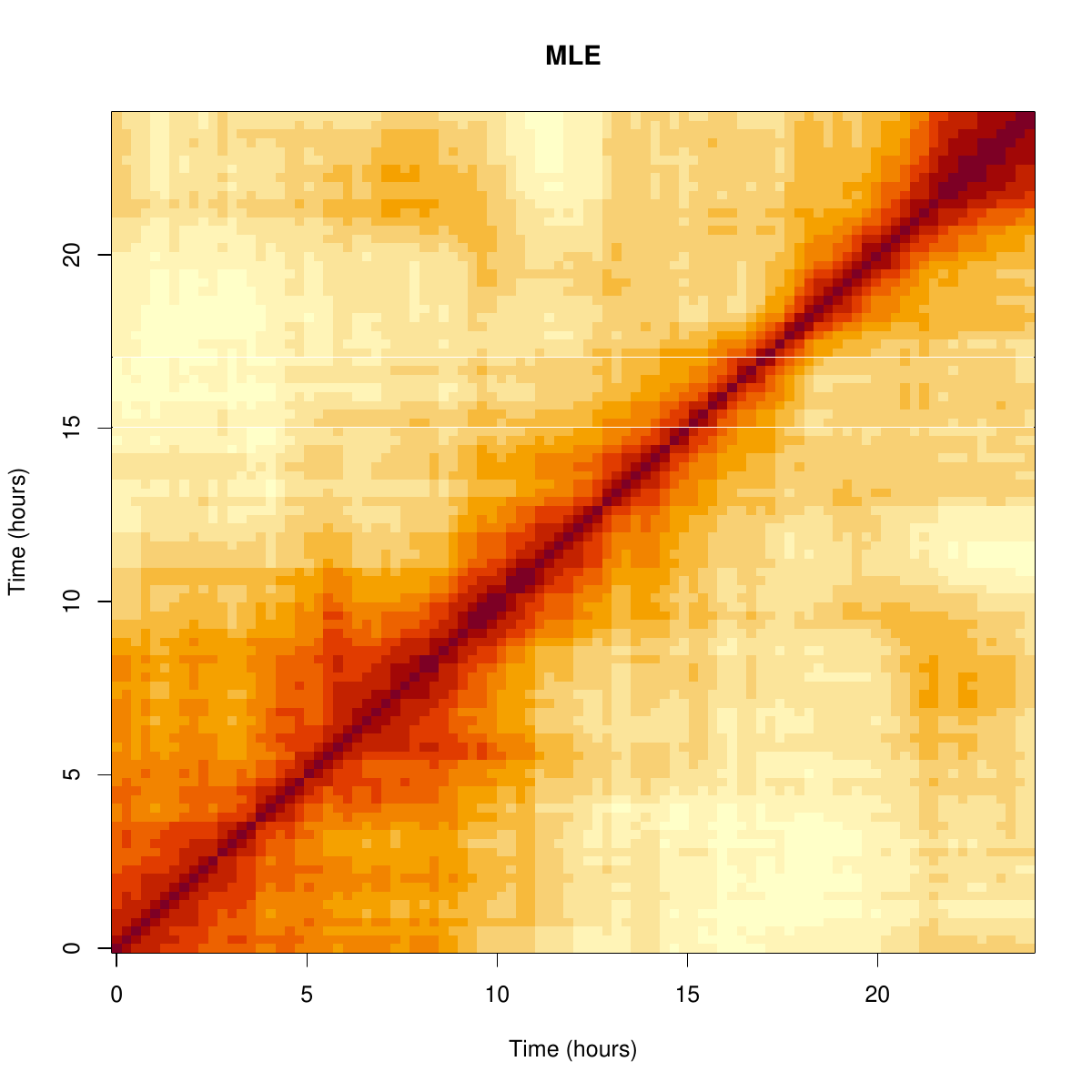}
\includegraphics[width=0.4\textwidth]{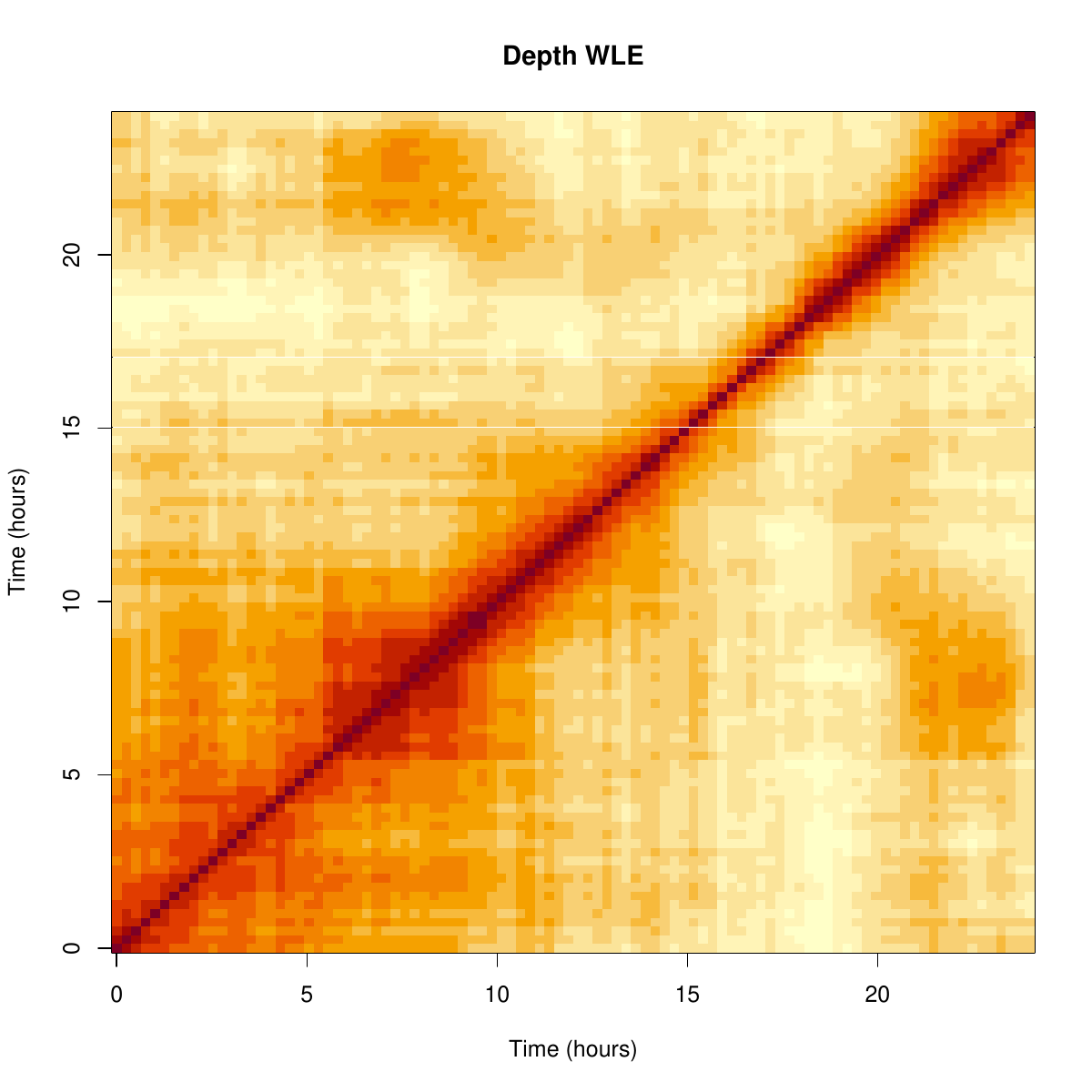} \\
\caption{Wind Data. Estimates correlation matrix provided by MLE (First column) and weighted likelihood based on depth procedure (Second column). First row: no contamination, second row: $6.7\%$ and third row: $12.6\%$.}
\label{sup:fig:wind-cor-1}
\end{figure}  

\begin{figure}
\centering
\includegraphics[width=0.4\textwidth]{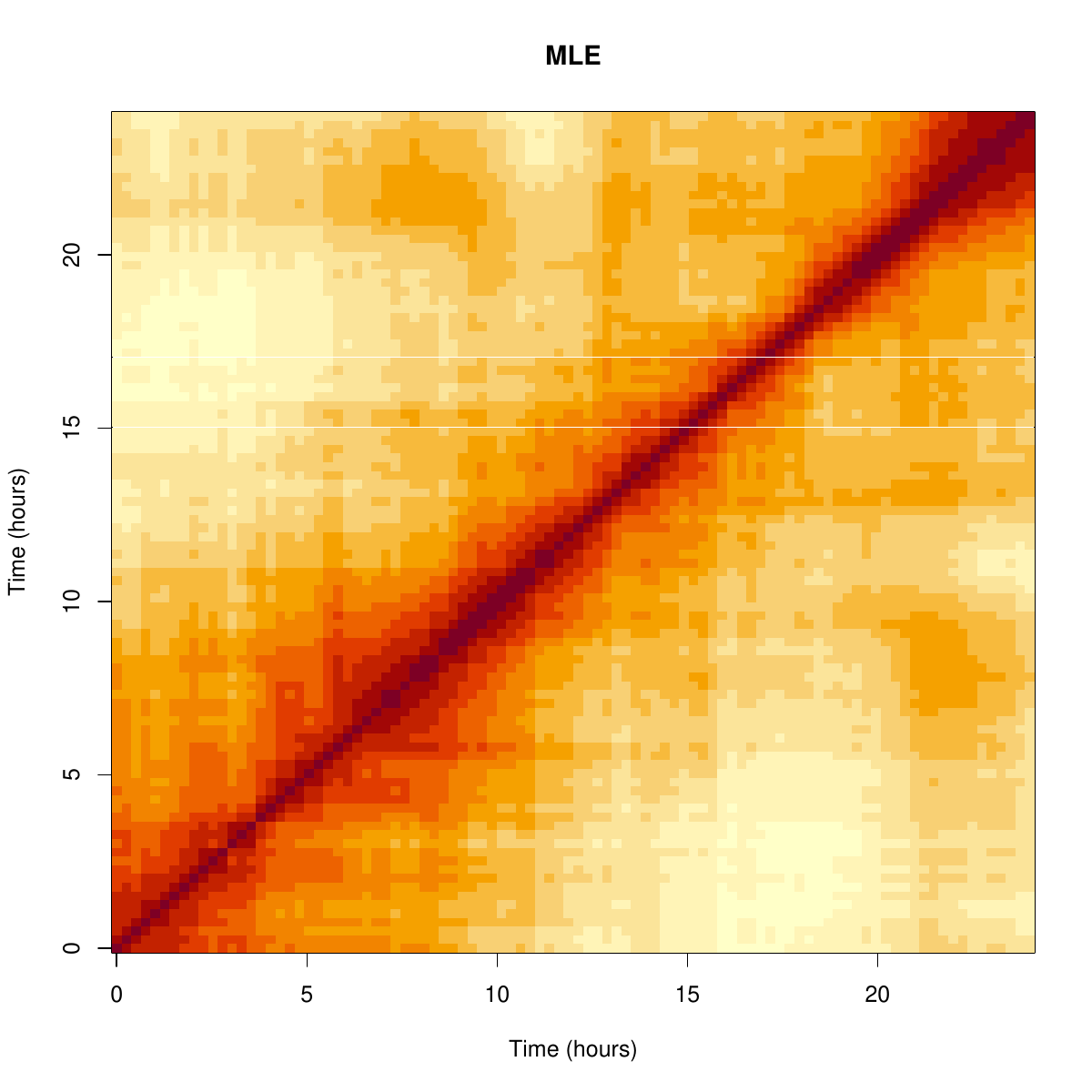}
\includegraphics[width=0.4\textwidth]{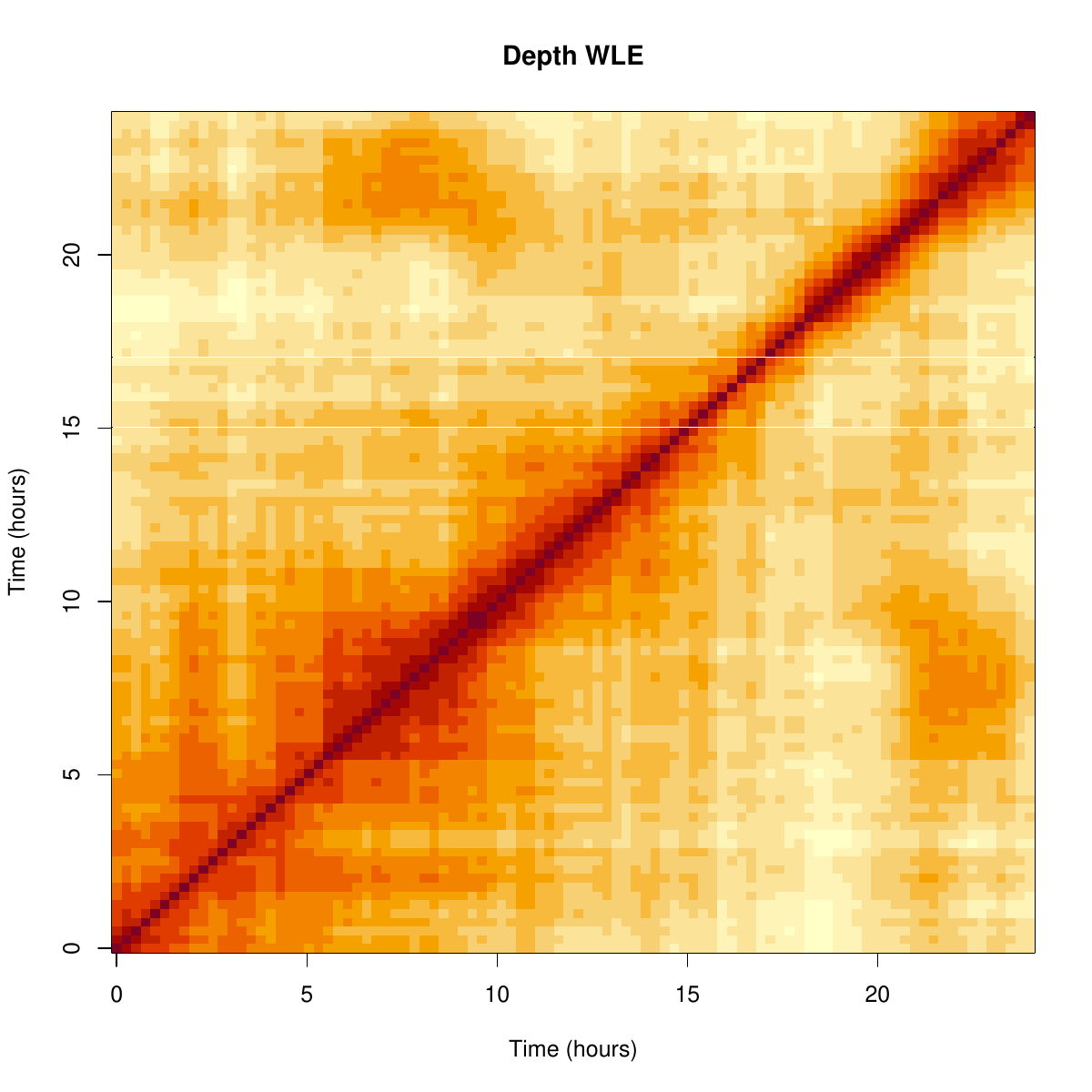} \\
\includegraphics[width=0.4\textwidth]{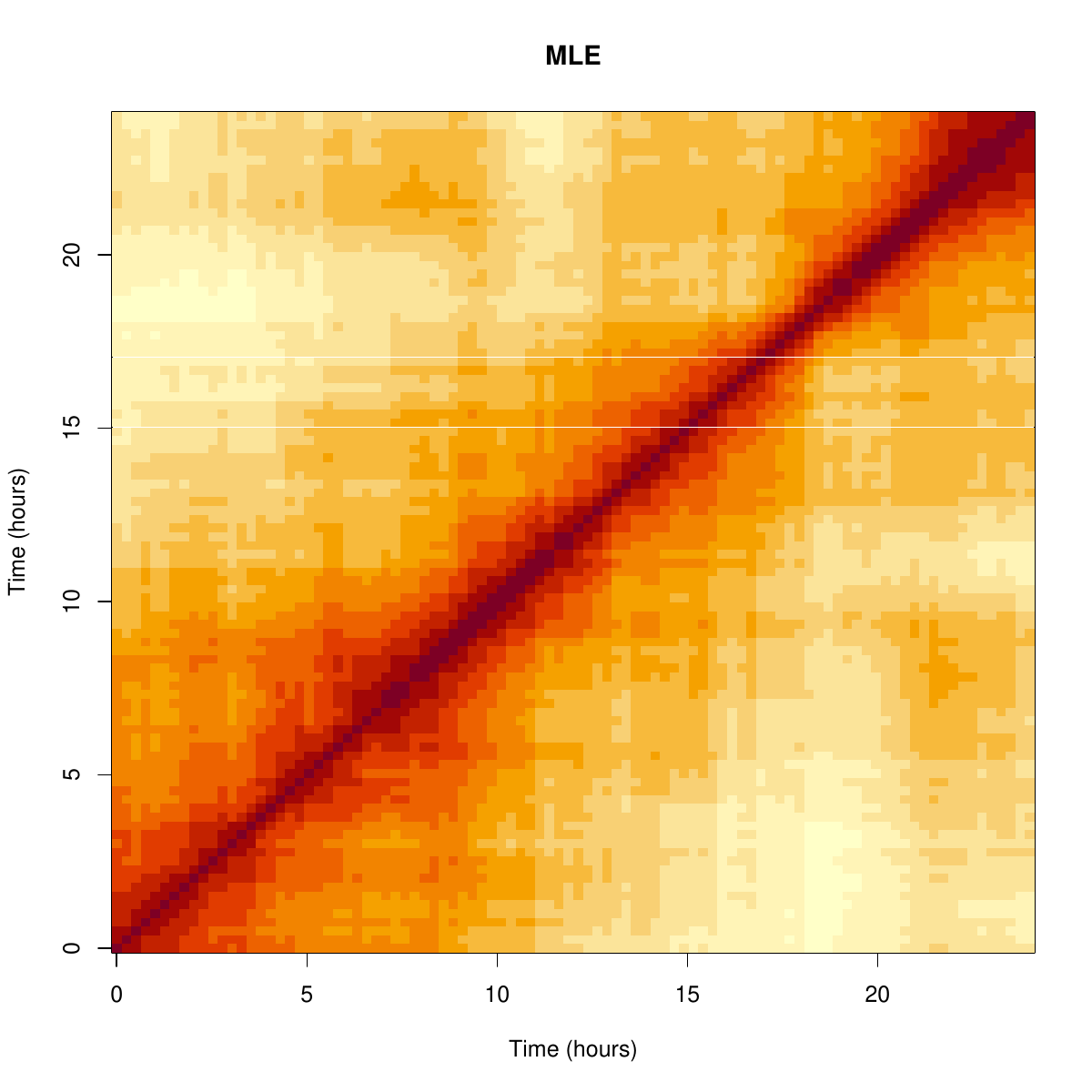}
\includegraphics[width=0.4\textwidth]{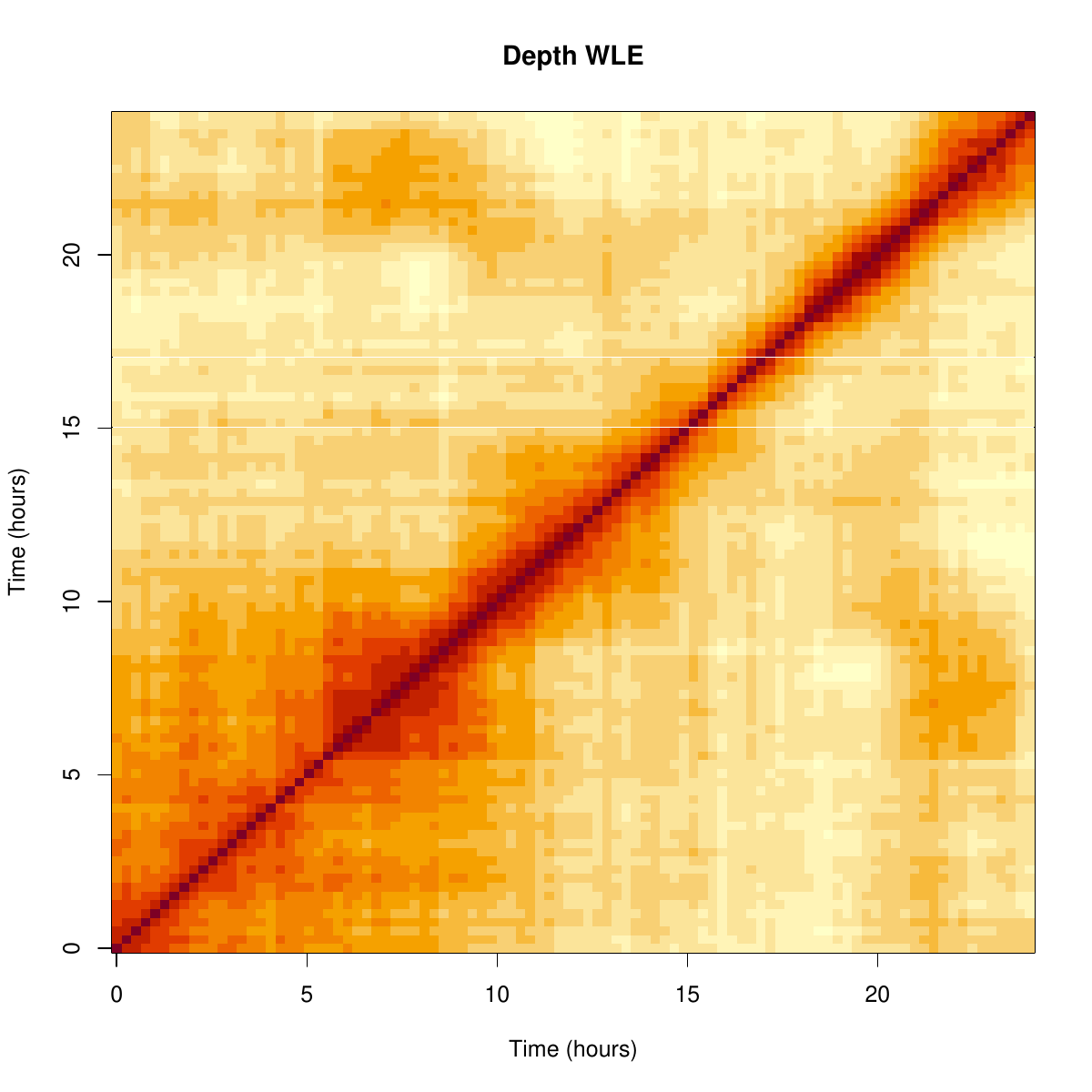} \\
\includegraphics[width=0.4\textwidth]{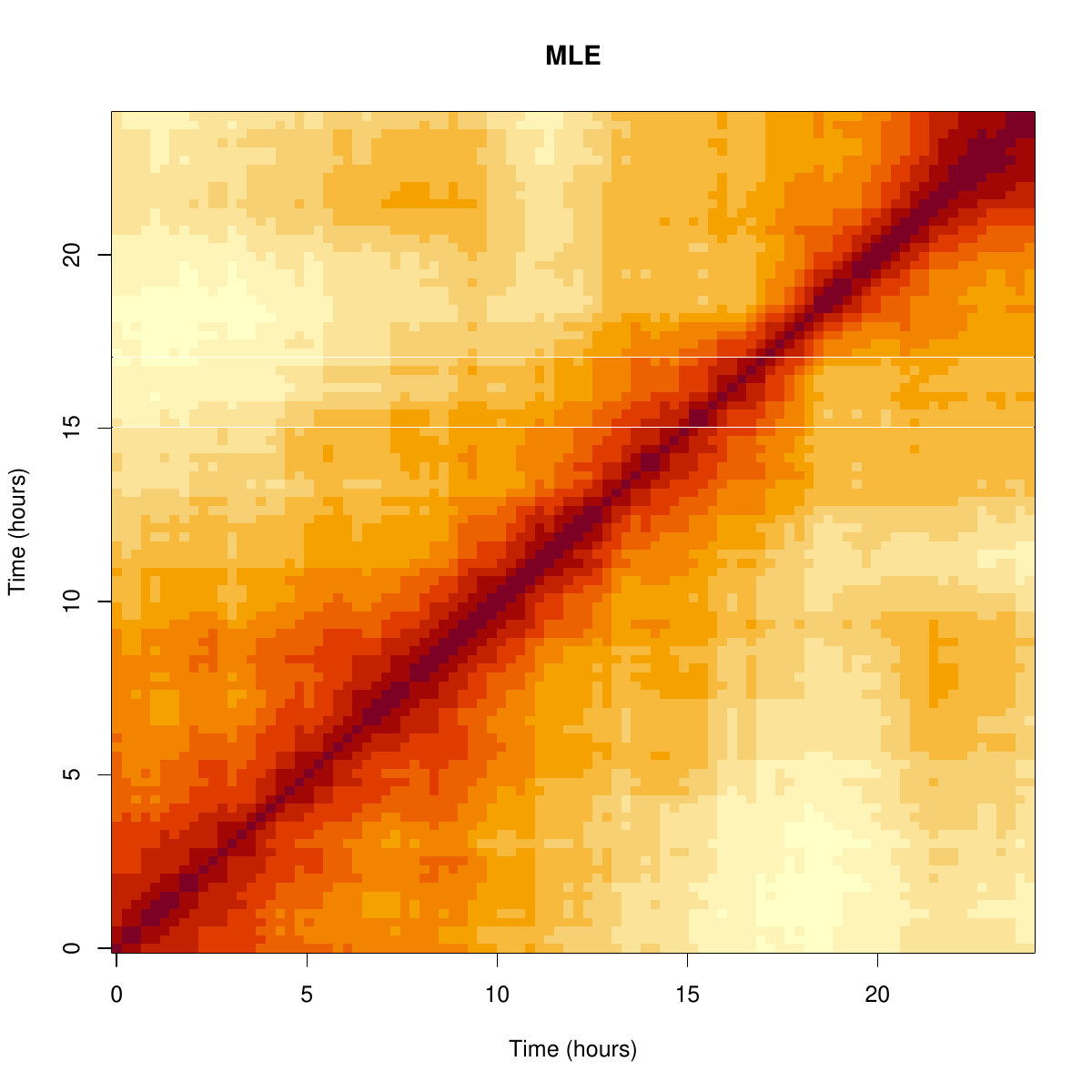}
\includegraphics[width=0.4\textwidth]{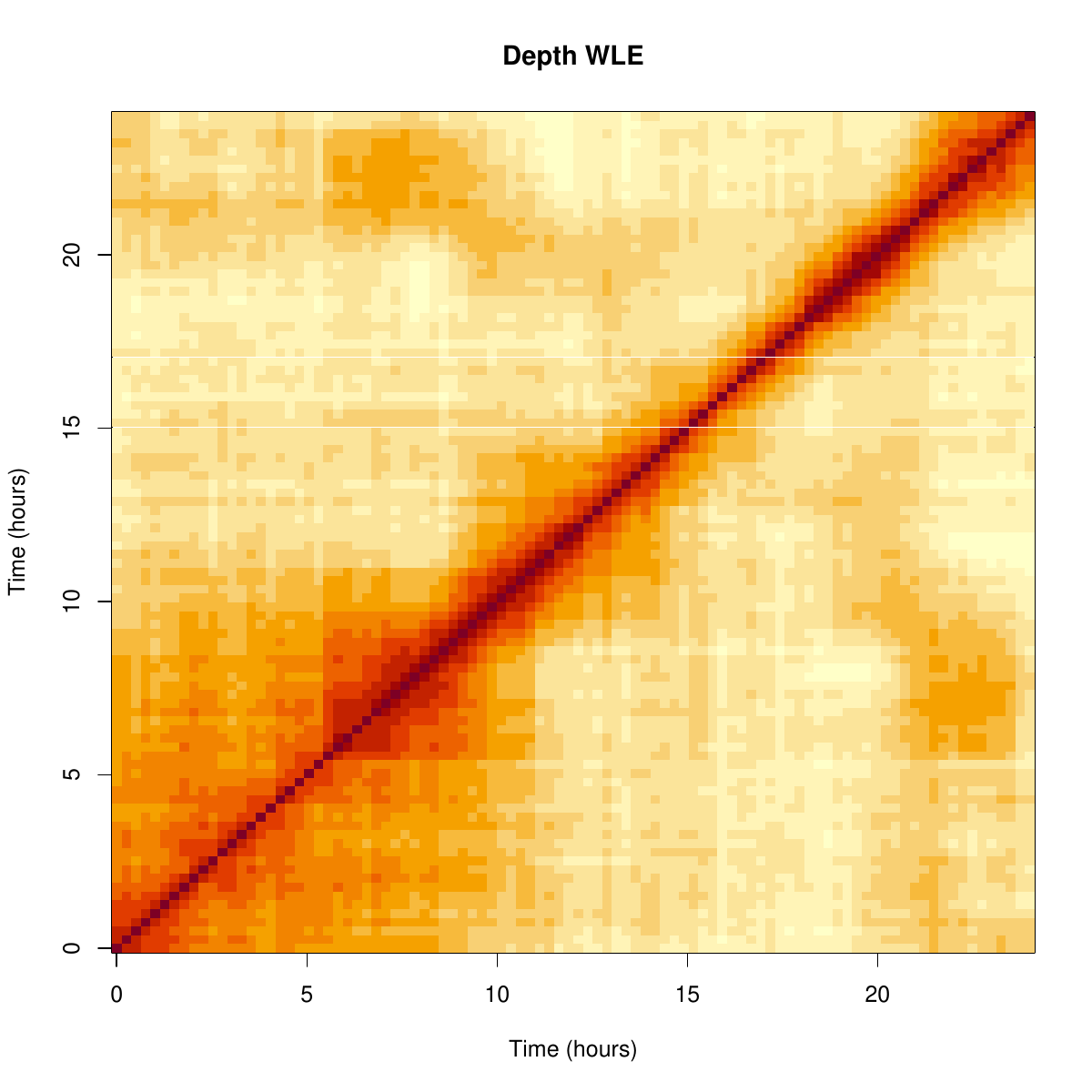} \\
\caption{Wind Data. Estimates correlation matrix provided by MLE (First column) and weighted likelihood based on depth procedure (Second column). Contamination level; first row: $17.9\%$, second row: $22.5\%$ and third row: $24.6\%$.}
\label{sup:fig:wind-cor-2}
\end{figure}  

\section{Monte Carlo simulations}
\label{sup:sec:monte}

%% library(xtable)
%% p <- c(1,2,5,10,20)
%% s <- c(1,2,5,10)
%% x <- expand.grid(p, s)
%% y <- matrix(apply(x, 1, function(z) z[2]*(z[1]*(z[1]+1)/2+z[1])), nrow=5)
%% xtable(y)
%% b <- p + p * (p + 1)/2 + 1
%% np <- y[,1]
%% n <- y[,-1]
%% e <- c(0.05, 0.1, 0.2, 0.3, 0.4, 0.45)
%% no <- round(outer(n, e))

%% P <- function(n, e, b) {
%%   dhyper(x=b, m=n-e, n=e, k=b)
%% }

%% pp <- array(NA, dim=c(5, 3, 6))
%% for (i in 1:length(e)) {
%%   for (j in 1:length(p)) {
%%     for (k in 1:3) {
%%       pp[j,k,i] <- P(n[j,k], no[j,k,i], b[j])
%%     }
%%   }
%% }
%% ppp <- signif(pp, digits=2)
%% pppp <- rbind(cbind(ppp[,,1], ppp[,,2], ppp[,,3]), cbind(ppp[,,4], ppp[,,5], ppp[,,6]))
%% xtable(pppp, display=rep("G",10))

In these simulations, data are sampled from a multivariate standard normal distribution, and outliers come from a multivariate $\mathcal{N}(\boldsymbol{\mu}, \sigma^2 I)$, where $\boldsymbol{\mu} = (\mu, \dots, \mu)^\top$. We refer to $\mu$ as contamination average (or location) and to $\sigma$ as contamination standard deviation (abbr. sd, or scale). See the main article for details.

%%%%%%%%%%%%%%%%%%%%%%%%%%%%%%%%%%%%%%%%%%%%%%%%%

\paragraph{More details on computation}

Consider a multivariate normal model $F_\theta$ with $\theta = (\mu, \Sigma)$, where $\mu$ is the mean vector and $\Sigma$ is the covariance matrix. We illustrate how to evaluate the proposed Depth Pearson Residuals \eqref{equ:dpr} using the half-space depth, but, firstly, we recall the definition of half-space depth \citep{tukey1975,donoho1992} and review some properties of interest.
Let $\operatorname{HS}_{u}(a)$ be the closed half-space $\left\{z \in \mathbb{R}^p: u^\top z \geq a \right\}$. Here $u$ is a $p$-vector satisfying $u^\top u = 1$. Note that $\operatorname{HS}_{u}(a)$ is the positive side of the hyperplane $\operatorname{HP}_{u}(a) = \left\{ z \in \mathbb{R}^p : u^\top z = a \right\}$. The negative side $\operatorname{HS}_{-u}(a)$ of $\operatorname{HP}_{u}(a)$ is similarly defined.
Intuitively, the half-space depth of a point $x \in \mathbb{R}^p$ w.r.t. a probability distribution is the minimum probability of all half-spaces including $x$ on their boundary. Formally, the half-space depth $d_{HS}(x;X)$ maps $x \in \mathbb{R}^p$ to the minimum probability, according to the random vector $X$, of all closed half-spaces including $x$, that is
\begin{align*}
d_{HS}(x; X) & = \inf_{u: u^\top u=1} \Pr(\operatorname{HS}_{u}(u^\top x); X) \\
& = \inf_{u: u^\top u=1} \Pr( z \in \mathbb{R}^p : u^\top z \geq u^\top x; X) \ .
\end{align*}

\citet[][Theorem 3.3, Corollary 4.3]{zuo2000b} show that the half-space depth for a multivariate normal model can be easily obtained since $d_{HS}(x; F_\theta) = (1 - F_{\chi^2_p}(\Delta(x;\theta)))/2$ where $\Delta(x;\theta) = (x - \mu)^\top \Sigma^{-1}(x - \mu)$ is the squared Mahalanobis distance and $F_{\chi^2_p}$ is the distribution function of a $\chi^2_p$ chi-squared with $p$ degrees of freedom random variable.

Once the population and empirical depth are evaluated, the DPRs are easily computed by Equation~\eqref{equ:dpr}.

%%%%%%%%%%%%%%%%%%%%%%%%%%%%%%%%%%%%%%%%%%%%%%%%%

We consider two different experiments. In the first experiment we study the performance of the proposed method as a function of the weight function parameters for different values of $\alpha$. The implemented algorithm is started at the true values, and we use $N=100$ Monte Carlo replications for each combination of the factors. 
In the second experiment we explore two different strategies to start the implemented algorithm: (i) a classic subsampling technique with subsample of size $p + p \times (p + 1)/2 + 1$ and $B = 500$ initial samples, or (ii) a deterministic approach where we use the observation with the highest depth as initial value for the location and the sample covariance of the $50\%$ of the observations with the highest depth for the shape. 
Again we use $N=100$ Monte Carlo replications for each combination of the factors. Table \ref{sup:tab:monte:n} reports the number of parameters to be estimated (first column) as function of the dimension $p$ and the sample size $n=s (p (p+1)/2+p)$ as a function of both dimension $p$ and sample size factor $s$.

\begin{table}[ht]
\centering
\begin{tabular}{rr|rrr}
  \hline
 & N. of parameters & \multicolumn{3}{c}{s} \\
 &  & 2 & 3 & 4 \\ 
  \hline
$p=1$ & 2 & 4 & 10 & 20 \\ 
  2 & 5 & 10 & 25 & 50 \\ 
  5 & 20 & 40 & 100 & 200 \\ 
 10 & 65 & 130 & 325 & 650 \\ 
 20 & 230 & 460 & 1150 & 2300 \\ 
   \hline
\end{tabular}
\caption{Simulation setting. Number of parameters to be estimated (first column) and sample size $n=s(p(p+1)/2+p)$ as function of the number of variables $p$ and sample size factor $s$.}
\label{sup:tab:monte:n}
\end{table}

Figures \ref{sup:fig:monte:MSE} and \ref{sup:fig:monte:DIV}, regarding the results of the first experiment, report the quantiles of the Mean Square Error and Kullback--Leibler Divergence respectively, as a function of the weight parameters $\gamma$, $\delta$, $\delta_1$, $\delta_2$ and for different values of $\alpha=0.25, 0.5, 0.75, 1$. As it easy to see, the influence of the weight parameters on the performance of the proposed method is extremely limited. 
Hence, for all further simulations we consider a fixed set of parameters depending on $\alpha$ which are summarized in Table \ref{tab:monte:optimal} of the main document.

We now comment the performance of the proposed method when it is started from the true values. This allows us to evaluate the performance in, somehow, the best situation without having to deal with multiple solutions of the estimating equations. Table \ref{sup:tab:eff} compares the efficiency of the proposed method with different values of $\alpha$ in the uncontaminated case with respect to maximum likelihood estimators. In the last rows the efficiency of the constrained M-Estimates of multivariate location and scatter based on the translated biweight function using a high breakdown point initial estimate \citep[\texttt{CovMest},][]{woodruff1994,rocke1996a,rocke1996b,todorov2009} is also reported for comparison reason.
\begin{table}[ht]
\centering
\begin{tabular}{rrrr}
  \hline  
$s$ & 2 & 5 & 10 \\
  \hline
  & \multicolumn{3}{c}{$\alpha=0.25$} \\
  \hline
$p=1$& 1.000 & 1.000 & 1.000 \\ 
  2 & 0.931 & 1.000 & 0.892 \\ 
  5 & 0.985 & 1.125 & 1.076 \\ 
 10 & 1.000 & 0.939 & 0.904 \\ 
 20 & 0.943 & 0.936 & 1.000 \\
  \hline
  & \multicolumn{3}{c}{$\alpha=0.5$} \\
  \hline
$p=1$& 0.933 & 0.916 & 1.441 \\ 
  2 & 0.880 & 1.000 & 0.925 \\ 
  5 & 0.985 & 1.120 & 1.029 \\ 
 10 & 1.000 & 0.926 & 0.904 \\ 
 20 & 0.942 & 0.936 & 1.000 \\
  \hline
  & \multicolumn{3}{c}{$\alpha=0.75$} \\
  \hline
$p=1$& 1.000 & 0.889 & 1.441 \\ 
  2 & 0.921 & 1.000 & 0.925 \\ 
  5 & 0.977 & 1.111 & 1.029 \\ 
 10 & 1.000 & 0.939 & 0.904 \\ 
 20 & 0.942 & 0.936 & 0.998 \\
  \hline
  & \multicolumn{3}{c}{$\alpha=1$} \\
  \hline
$p=1$& 1.406 & 0.848 & 2.502 \\ 
  2 & 0.802 & 1.000 & 0.892 \\ 
  5 & 1.000 & 1.129 & 1.112 \\ 
 10 & 1.000 & 0.914 & 0.912 \\ 
 20 & 0.958 & 0.939 & 1.000 \\ 
  \hline
  & \multicolumn{3}{c}{\texttt{CovMest}} \\
  \hline
$p=1$ & 1.256 & 0.850 & 1.561 \\ 
  2 & 1.056 & 1.565 & 1.101 \\ 
  5 & 1.395 & 1.310 & 1.332 \\ 
 10 & 1.110 & 0.979 & 0.975 \\ 
 20 & 0.975 & 1.013 & 1.133 \\ 
\hline
\end{tabular}
\caption{Monte Carlo simulation. Estimated efficiency of Maximum Likehood method with respect to the proposed method, starting from the true values, for different values of $\alpha$ and \texttt{CovMest}, number of variables $p$ and sample size factor $s$.}
\label{sup:tab:eff}
\end{table}
We can conclude that the efficiency is very high and comparable to that of maximum likelihood estimators for all the considered estimators. We now turn our attention to the finite sample robust performance of the methods. Tables \ref{sup:tab:MSE:max:1}--\ref{sup:tab:MSE:max:2} and \ref{sup:tab:DIV:max:1}--\ref{sup:tab:DIV:max:2} report the maximum estimated Mean Square Errors and maximum estimated Kullback--Leibler divergence for the proposed method as a function of $\alpha$ parameter, dimension and sample size factor. Results are also reported in Figures \ref{sup:fig:monte:MSE:0.25:1}--\ref{sup:fig:monte:MSE:1:2} for the Mean Square Error and in Figures \ref{sup:fig:monte:DIV:0.25:1}--\ref{sup:fig:monte:DIV:1:2} for the Kullback--Leibler divergence. 
Values of $\alpha$ small (e.g. $\alpha=0.25$) seem to perform well for the location parameters in all conditions, while for the scatter parameters the performance is good only for small/moderate level of contamination ($\varepsilon \le 0.2$) or small dimension ($p \le 2$). 
For larger values of $\alpha \ge 0.5$ the performance is extremely good in all conditions with a small deterioration for large level of contamination $\varepsilon \ge 0.3$ as the dimension increases. 
A common pattern in all figures highlights the nice robust behavior of the proposed estimator: after some value of the contamination average $\mu$, outliers have no more impact on the estimated values (sharp decreasing in the curves). Overall the best performance is achieved for $\alpha=0.5$.

%%%%% Second experiment

We now turn to the second experiment. 
First, we consider the performance of the proposed algorithm when started using a subsampling technique. 
Table \ref{sup:tab:monte:sub} reports the probability of sampling a subsample of size $p+(p+1)\times p/2 + 1$ free of outliers as a function of the dimension $p$, sample factor $s$ and level of contamination $\varepsilon$, when the sampling is performed without replacement.

%%%%% We flag in red all the cells where the probability is smaller than 1/B=1/500
\begin{table}[ht]
  \setlength{\tabcolsep}{2pt}  
  \centering
  \resizebox{\textwidth}{!}{
  \begin{tabular}{r|rrr|rrr|rrr}
    \hline
    $\varepsilon$ & \multicolumn{3}{c}{0.05} & \multicolumn{3}{c}{0.1} & \multicolumn{3}{c}{0.2} \\
    \hline  
    $s$ & 2 & 5 & 10 & 2 & 5 & 10 & 2 & 5 & 10 \\
    \hline
  $p$=1 &   1 &   1 & 0.85 &   1 & 0.7 & 0.72 & 0.25 & 0.47 & 0.49 \\ 
    2 &   1 & 0.76 & 0.77 & 0.4 & 0.57 & 0.51 & 0.13 & 0.22 & 0.24 \\ 
    5 & 0.22 & 0.3 & 0.32 & 0.042 & 0.083 & 0.096 & \cellcolor{red!25} 0.00098 & 0.0049 & 0.0069 \\ 
   10 & 0.013 & 0.024 & 0.03 & \cellcolor{red!25} 5E-05 & \cellcolor{red!25} 0.00046 & \cellcolor{red!25} 0.00064 & \cellcolor{red!25} 3.9E-10 & \cellcolor{red!25} 5.7E-08 & \cellcolor{red!25} 1.6E-07 \\ 
   20 & \cellcolor{red!25} 6E-08 & \cellcolor{red!25} 1.5E-06 & \cellcolor{red!25} 3.7E-06 & \cellcolor{red!25} 9.3E-16 & \cellcolor{red!25} 1.3E-12 & \cellcolor{red!25} 6.8E-12 & \cellcolor{red!25} 1.3E-33 & \cellcolor{red!25}4.5E-26 & \cellcolor{red!25} 1.8E-24 \\
    \hline
    $\varepsilon$ & \multicolumn{3}{c}{0.3} & \multicolumn{3}{c}{0.4} & \multicolumn{3}{c}{0.45} \\
    \hline   
  $p=1$ & 0.25 & 0.29 & 0.32 & \cellcolor{red!25} 0 & 0.17 & 0.19 & \cellcolor{red!25} 0 & 0.17 & 0.14 \\ 
    2 & 0.033 & 0.07 & 0.1 & 0.0048 & 0.028 & 0.037 & 0.0048 & 0.017 & 0.024 \\ 
    5 & \cellcolor{red!25} 9E-06 & \cellcolor{red!25} 0.00019 & \cellcolor{red!25} 0.00034 & \cellcolor{red!25} 1.5E-08 & \cellcolor{red!25} 3.9E-06 & \cellcolor{red!25} 1E-05 & \cellcolor{red!25} 1.7E-10 & \cellcolor{red!25} 4.1E-07 & \cellcolor{red!25} 1.4E-06 \\ 
   10 & \cellcolor{red!25} 1.7E-16 & \cellcolor{red!25} 1.7E-12 & \cellcolor{red!25} 1.3E-11 & \cellcolor{red!25} 4.6E-25 & \cellcolor{red!25} 1E-17 & \cellcolor{red!25} 2E-16 & \cellcolor{red!25} 1.7E-30 & \cellcolor{red!25} 9.9E-21 & \cellcolor{red!25} 4.1E-19 \\ 
   20 & \cellcolor{red!25} 8.2E-56 & \cellcolor{red!25} 1.1E-41 & \cellcolor{red!25} 7.5E-39 & \cellcolor{red!25} 1.2E-85 & \cellcolor{red!25} 3.6E-60 & \cellcolor{red!25} 1.2E-55 & \cellcolor{red!25} 2.3E-106 & \cellcolor{red!25} 5.3E-71 & \cellcolor{red!25} 3.1E-65 \\ 
     \hline
  \end{tabular}
  }
  \caption{Simulation setting. Probability of sampling a subsample of size $p+(p+1)\times p/2 + 1$ free of outliers as a function of the dimension $p$, sample size factor $s$ and level of contamination $\varepsilon$, when the sampling is performed without replacement. When the probability is smaller than $1/B=1/500$ the cell is colored in red.}
  \label{sup:tab:monte:sub}
\end{table}

Given the number of initial values sampled $B=500$, for certain combinations of $p$, $s$ and $\varepsilon$ we do not expect to be able to sample a subsample that is free of outliers. Those cases are marked in red.
In light of these probabilities, we do not expect satisfying results for the subsampling method for specific combinations of small sample size factor $s$, large number of variables $p$, and high contamination level $\varepsilon$. 

On the $N=100$ Monte Carlo replications we observed the presence of multiple roots, with higher frequency as $p$ and $\varepsilon$ increases. 
The roots are of two types: a robust root ``close'' to the true value and a maximum likelihood root. We are interested in the first type, so we count how many times we are able to retrieve this robust root. 
Figures \ref{sup:fig:subs-true-1}--\ref{sup:fig:subs-true-8b} show the results of the simulation.
When the contamination is small $\varepsilon \leq 0.1$ our procedure with subsampling finds the robust root almost always for $p<20$, with better performance for growing sample size factor $s$ and $\alpha \geq 0.5$. 

When the contamination is near ($\mu-$contamination near zero, light color in the plots) it is more likely that our procedure with subsampling finds the robust root, especially for small to moderate contamination scale (small bubbles). For far location contamination, instead, a small variance makes it harder to identify the robust root. This pattern is clearly visible, for instance, in Figure~\ref{sup:fig:subs-true-3}.

Finally, we consider starting the algorithm using a deterministic approach where initial values are selected using the depth function. Figures~\ref{sup:fig:depth-true-1}-\ref{sup:fig:depth-true-8b} summarize the counts of times the robust root has been identified in the $N=100$ Monte Carlo simulations. Performance are slightly worse for small $p, s$ and $\alpha$, but we observe an improvement w.r.t. subsampling for moderate to large contamination $\varepsilon \geq 0.2$, when the $p \geq 2$ and in particular for $s>2$ and $\alpha \in \{0.25, 0.5\}$, see Figures~\ref{sup:fig:subs-true-2} and \ref{sup:fig:depth-true-2} for a comparison, or for $2<p<20$ independently of $\alpha$. 
This suggests that starting from the deepest point (or sample covariance of deepest points) is a good option when the sample size $n$ is not very large and the contamination level is moderate to high, since in this case a deterministic initialization achieved results which are comparable to the more computation demanding bootstrap with many subsamples.

For the sake of comparison, the same results are available for the \texttt{CovMest} method: Tables \ref{sup:tab:MSE:max:CovMest} and \ref{sup:tab:DIV:max:CovMest} report, respectively, the estimated Mean Square Error and Kullback--Leibler divergence for varying $\varepsilon$ and different combinations of $p$ and $s$. Results are also shown in Figures \ref{sup:fig:monte:MSE:M:1}, \ref{sup:fig:monte:MSE:M:2}, \ref{sup:fig:monte:DIV:M:1} and \ref{sup:fig:monte:DIV:M:2}.

%%%%% max MSE at true value

\begin{table}[ht]
\centering
\begin{tabular}{lrrrrrrrrr}
  \hline
  $\varepsilon$ & \multicolumn{3}{c}{0.05} & \multicolumn{3}{c}{0.1} & \multicolumn{3}{c}{0.2} \\
  \hline  
  $s$ & 2 & 5 & 10 & 2 & 5 & 10 & 2 & 5 & 10 \\
  \hline
  & \multicolumn{9}{c}{$\alpha=0.25$} \\
  \hline
$p=1$ & 0.11 & 0.07 & 0.04 & 0.11 & 0.11 & 0.08 & 0.26 & 0.27 & 0.59 \\ 
  2 & 0.08 & 0.04 & 0.02 & 0.10 & 0.06 & 0.10 & 0.18 & 0.36 & 0.37 \\ 
  5 & 0.03 & 0.02 & 0.02 & 0.06 & 0.05 & 0.09 & 0.18 & 0.34 & 0.36 \\ 
 10 & 0.01 & 0.01 & 0.01 & 0.05 & 0.04 & 0.04 & 0.17 & 0.16 & 0.16 \\ 
 20 & 0.01 & 0.01 & 0.01 & 0.04 & 0.04 & 0.04 & 0.16 & 0.16 & 0.16 \\ 
  \hline
  & \multicolumn{9}{c}{$\alpha=0.5$} \\
  \hline
$p=1$ & 0.10 & 0.06 & 0.05 & 0.10 & 0.10 & 0.06 & 0.15 & 0.15 & 0.13 \\ 
  2 & 0.09 & 0.04 & 0.02 & 0.09 & 0.05 & 0.05 & 0.11 & 0.17 & 0.17 \\ 
  5 & 0.03 & 0.01 & 0.01 & 0.03 & 0.03 & 0.04 & 0.06 & 0.16 & 0.17 \\ 
 10 & 0.01 & 0.01 & 0.00 & 0.02 & 0.01 & 0.04 & 0.05 & 0.16 & 0.16 \\ 
 20 & 0.01 & 0.00 & 0.00 & 0.01 & 0.01 & 0.01 & 0.04 & 0.04 & 0.04 \\
  \hline
  & \multicolumn{9}{c}{$\alpha=0.75$} \\
  \hline  
$p=1$ & 0.14 & 0.06 & 0.04 & 0.14 & 0.10 & 0.05 & 0.26 & 0.14 & 0.15 \\ 
  2 & 0.09 & 0.04 & 0.02 & 0.09 & 0.05 & 0.05 & 0.18 & 0.18 & 0.17 \\ 
  5 & 0.03 & 0.02 & 0.01 & 0.03 & 0.05 & 0.04 & 0.11 & 0.17 & 0.17 \\ 
 10 & 0.01 & 0.01 & 0.01 & 0.02 & 0.04 & 0.04 & 0.15 & 0.16 & 0.16 \\ 
 20 & 0.01 & 0.00 & 0.00 & 0.01 & 0.01 & 0.01 & 0.04 & 0.04 & 0.04 \\
  \hline
  & \multicolumn{9}{c}{$\alpha=1$} \\
  \hline
$p=1$ & 0.19 & 0.05 & 0.05 & 0.19 & 0.13 & 0.07 & 0.27 & 0.22 & 0.15 \\ 
  2 & 0.09 & 0.04 & 0.02 & 0.10 & 0.05 & 0.05 & 0.11 & 0.17 & 0.16 \\ 
  5 & 0.03 & 0.01 & 0.01 & 0.03 & 0.02 & 0.04 & 0.06 & 0.15 & 0.16 \\ 
 10 & 0.01 & 0.01 & 0.00 & 0.02 & 0.01 & 0.01 & 0.05 & 0.04 & 0.14 \\ 
 20 & 0.01 & 0.00 & 0.00 & 0.01 & 0.01 & 0.01 & 0.04 & 0.04 & 0.04 \\ 
   \hline
\end{tabular}
\caption{Monte Carlo simulation. Estimated maximum Mean Square Error  for the proposed method, starting from the true values, under contamination ($\varepsilon = 0.05, 0.1, 0.2$) for different values of $\alpha$, number of variables $p$ and sample size factor $s$.}
\label{sup:tab:MSE:max:1}
\end{table}

\begin{table}[ht]
\centering
\begin{tabular}{lrrrrrrrrr}
  \hline
  $\varepsilon$ & \multicolumn{3}{c}{0.3} & \multicolumn{3}{c}{0.4} & \multicolumn{3}{c}{0.45} \\
  \hline    
  $s$ & 2 & 5 & 10 & 2 & 5 & 10 & 2 & 5 & 10 \\
  \hline
  & \multicolumn{9}{c}{$\alpha=0.25$} \\
  \hline  
$p=1$ & 0.26 & 0.80 & 1.47 & 0.95 & 1.49 & 2.59 & 0.95 & 1.49 & 3.31 \\ 
  2 & 0.39 & 0.90 & 0.82 & 0.70 & 1.44 & 1.47 & 0.70 & 1.75 & 1.77 \\ 
  5 & 0.39 & 0.83 & 0.82 & 0.67 & 1.46 & 1.45 & 0.86 & 1.83 & 1.83 \\ 
 10 & 0.37 & 0.37 & 0.36 & 0.66 & 0.65 & 1.42 & 0.81 & 1.73 & 1.80 \\ 
 20 & 0.36 & 0.36 & 0.36 & 0.64 & 0.64 & 0.64 & 0.81 & 0.81 & 0.81 \\ 
  \hline
  & \multicolumn{9}{c}{$\alpha=0.5$} \\
  \hline  
$p=1$ & 0.15 & 0.19 & 0.35 & 0.66 & 0.48 & 0.66 & 0.66 & 0.48 & 1.52 \\ 
  2 & 0.14 & 0.40 & 0.37 & 0.25 & 0.68 & 0.67 & 0.25 & 0.87 & 0.81 \\ 
  5 & 0.13 & 0.37 & 0.37 & 0.53 & 0.68 & 0.66 & 0.78 & 0.90 & 0.85 \\ 
 10 & 0.28 & 0.37 & 0.36 & 0.62 & 0.65 & 0.64 & 0.78 & 0.81 & 0.81 \\ 
 20 & 0.09 & 0.09 & 0.09 & 0.17 & 0.16 & 0.16 & 0.25 & 0.22 & 0.21 \\ 
  \hline
  & \multicolumn{9}{c}{$\alpha=0.75$} \\
  \hline  
$p=1$ & 0.26 & 0.38 & 0.40 & 1.01 & 0.69 & 0.67 & 1.01 & 0.69 & 0.85 \\ 
  2 & 0.38 & 0.43 & 0.37 & 0.71 & 0.68 & 0.67 & 0.71 & 0.85 & 0.80 \\ 
  5 & 0.34 & 0.38 & 0.37 & 0.63 & 0.66 & 0.65 & 0.84 & 0.84 & 0.83 \\ 
 10 & 0.35 & 0.37 & 0.36 & 0.64 & 0.65 & 0.64 & 0.78 & 0.81 & 0.81 \\ 
 20 & 0.09 & 0.09 & 0.09 & 0.18 & 0.17 & 0.16 & 0.67 & 0.54 & 0.22 \\ 
  \hline
  & \multicolumn{9}{c}{$\alpha=1$} \\
  \hline  
$p=1$ & 0.27 & 0.28 & 0.35 & 1.00 & 0.94 & 0.68 & 1.00 & 0.94 & 0.87 \\ 
  2 & 0.15 & 0.43 & 0.38 & 0.28 & 0.74 & 0.70 & 0.28 & 0.94 & 0.85 \\ 
  5 & 0.11 & 0.35 & 0.36 & 0.46 & 0.66 & 0.65 & 0.93 & 0.81 & 0.82 \\ 
 10 & 0.10 & 0.10 & 0.36 & 0.48 & 0.61 & 0.63 & 0.60 & 0.80 & 0.80 \\ 
 20 & 0.10 & 0.09 & 0.09 & 0.67 & 0.63 & 0.61 & 0.54 & 0.54 & 0.82 \\ 
   \hline
\end{tabular}
\caption{Monte Carlo simulation. Estimated maximum Mean Square Error for the proposed method, starting from the true values, under contamination ($\varepsilon = 0.3, 0.4, 0.45$) for different values of $\alpha$, number of variables $p$ and sample size factor $s$.}
\label{sup:tab:MSE:max:2}
\end{table}

%%%%%%%% max CovMest

\begin{table}[ht]
\centering
\begin{tabular}{rrrrrrrrrr}
  \hline
  $\varepsilon$ & \multicolumn{3}{c}{0.05} & \multicolumn{3}{c}{0.1} & \multicolumn{3}{c}{0.2} \\
  \hline  
  $s$ & 2 & 5 & 10 & 2 & 5 & 10 & 2 & 5 & 10 \\
  \hline
$p=1$ & 0.21 & 0.06 & 0.04 & 0.21 & 0.11 & 0.04 & 0.21 & 0.14 & 0.13 \\ 
  2 & 0.12 & 0.06 & 0.03 & 0.16 & 0.06 & 0.04 & 0.29 & 0.34 & 0.37 \\ 
  5 & 0.04 & 0.01 & 0.01 & 0.05 & 0.02 & 0.02 & 0.78 & 0.46 & 0.47 \\ 
 10 & 0.01 & 0.01 & 0.00 & 0.02 & 0.01 & 0.01 & 0.50 & 0.44 & 0.43 \\ 
 20 & 0.01 & 0.00 & 0.00 & 0.01 & 0.01 & 0.04 & 0.45 & 0.42 & 0.19 \\
  \hline
  $\varepsilon$ & \multicolumn{3}{c}{0.3} & \multicolumn{3}{c}{0.4} & \multicolumn{3}{c}{0.45} \\
  \hline  
$p=1$ & 0.21 & 0.42 & 0.50 & 25.60 & 2.10 & 2.22 & 25.60 & 2.10 & 5.52 \\ 
  2 & 2.84 & 9.19 & 5.22 & 43.37 & 45.04 & 49.19 & 43.37 & 60.23 & 70.93 \\ 
  5 & 25.91 & 4.69 & 3.22 & 48.30 & 52.74 & 31.51 & 61.40 & 72.14 & 70.57 \\ 
 10 & 16.52 & 4.43 & 2.98 & 53.76 & 27.66 & 25.64 & 68.67 & 68.86 & 40.89 \\ 
 20 & 12.69 & 11.78 & 11.59 & 30.29 & 24.98 & 24.36 & 73.77 & 39.39 & 37.07 \\
   \hline
\end{tabular}
\caption{Monte Carlo simulation. Estimated maximum Mean Square Error for \texttt{CovMest} method under contamination ($\varepsilon = 0.05, 0.1, 0.2, 0.3, 0.4, 0.45$) for different number of variables $p$ and sample size factor $s$.}
\label{sup:tab:MSE:max:CovMest}
\end{table}

%%%% DIV with true values

\begin{table}[ht]
\centering
\begin{tabular}{rrrrrrrrrr}
  \hline
  $\varepsilon$ & \multicolumn{3}{c}{0.05} & \multicolumn{3}{c}{0.1} & \multicolumn{3}{c}{0.2} \\
  \hline  
  $s$ & 2 & 5 & 10 & 2 & 5 & 10 & 2 & 5 & 10 \\
  \hline
  & \multicolumn{9}{c}{$\alpha=0.25$} \\
  \hline
$p=1$ & 0.23 & 0.07 & 0.15 & 0.23 & 0.14 & 0.31 & 0.66 & 0.37 & 0.99 \\ 
  2 & 0.29 & 0.20 & 0.17 & 0.35 & 0.44 & 0.62 & 0.55 & 1.32 & 1.49 \\ 
  5 & 0.59 & 0.41 & 0.97 & 1.06 & 0.93 & 2.45 & 2.08 & 4.75 & 5.62 \\ 
 10 & 1.14 & 1.00 & 0.90 & 2.50 & 2.25 & 2.14 & 4.94 & 5.50 & 5.92 \\ 
 20 & 2.65 & 2.43 & 2.32 & 5.53 & 5.30 & 5.22 & 10.64 & 10.67 & 12.03 \\ 
  \hline
  & \multicolumn{9}{c}{$\alpha=0.5$} \\
  \hline
$p=1$ & 0.70 & 0.11 & 0.04 & 0.70 & 0.19 & 0.10 & 1.17 & 0.43 & 0.29 \\ 
  2 & 0.41 & 0.15 & 0.06 & 0.48 & 0.15 & 0.17 & 0.83 & 0.37 & 0.42 \\ 
  5 & 0.43 & 0.19 & 0.33 & 0.48 & 0.39 & 0.79 & 0.75 & 1.82 & 1.78 \\ 
 10 & 0.51 & 0.25 & 0.17 & 0.72 & 0.43 & 2.10 & 1.28 & 4.50 & 4.46 \\ 
 20 & 0.74 & 0.47 & 0.38 & 1.29 & 1.00 & 0.91 & 2.60 & 2.28 & 2.17 \\ 
  \hline
  & \multicolumn{9}{c}{$\alpha=0.75$} \\
  \hline
$p=1$ & 0.24 & 0.10 & 0.04 & 0.24 & 0.17 & 0.07 & 0.69 & 0.24 & 0.13 \\ 
  2 & 0.30 & 0.13 & 0.06 & 0.34 & 0.15 & 0.17 & 0.59 & 0.40 & 0.43 \\ 
  5 & 0.45 & 0.38 & 0.33 & 0.53 & 0.85 & 0.81 & 1.43 & 1.94 & 1.78 \\ 
 10 & 0.50 & 0.26 & 0.81 & 0.71 & 2.11 & 2.12 & 4.34 & 4.54 & 4.47 \\ 
 20 & 0.74 & 0.47 & 0.38 & 1.29 & 1.00 & 0.91 & 2.61 & 2.28 & 2.18 \\ 
  \hline
  & \multicolumn{9}{c}{$\alpha=1$} \\
  \hline
$p=1$ & 1.08 & 0.20 & 0.18 & 1.08 & 0.56 & 0.29 & 1.89 & 0.72 & 0.51 \\ 
  2 & 0.44 & 0.16 & 0.07 & 0.62 & 0.17 & 0.17 & 1.01 & 0.40 & 0.43 \\ 
  5 & 0.47 & 0.18 & 0.12 & 0.54 & 0.27 & 0.65 & 0.87 & 1.62 & 1.75 \\ 
 10 & 0.51 & 0.25 & 0.16 & 0.74 & 0.43 & 0.35 & 1.32 & 0.96 & 4.11 \\
 20 & 0.75 & 0.47 & 0.38 & 1.31 & 1.00 & 0.91 & 2.68 & 2.32 & 2.20 \\
\hline
\end{tabular} 
\caption{Monte Carlo simulation. Estimated maximum Kullback--Leibler divergence for the proposed method, starting from the true values, under contamination ($\varepsilon = 0.05, 0.1, 0.2$) for different values of $\alpha$, number of variables $p$ and sample size factor $s$.}
\label{sup:tab:DIV:max:1}
\end{table}

\begin{table}[ht]
\centering
\begin{tabular}{rrrrrrrrrr}
  \hline
  $\varepsilon$ & \multicolumn{3}{c}{0.3} & \multicolumn{3}{c}{0.4} & \multicolumn{3}{c}{0.45} \\
  \hline    
  $s$ & 2 & 5 & 10 & 2 & 5 & 10 & 2 & 5 & 10 \\
  \hline
  & \multicolumn{9}{c}{$\alpha=0.25$} \\
  \hline  
$p=1$ & 0.66 & 0.56 & 1.50 & 0.78 & 0.61 & 1.82 & 0.78 & 0.61 & 1.89 \\ 
  2 & 0.75 & 2.02 & 2.30 & 1.07 & 2.43 & 4.91 & 1.07 & 2.59 & 6.61 \\ 
  5 & 2.97 & 8.23 & 8.82 & 4.12 & 11.03 & 19.13 & 5.24 & 15.03 & 25.20 \\ 
 10 & 7.83 & 9.24 & 10.61 & 11.60 & 17.39 & 23.08 & 13.12 & 23.66 & 28.45 \\ 
 20 & 15.31 & 18.44 & 19.38 & 18.62 & 24.98 & 25.22 & 26.13 & 27.45 & 27.55 \\ 
  \hline
  & \multicolumn{9}{c}{$\alpha=0.5$} \\
  \hline  
$p=1$ & 1.17 & 0.54 & 0.35 & 1.90 & 0.82 & 0.52 & 1.90 & 0.82 & 0.57 \\ 
  2 & 1.23 & 0.64 & 0.63 & 1.83 & 0.90 & 0.78 & 1.83 & 1.04 & 0.85 \\ 
  5 & 1.36 & 2.69 & 2.63 & 2.95 & 3.55 & 4.31 & 3.30 & 4.04 & 5.52 \\ 
 10 & 5.58 & 6.38 & 6.23 & 7.35 & 7.44 & 7.30 & 7.74 & 7.73 & 7.58 \\ 
 20 & 4.09 & 3.69 & 3.54 & 6.07 & 5.34 & 5.50 & 12.51 & 6.65 & 8.36 \\ 
  \hline
  & \multicolumn{9}{c}{$\alpha=0.75$} \\
  \hline  
$p=1$ & 0.69 & 0.39 & 0.27 & 0.99 & 0.62 & 0.38 & 0.99 & 0.62 & 0.48 \\ 
  2 & 0.87 & 0.64 & 0.67 & 1.35 & 0.87 & 0.84 & 1.35 & 0.97 & 0.89 \\ 
  5 & 2.74 & 2.70 & 3.09 & 3.32 & 3.46 & 5.09 & 3.54 & 4.88 & 5.97 \\ 
 10 & 6.20 & 6.39 & 6.24 & 7.50 & 7.44 & 7.30 & 7.80 & 7.73 & 7.58 \\ 
 20 & 4.14 & 3.71 & 3.55 & 6.61 & 5.54 & 6.87 & 35.19 & 30.36 & 9.05 \\ 
  \hline
  & \multicolumn{9}{c}{$\alpha=1$} \\
  \hline  
$p=1$ & 1.89 & 0.99 & 0.76 & 5.26 & 3.30 & 4.77 & 5.26 & 3.30 & 7.03 \\ 
  2 & 1.50 & 0.73 & 0.66 & 2.79 & 1.40 & 0.88 & 2.79 & 9.12 & 1.85 \\ 
  5 & 1.55 & 2.62 & 2.56 & 25.04 & 3.17 & 3.09 & 44.36 & 4.70 & 3.20 \\ 
 10 & 2.25 & 1.67 & 6.18 & 11.23 & 7.08 & 7.24 & 58.43 & 11.15 & 10.80 \\ 
 20 & 4.56 & 3.91 & 3.67 & 30.57 & 25.36 & 24.30 & 13.65 & 13.57 & 43.28 \\ 
   \hline
\end{tabular}
\caption{Monte Carlo simulation. Estimated maximum Kullback--Leibler divergence for the proposed method, starting from the true values, under contamination ($\varepsilon = 0.3, 0.4, 0.45$) for different values of $\alpha$, number of variables $p$ and sample size factor $s$.}
\label{sup:tab:DIV:max:2}
\end{table}

%%% CovMest DIV

\begin{table}[ht]
\centering
\begin{tabular}{rrrrrrrrrr}
  \hline
  $\varepsilon$ & \multicolumn{3}{c}{0.05} & \multicolumn{3}{c}{0.1} & \multicolumn{3}{c}{0.2} \\
  \hline  
  $s$ & 2 & 5 & 10 & 2 & 5 & 10 & 2 & 5 & 10 \\
  \hline
$p=1$ & 0.06 & 0.02 & 0.01 & 0.06 & 0.03 & 0.02 & 0.15 & 0.09 & 0.09 \\ 
  2 & 1.81 & 0.45 & 0.18 & 1.86 & 0.45 & 0.25 & 2.35 & 1.86 & 1.91 \\ 
  5 & 1.27 & 0.27 & 0.15 & 1.80 & 0.39 & 0.25 & 14.57 & 6.63 & 6.49 \\ 
 10 & 0.74 & 0.34 & 0.22 & 1.09 & 0.60 & 0.48 & 17.67 & 15.53 & 15.00 \\ 
 20 & 0.94 & 0.58 & 0.47 & 1.66 & 1.25 & 6.17 & 35.28 & 33.40 & 13.87 \\
  \hline
  $\varepsilon$ & \multicolumn{3}{c}{0.3} & \multicolumn{3}{c}{0.4} & \multicolumn{3}{c}{0.45} \\
  \hline 
$p=1$ & 0.15 & 0.39 & 0.37 & 2.61 & 1.44 & 1.56 & 2.61 & 1.44 & 2.66 \\ 
  2 & 15.90 & 44.70 & 22.27 & 133.02 & 189.47 & 181.54 & 133.02 & 238.58 & 289.30 \\ 
  5 & 342.52 & 50.82 & 33.48 & 454.35 & 506.65 & 228.86 & 390.53 & 645.47 & 566.42 \\ 
 10 & 401.55 & 106.91 & 69.60 & 1011.96 & 442.18 & 407.64 & 993.00 & 978.48 & 505.97 \\ 
 20 & 655.42 & 615.34 & 605.26 & 1026.02 & 863.24 & 837.07 & 2246.27 & 1064.64 & 1000.27 \\ 
   \hline
\end{tabular}
\caption{Monte Carlo simulation. Estimated maximum Kullback--Leibler divergence for \texttt{CovMest} method, under contamination ($\varepsilon = 0.05, 0.1, 0.2, 0.3, 0.4, 0.45$) for different number of variables $p$ and sample size factor $s$.}
\label{sup:tab:DIV:max:CovMest}
\end{table}

\clearpage

\begin{figure}
\centering
\includegraphics[width=0.45\textwidth]{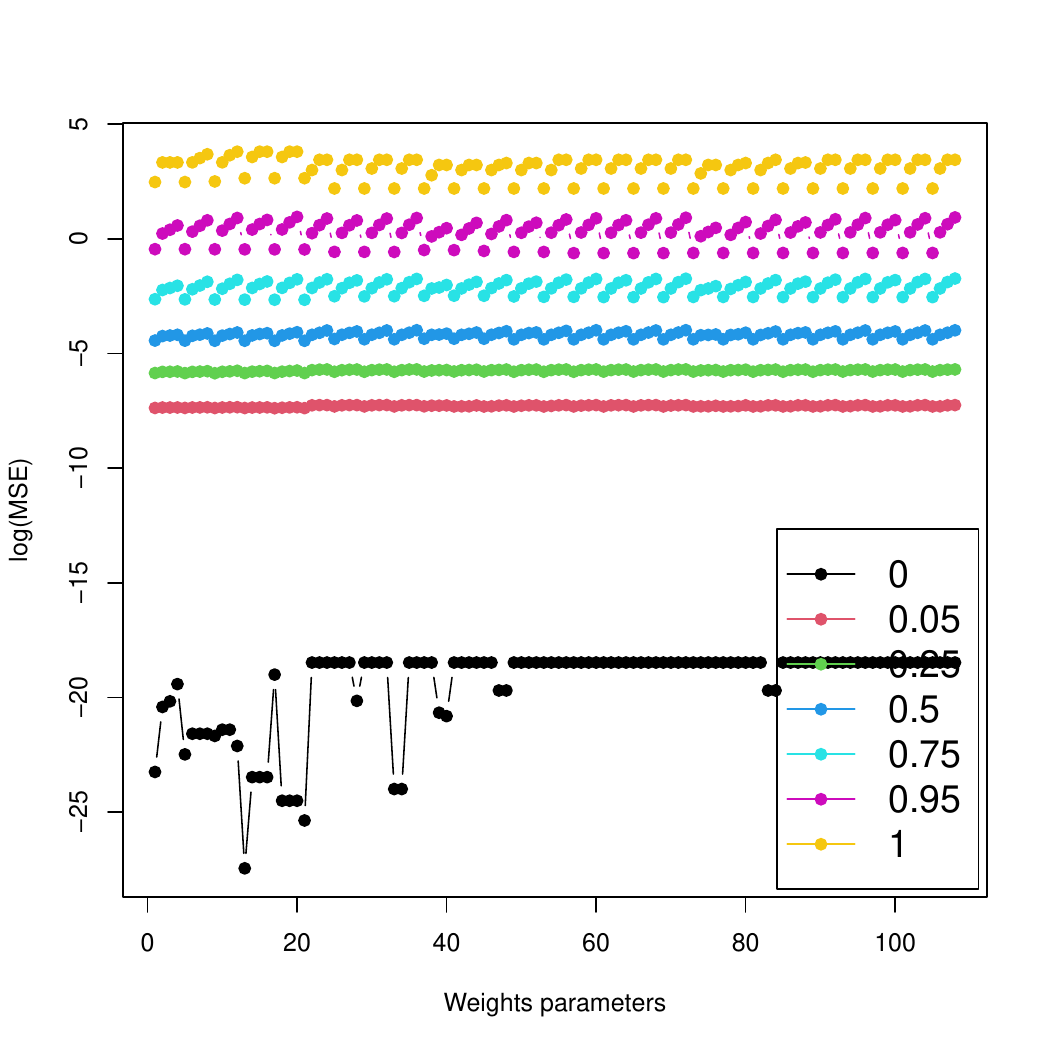}
\includegraphics[width=0.45\textwidth]{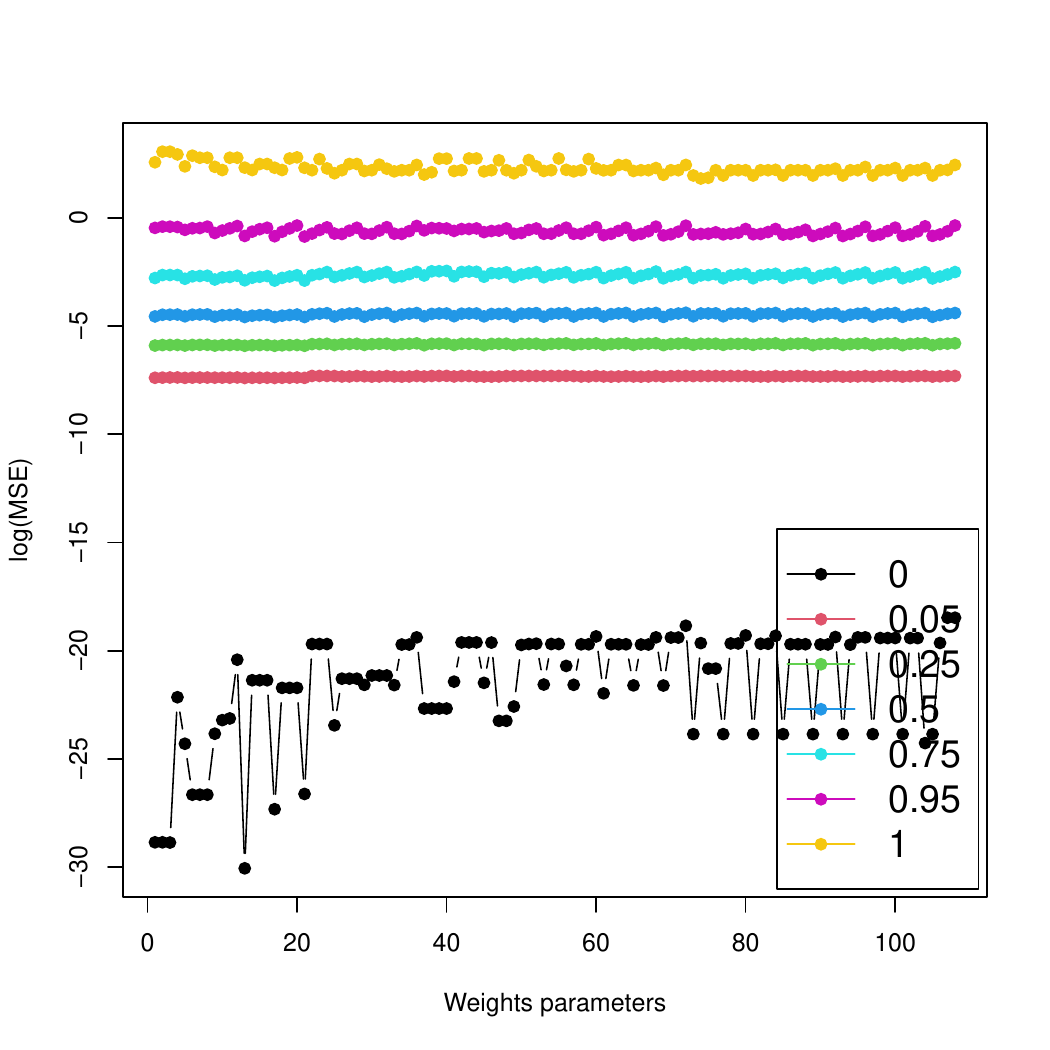} \\
\includegraphics[width=0.45\textwidth]{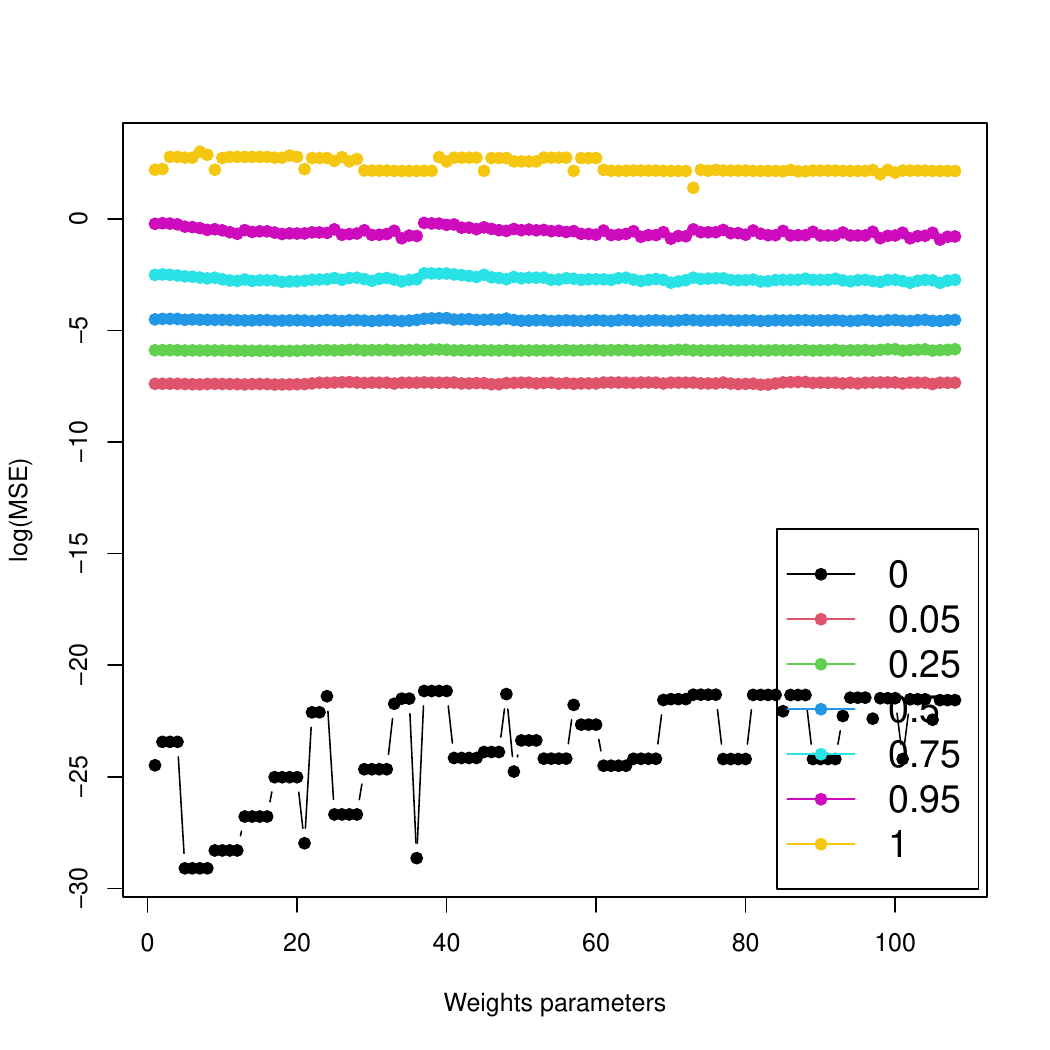}
\includegraphics[width=0.45\textwidth]{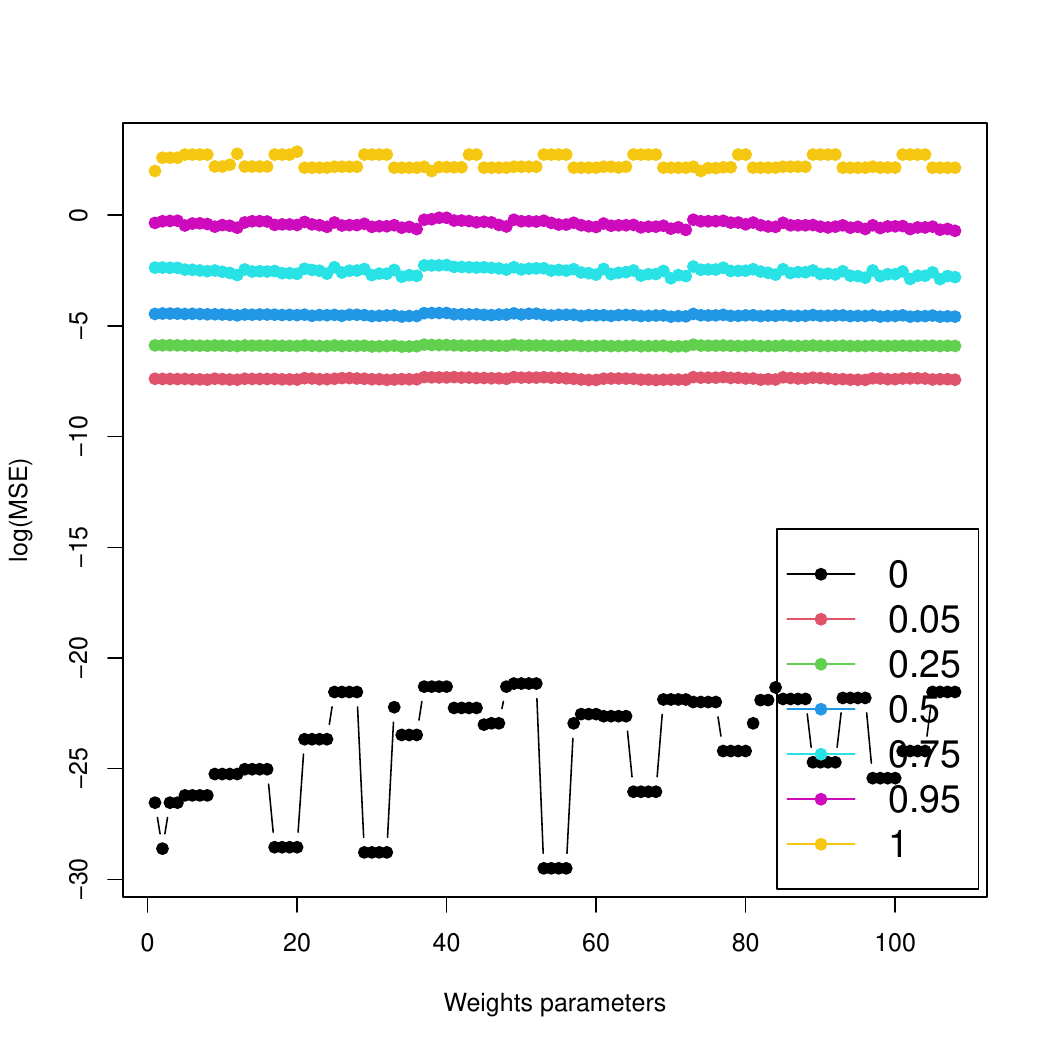}
\caption{Monte Carlo Simulation. Quantiles ($0$, $0.05$, $0.25$, $0.5$, $0.75$, $0.95$, $1$, see legenda) of the log Mean Square Error as a function of the weight function's parameters. Top left: $\alpha=0.25$, top right: $\alpha=0.5$, bottom left: $\alpha=0.75$, bottom right: $\alpha=1$.}
\label{sup:fig:monte:MSE}
\end{figure}  

\begin{figure}
\centering
\includegraphics[width=0.45\textwidth]{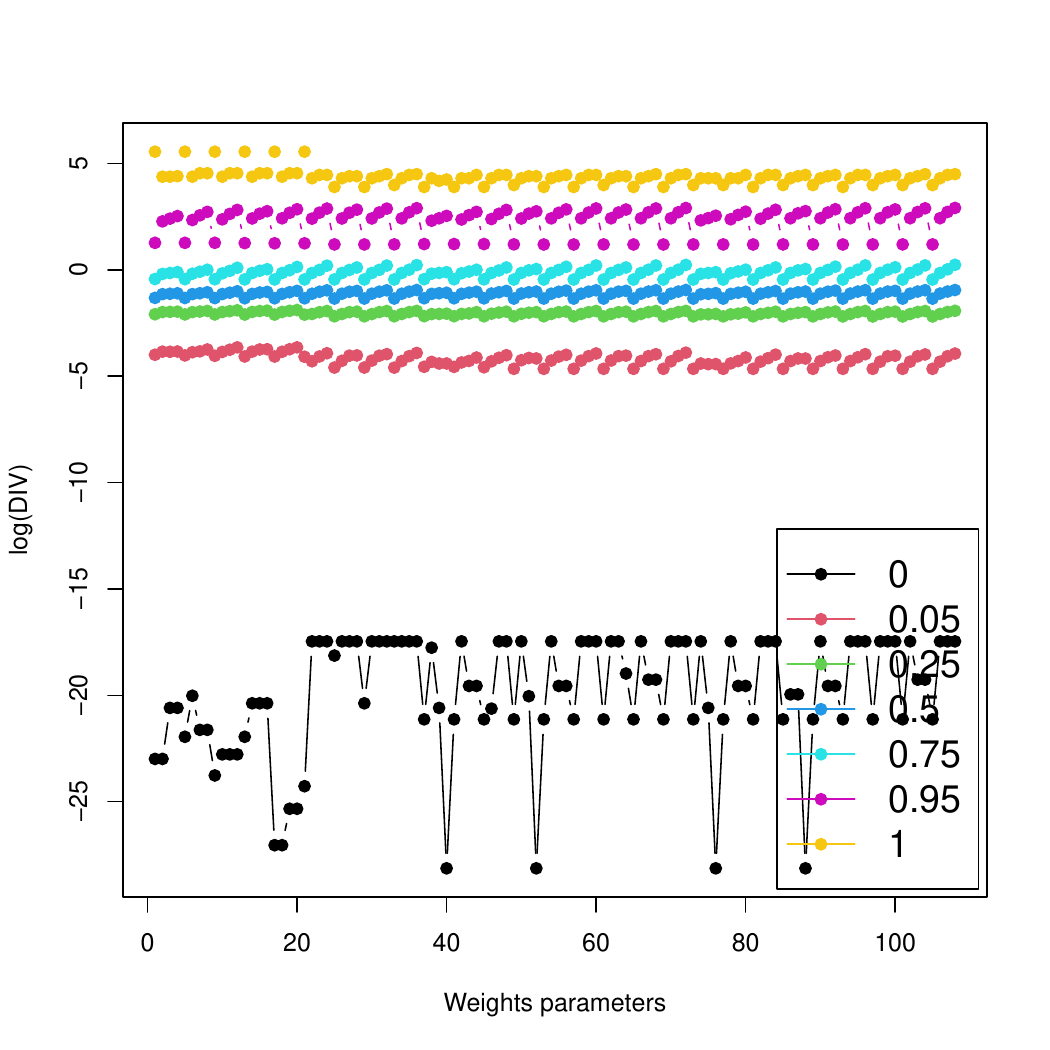}
\includegraphics[width=0.45\textwidth]{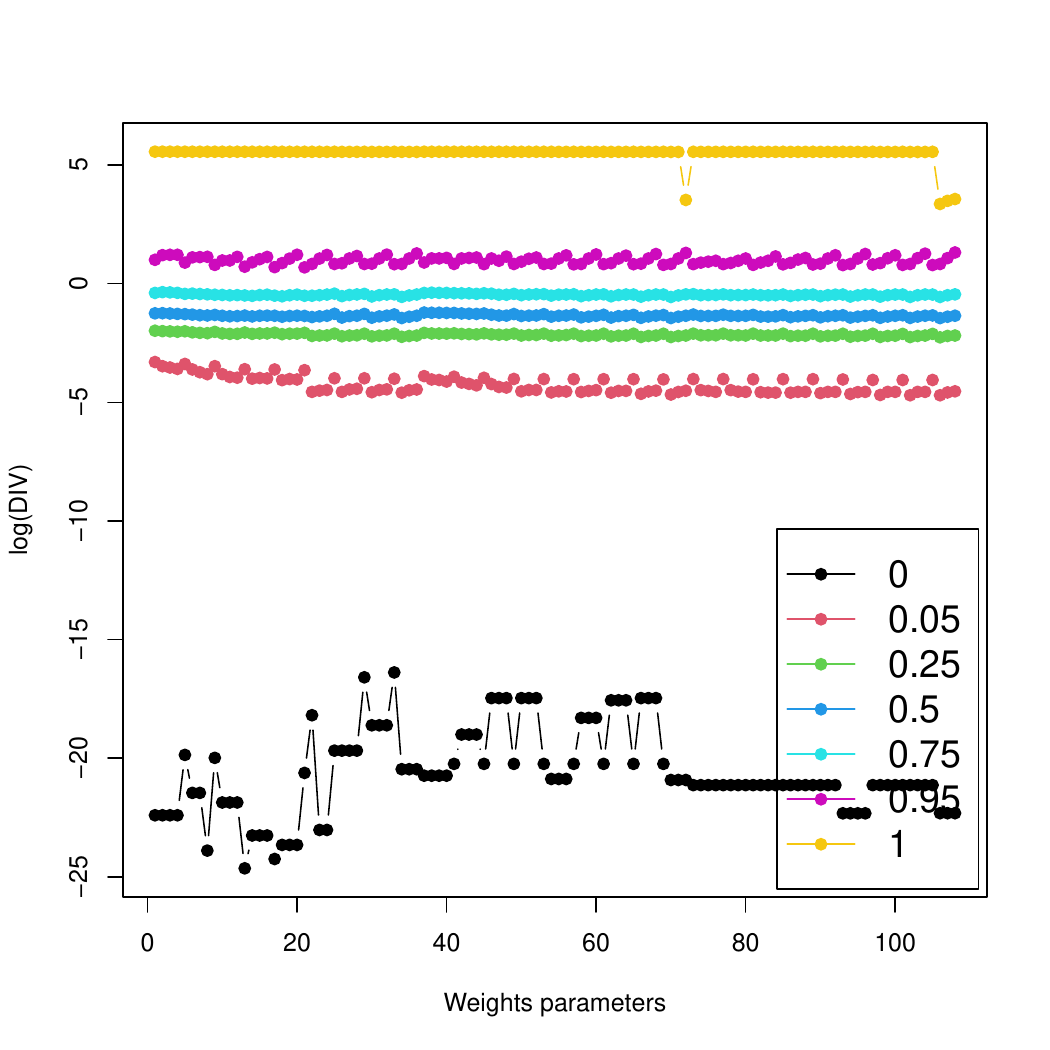} \\
\includegraphics[width=0.45\textwidth]{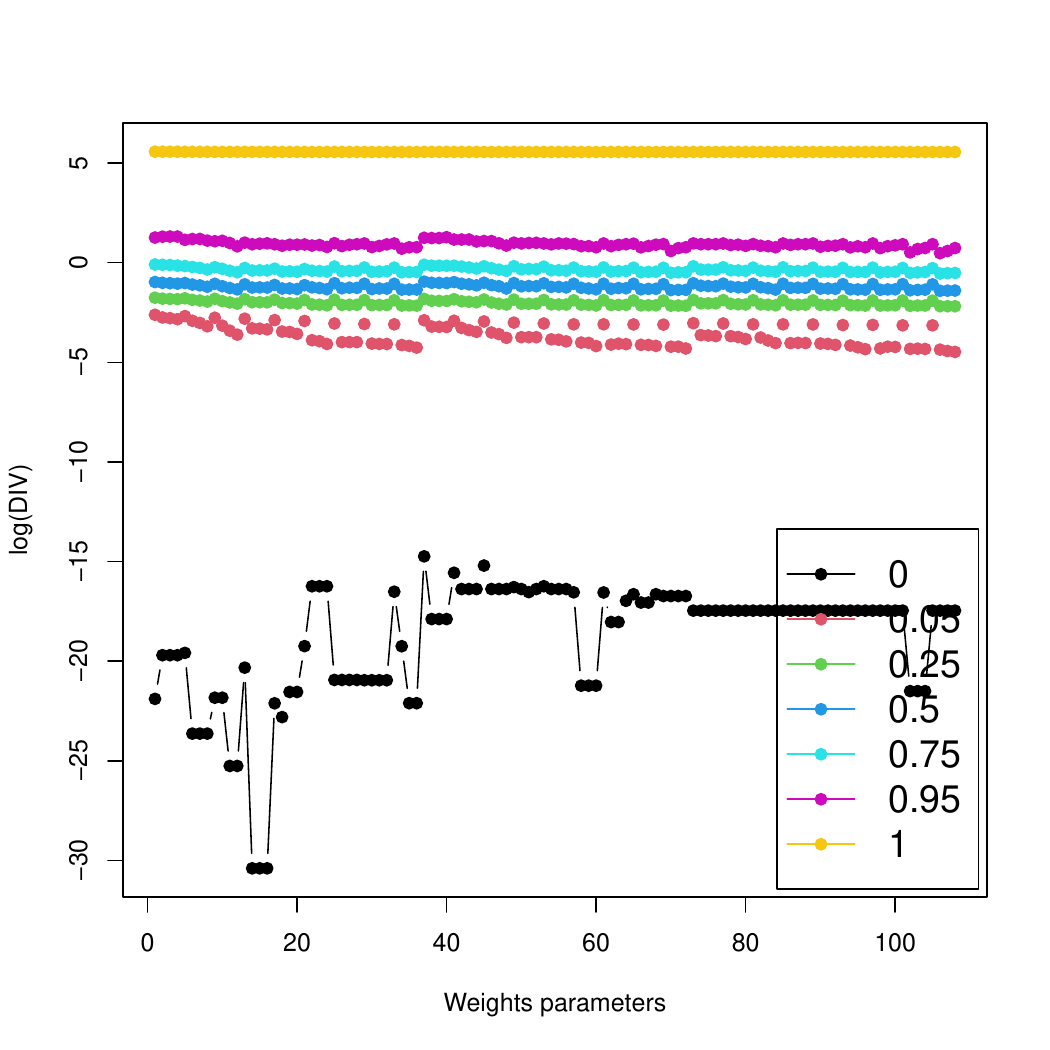}
\includegraphics[width=0.45\textwidth]{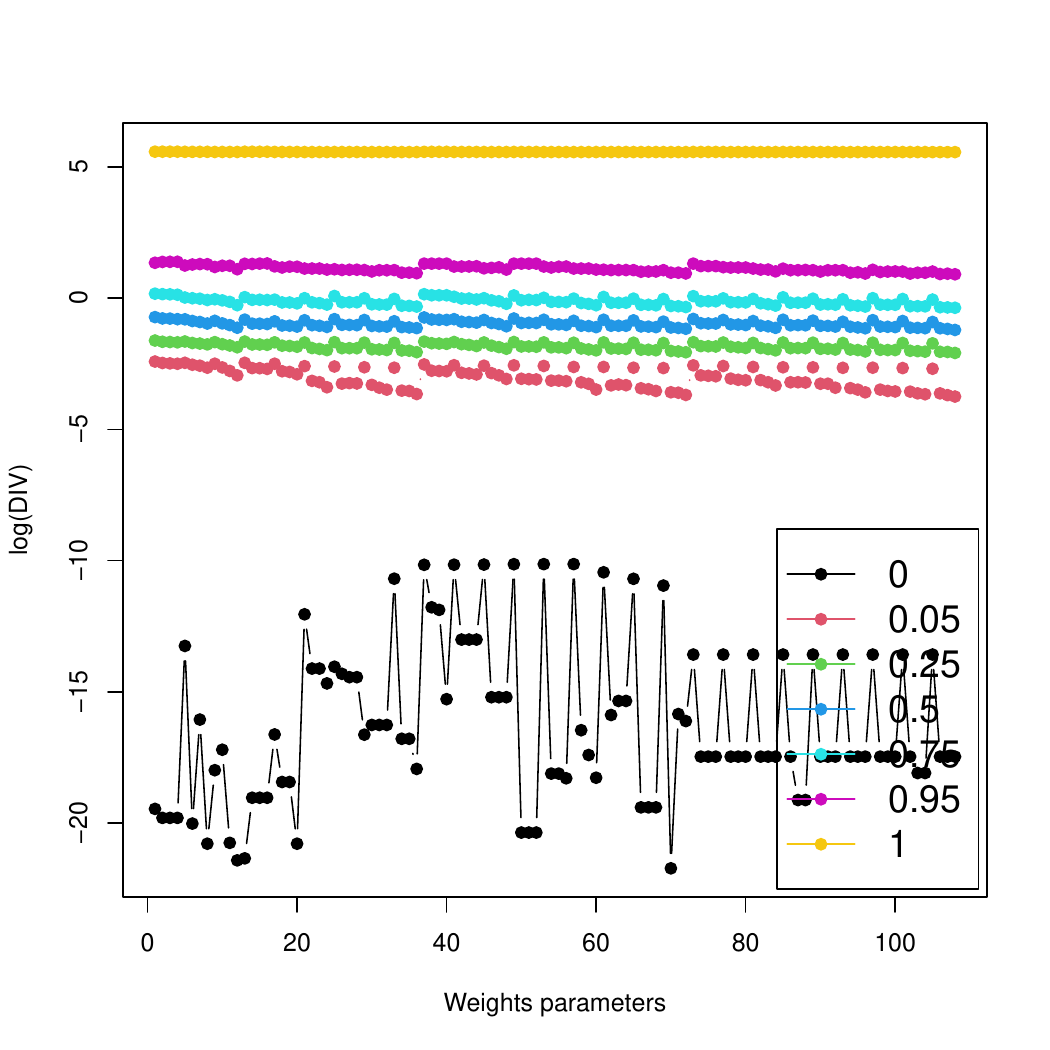}
\caption{Monte Carlo Simulation.  Quantiles ($0$, $0.05$, $0.25$, $0.5$, $0.75$, $0.95$, $1$, see legenda) of the log Kullback--Leibler Divergence as a function of the weight function's parameters. Top left: $\alpha=0.25$, top right: $\alpha=0.5$, bottom left: $\alpha=0.75$, bottom right: $\alpha=1$.}
\label{sup:fig:monte:DIV}
\end{figure}  

%%%%%%%%%%%%%%%%% MSE at true value

\begin{figure}
\centering
\includegraphics[width=0.32\textwidth]{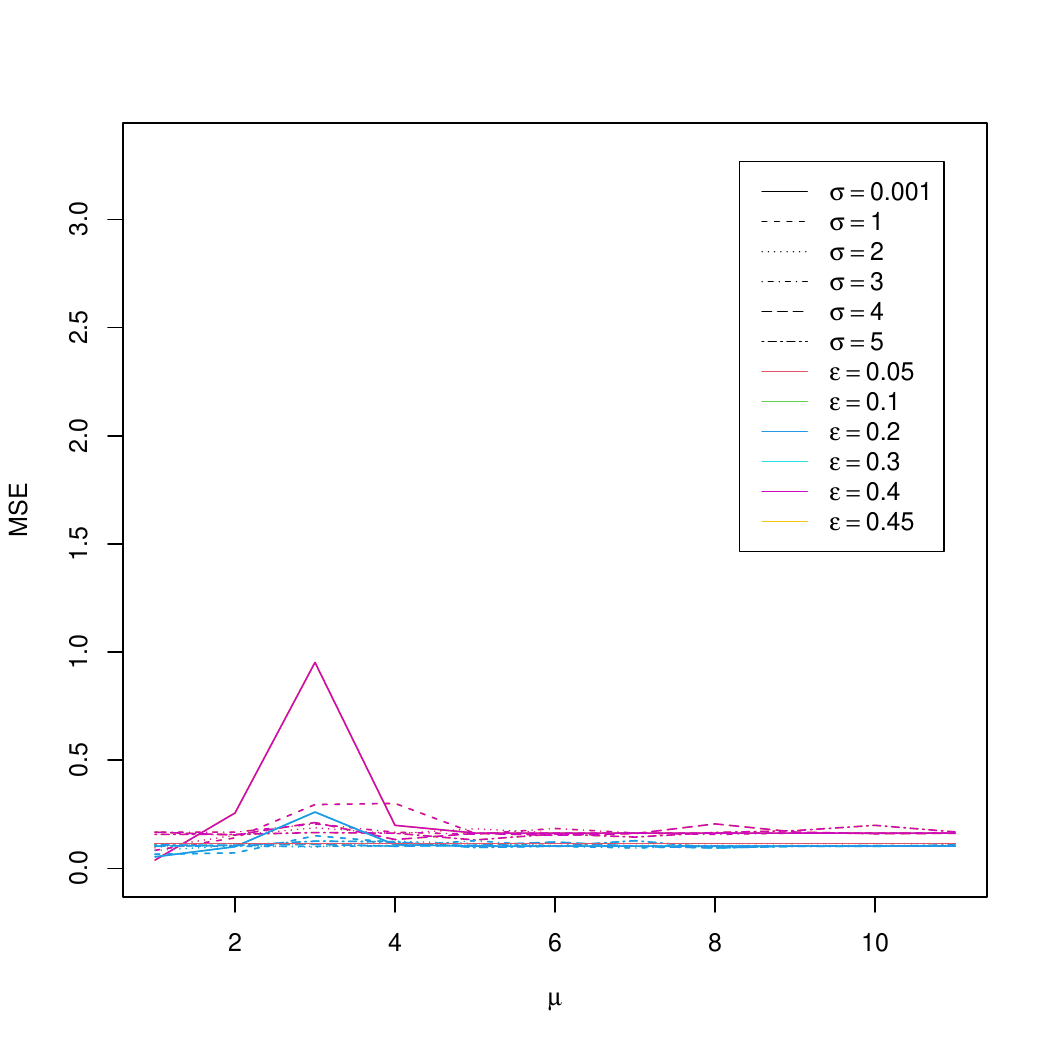}
\includegraphics[width=0.32\textwidth]{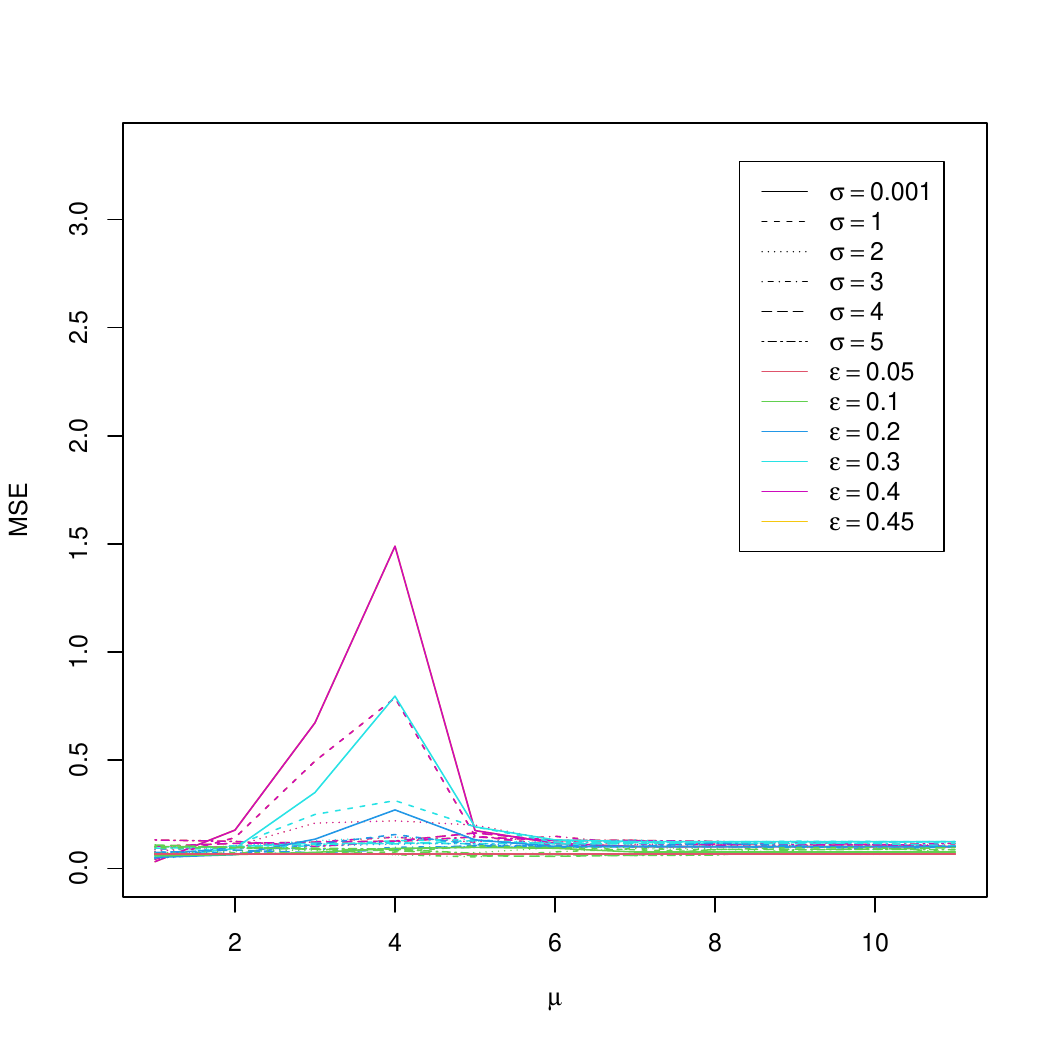} 
\includegraphics[width=0.32\textwidth]{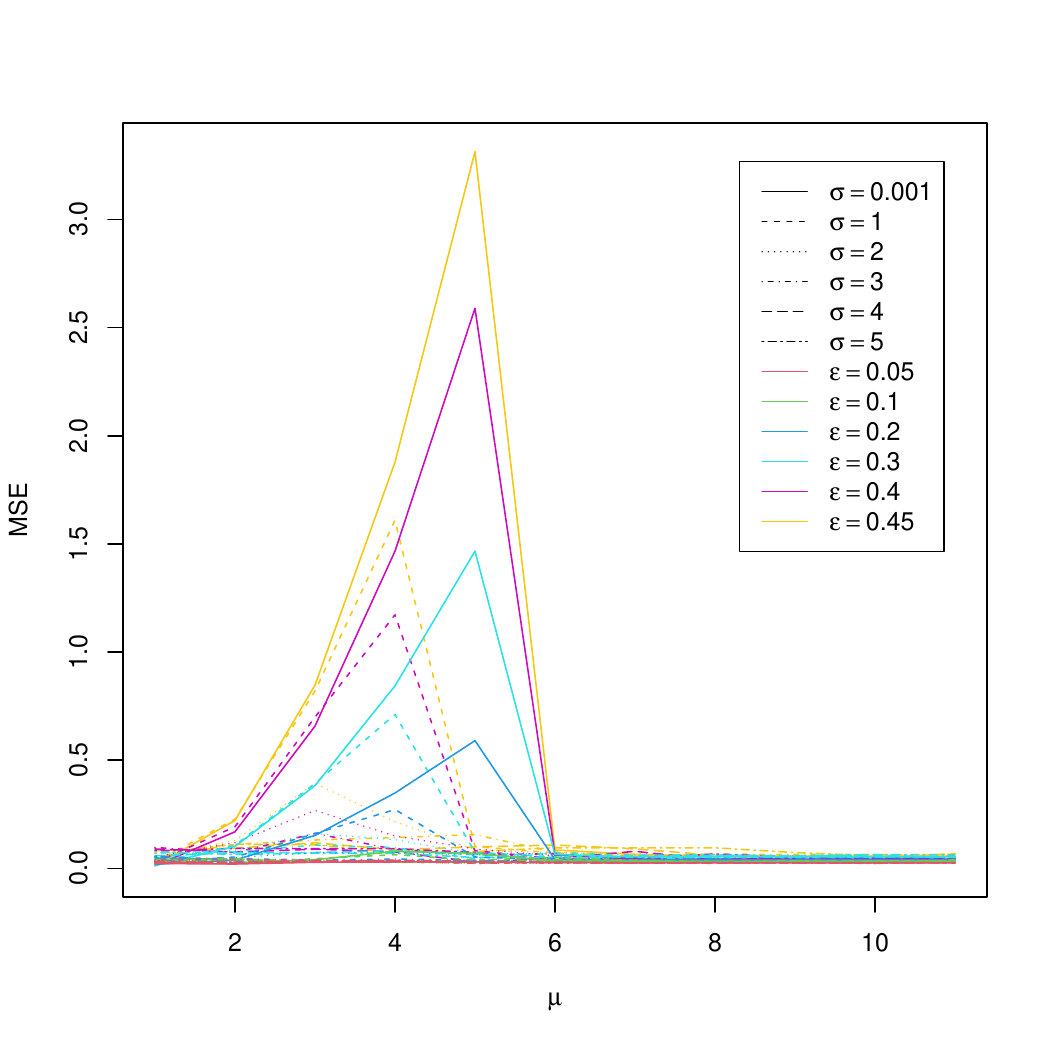} \\
\includegraphics[width=0.32\textwidth]{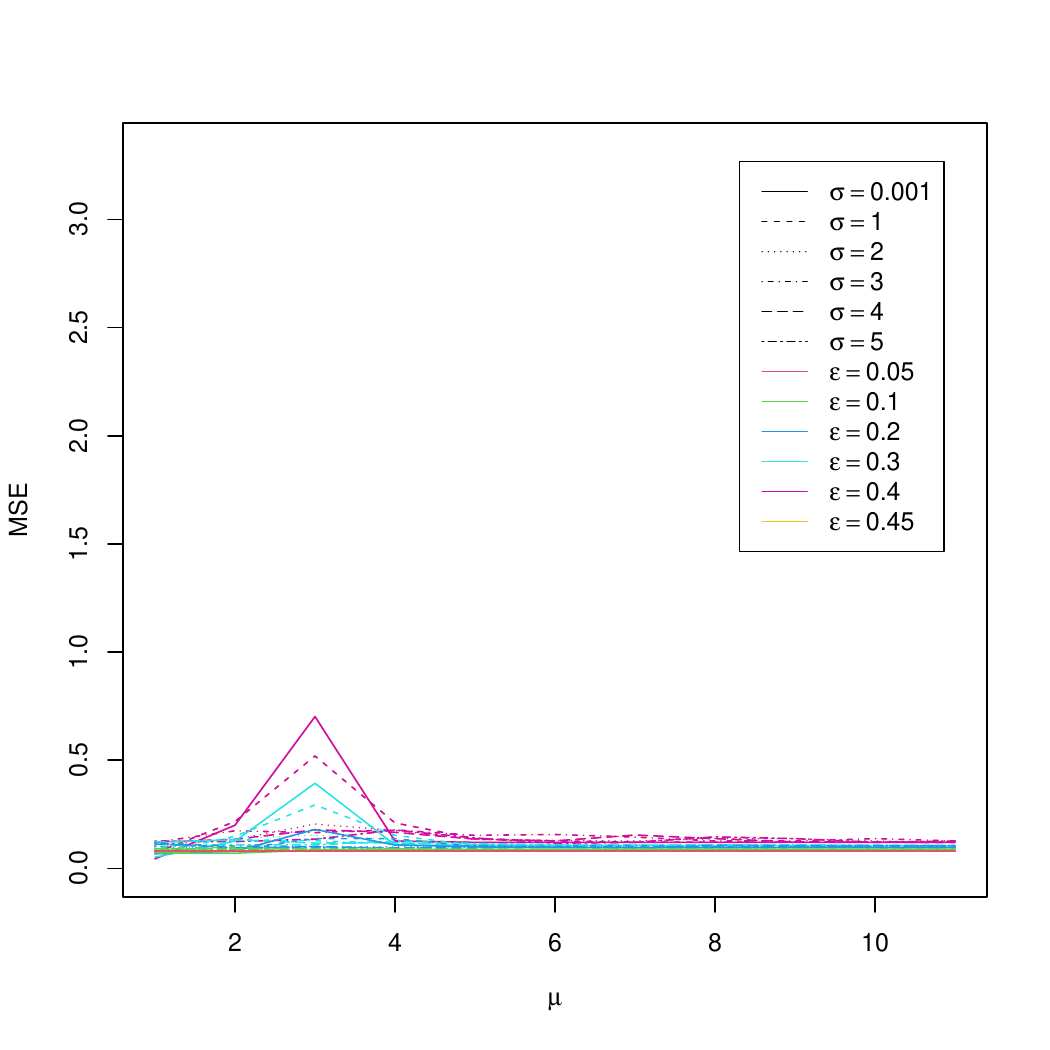}
\includegraphics[width=0.32\textwidth]{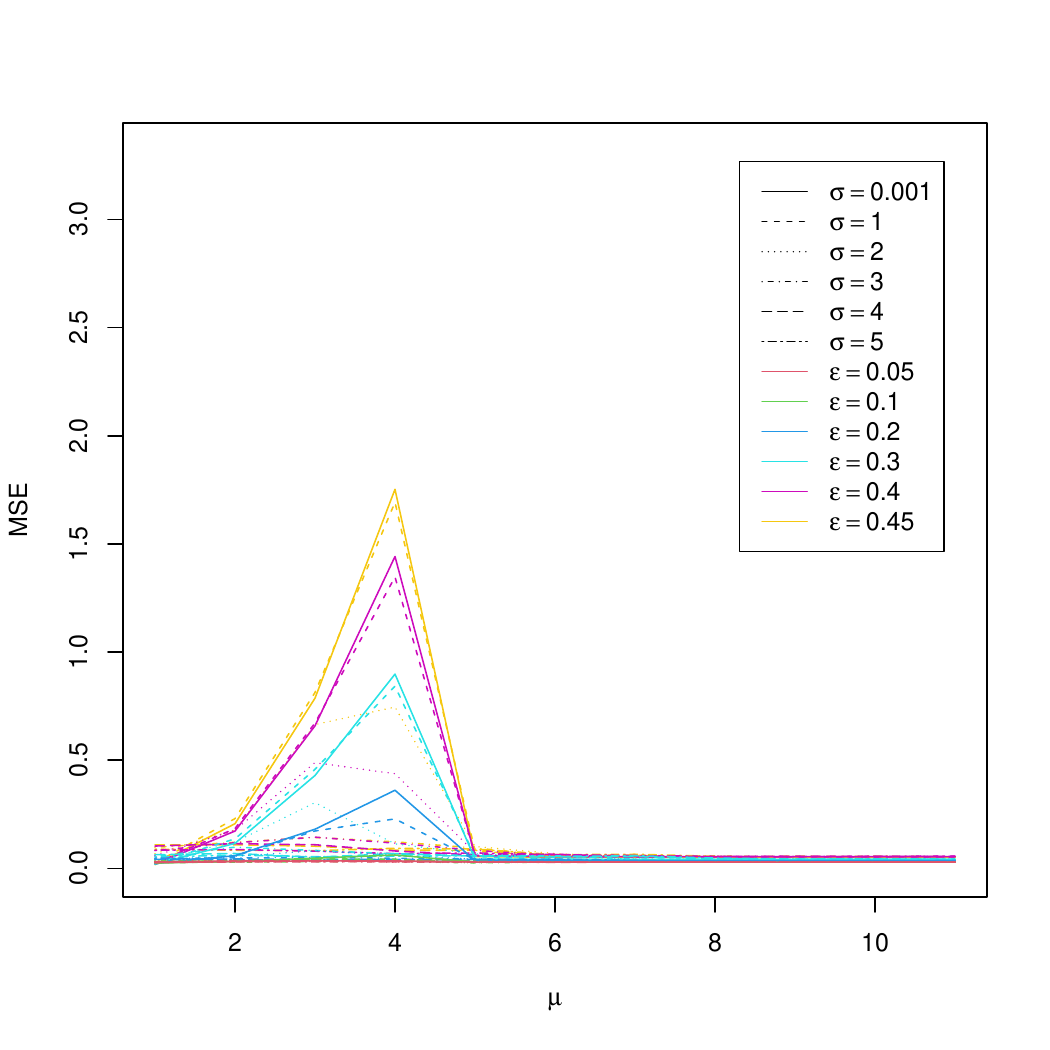} 
\includegraphics[width=0.32\textwidth]{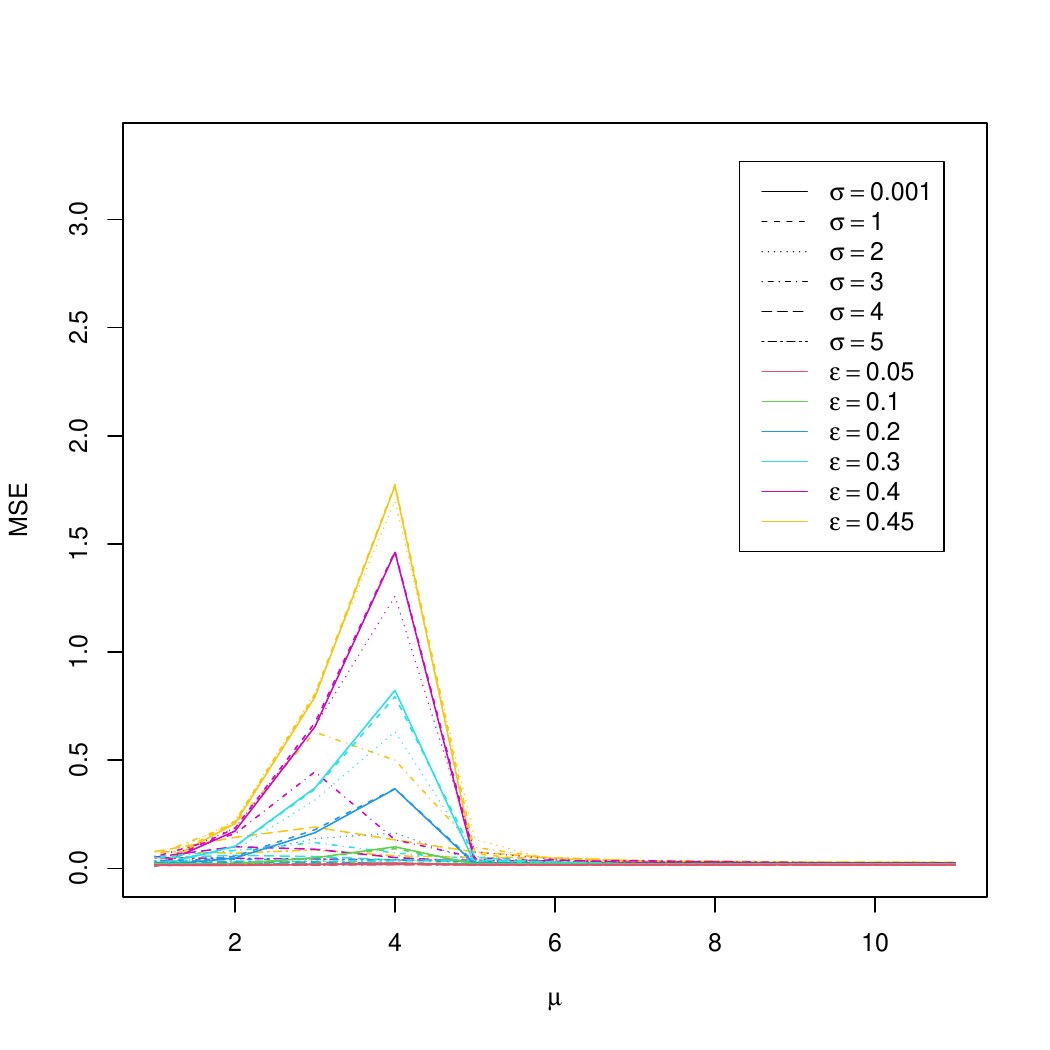} \\
\includegraphics[width=0.32\textwidth]{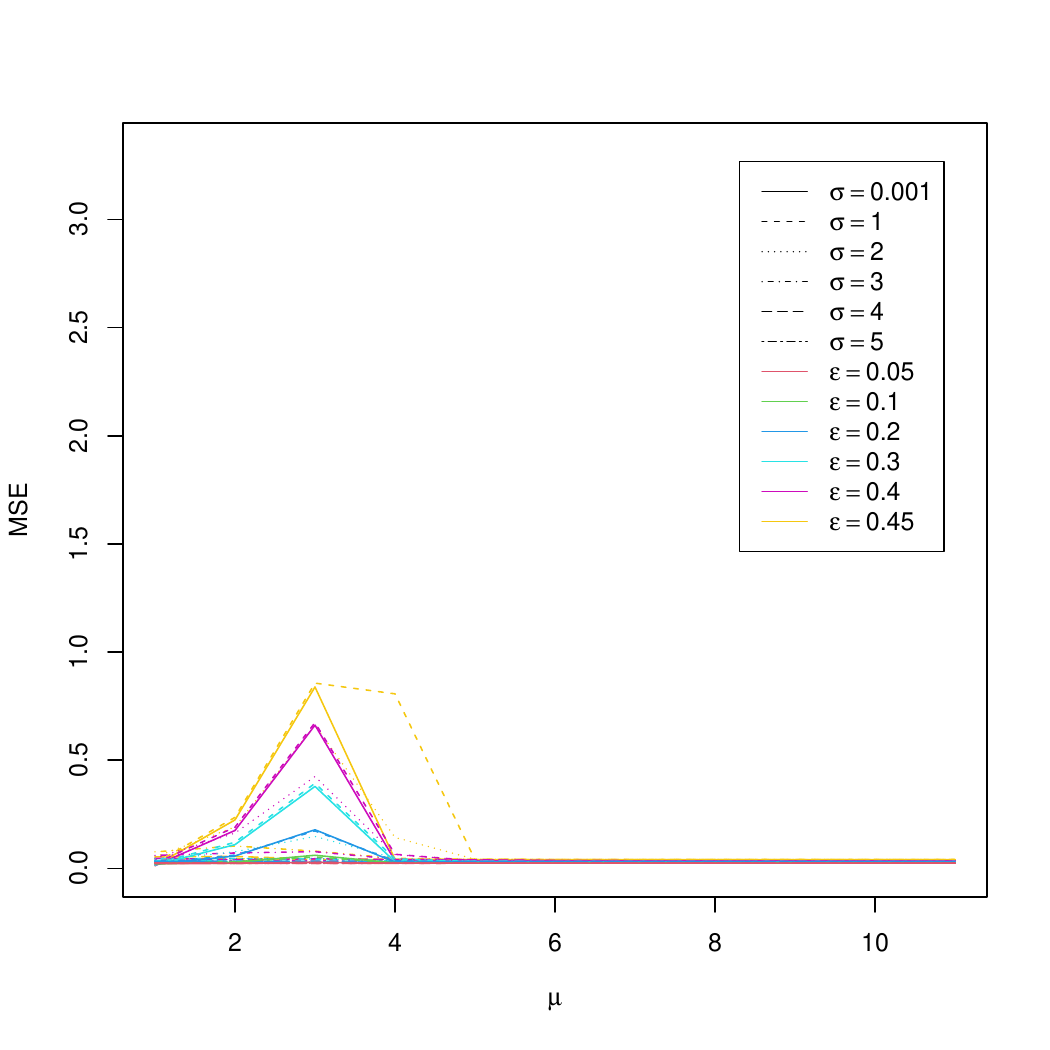}
\includegraphics[width=0.32\textwidth]{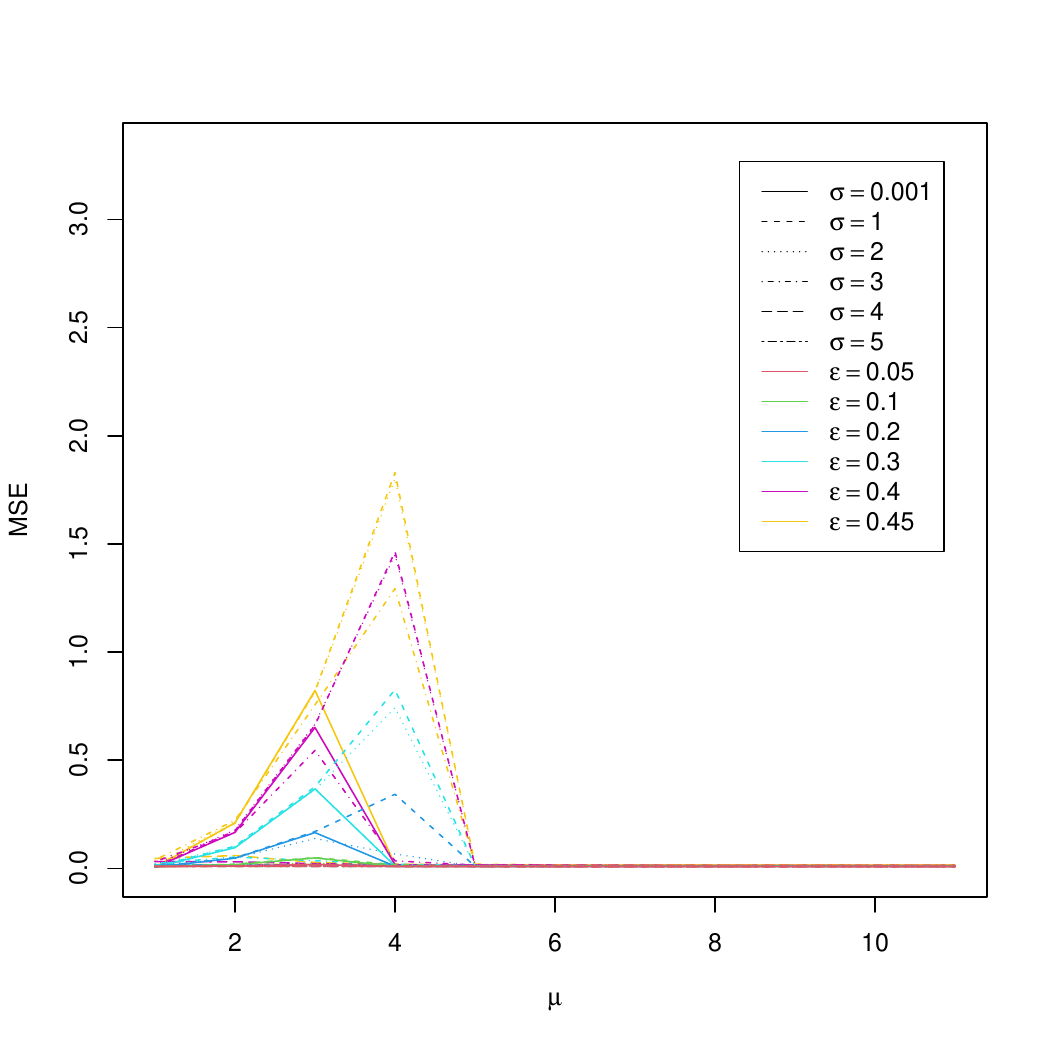} 
\includegraphics[width=0.32\textwidth]{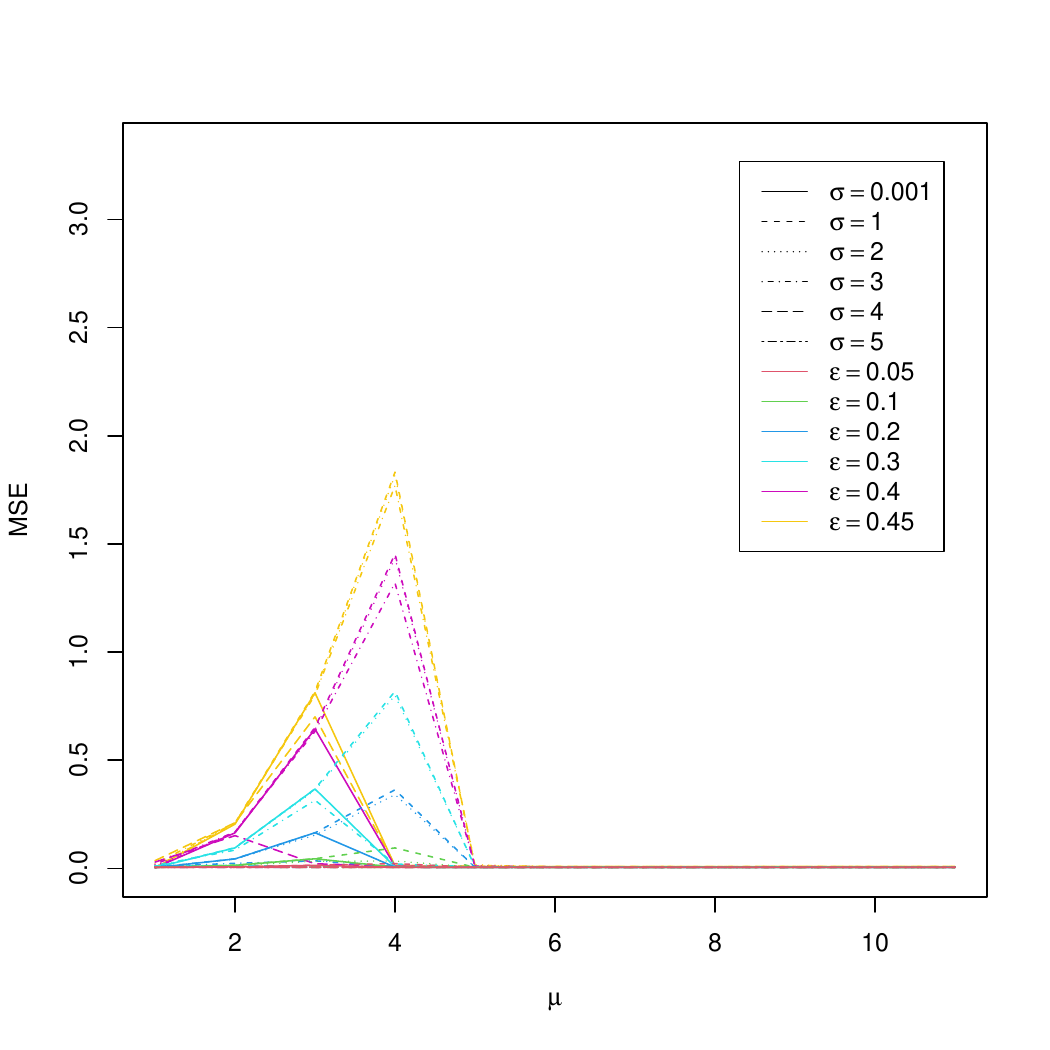} \\
\caption{Monte Carlo Simulation. Mean Square Error for the proposed method starting at the true values with $\alpha=0.25$ as a function of the contamination average $\mu$ ($x$-axis), contamination scale $\sigma$ (different line styles) and contamination level $\varepsilon$ (colors). Rows: number of variables $p=1, 2, 5$ and columns: sample size factor $s=2, 5, 10$.}
\label{sup:fig:monte:MSE:0.25:1}
\end{figure}  

\begin{figure}
\centering
\includegraphics[width=0.32\textwidth]{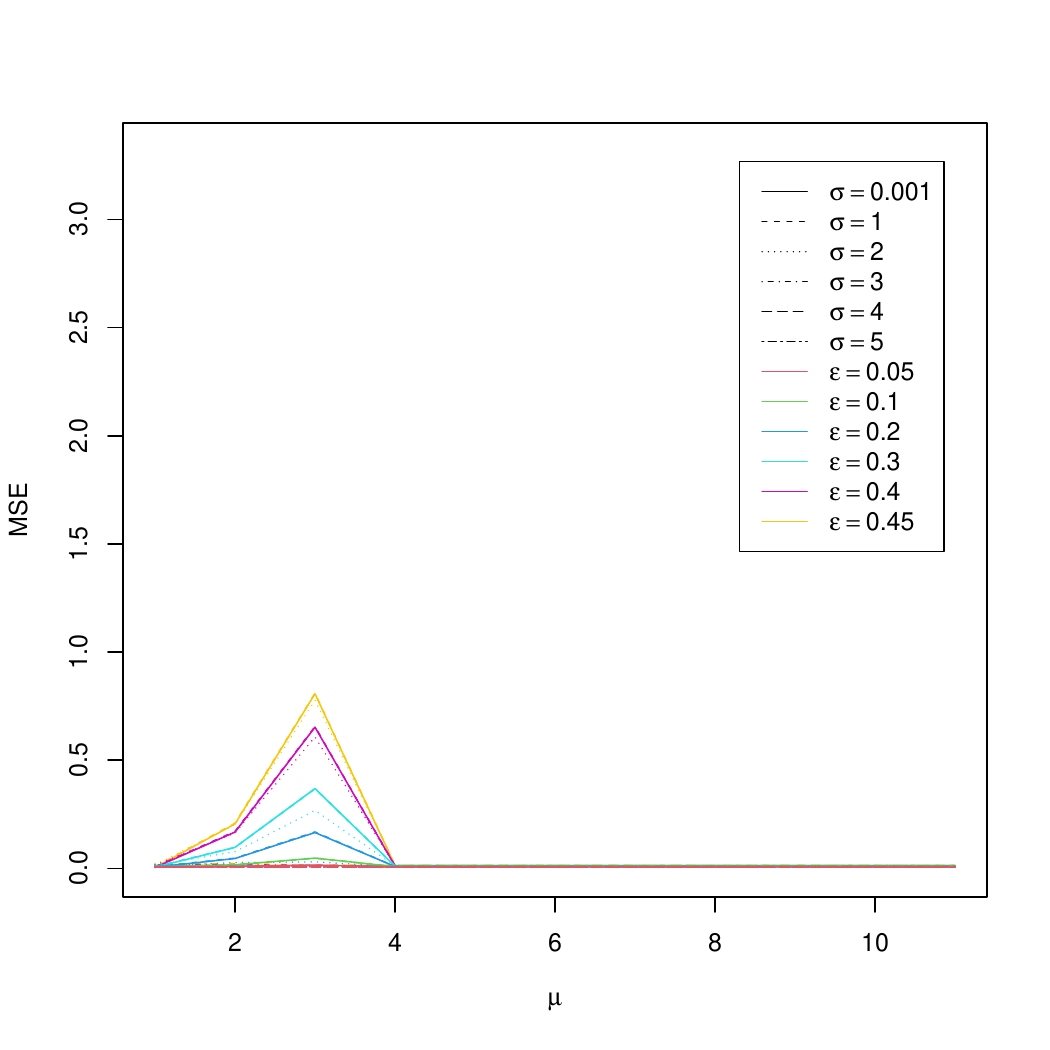}
\includegraphics[width=0.32\textwidth]{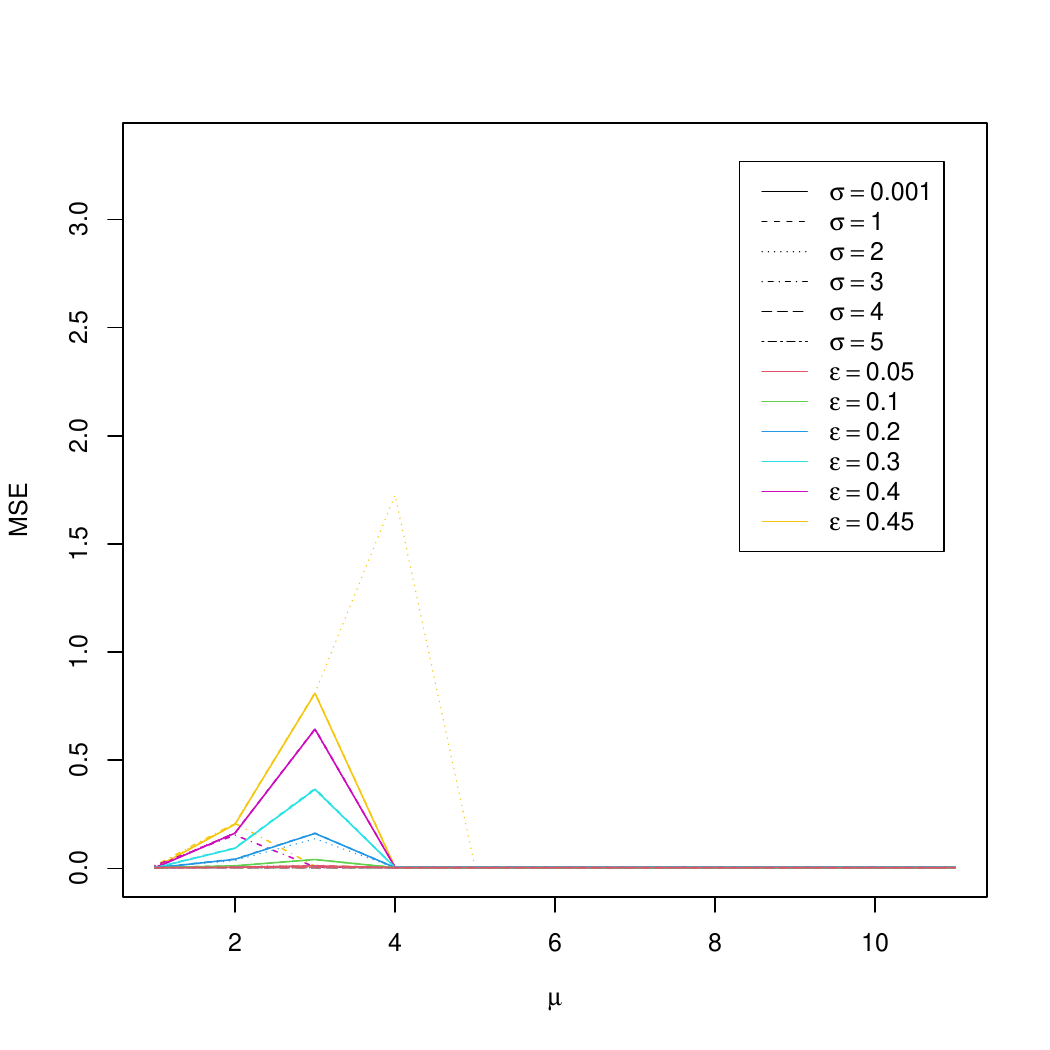} 
\includegraphics[width=0.32\textwidth]{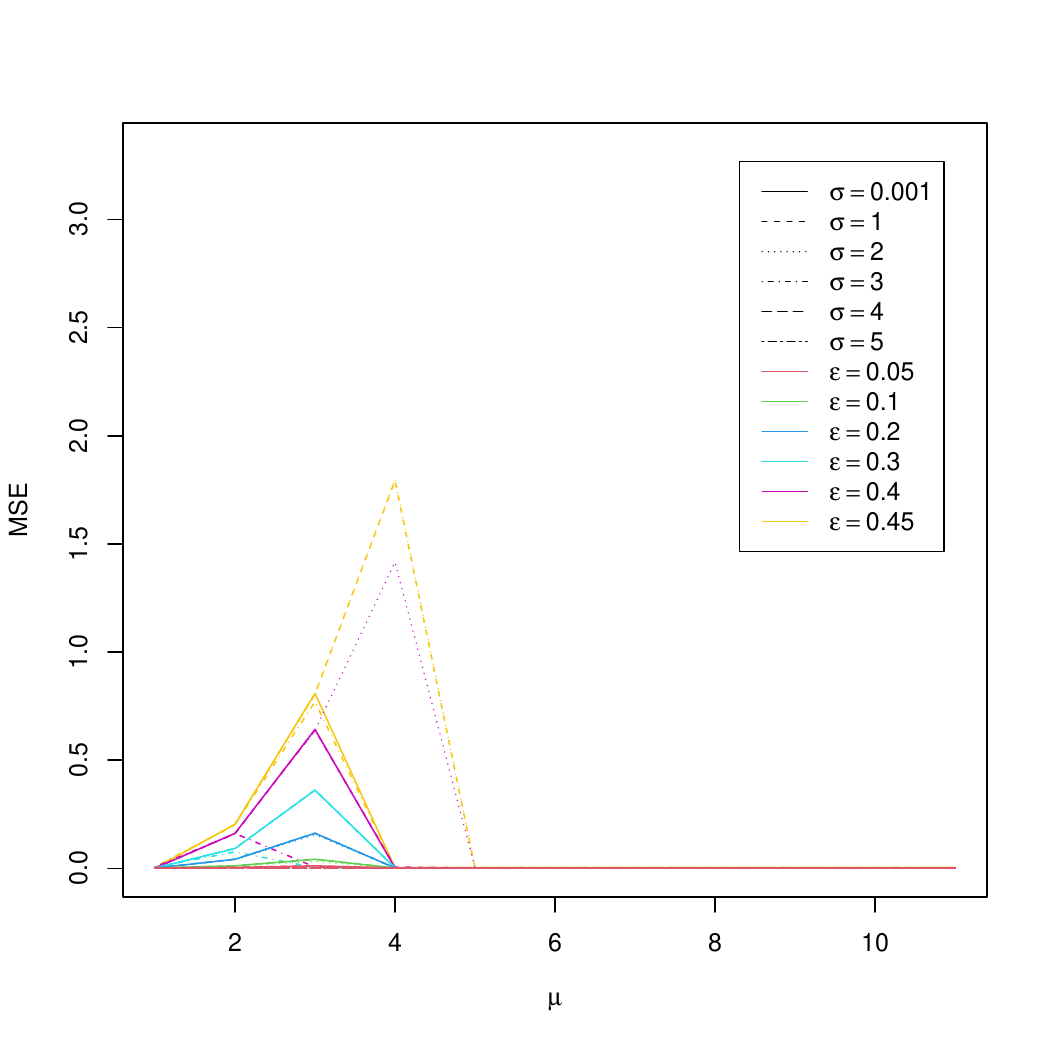} \\
\includegraphics[width=0.32\textwidth]{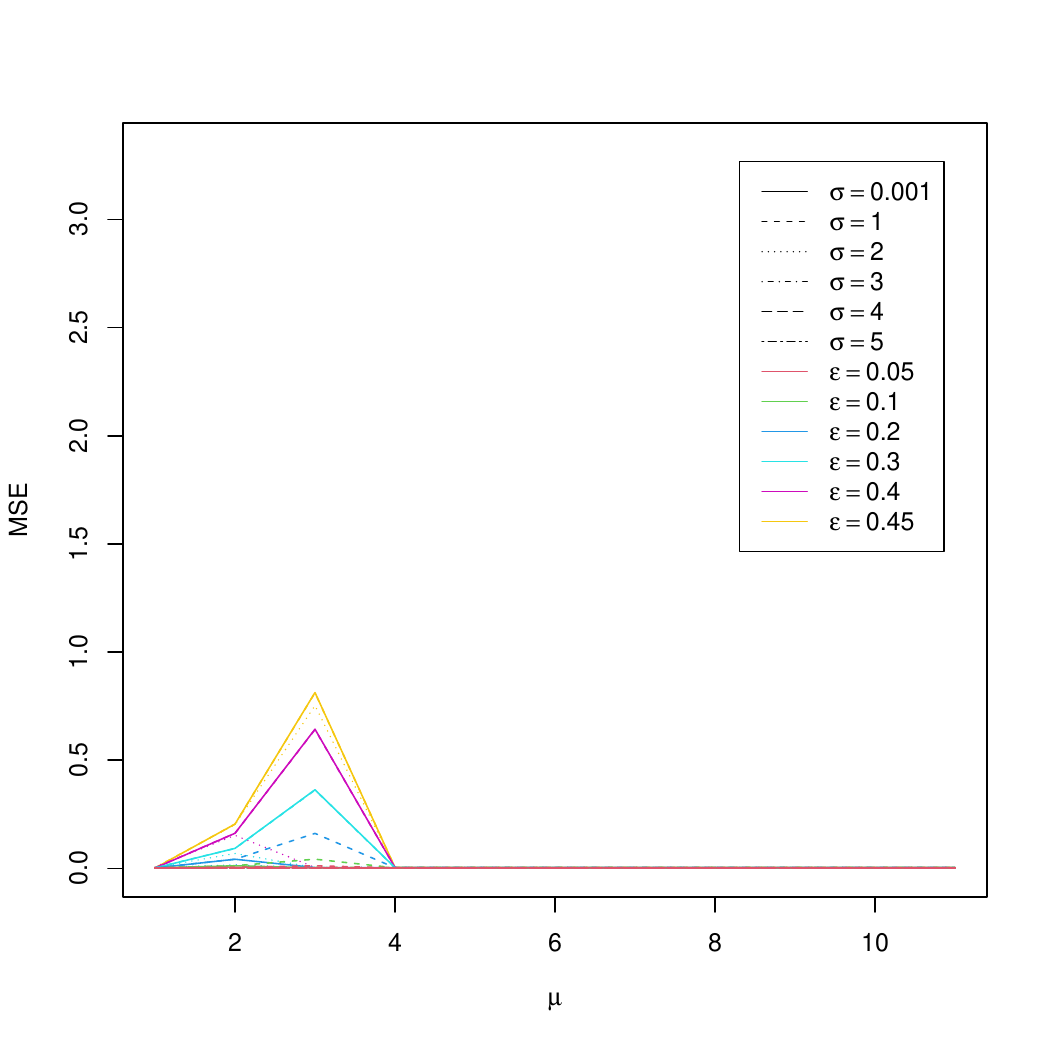}
\includegraphics[width=0.32\textwidth]{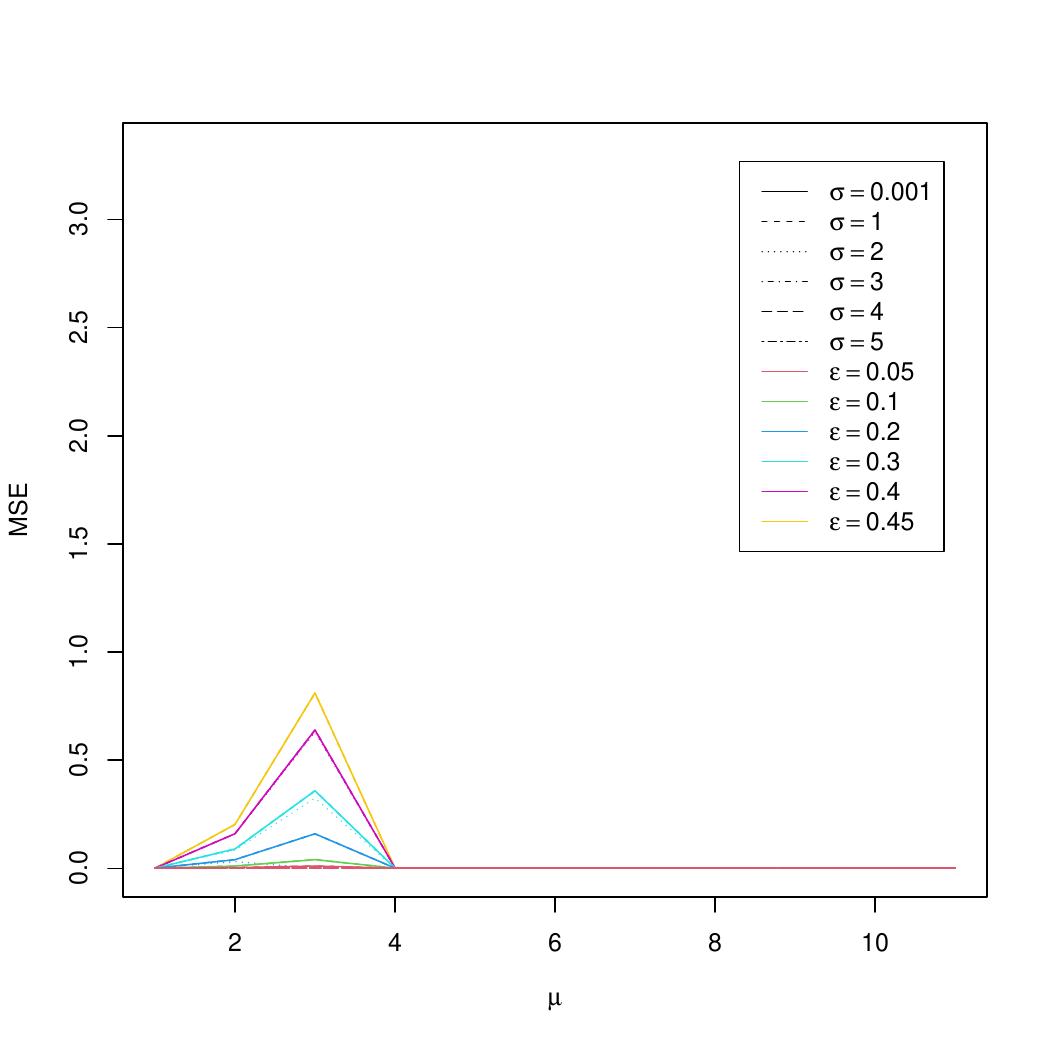} 
\includegraphics[width=0.32\textwidth]{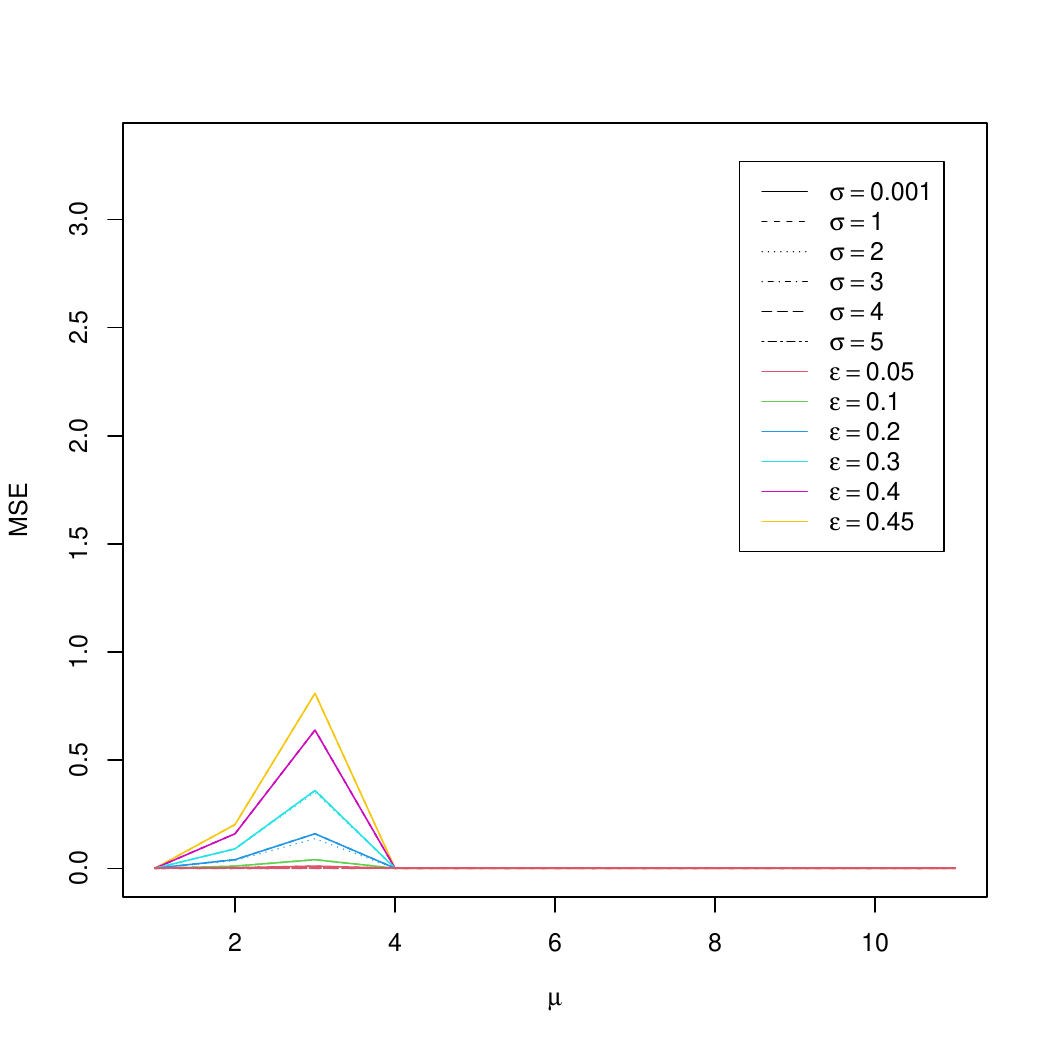}
\caption{Monte Carlo Simulation. Mean Square Error for the proposed method starting at the true values with $\alpha=0.25$ as a function of the contamination average $\mu$ ($x$-axis), contamination scale $\sigma$ (different line styles) and contamination level $\varepsilon$ (colors). Rows: number of variables $p=10, 20$ and columns: sample size factor $s=2, 5, 10$.}
\label{sup:fig:monte:MSE:0.25:2}
\end{figure}  

\begin{figure}
\centering
\includegraphics[width=0.32\textwidth]{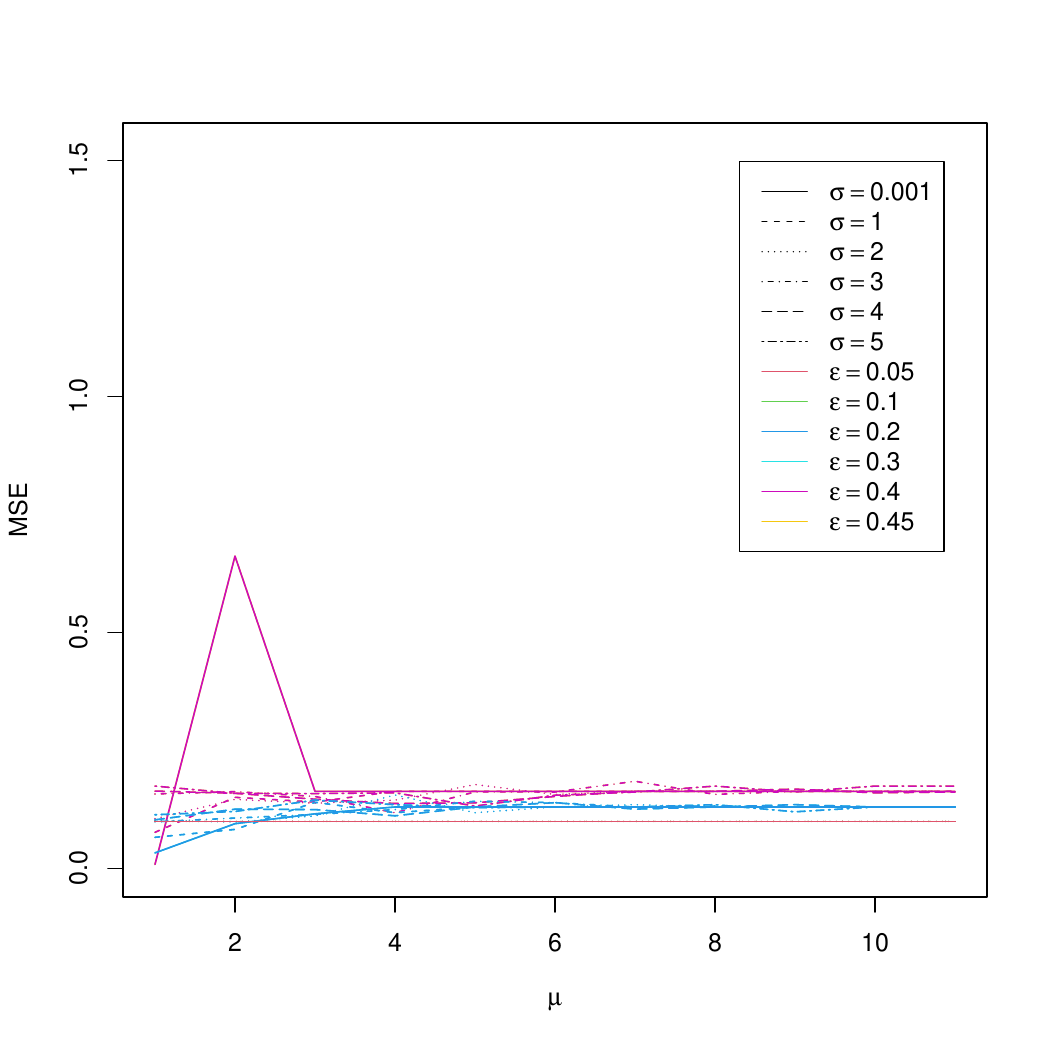}
\includegraphics[width=0.32\textwidth]{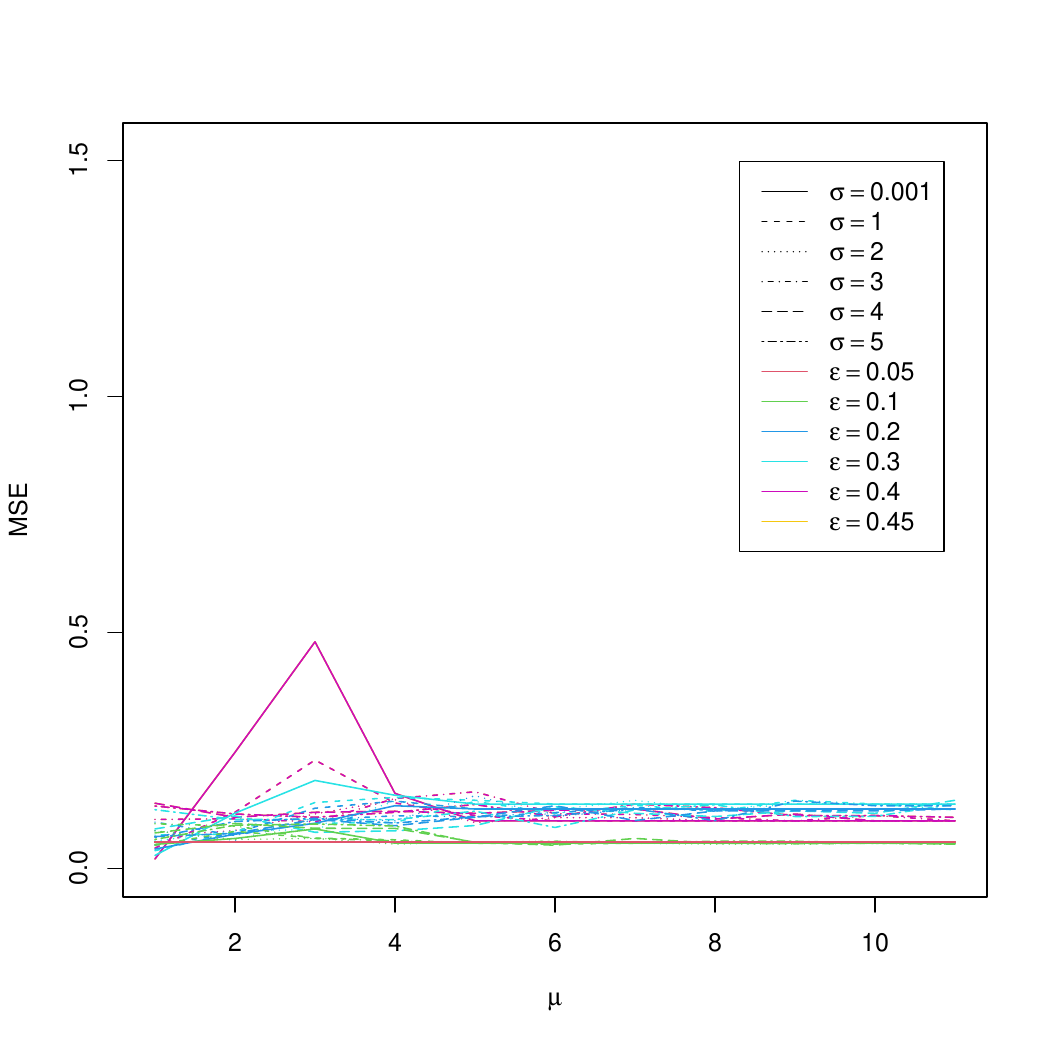} 
\includegraphics[width=0.32\textwidth]{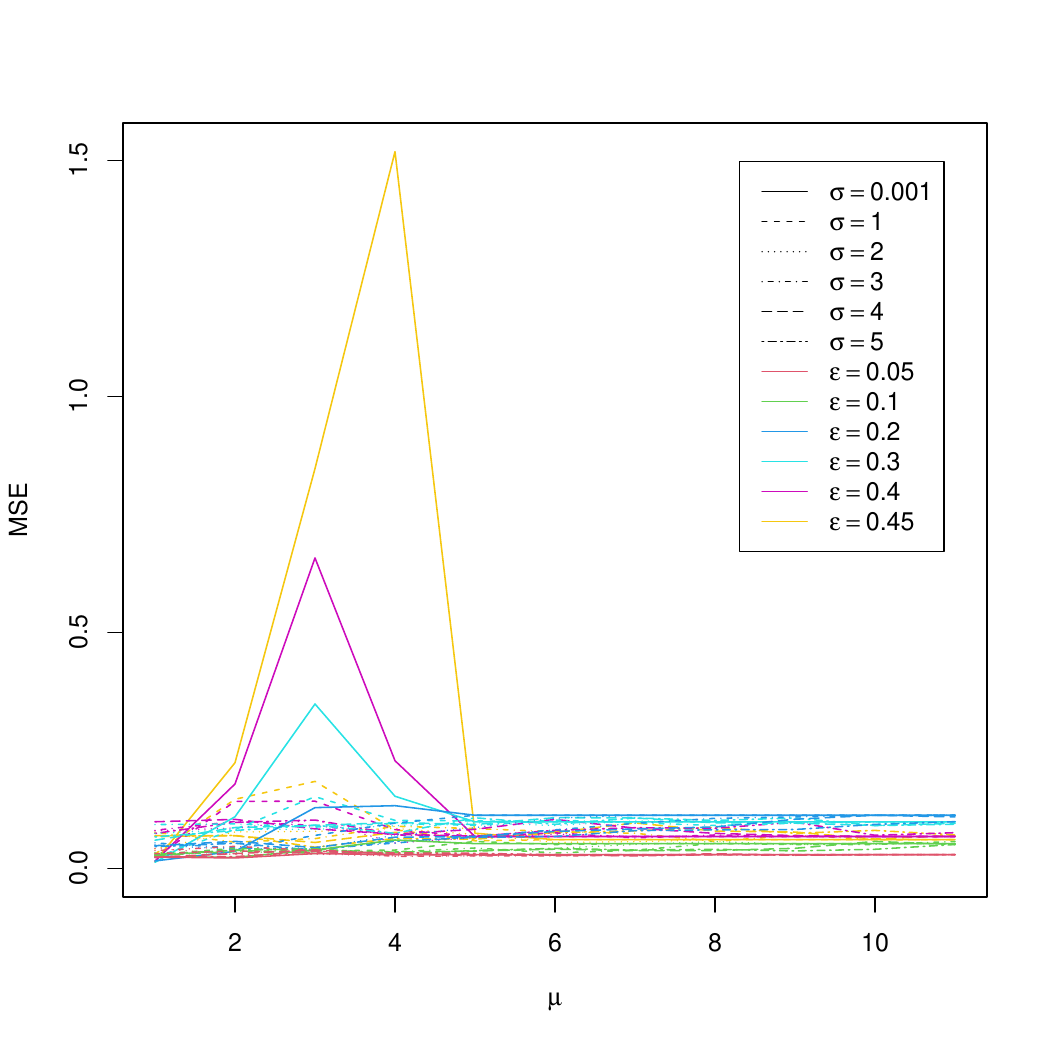} \\
\includegraphics[width=0.32\textwidth]{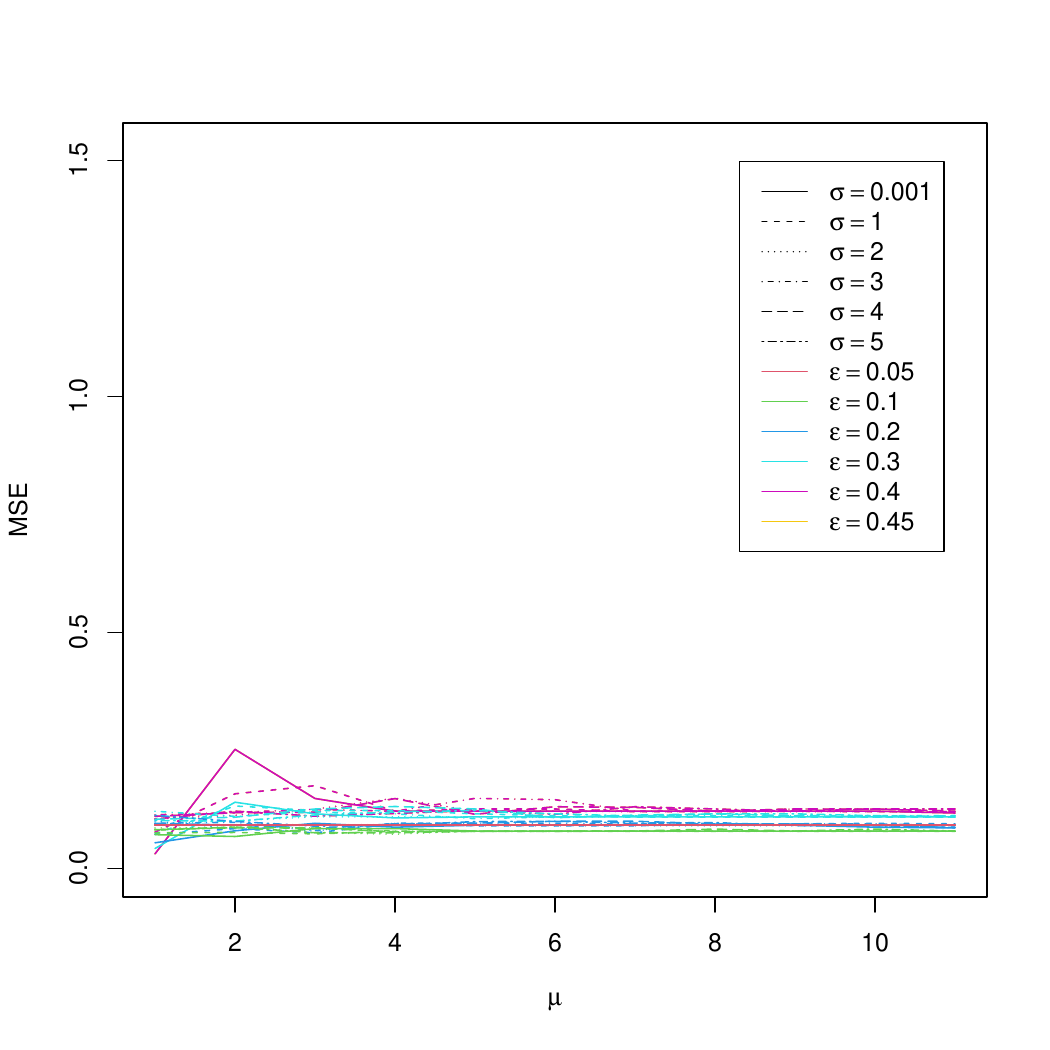}
\includegraphics[width=0.32\textwidth]{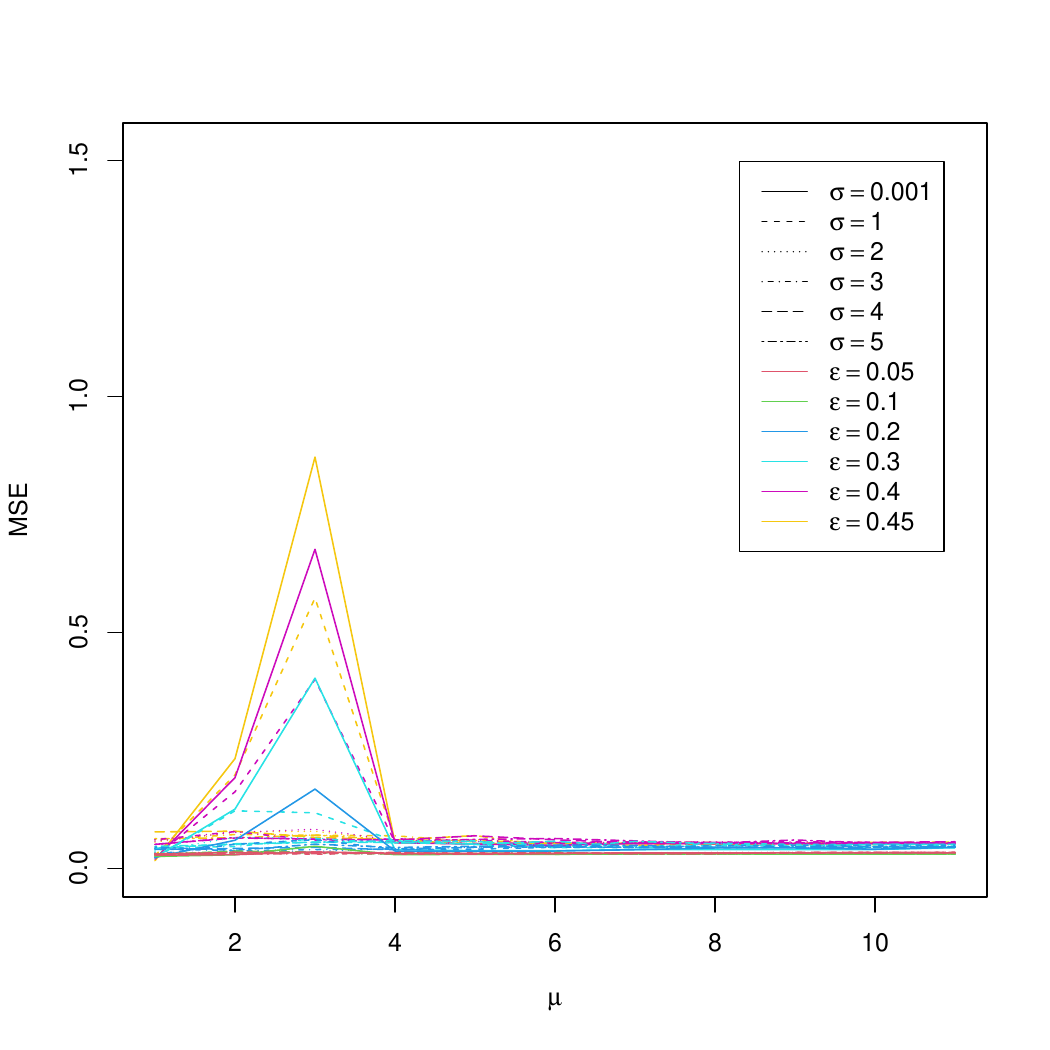} 
\includegraphics[width=0.32\textwidth]{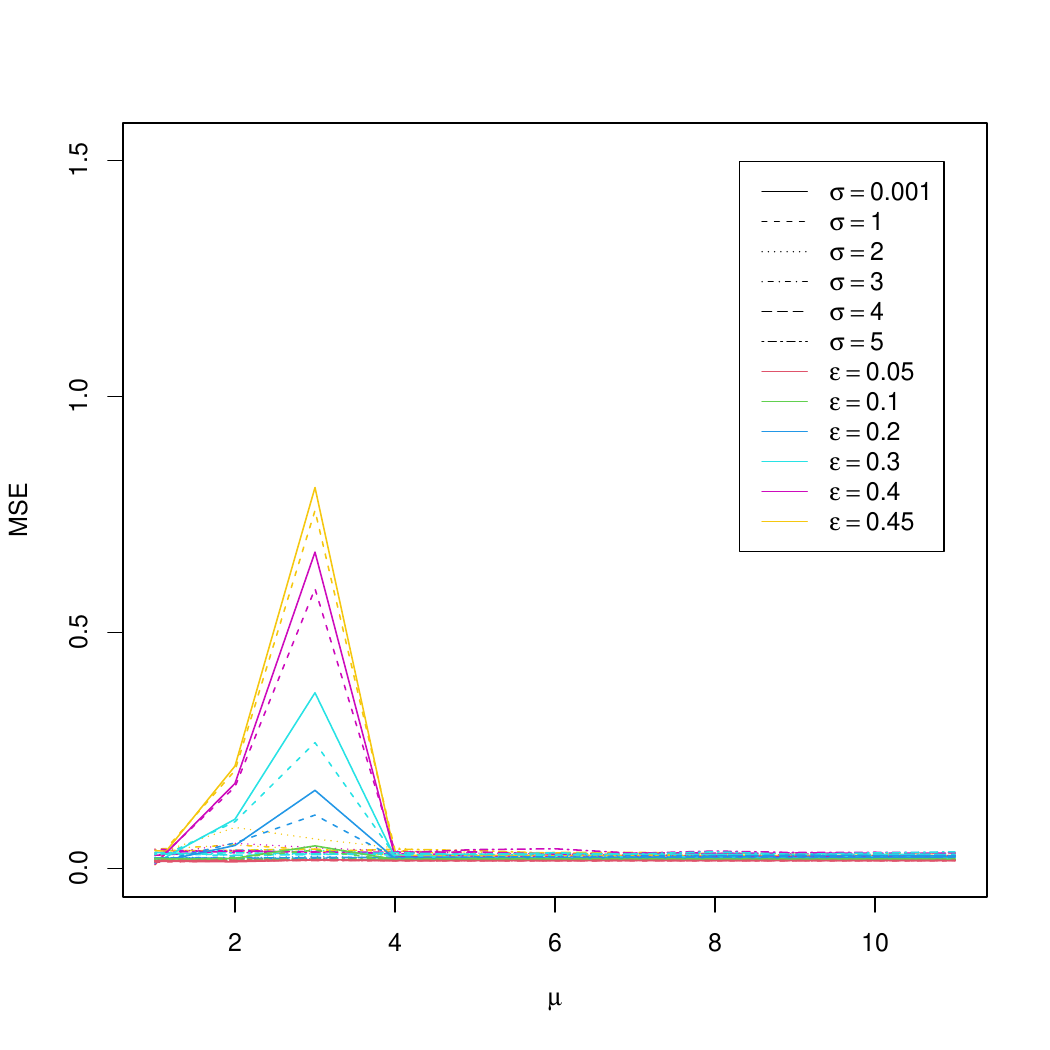} \\
\includegraphics[width=0.32\textwidth]{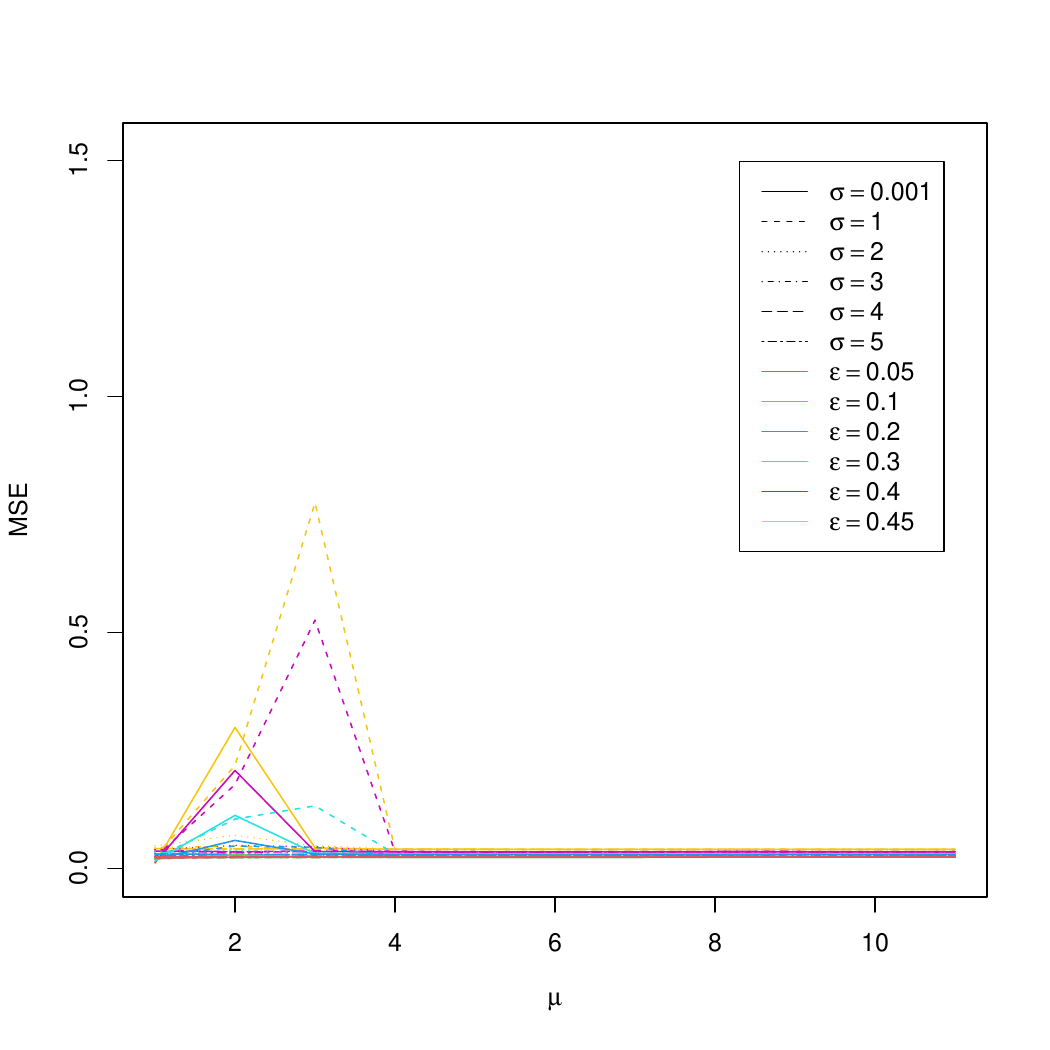}
\includegraphics[width=0.32\textwidth]{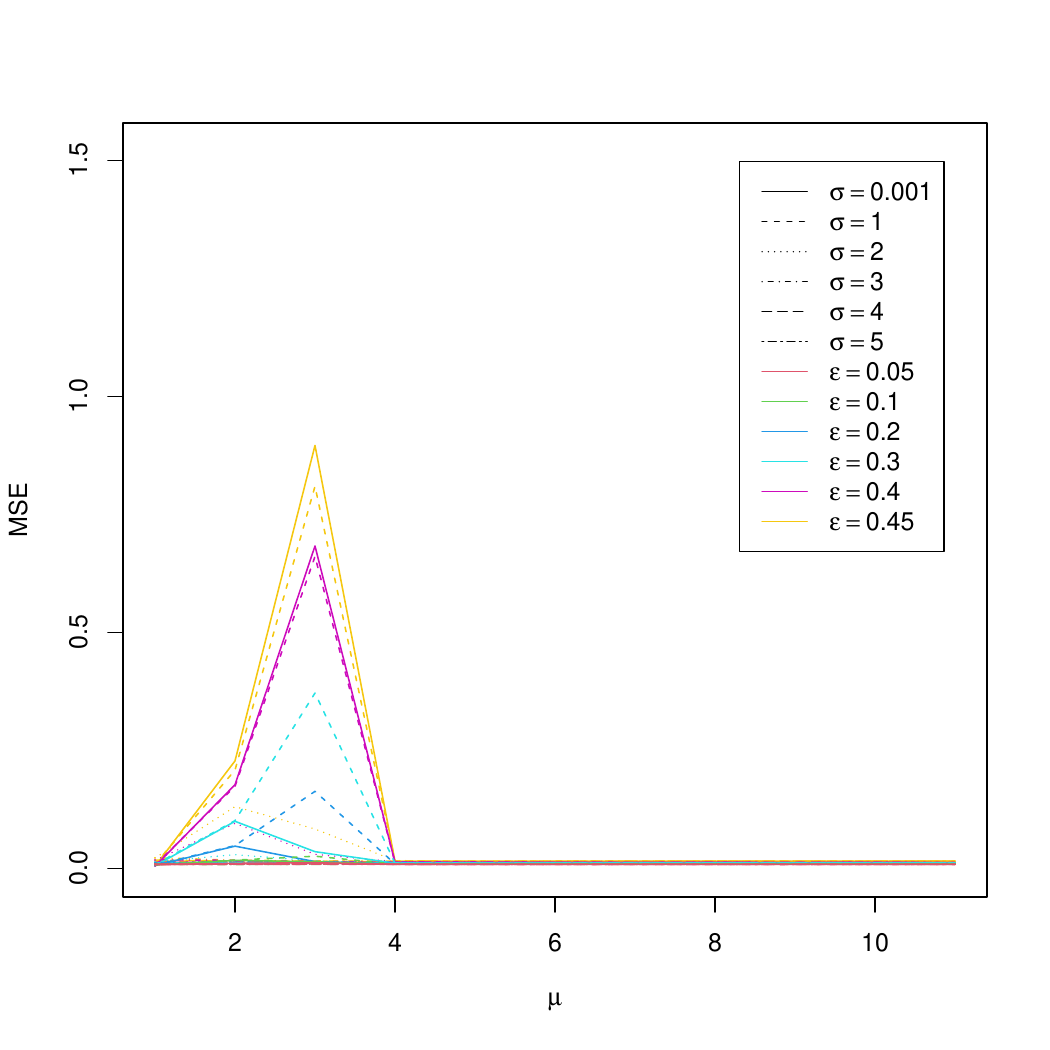} 
\includegraphics[width=0.32\textwidth]{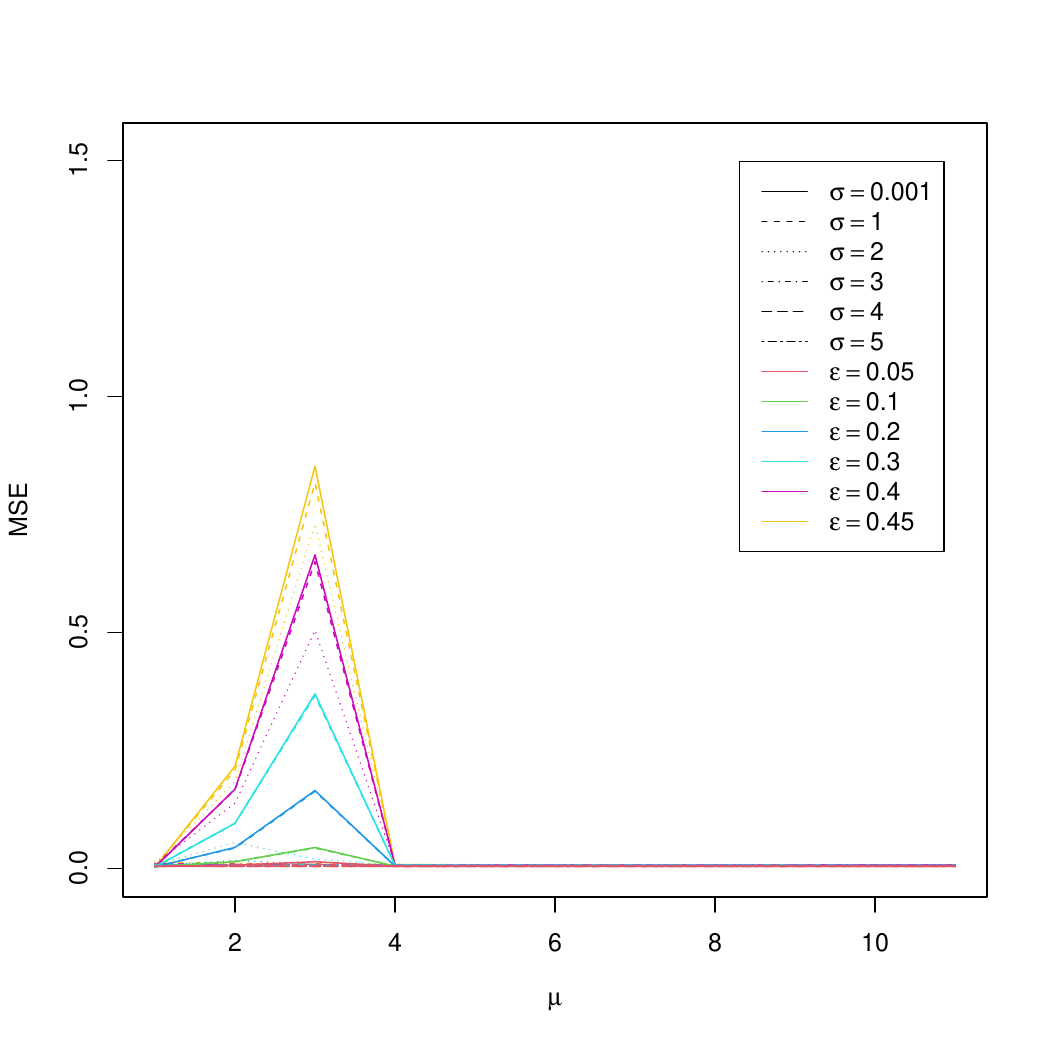} \\
\caption{Monte Carlo Simulation. Mean Square Error for the proposed method starting at the true values with $\alpha=0.5$ as a function of the contamination average $\mu$ ($x$-axis), contamination scale $\sigma$ (different line styles) and contamination level $\varepsilon$ (colors). Rows: number of variables $p=1, 2, 5$ and columns: sample size factor $s=2, 5, 10$.}
\label{sup:fig:monte:MSE:0.5:1}
\end{figure}  

\begin{figure}
\centering
\includegraphics[width=0.32\textwidth]{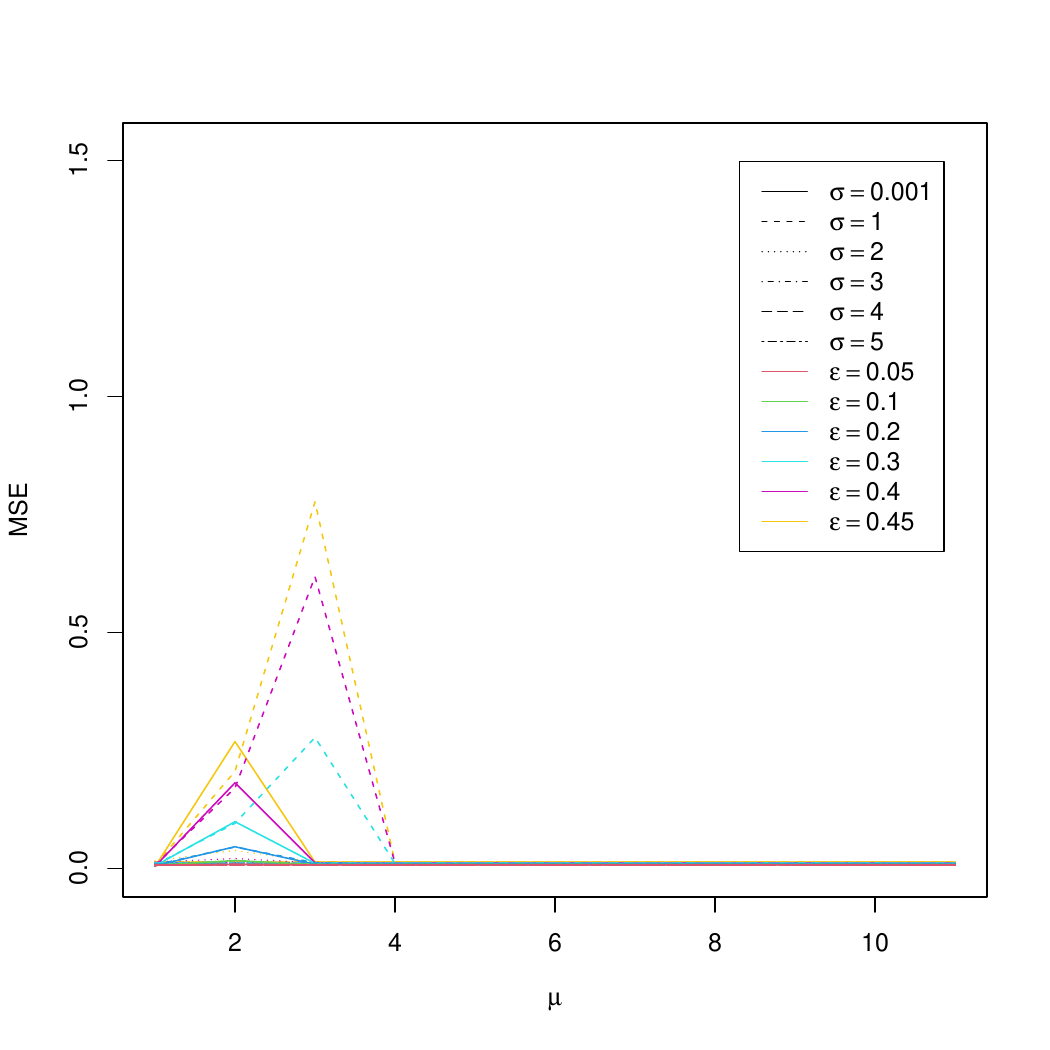}
\includegraphics[width=0.32\textwidth]{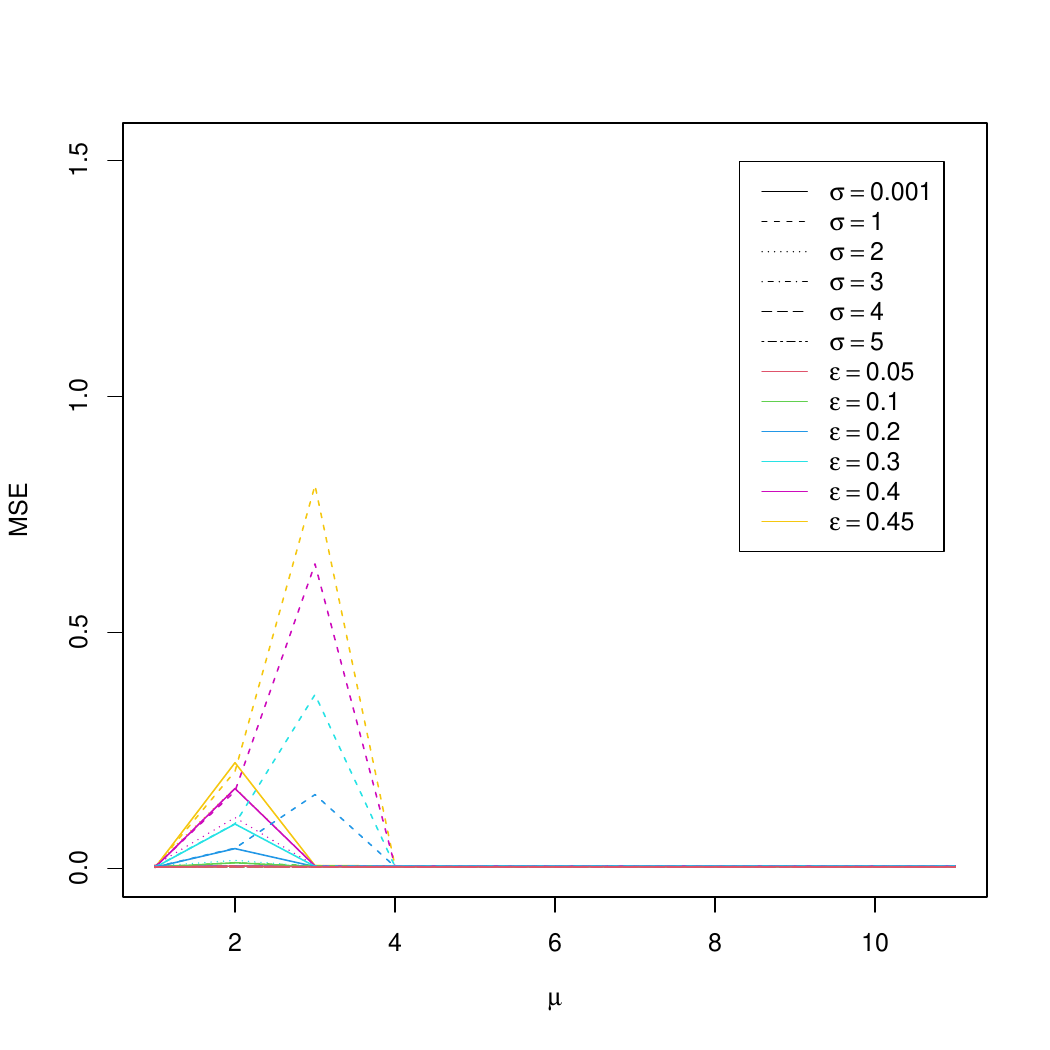} 
\includegraphics[width=0.32\textwidth]{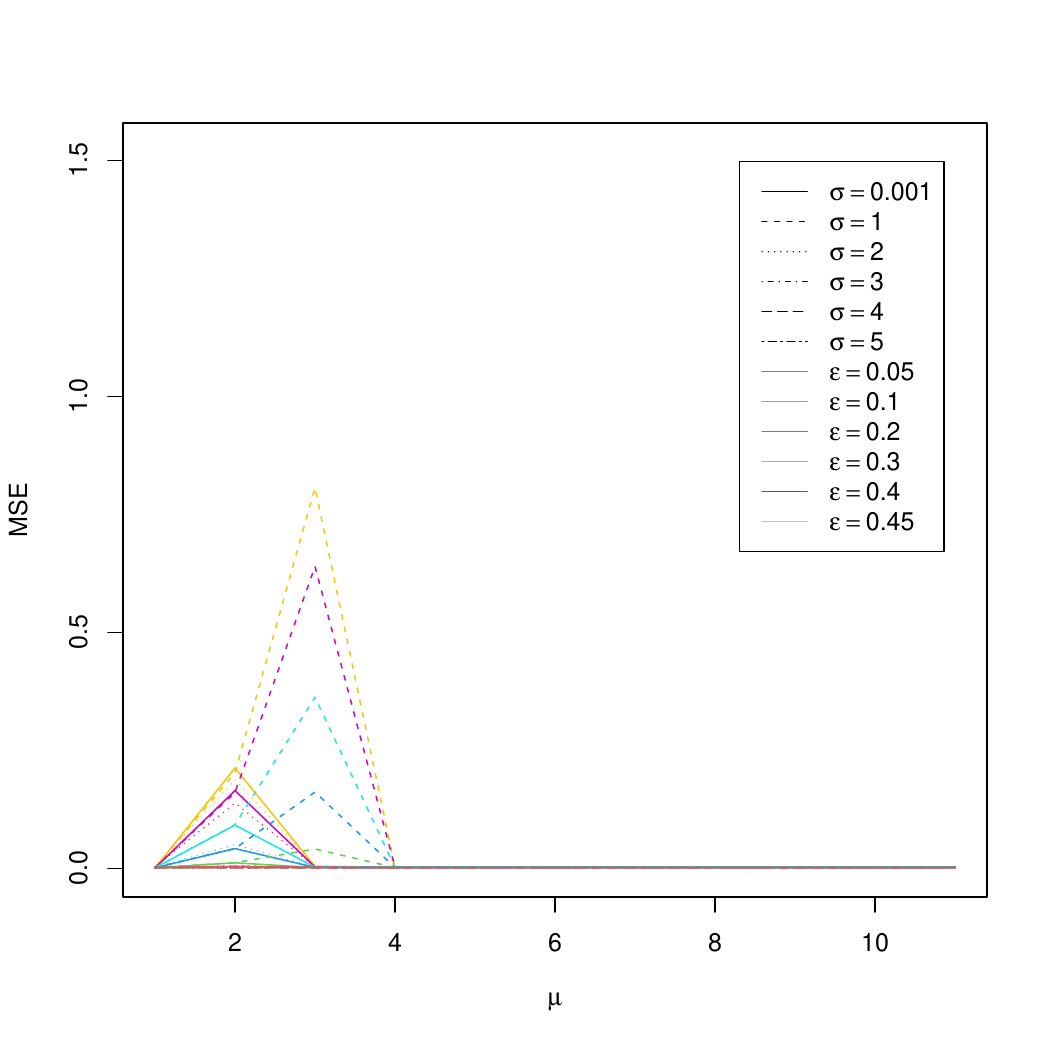} \\
\includegraphics[width=0.32\textwidth]{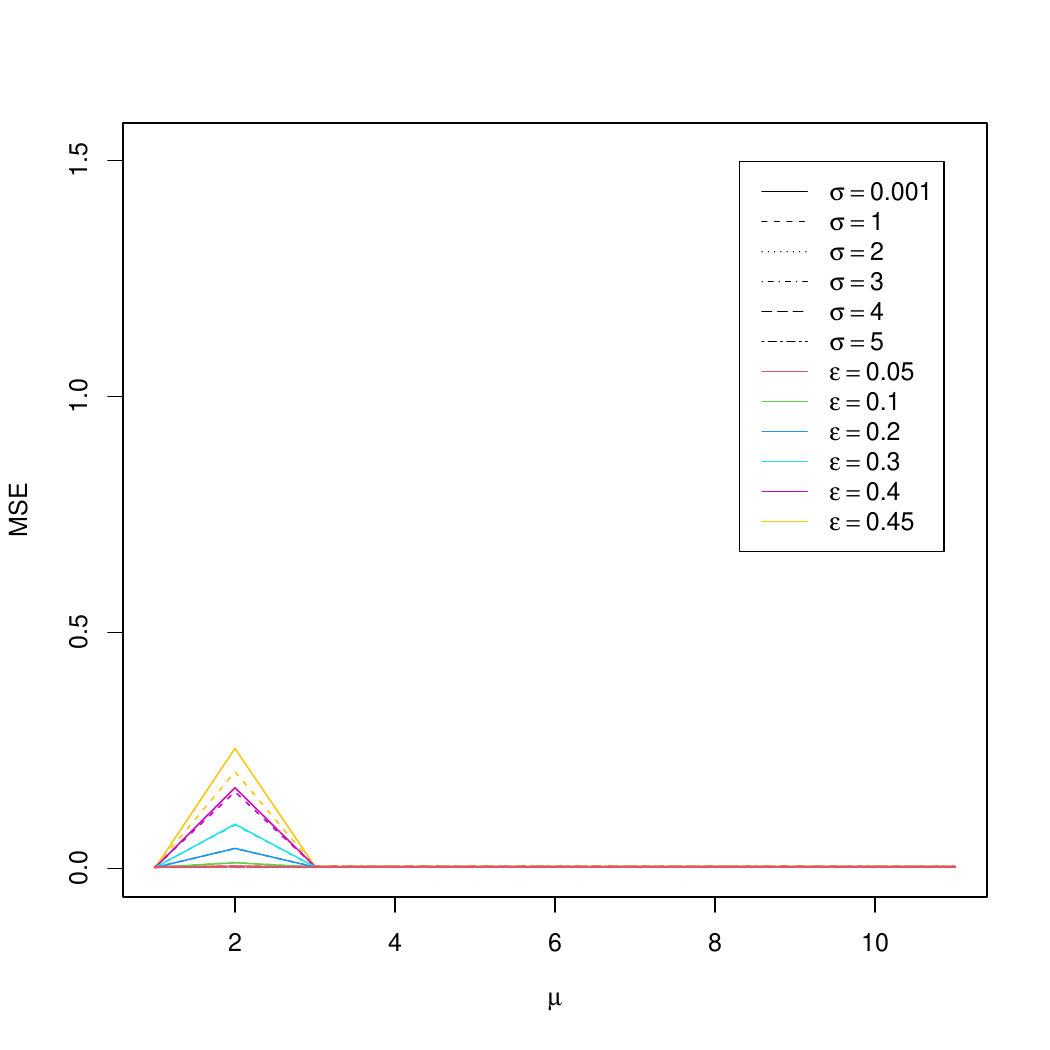}
\includegraphics[width=0.32\textwidth]{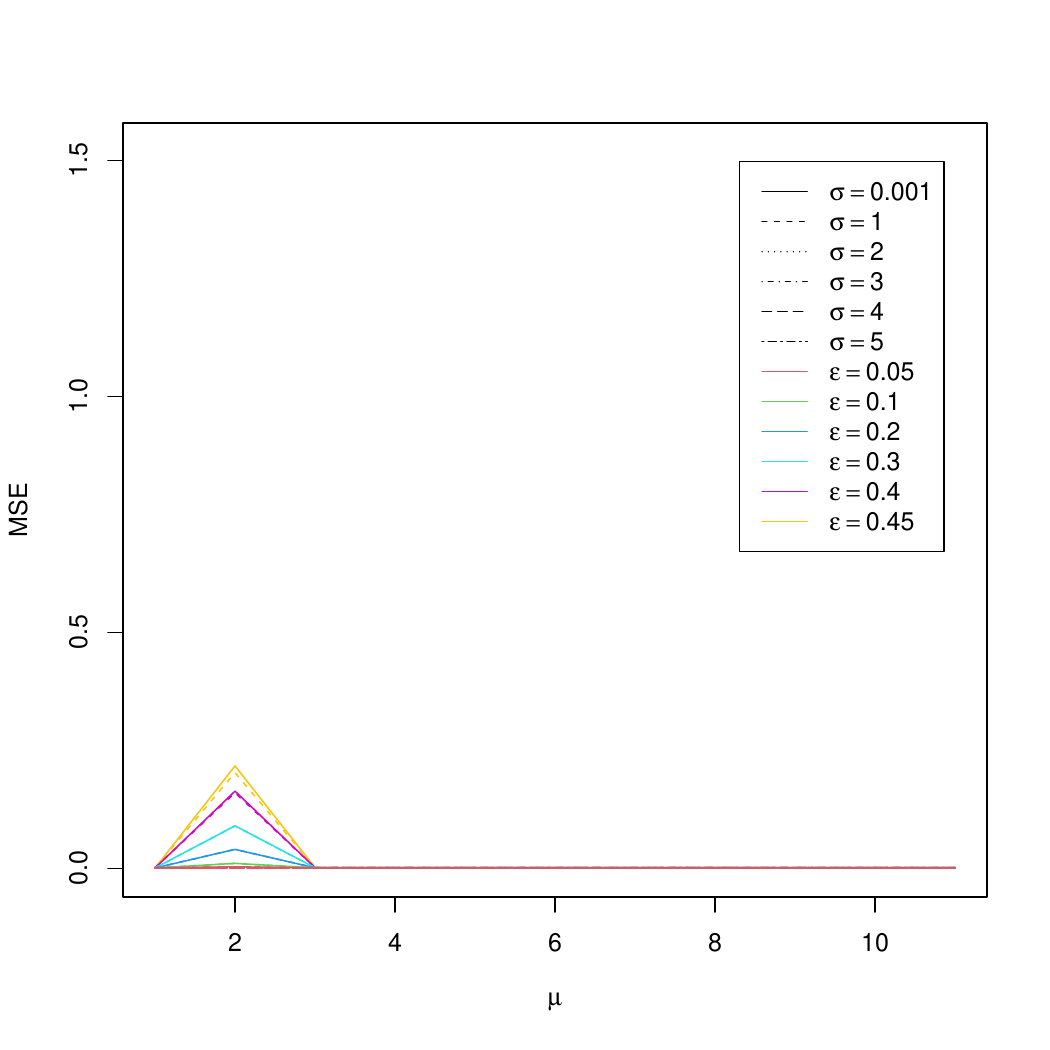} 
\includegraphics[width=0.32\textwidth]{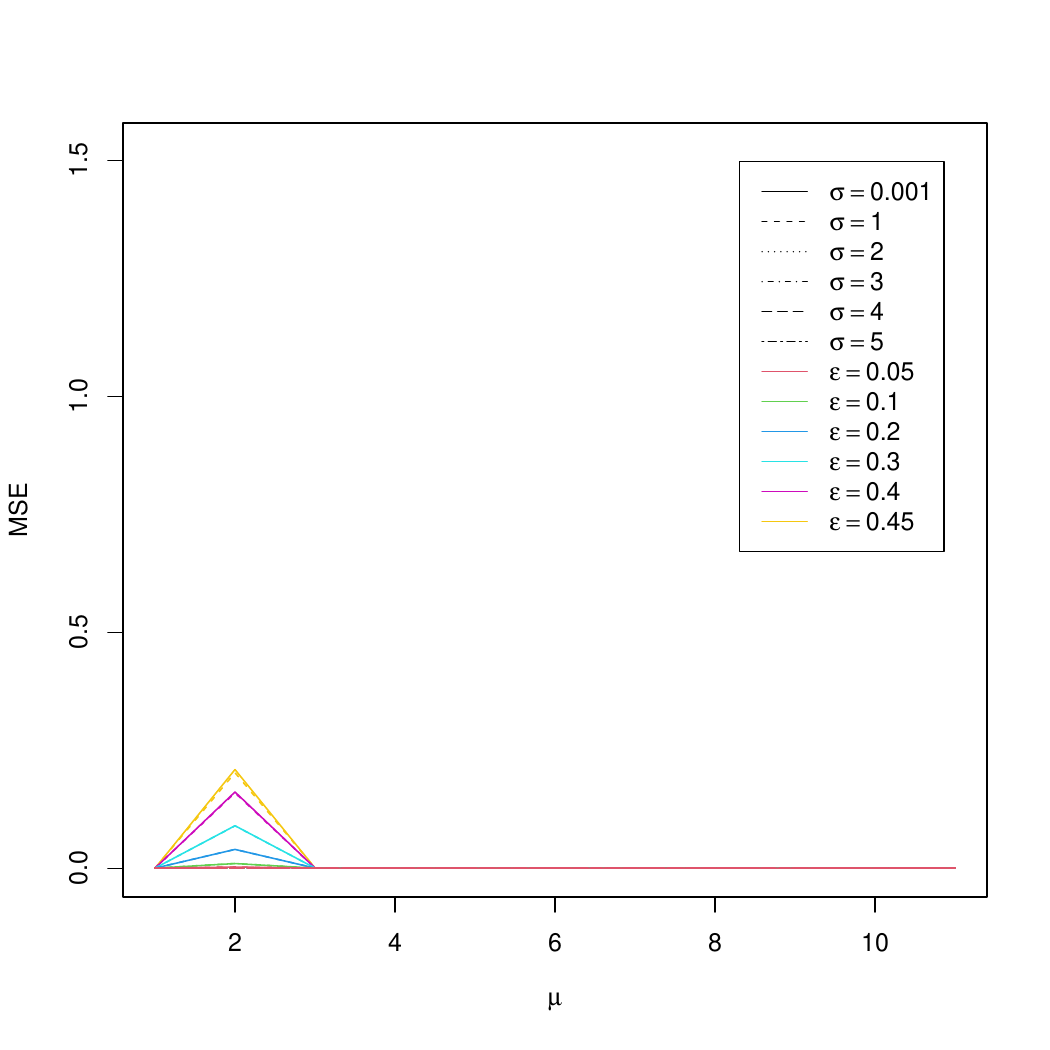}
\caption{Monte Carlo Simulation. Mean Square Error for the proposed method starting at the true values with $\alpha=0.5$ as a function of the contamination average $\mu$ ($x$-axis), contamination scale $\sigma$ (different line styles) and contamination level $\varepsilon$ (colors). Rows: number of variables $p=10, 20$ and columns: sample size factor $s=2, 5, 10$.}
\label{sup:fig:monte:MSE:0.5:2}
\end{figure}  

\begin{figure}
\centering
\includegraphics[width=0.32\textwidth]{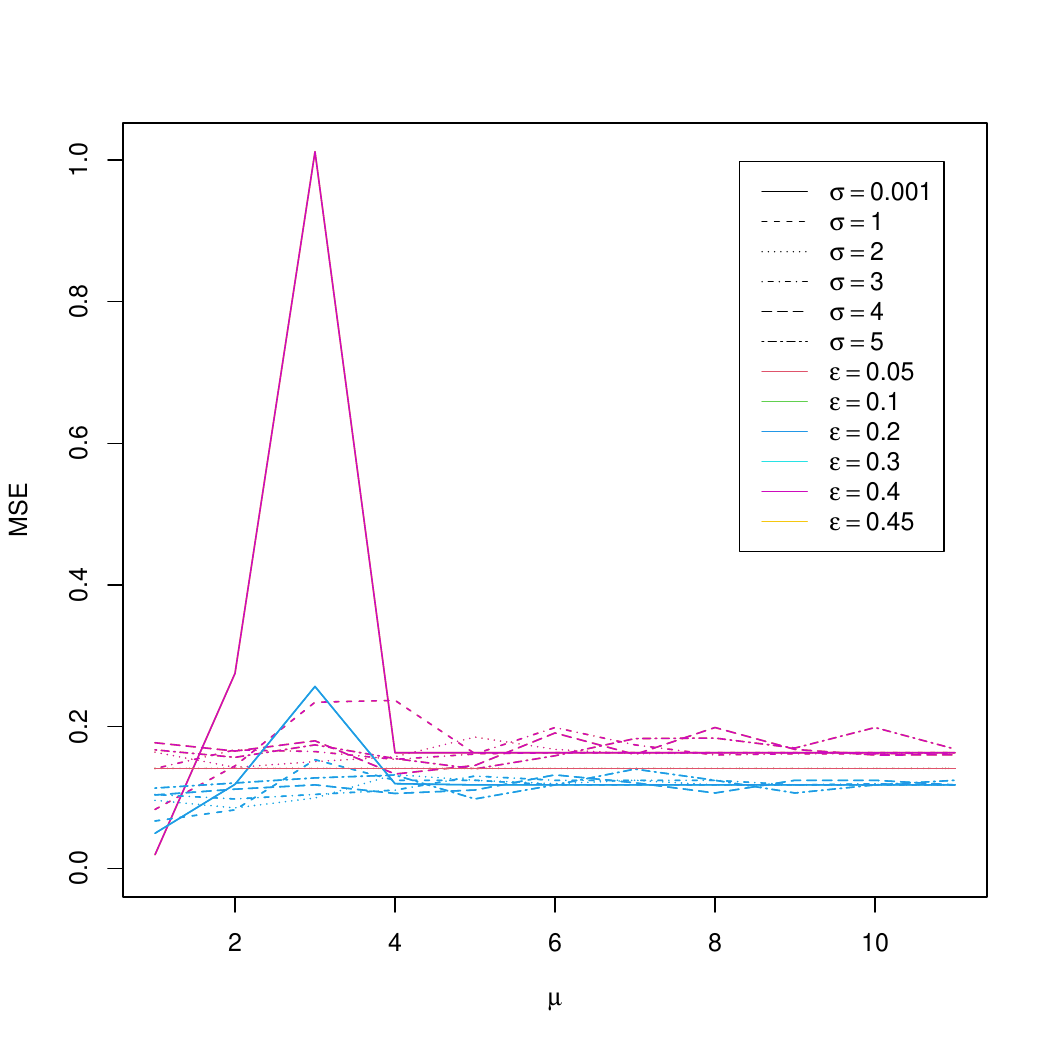}
\includegraphics[width=0.32\textwidth]{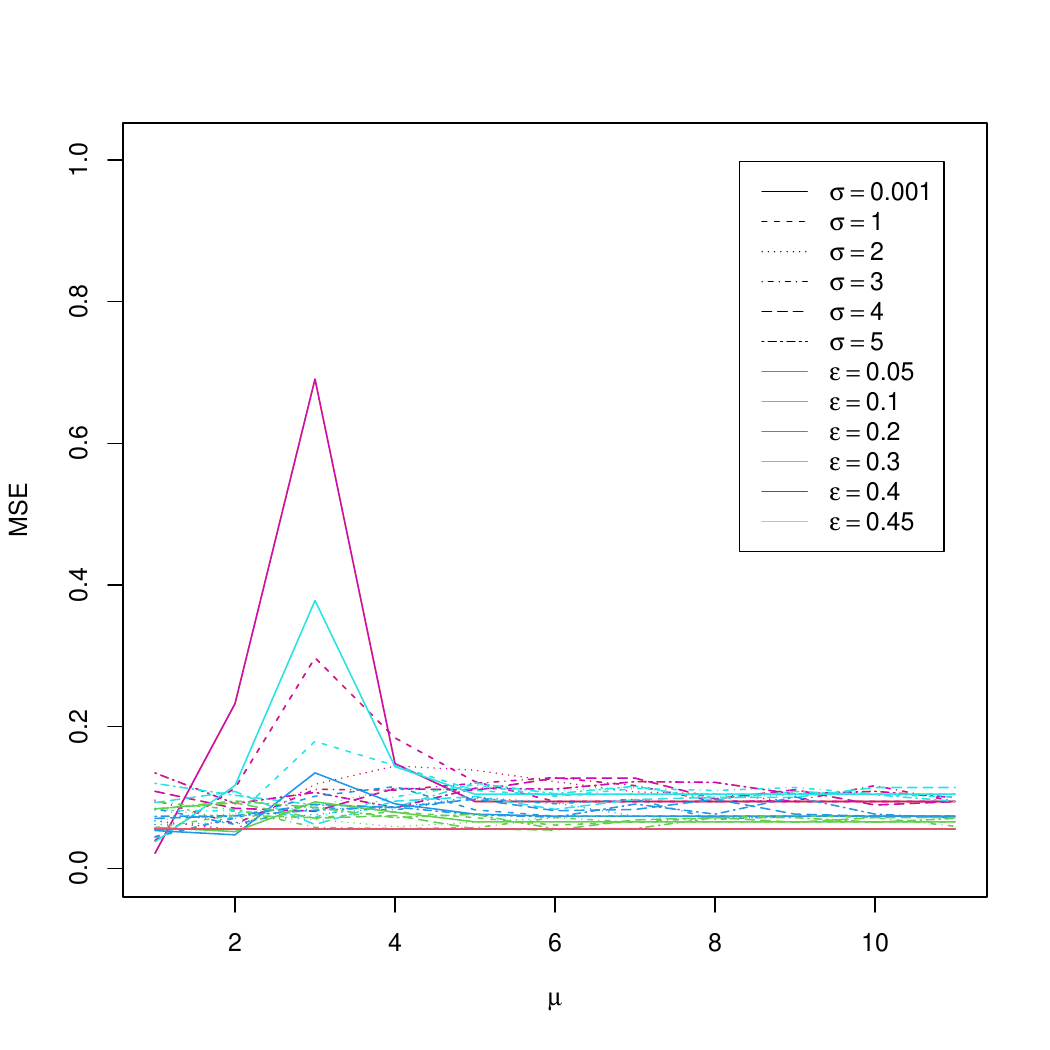} 
\includegraphics[width=0.32\textwidth]{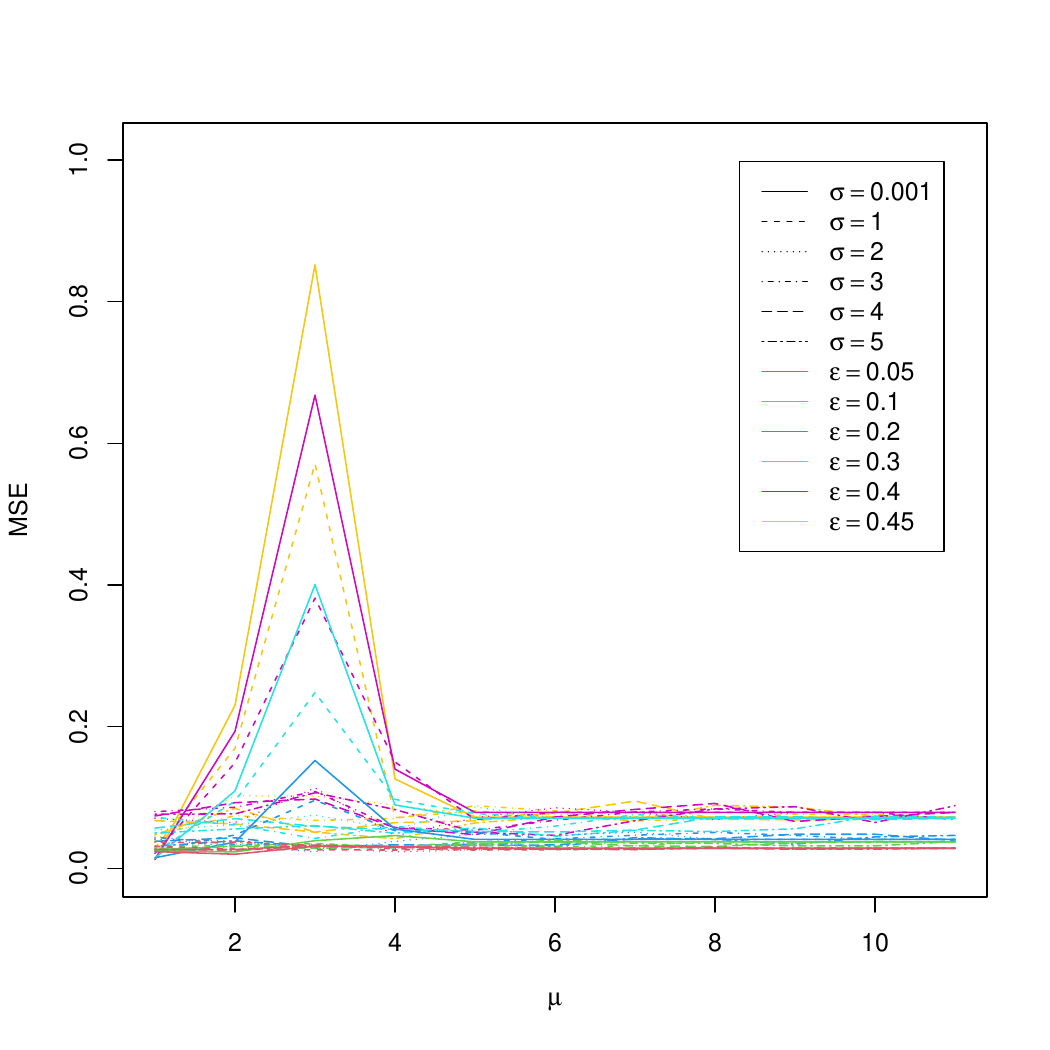} \\
\includegraphics[width=0.32\textwidth]{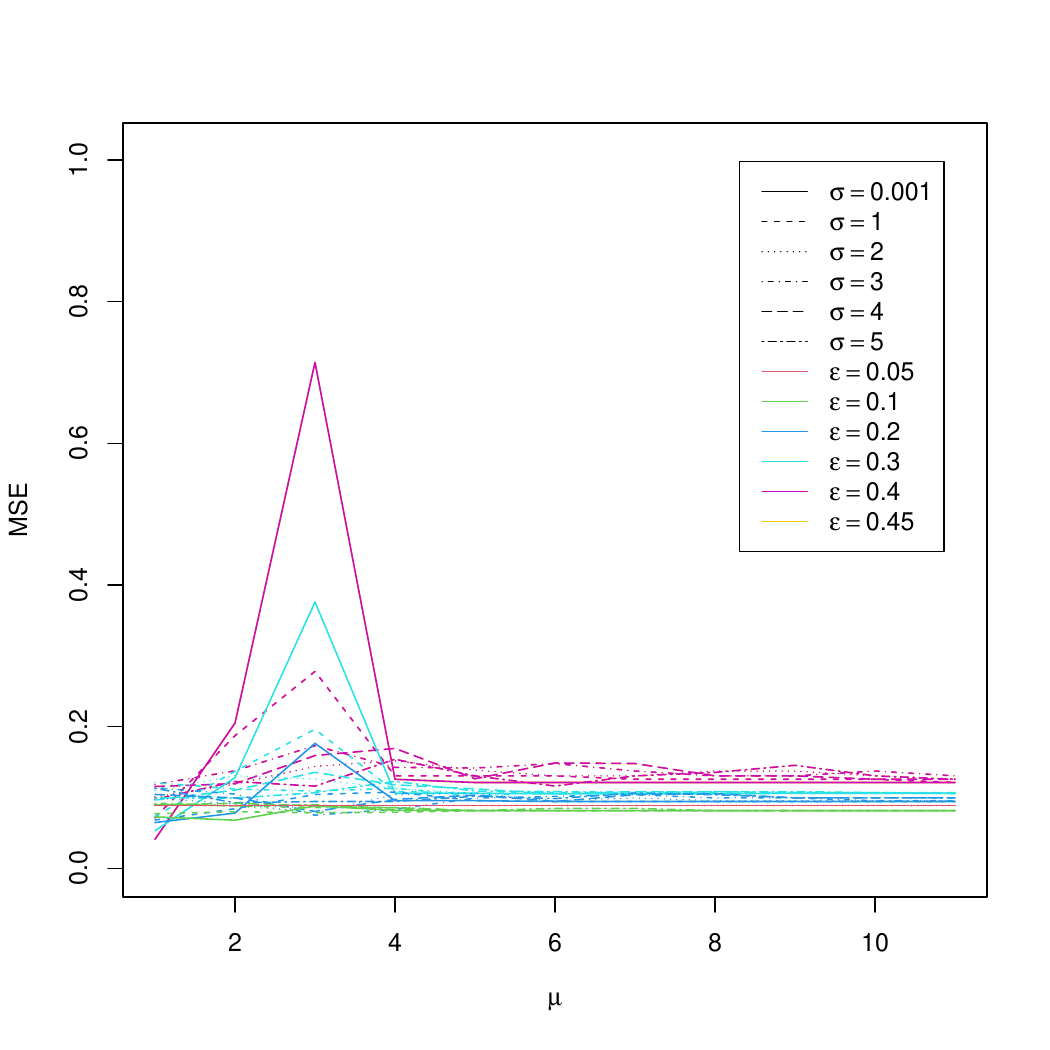}
\includegraphics[width=0.32\textwidth]{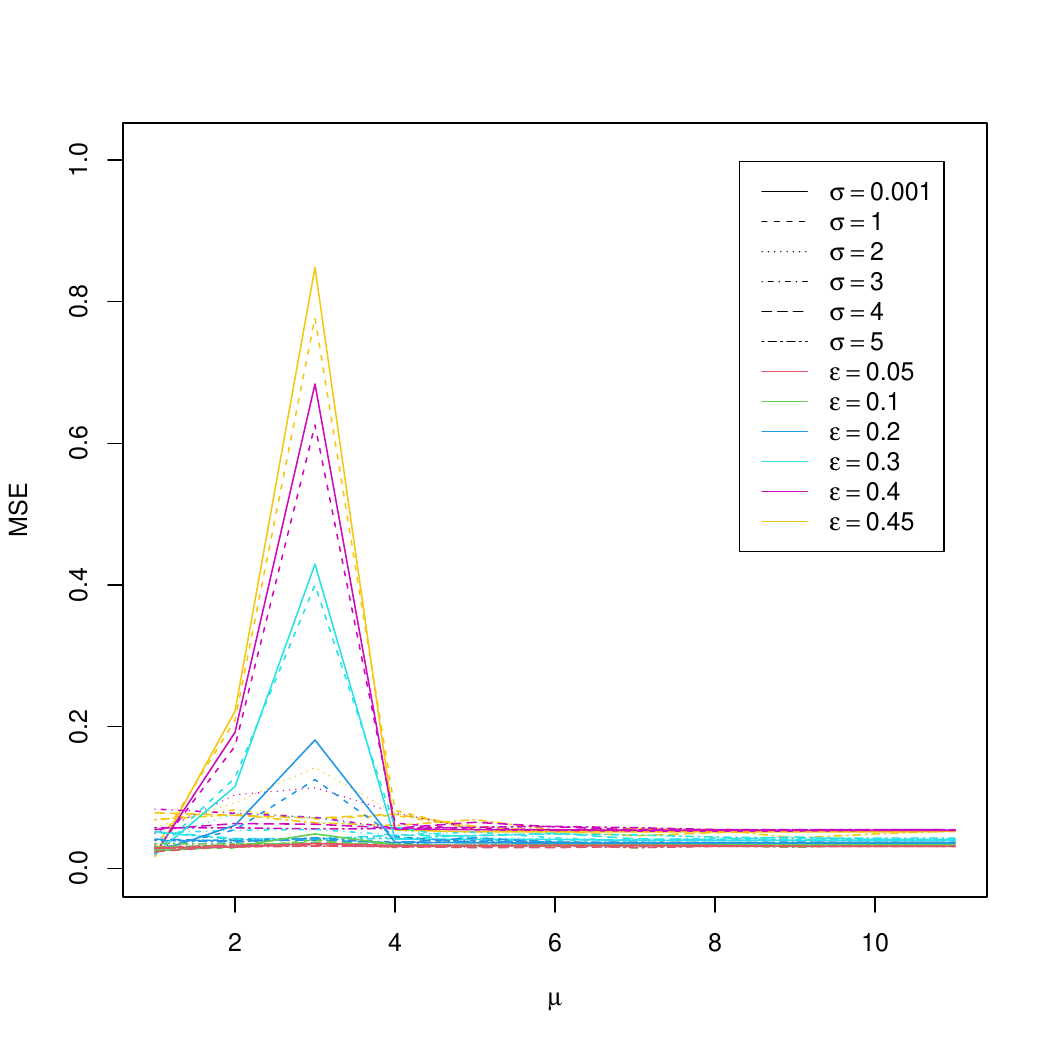} 
\includegraphics[width=0.32\textwidth]{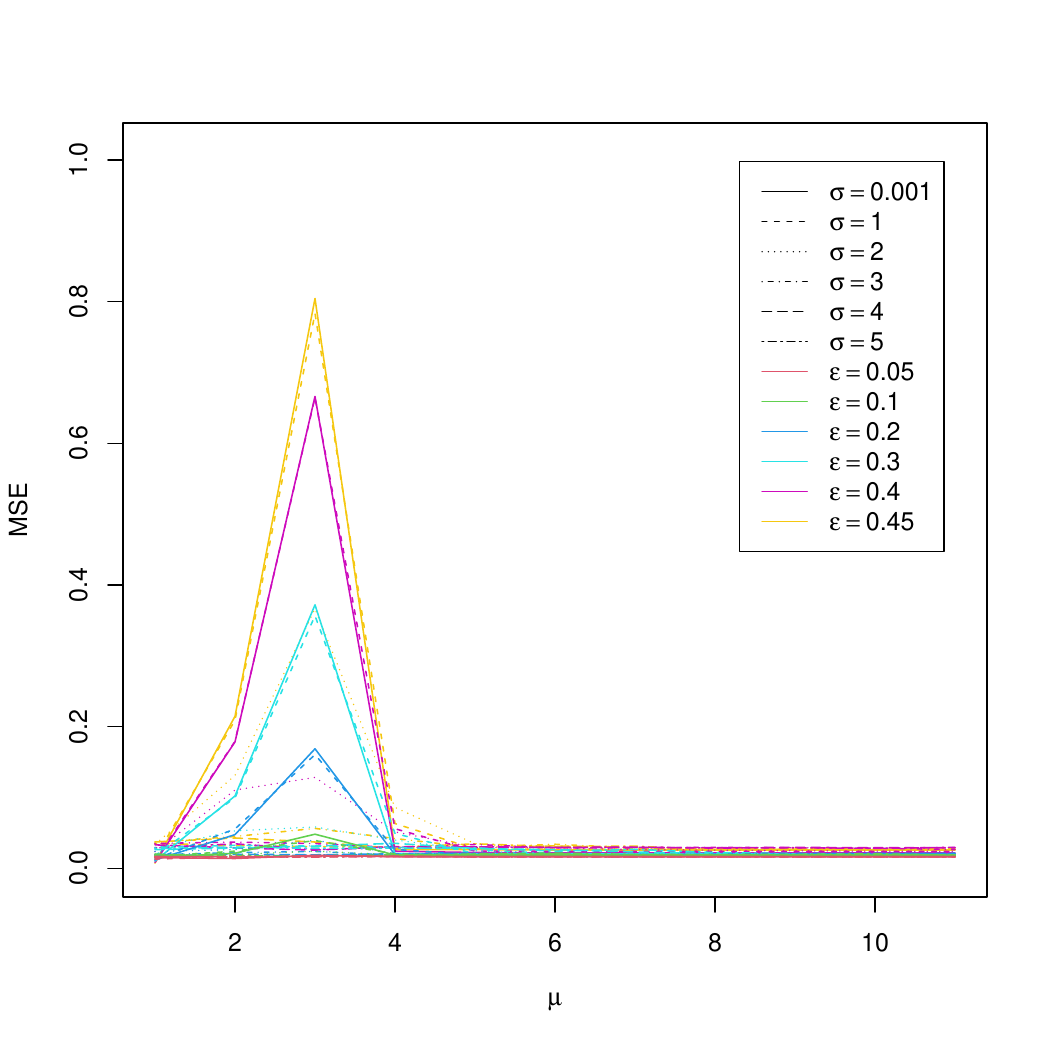} \\
\includegraphics[width=0.32\textwidth]{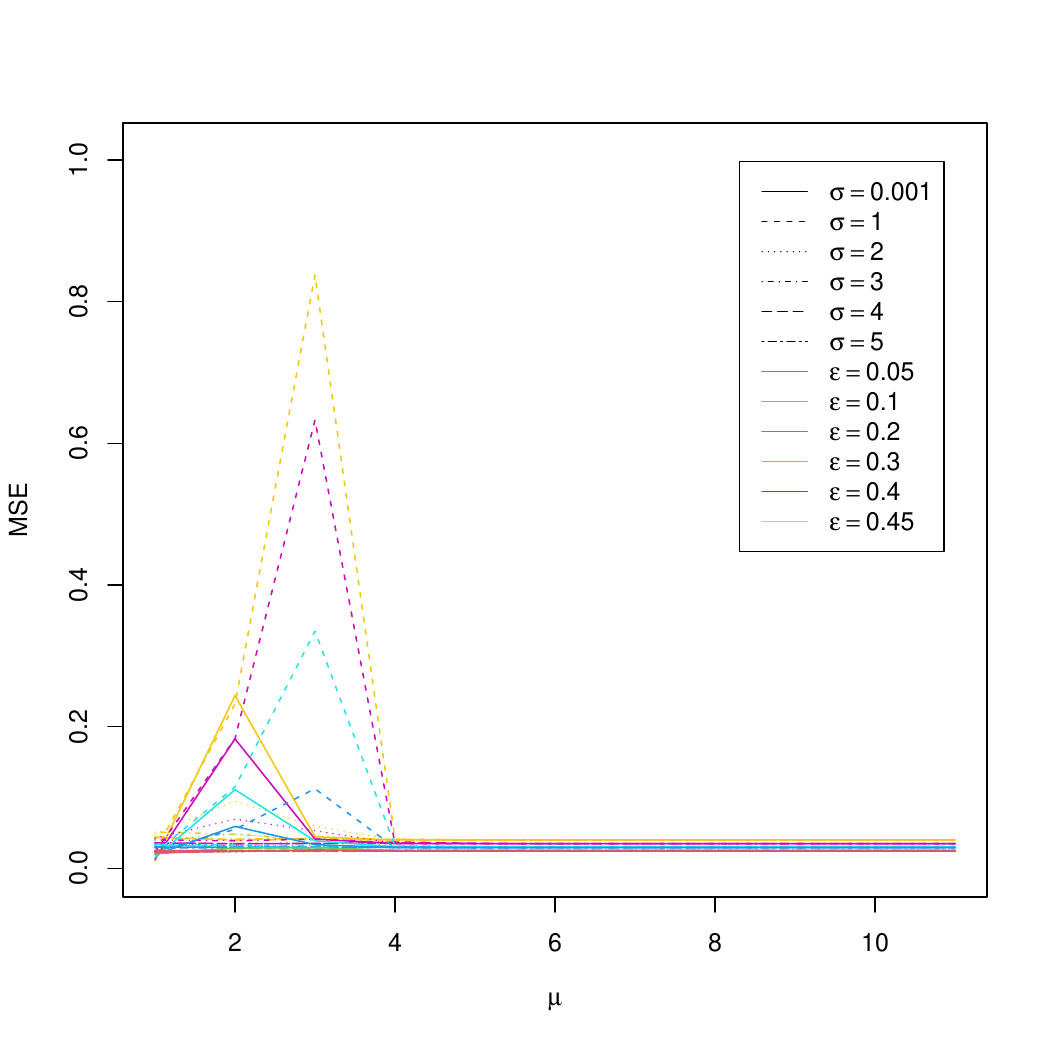}
\includegraphics[width=0.32\textwidth]{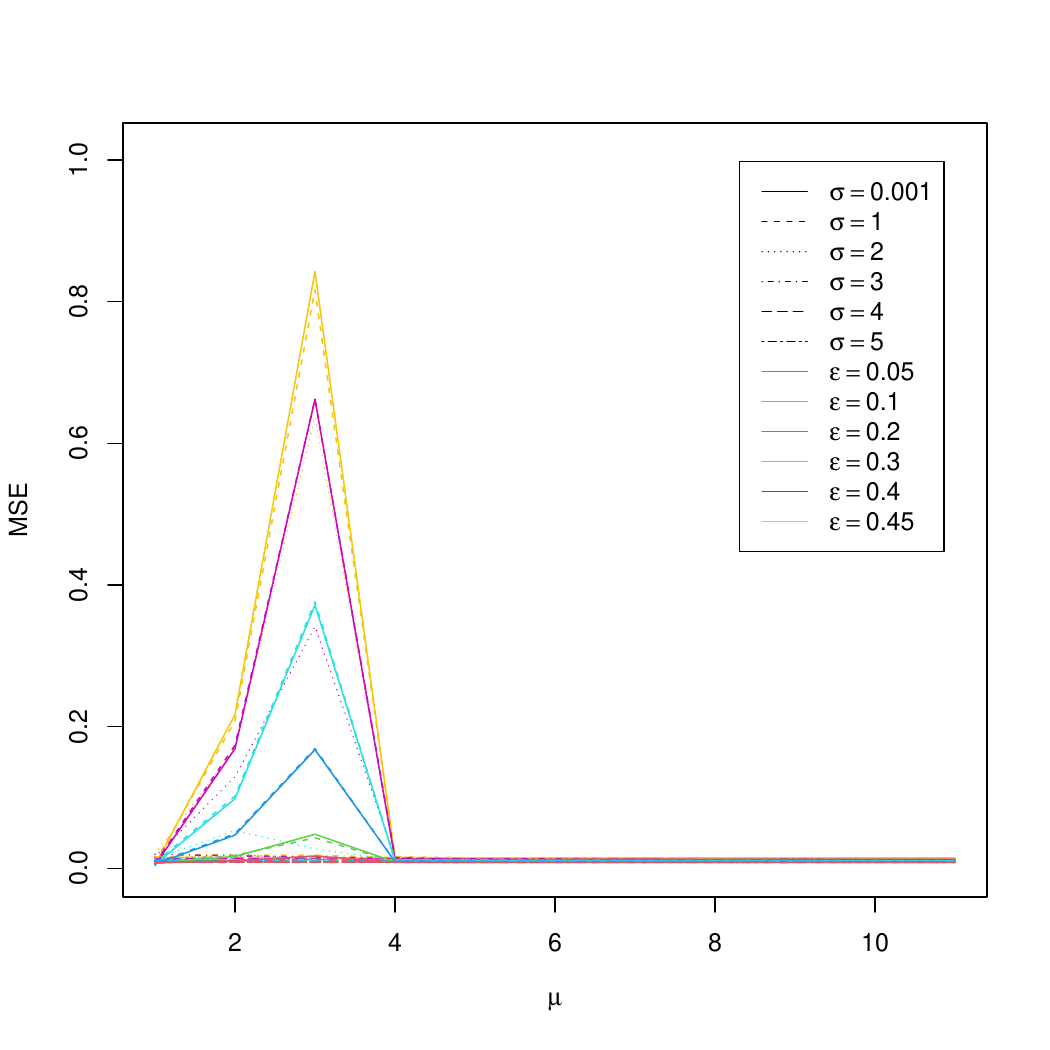} 
\includegraphics[width=0.32\textwidth]{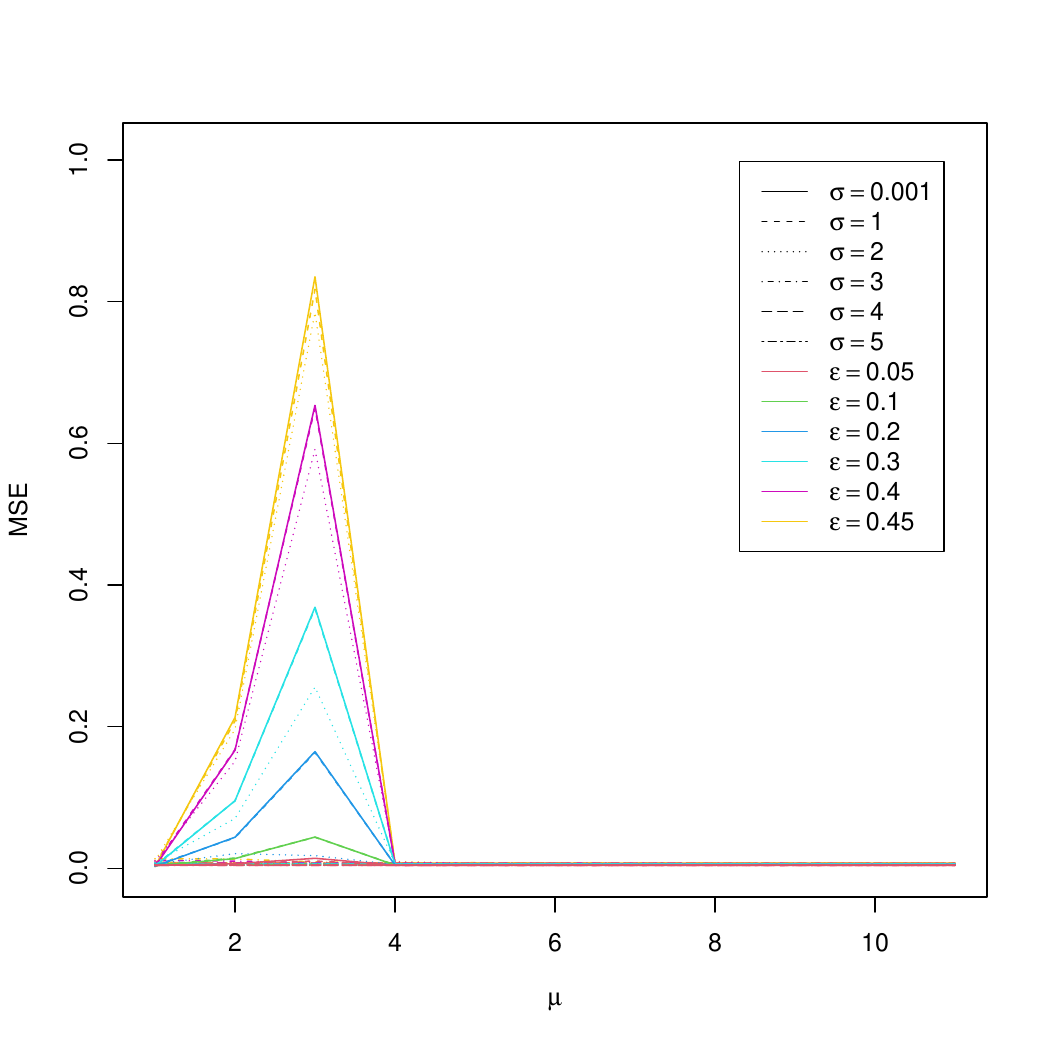} \\
\caption{Monte Carlo Simulation. Mean Square Error for the proposed method starting at the true values with $\alpha=0.75$ as a function of the contamination average $\mu$ ($x$-axis), contamination scale $\sigma$ (different line styles) and contamination level $\varepsilon$ (colors). Rows: number of variables $p=1, 2, 5$ and columns: sample size factor $s=2, 5, 10$.}
\label{sup:fig:monte:MSE:0.75:1}
\end{figure}  

\begin{figure}
\centering
\includegraphics[width=0.32\textwidth]{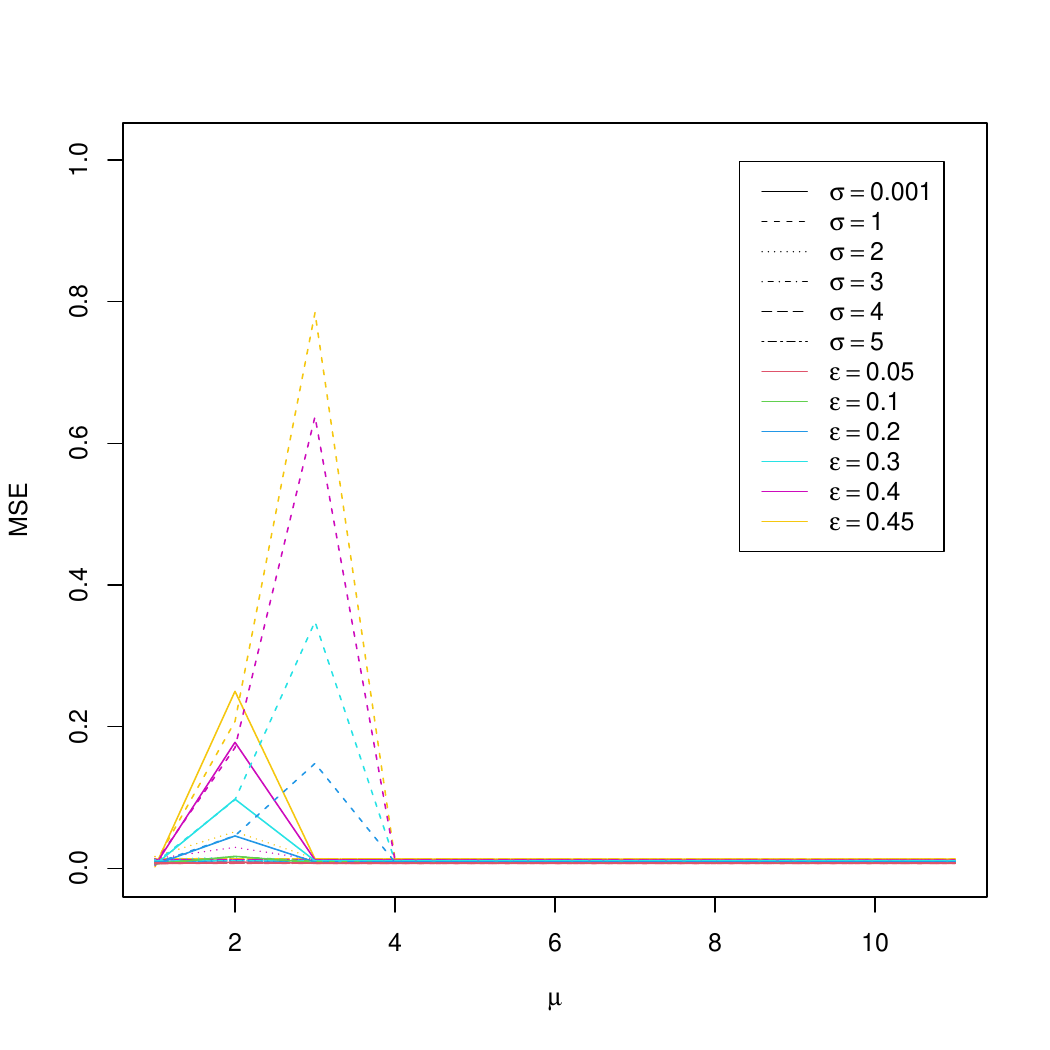}
\includegraphics[width=0.32\textwidth]{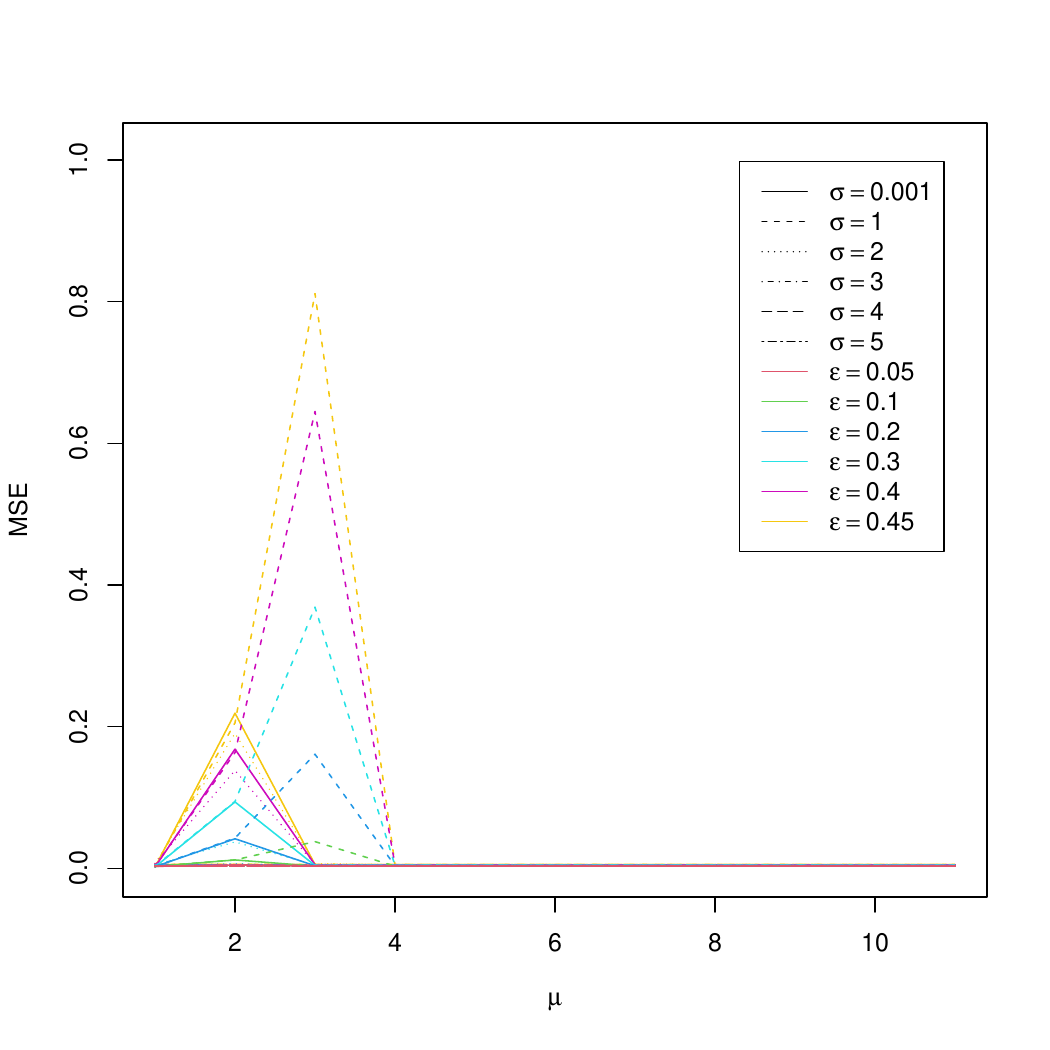} 
\includegraphics[width=0.32\textwidth]{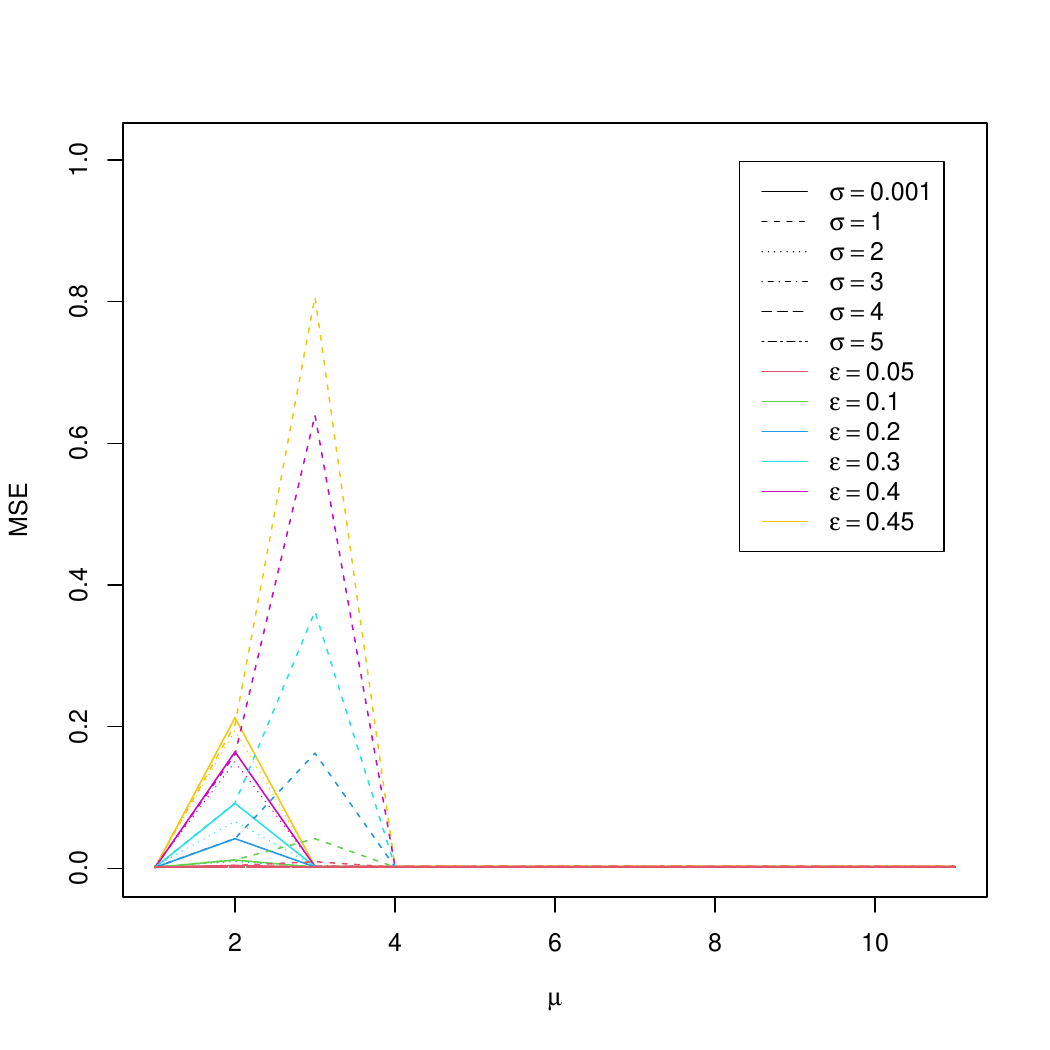} \\
\includegraphics[width=0.32\textwidth]{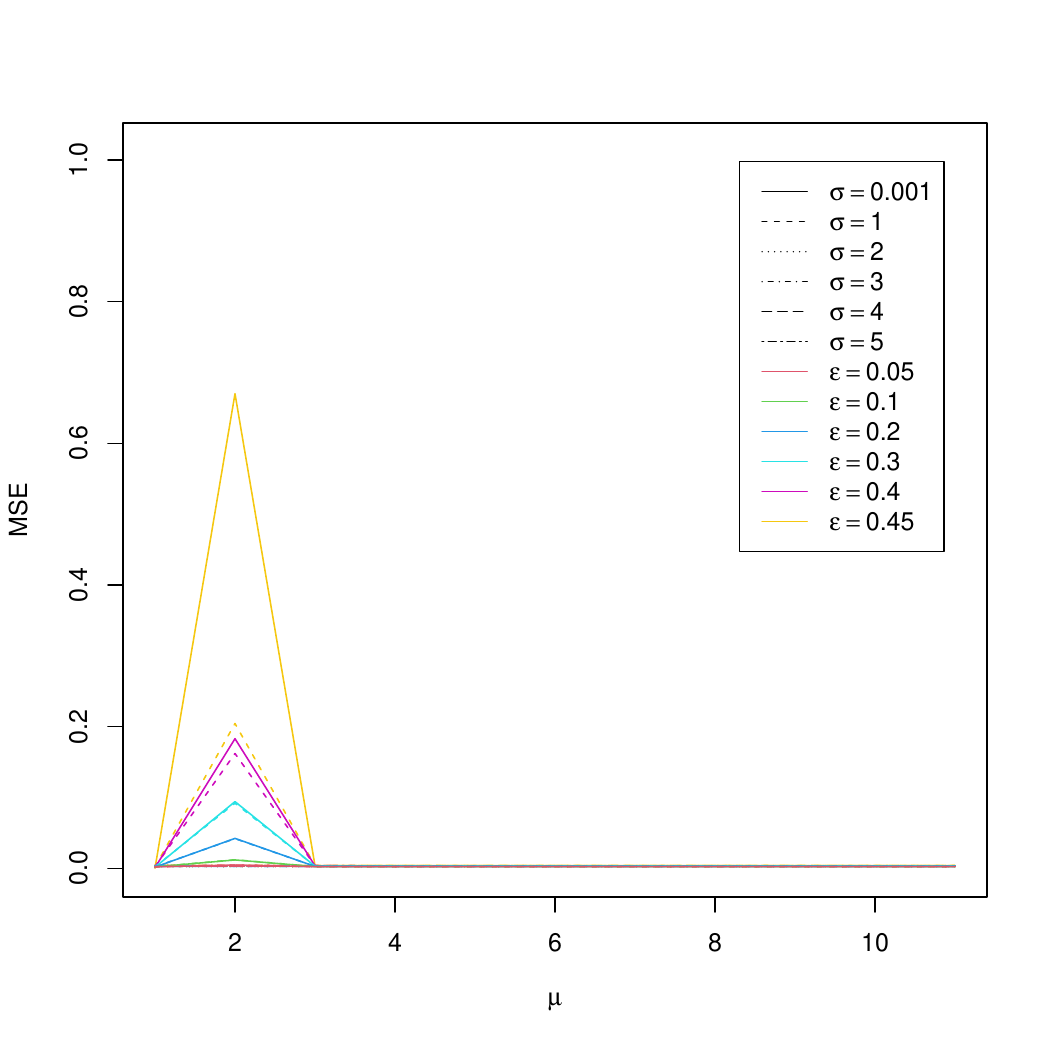}
\includegraphics[width=0.32\textwidth]{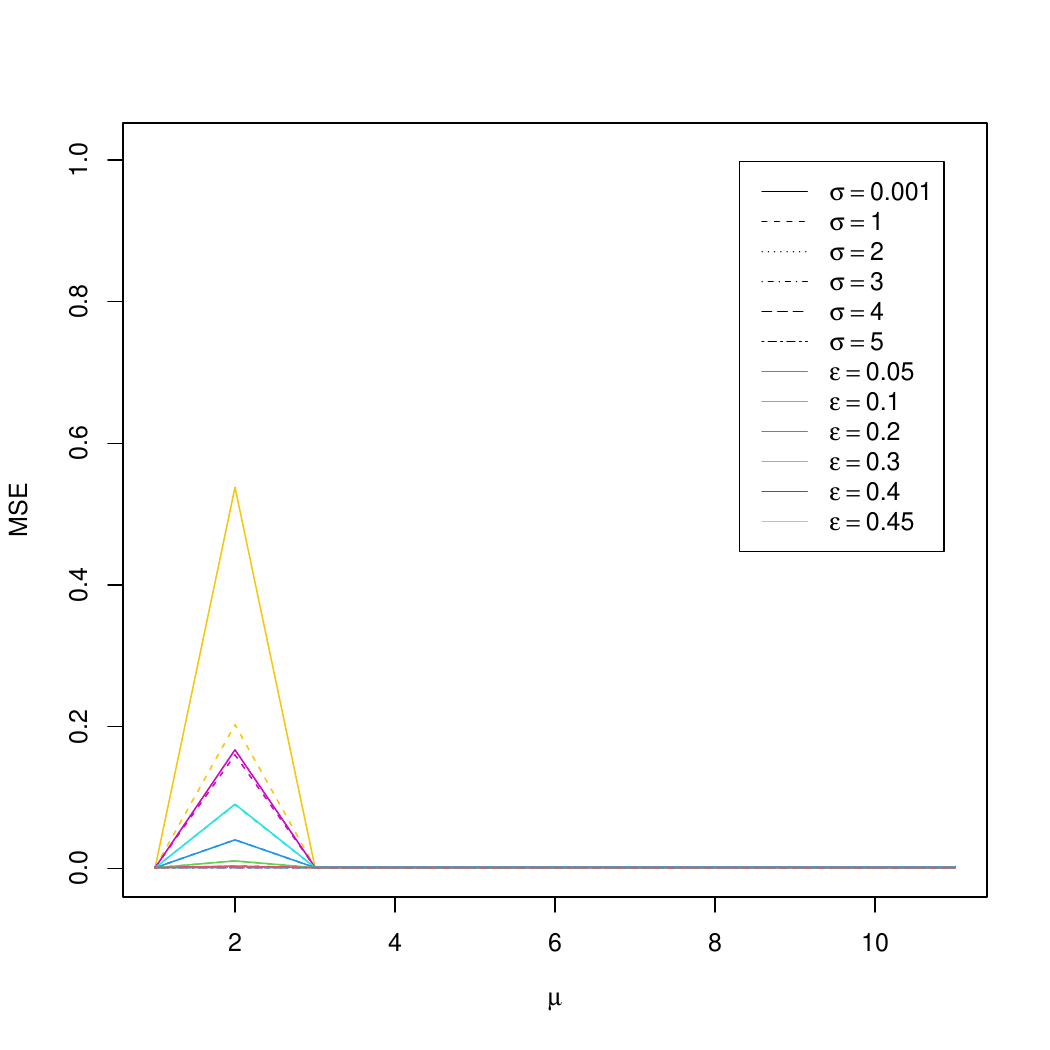} 
\includegraphics[width=0.32\textwidth]{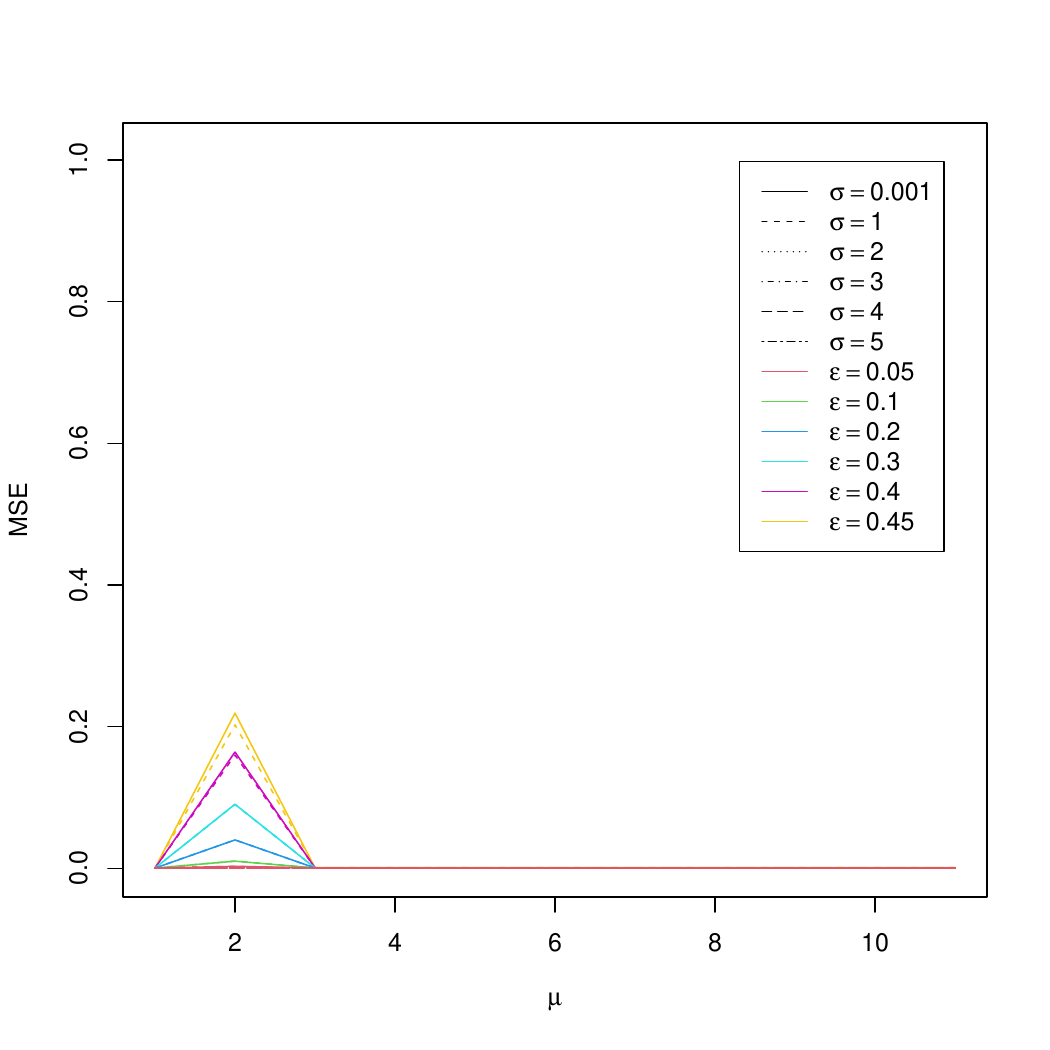}
\caption{Monte Carlo Simulation. Mean Square Error for the proposed method starting at the true values with $\alpha=0.75$ as a function of the contamination average $\mu$ ($x$-axis), contamination scale $\sigma$ (different line styles) and contamination level $\varepsilon$ (colors). Rows: number of variables $p=10, 20$ and columns: sample size factor $s=2, 5, 10$.}
\label{sup:fig:monte:MSE:0.75:2}
\end{figure}  

\begin{figure}
\centering
\includegraphics[width=0.32\textwidth]{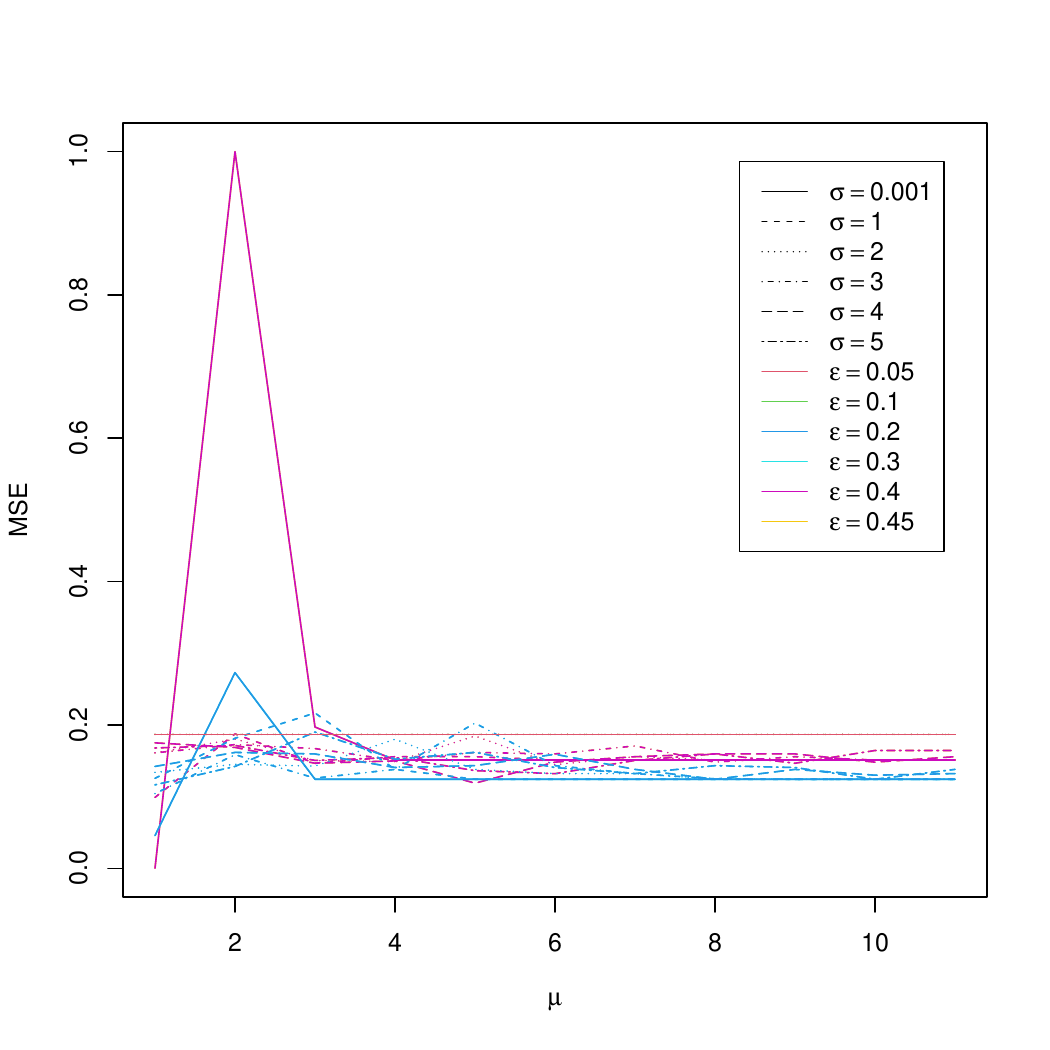}
\includegraphics[width=0.32\textwidth]{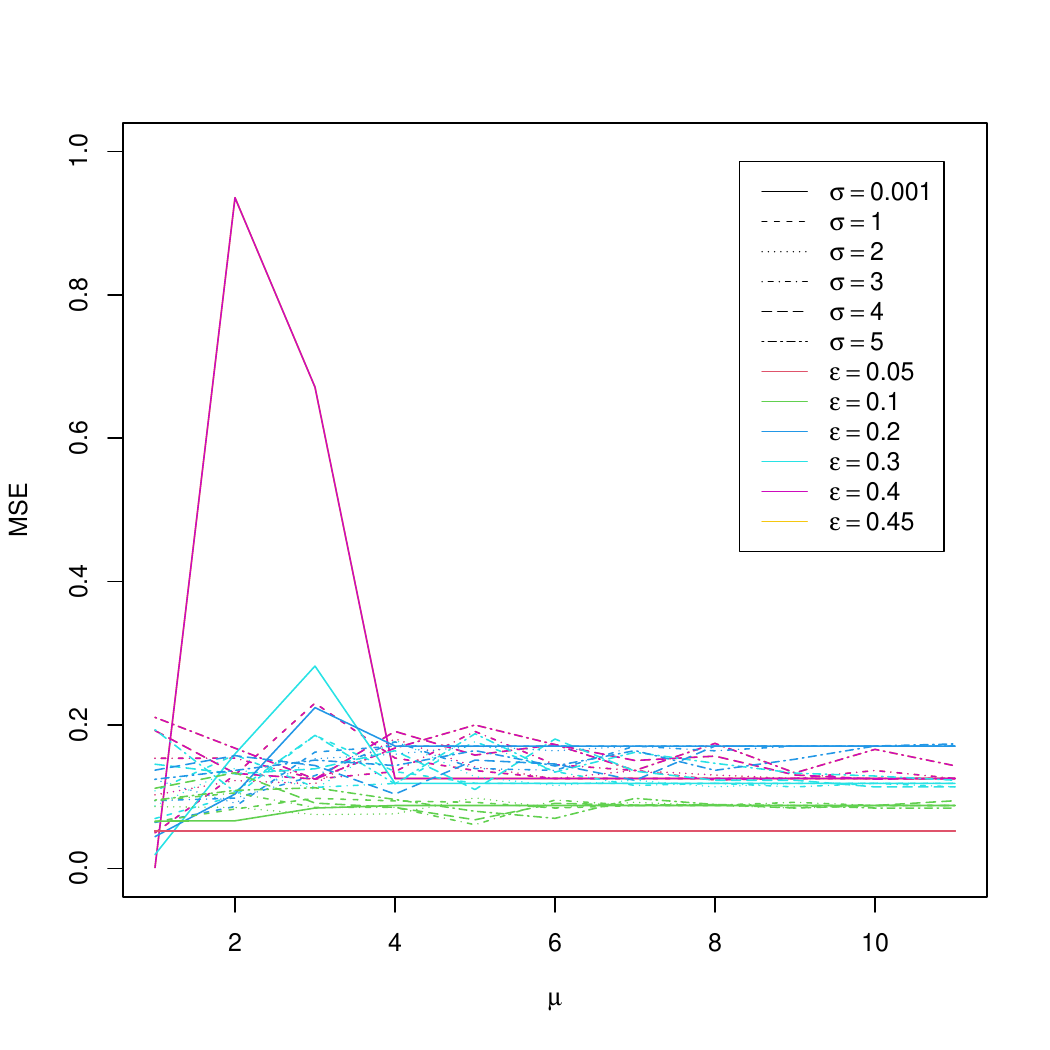} 
\includegraphics[width=0.32\textwidth]{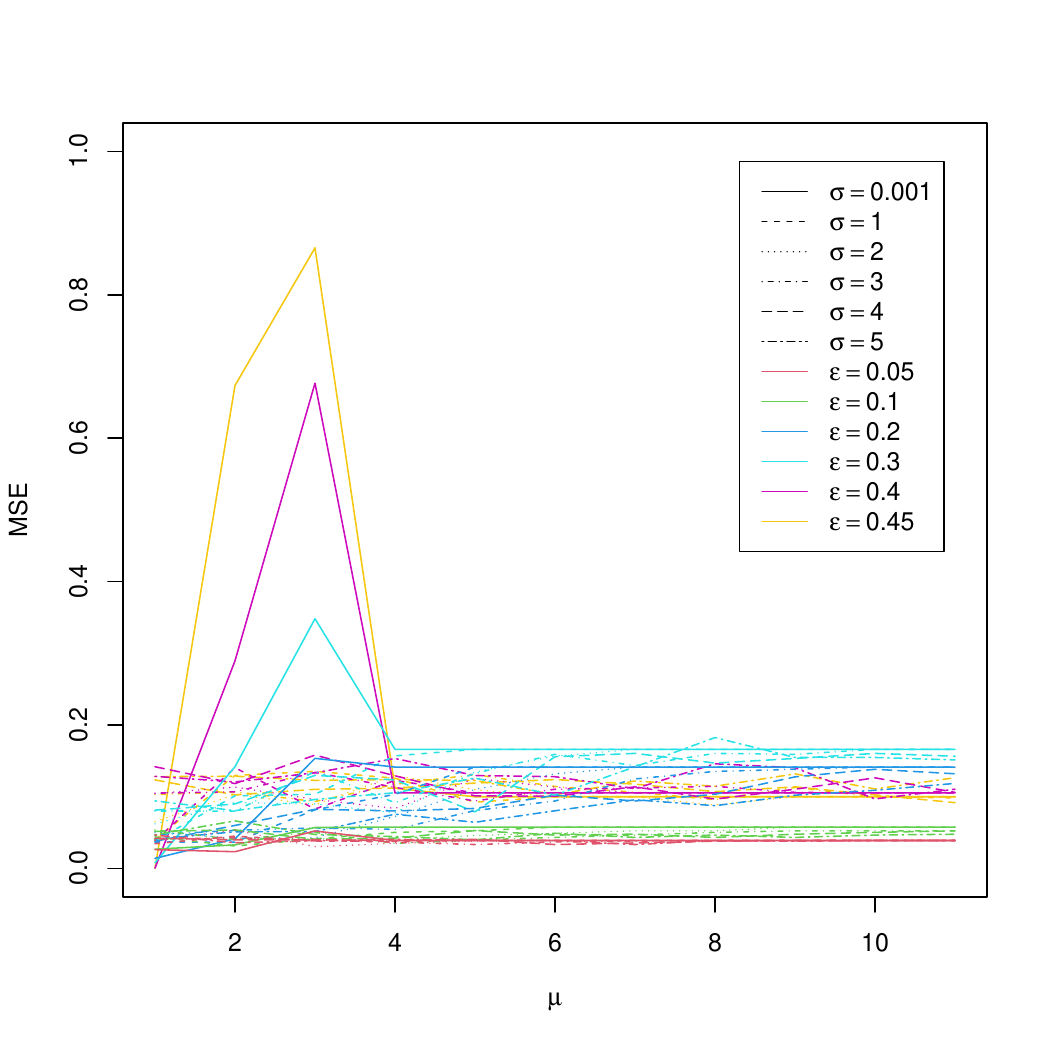} \\
\includegraphics[width=0.32\textwidth]{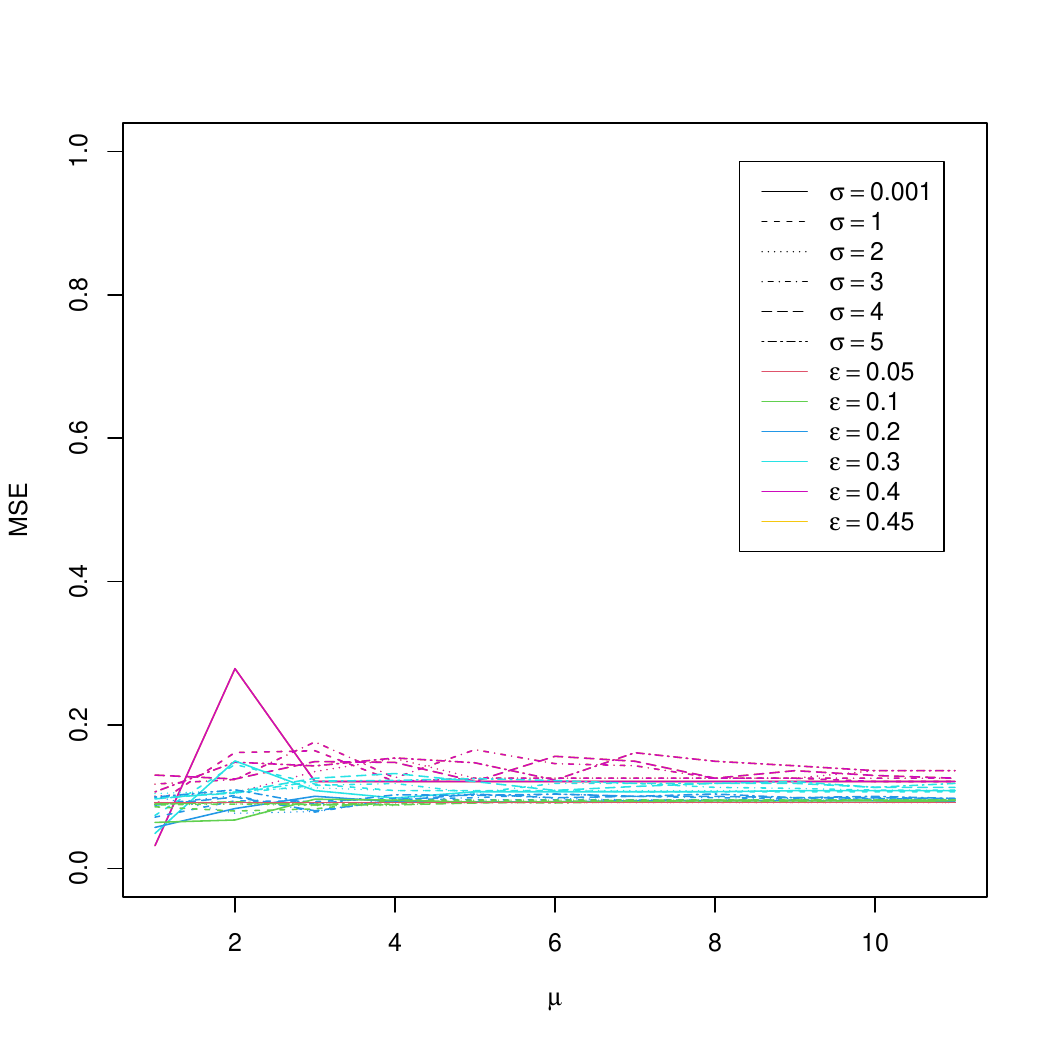}
\includegraphics[width=0.32\textwidth]{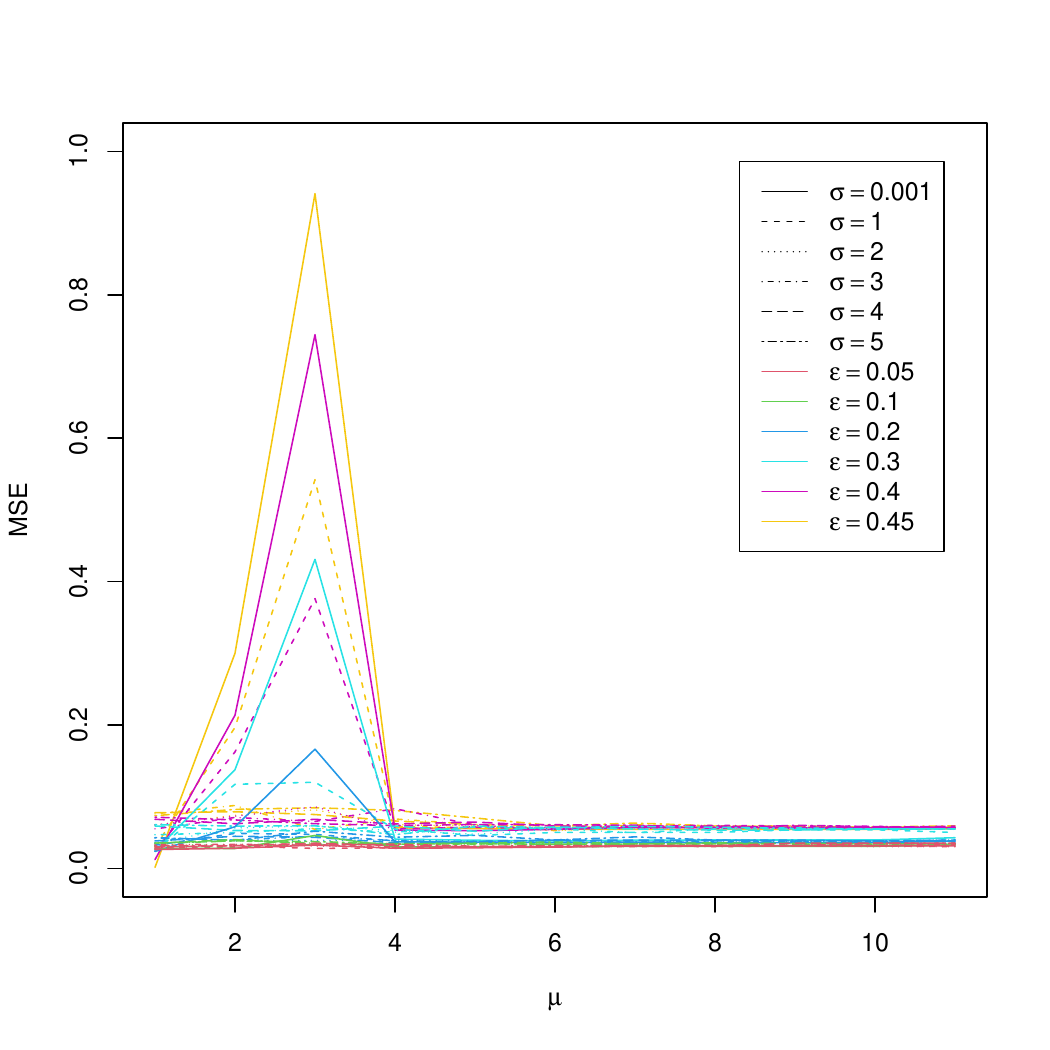} 
\includegraphics[width=0.32\textwidth]{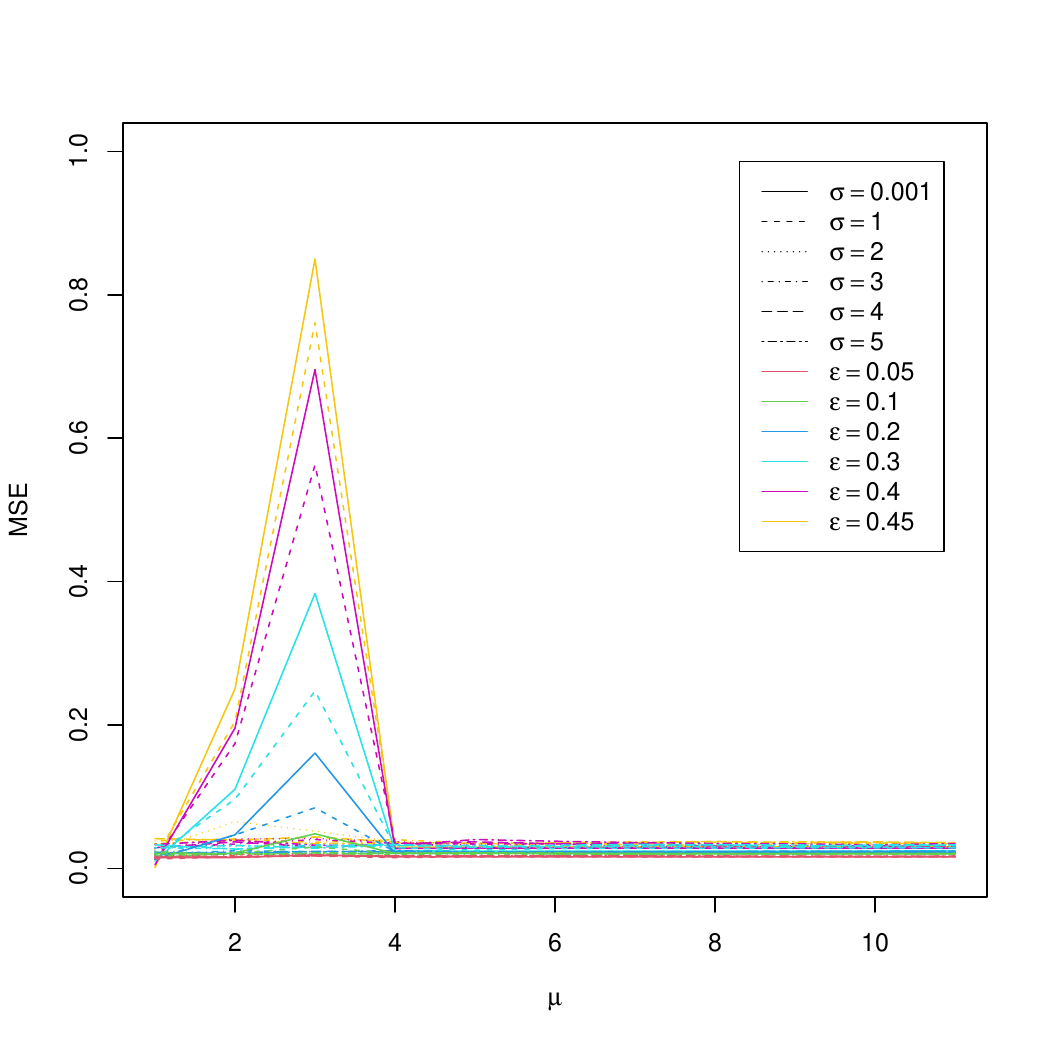} \\
\includegraphics[width=0.32\textwidth]{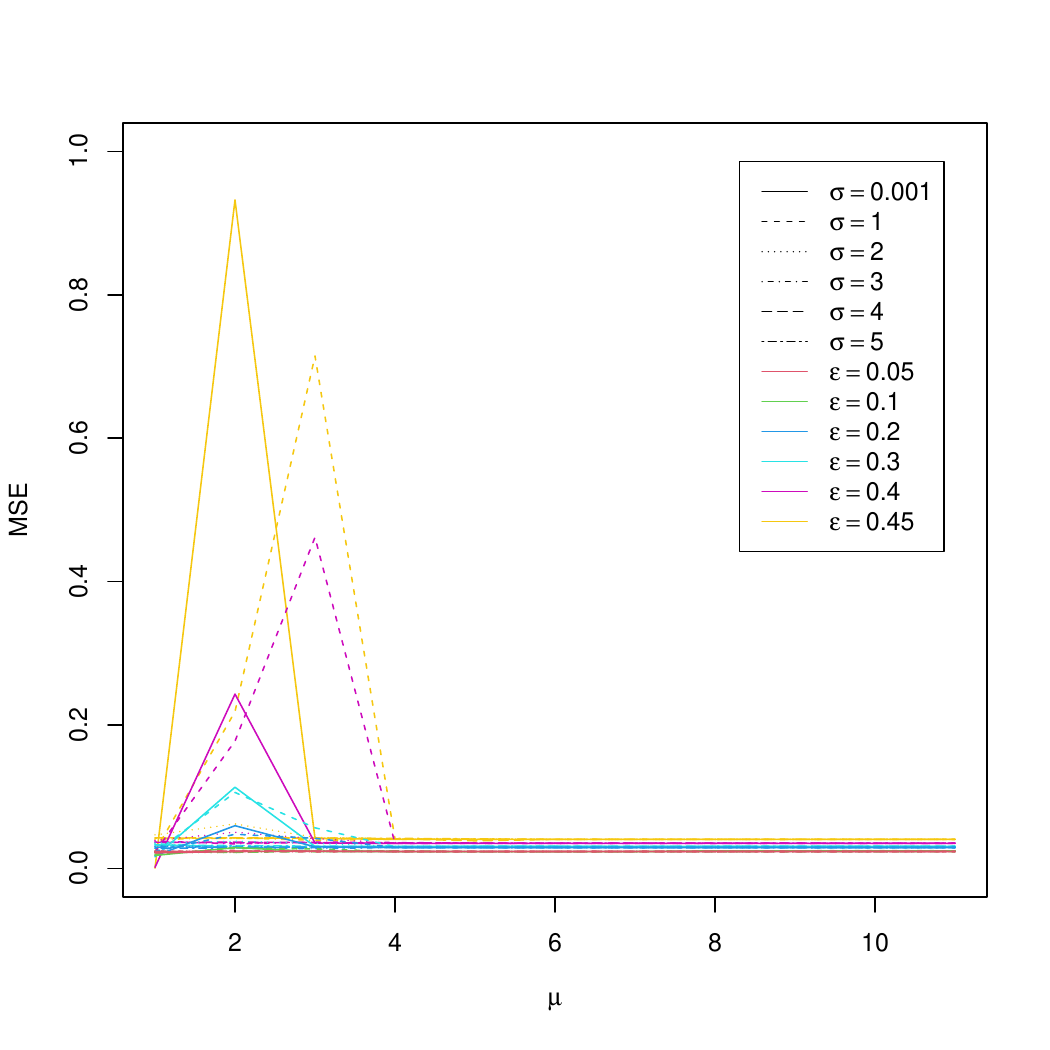}
\includegraphics[width=0.32\textwidth]{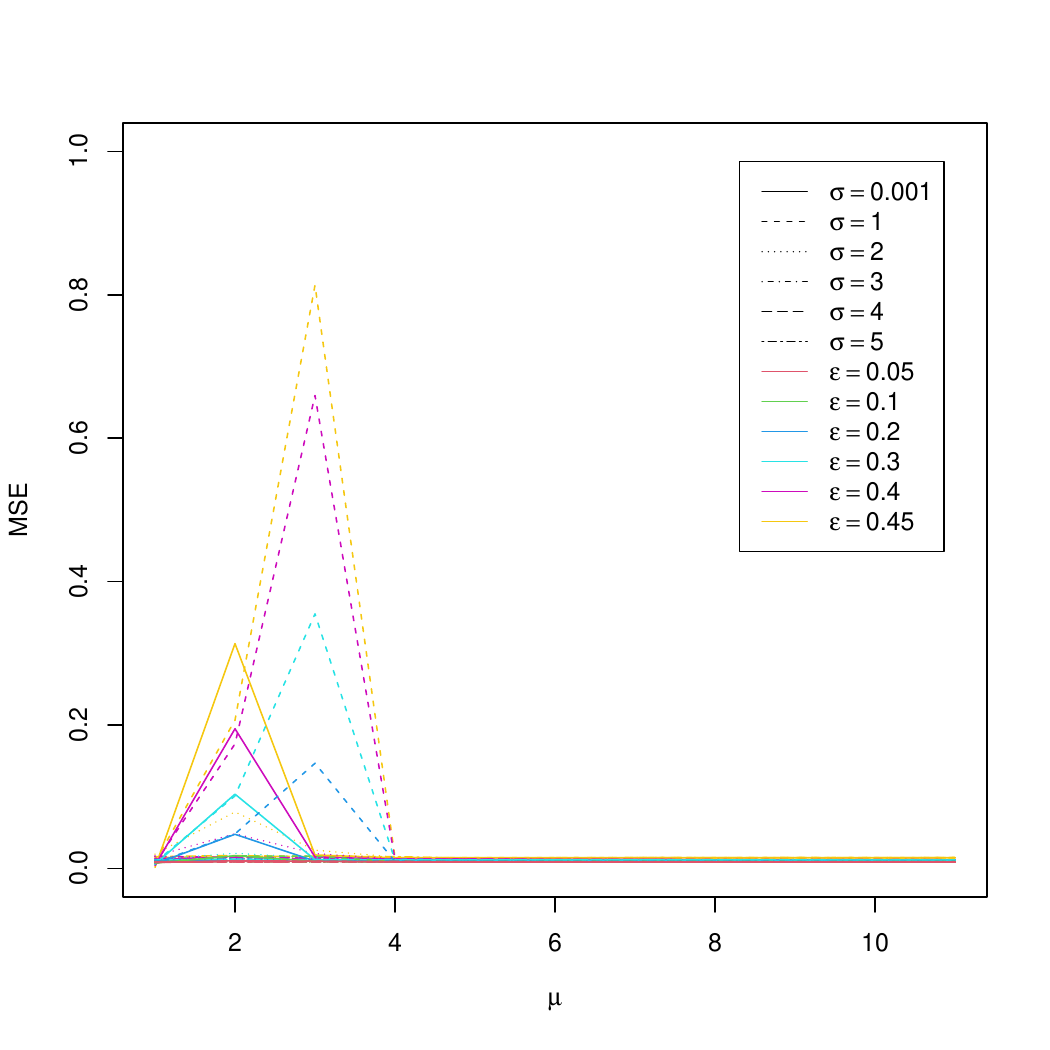} 
\includegraphics[width=0.32\textwidth]{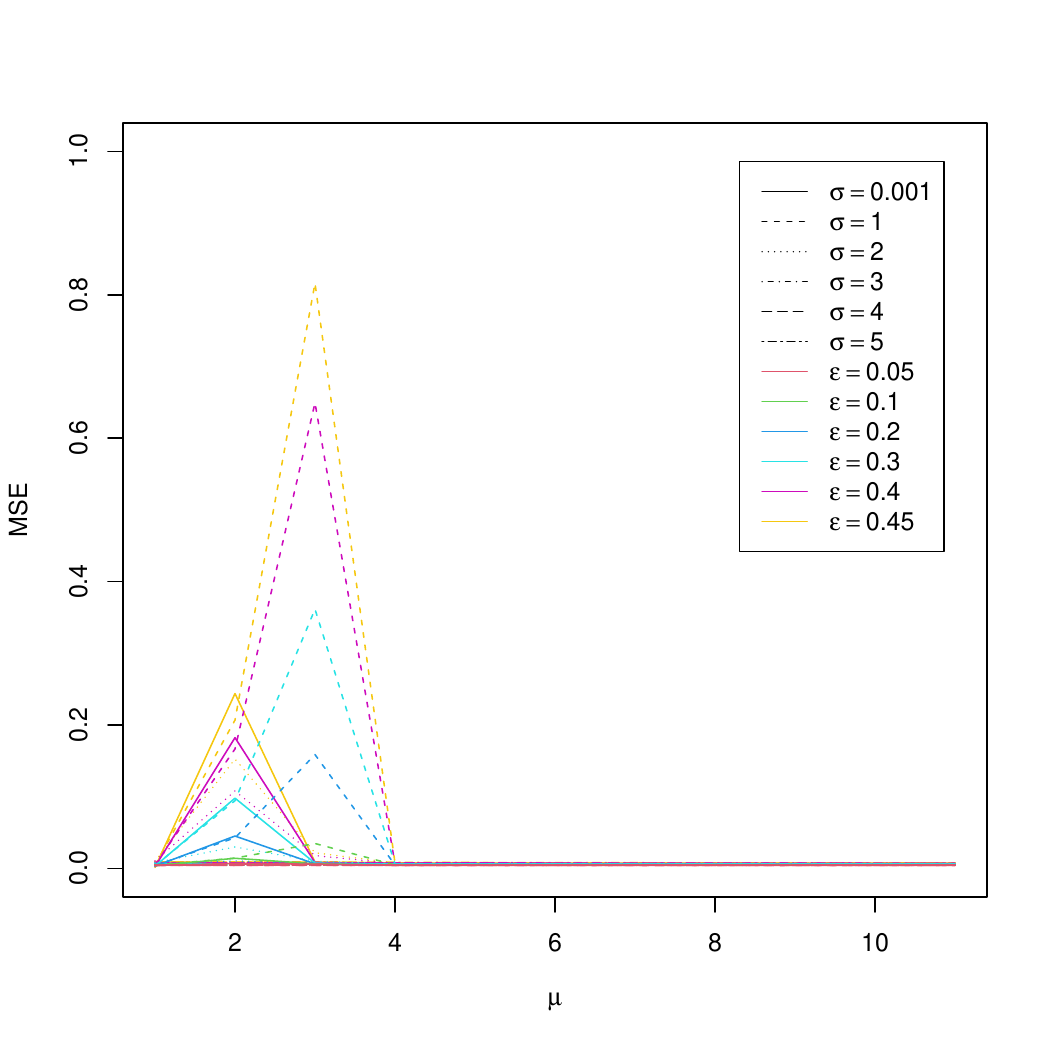} \\
\caption{Monte Carlo Simulation. Mean Square Error for the proposed method starting at the true values with $\alpha=1$ as a function of the contamination average $\mu$ ($x$-axis), contamination scale $\sigma$ (different line styles) and contamination level $\varepsilon$ (colors). Rows: number of variables $p=1, 2, 5$ and columns: sample size factor $s=2, 5, 10$.}
\label{sup:fig:monte:MSE:1:1}
\end{figure}  

\begin{figure}
\centering
\includegraphics[width=0.32\textwidth]{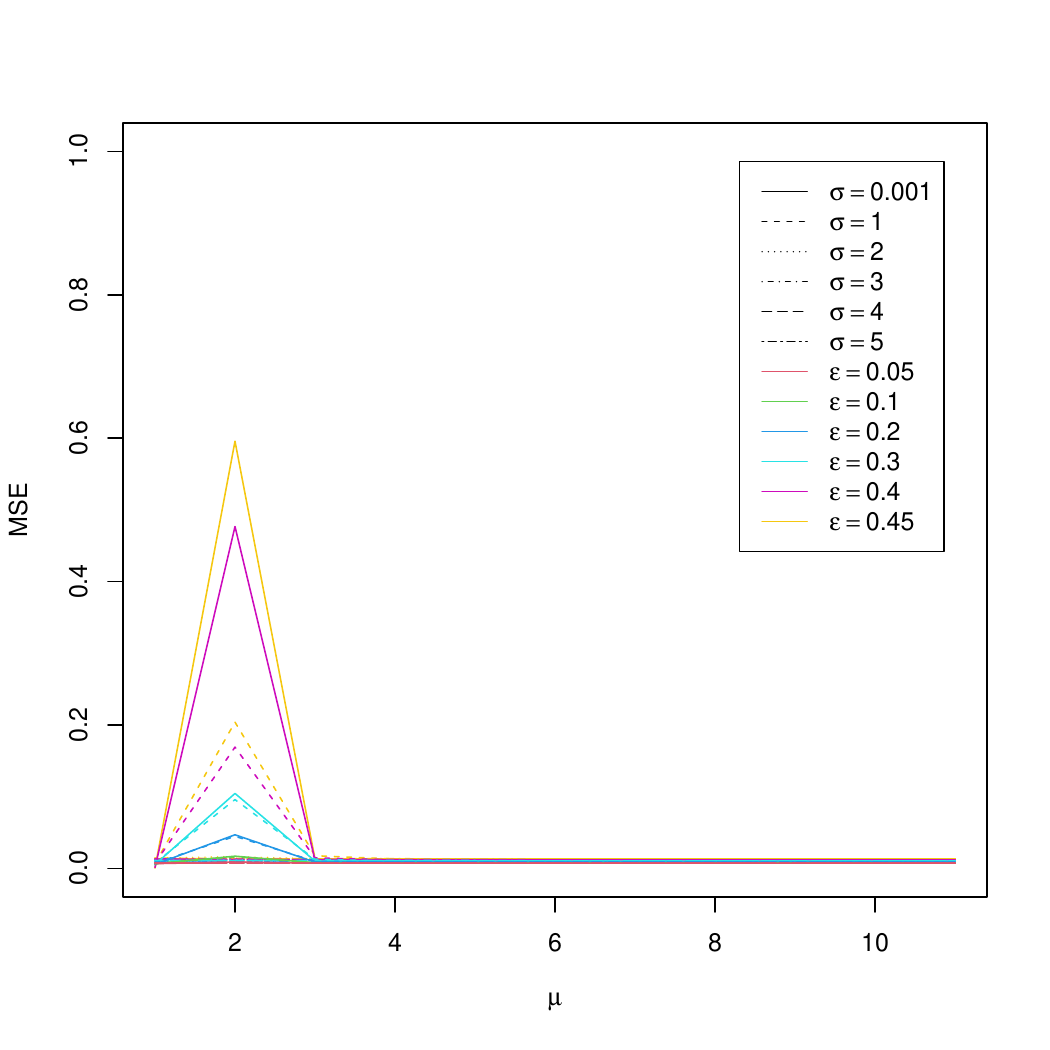}
\includegraphics[width=0.32\textwidth]{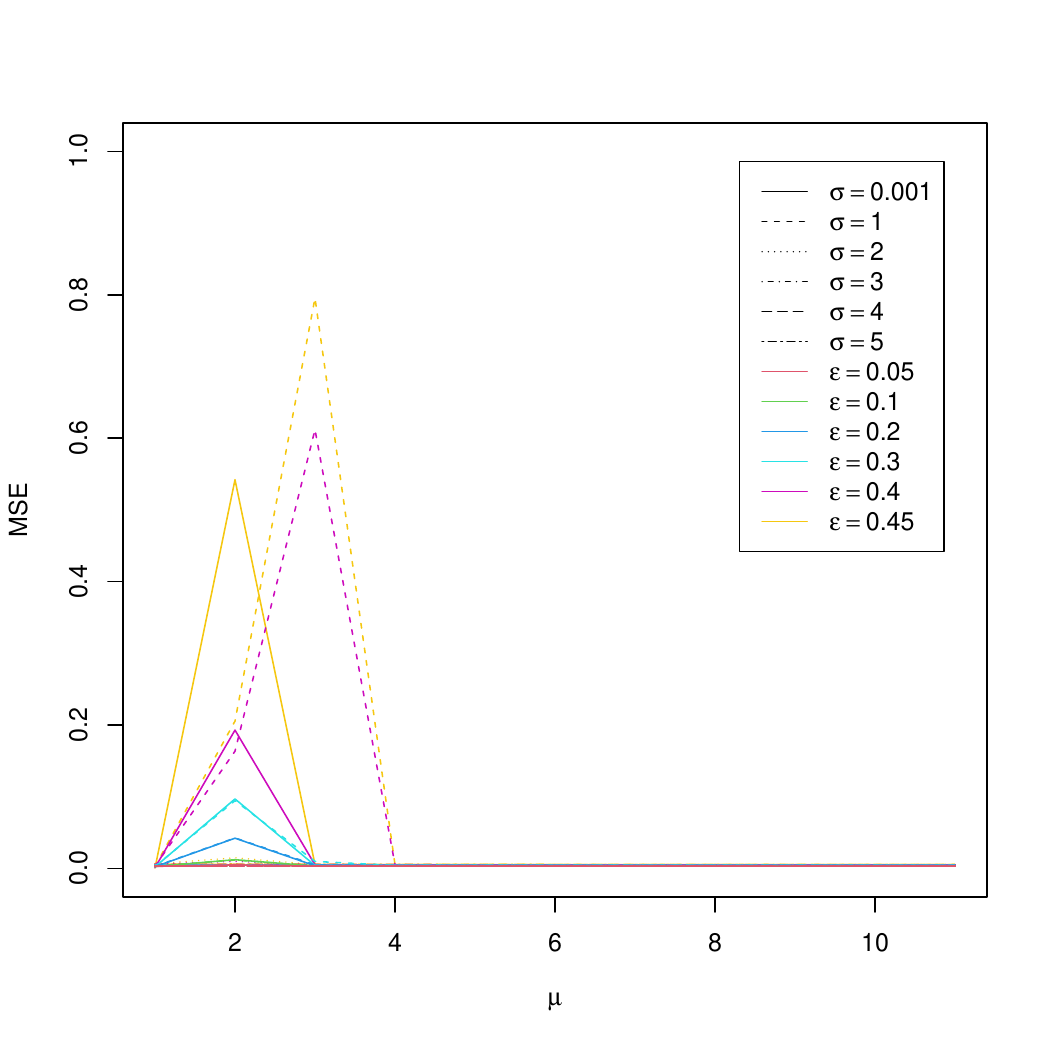} 
\includegraphics[width=0.32\textwidth]{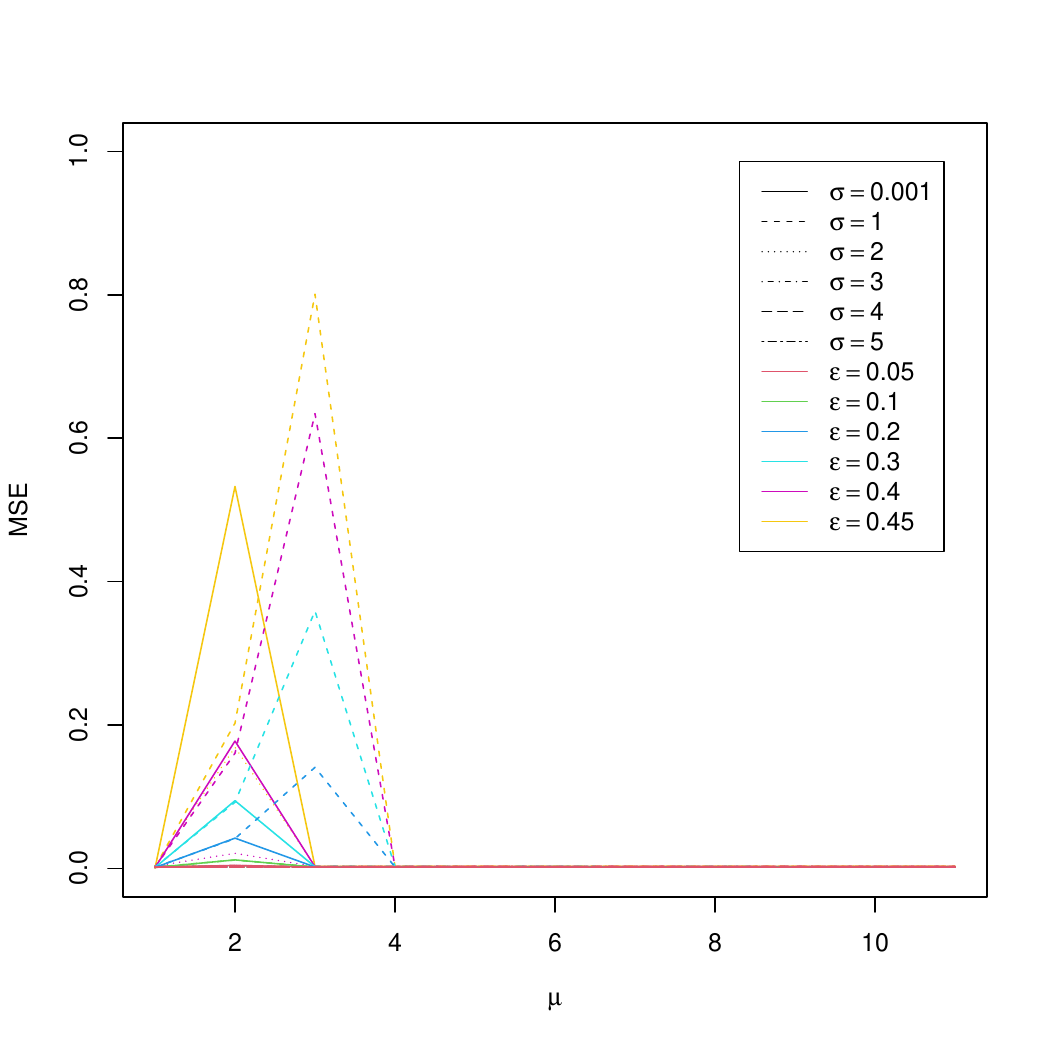} \\
\includegraphics[width=0.32\textwidth]{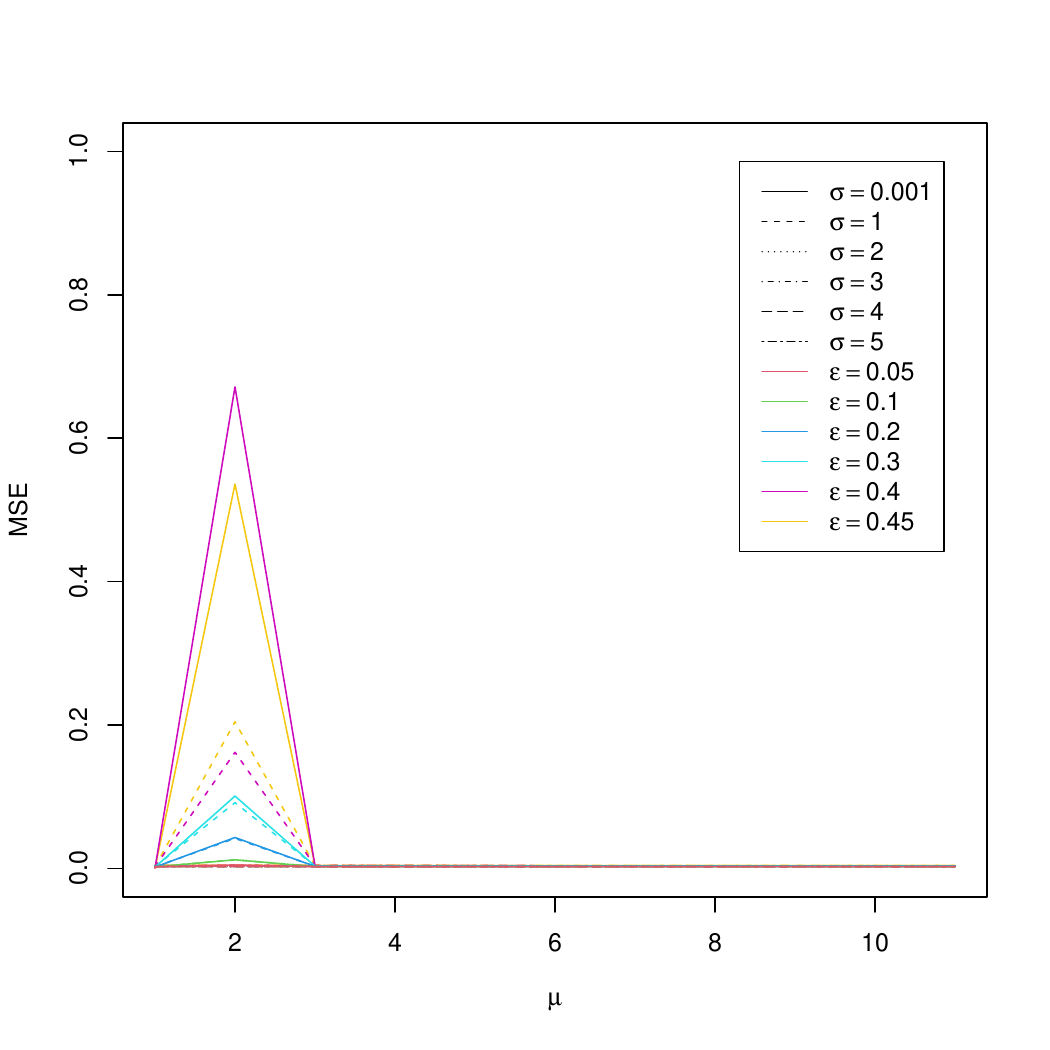}
\includegraphics[width=0.32\textwidth]{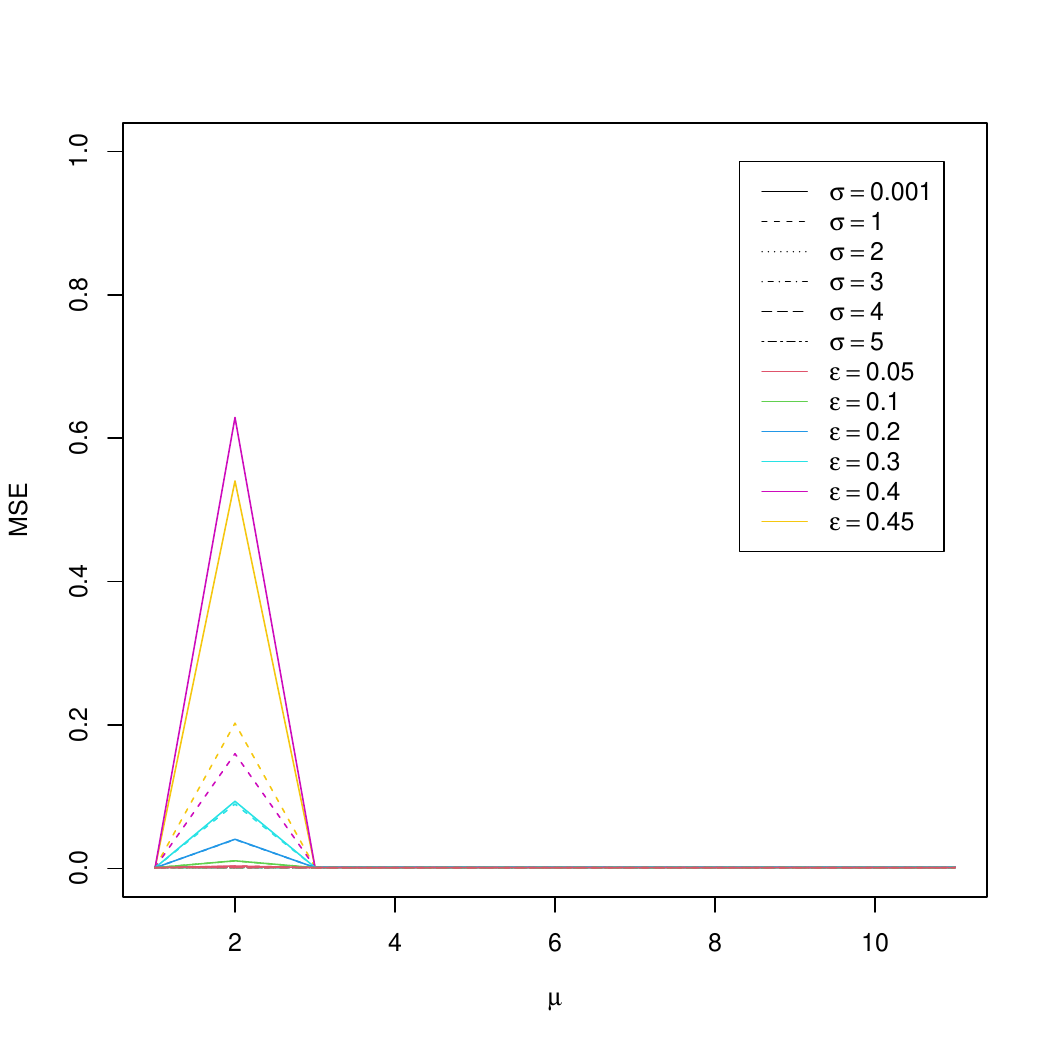} 
\includegraphics[width=0.32\textwidth]{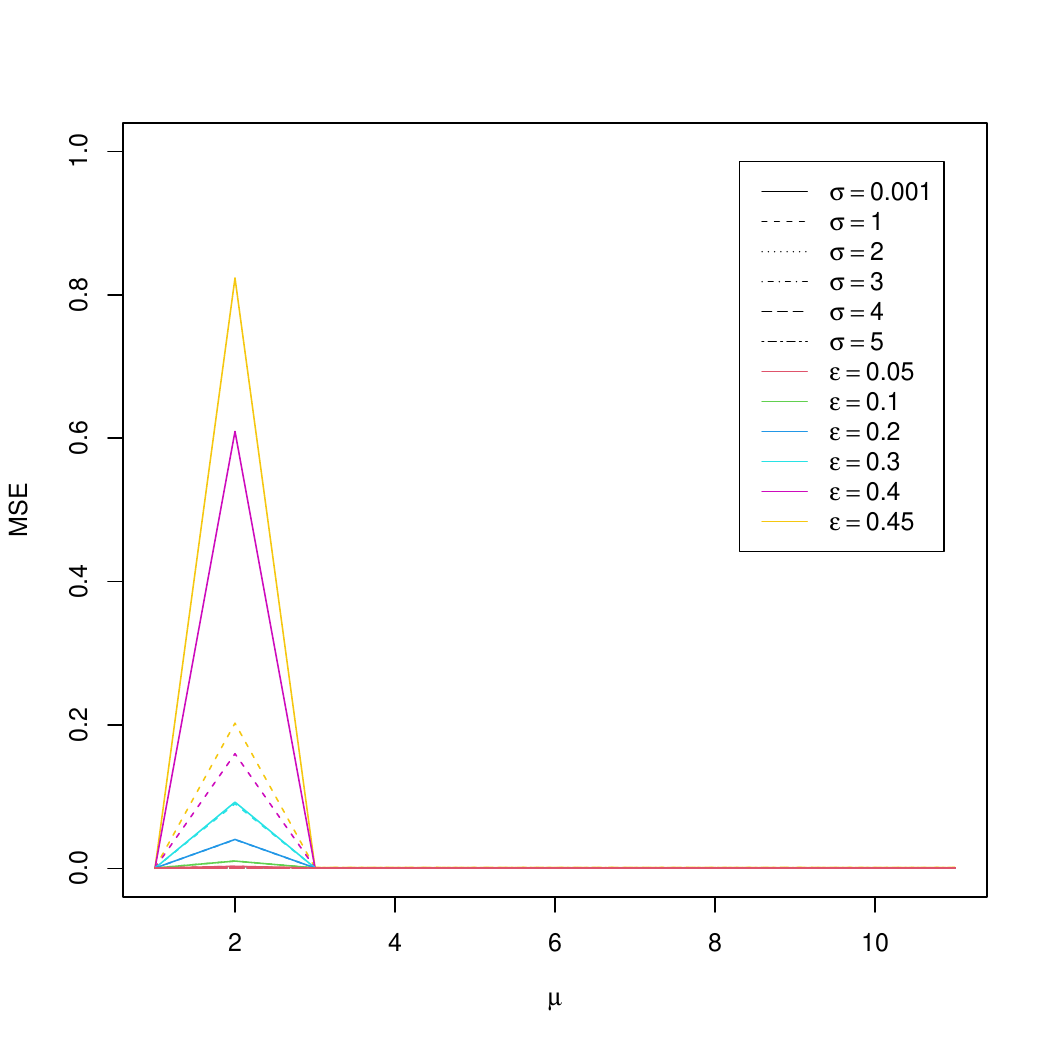}
\caption{Monte Carlo Simulation. Mean Square Error for the proposed method starting at the true values with $\alpha=1$ as a function of the contamination average $\mu$ ($x$-axis), contamination scale $\sigma$ (different line styles) and contamination level $\varepsilon$ (colors). Rows: number of variables $p=10, 20$ and columns: sample size factor $s=2, 5, 10$.}
\label{sup:fig:monte:MSE:1:2}
\end{figure}  

\clearpage

%%% CovMest

\begin{figure}
\centering
\includegraphics[width=0.32\textwidth]{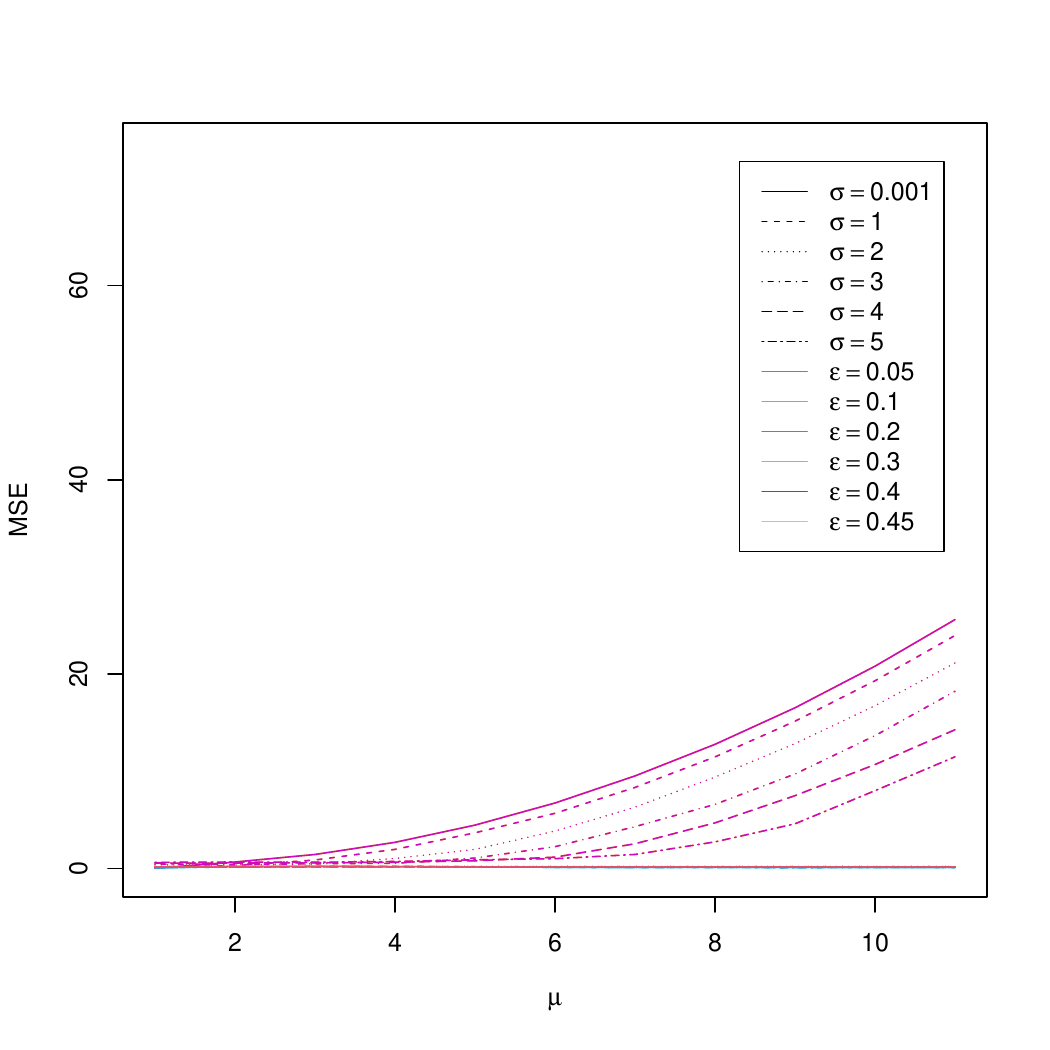}
\includegraphics[width=0.32\textwidth]{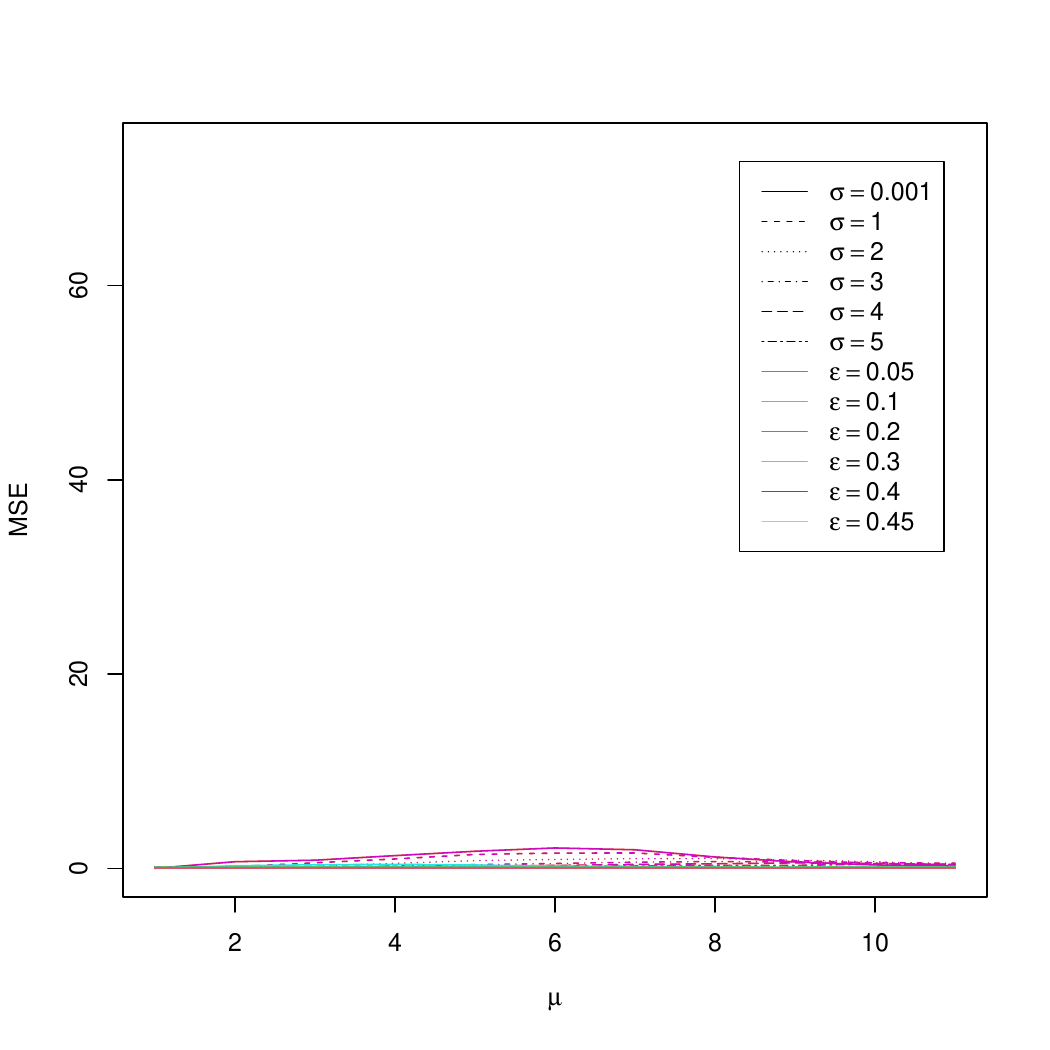} 
\includegraphics[width=0.32\textwidth]{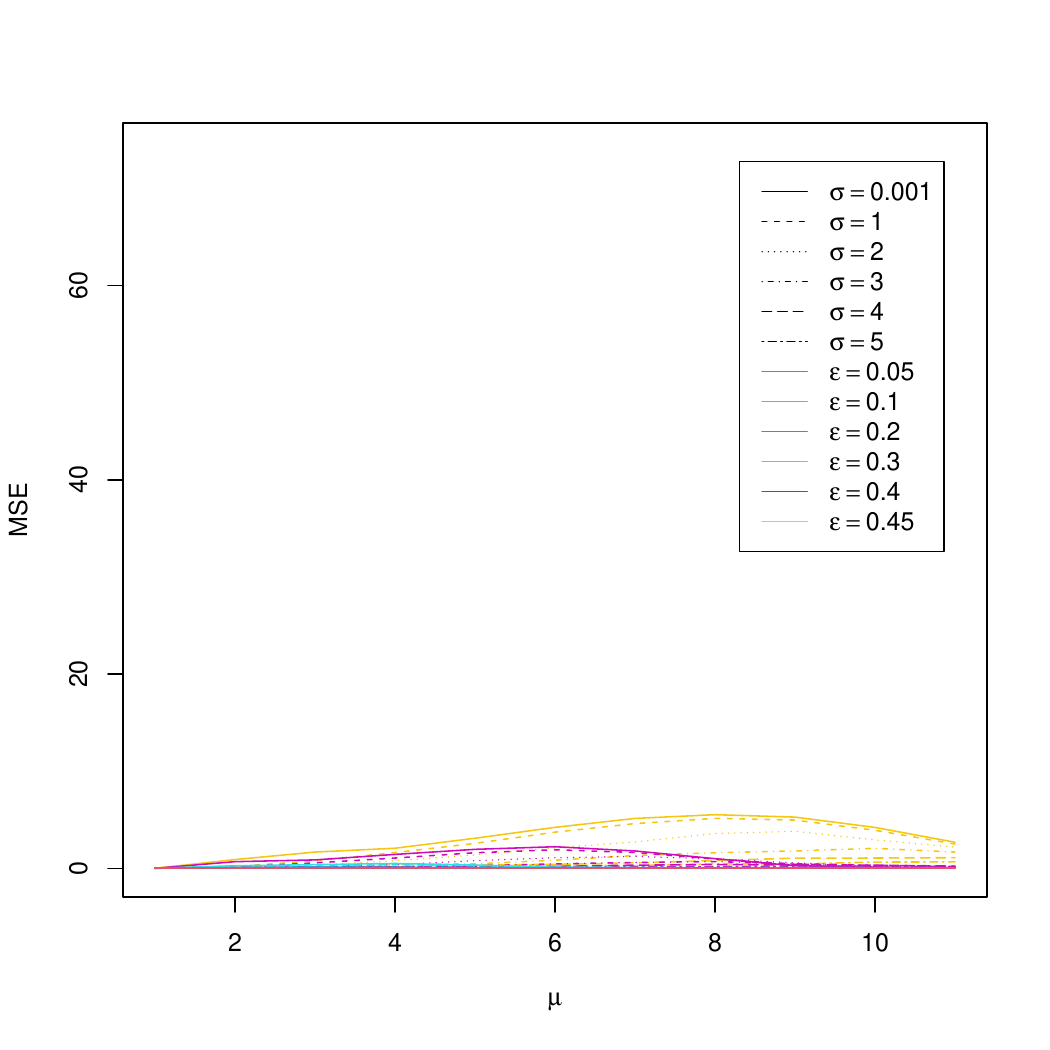} \\
\includegraphics[width=0.32\textwidth]{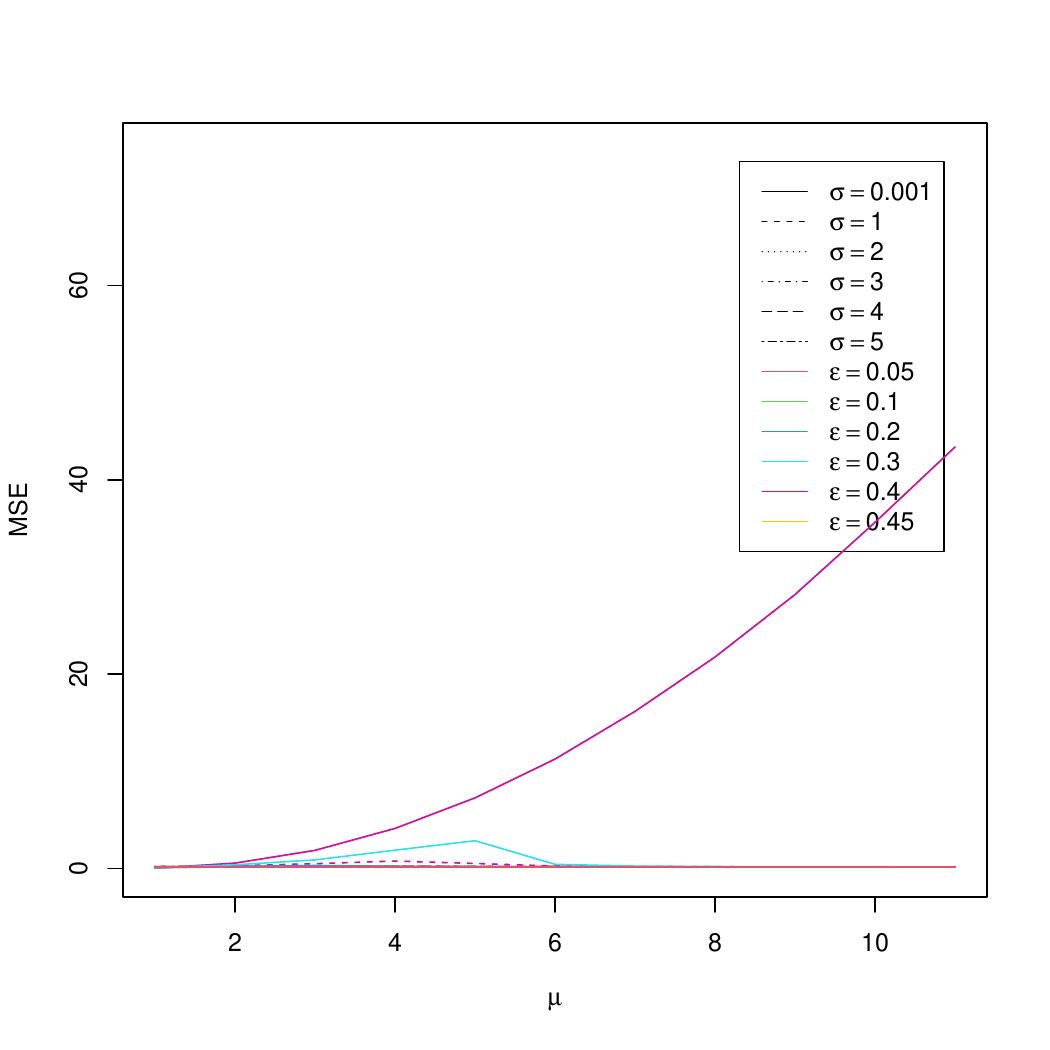}
\includegraphics[width=0.32\textwidth]{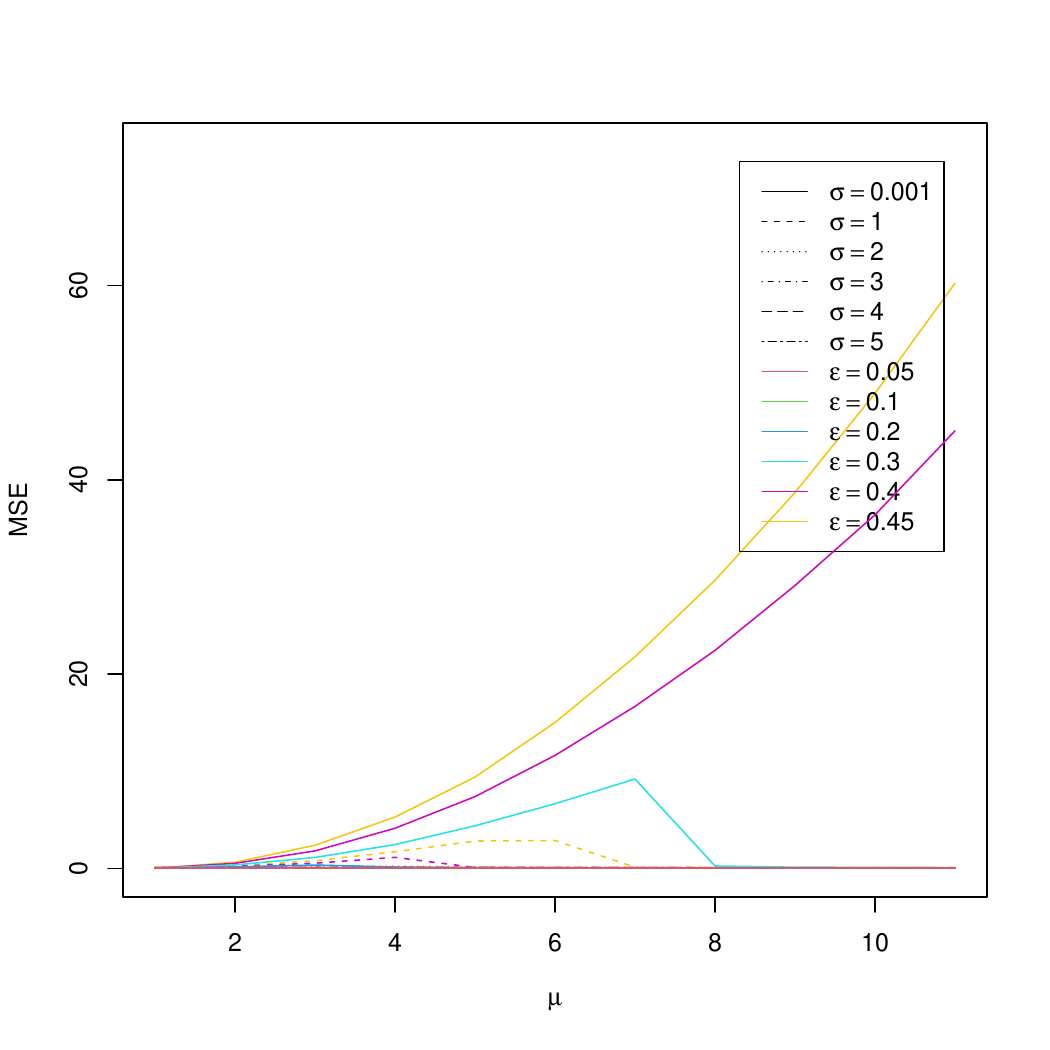} 
\includegraphics[width=0.32\textwidth]{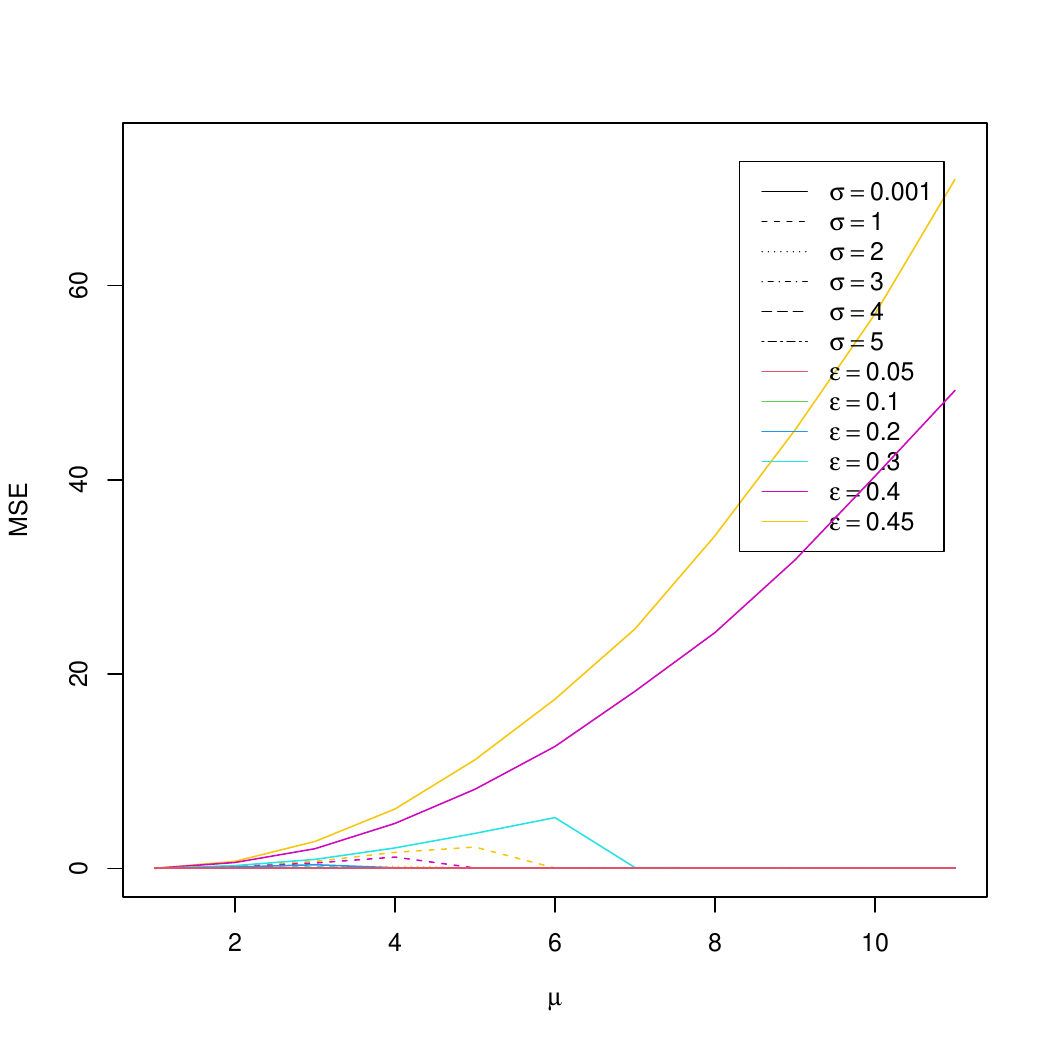} \\
\includegraphics[width=0.32\textwidth]{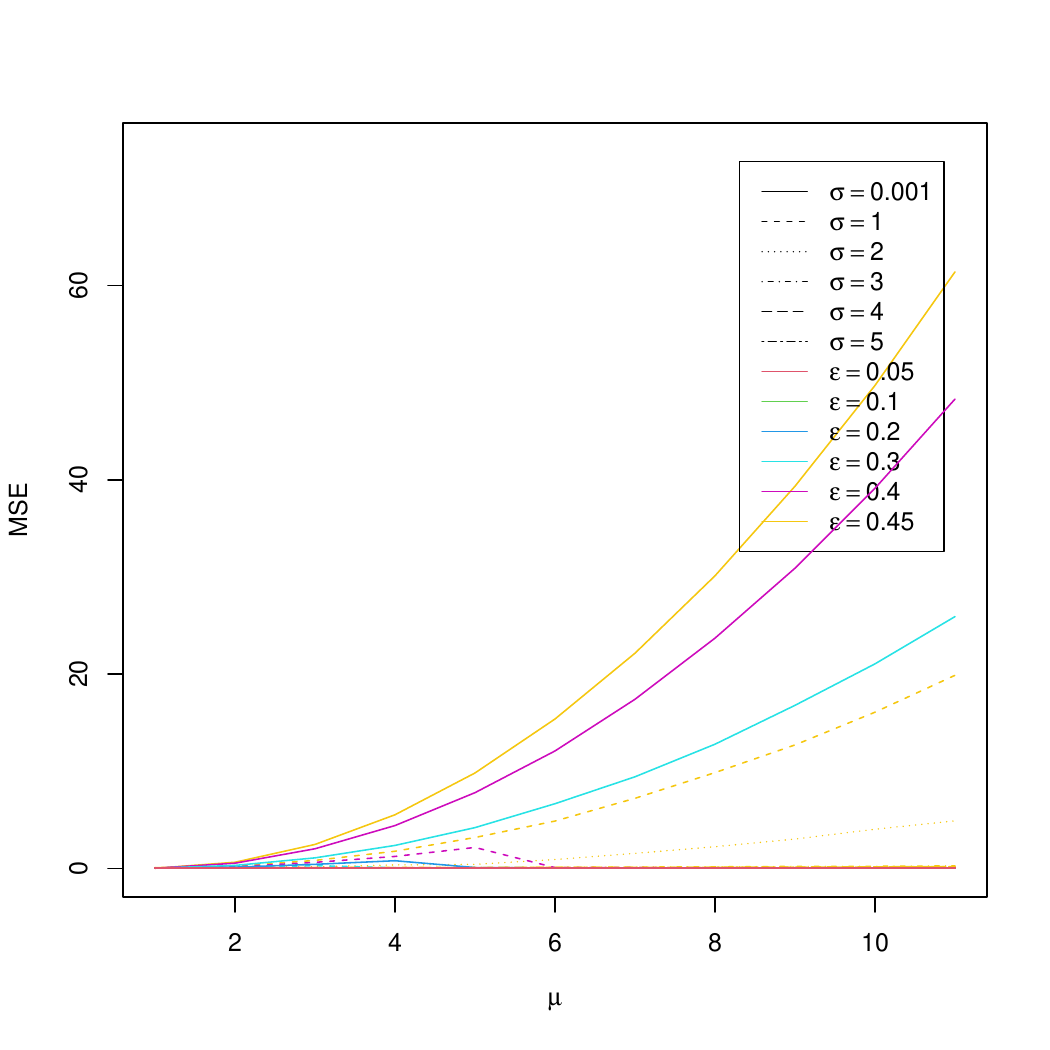}
\includegraphics[width=0.32\textwidth]{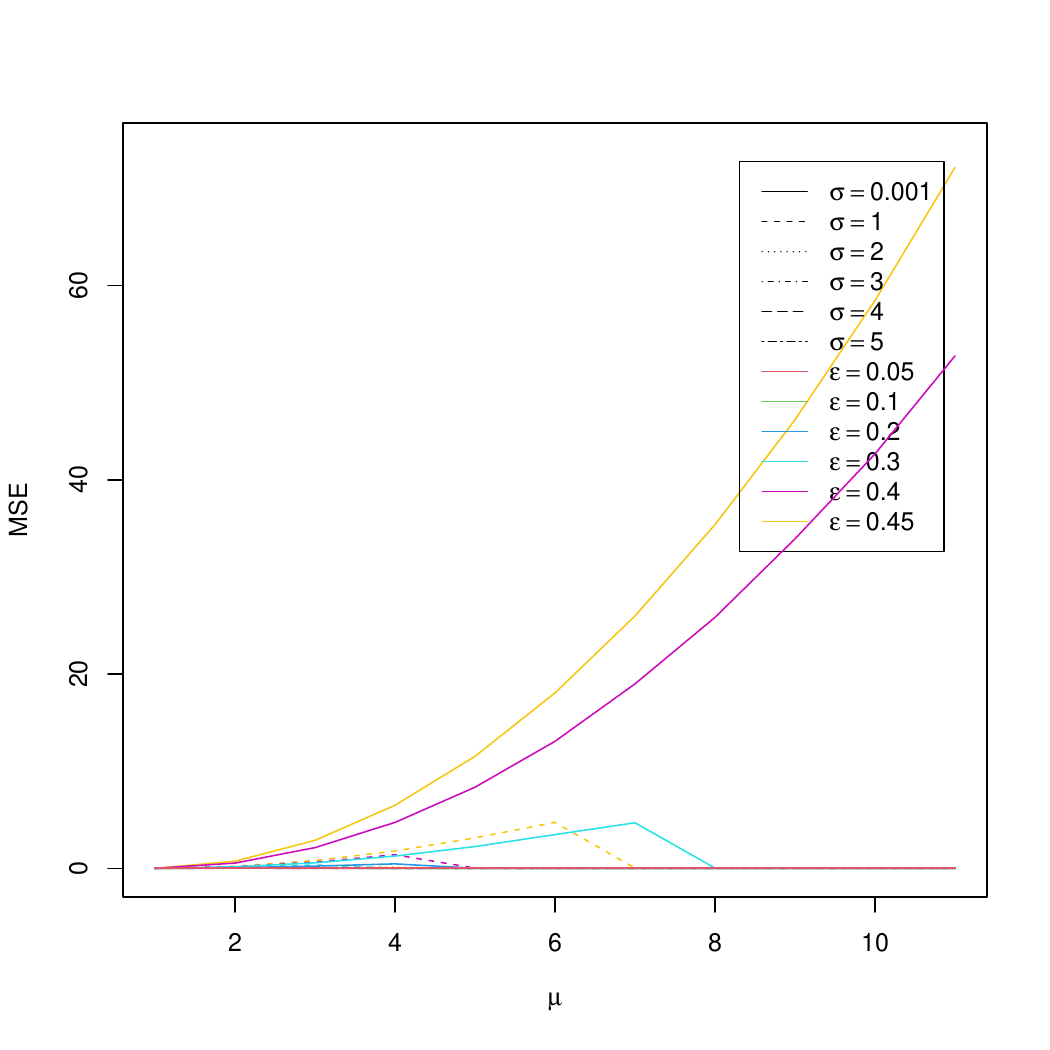} 
\includegraphics[width=0.32\textwidth]{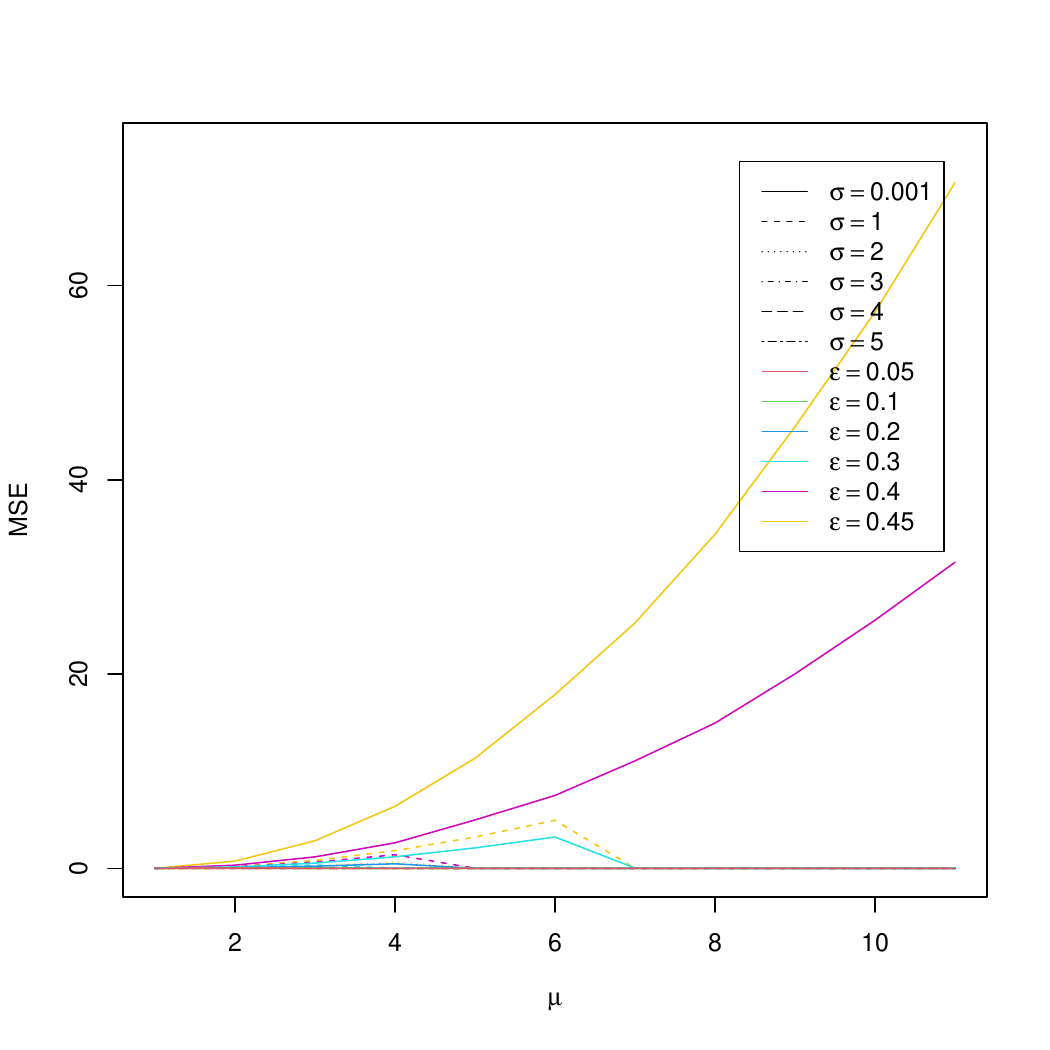} \\
\caption{Monte Carlo Simulation. Mean Square Error for the \texttt{CovMest} method as a function of the contamination average $\mu$ ($x$-axis), contamination scale $\sigma$ (different line styles) and contamination level $\varepsilon$ (colors). Rows: number of variables $p=1, 2, 5$ and columns: sample size factor $s=2, 5, 10$.}
\label{sup:fig:monte:MSE:M:1}
\end{figure}  

\begin{figure}
\centering
\includegraphics[width=0.32\textwidth]{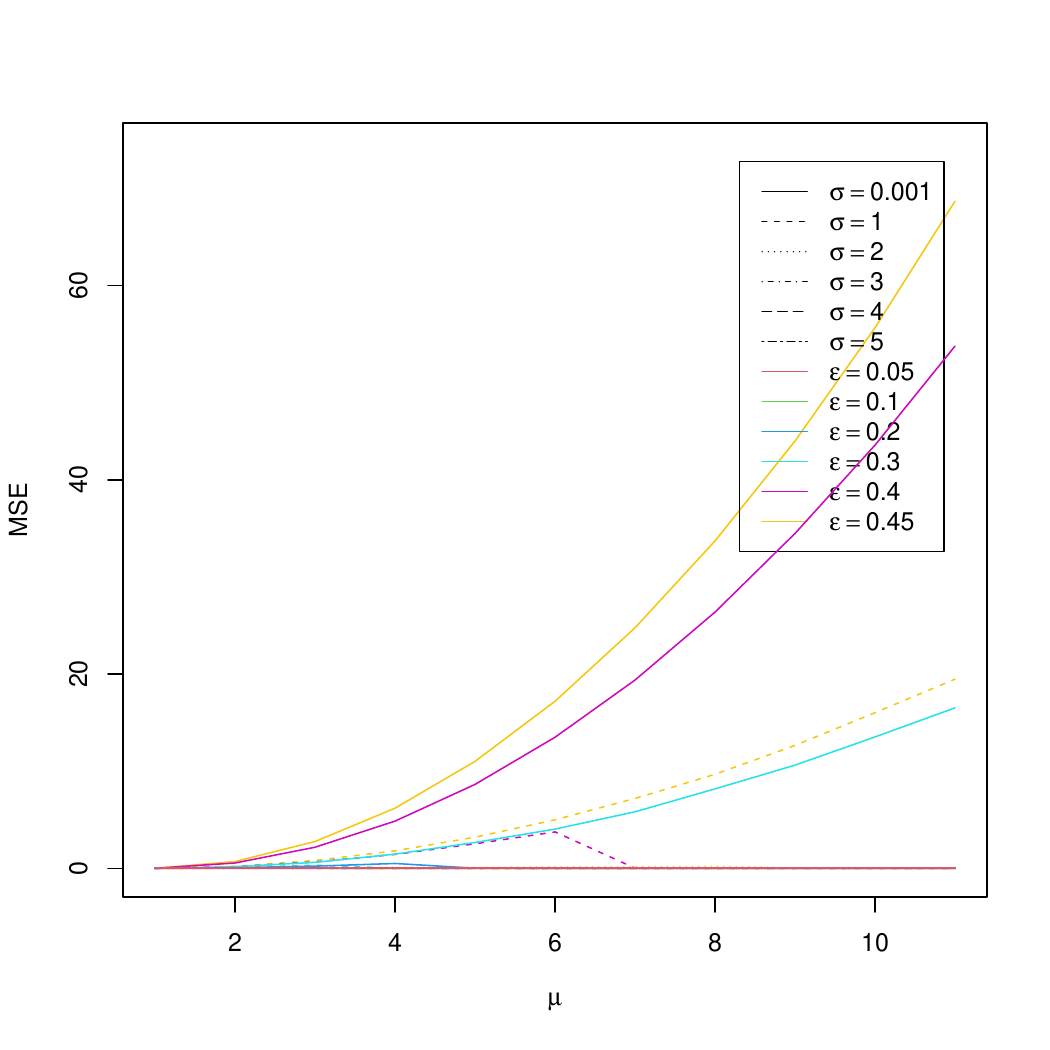}
\includegraphics[width=0.32\textwidth]{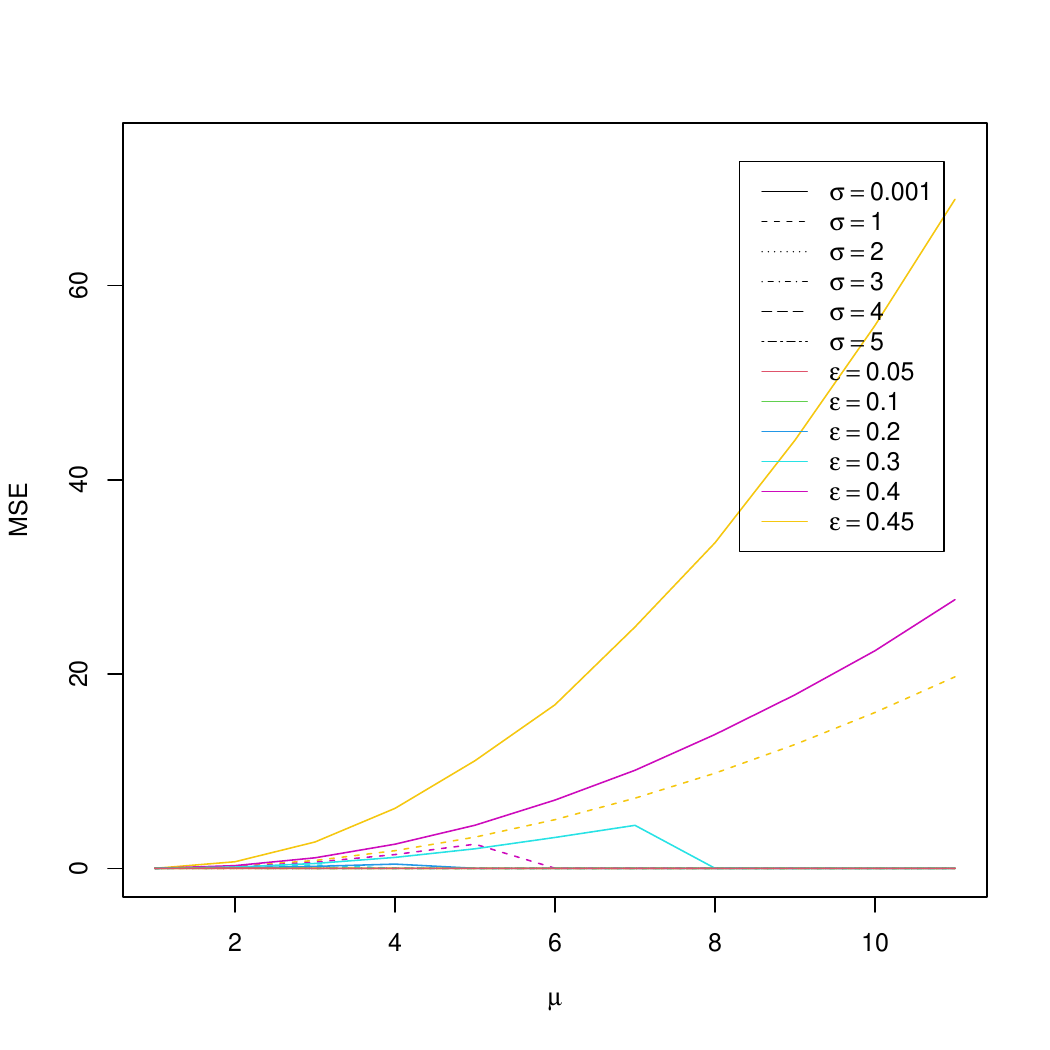} 
\includegraphics[width=0.32\textwidth]{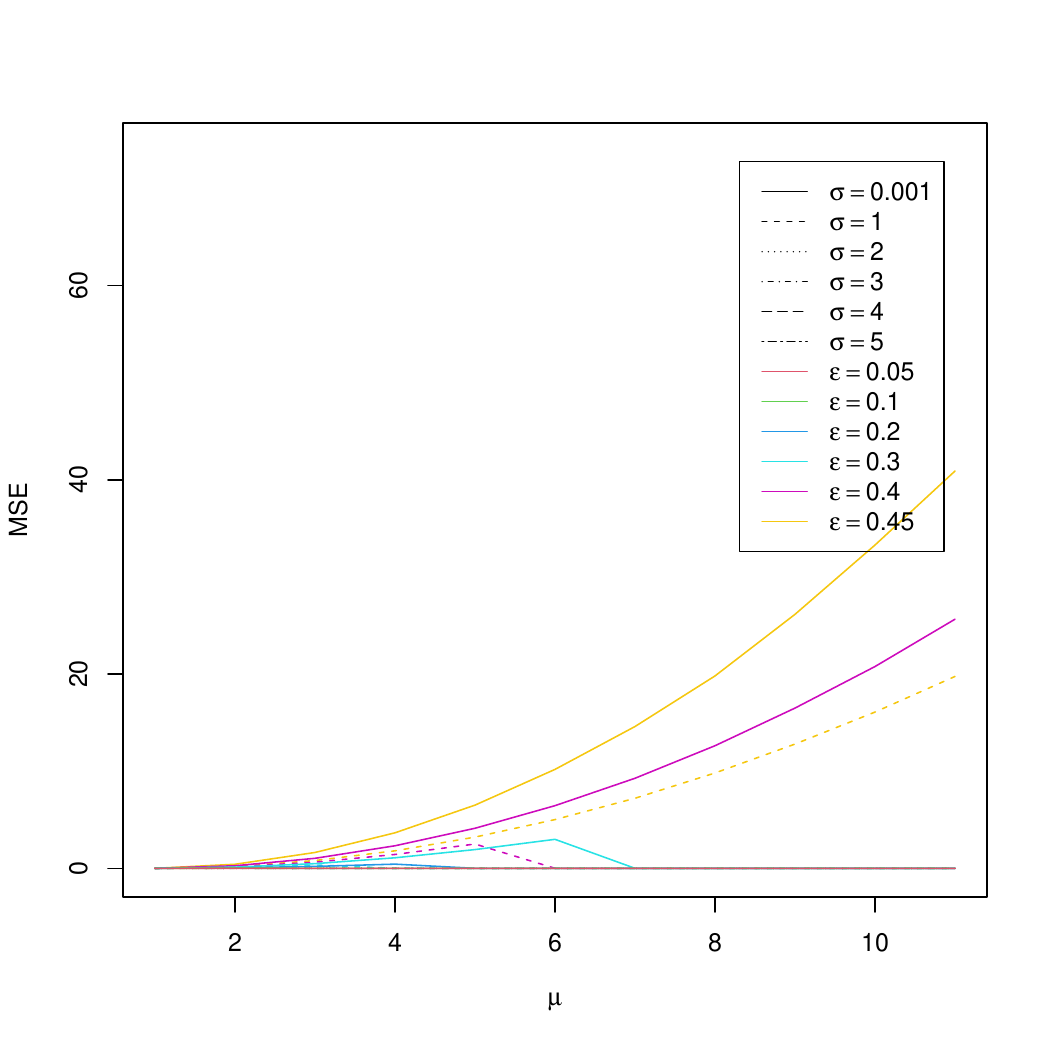} \\
\includegraphics[width=0.32\textwidth]{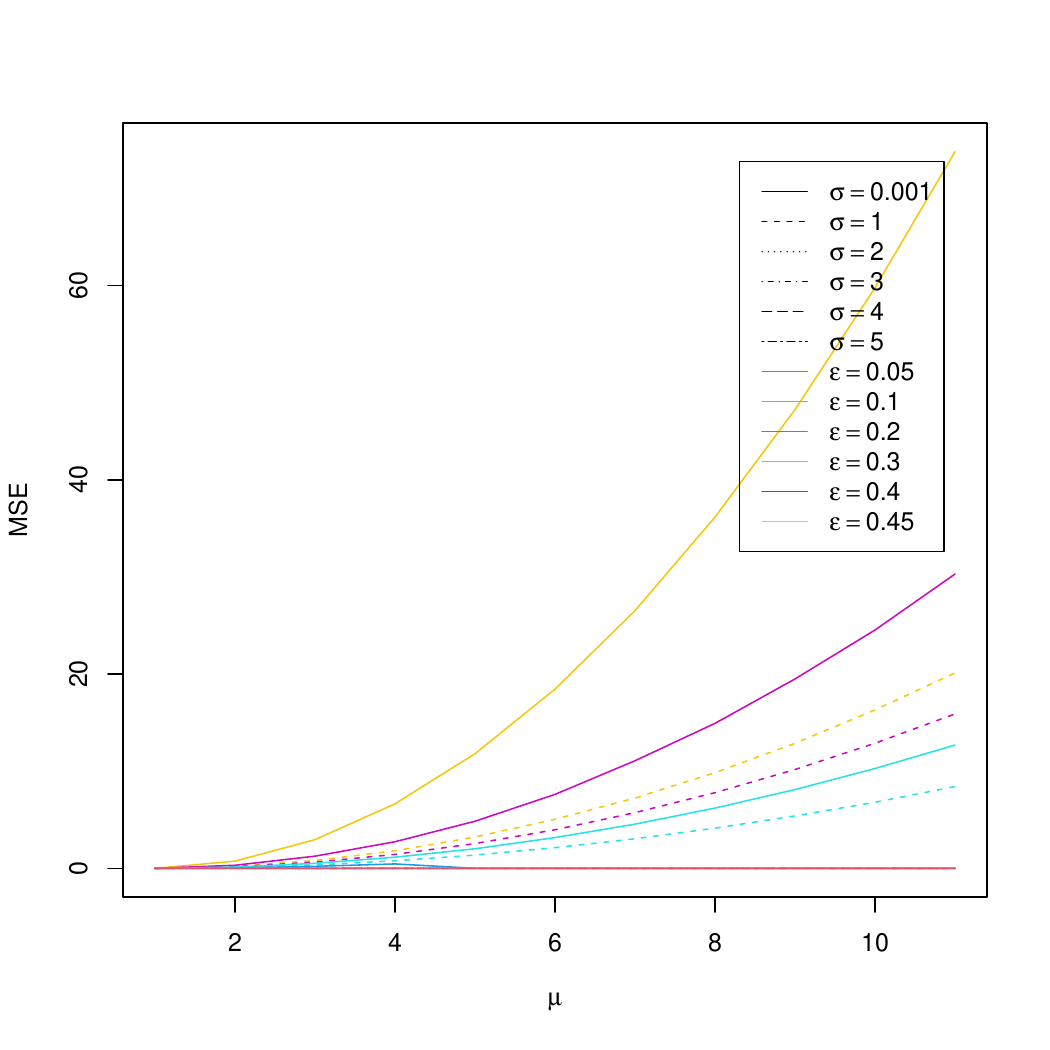}
\includegraphics[width=0.32\textwidth]{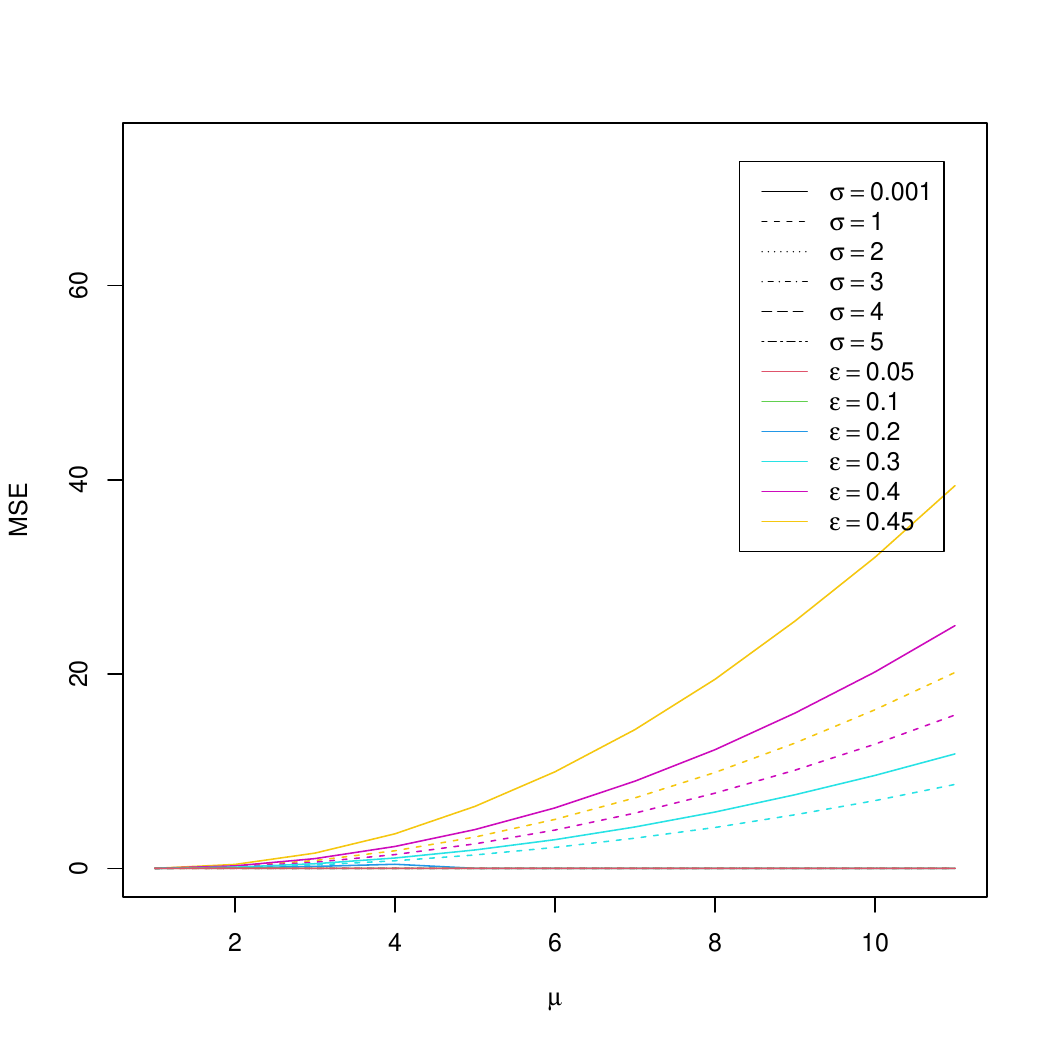} 
\includegraphics[width=0.32\textwidth]{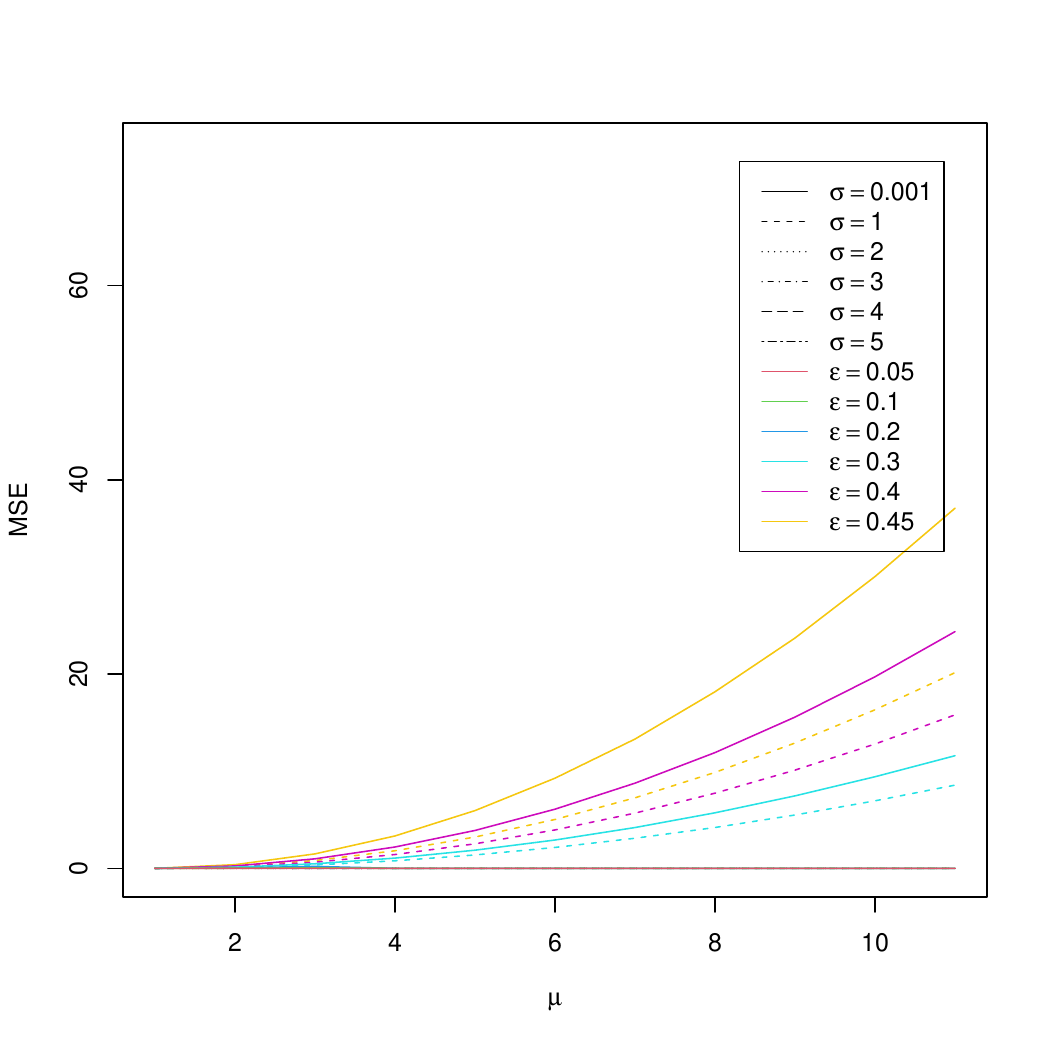}
\caption{Monte Carlo Simulation. Mean Square Error for the \texttt{CovMest} method as a function of the contamination average $\mu$ ($x$-axis), contamination scale $\sigma$ (different line styles) and contamination level $\varepsilon$ (colors). Rows: number of variables $p=10, 20$ and columns: sample size factor $s=2, 5, 10$.}
\label{sup:fig:monte:MSE:M:2}
\end{figure}  

\clearpage

%%%%%%%%%%%%%%%% DIV from the true values

\begin{figure}
\centering
\includegraphics[width=0.32\textwidth]{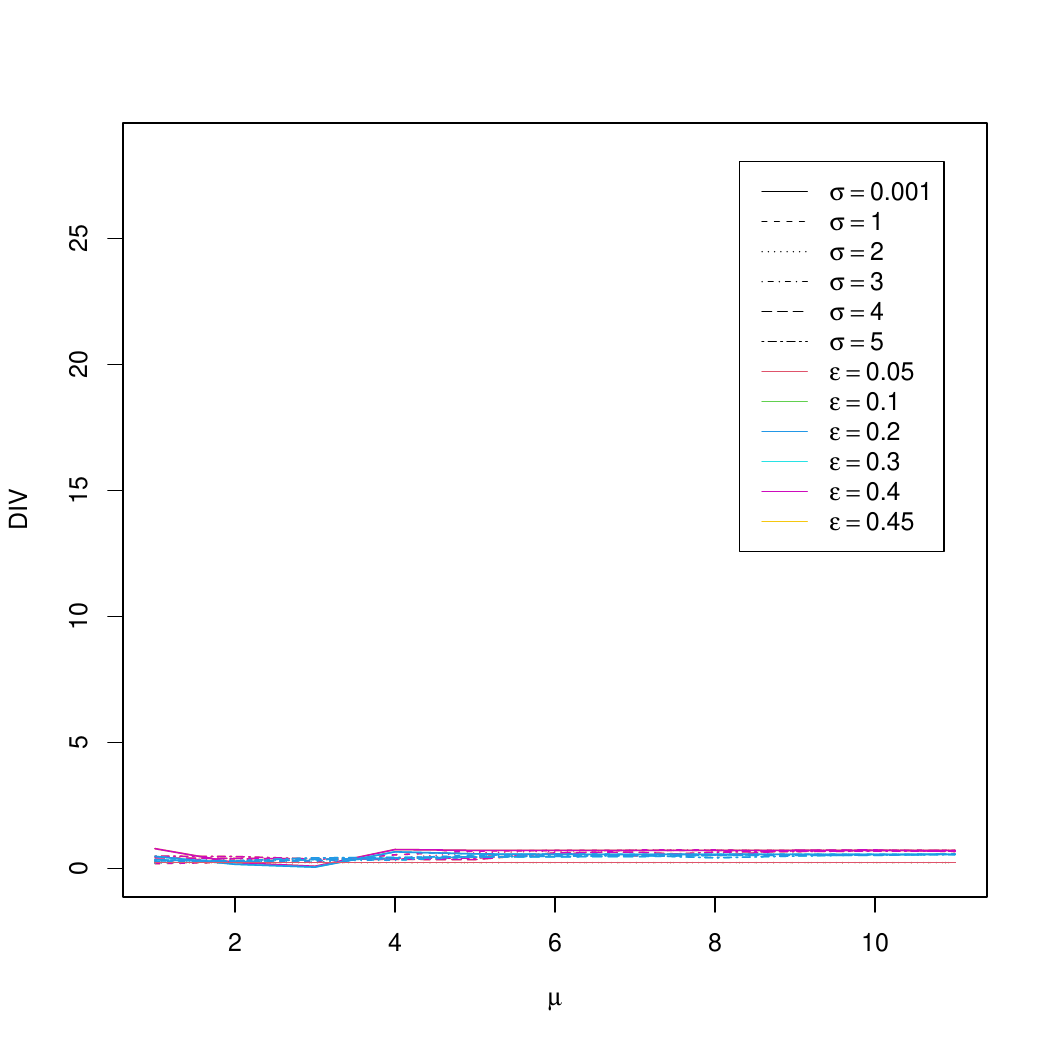}
\includegraphics[width=0.32\textwidth]{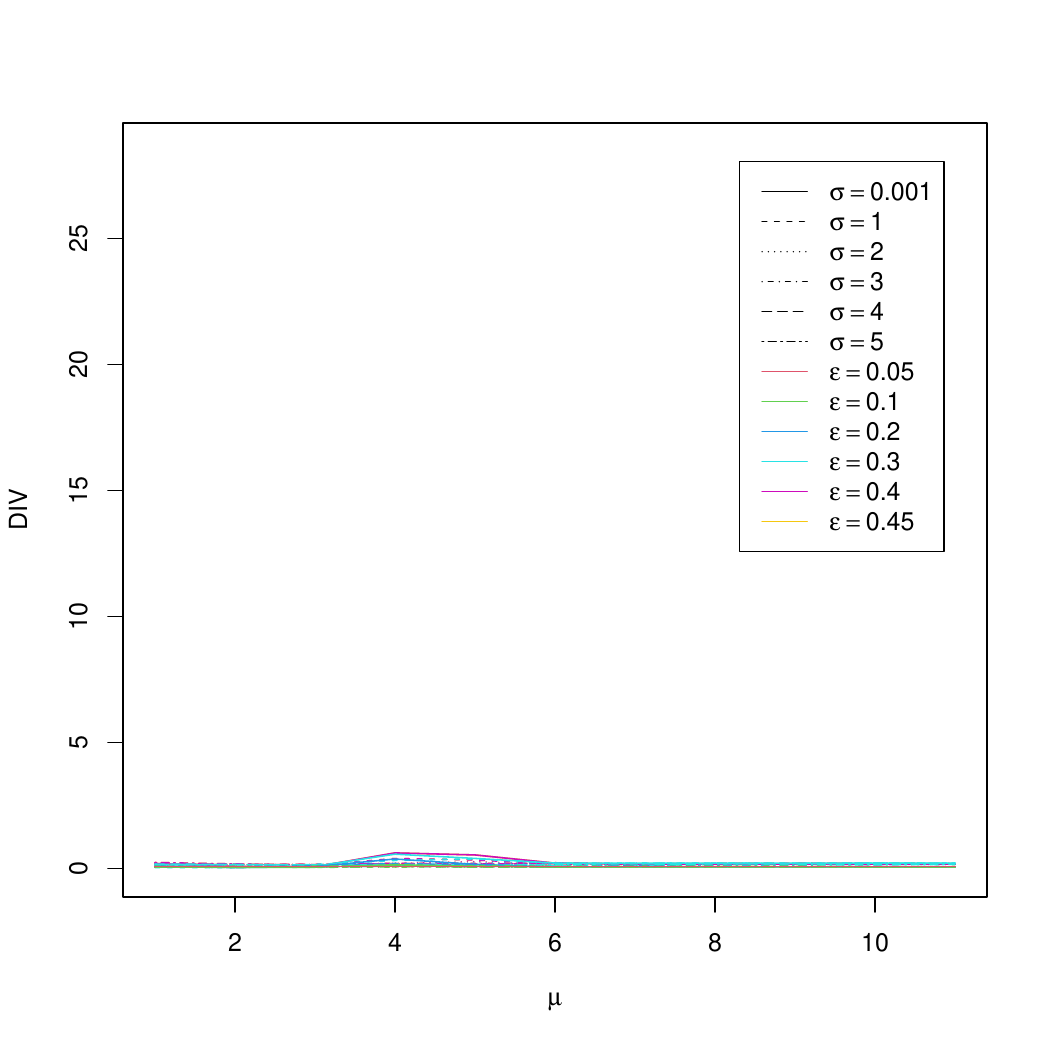} 
\includegraphics[width=0.32\textwidth]{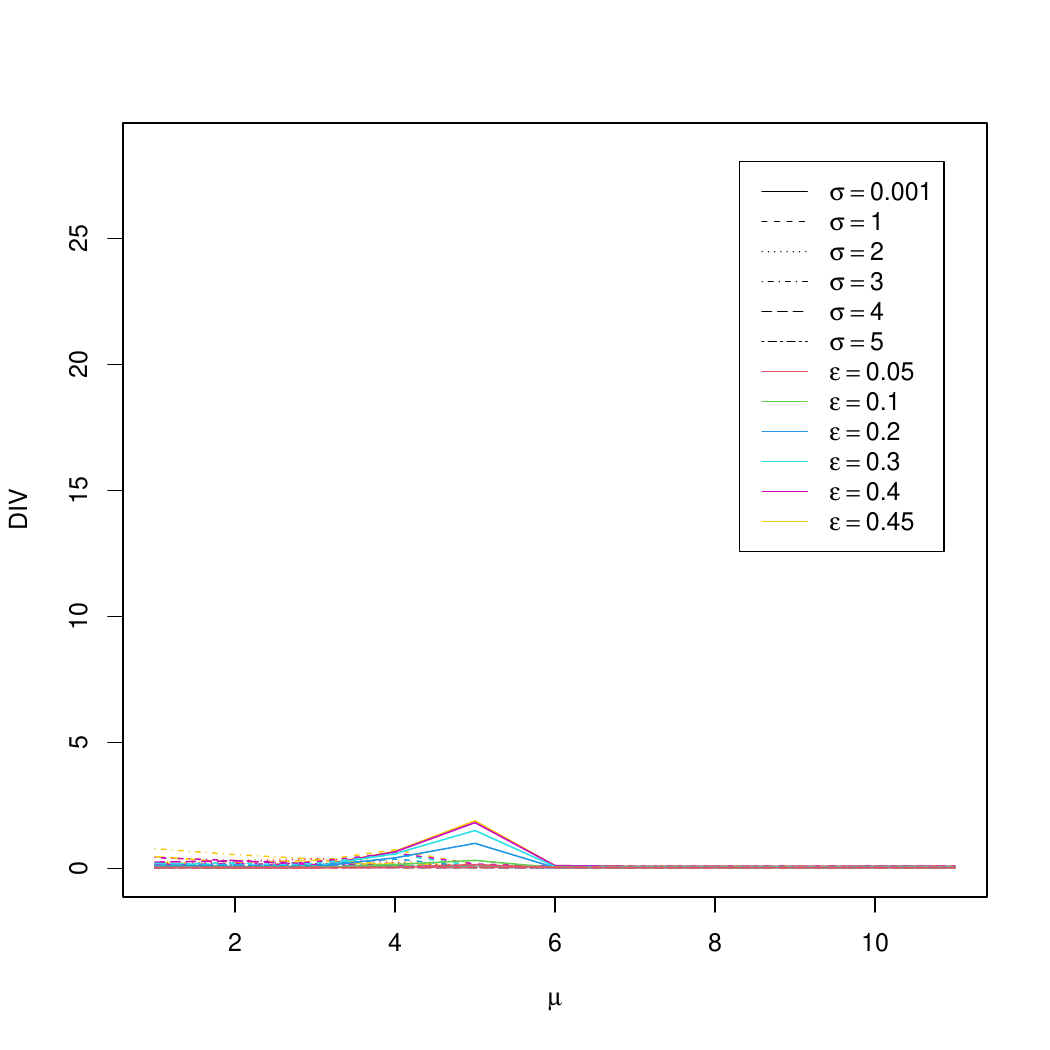} \\
\includegraphics[width=0.32\textwidth]{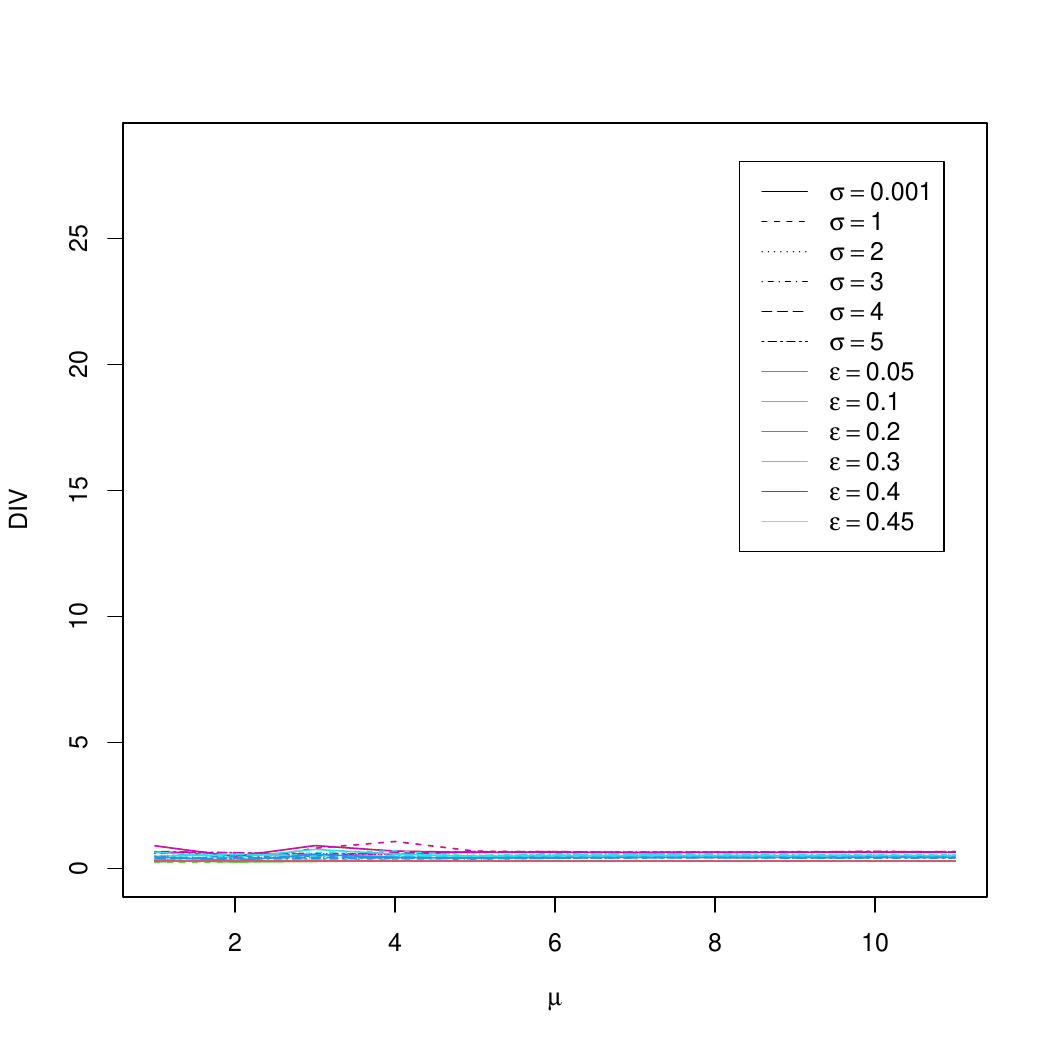}
\includegraphics[width=0.32\textwidth]{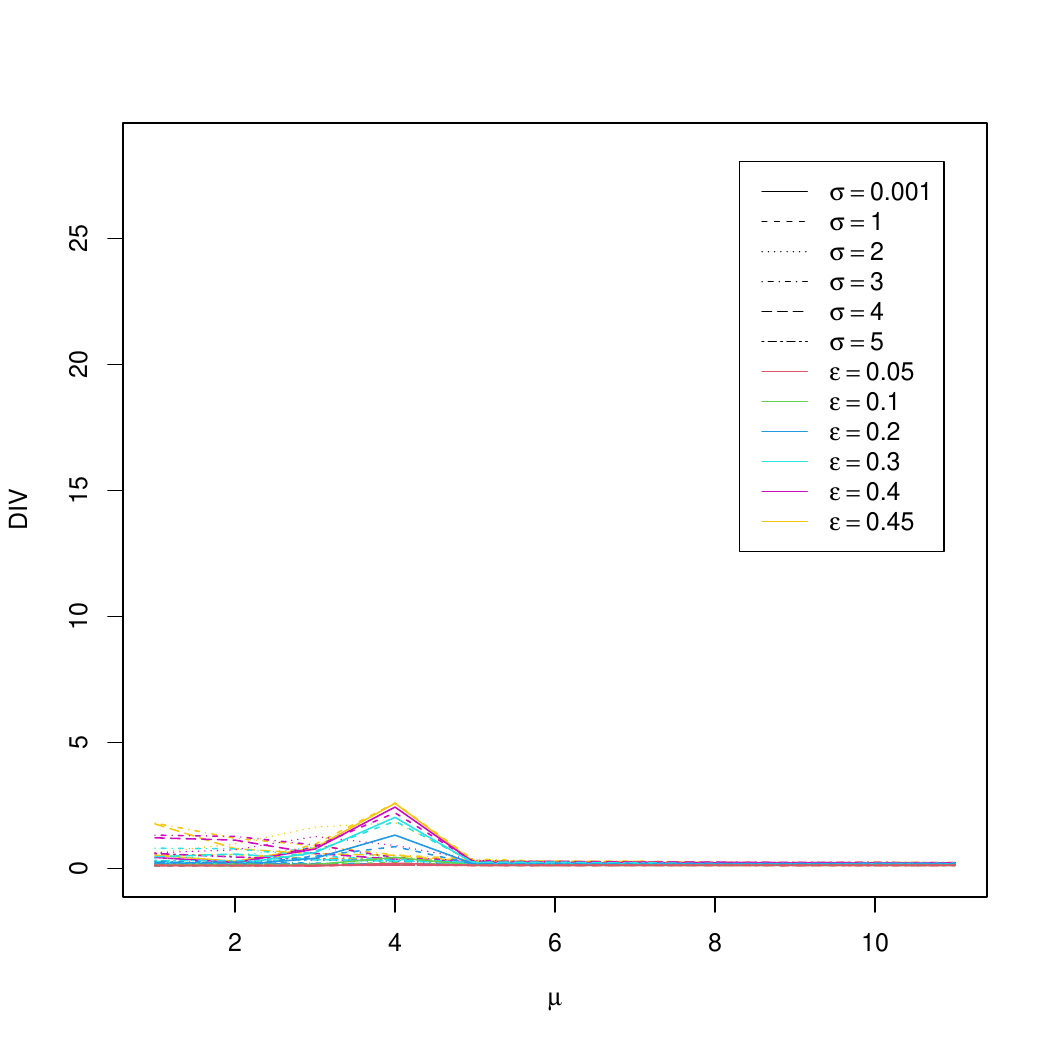} 
\includegraphics[width=0.32\textwidth]{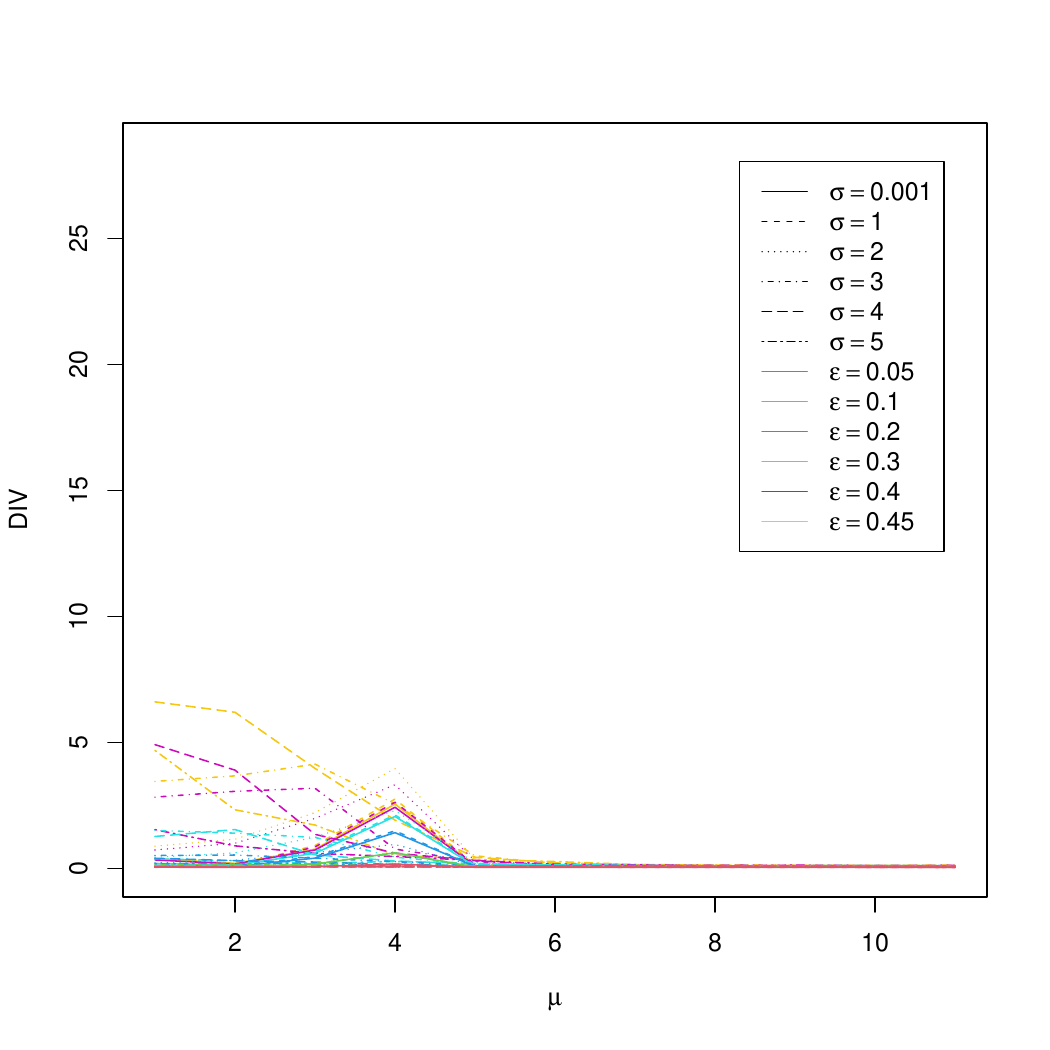} \\
\includegraphics[width=0.32\textwidth]{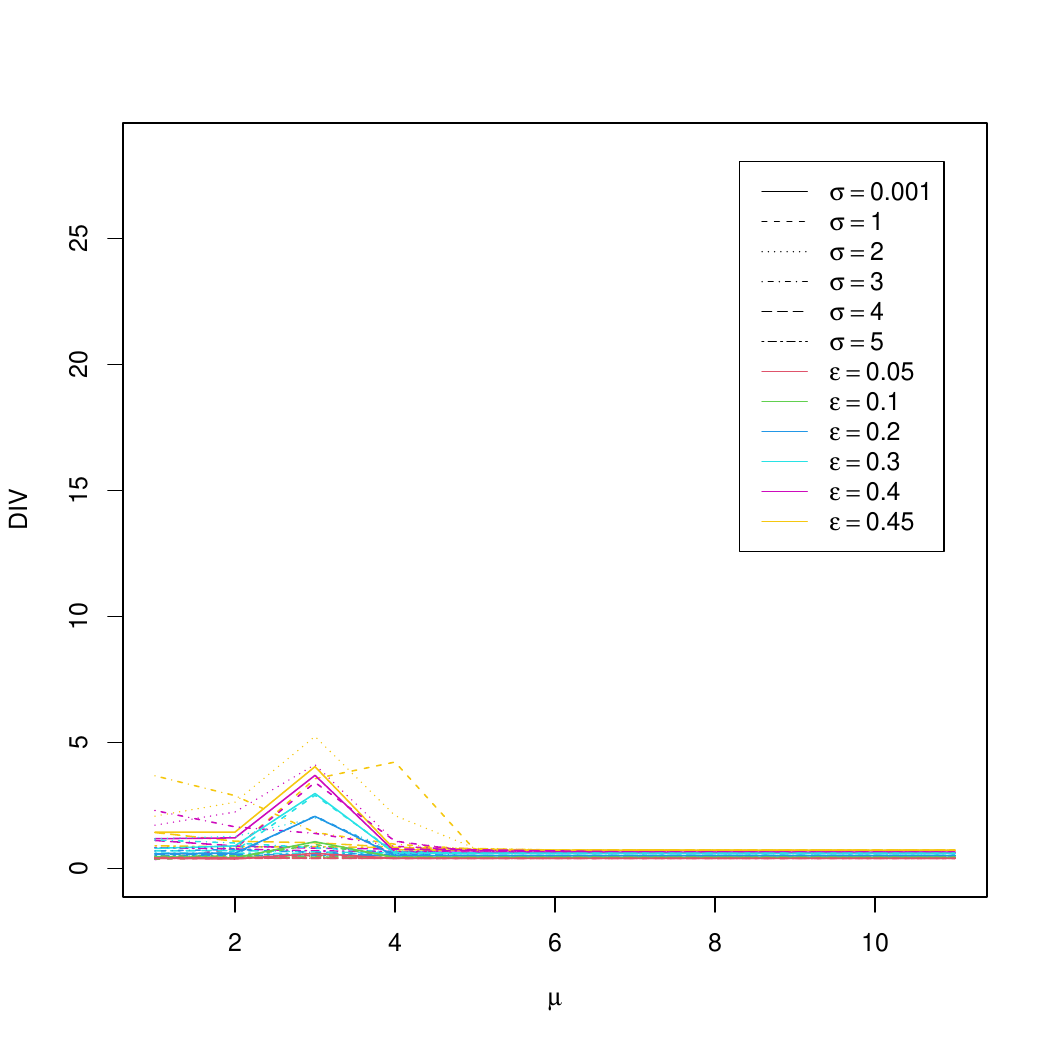}
\includegraphics[width=0.32\textwidth]{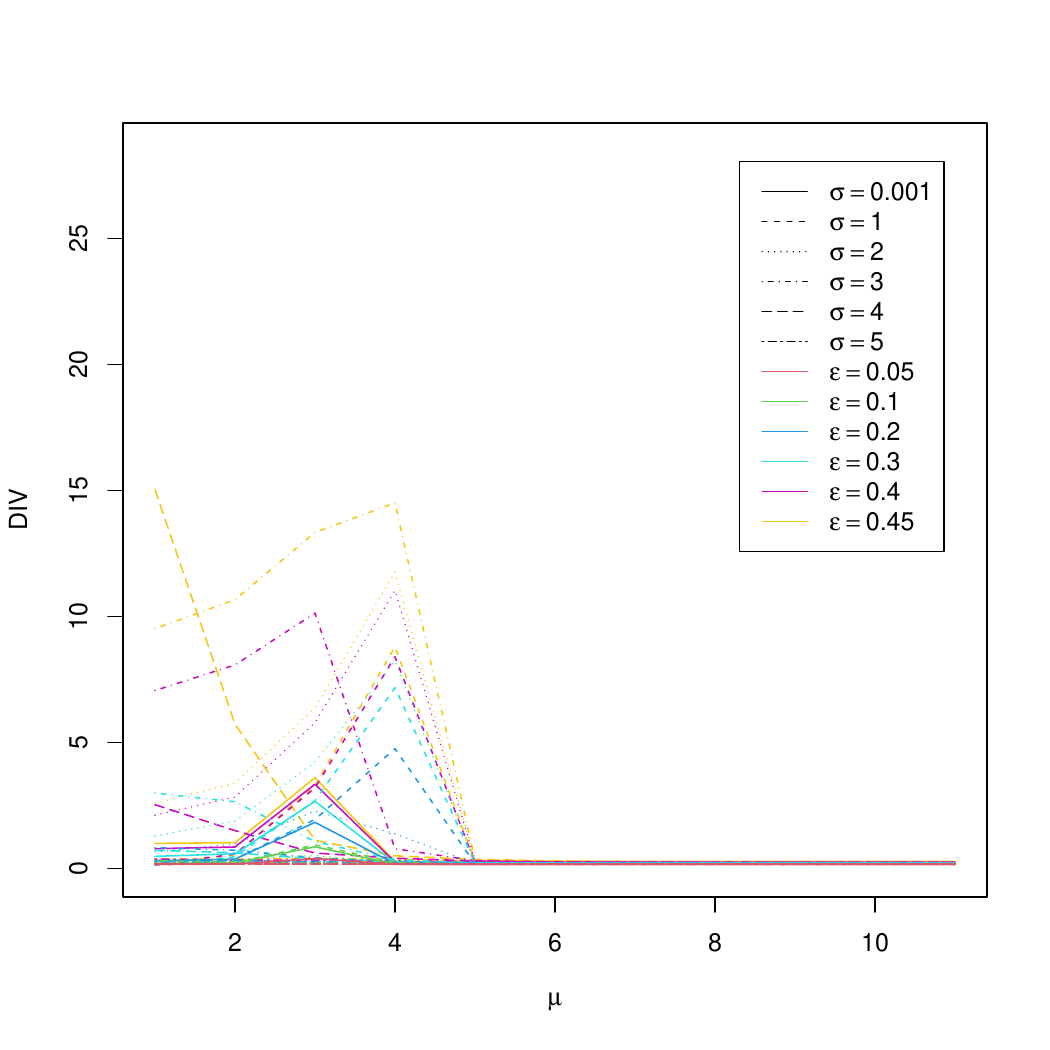} 
\includegraphics[width=0.32\textwidth]{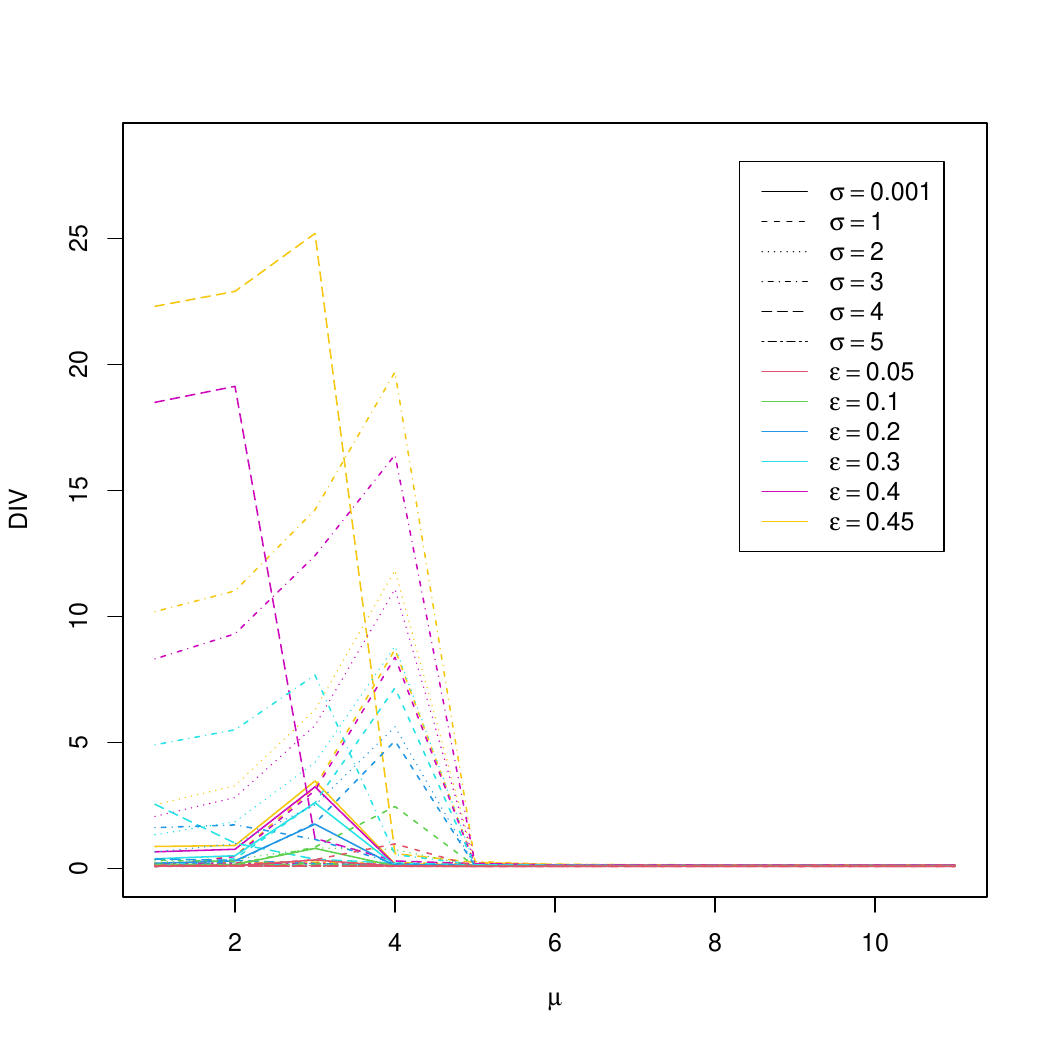} \\
\caption{Monte Carlo Simulation. Kullback--Leibler Divergence for the proposed method starting at the true values with $\alpha=0.25$ as a function of the contamination average $\mu$ ($x$-axis), contamination scale $\sigma$ (different line styles) and contamination level $\varepsilon$ (colors). Rows: number of variables $p=1, 2, 5$ and columns: sample size factor $s=2, 5, 10$.}
\label{sup:fig:monte:DIV:0.25:1}
\end{figure}  

\begin{figure}
\centering
\includegraphics[width=0.32\textwidth]{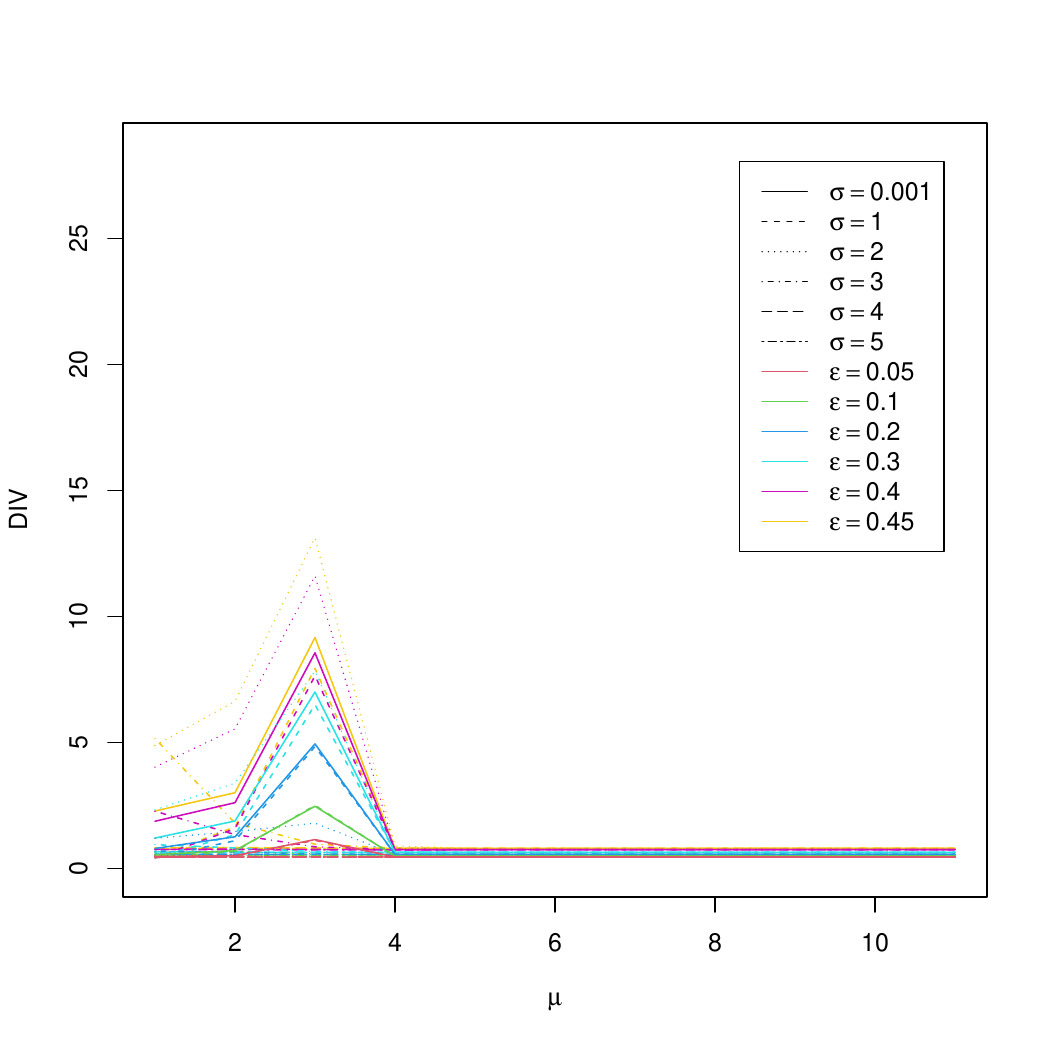}
\includegraphics[width=0.32\textwidth]{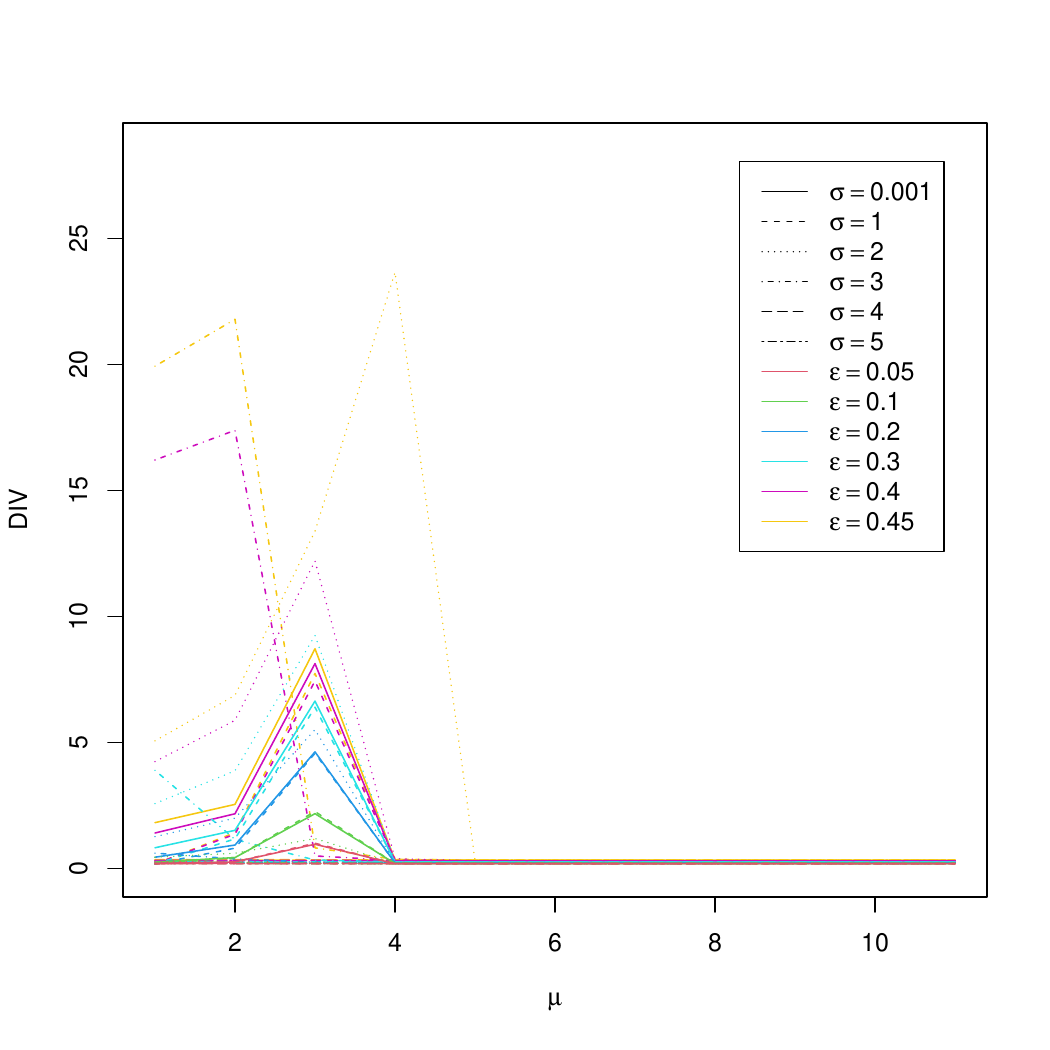} 
\includegraphics[width=0.32\textwidth]{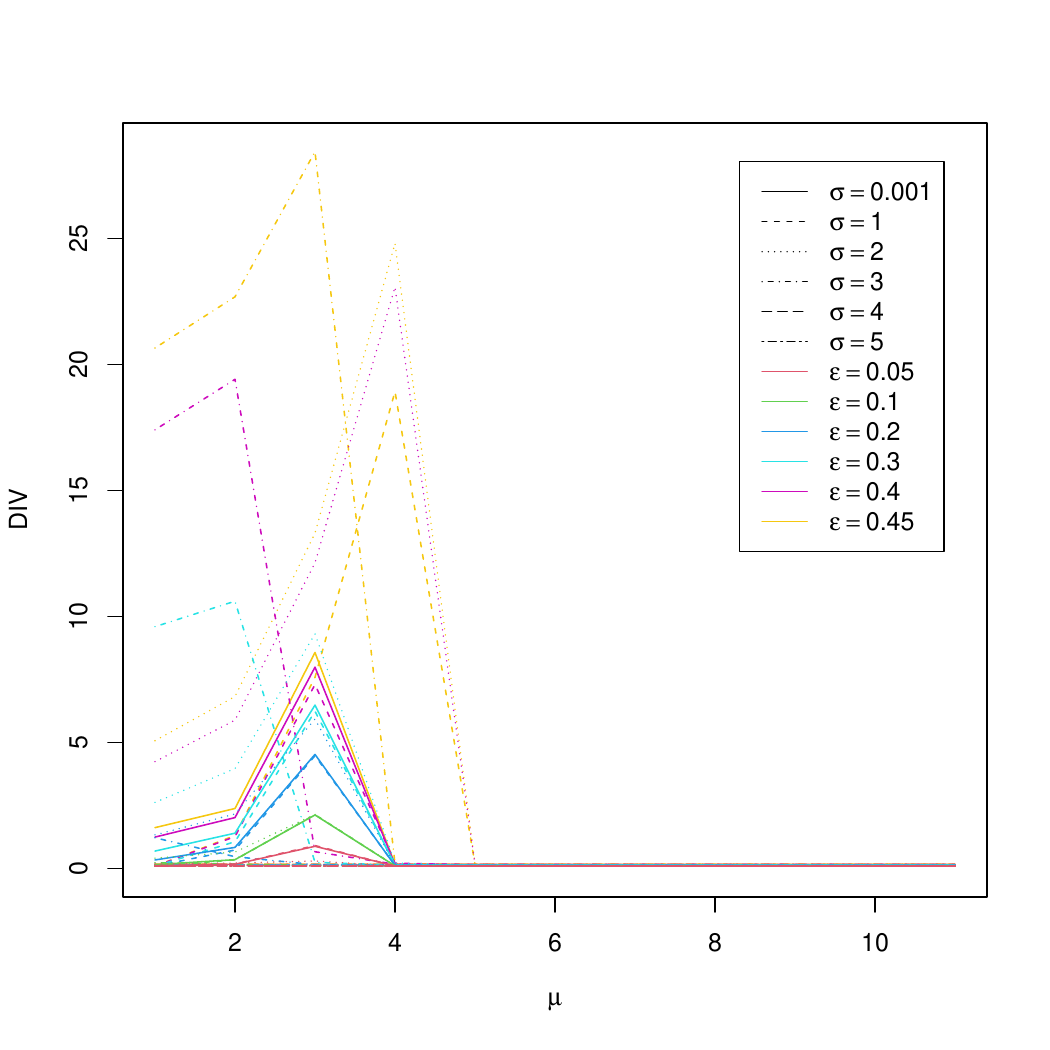} \\
\includegraphics[width=0.32\textwidth]{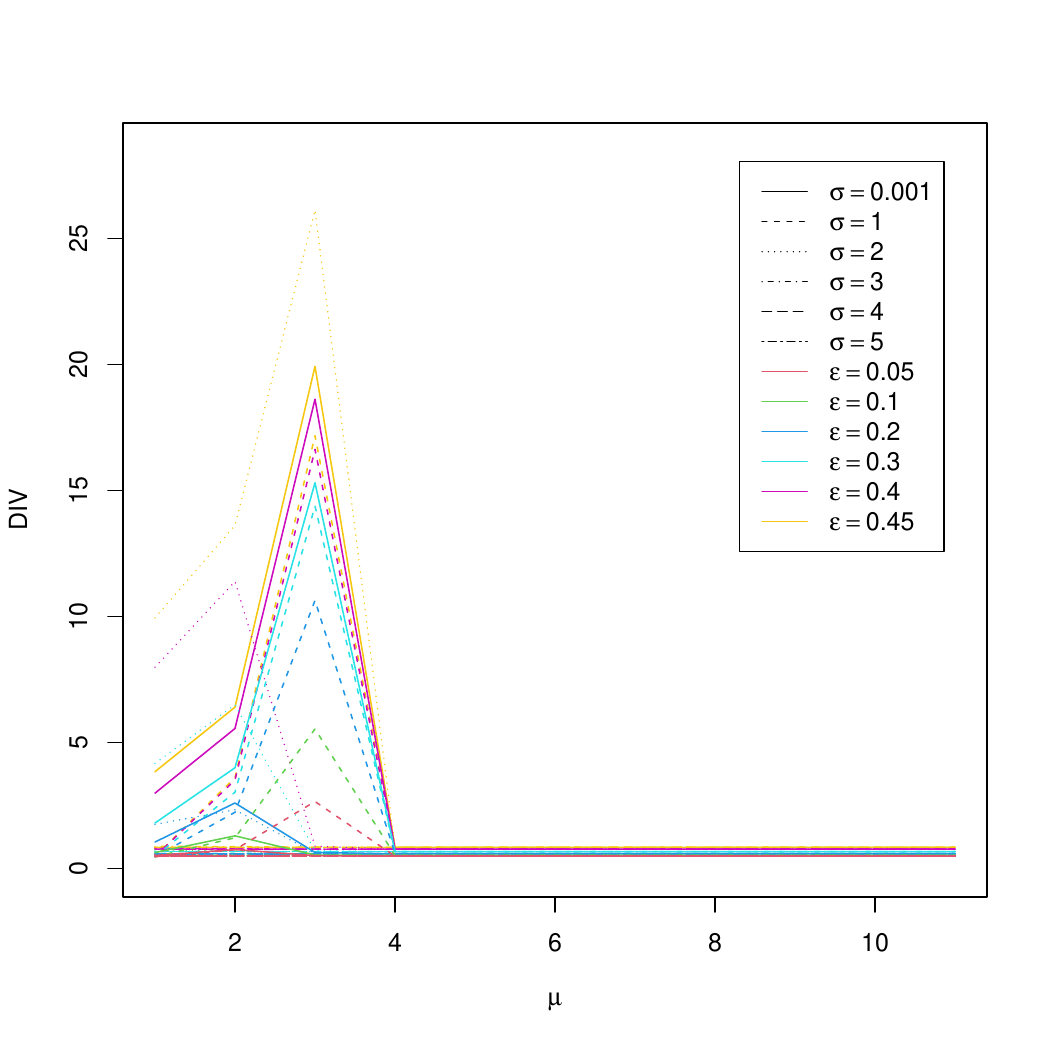}
\includegraphics[width=0.32\textwidth]{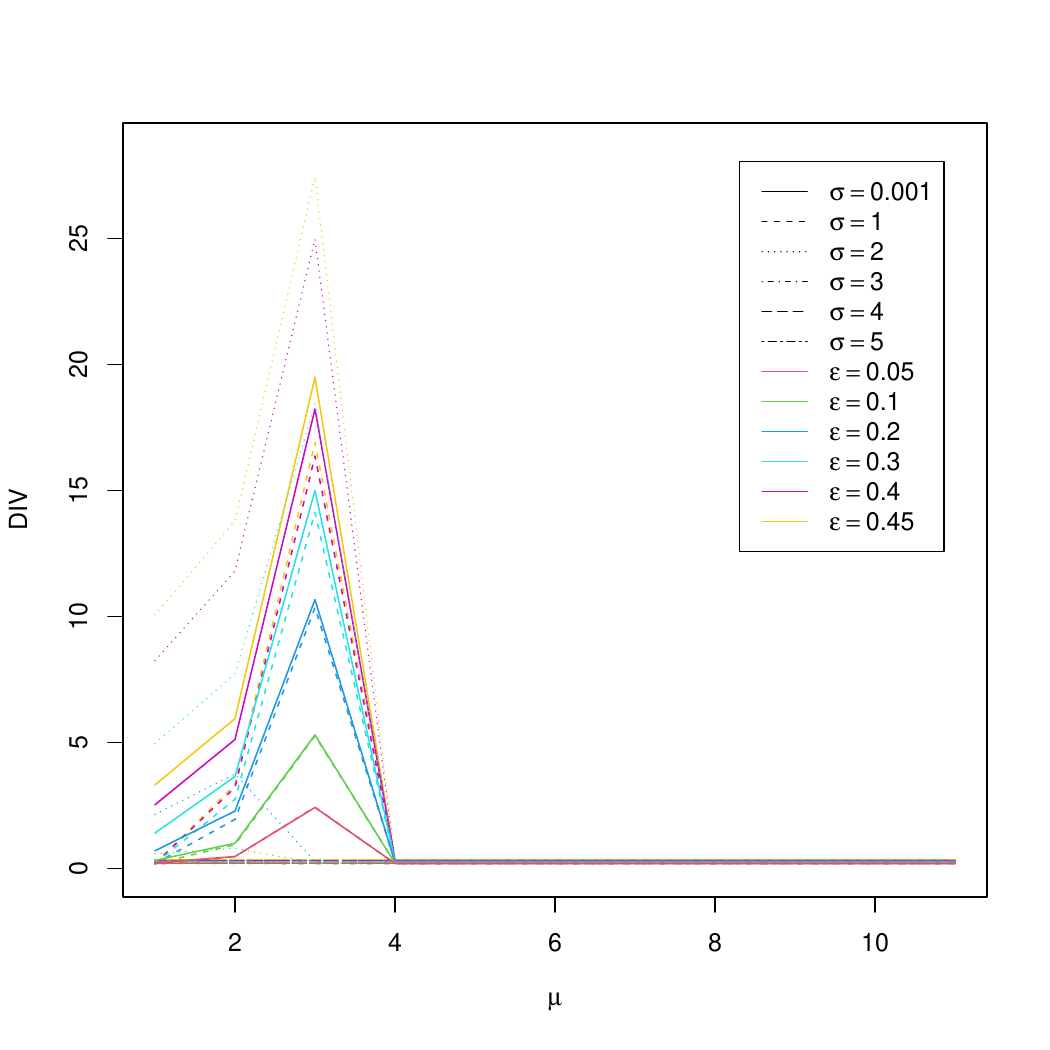} 
\includegraphics[width=0.32\textwidth]{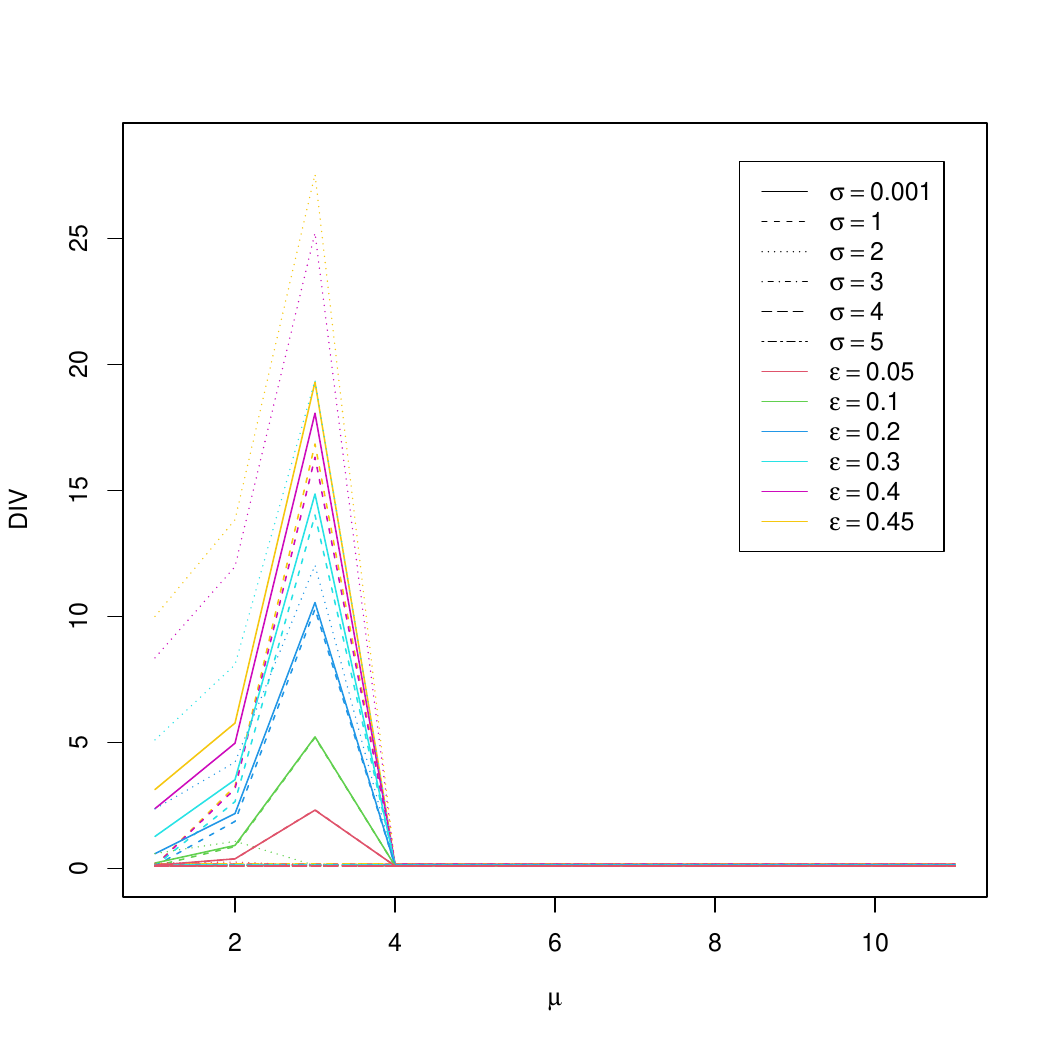}
\caption{Monte Carlo Simulation. Kullback--Leibler Divergence for the proposed method starting at the true values with $\alpha=0.25$ as a function of the contamination average $\mu$ ($x$-axis), contamination scale $\sigma$ (different line styles) and contamination level $\varepsilon$ (colors). Rows: number of variables $p=10, 20$ and columns: sample size factor $s=2, 5, 10$.}
\label{sup:fig:monte:DIV:0.25:2}
\end{figure}  

\begin{figure}
\centering
\includegraphics[width=0.32\textwidth]{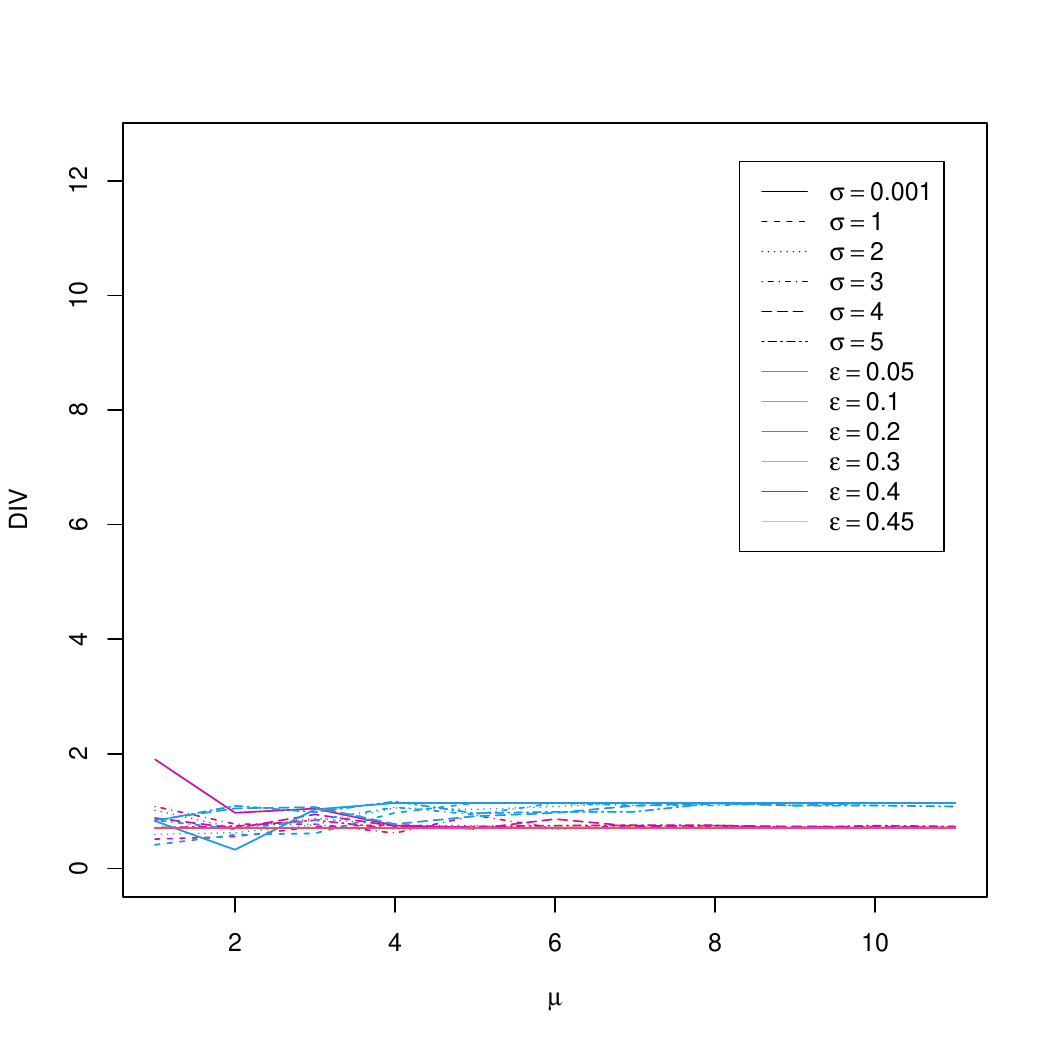}
\includegraphics[width=0.32\textwidth]{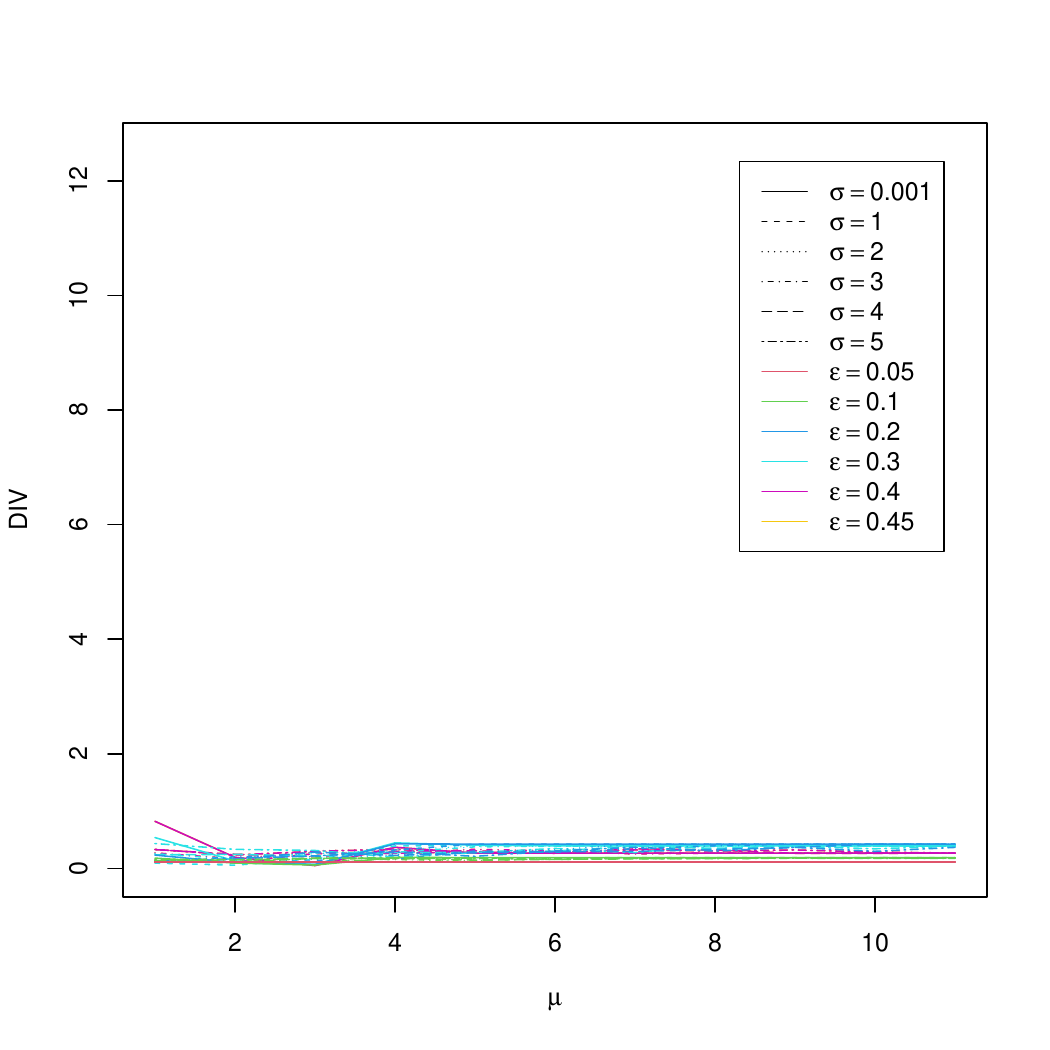} 
\includegraphics[width=0.32\textwidth]{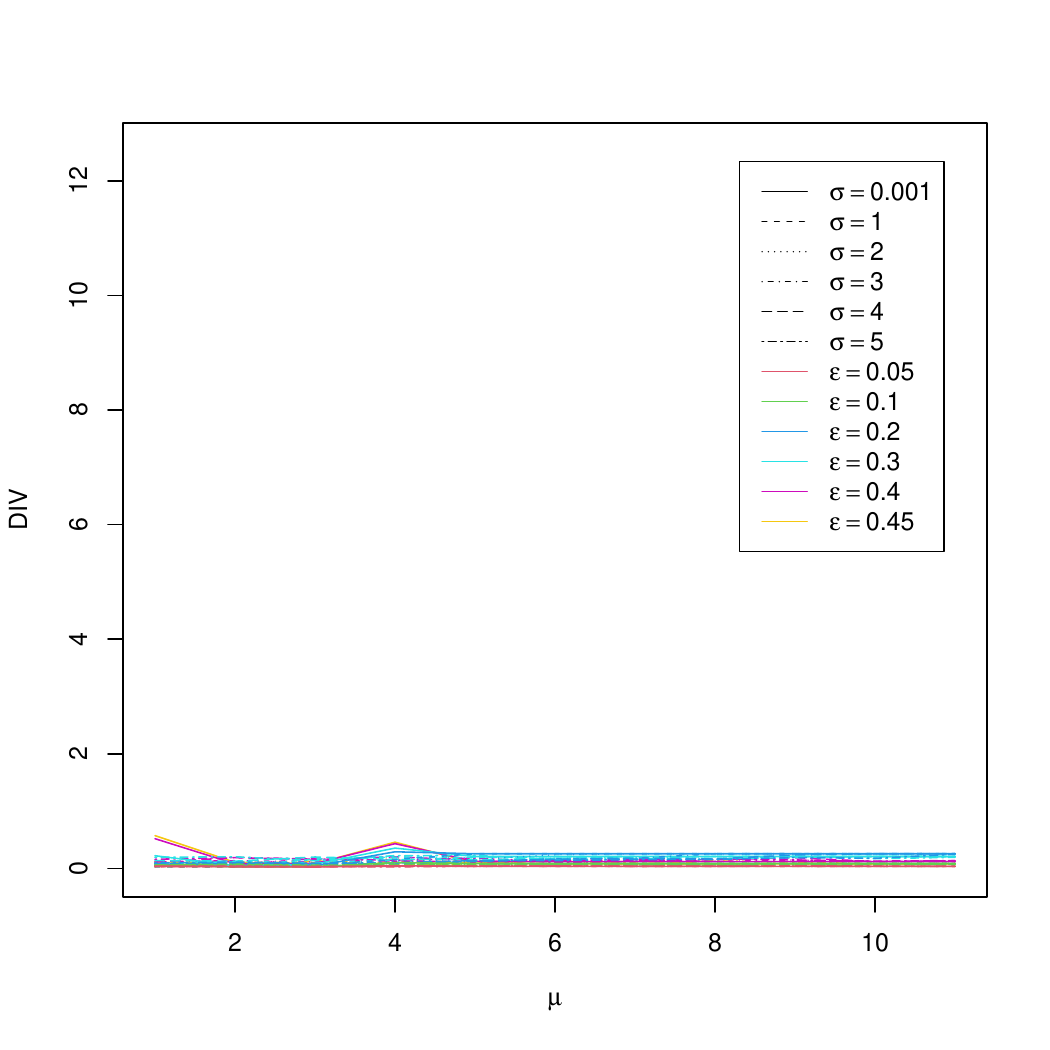} \\
\includegraphics[width=0.32\textwidth]{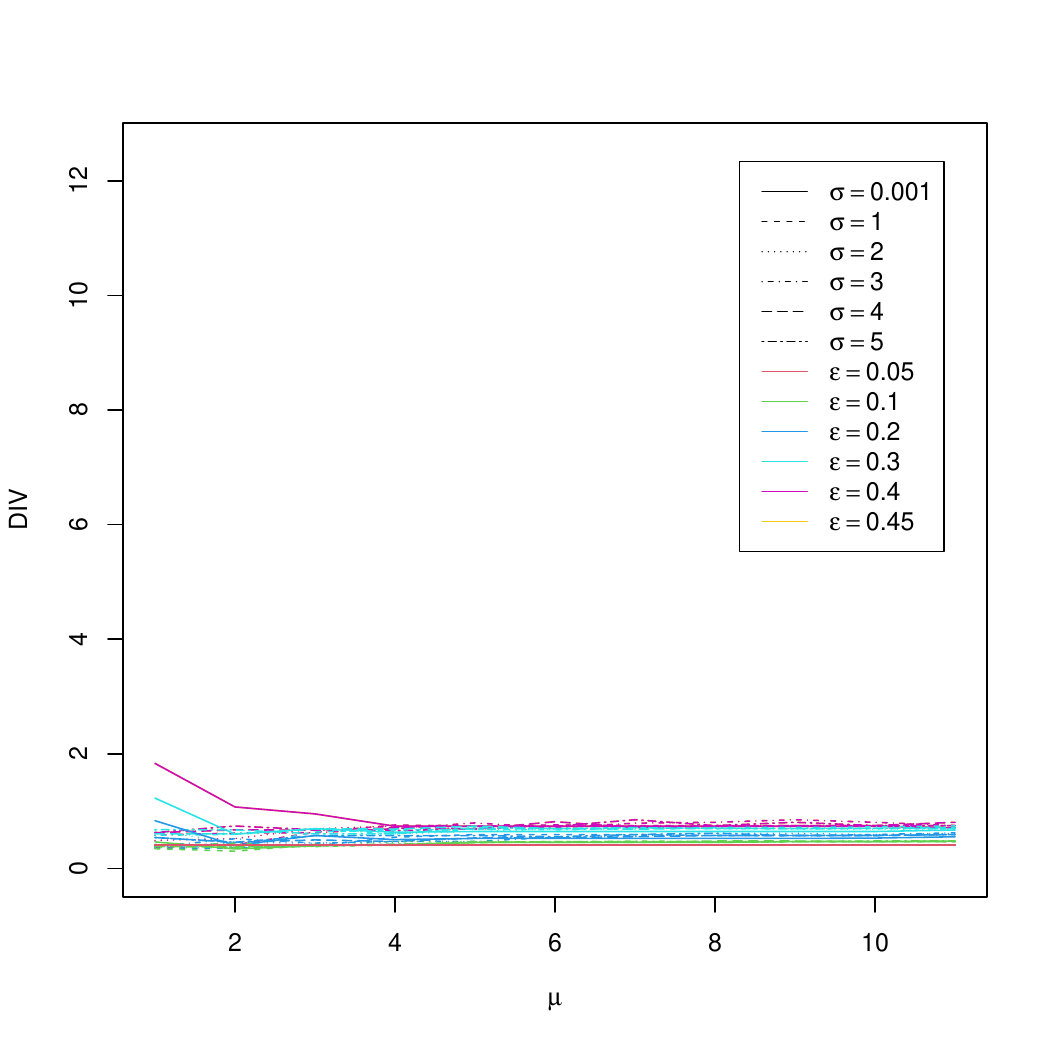}
\includegraphics[width=0.32\textwidth]{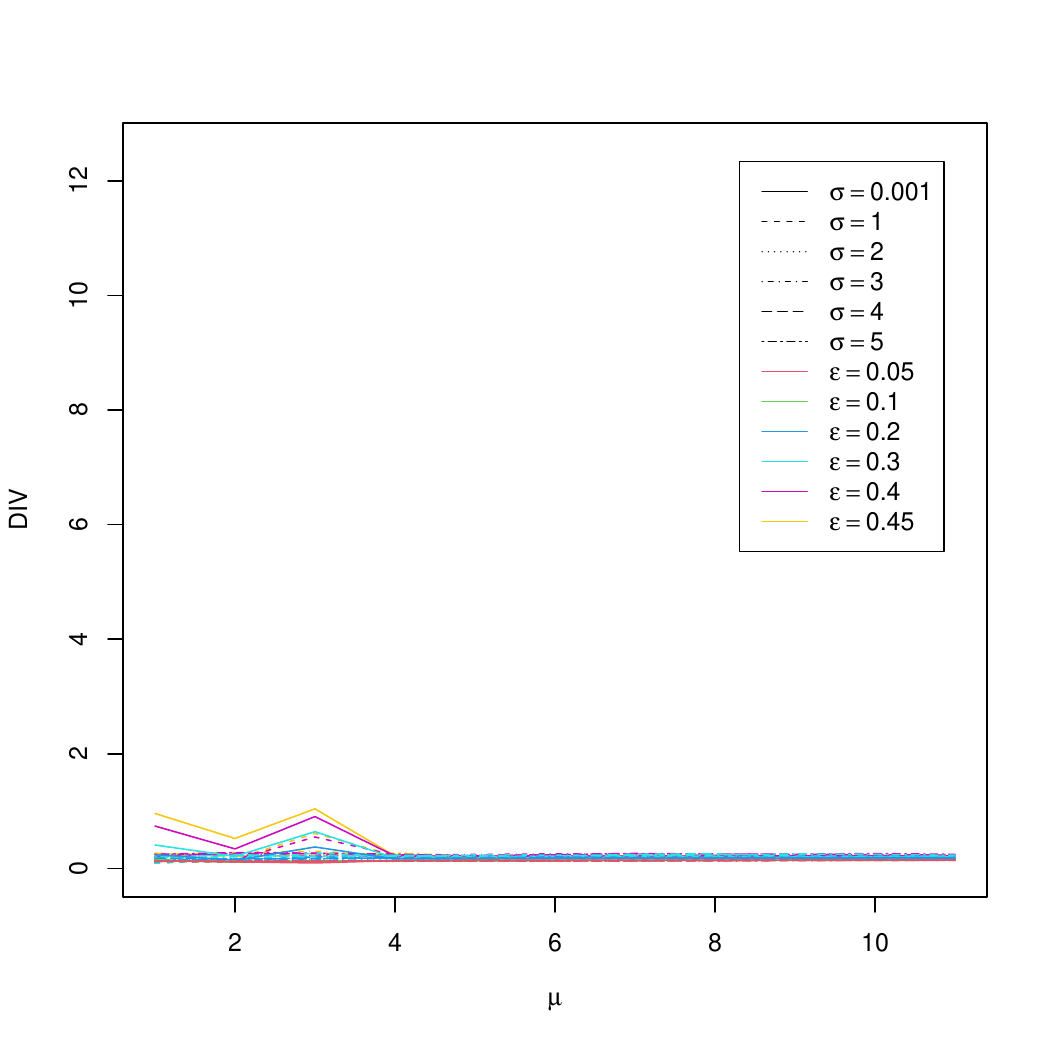} 
\includegraphics[width=0.32\textwidth]{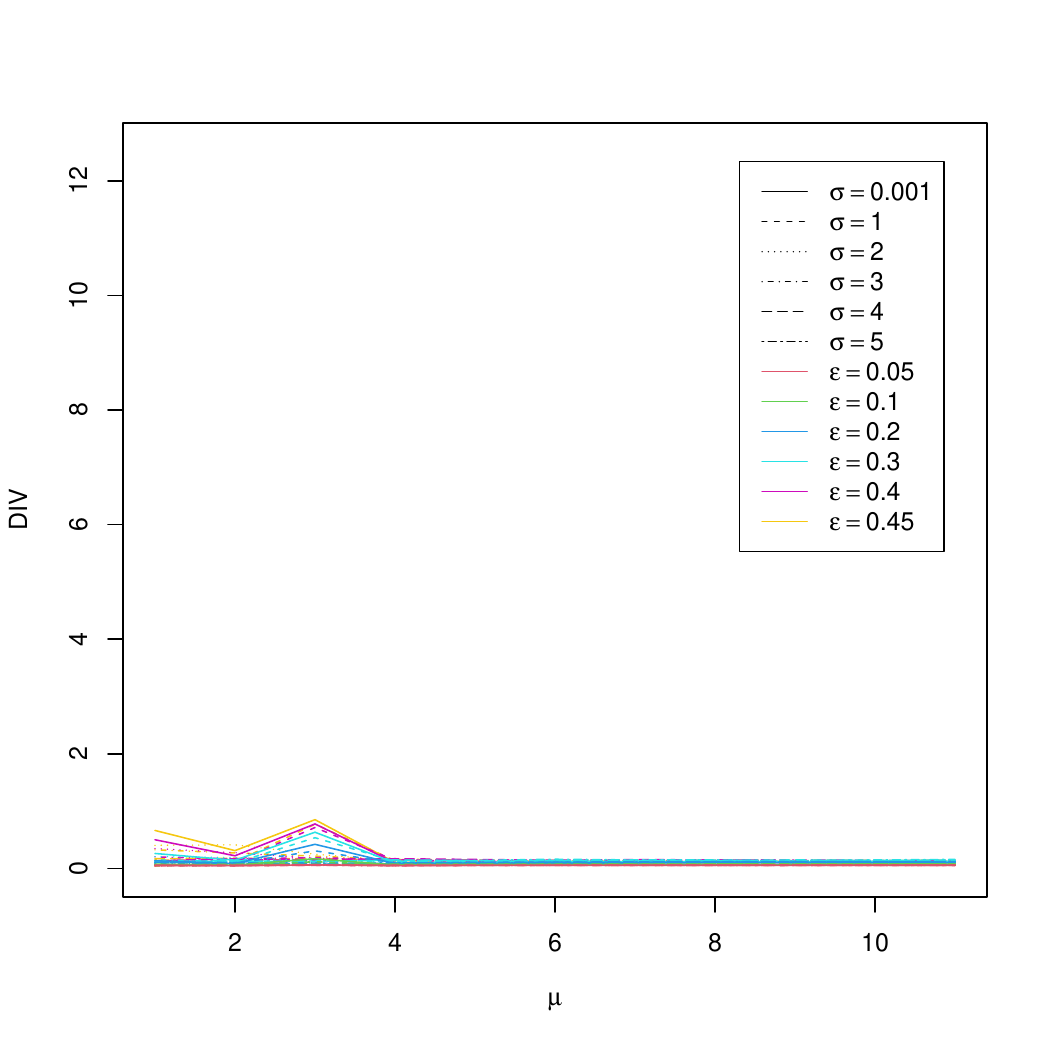} \\
\includegraphics[width=0.32\textwidth]{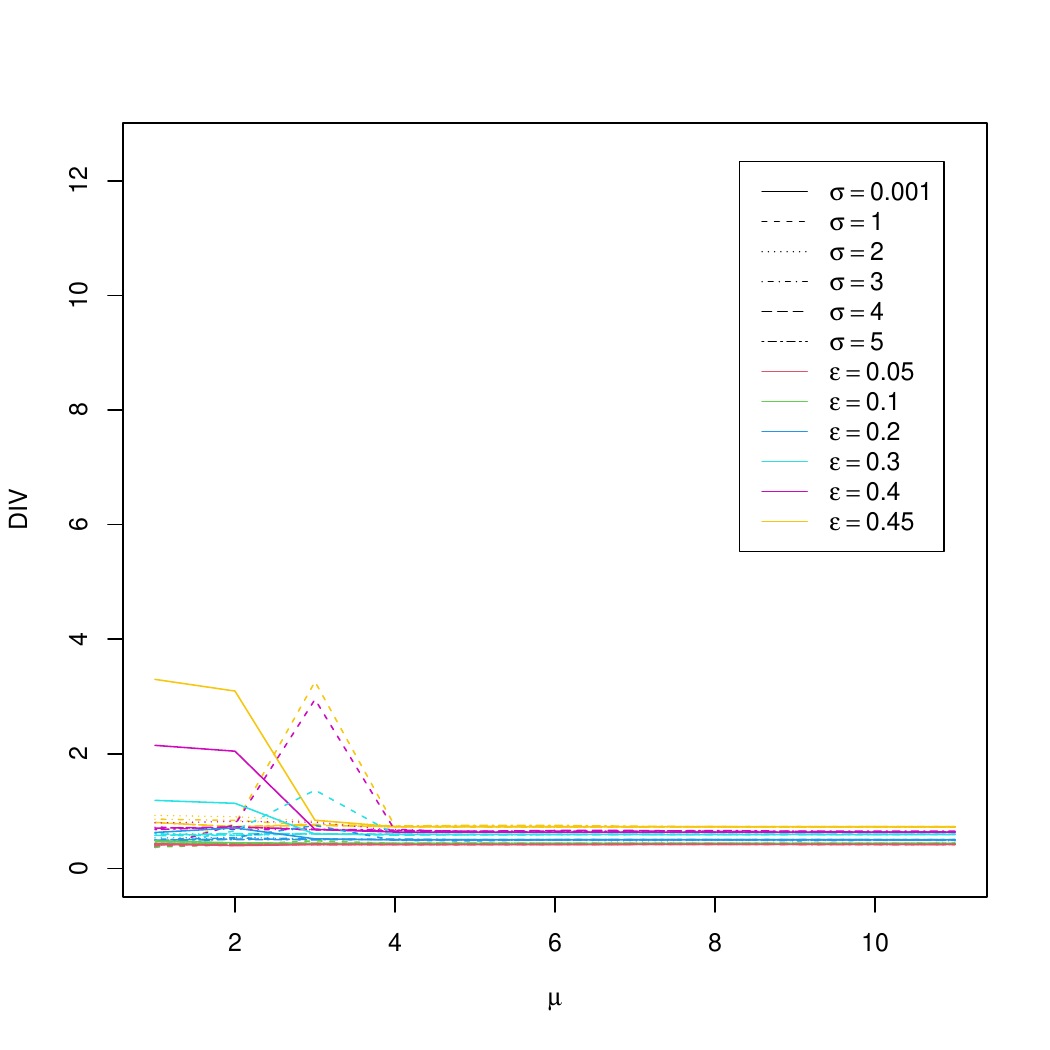}
\includegraphics[width=0.32\textwidth]{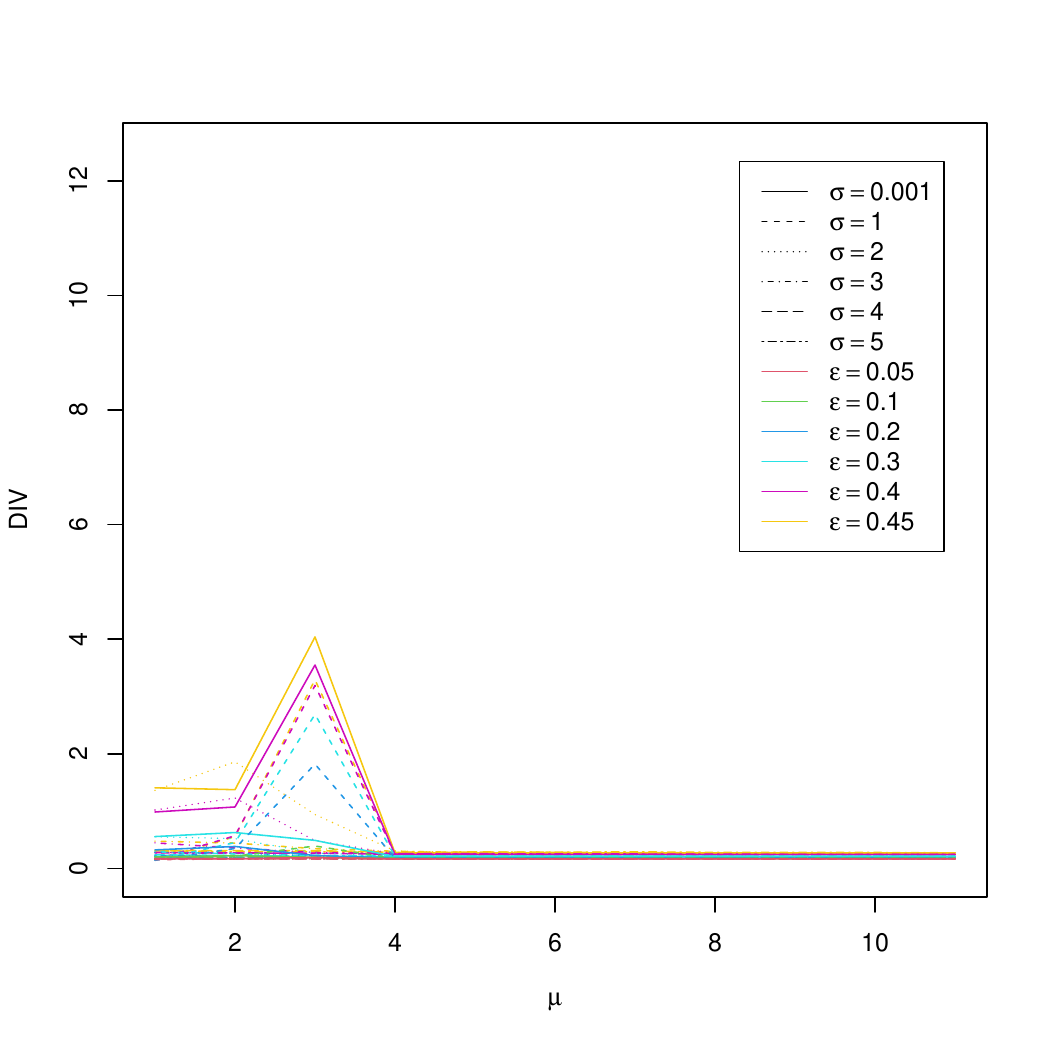} 
\includegraphics[width=0.32\textwidth]{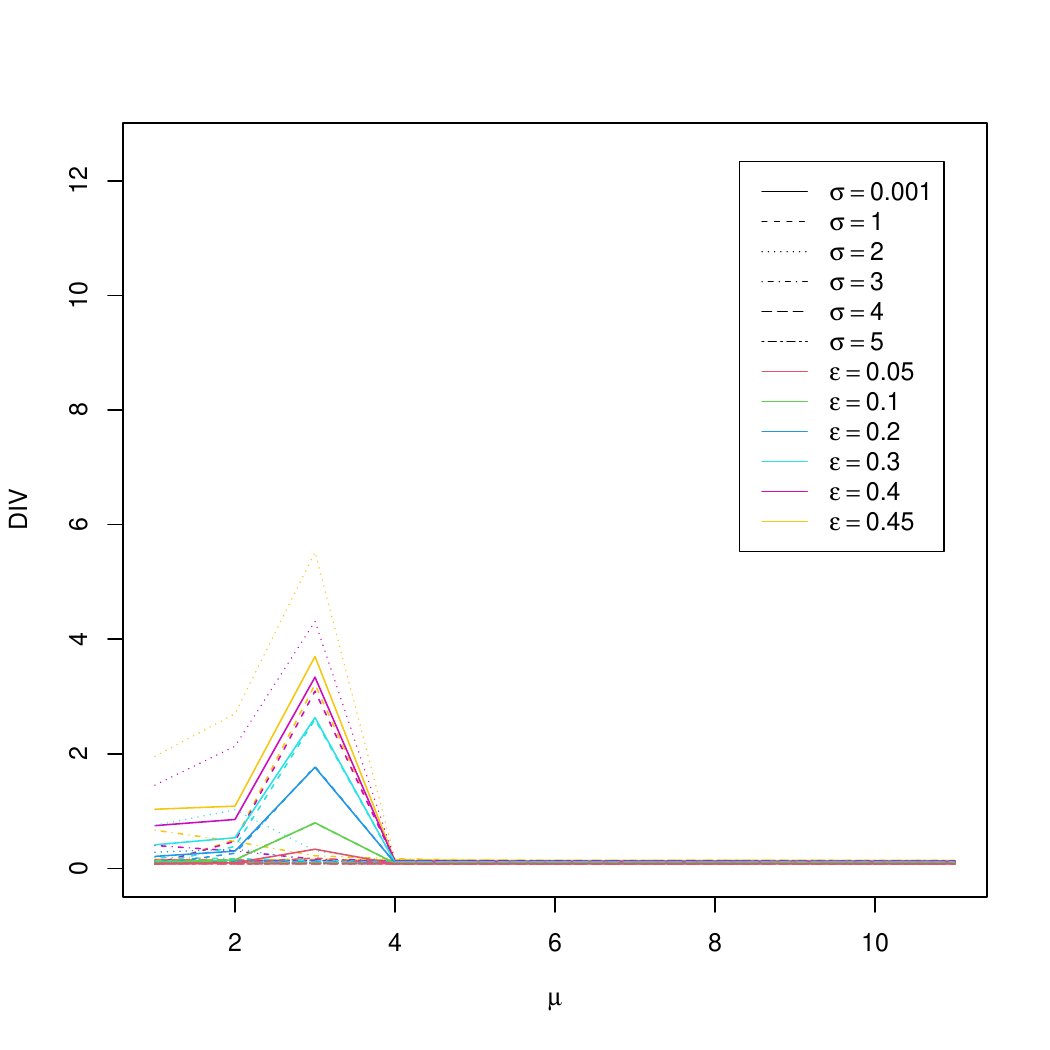} \\
\caption{Monte Carlo Simulation. Kullback--Leibler Divergence for the proposed method starting at the true values with $\alpha=0.5$ as a function of the contamination average $\mu$ ($x$-axis), contamination scale $\sigma$ (different line styles) and contamination level $\varepsilon$ (colors). Rows: number of variables $p=1, 2, 5$ and columns: sample size factor $s=2, 5, 10$.}
\label{sup:fig:monte:DIV:0.5:1}
\end{figure}  

\begin{figure}
\centering
\includegraphics[width=0.32\textwidth]{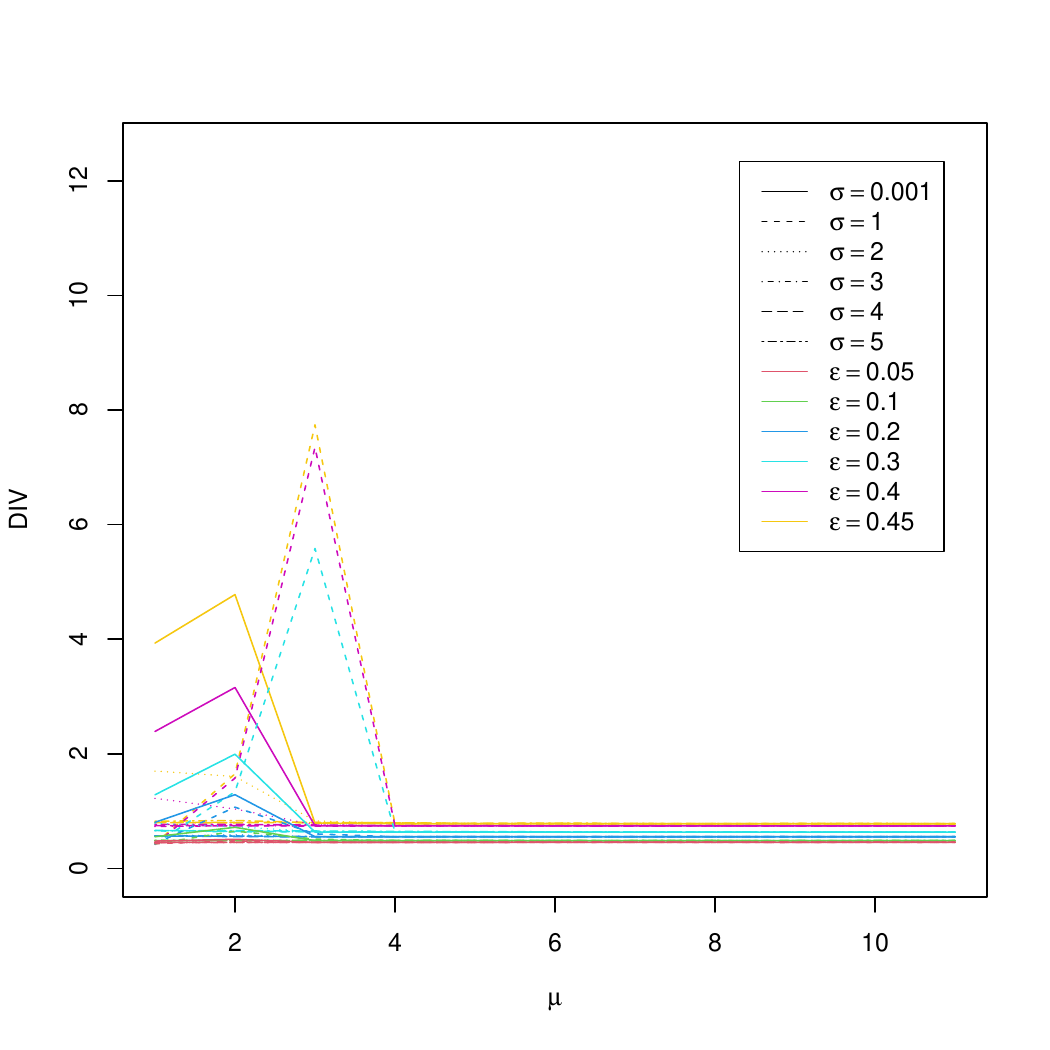}
\includegraphics[width=0.32\textwidth]{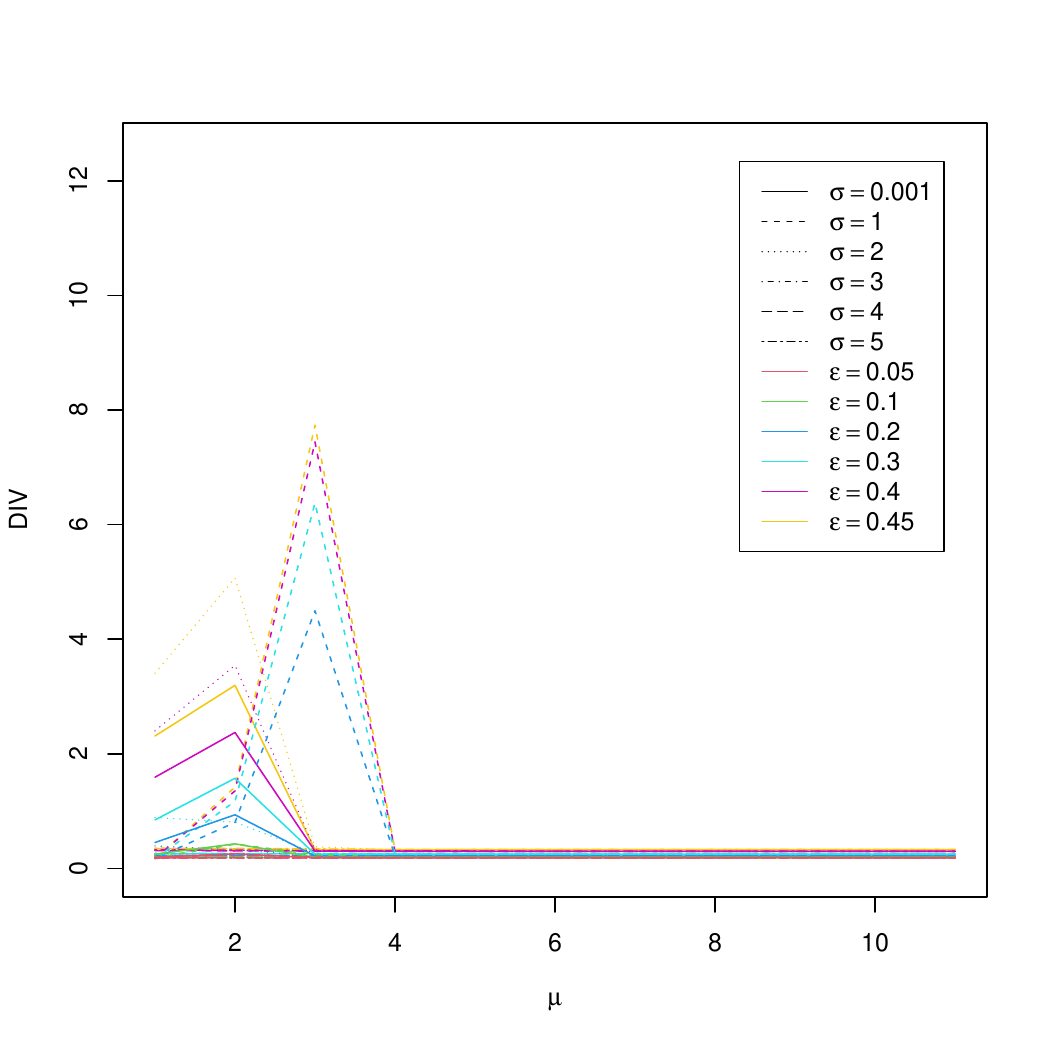} 
\includegraphics[width=0.32\textwidth]{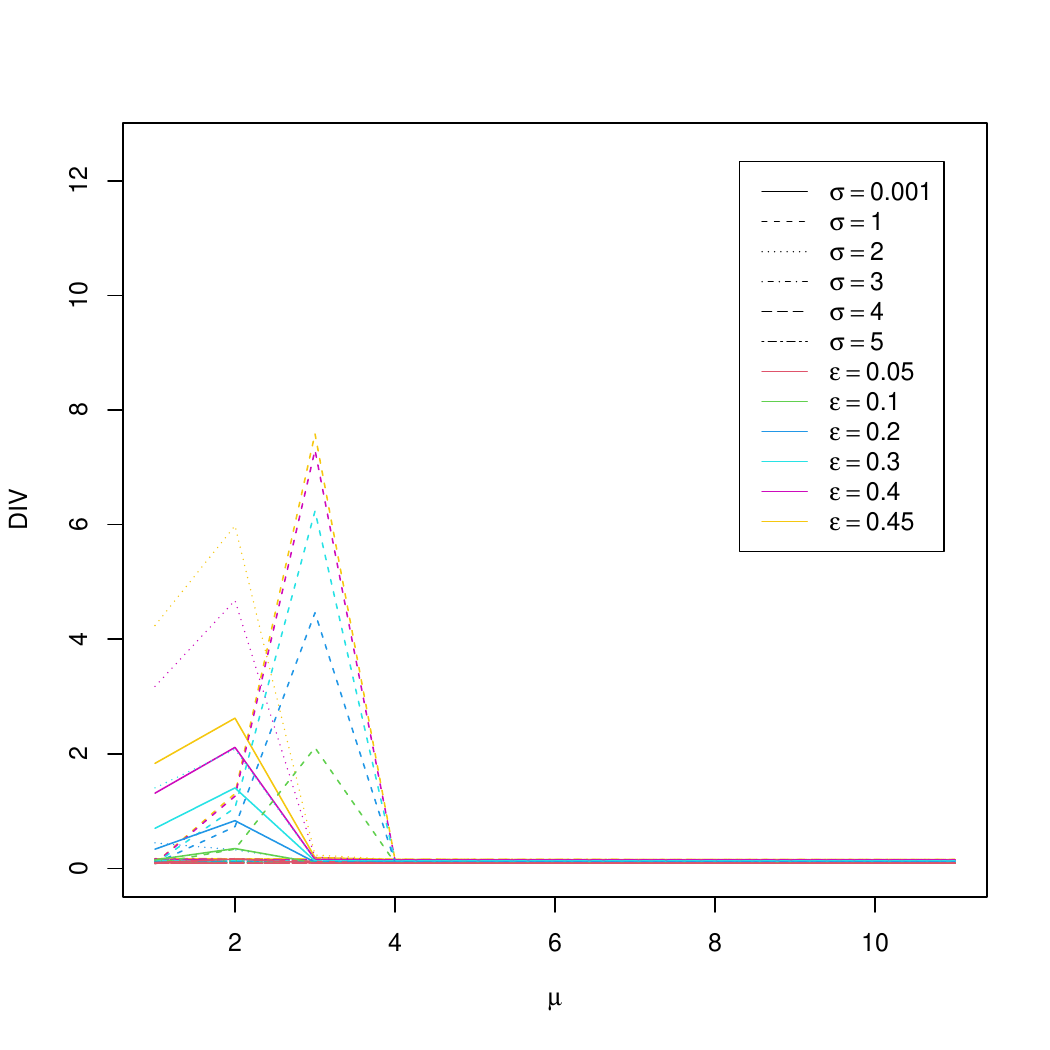} \\
\includegraphics[width=0.32\textwidth]{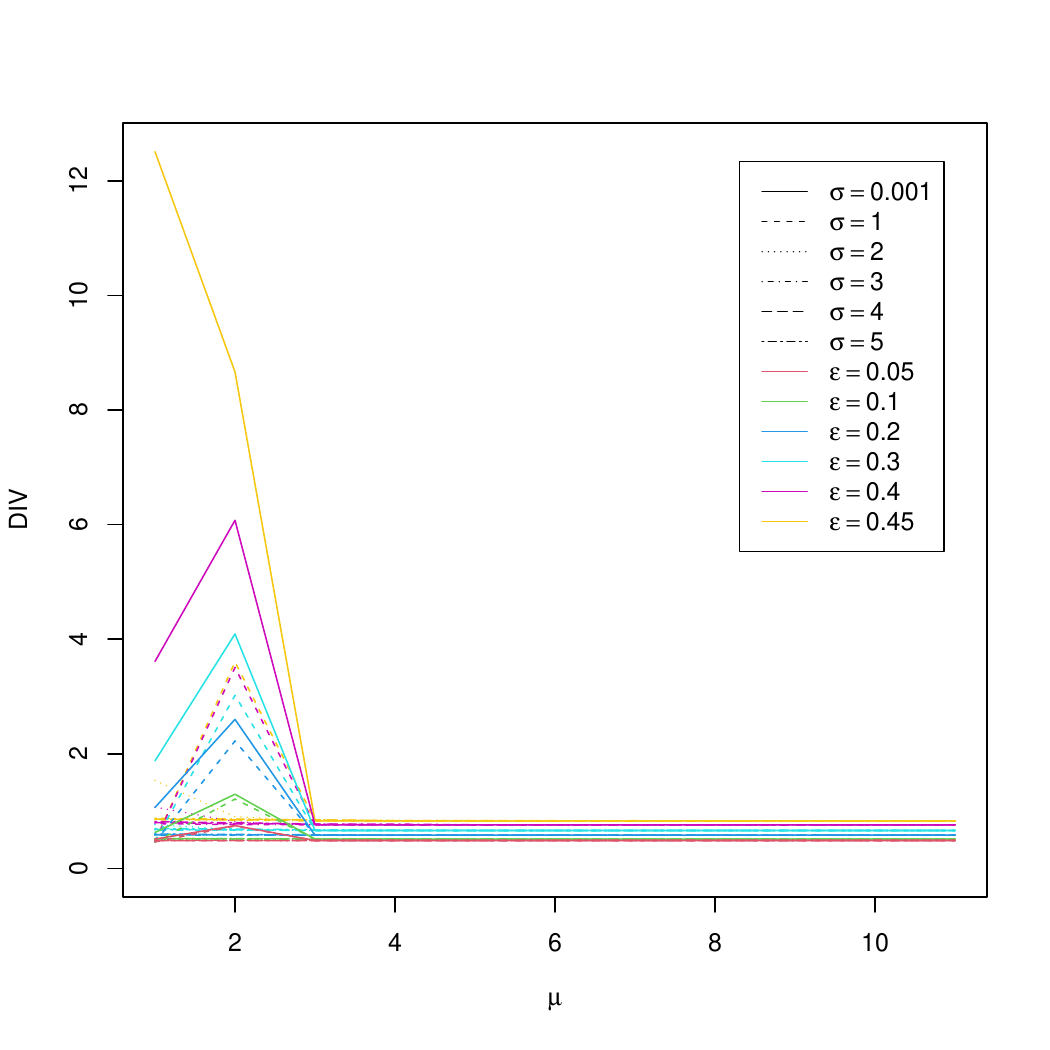}
\includegraphics[width=0.32\textwidth]{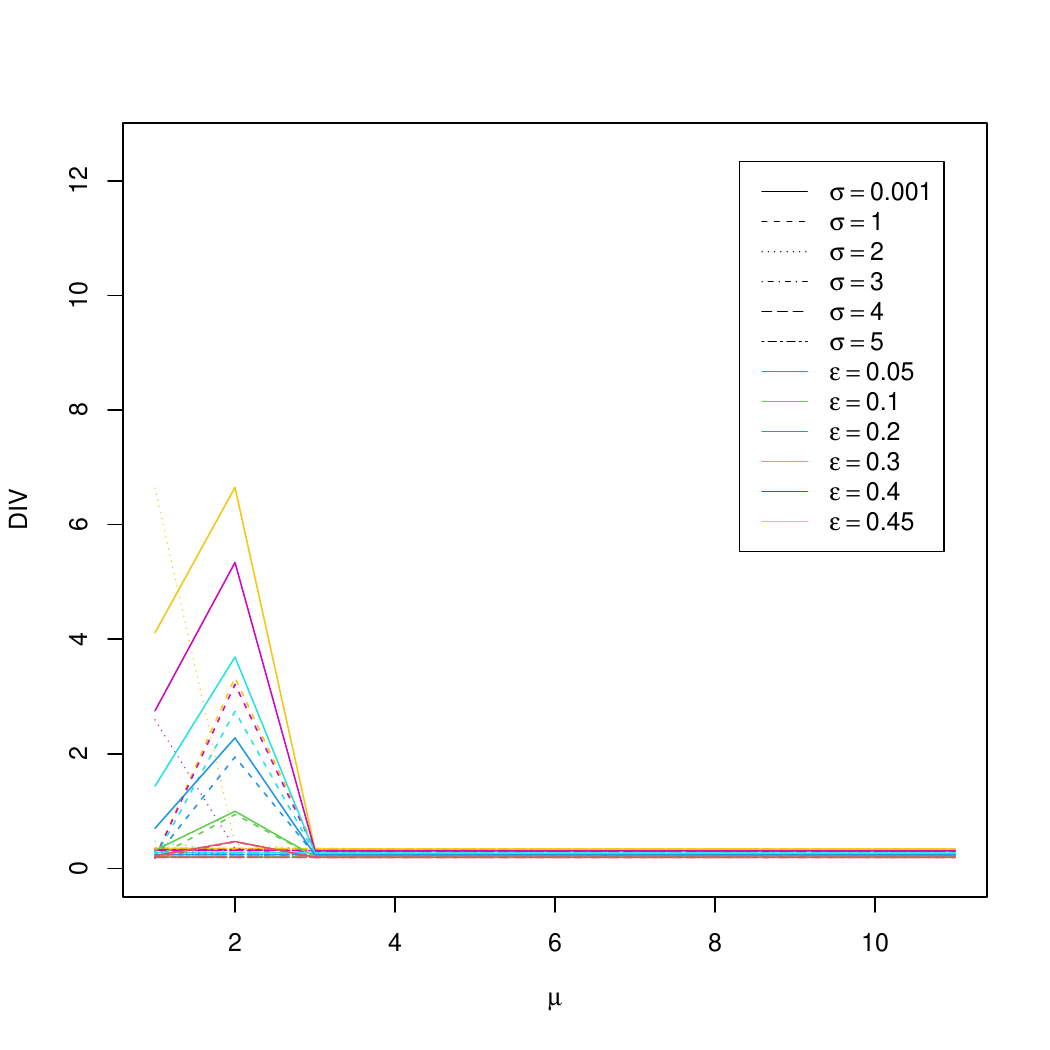} 
\includegraphics[width=0.32\textwidth]{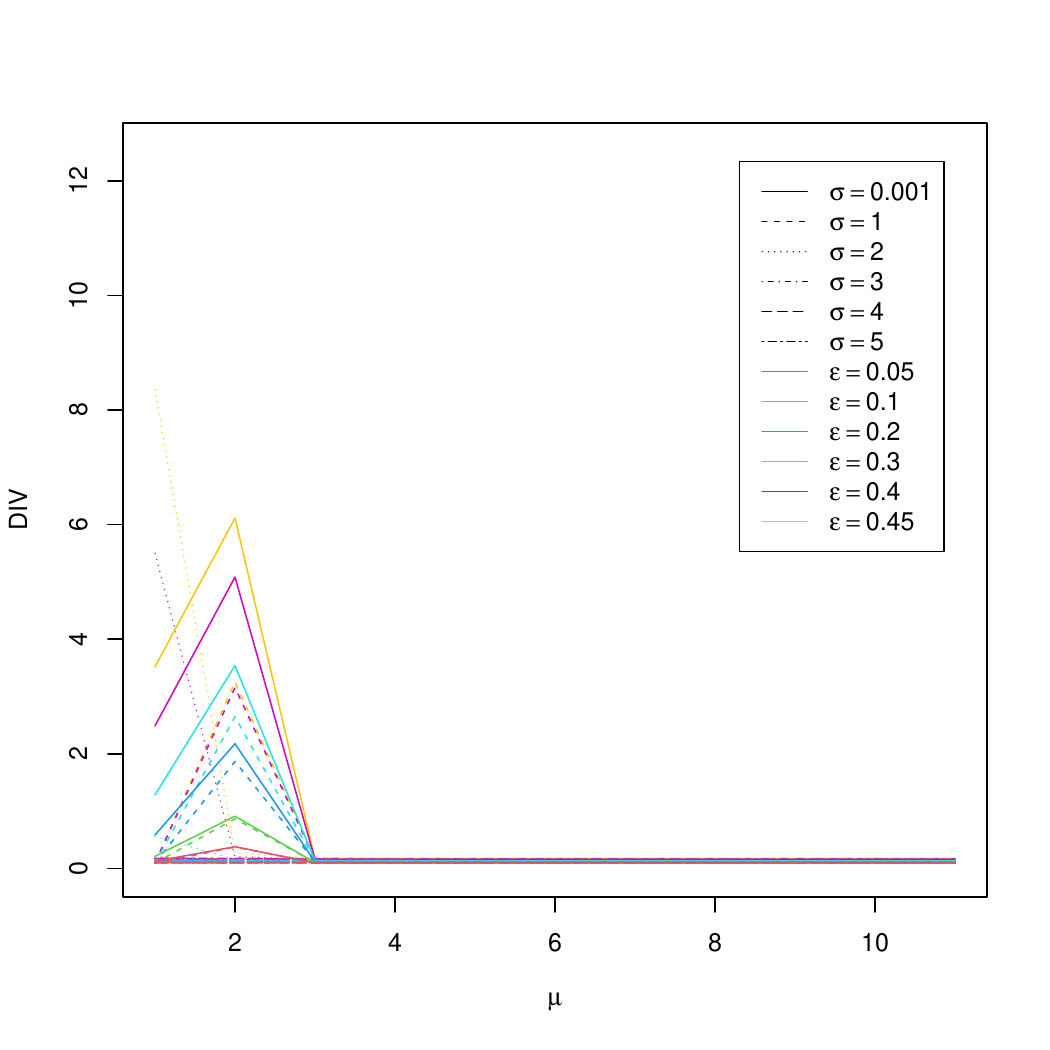}
\caption{Monte Carlo Simulation. Kullback--Leibler Divergence for the proposed method starting at the true values with $\alpha=0.5$ as a function of the contamination average $\mu$ ($x$-axis), contamination scale $\sigma$ (different line styles) and contamination level $\varepsilon$ (colors). Rows: number of variables $p=10, 20$ and columns: sample size factor $s=2, 5, 10$.}
\label{sup:fig:monte:DIV:0.5:2}
\end{figure}  

\begin{figure}
\centering
\includegraphics[width=0.32\textwidth]{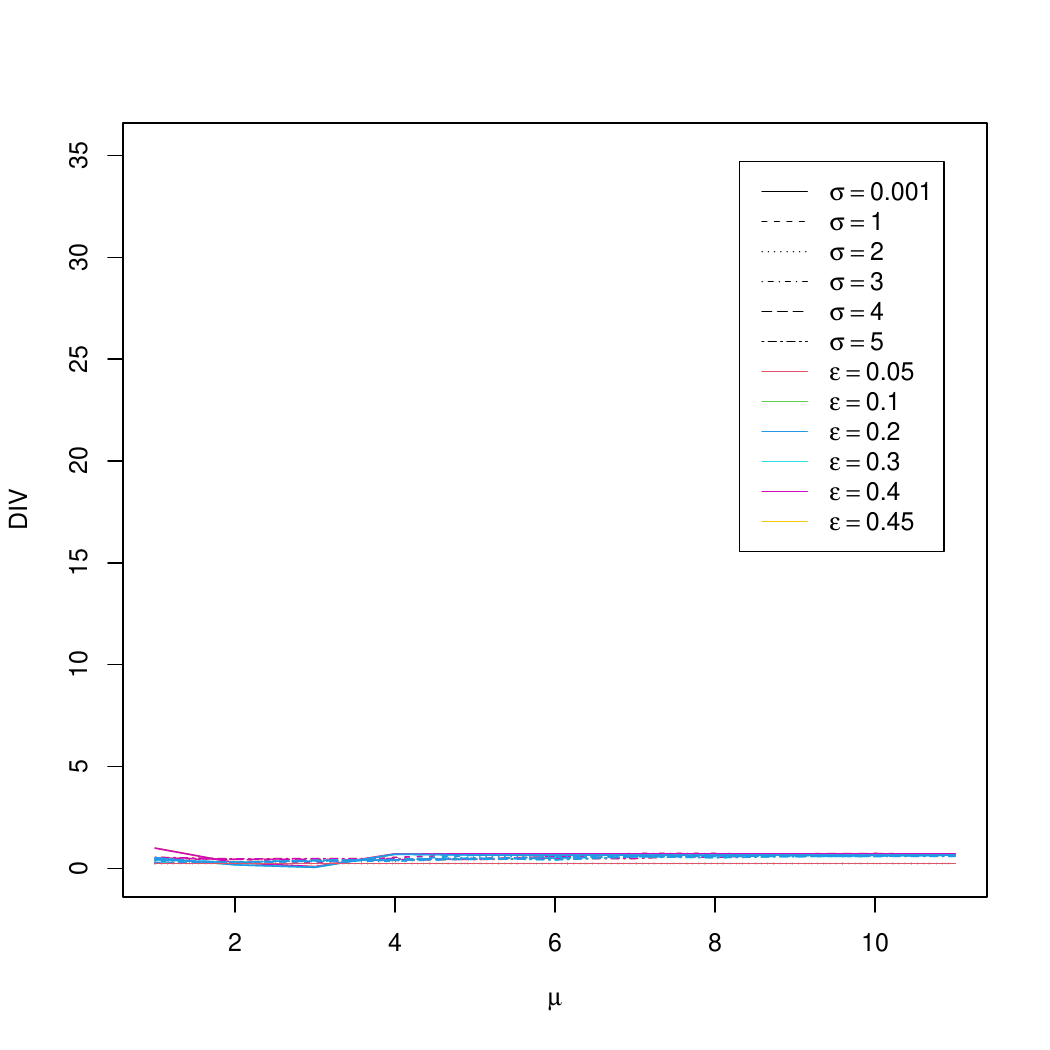}
\includegraphics[width=0.32\textwidth]{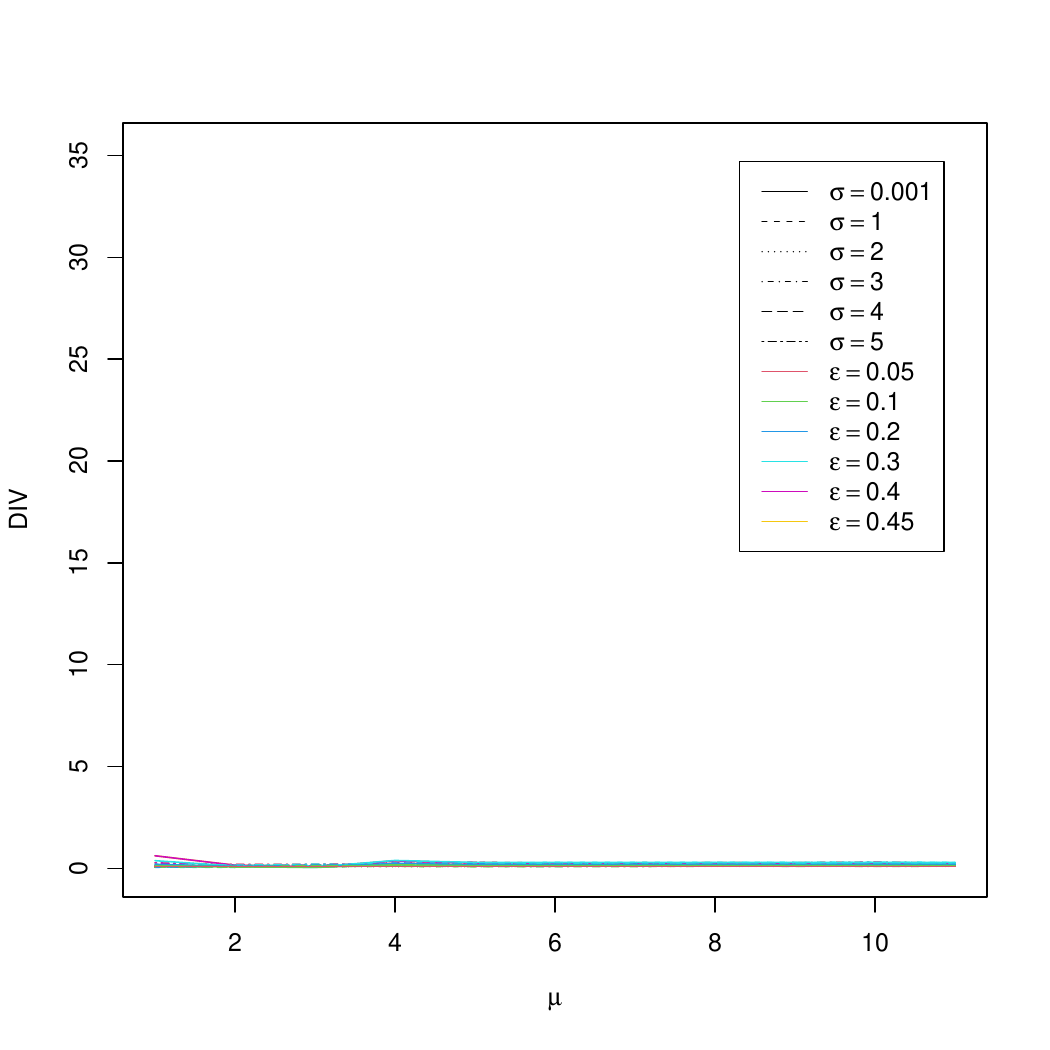} 
\includegraphics[width=0.32\textwidth]{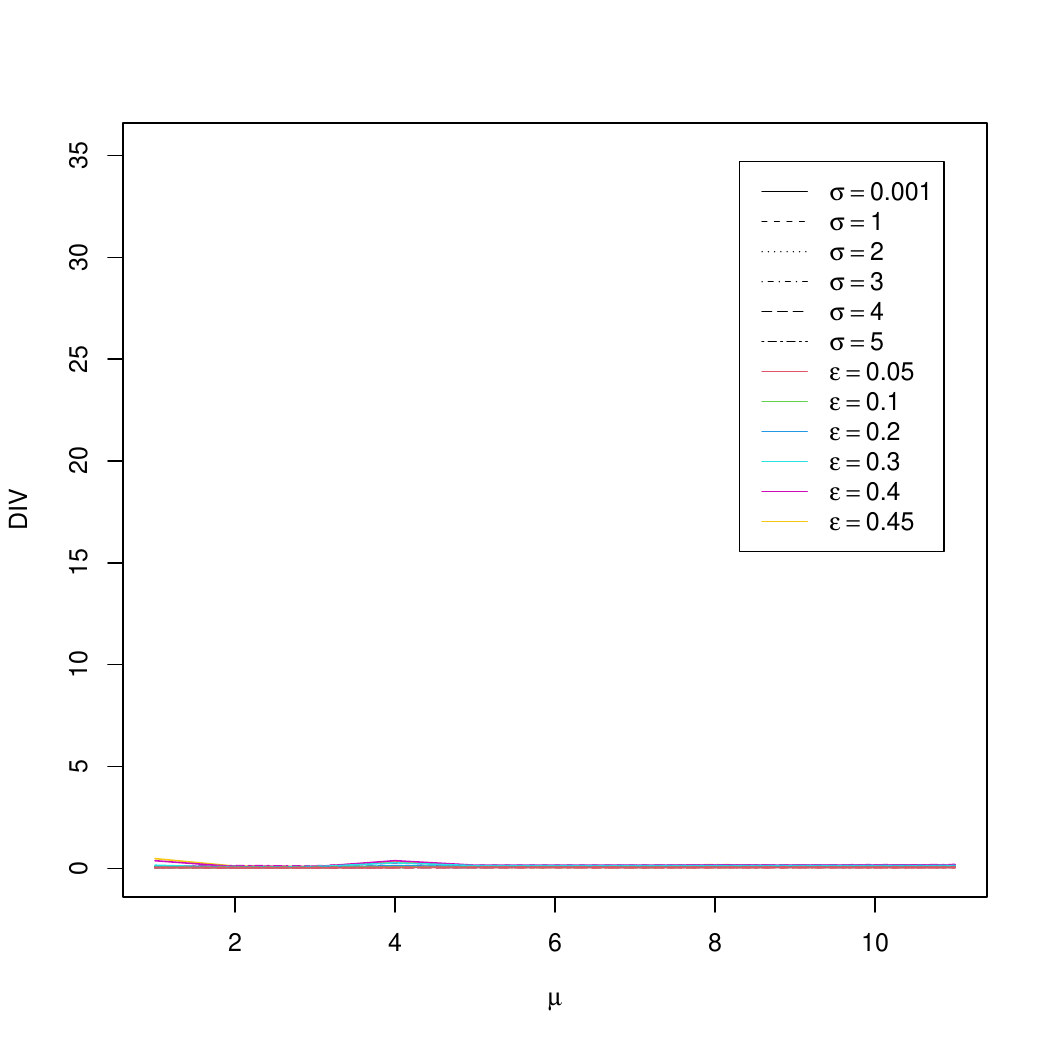} \\
\includegraphics[width=0.32\textwidth]{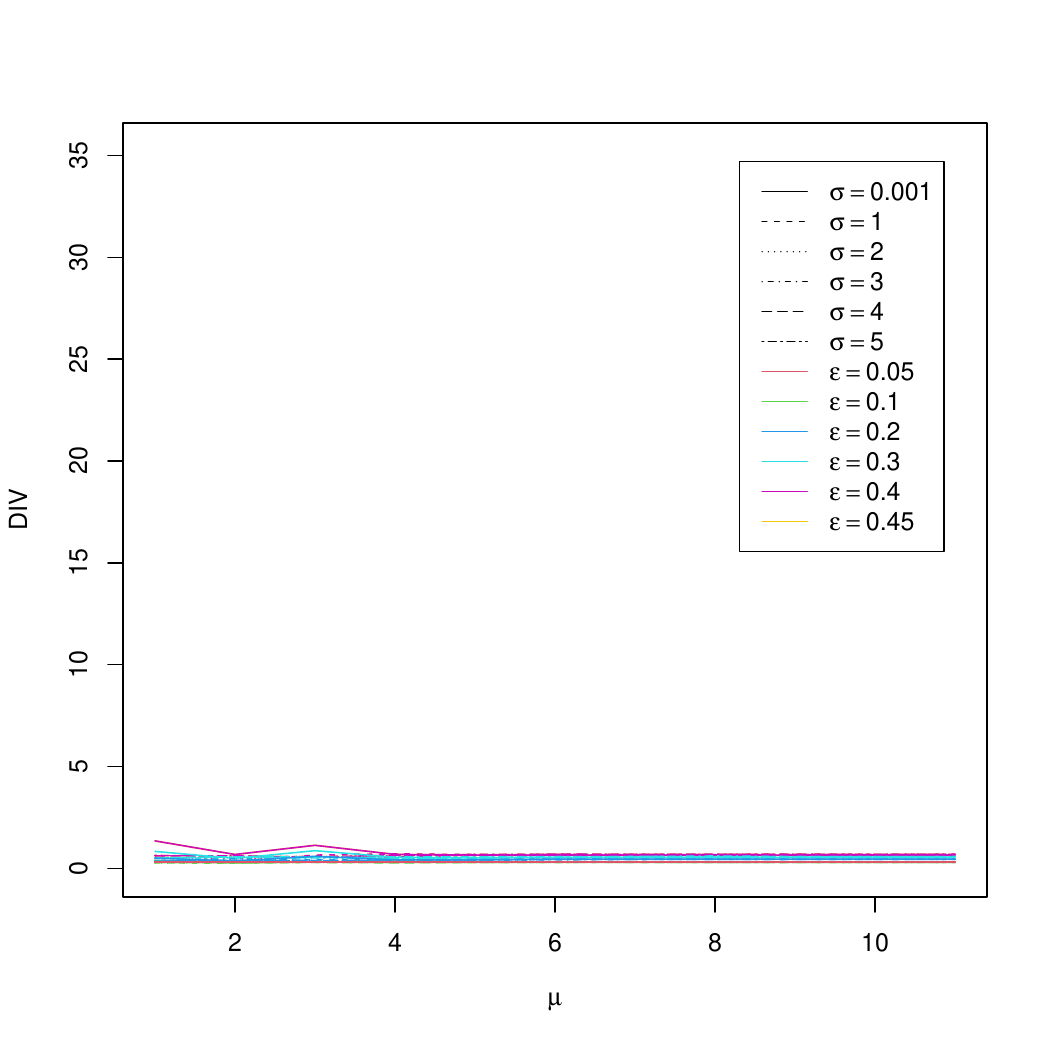}
\includegraphics[width=0.32\textwidth]{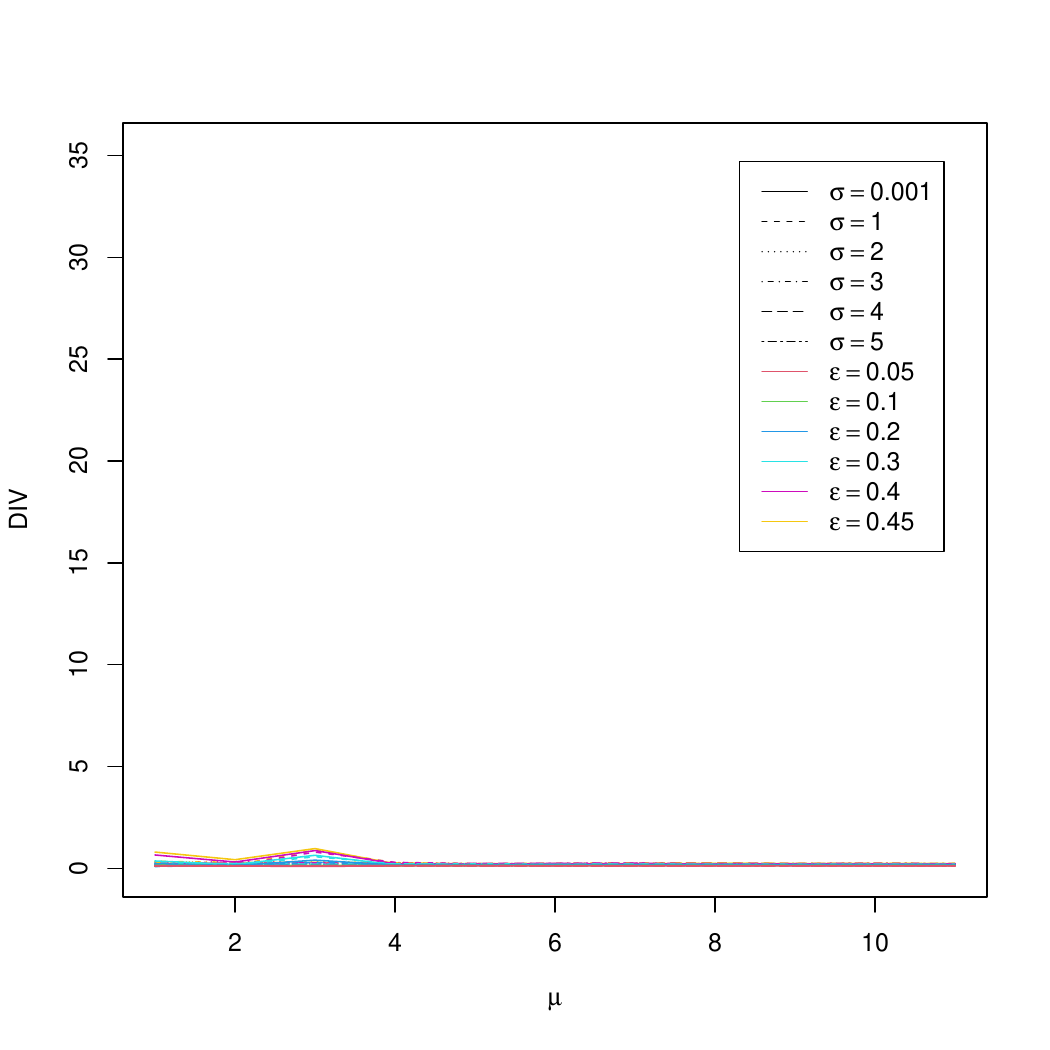} 
\includegraphics[width=0.32\textwidth]{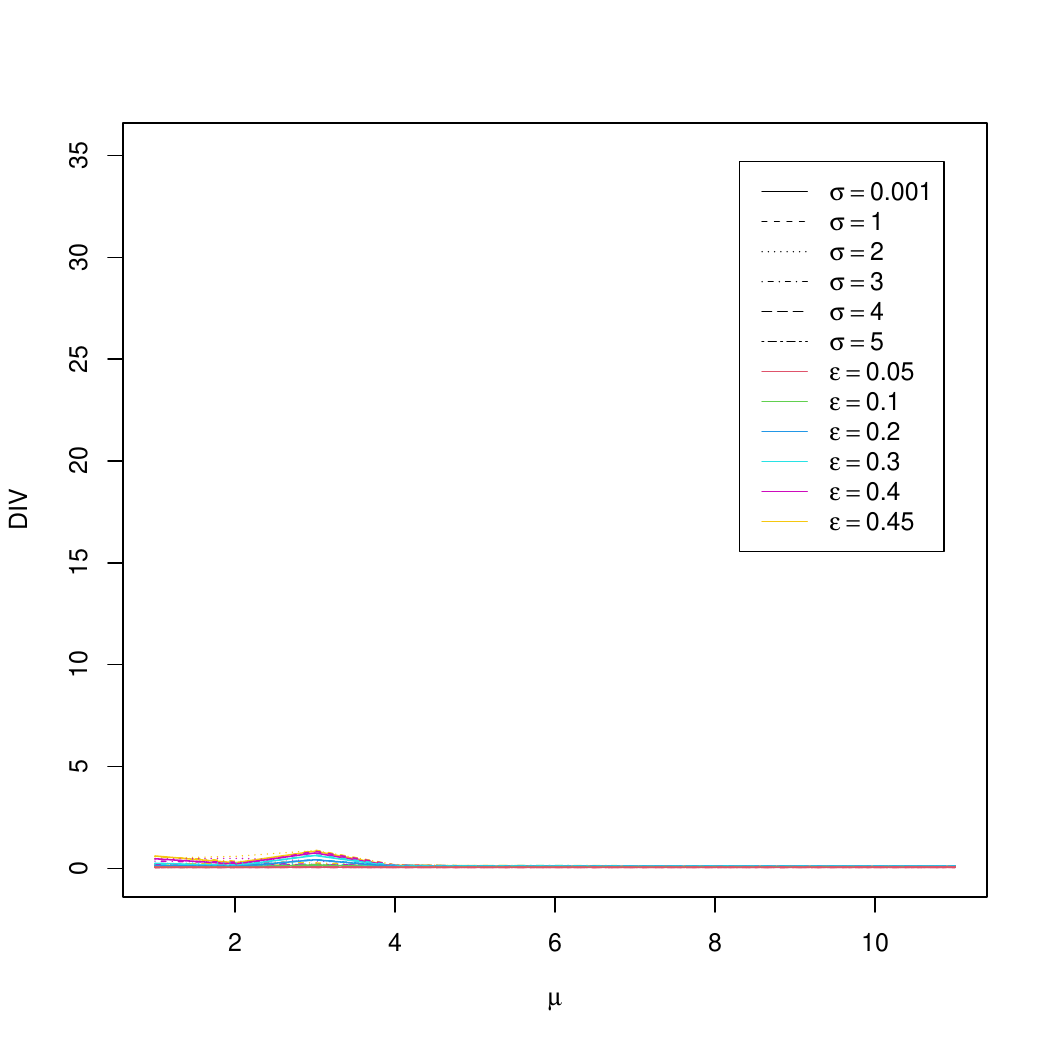} \\
\includegraphics[width=0.32\textwidth]{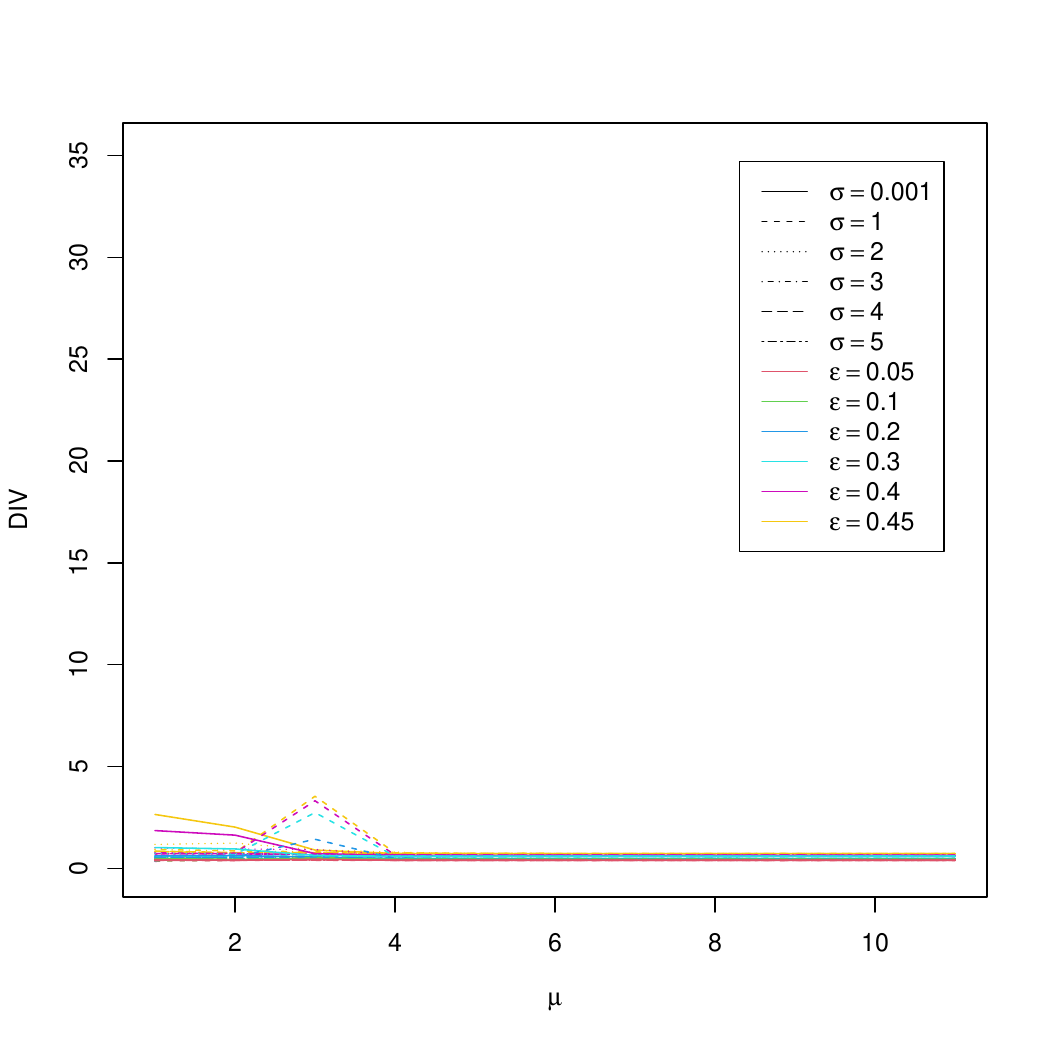}
\includegraphics[width=0.32\textwidth]{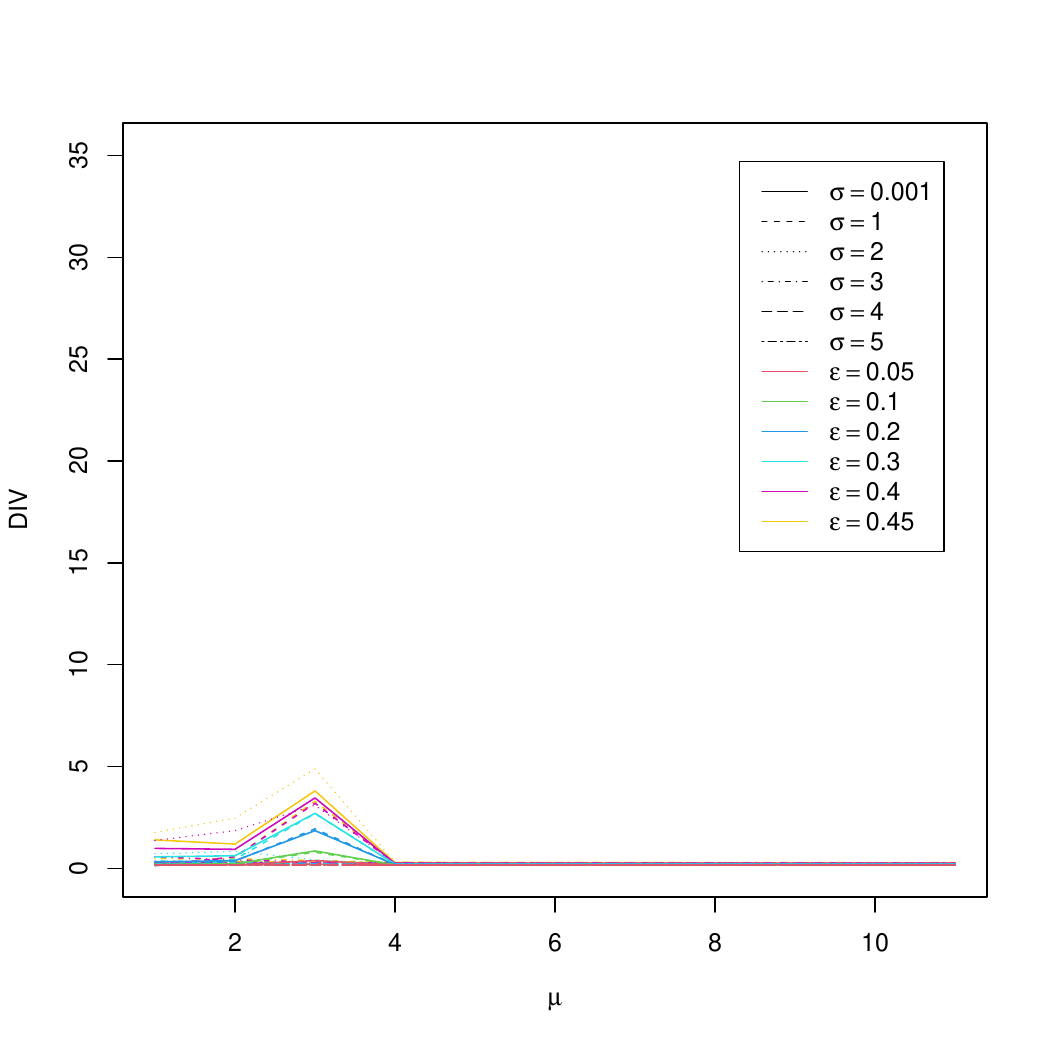} 
\includegraphics[width=0.32\textwidth]{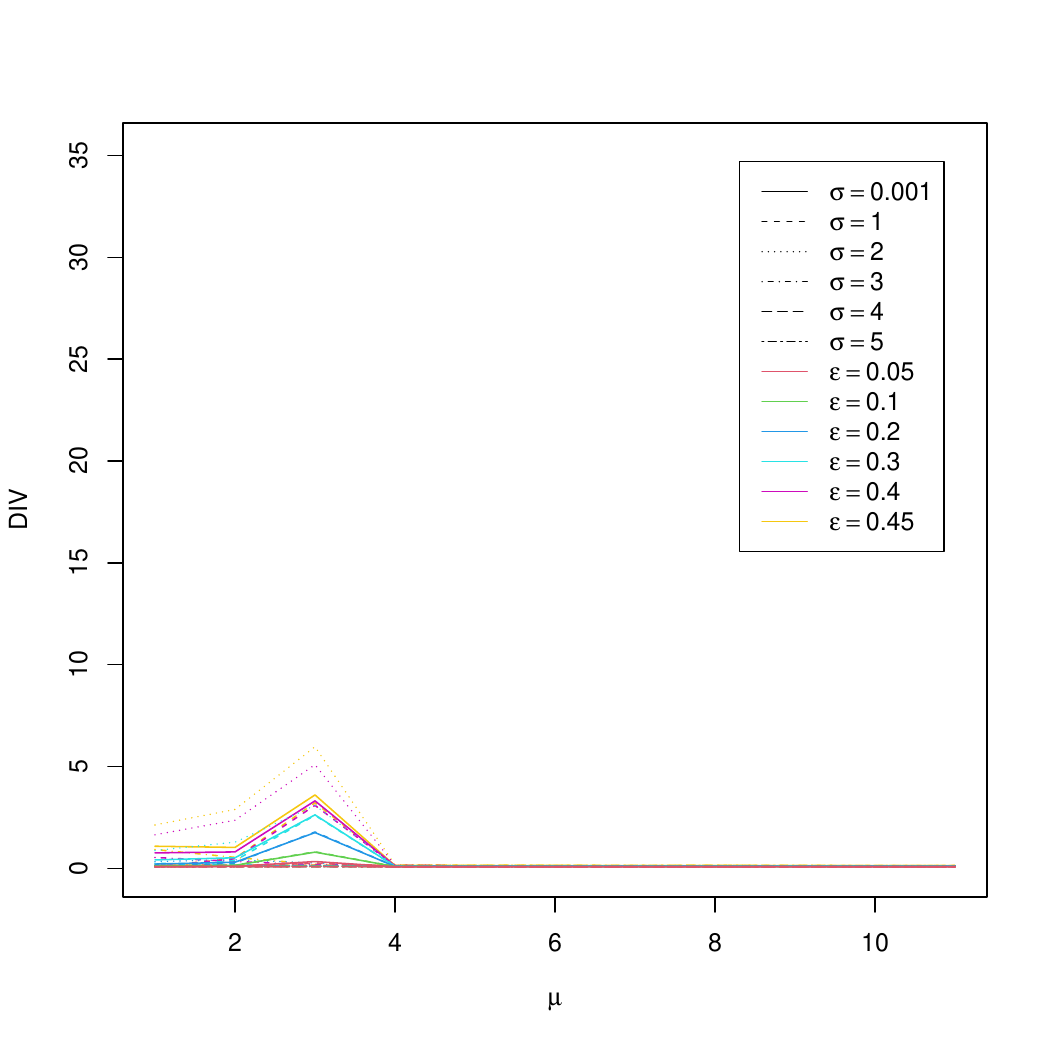} \\
\caption{Monte Carlo Simulation. Kullback--Leibler Divergence for the proposed method starting at the true values with $\alpha=0.75$ as a function of the contamination average $\mu$ ($x$-axis), contamination scale $\sigma$ (different line styles) and contamination level $\varepsilon$ (colors). Rows: number of variables $p=1, 2, 5$ and columns: sample size factor $s=2, 5, 10$.}
\label{sup:fig:monte:DIV:0.75:1}
\end{figure}  

\begin{figure}
\centering
\includegraphics[width=0.32\textwidth]{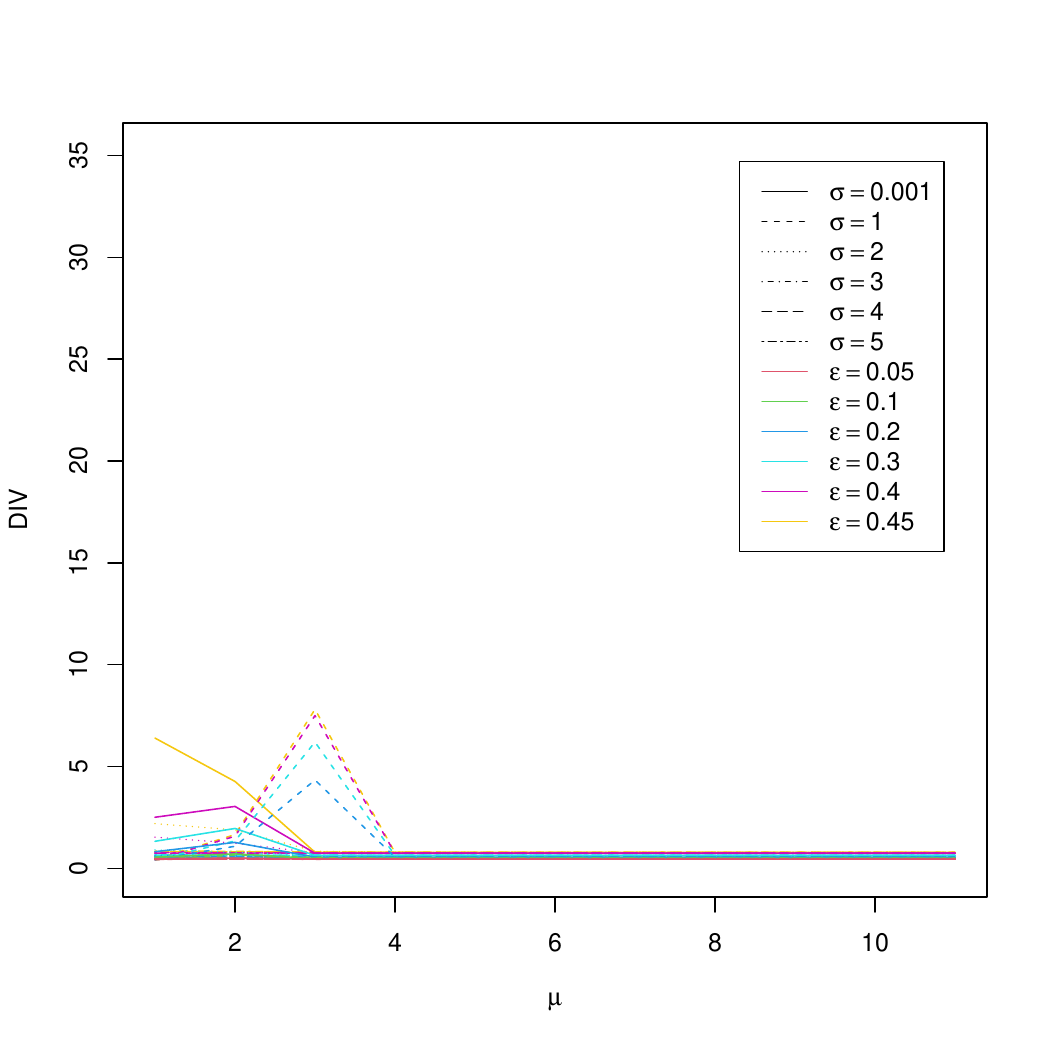}
\includegraphics[width=0.32\textwidth]{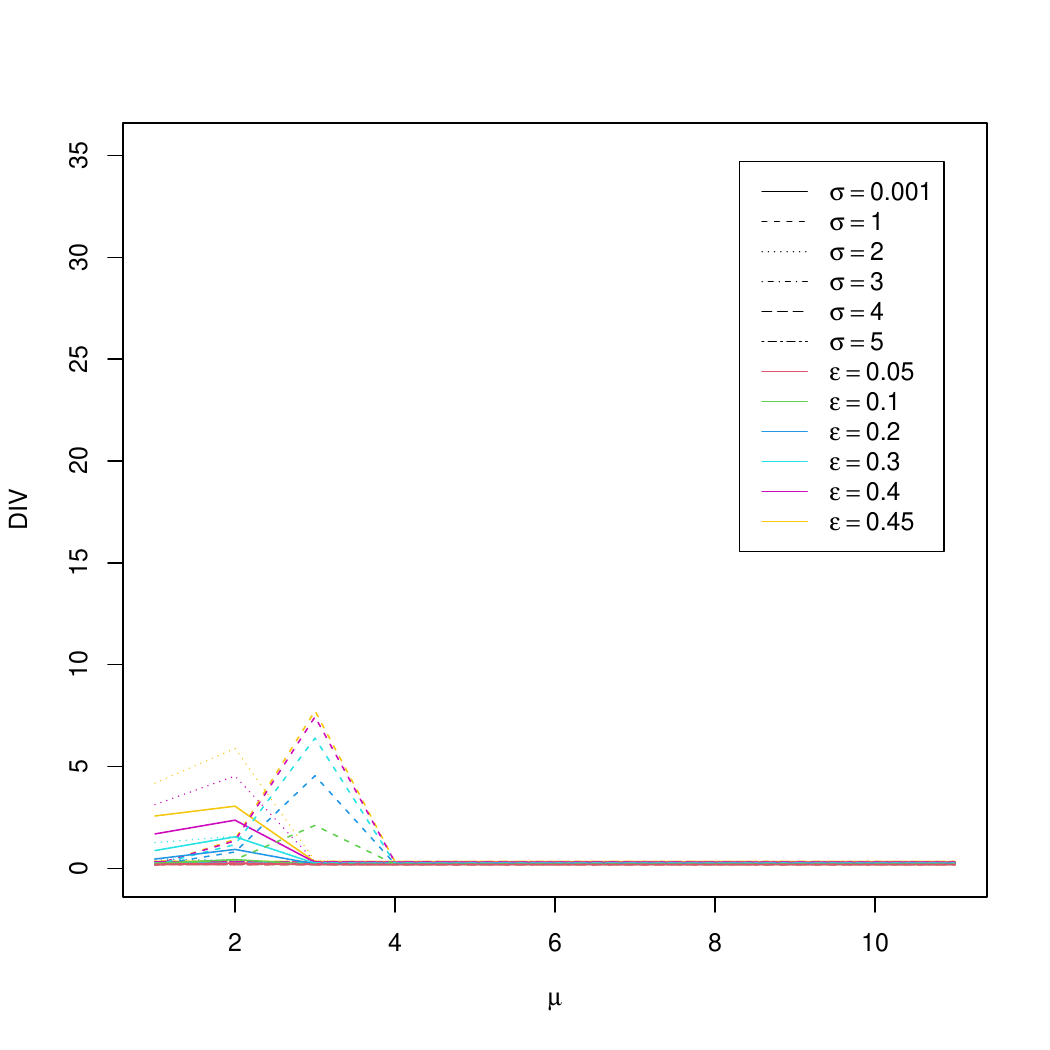} 
\includegraphics[width=0.32\textwidth]{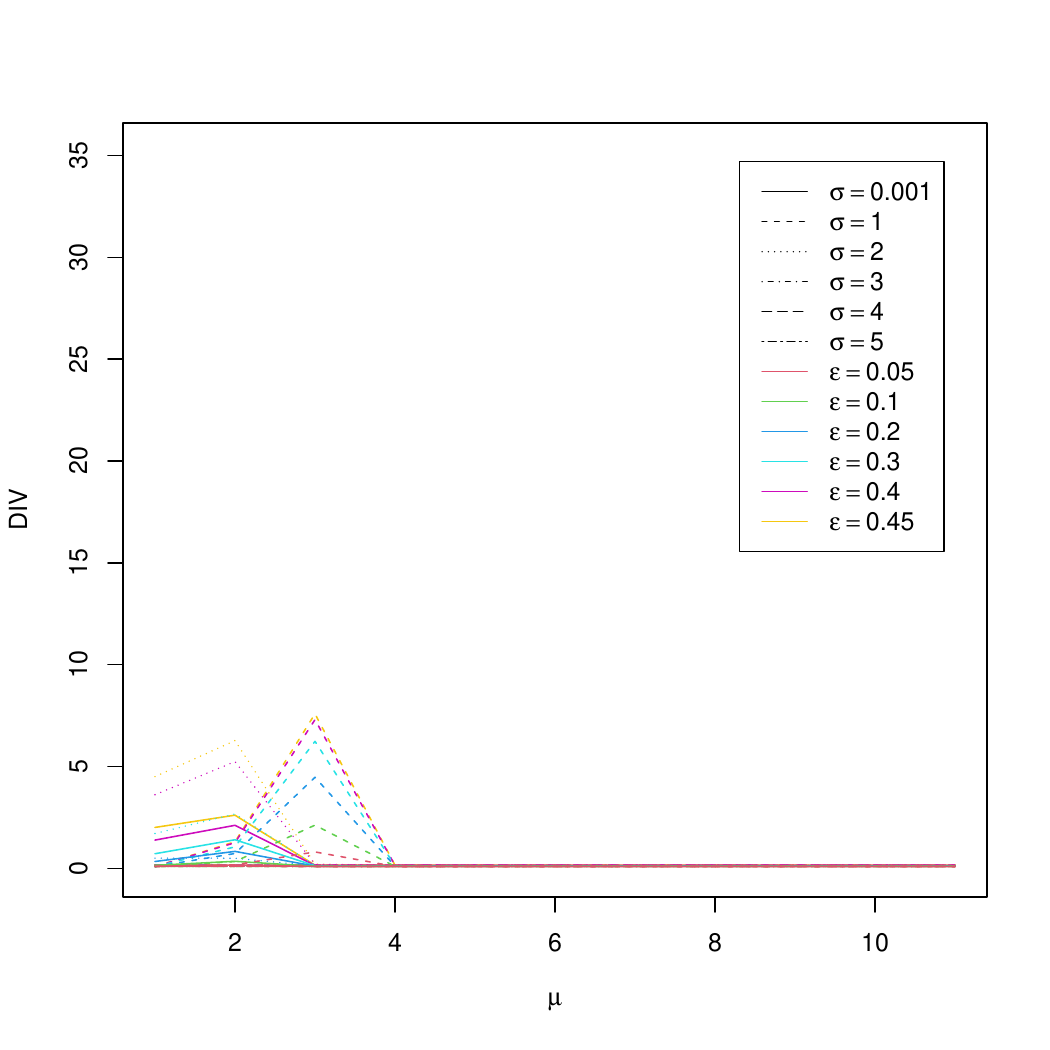} \\
\includegraphics[width=0.32\textwidth]{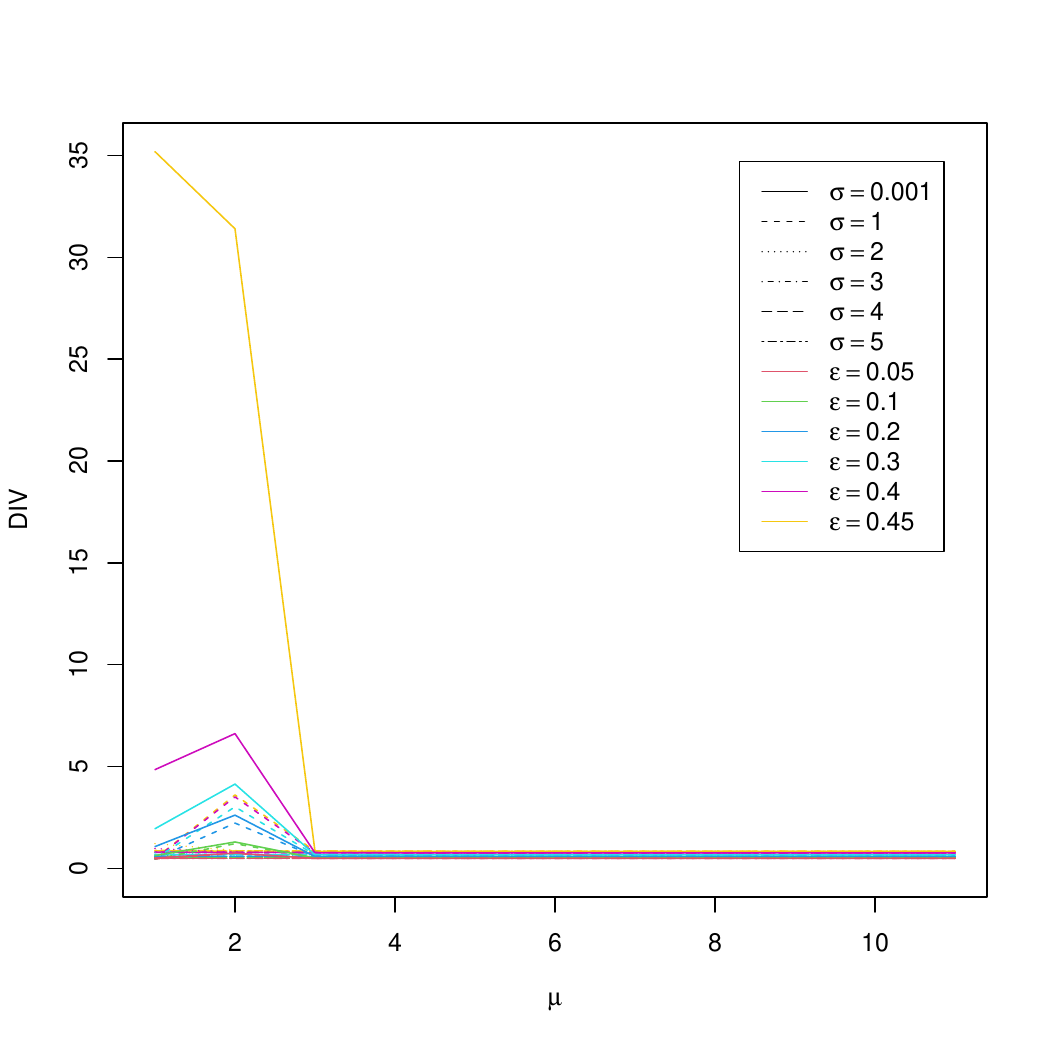}
\includegraphics[width=0.32\textwidth]{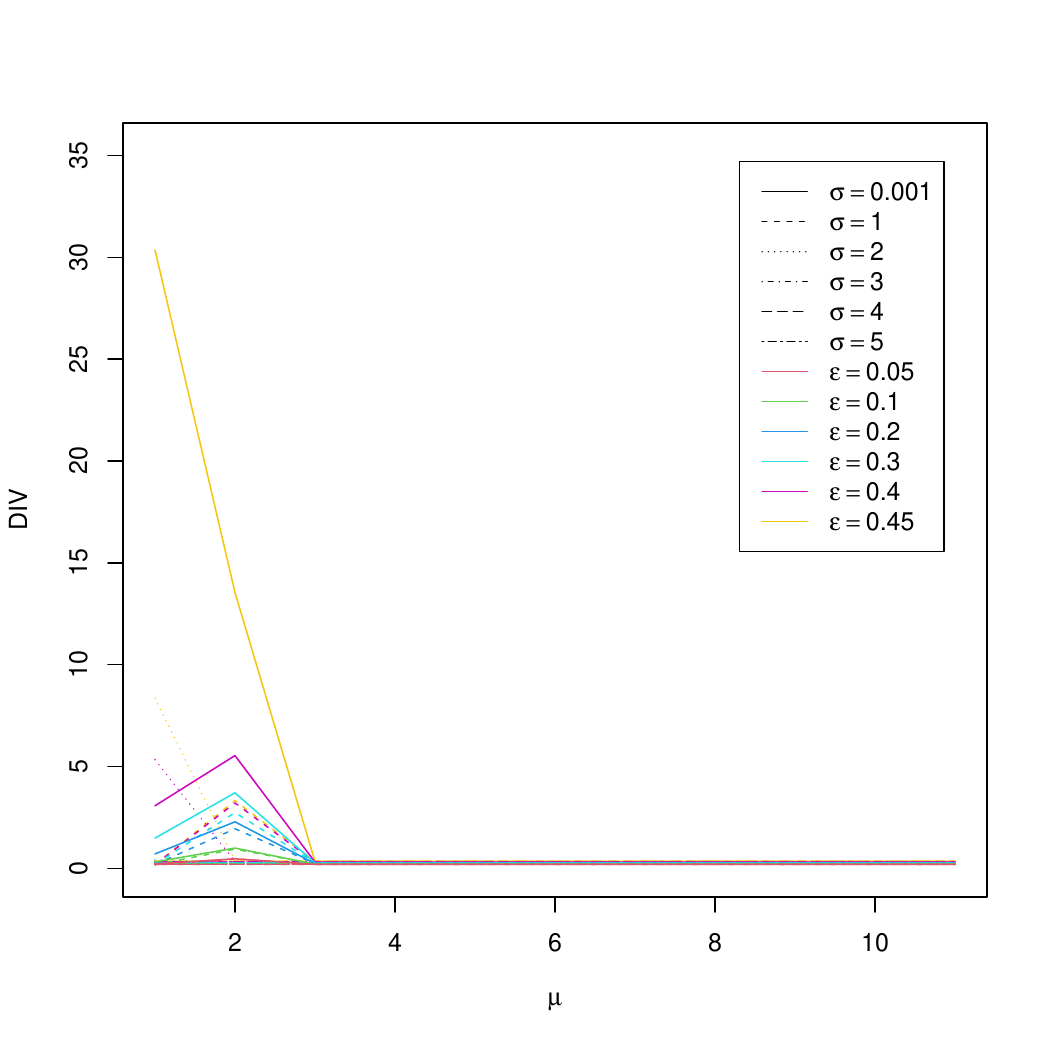} 
\includegraphics[width=0.32\textwidth]{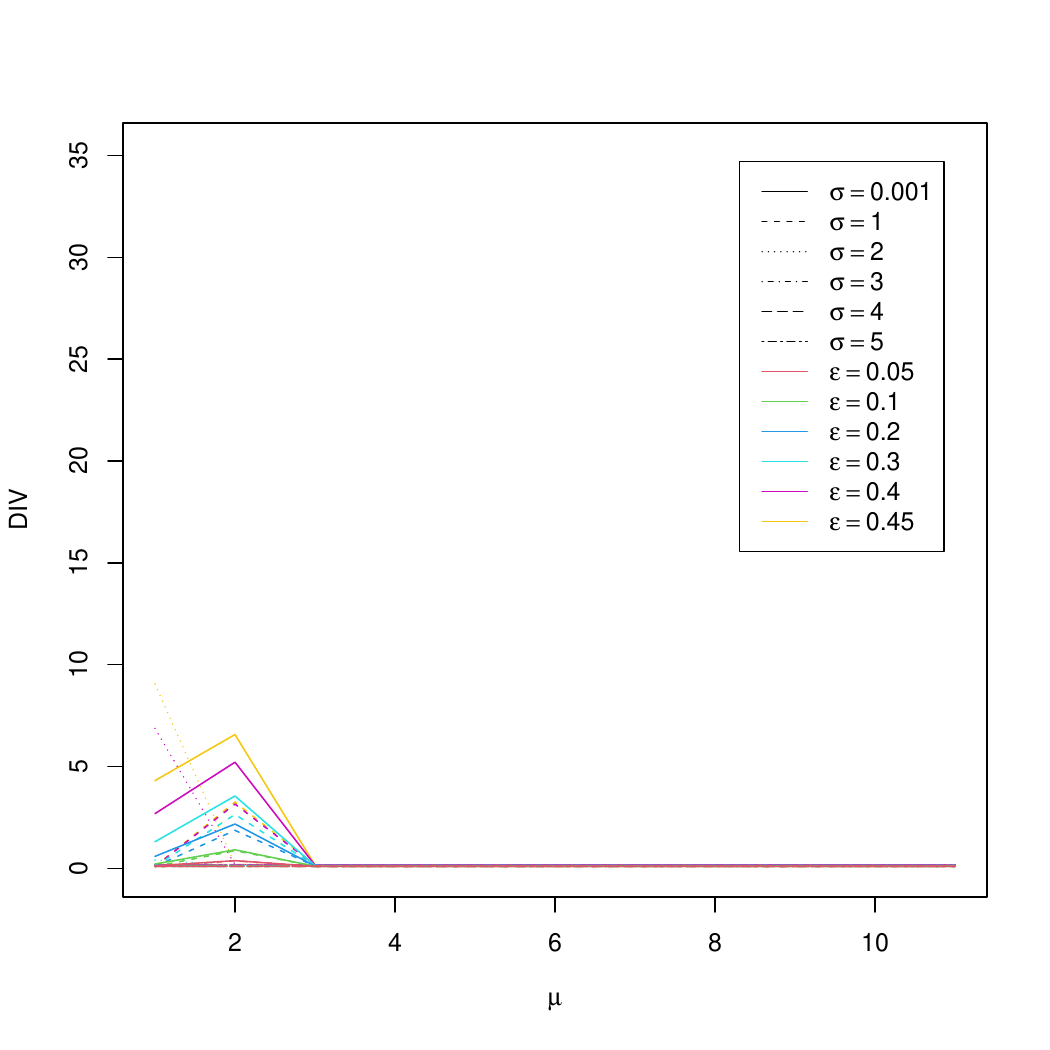}
\caption{Monte Carlo Simulation. Kullback--Leibler Divergence for the proposed method starting at the true values with $\alpha=0.75$ as a function of the contamination average $\mu$ ($x$-axis), contamination scale $\sigma$ (different line styles) and contamination level $\varepsilon$ (colors). Rows: number of variables $p=10, 20$ and columns: sample size factor $s=2, 5, 10$.}
\label{sup:fig:monte:DIV:0.75:2}
\end{figure}  

\begin{figure}
\centering
\includegraphics[width=0.32\textwidth]{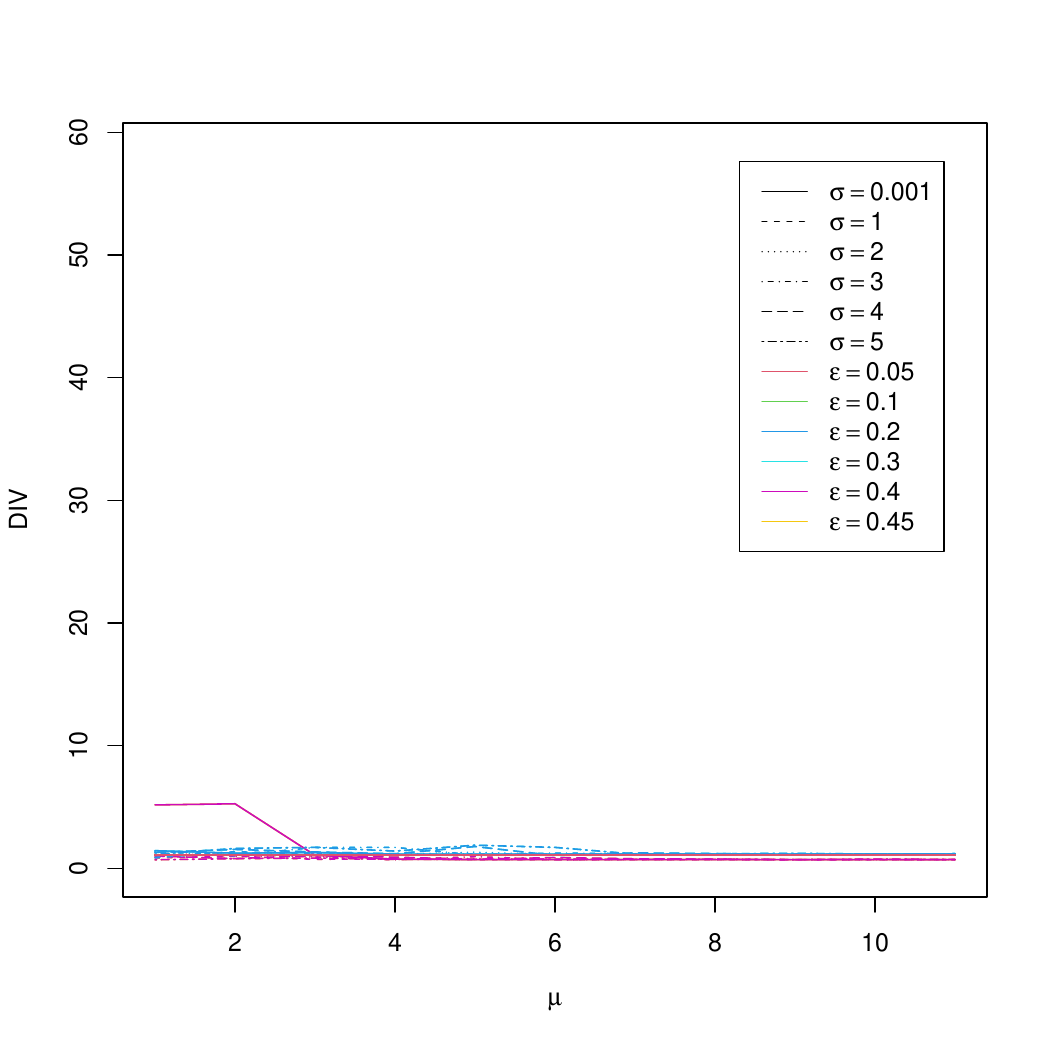}
\includegraphics[width=0.32\textwidth]{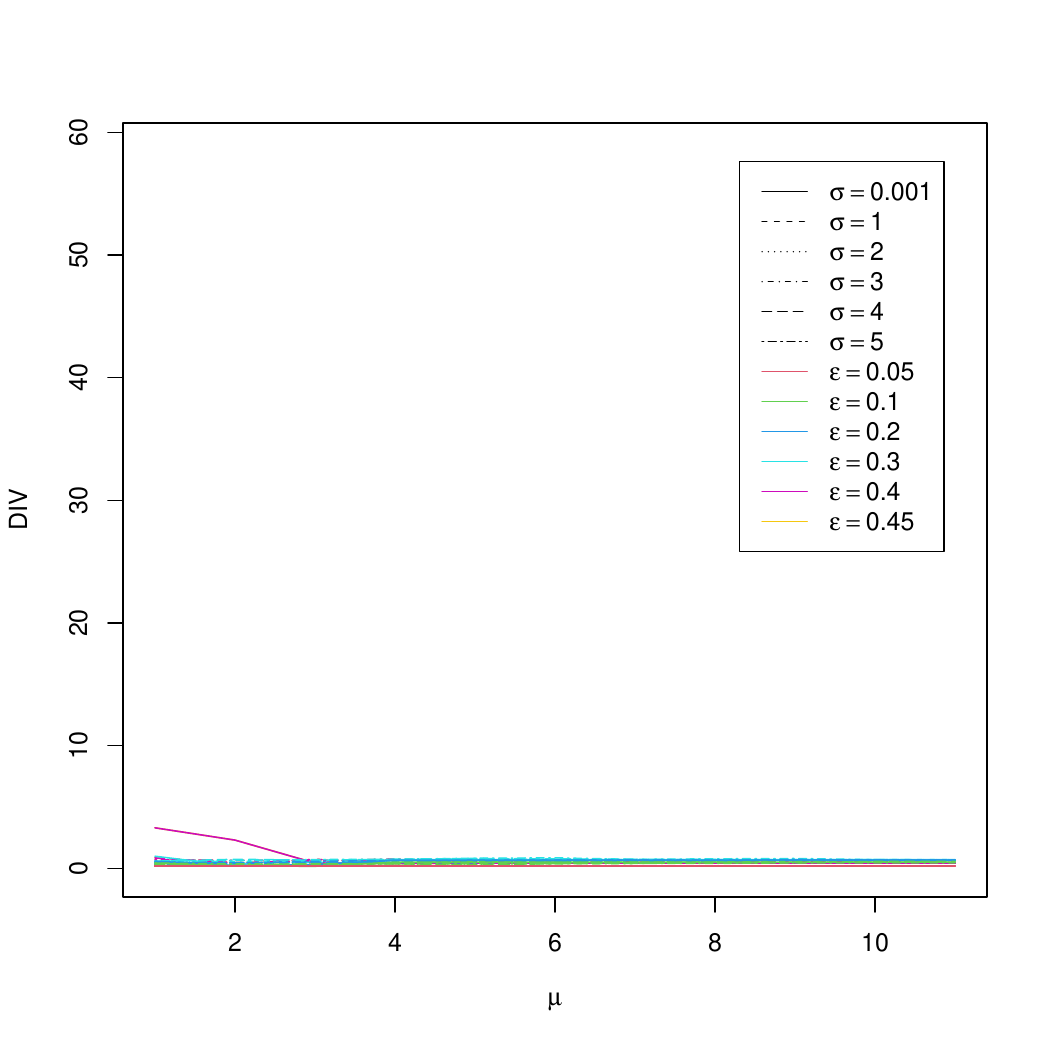} 
\includegraphics[width=0.32\textwidth]{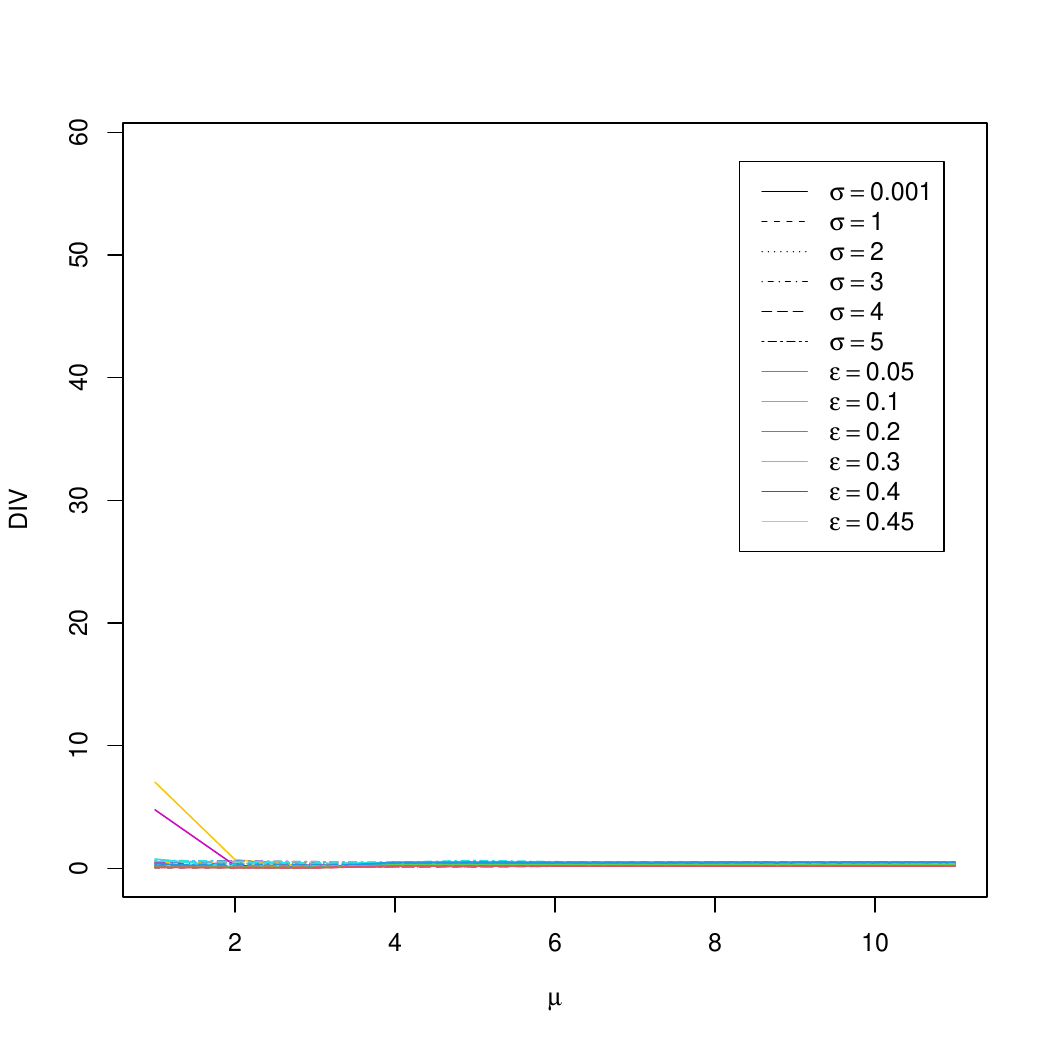} \\
\includegraphics[width=0.32\textwidth]{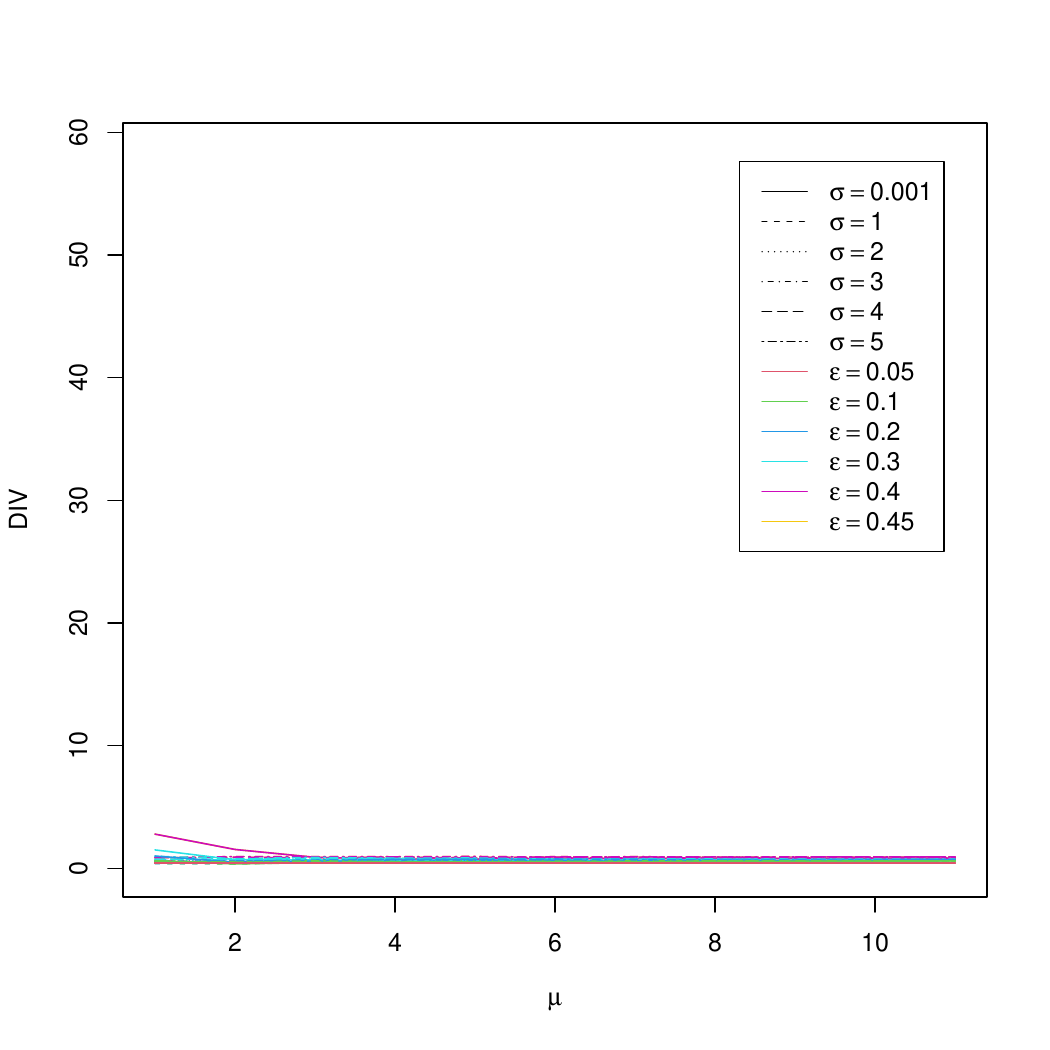}
\includegraphics[width=0.32\textwidth]{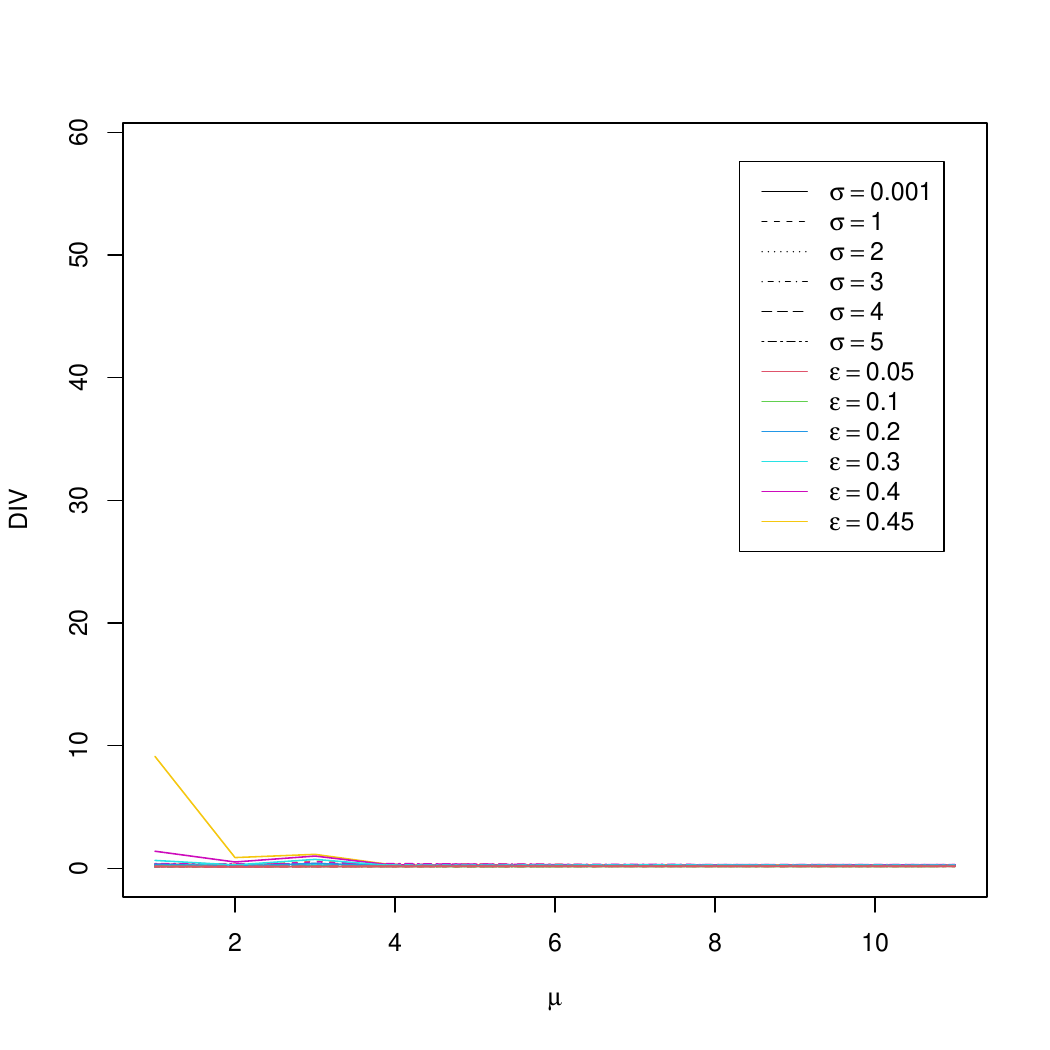} 
\includegraphics[width=0.32\textwidth]{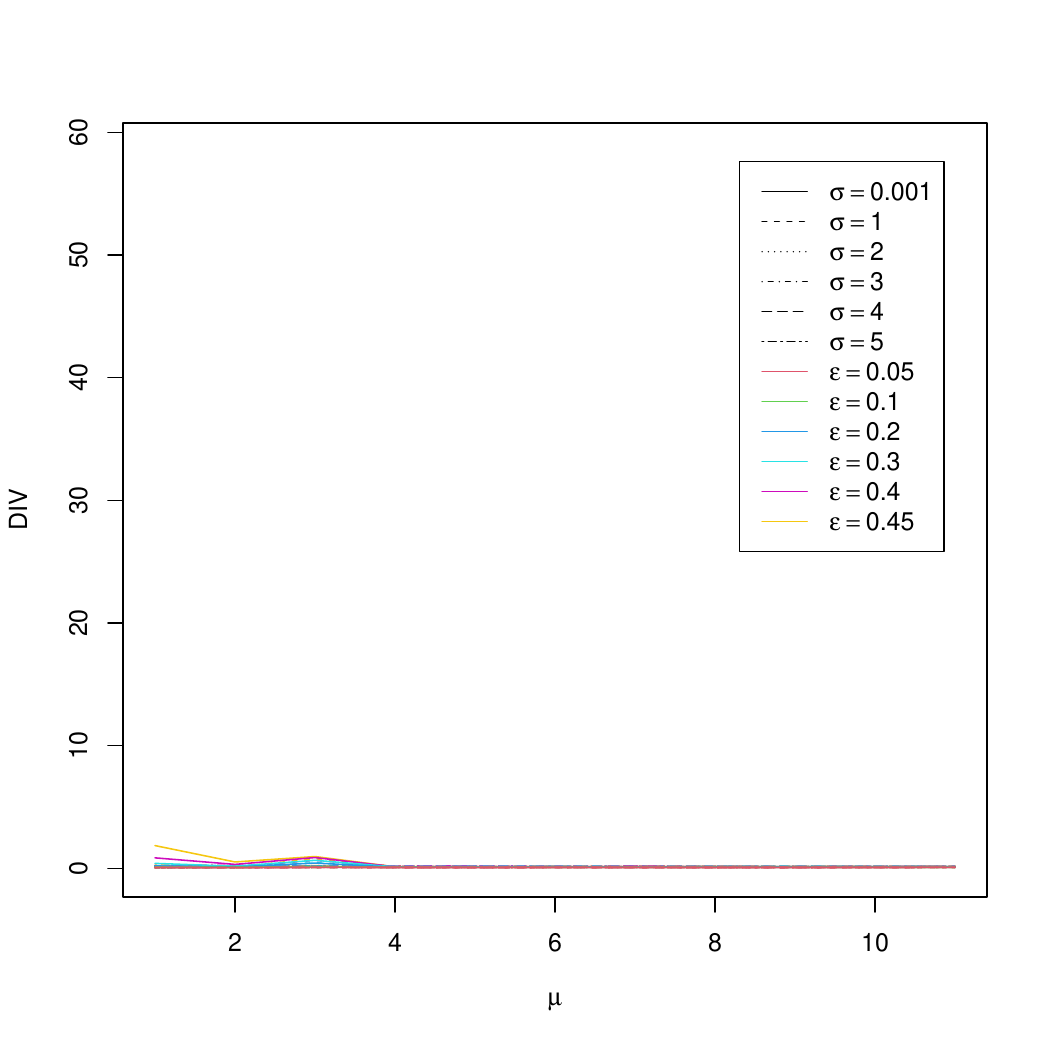} \\
\includegraphics[width=0.32\textwidth]{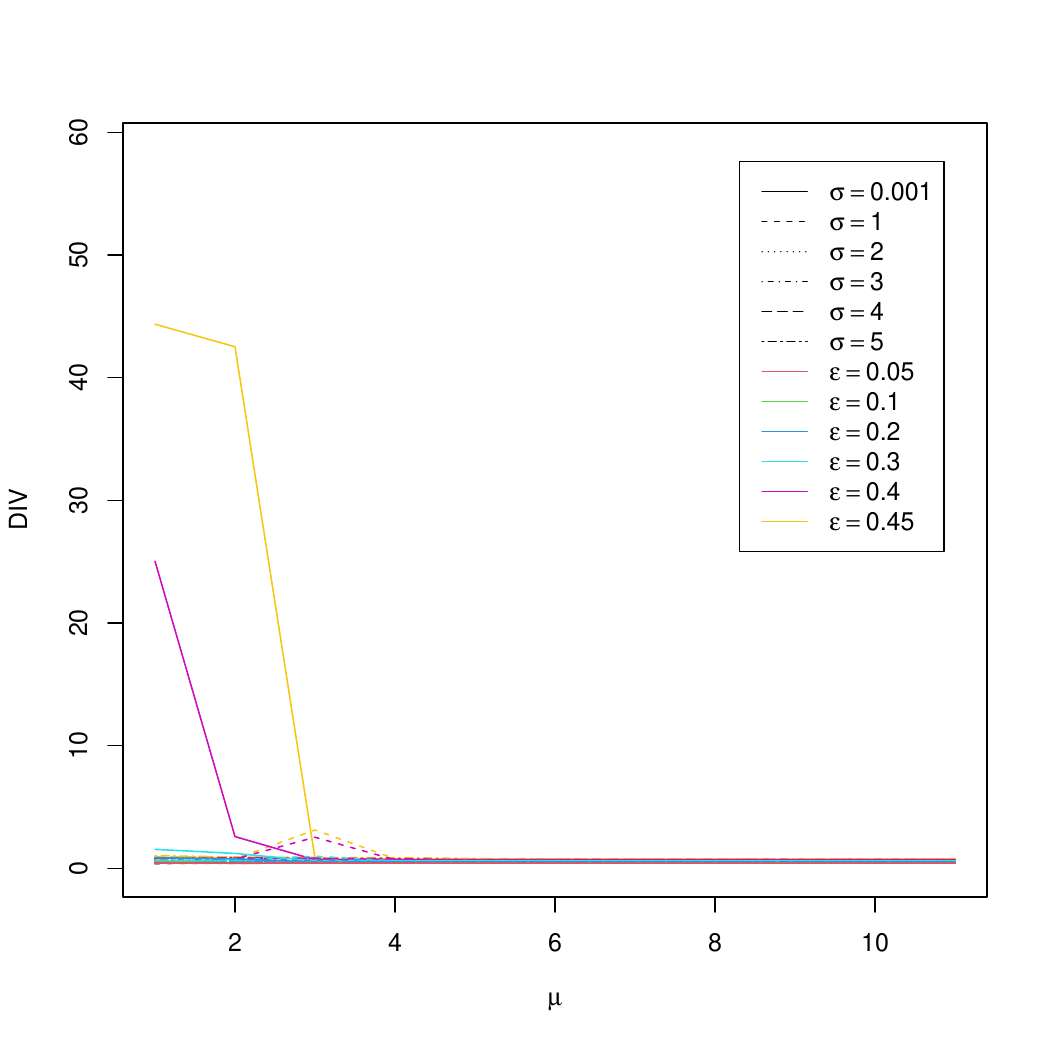}
\includegraphics[width=0.32\textwidth]{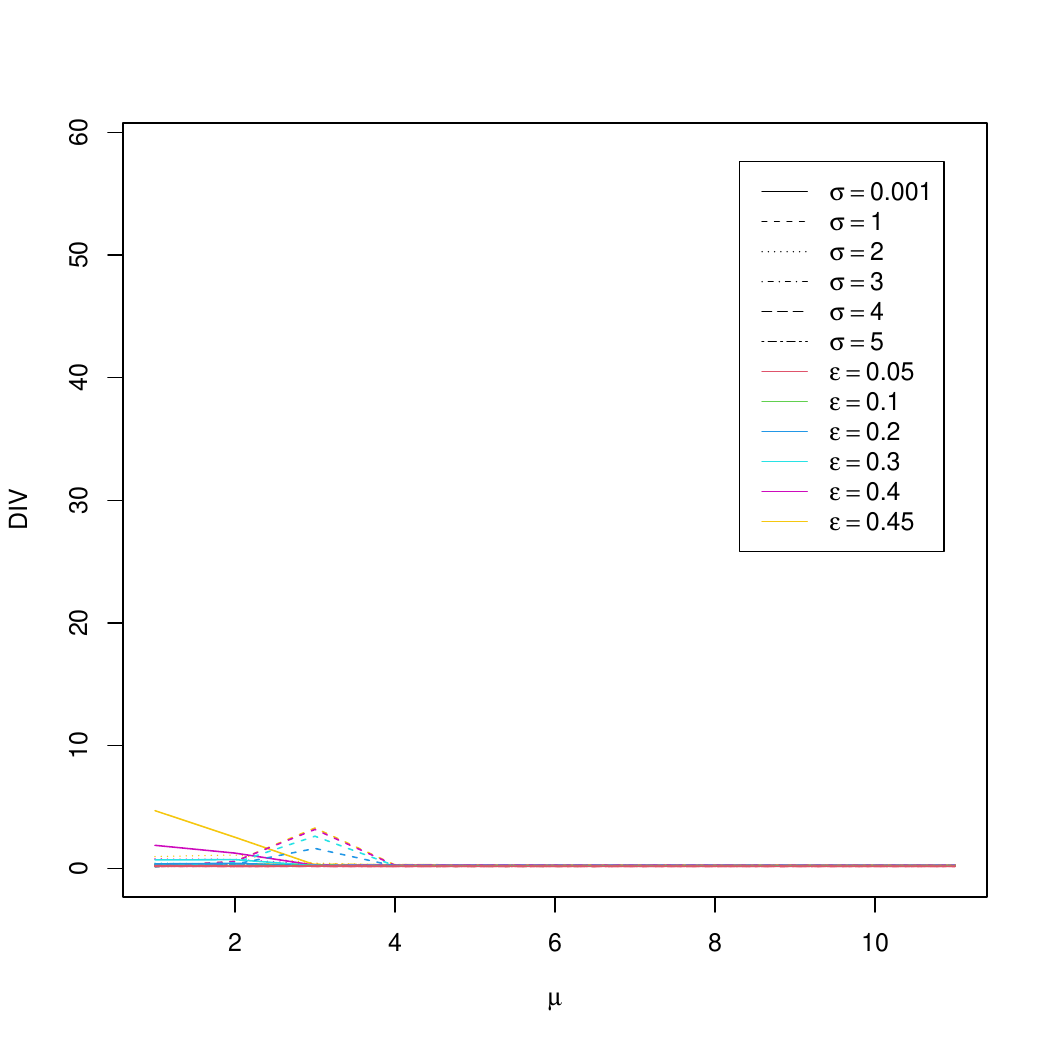} 
\includegraphics[width=0.32\textwidth]{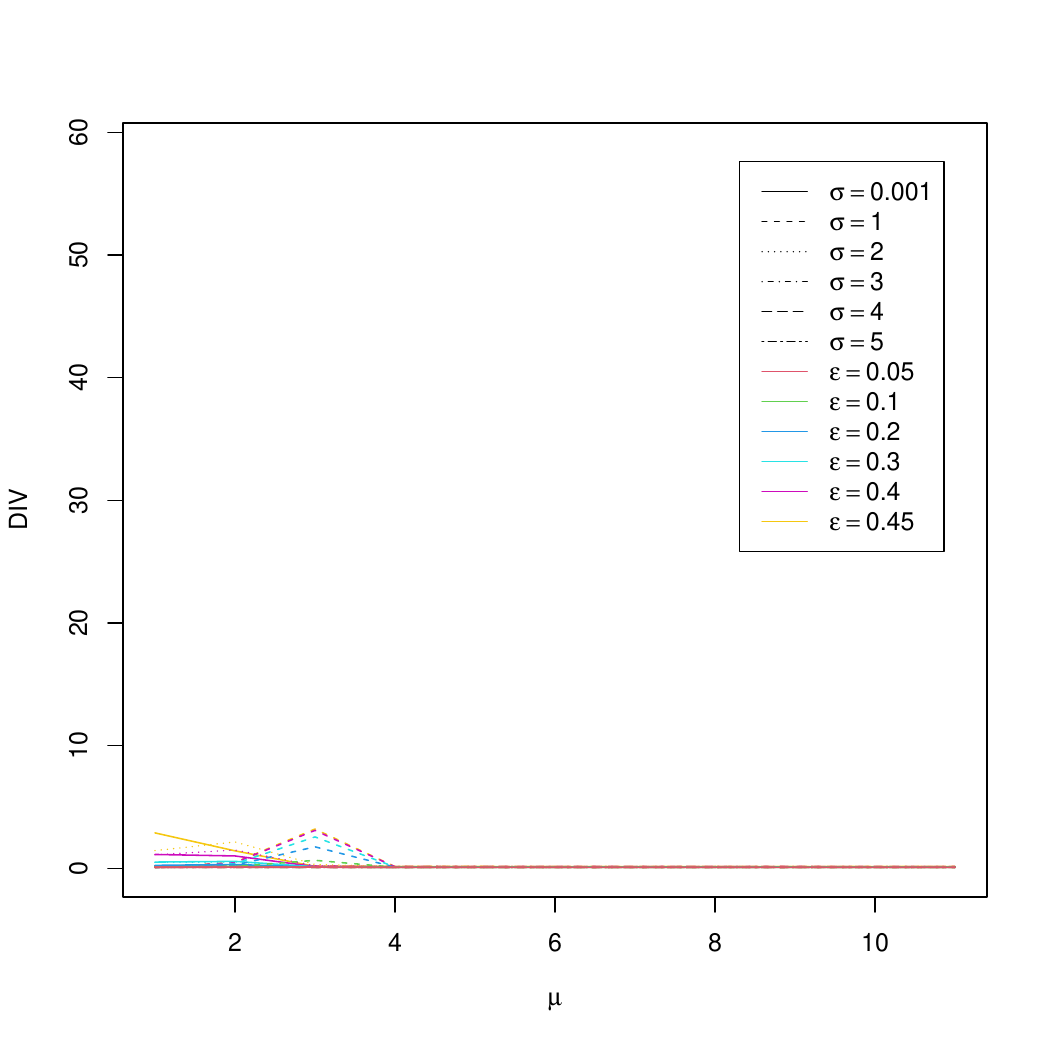} \\
\caption{Monte Carlo Simulation. Kullback--Leibler Divergence for the proposed method starting at the true values with $\alpha=1$ as a function of the contamination average $\mu$ ($x$-axis), contamination scale $\sigma$ (different line styles) and contamination level $\varepsilon$ (colors). Rows: number of variables $p=1, 2, 5$ and columns: sample size factor $s=2, 5, 10$.}
\label{sup:fig:monte:DIV:1:1}
\end{figure}  

\begin{figure}
\centering
\includegraphics[width=0.32\textwidth]{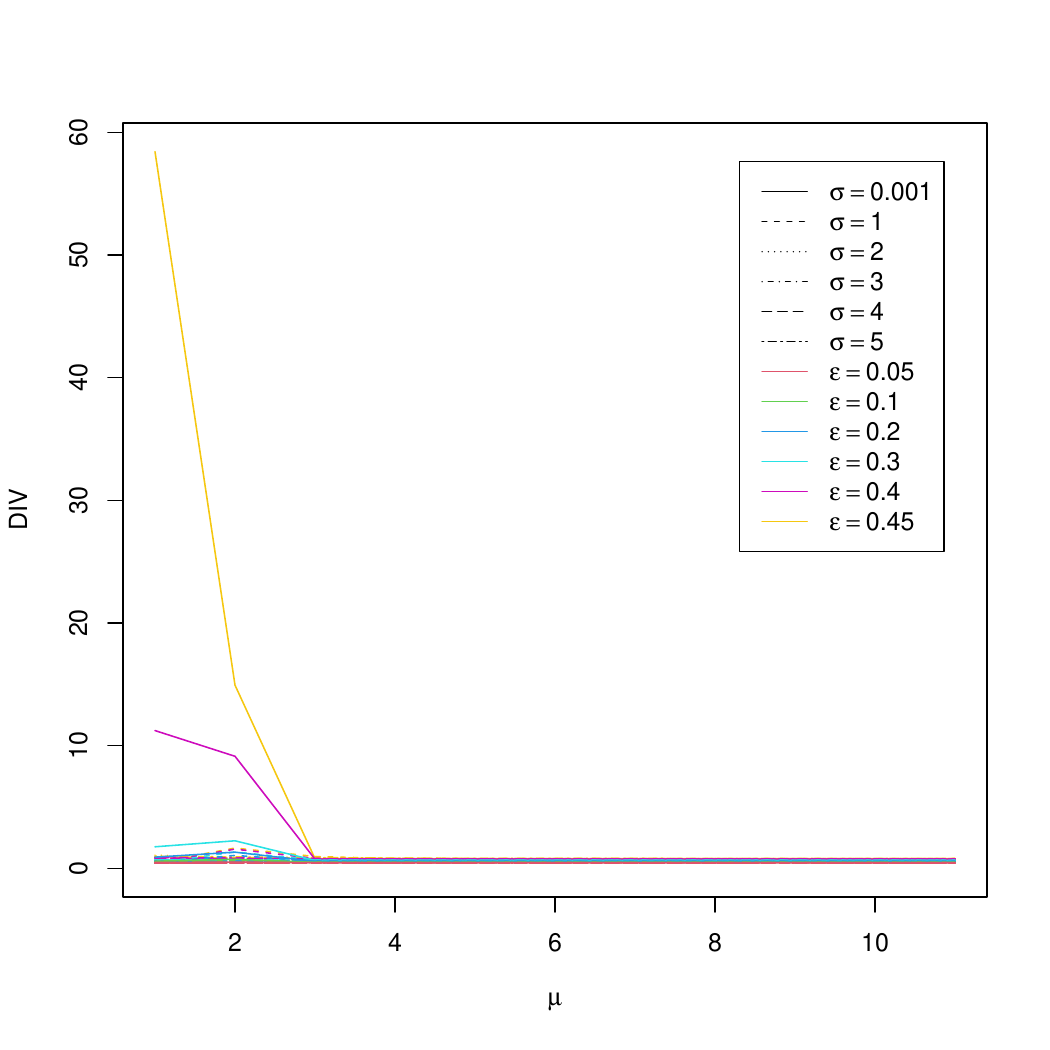}
\includegraphics[width=0.32\textwidth]{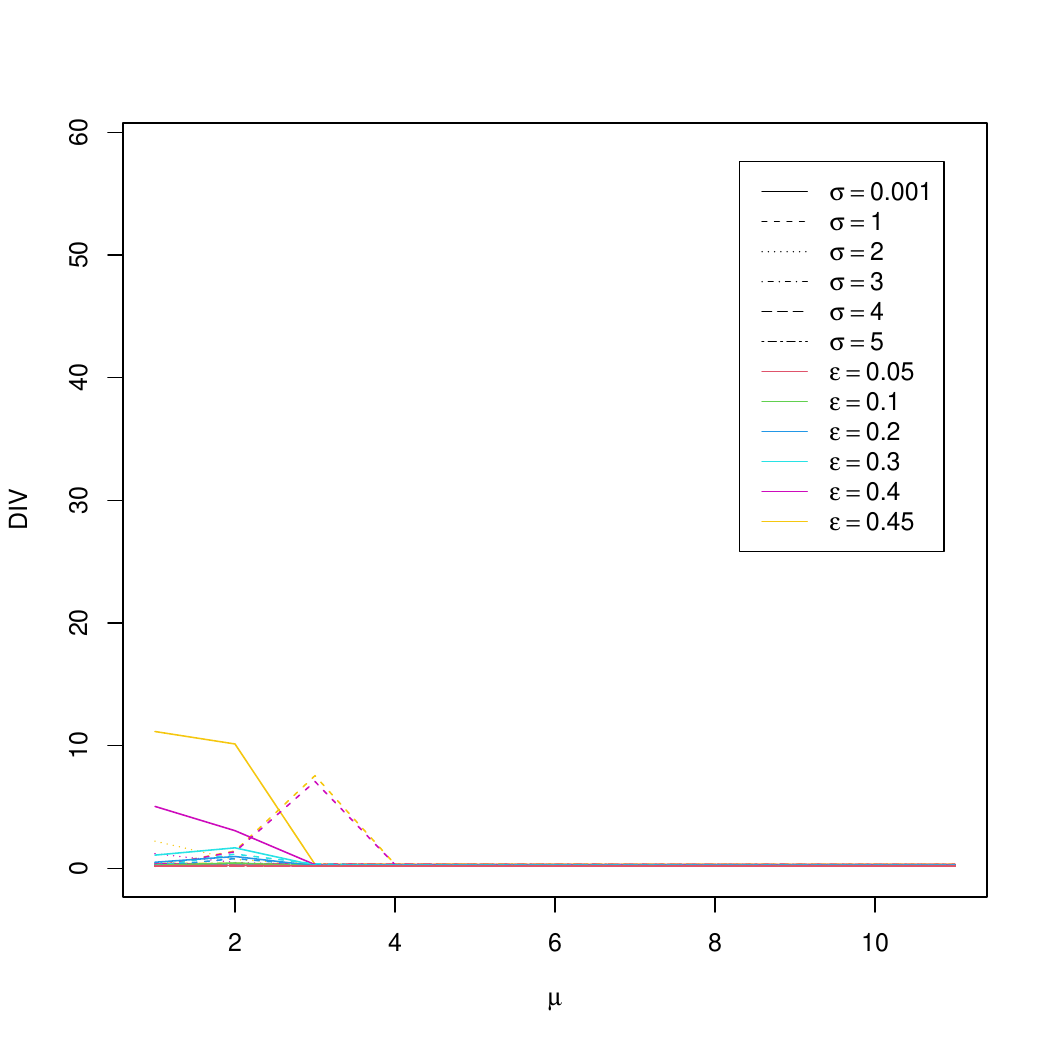} 
\includegraphics[width=0.32\textwidth]{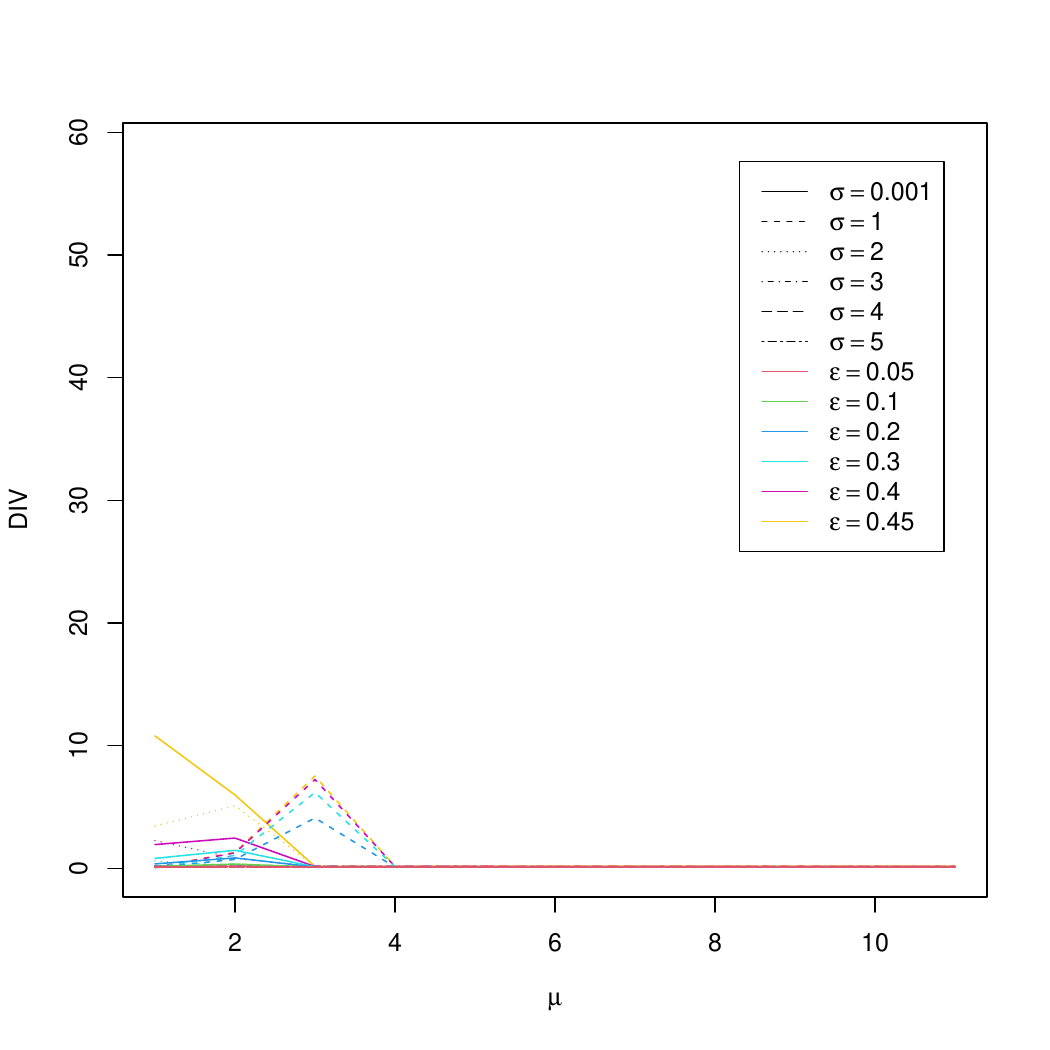} \\
\includegraphics[width=0.32\textwidth]{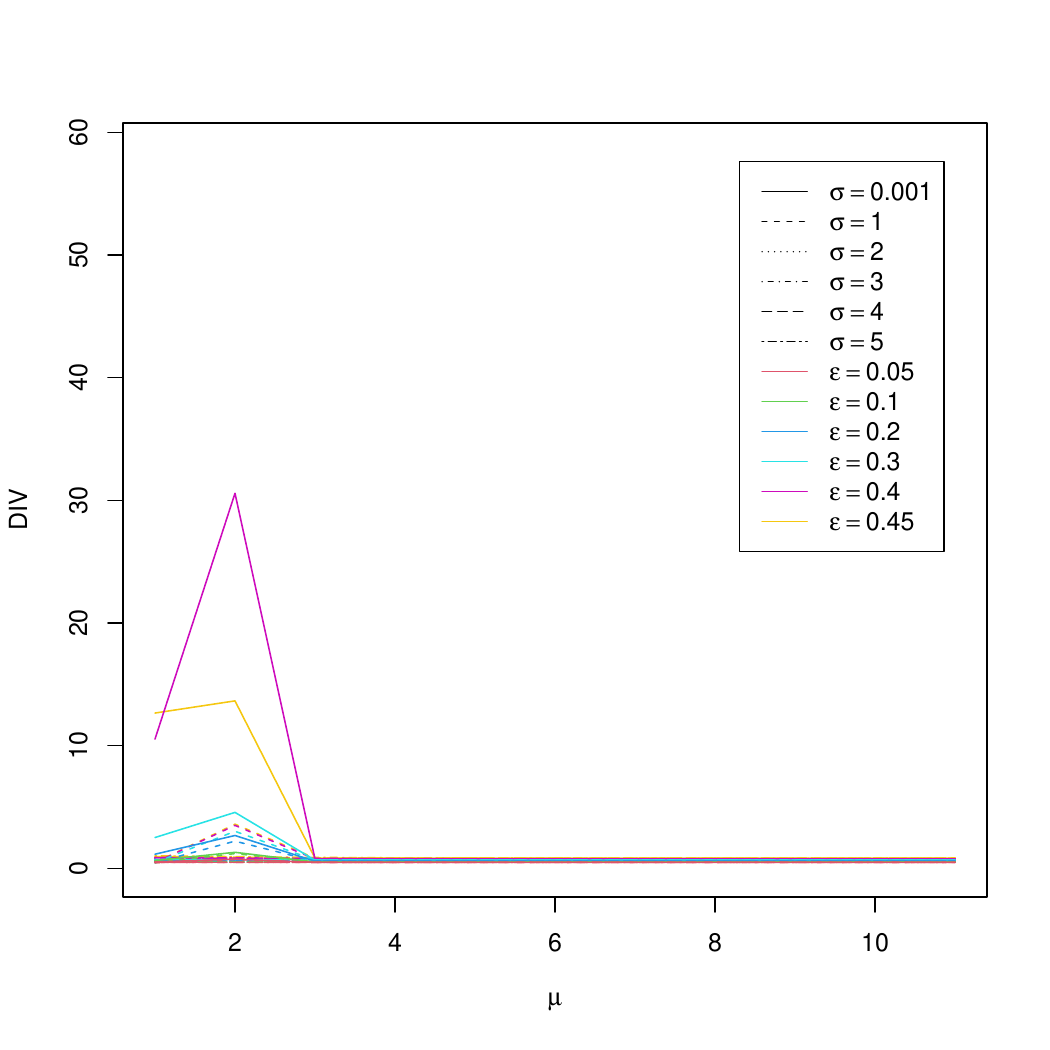}
\includegraphics[width=0.32\textwidth]{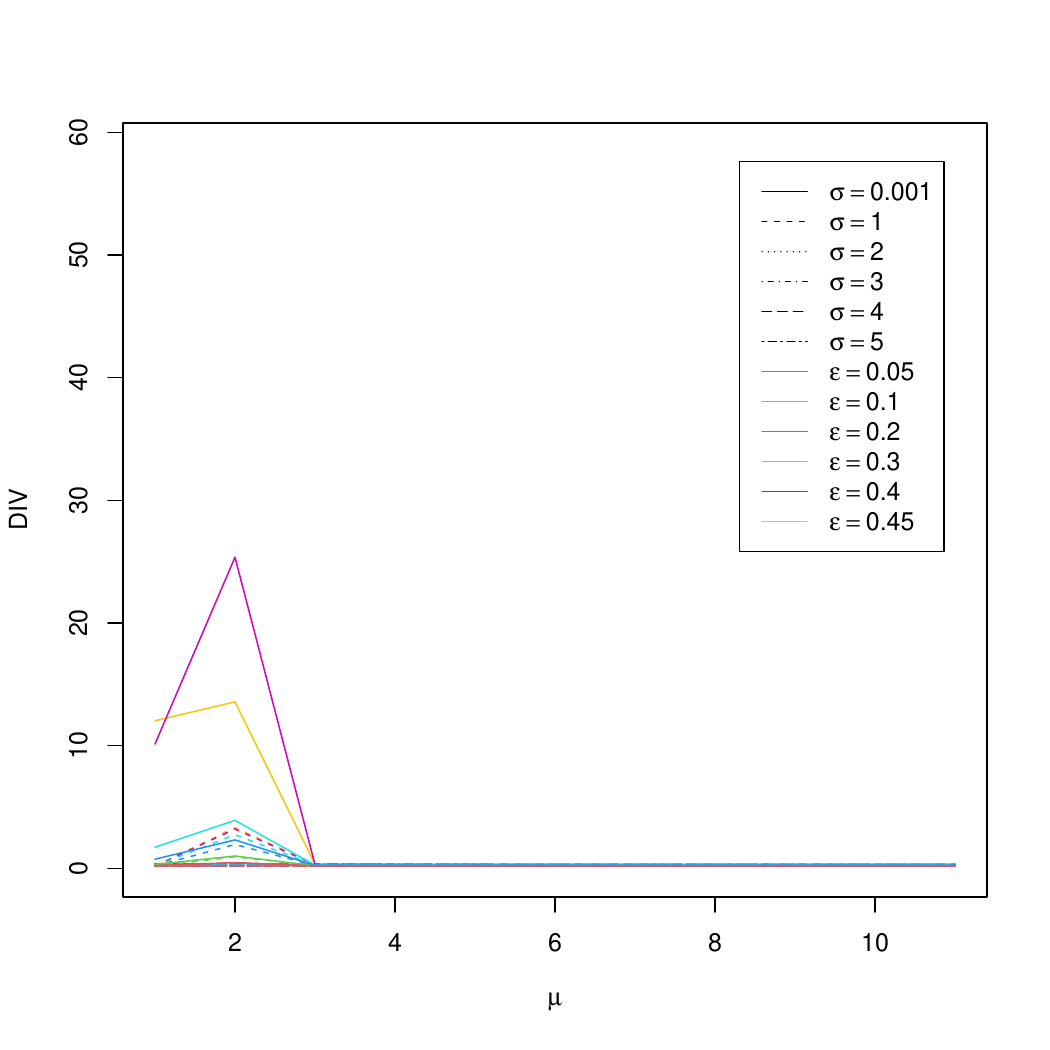} 
\includegraphics[width=0.32\textwidth]{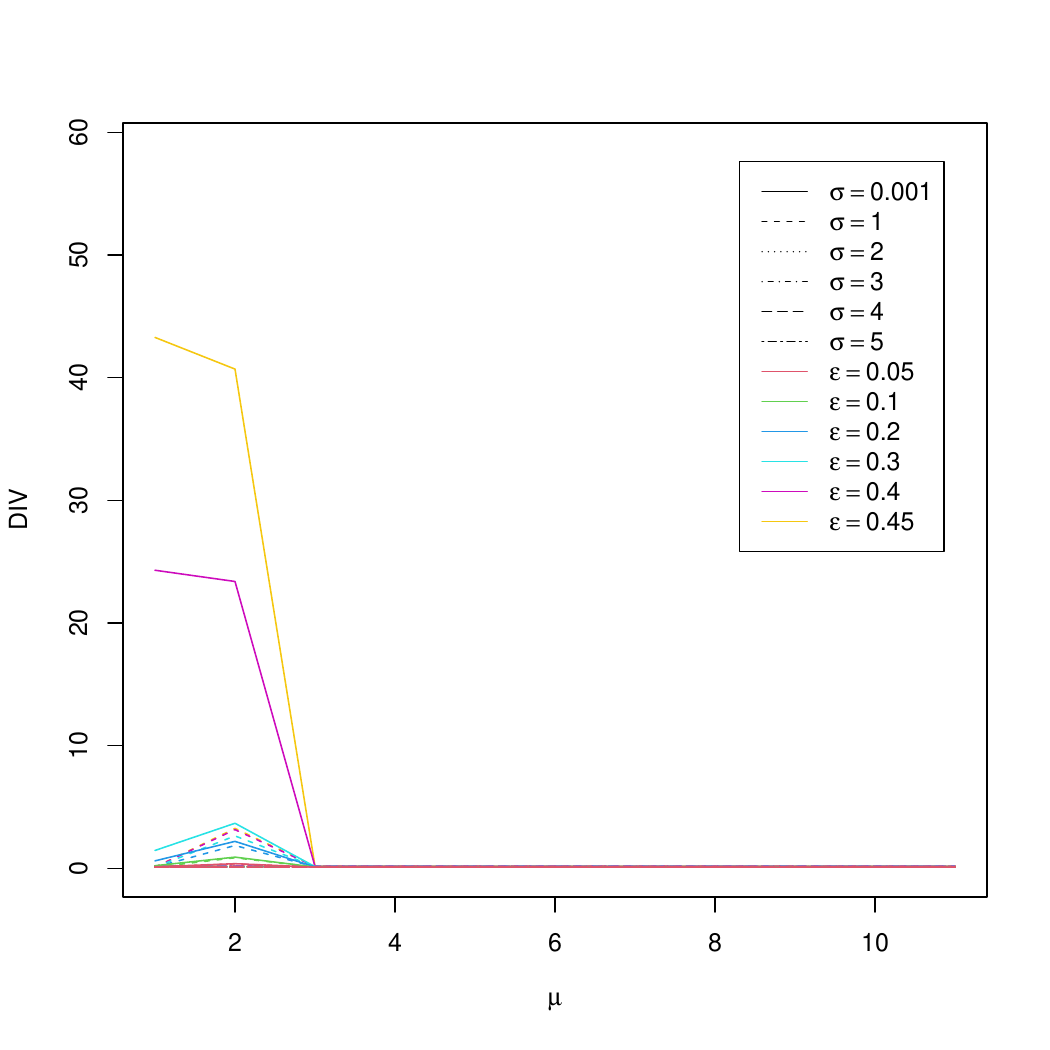}
\caption{Monte Carlo Simulation. Kullback--Leibler Divergence for the proposed method starting at the true values with $\alpha=1$ as a function of the contamination average $\mu$ ($x$-axis), contamination scale $\sigma$ (different line styles) and contamination level $\varepsilon$ (colors). Rows: number of variables $p=10, 20$ and columns: sample size factor $s=2, 5, 10$.}
\label{sup:fig:monte:DIV:1:2}
\end{figure}  

\clearpage

%%%%%%%%%%%%%%%% new figures 
% sub-sampling vs true - percentage
\begin{figure}
  \centering
  \includegraphics[width=\textwidth]{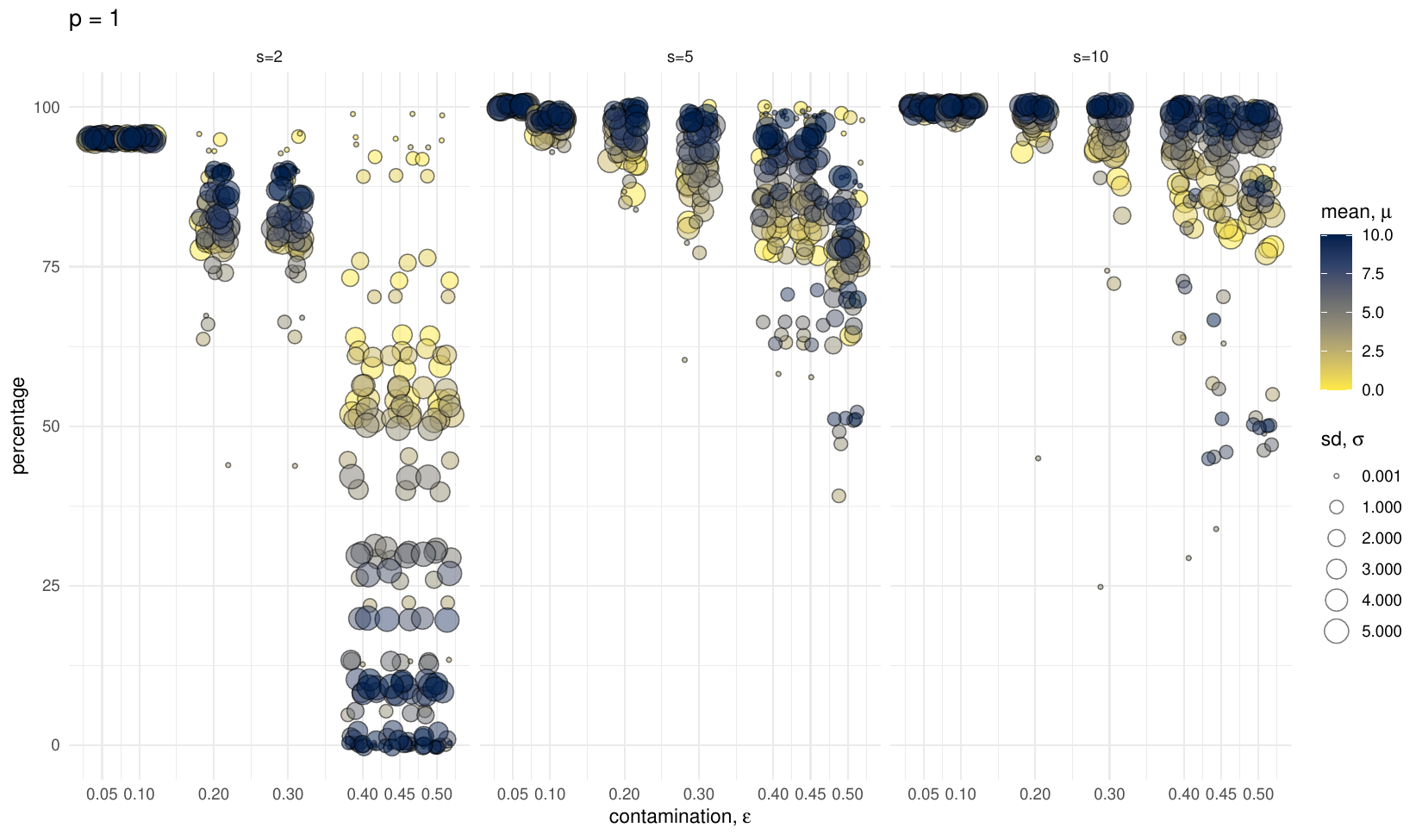}\\
  \includegraphics[width=\textwidth]{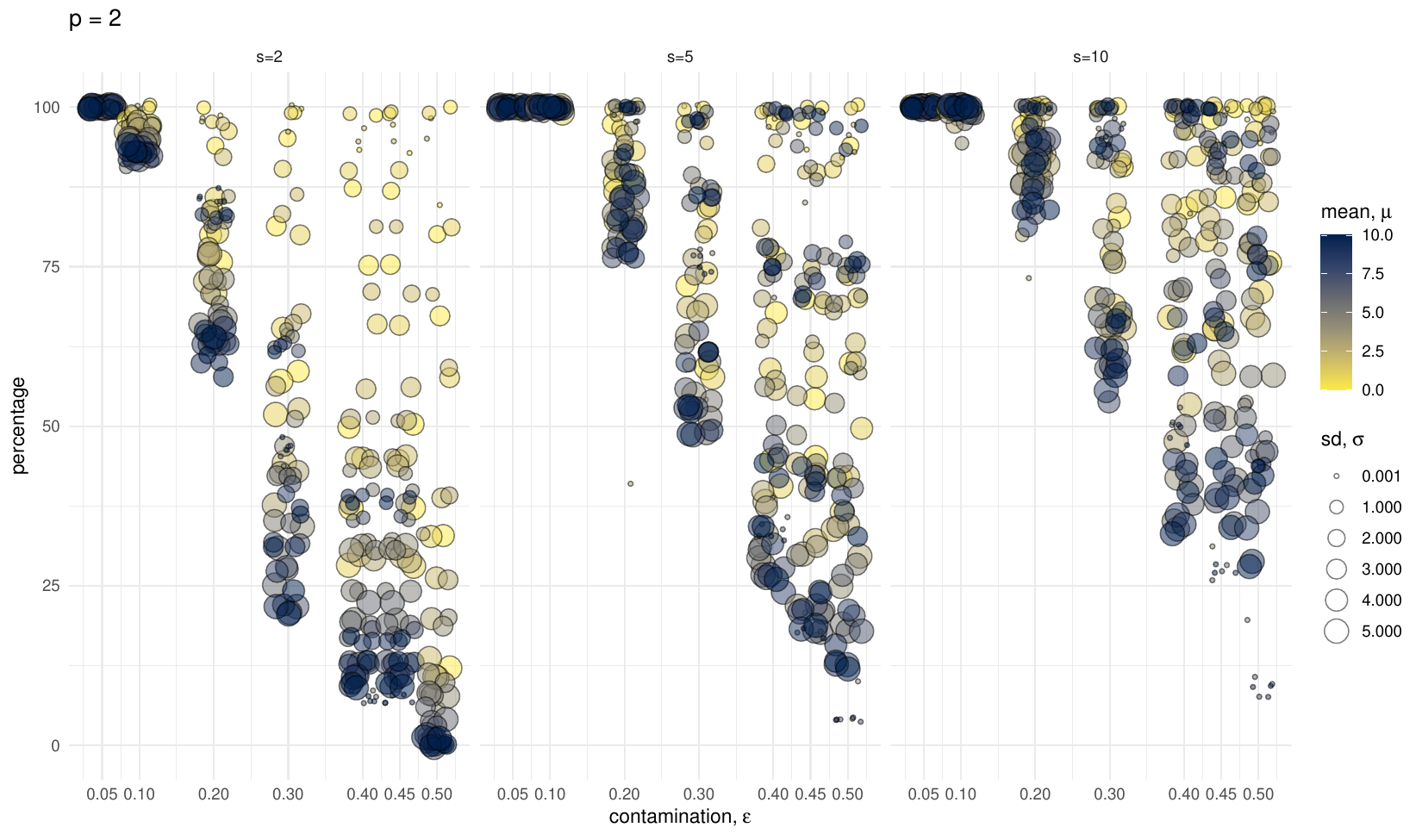} 
  \caption{Monte Carlo Simulation. Percentage of robust root retrieval out of $N=100$ simulations for the proposed method with subsampling for the initial values, as a function of contamination $\varepsilon$ ($x-$axis). Contamination average $\mu$ and scale $\sigma$ are represented by color and size of the bubbles respectively. From left to right, the subplots show the different sample size factors $s=2, 5, 10$. Number of variables $p=1$ (top) and $p=2$ (bottom). $\alpha=0.25$.}
  \label{sup:fig:subs-true-1}
\end{figure}

\begin{figure}
  \centering
  \includegraphics[width=\textwidth]{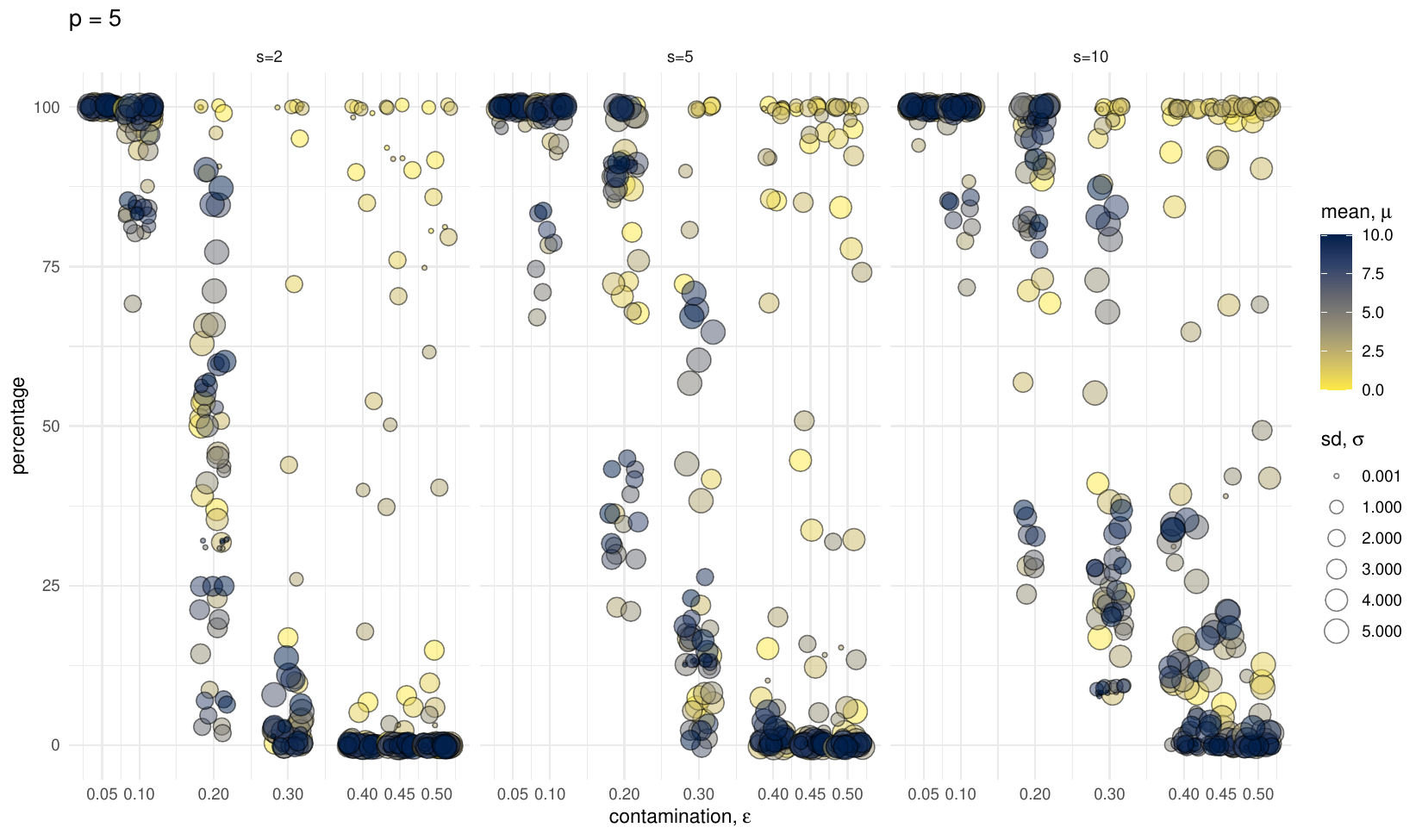}\\
  \includegraphics[width=\textwidth]{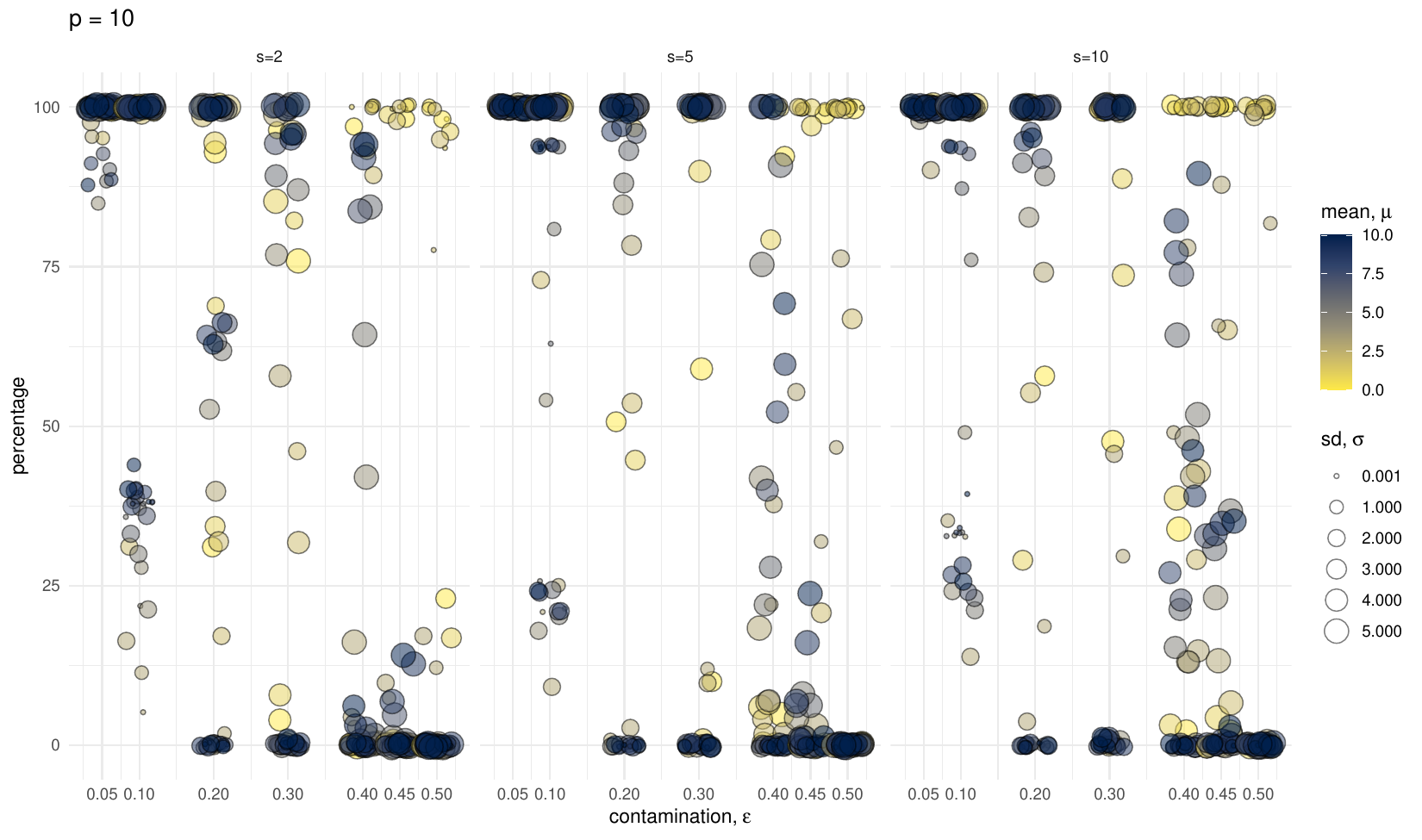}
  \caption{Monte Carlo Simulation. Percentage of robust root retrieval out of $N=100$ simulations for the proposed method with subsampling for the initial values, as a function of contamination $\varepsilon$ ($x-$axis). Contamination average $\mu$ and scale $\sigma$ are represented by color and size of the bubbles respectively. From left to right, the subplots show the different sample size factors $s=2, 5, 10$. Number of variables $p=5$ (top) and $p=10$ (bottom). $\alpha=0.25$.}
  \label{sup:fig:subs-true-2}
\end{figure}

\begin{figure}
  \includegraphics[width=\textwidth]{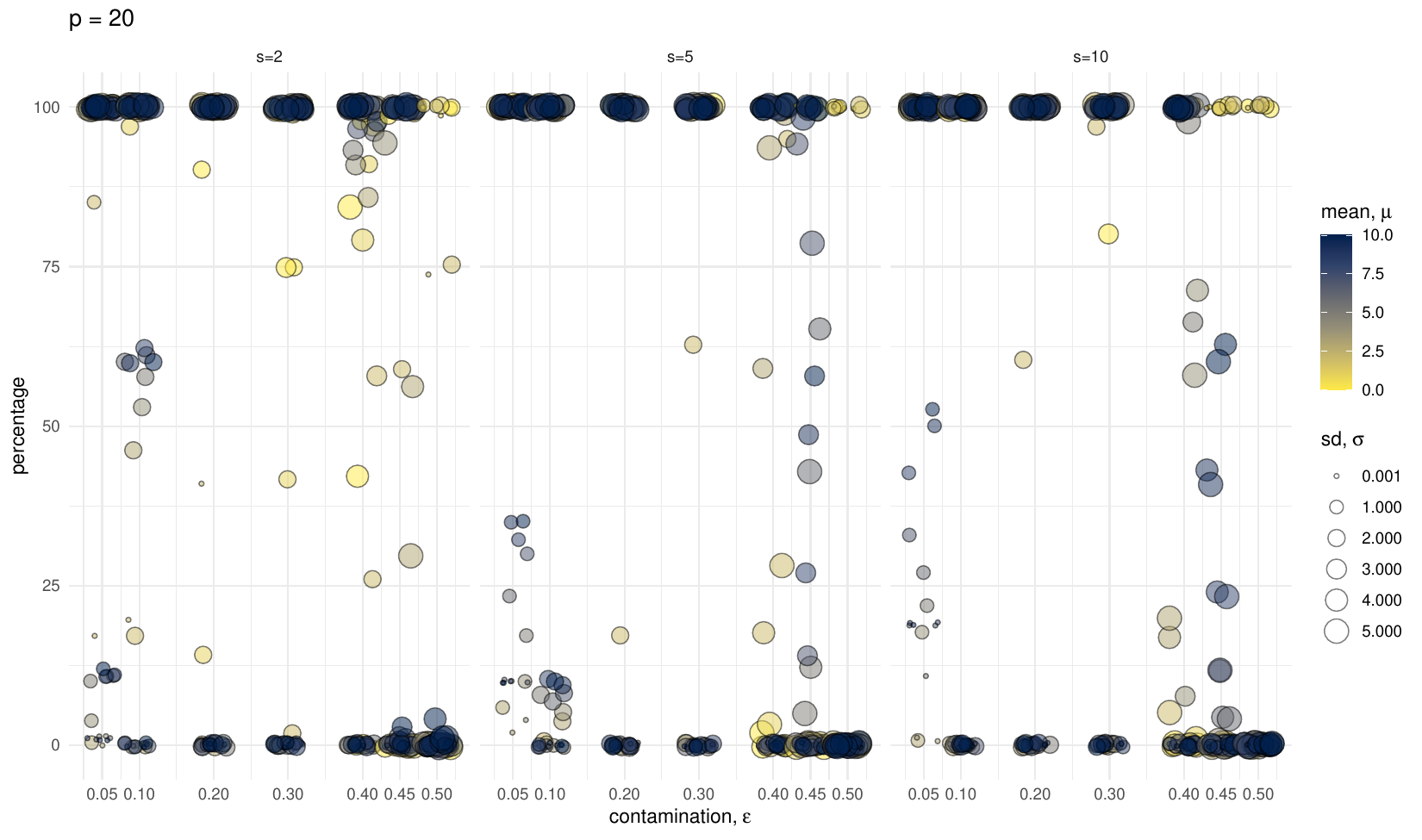}
  \caption{Monte Carlo Simulation. Percentage of robust root retrieval out of $N=100$ simulations for the proposed method with subsampling for the initial values, as a function of contamination $\varepsilon$ ($x-$axis). Contamination average $\mu$ and scale $\sigma$ are represented by color and size of the bubbles respectively. From left to right, the subplots show the different sample size factors $s=2, 5, 10$. Number of variables $p=20$. $\alpha=0.25$.}
  \label{sup:fig:subs-true-2b}
\end{figure}

\begin{figure}
  \centering
  \includegraphics[width=\textwidth]{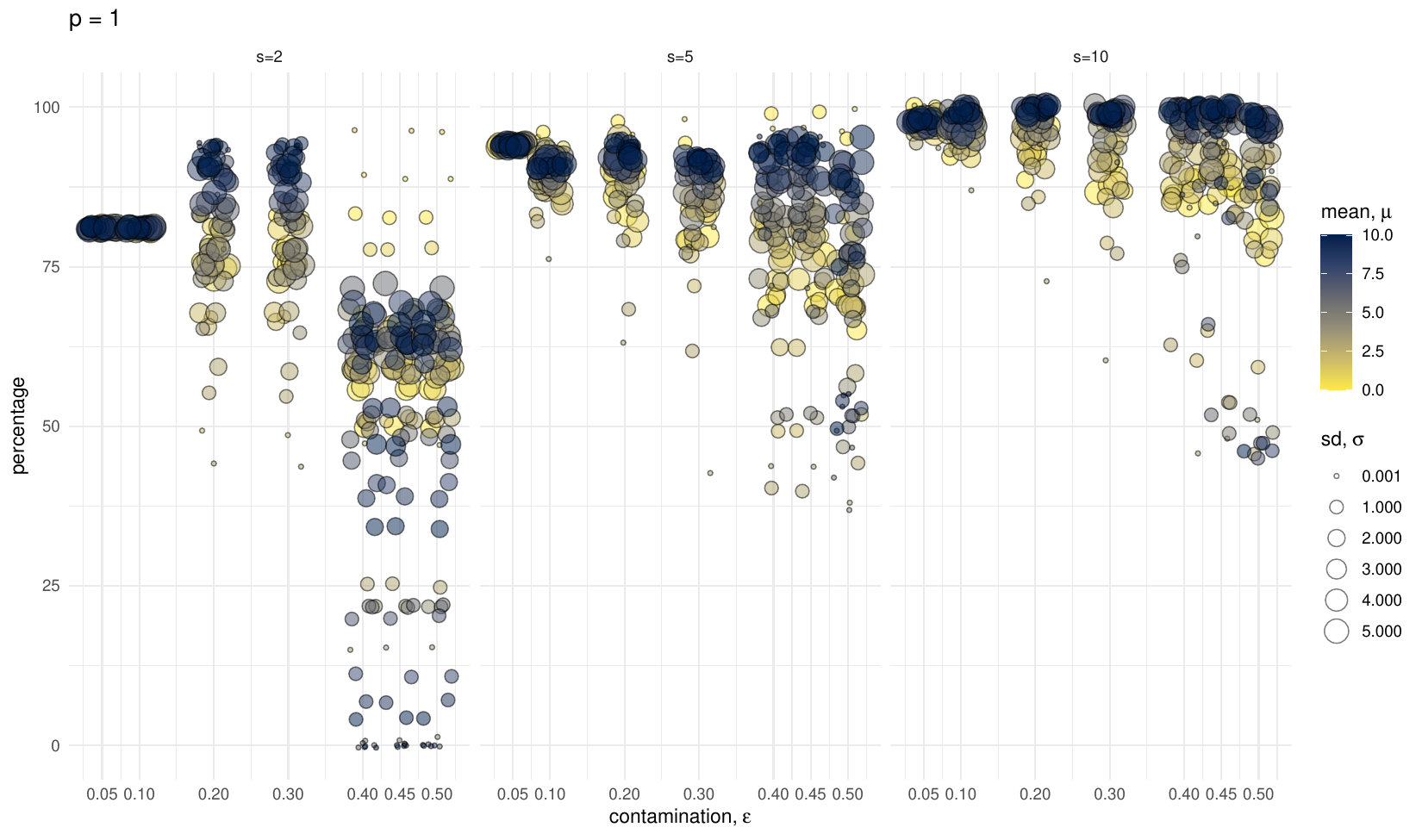}\\
  \includegraphics[width=\textwidth]{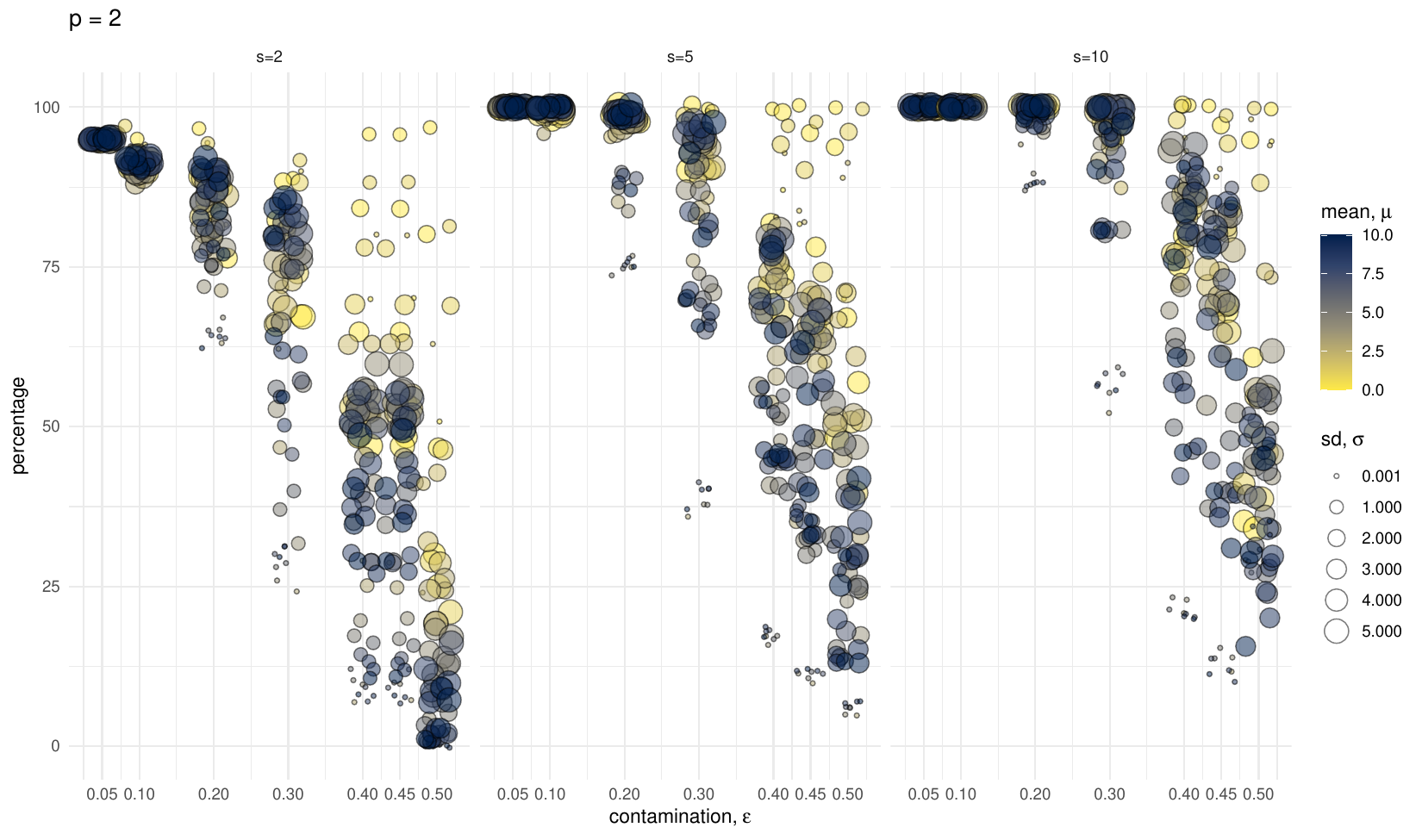} 
  \caption{Monte Carlo Simulation. Percentage of robust root retrieval out of $N=100$ simulations for the proposed method with subsampling for the initial values, as a function of contamination $\varepsilon$ ($x-$axis). Contamination average $\mu$ and scale $\sigma$ are represented by color and size of the bubbles respectively. From left to right, the subplots show the different sample size factors $s=2, 5, 10$. Number of variables $p=1$ (top) and $p=2$ (bottom). $\alpha=0.5$.}
  \label{sup:fig:subs-true-3}
\end{figure}

\begin{figure}
  \centering
  \includegraphics[width=\textwidth]{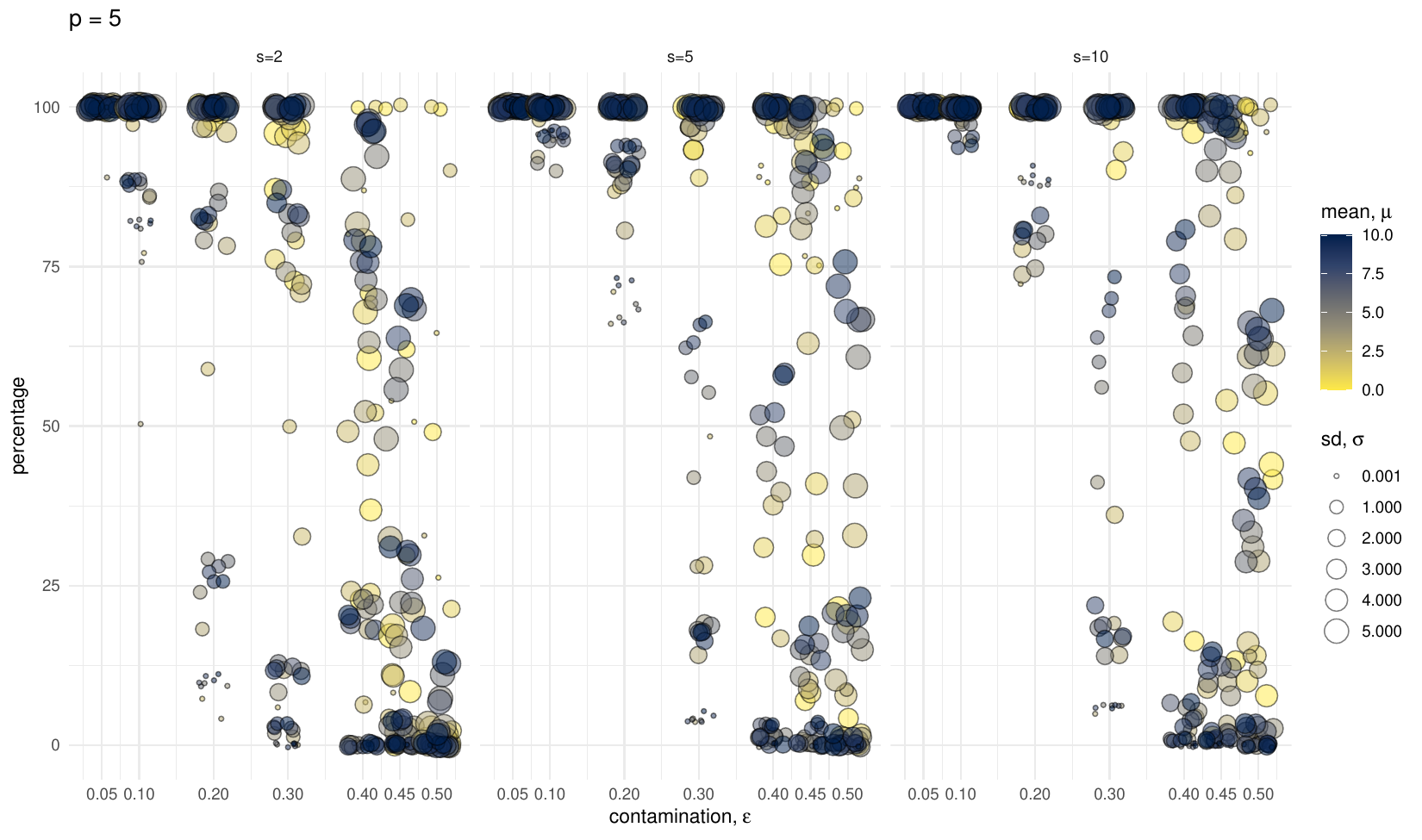}\\
  \includegraphics[width=\textwidth]{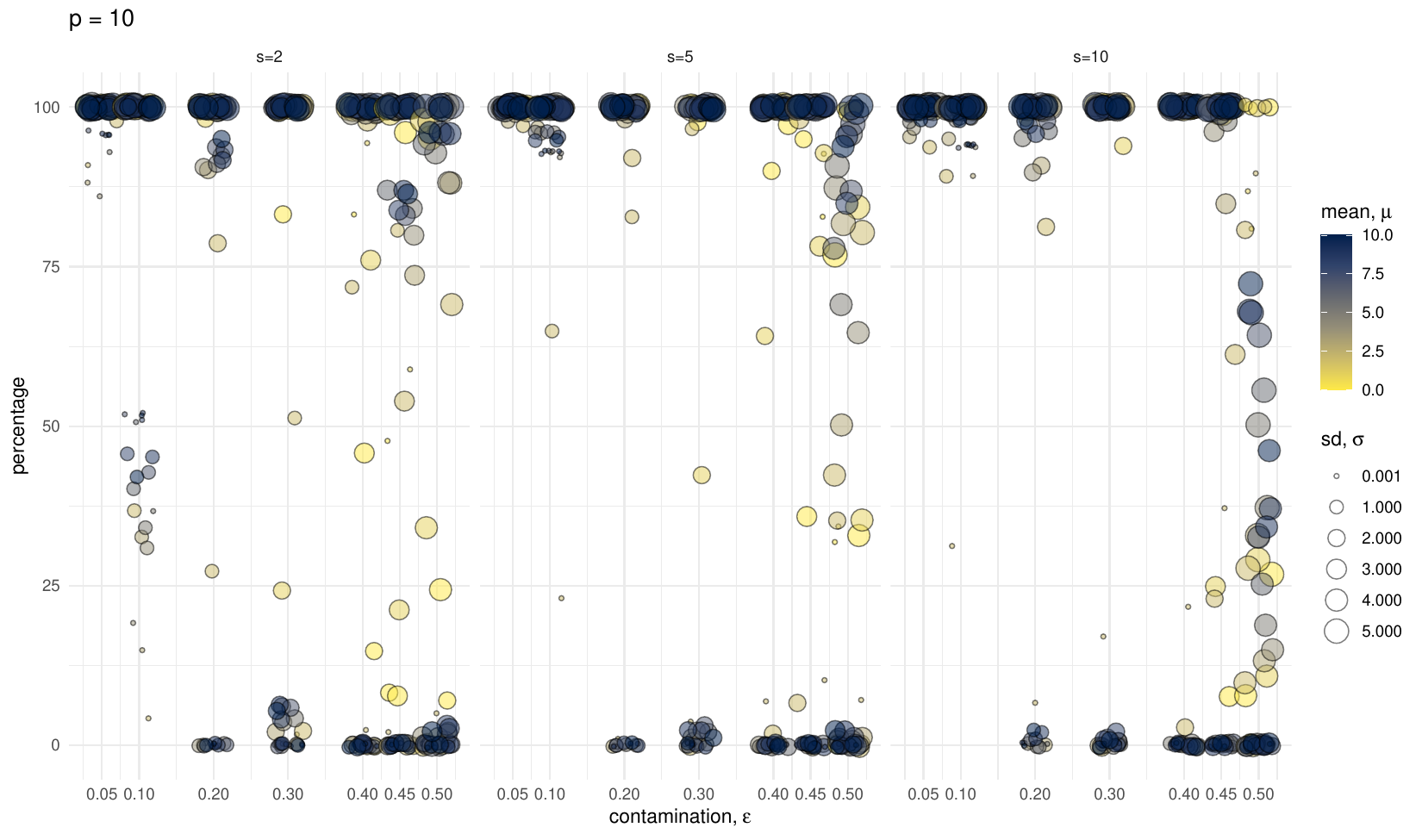}
  \caption{Monte Carlo Simulation. Percentage of robust root retrieval out of $N=100$ simulations for the proposed method with subsampling for the initial values, as a function of contamination $\varepsilon$ ($x-$axis). Contamination average $\mu$ and scale $\sigma$ are represented by color and size of the bubbles respectively. From left to right, the subplots show the different sample size factors $s=2, 5, 10$. Number of variables $p=5$ (top) and $p=10$ (bottom). $\alpha=0.5$.}
  \label{sup:fig:subs-true-4}
\end{figure}

\begin{figure}
  \includegraphics[width=\textwidth]{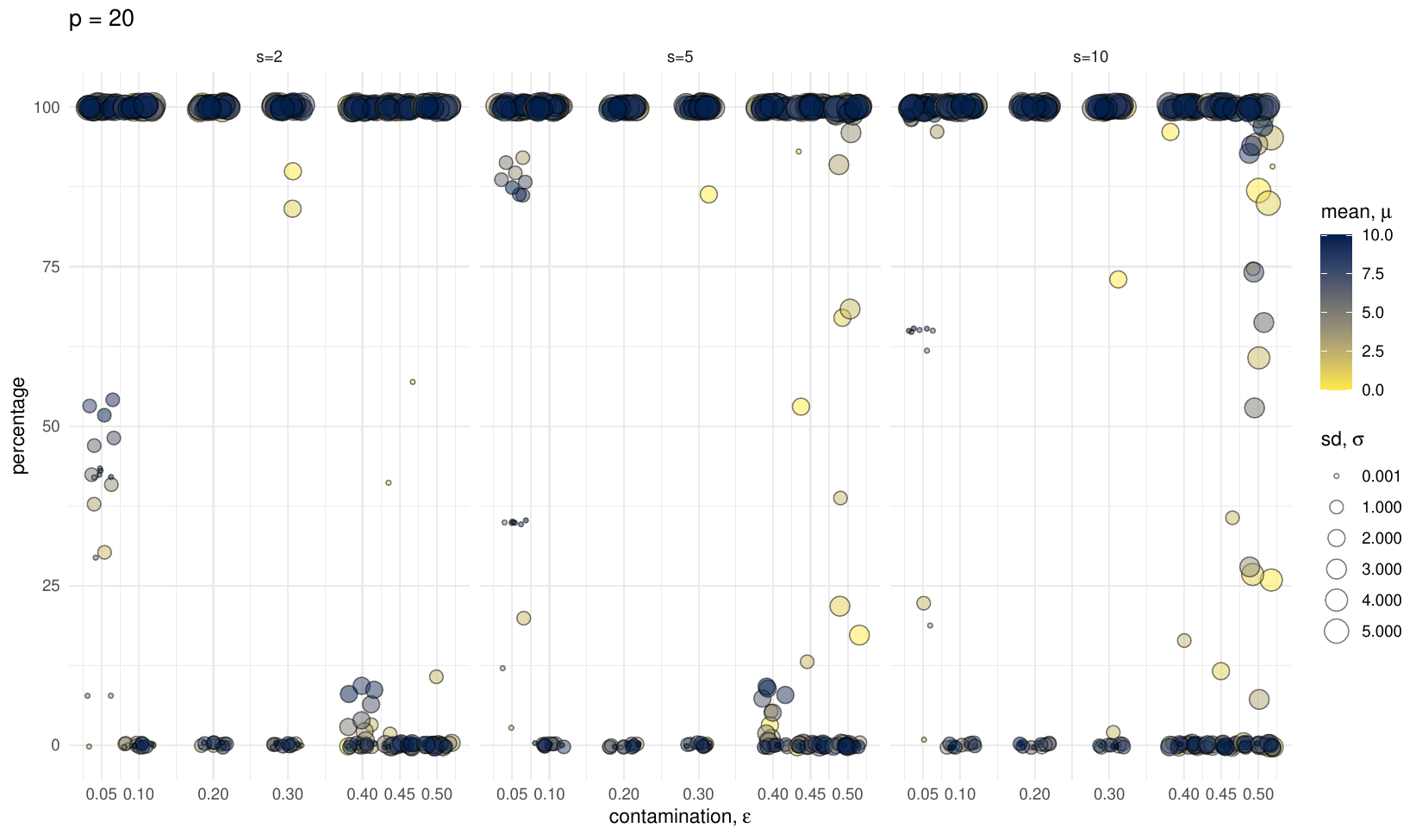}
  \caption{Monte Carlo Simulation. Percentage of robust root retrieval out of $N=100$ simulations for the proposed method with subsampling for the initial values, as a function of contamination $\varepsilon$ ($x-$axis). Contamination average $\mu$ and scale $\sigma$ are represented by color and size of the bubbles respectively. From left to right, the subplots show the different sample size factors $s=2, 5, 10$. Number of variables $p=20$. $\alpha=0.5$.}
  \label{sup:fig:subs-true-4b}
\end{figure}

\begin{figure}
  \centering
  \includegraphics[width=\textwidth]{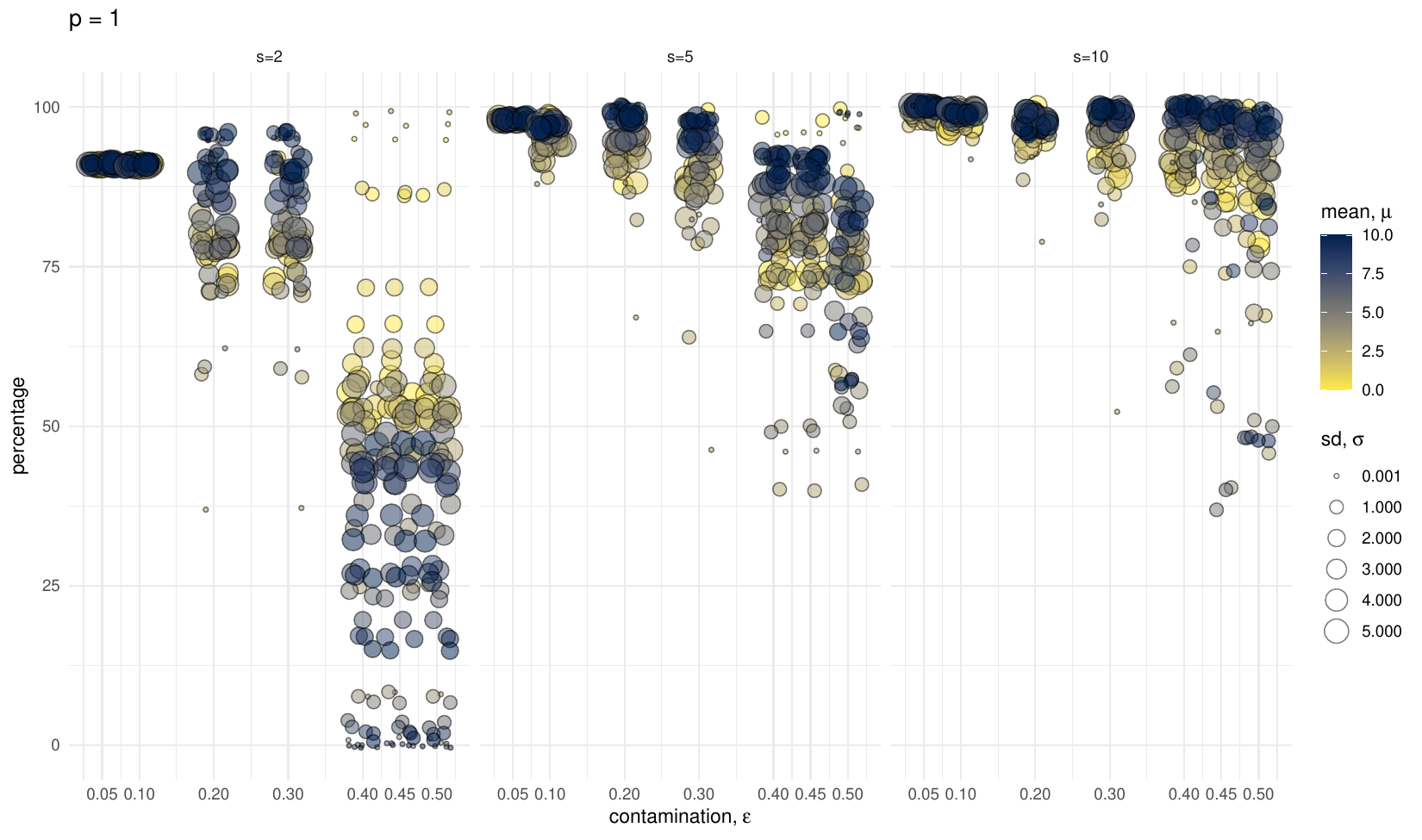}\\
  \includegraphics[width=\textwidth]{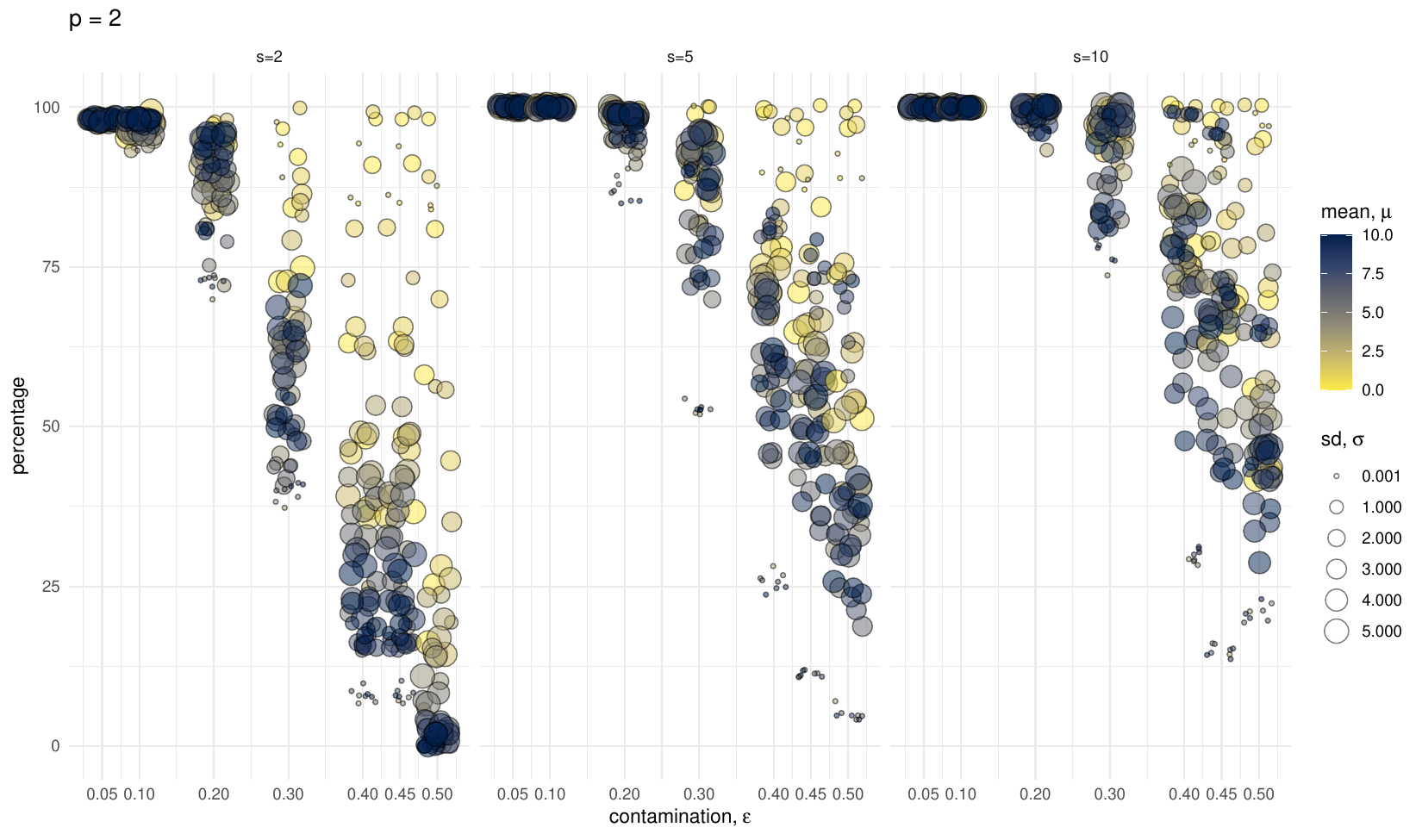} 
  \caption{Monte Carlo Simulation. Percentage of robust root retrieval out of $N=100$ simulations for the proposed method with subsampling for the initial values, as a function of contamination $\varepsilon$ ($x-$axis). Contamination average $\mu$ and scale $\sigma$ are represented by color and size of the bubbles respectively. From left to right, the subplots show the different sample size factors $s=2, 5, 10$. Number of variables $p=1$ (top) and $p=2$ (bottom). $\alpha=0.75$.}
  \label{sup:fig:subs-true-5}
\end{figure}

\begin{figure}
  \centering
  \includegraphics[width=\textwidth]{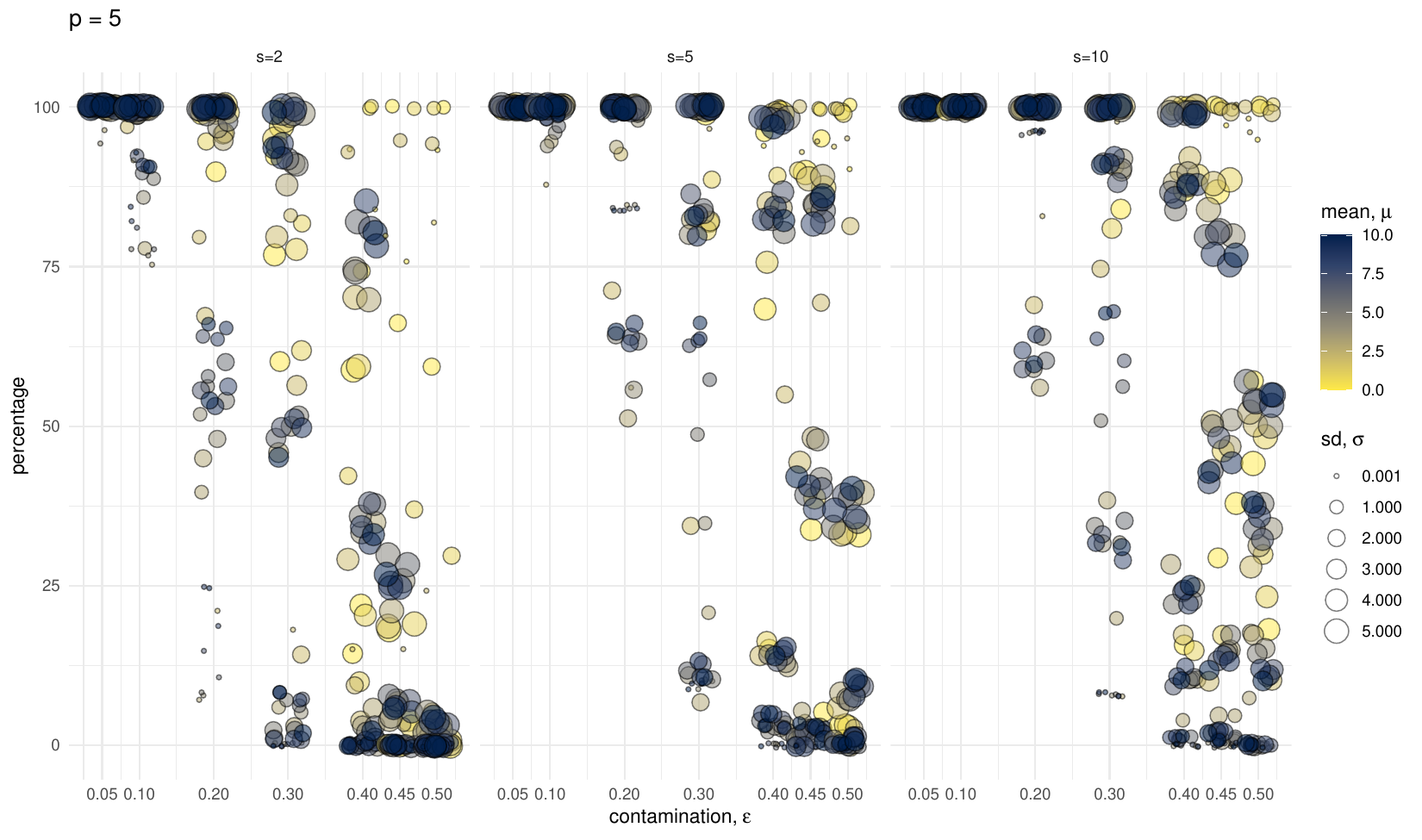}\\
  \includegraphics[width=\textwidth]{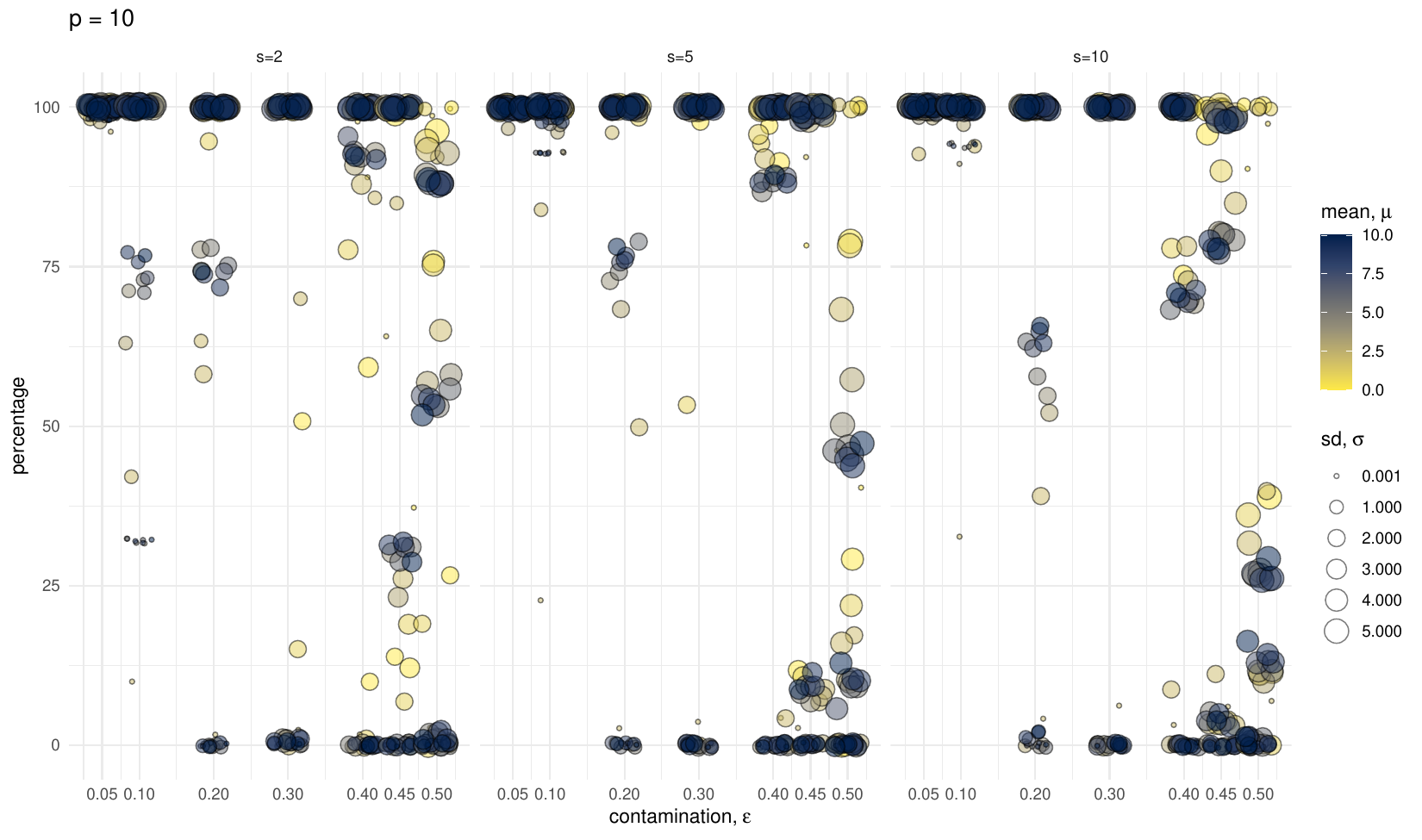}
  \caption{Monte Carlo Simulation. Percentage of robust root retrieval out of $N=100$ simulations for the proposed method with subsampling for the initial values, as a function of contamination $\varepsilon$ ($x-$axis). Contamination average $\mu$ and scale $\sigma$ are represented by color and size of the bubbles respectively. From left to right, the subplots show the different sample size factors $s=2, 5, 10$. Number of variables $p=5$ (top) and $p=10$ (bottom). $\alpha=0.75$.}
  \label{sup:fig:subs-true-6}
\end{figure}

\begin{figure}
  \includegraphics[width=\textwidth]{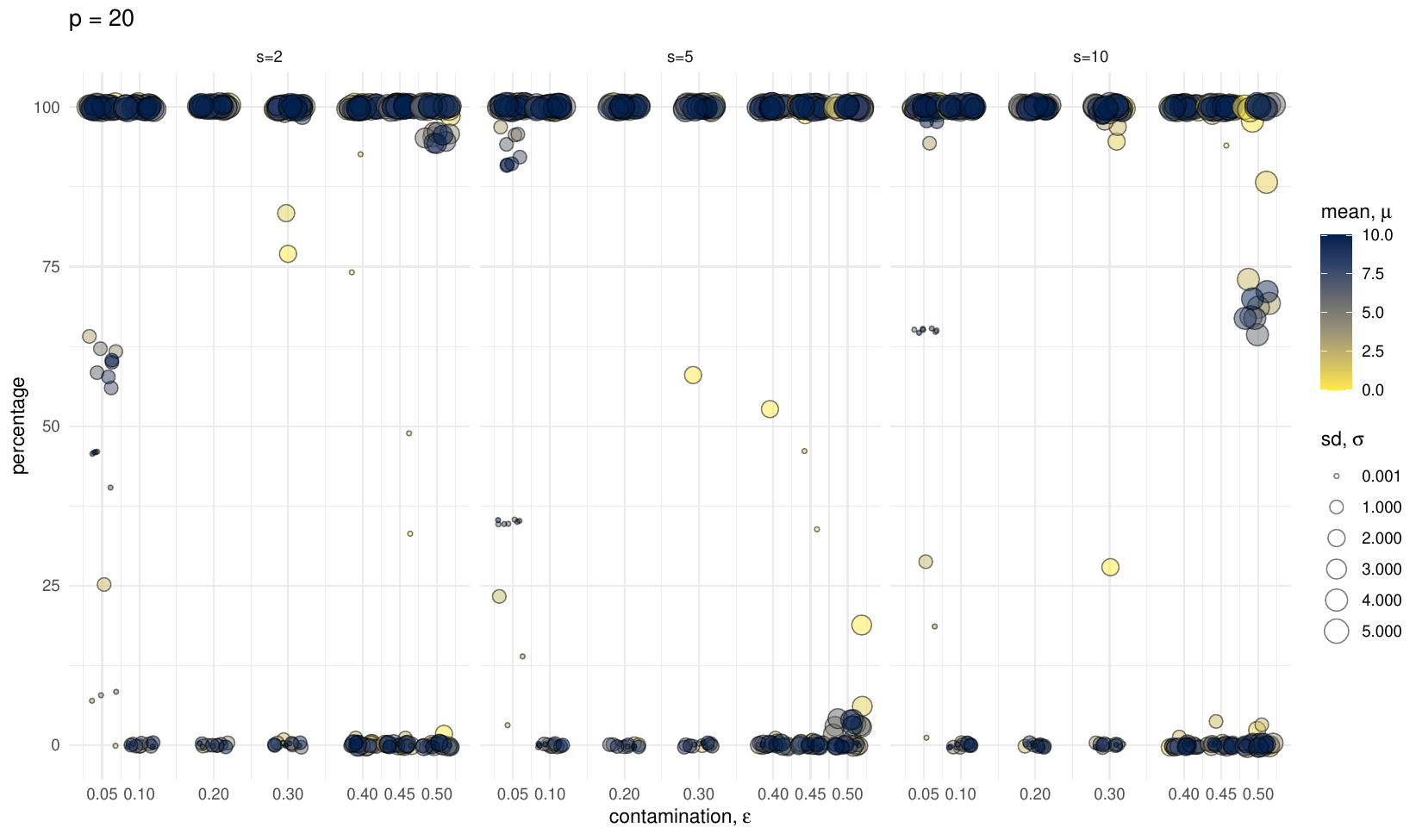}
  \caption{Monte Carlo Simulation. Percentage of robust root retrieval out of $N=100$ simulations for the proposed method with subsampling for the initial values, as a function of contamination $\varepsilon$ ($x-$axis). Contamination average $\mu$ and scale $\sigma$ are represented by color and size of the bubbles respectively. From left to right, the subplots show the different sample size factors $s=2, 5, 10$. Number of variables $p=20$. $\alpha=0.75$.}
  \label{sup:fig:subs-true-6b}
\end{figure}

\begin{figure}
  \centering
  \includegraphics[width=\textwidth]{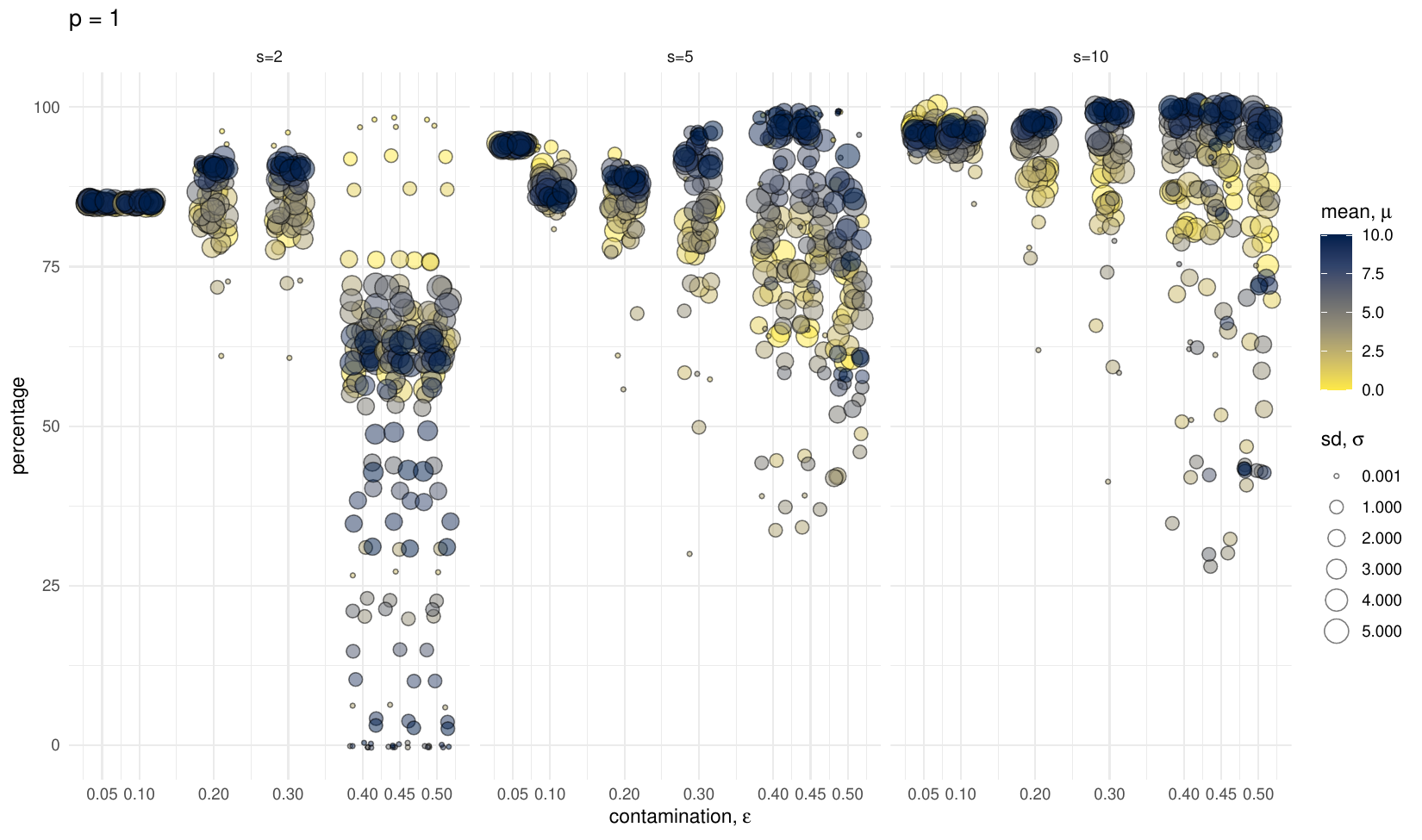}\\
  \includegraphics[width=\textwidth]{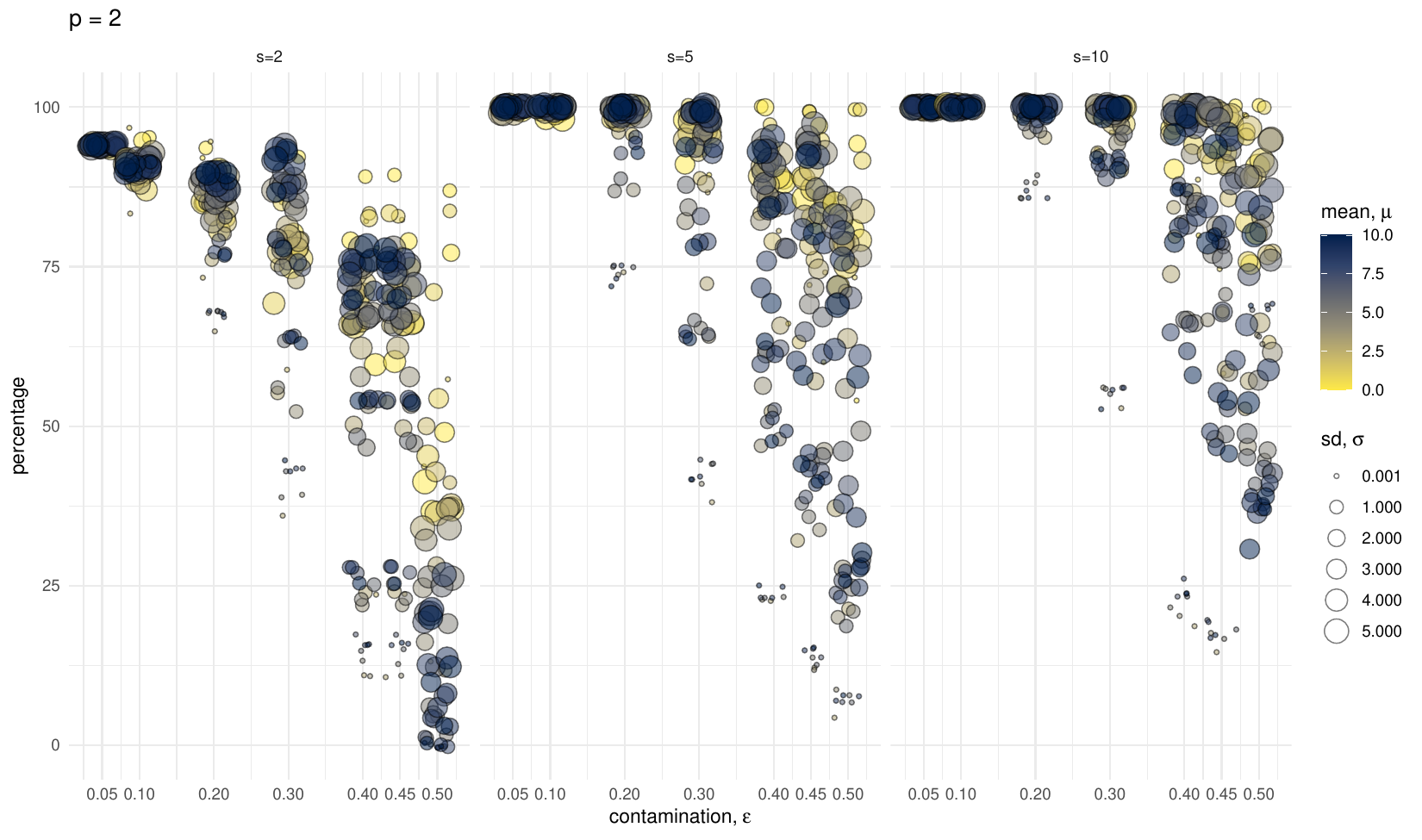} 
  \caption{Monte Carlo Simulation. Percentage of robust root retrieval out of $N=100$ simulations for the proposed method with subsampling for the initial values, as a function of contamination $\varepsilon$ ($x-$axis). Contamination average $\mu$ and scale $\sigma$ are represented by color and size of the bubbles respectively. From left to right, the subplots show the different sample size factors $s=2, 5, 10$. Number of variables $p=1$ (top) and $p=2$ (bottom). $\alpha=1$.}
  \label{sup:fig:subs-true-7}
\end{figure}

\begin{figure}
  \centering
  \includegraphics[width=\textwidth]{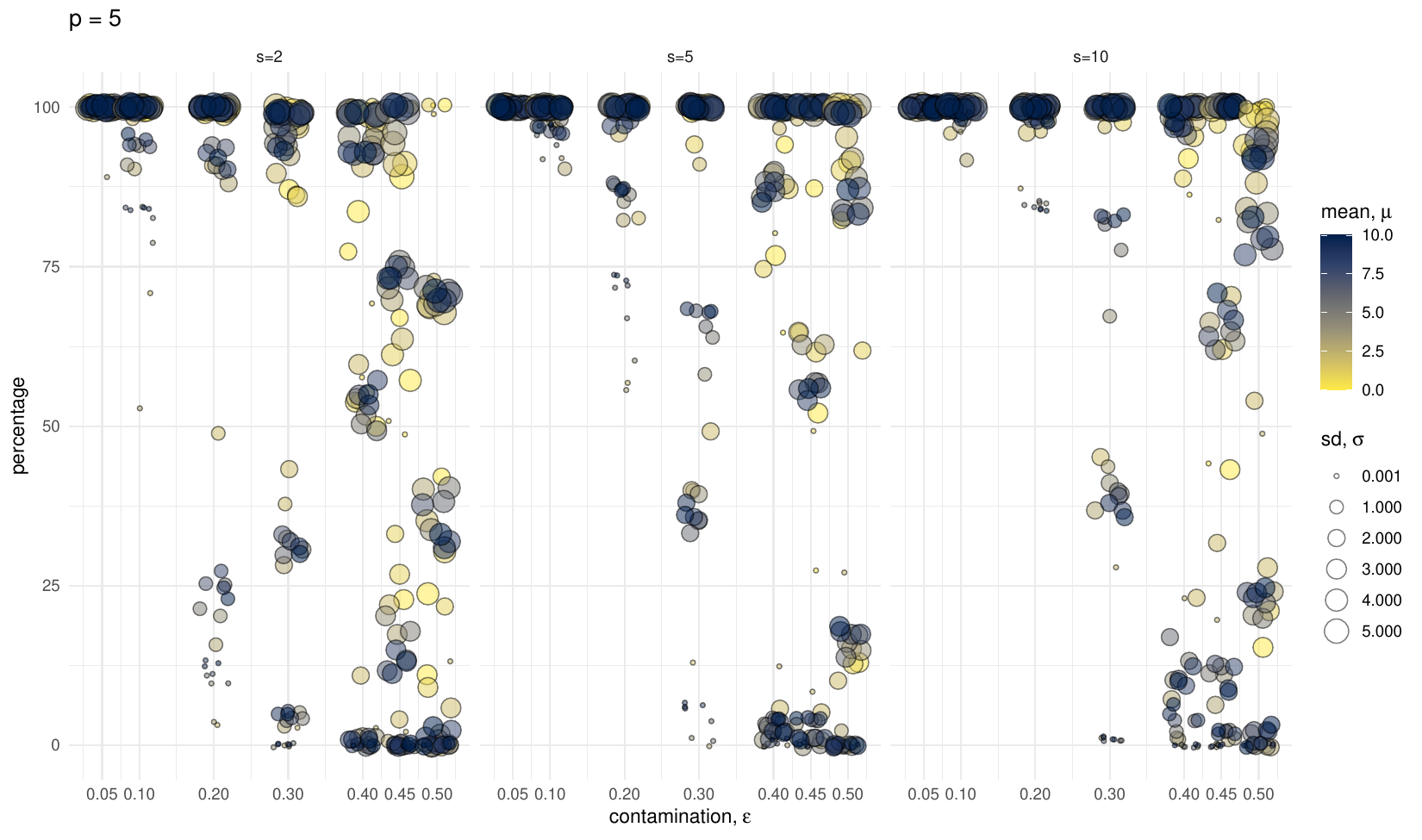}\\
  \includegraphics[width=\textwidth]{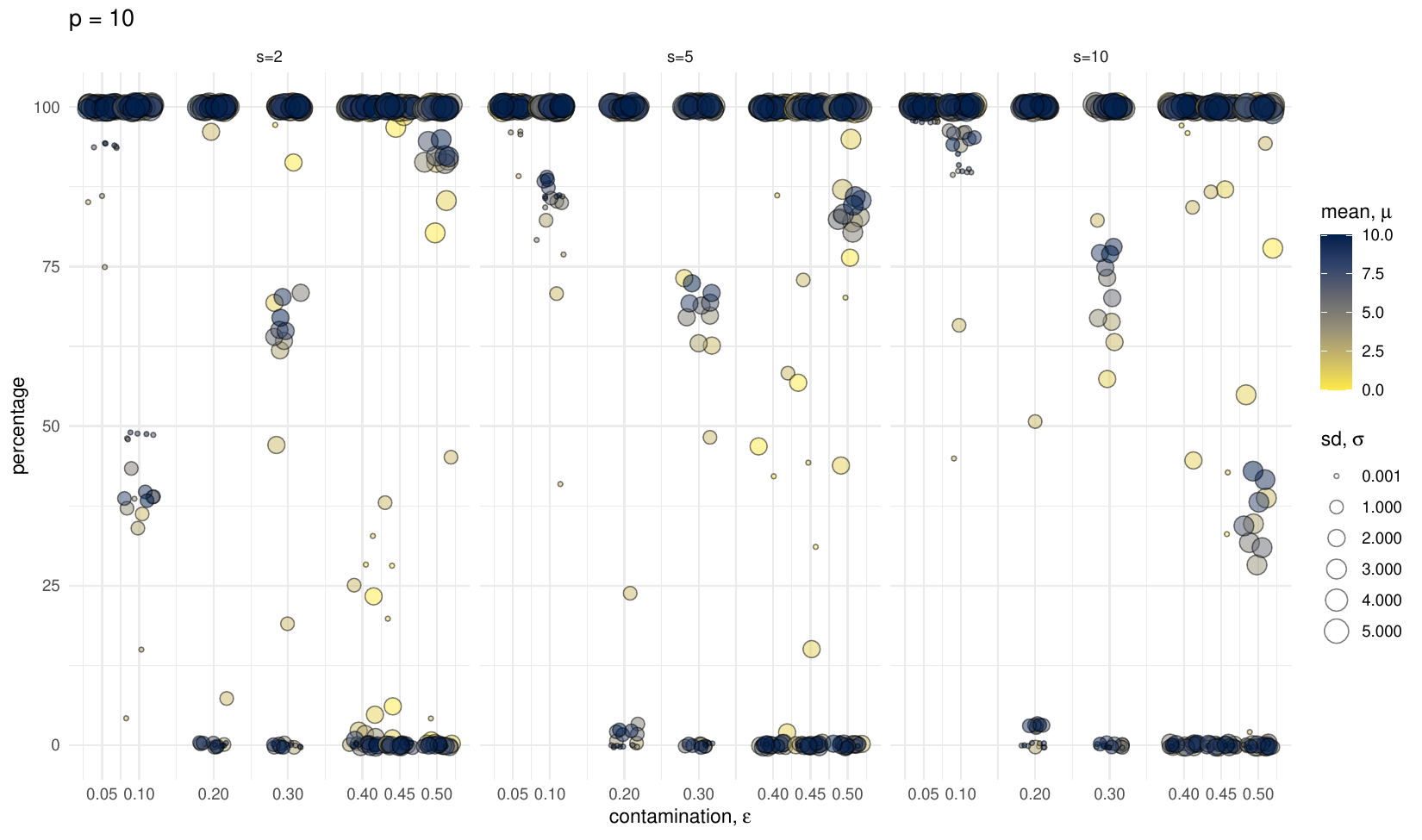}
  \caption{Monte Carlo Simulation. Percentage of robust root retrieval out of $N=100$ simulations for the proposed method with subsampling for the initial values, as a function of contamination $\varepsilon$ ($x-$axis). Contamination average $\mu$ and scale $\sigma$ are represented by color and size of the bubbles respectively. From left to right, the subplots show the different sample size factors $s=2, 5, 10$. Number of variables $p=5$ (top) and $p=10$ (bottom). $\alpha=1$.}
  \label{sup:fig:subs-true-8}
\end{figure}

\begin{figure}
  \includegraphics[width=\textwidth]{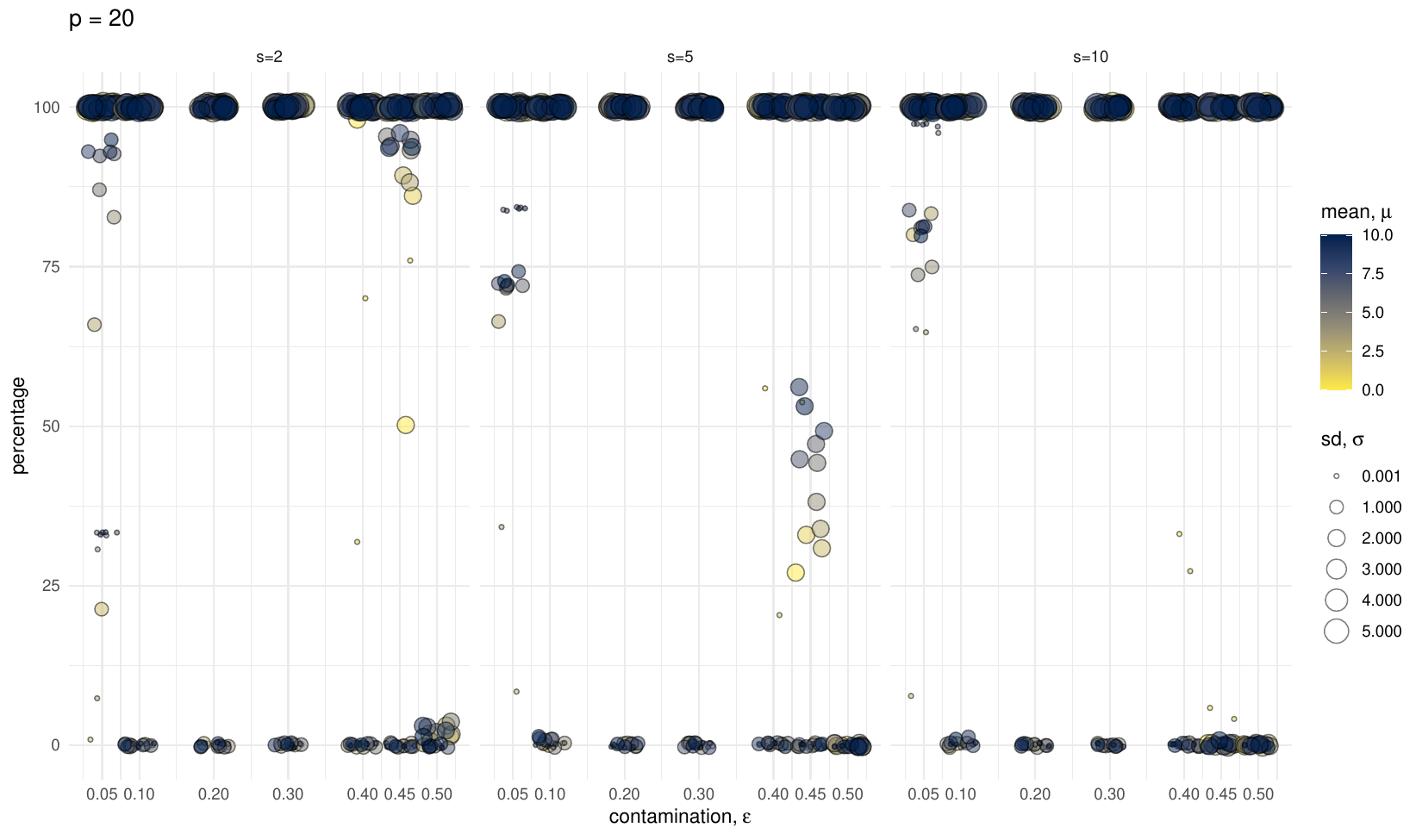}
  \caption{Monte Carlo Simulation. Percentage of robust root retrieval out of $N=100$ simulations for the proposed method with subsampling for the initial values, as a function of contamination $\varepsilon$ ($x-$axis). Contamination average $\mu$ and scale $\sigma$ are represented by color and size of the bubbles respectively. From left to right, the subplots show the different sample size factors $s=2, 5, 10$. Number of variables $p=20$. $\alpha=1$.}
  \label{sup:fig:subs-true-8b}
\end{figure}

\clearpage

%%% CovMest

\begin{figure}
\centering
\includegraphics[width=0.32\textwidth]{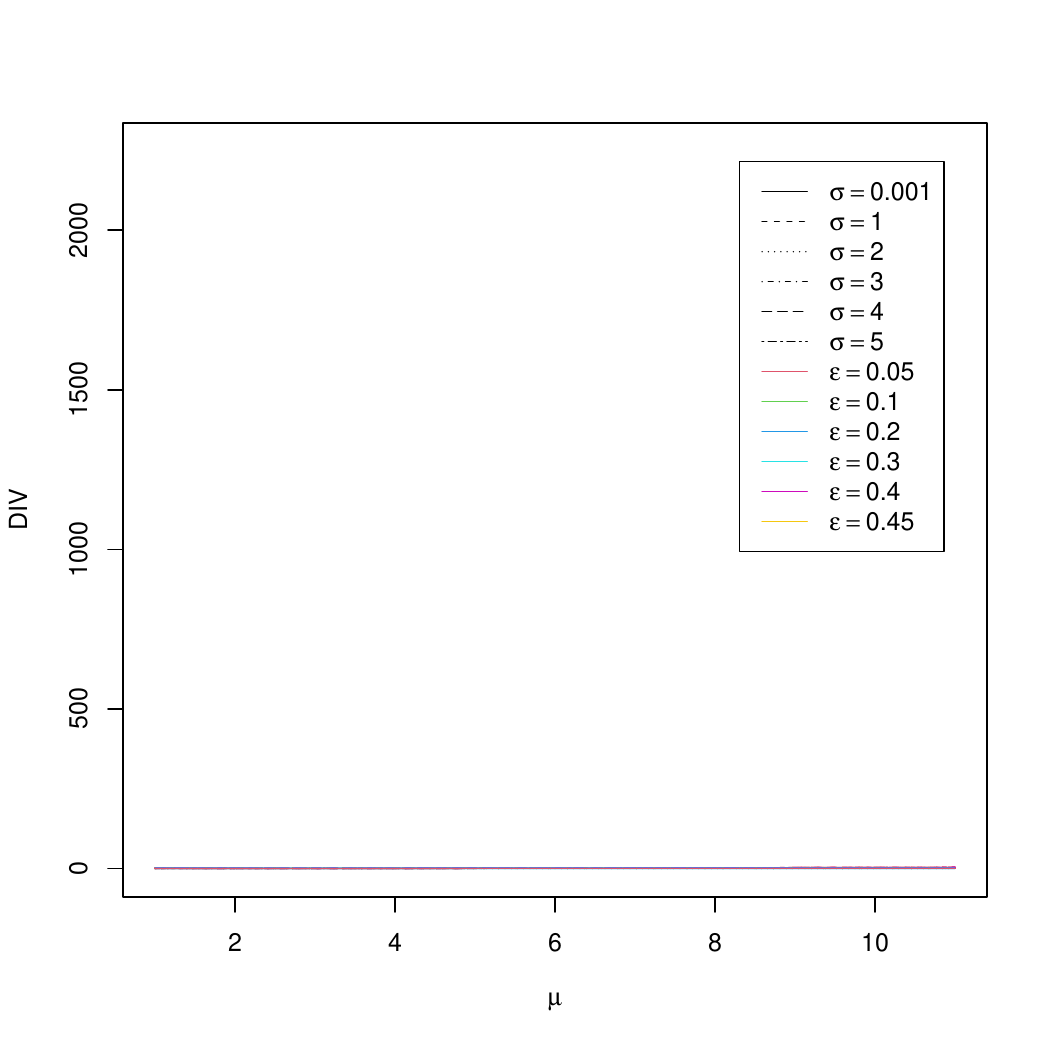}
\includegraphics[width=0.32\textwidth]{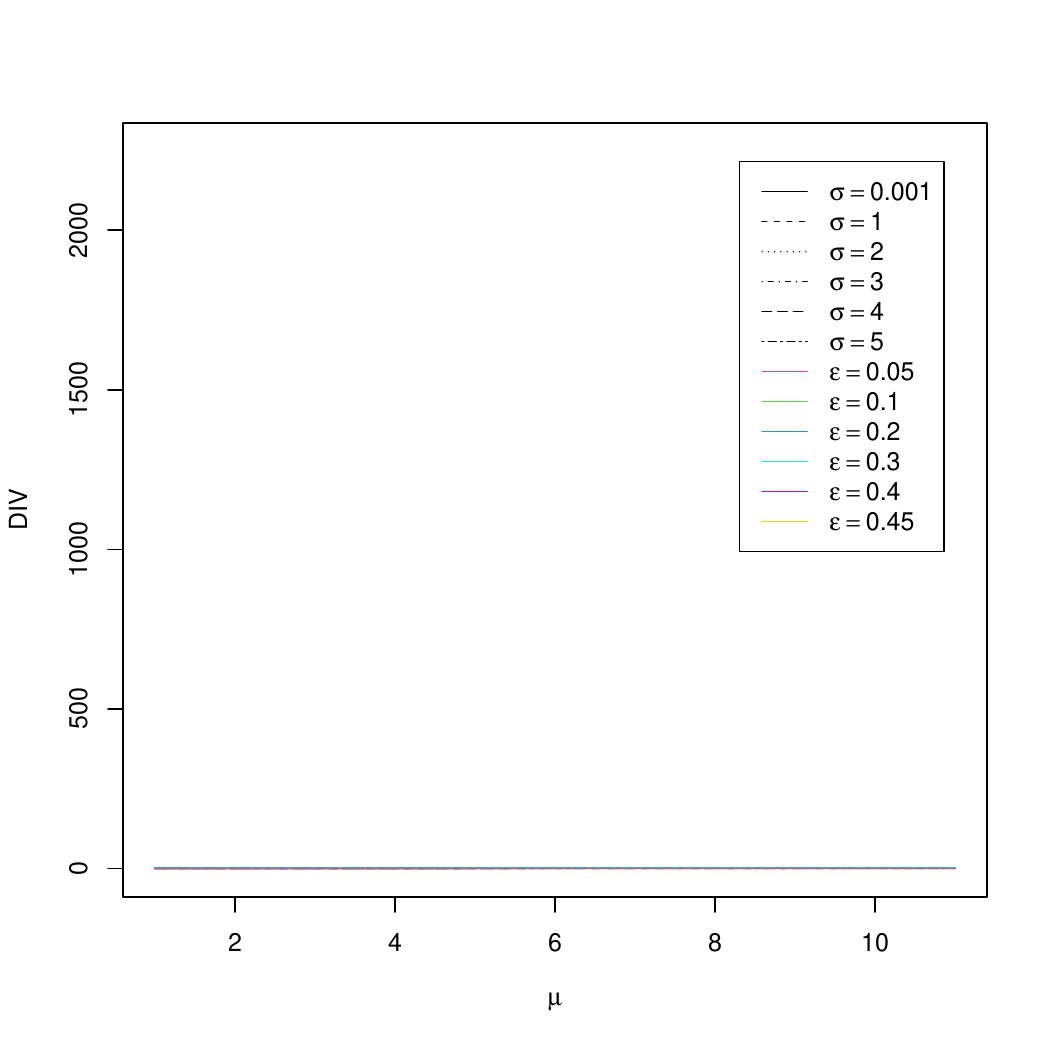} 
\includegraphics[width=0.32\textwidth]{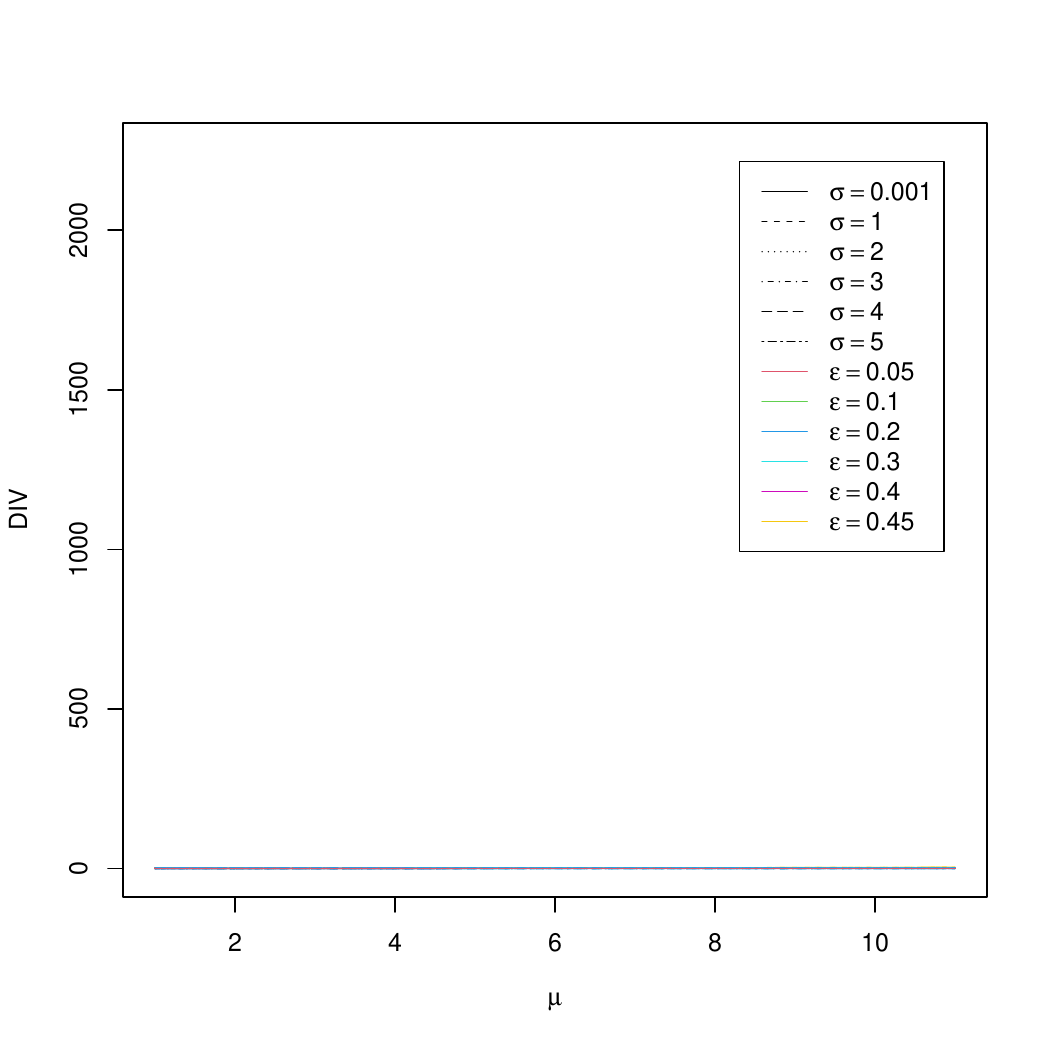} \\
\includegraphics[width=0.32\textwidth]{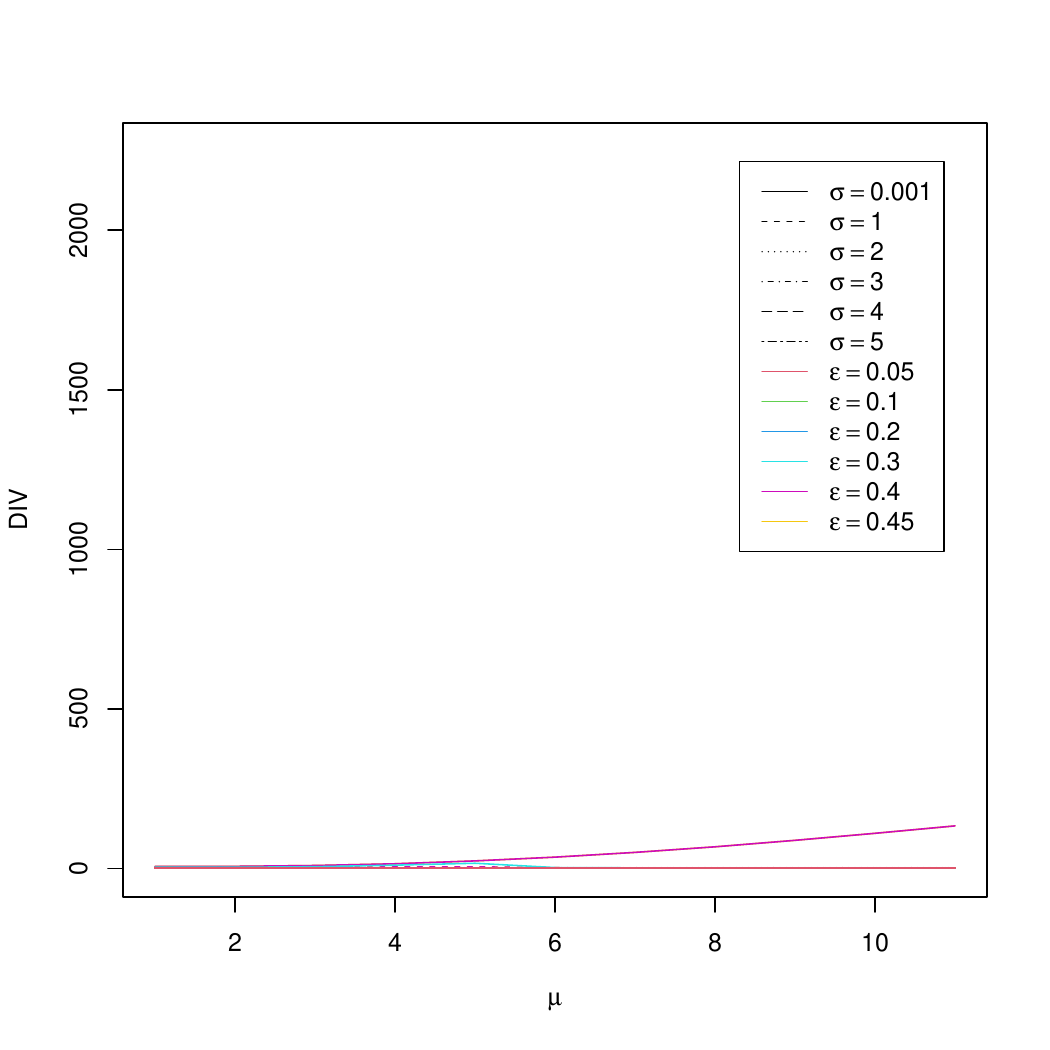}
\includegraphics[width=0.32\textwidth]{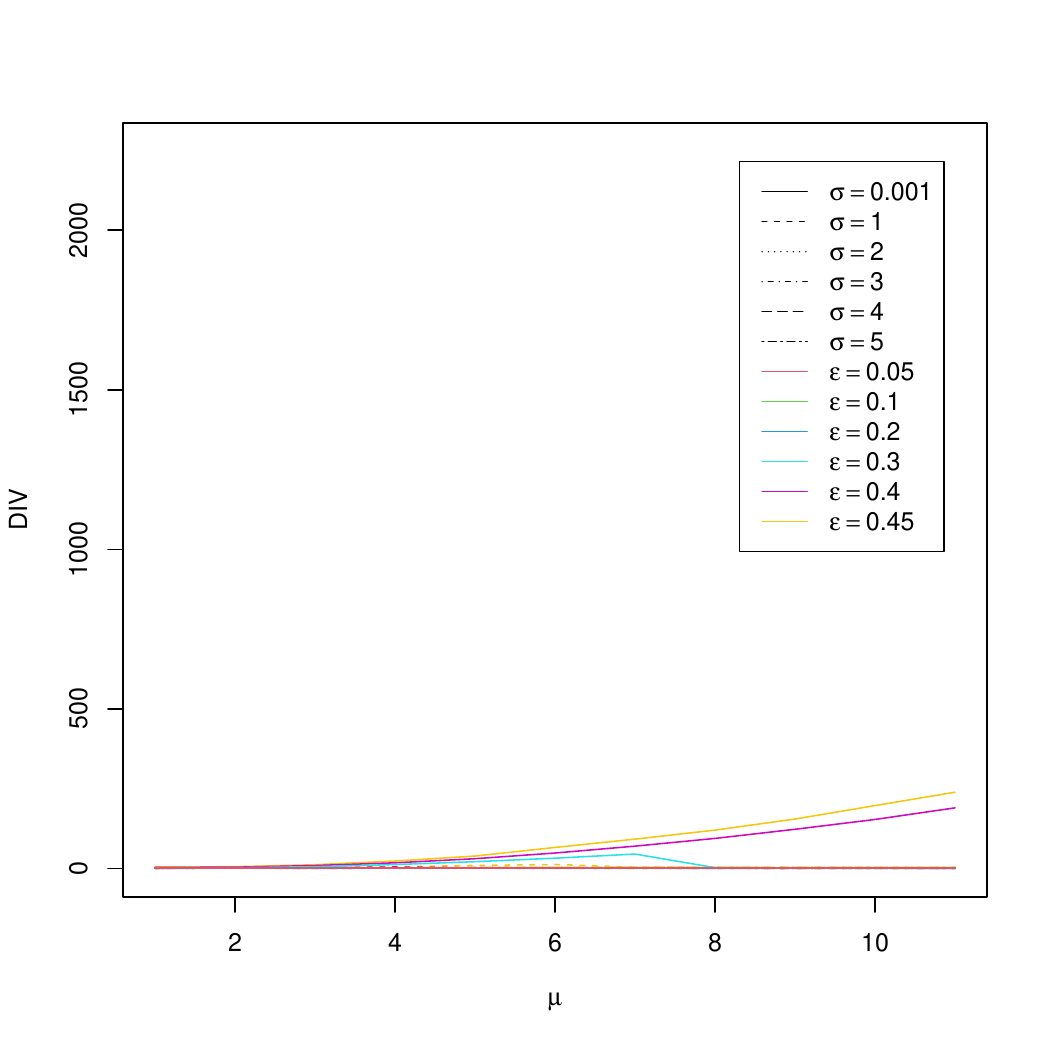} 
\includegraphics[width=0.32\textwidth]{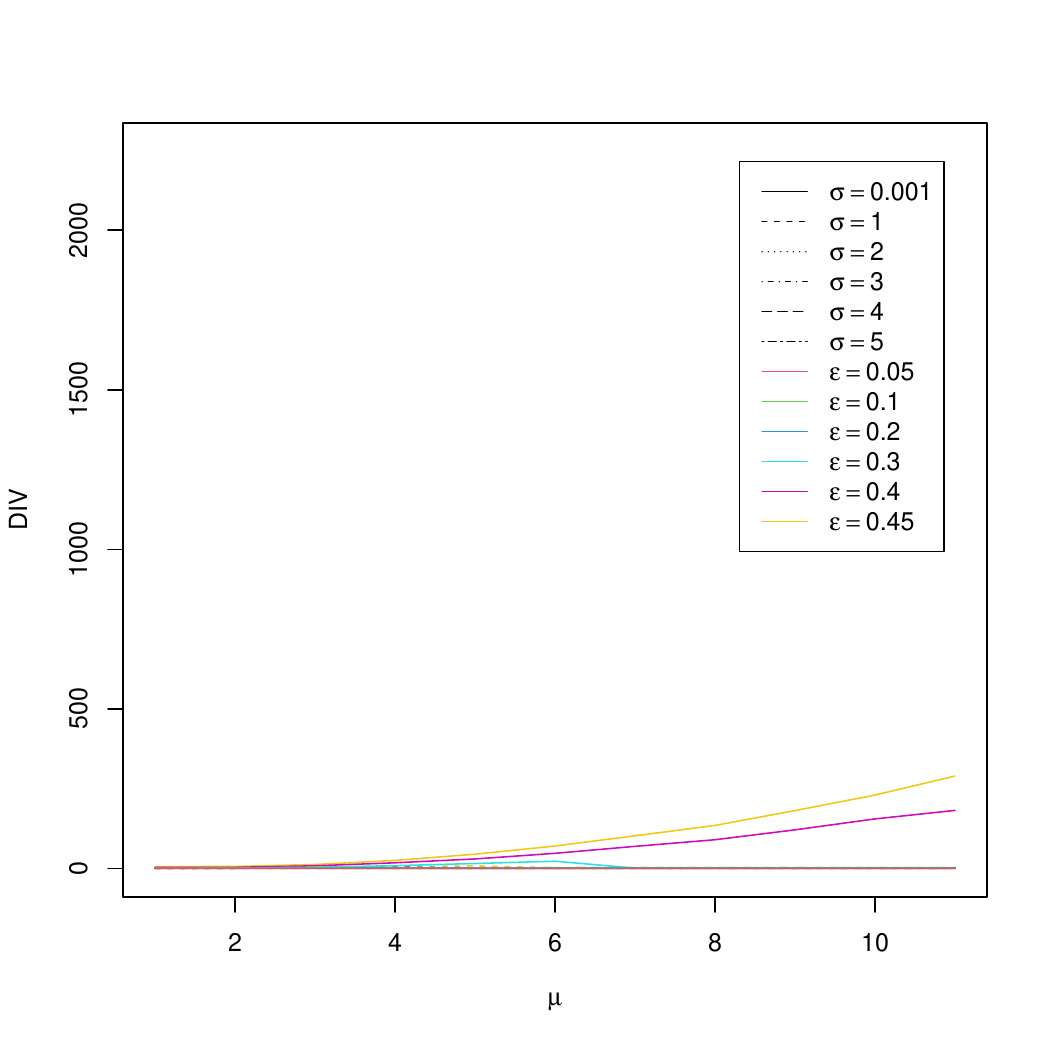} \\
\includegraphics[width=0.32\textwidth]{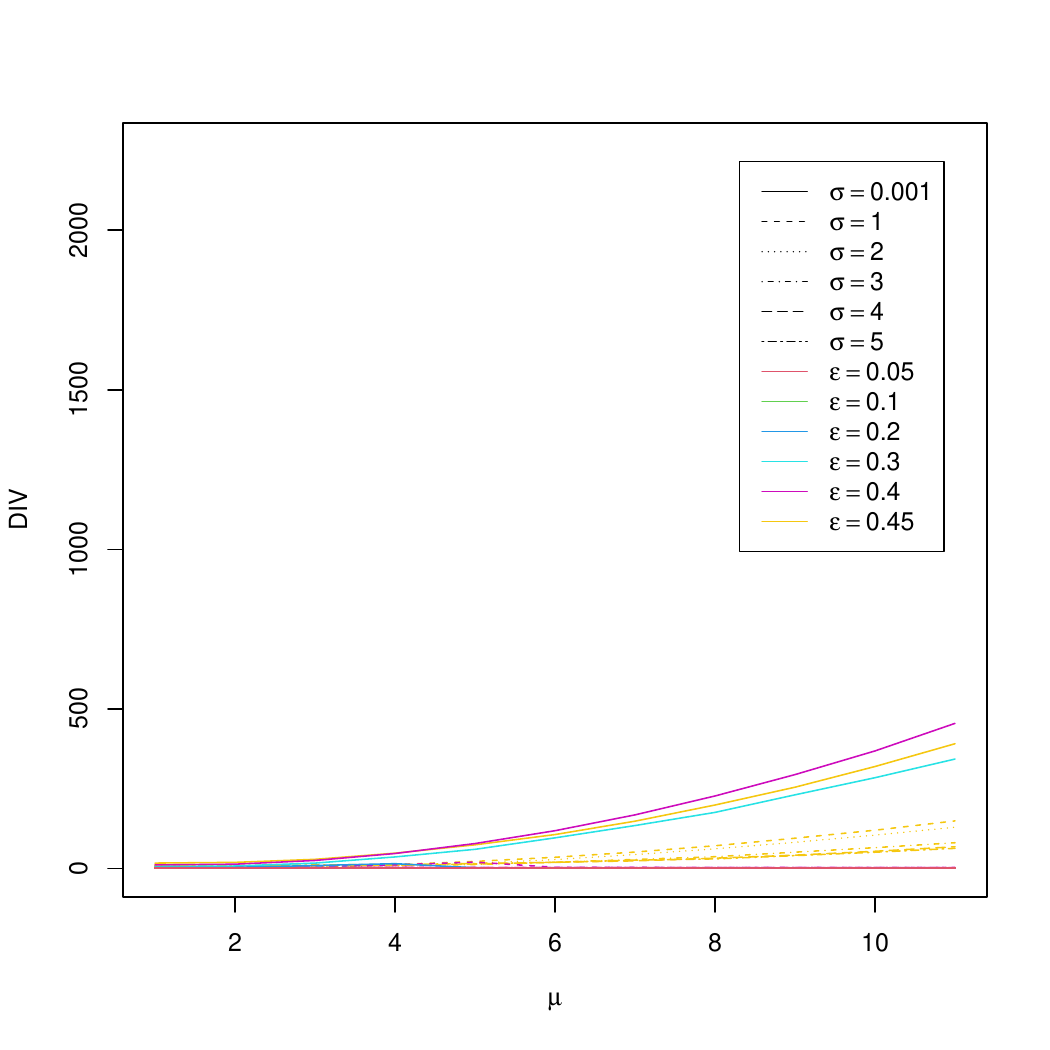}
\includegraphics[width=0.32\textwidth]{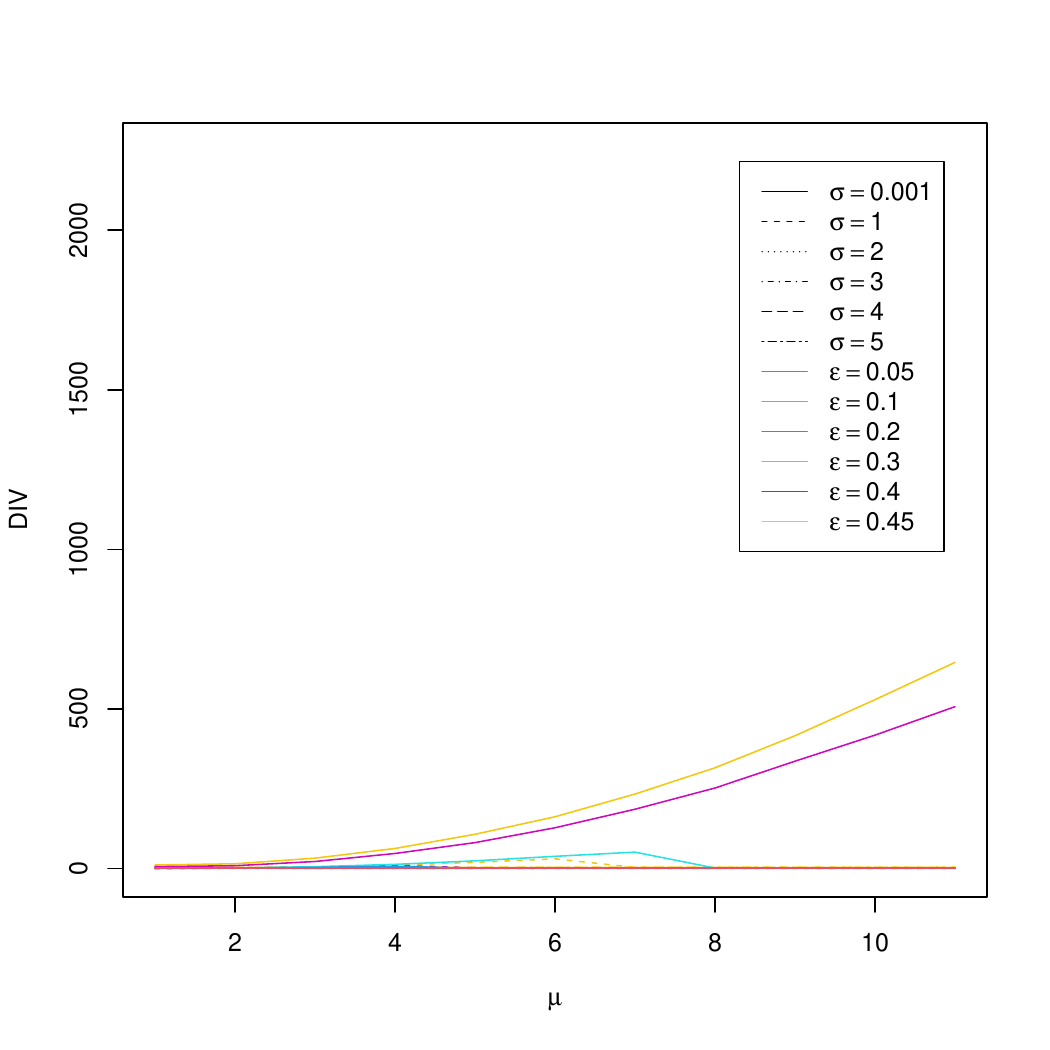} 
\includegraphics[width=0.32\textwidth]{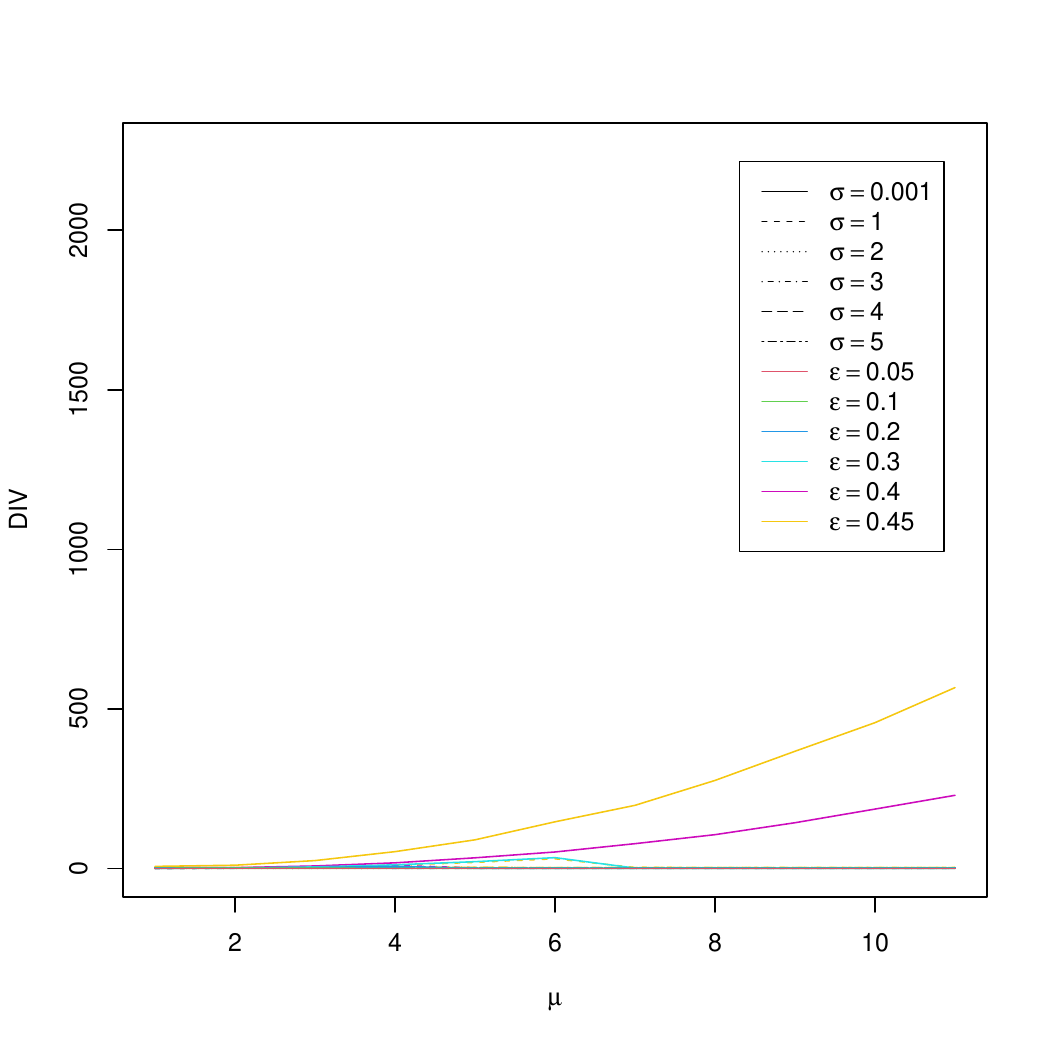} \\
\caption{Monte Carlo Simulation. Kullback--Leibler Divergence for the \texttt{CovMest} method as a function of the contamination average $\mu$ ($x$-axis), contamination scale $\sigma$ (different line styles) and contamination level $\varepsilon$ (colors). Rows: number of variables $p=1, 2, 5$ and columns: sample size factor $s=2, 5, 10$.}
\label{sup:fig:monte:DIV:M:1}
\end{figure}  

\begin{figure}
\centering
\includegraphics[width=0.32\textwidth]{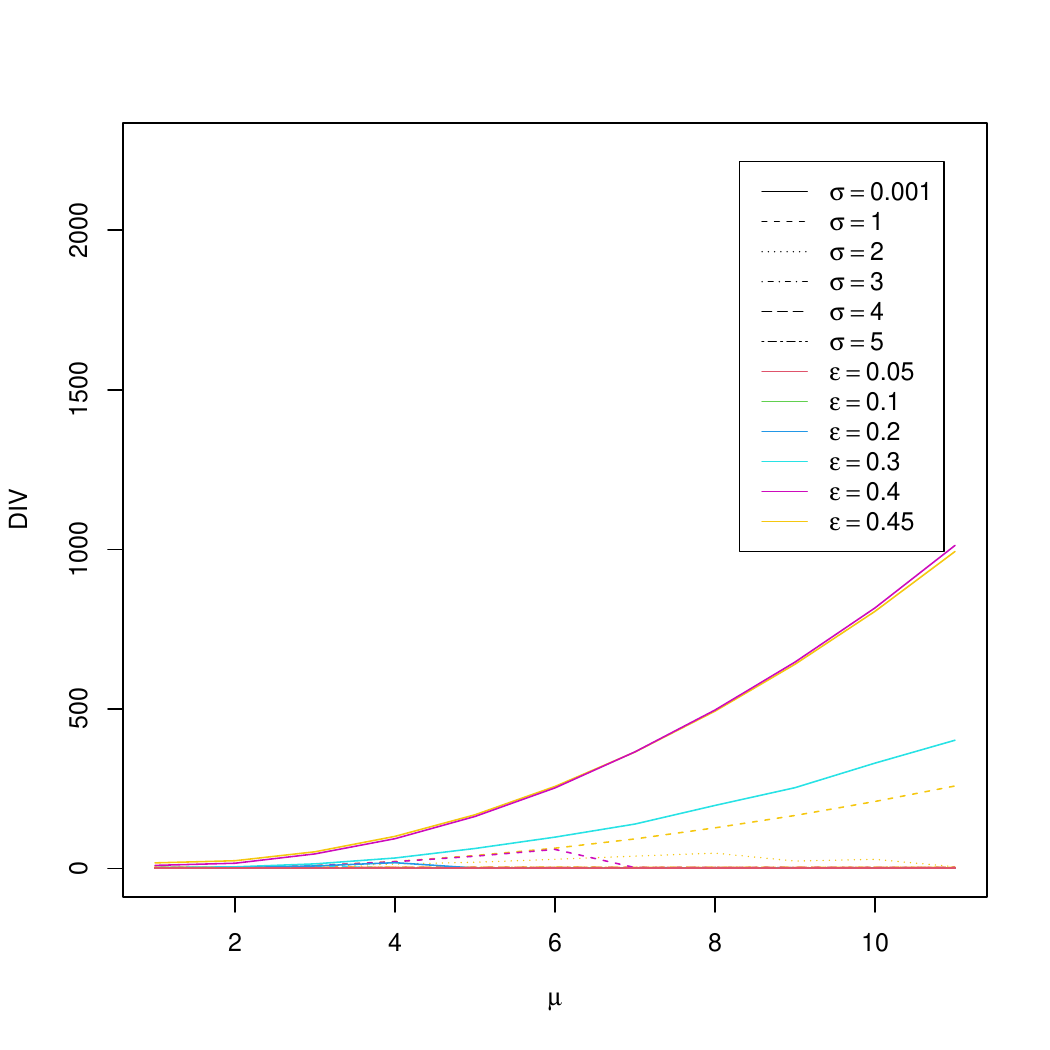}
\includegraphics[width=0.32\textwidth]{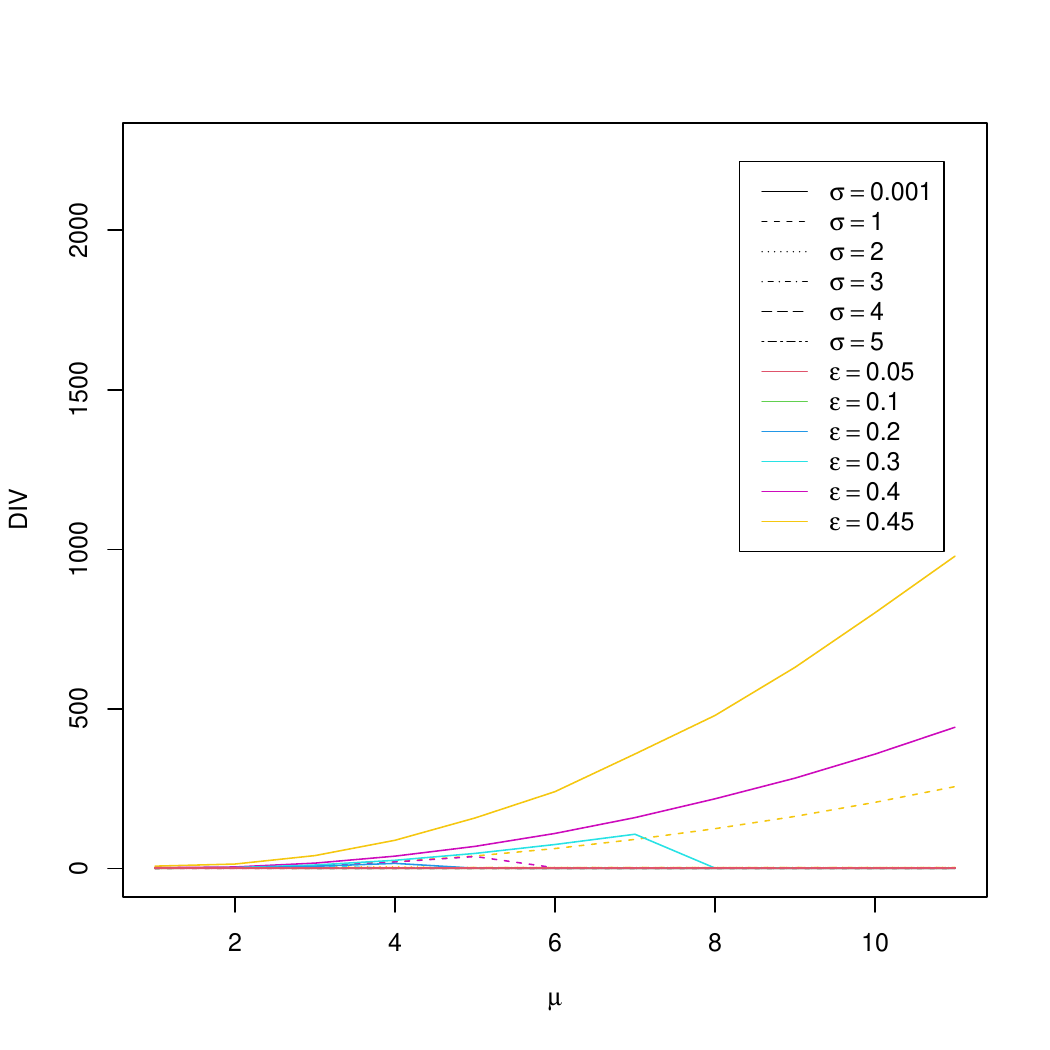} 
\includegraphics[width=0.32\textwidth]{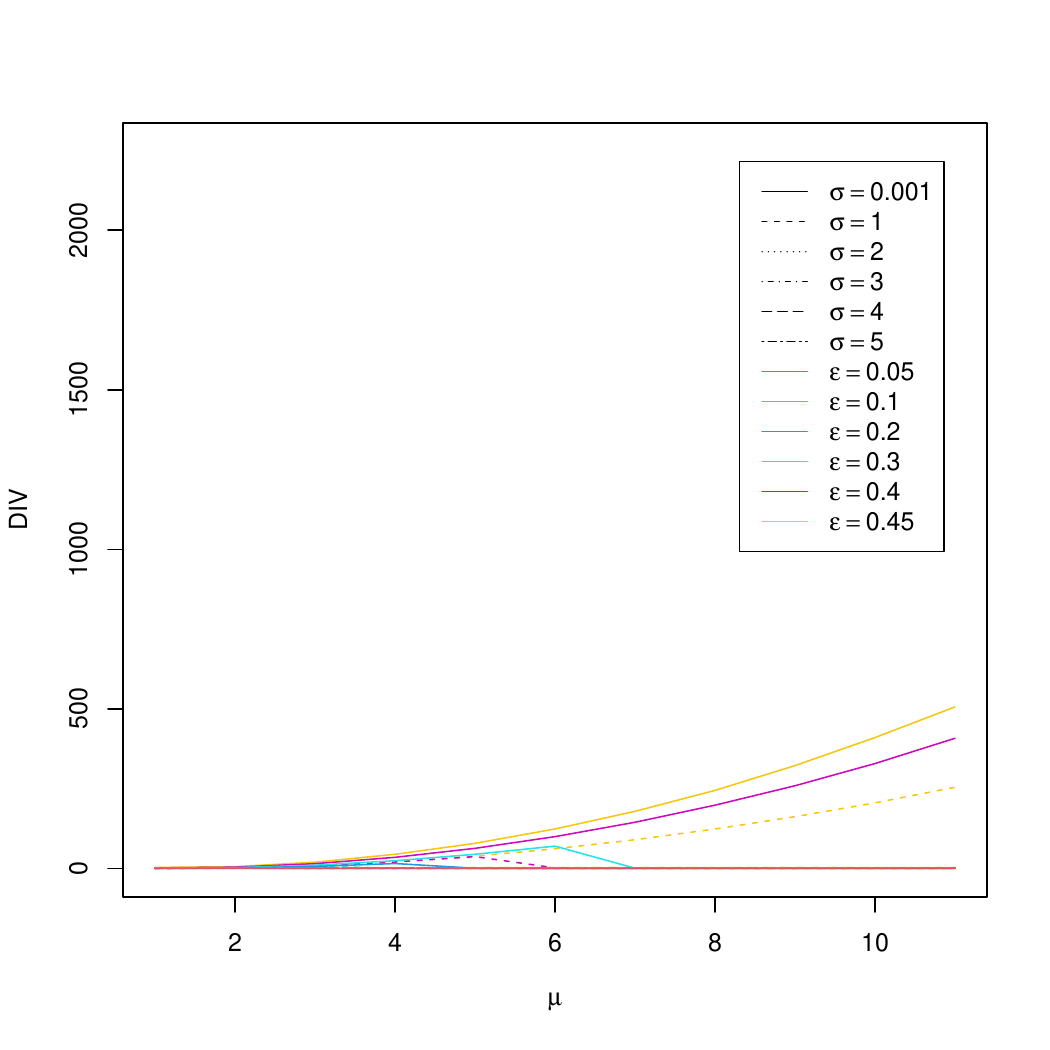} \\
\includegraphics[width=0.32\textwidth]{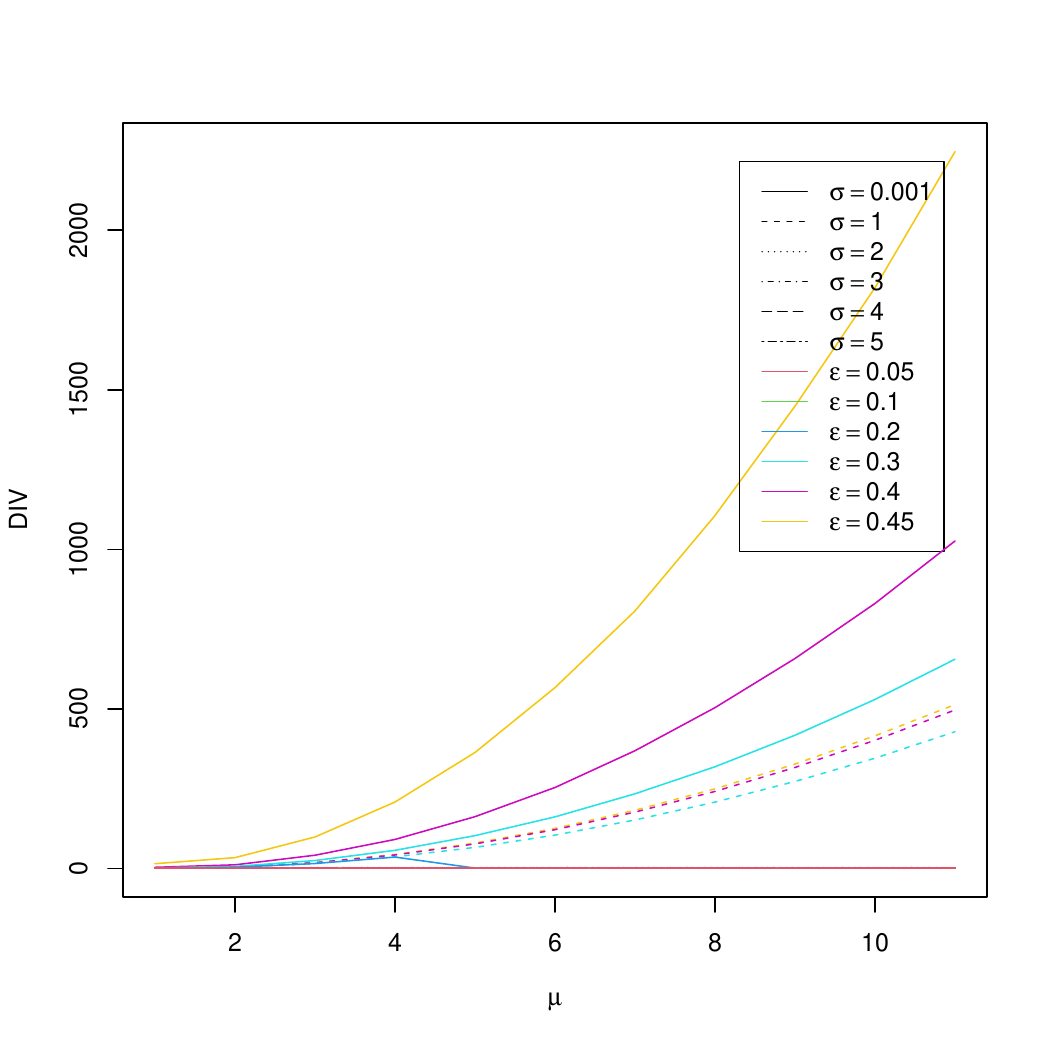}
\includegraphics[width=0.32\textwidth]{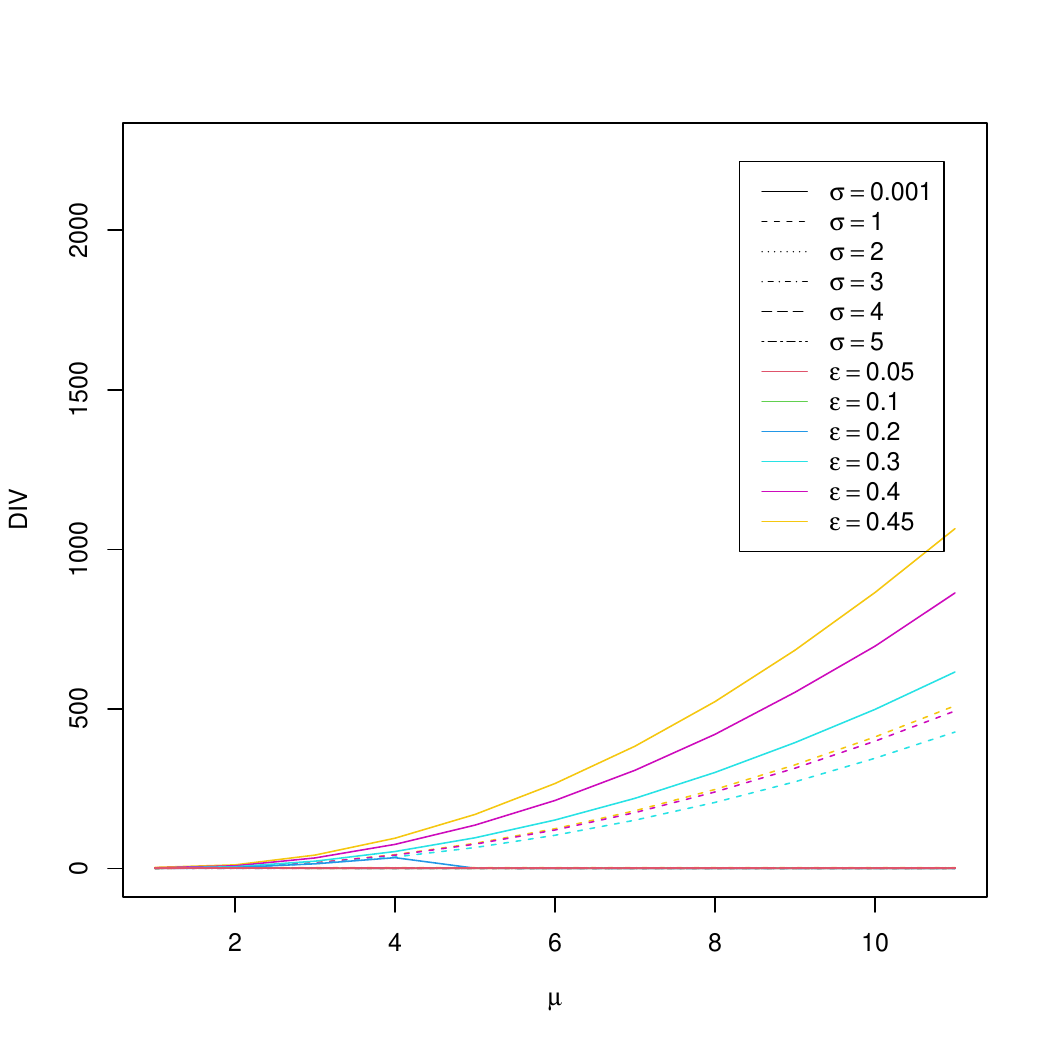} 
\includegraphics[width=0.32\textwidth]{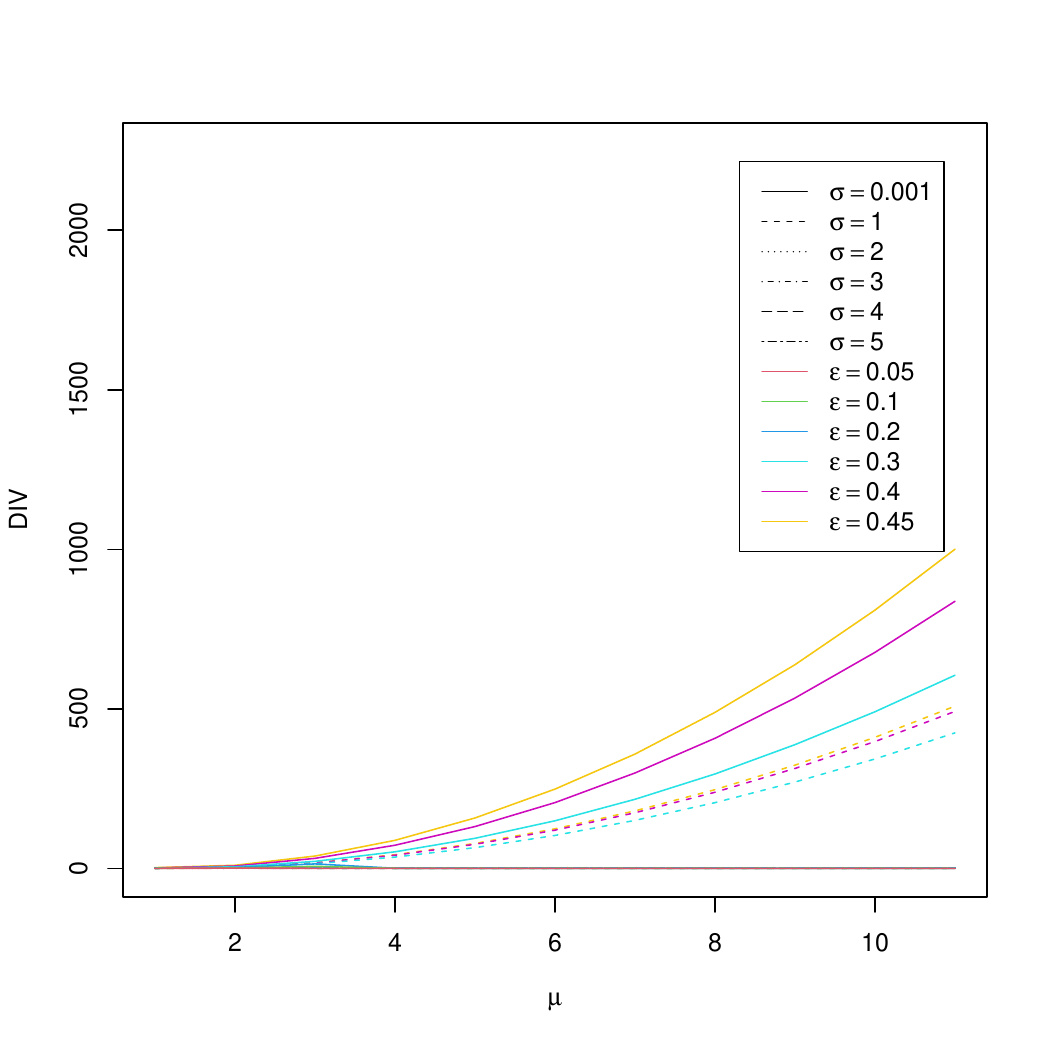}
\caption{Monte Carlo Simulation. Kullback--Leibler Divergence for the \texttt{CovMest} method as a function of the contamination average $\mu$ ($x$-axis), contamination scale $\sigma$ (different line styles) and contamination level $\varepsilon$ (colors). Rows: number of variables $p=10, 20$ and columns: sample size factor $s=2, 5, 10$.}
\label{sup:fig:monte:DIV:M:2}
\end{figure}  

%%%%%%%%%%%%%%%% new figures 
% depth vs true - percentage
\begin{figure}
  \centering
  \includegraphics[width=\textwidth]{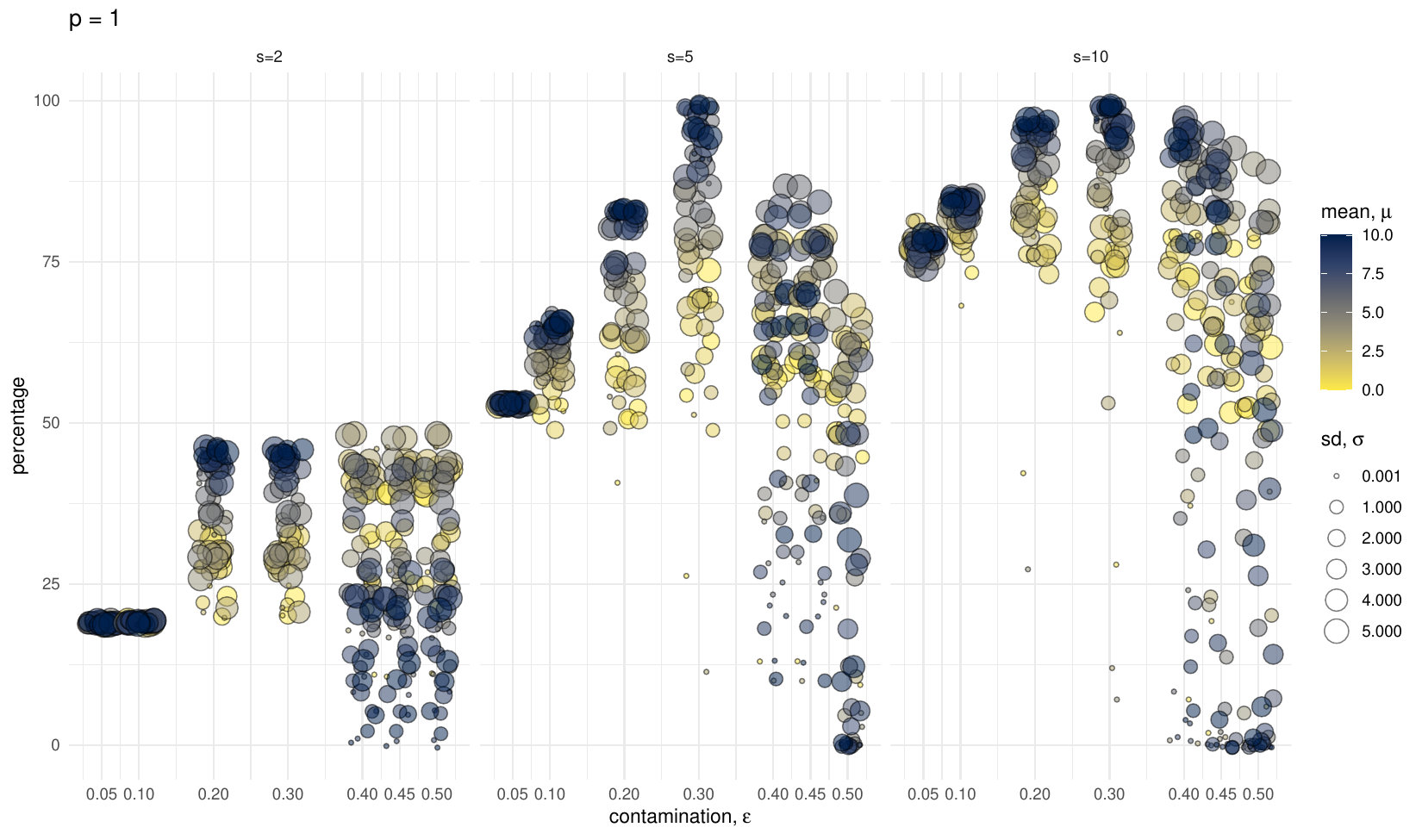}\\
  \includegraphics[width=\textwidth]{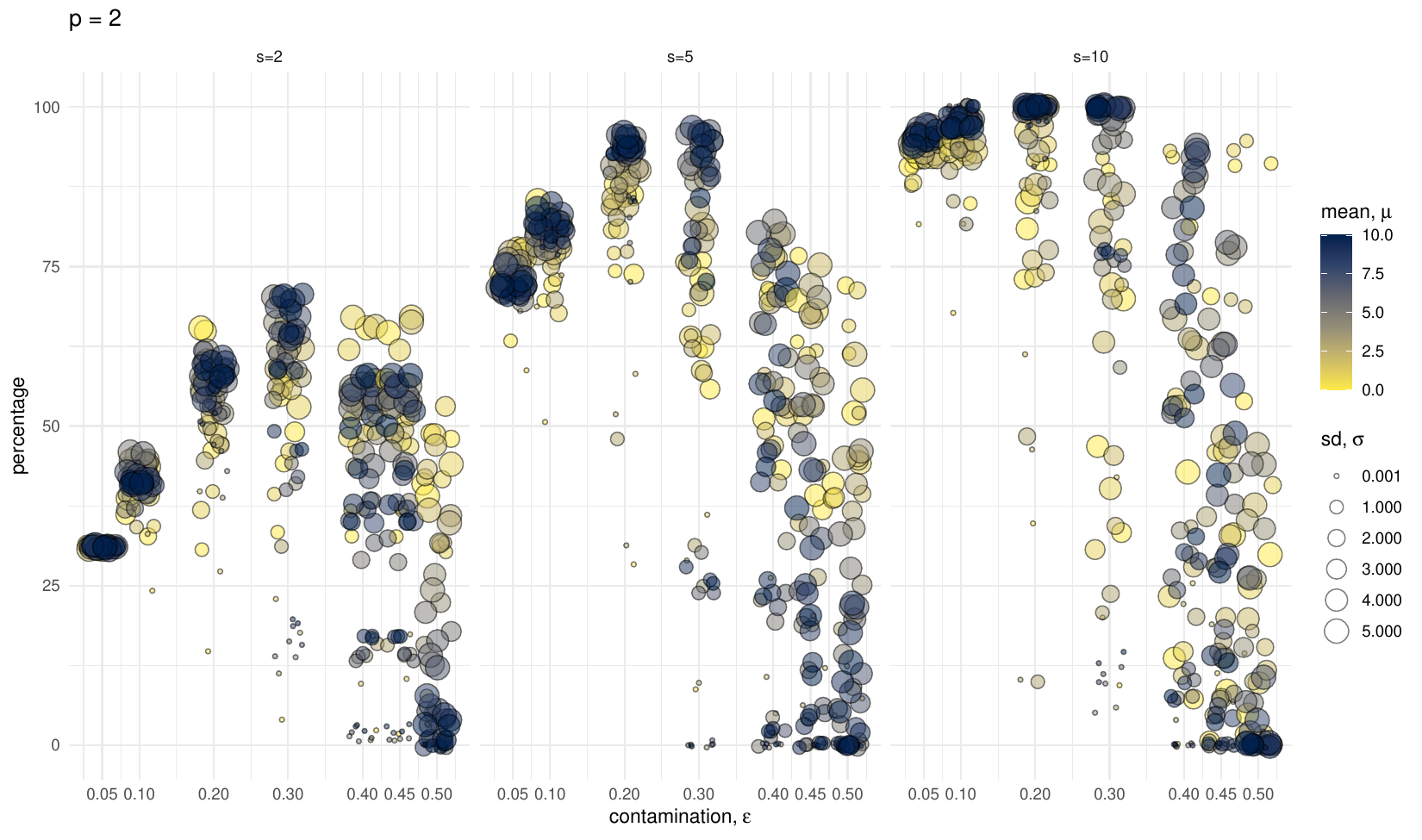} 
  \caption{Monte Carlo Simulation. Percentage of robust root retrieval out of $N=100$ simulations for the proposed method starting deterministically from the deepest points as a function of contamination $\varepsilon$ ($x-$axis). Contamination average $\mu$ and scale $\sigma$ are represented by color and size of the bubbles respectively. From left to right, the subplots show the different sample size factors $s=2, 5, 10$. Number of variables $p=1$ (top) and $p=2$ (bottom). $\alpha=0.25$.}
  \label{sup:fig:depth-true-1}
\end{figure}

\begin{figure}
  \centering
  \includegraphics[width=\textwidth]{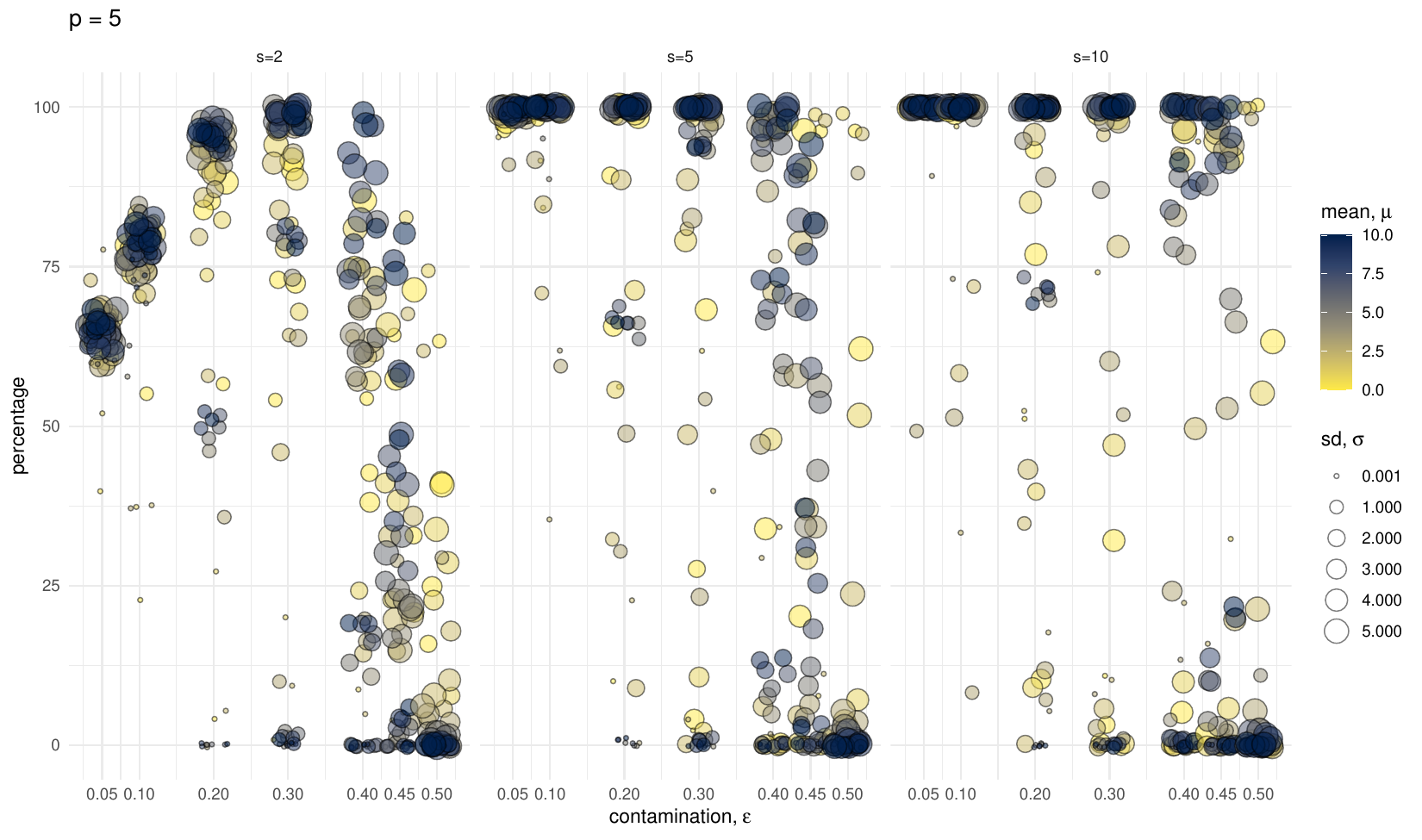}\\
  \includegraphics[width=\textwidth]{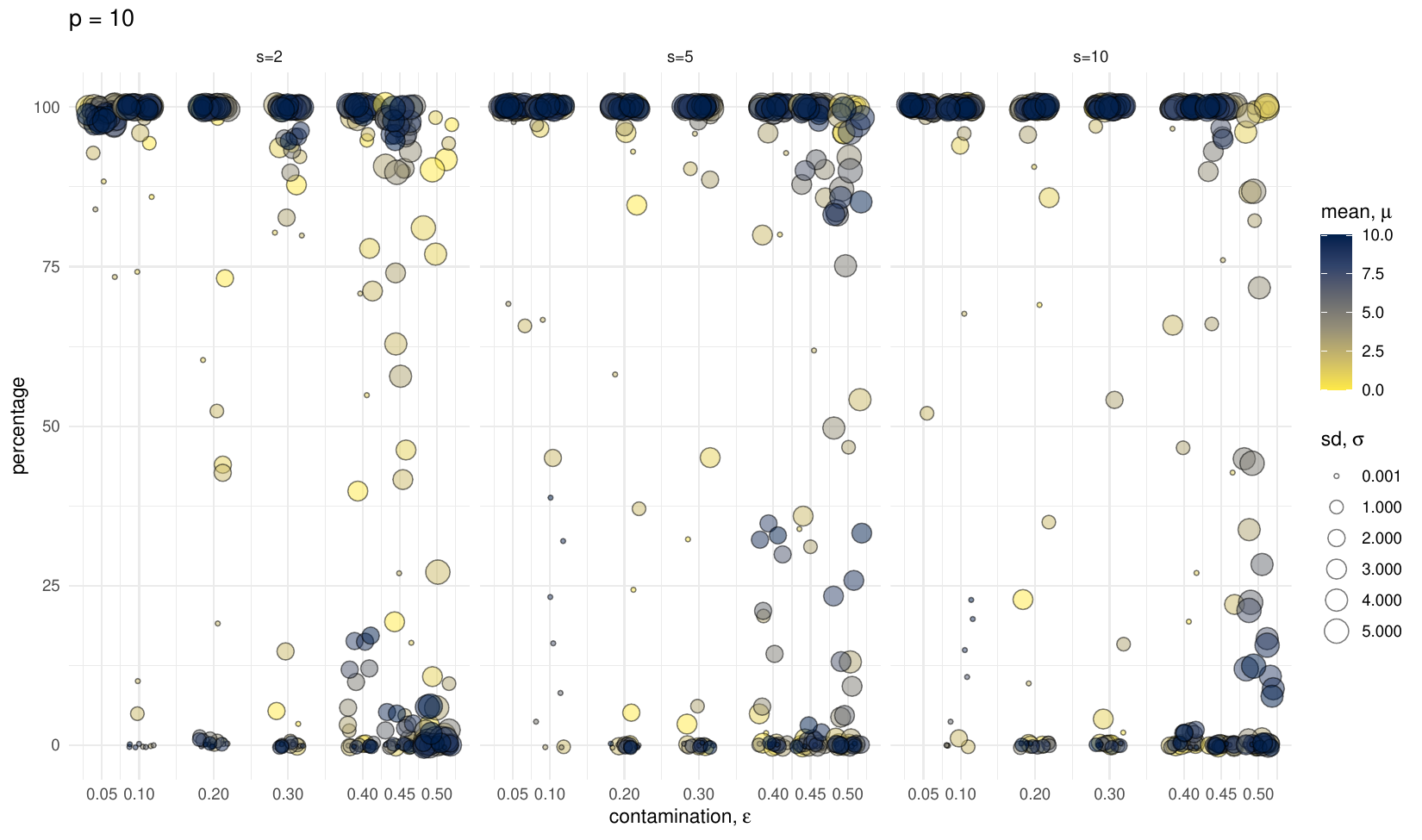}
  \caption{Monte Carlo Simulation. Percentage of robust root retrieval out of $N=100$ simulations for the proposed method starting deterministically from the deepest points as a function of contamination $\varepsilon$ ($x-$axis). Contamination average $\mu$ and scale $\sigma$ are represented by color and size of the bubbles respectively. From left to right, the subplots show the different sample size factors $s=2, 5, 10$. Number of variables $p=5$ (top) and $p=10$ (bottom). $\alpha=0.25$.}
  \label{sup:fig:depth-true-2}
\end{figure}

\begin{figure}
  \centering
  \includegraphics[width=\textwidth]{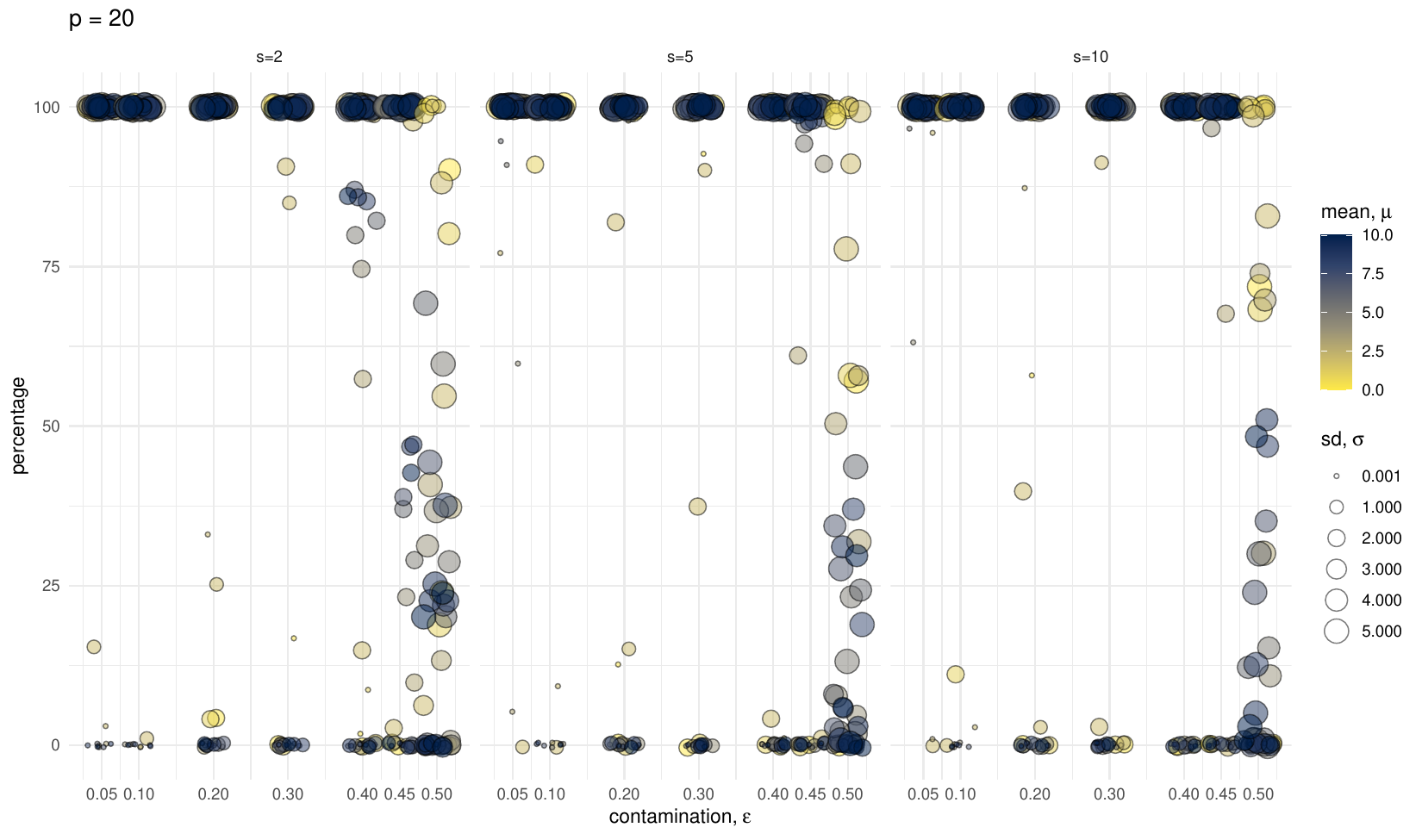}
  \caption{Monte Carlo Simulation. Percentage of robust root retrieval out of $N=100$ simulations for the proposed method starting deterministically from the deepest points as a function of contamination $\varepsilon$ ($x-$axis). Contamination average $\mu$ and scale $\sigma$ are represented by color and size of the bubbles respectively. From left to right, the subplots show the different sample size factors $s=2, 5, 10$. Number of variables $p=20$. $\alpha=0.25$.}
  \label{sup:fig:depth-true-2b}
\end{figure}

\begin{figure}
  \centering
  \includegraphics[width=\textwidth]{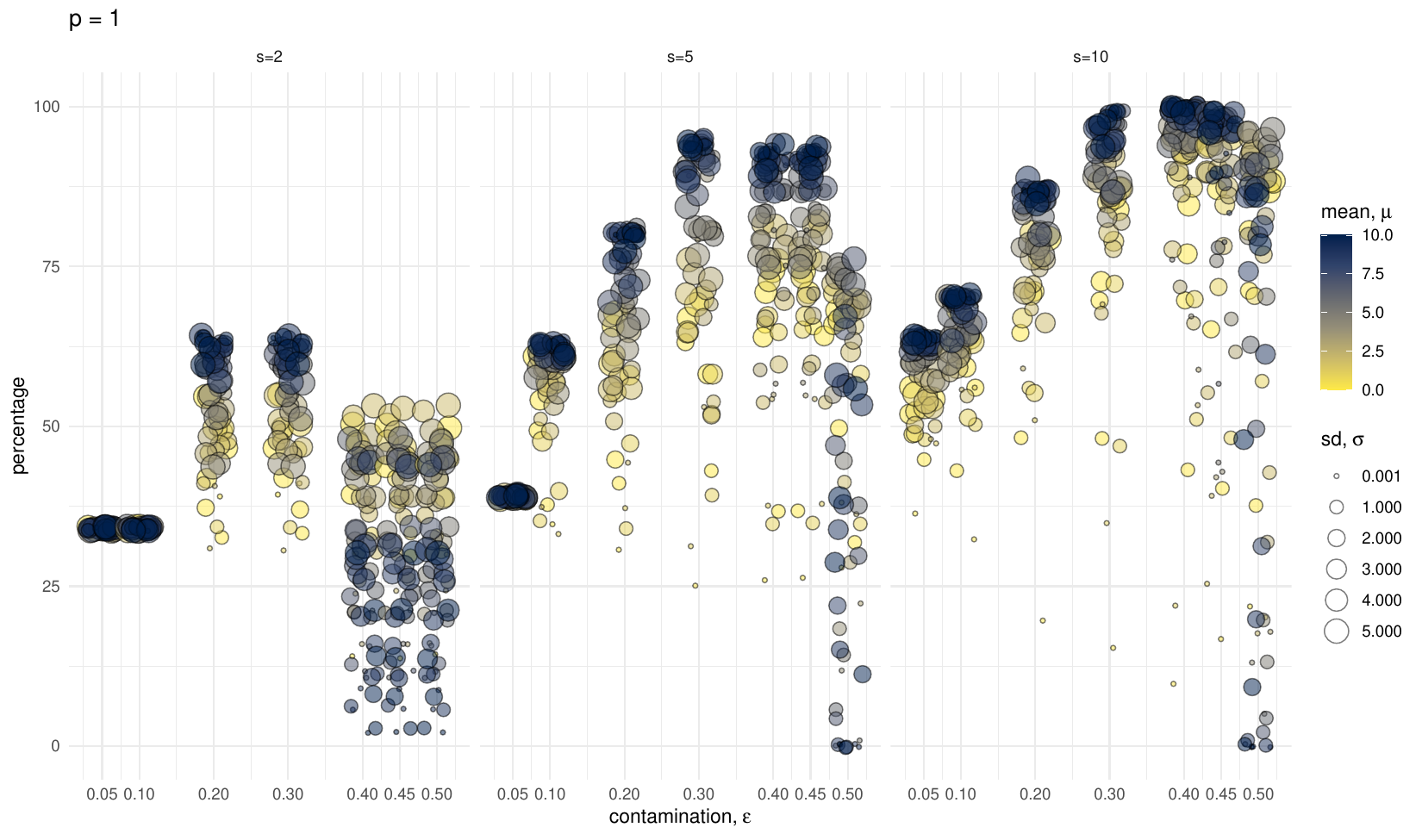}\\
  \includegraphics[width=\textwidth]{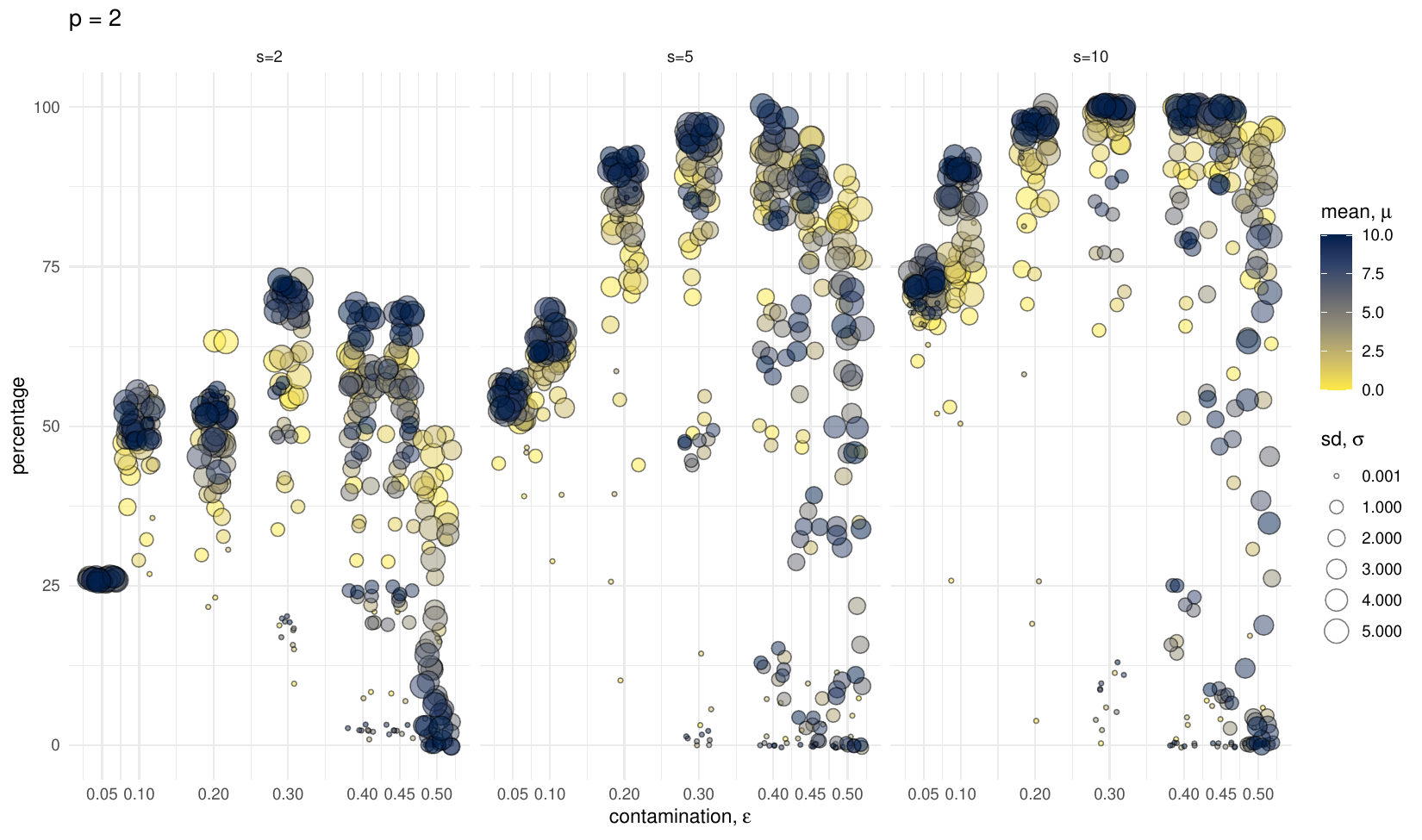} 
  \caption{Monte Carlo Simulation. Percentage of robust root retrieval out of $N=100$ simulations for the proposed method starting deterministically from the deepest points as a function of contamination $\varepsilon$ ($x-$axis). Contamination average $\mu$ and scale $\sigma$ are represented by color and size of the bubbles respectively. From left to right, the subplots show the different sample size factors $s=2, 5, 10$. Number of variables $p=1$ (top) and $p=2$ (bottom). $\alpha=0.5$.}
  \label{sup:fig:depth-true-3}
\end{figure}

\begin{figure}
  \centering
  \includegraphics[width=\textwidth]{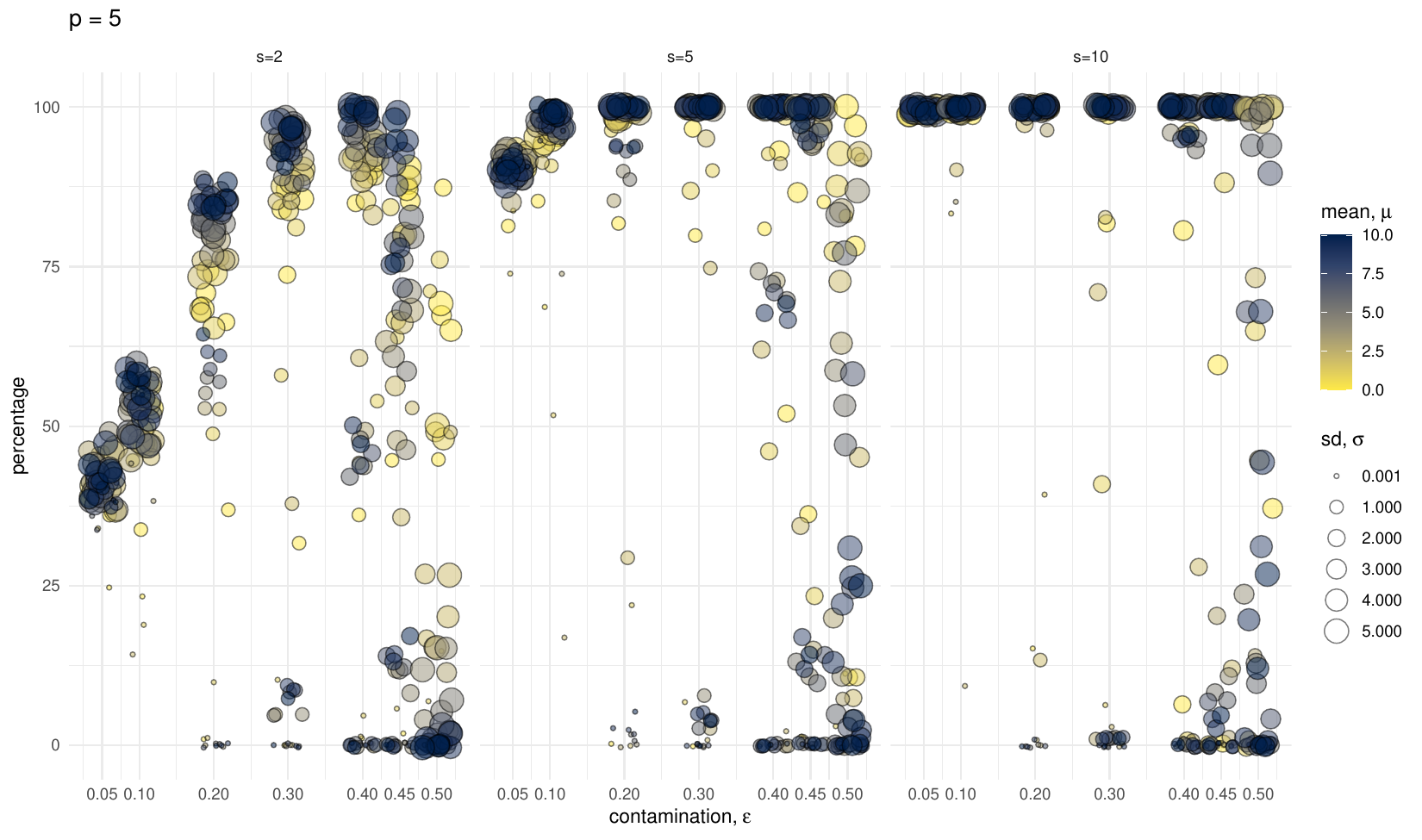}\\
  \includegraphics[width=\textwidth]{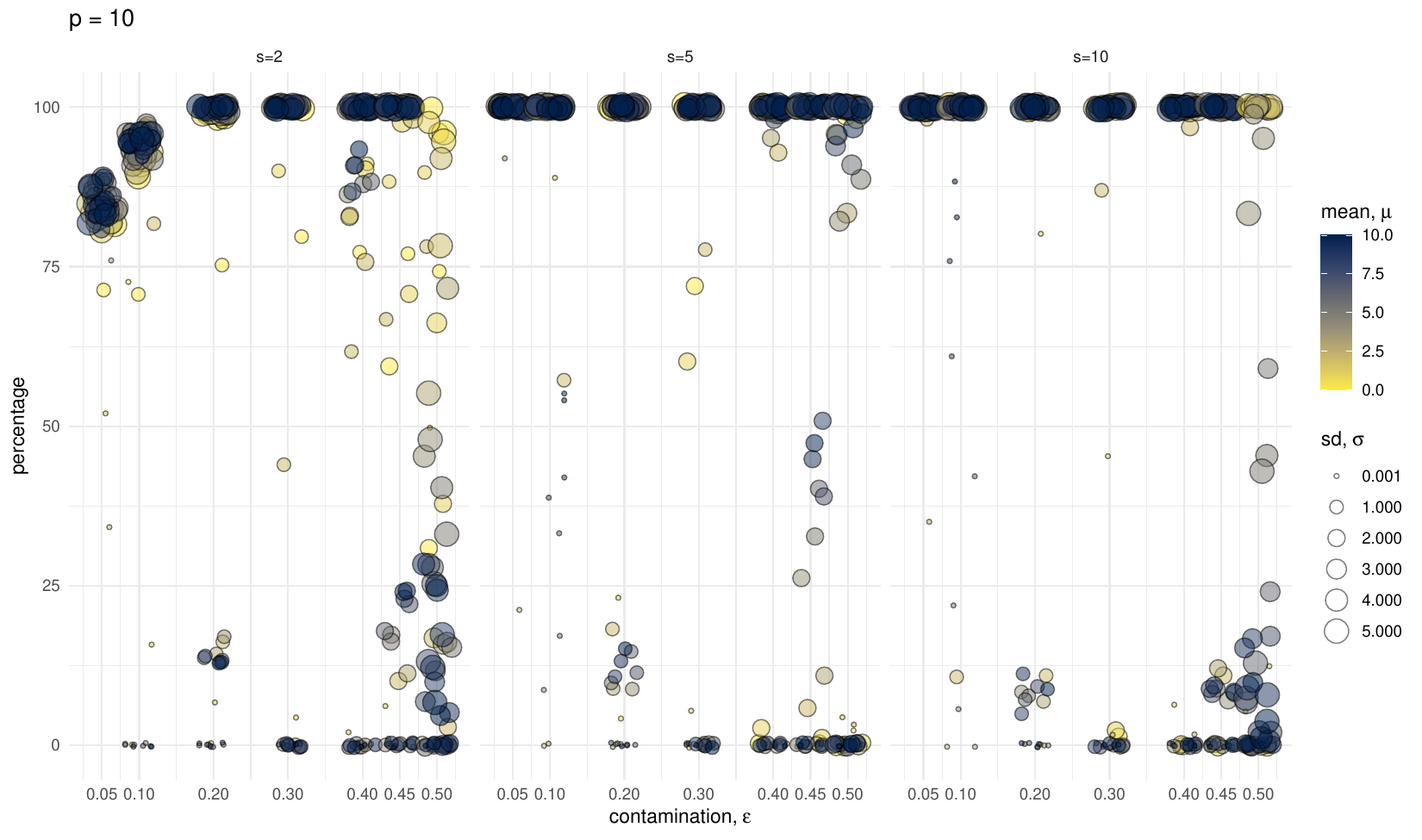}
  \caption{Monte Carlo Simulation. Percentage of robust root retrieval out of $N=100$ simulations for the proposed method starting deterministically from the deepest points as a function of contamination $\varepsilon$ ($x-$axis). Contamination average $\mu$ and scale $\sigma$ are represented by color and size of the bubbles respectively. From left to right, the subplots show the different sample size factors $s=2, 5, 10$. Number of variables $p=5$ (top) and $p=10$ (bottom). $\alpha=0.5$.}
  \label{sup:fig:depth-true-4}
\end{figure}

\begin{figure}
  \includegraphics[width=\textwidth]{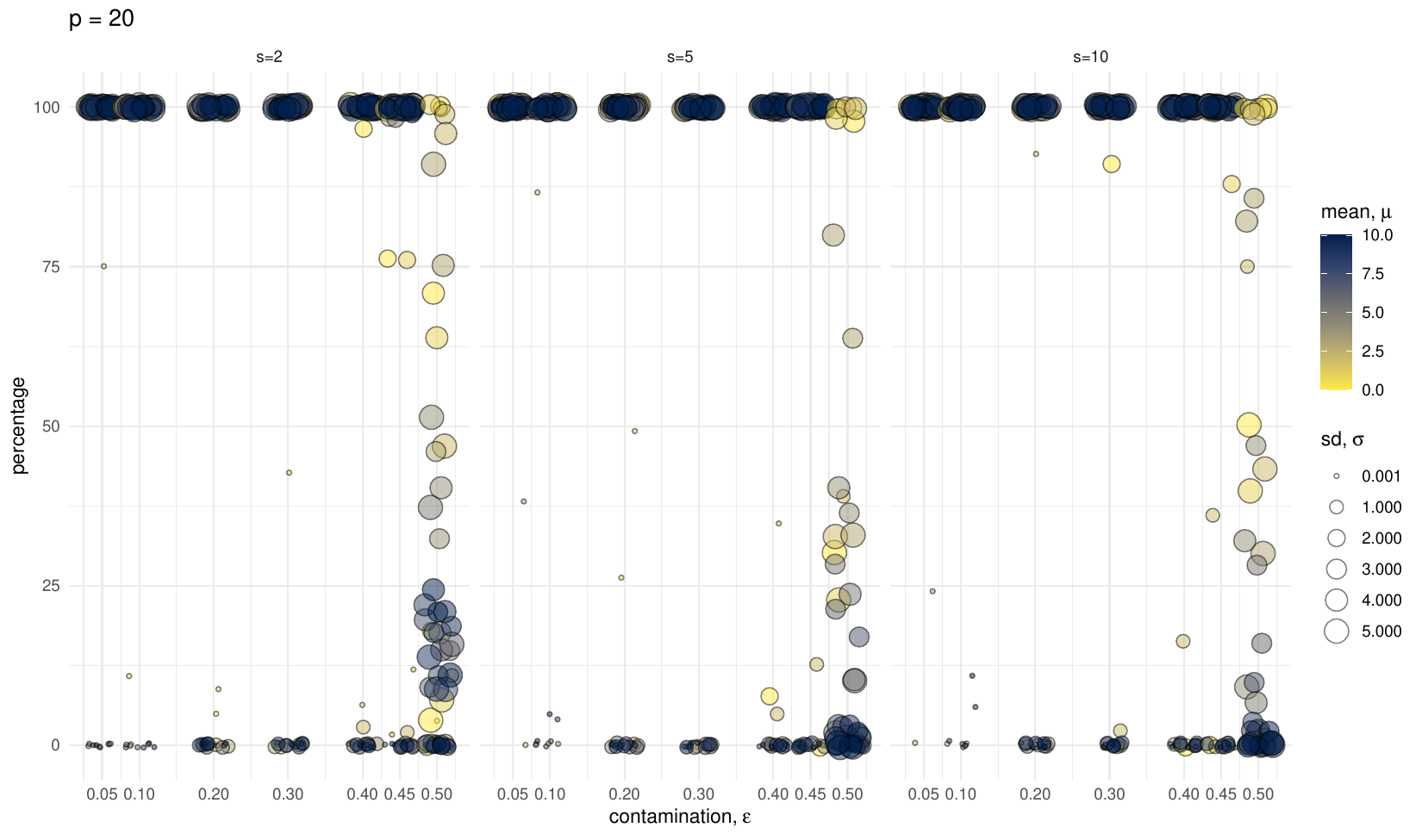}
  \caption{Monte Carlo Simulation. Percentage of robust root retrieval out of $N=100$ simulations for the proposed method starting deterministically from the deepest points as a function of contamination $\varepsilon$ ($x-$axis). Contamination average $\mu$ and scale $\sigma$ are represented by color and size of the bubbles respectively. From left to right, the subplots show the different sample size factors $s=2, 5, 10$. Number of variables $p=20$. $\alpha=0.5$.}
  \label{sup:fig:depth-true-4b}
\end{figure}

\begin{figure}
  \centering
  \includegraphics[width=\textwidth]{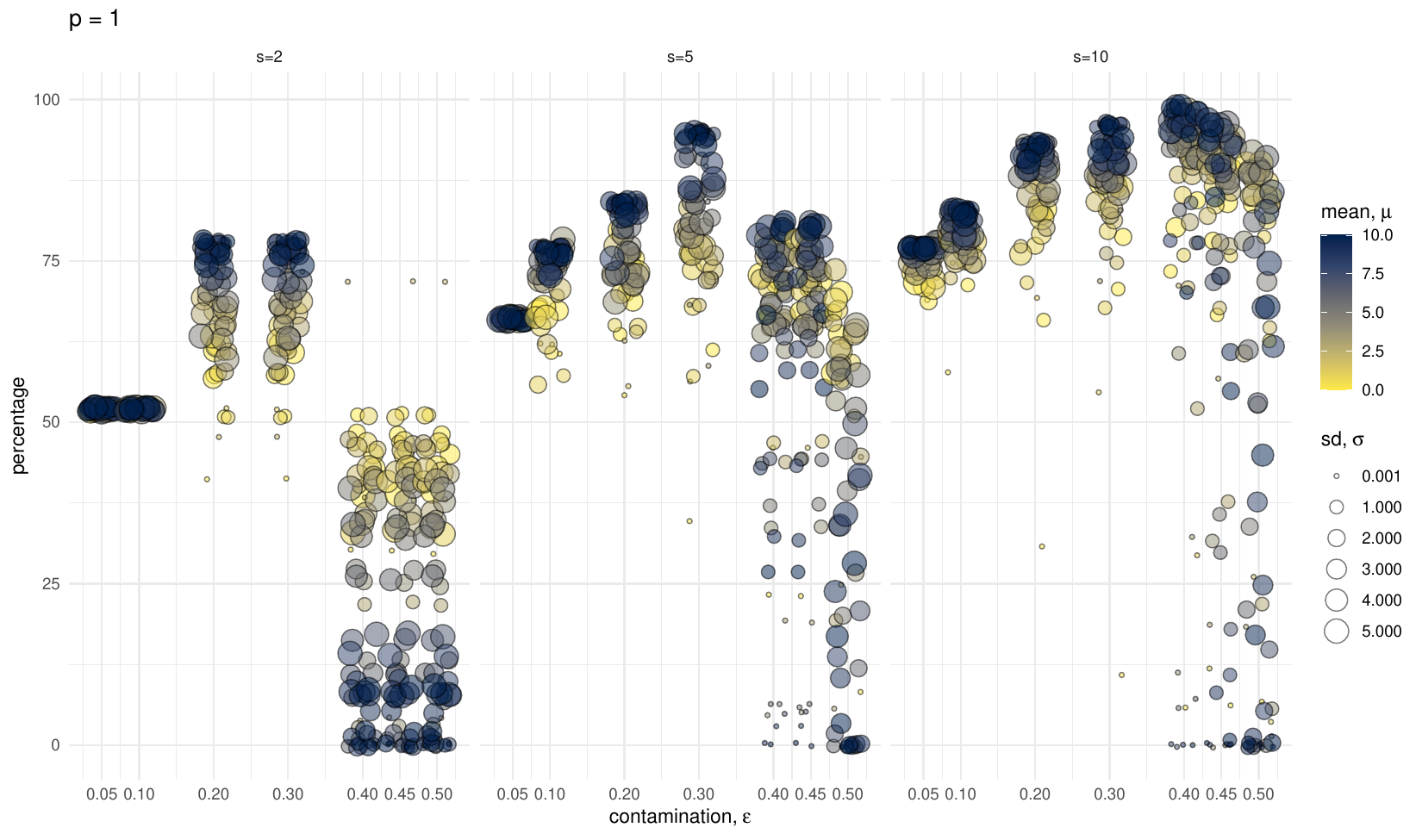}\\
  \includegraphics[width=\textwidth]{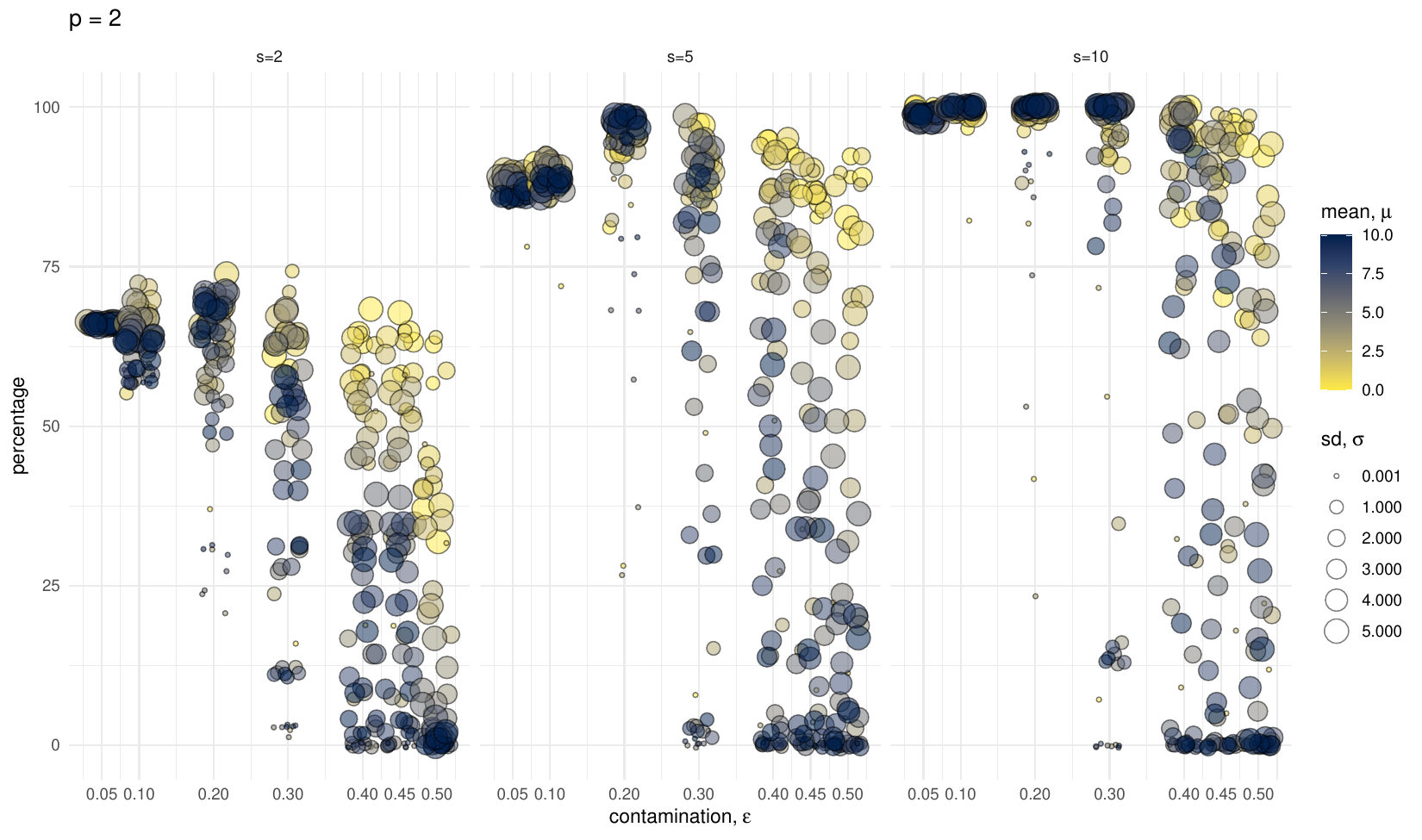} 
  \caption{Monte Carlo Simulation. Percentage of robust root retrieval out of $N=100$ simulations for the proposed method starting deterministically from the deepest points as a function of contamination $\varepsilon$ ($x-$axis). Contamination average $\mu$ and scale $\sigma$ are represented by color and size of the bubbles respectively. From left to right, the subplots show the different sample size factors $s=2, 5, 10$. Number of variables $p=1$ (top) and $p=2$ (bottom). $\alpha=0.75$.}
  \label{sup:fig:depth-true-5}
\end{figure}

\begin{figure}
  \centering
  \includegraphics[width=\textwidth]{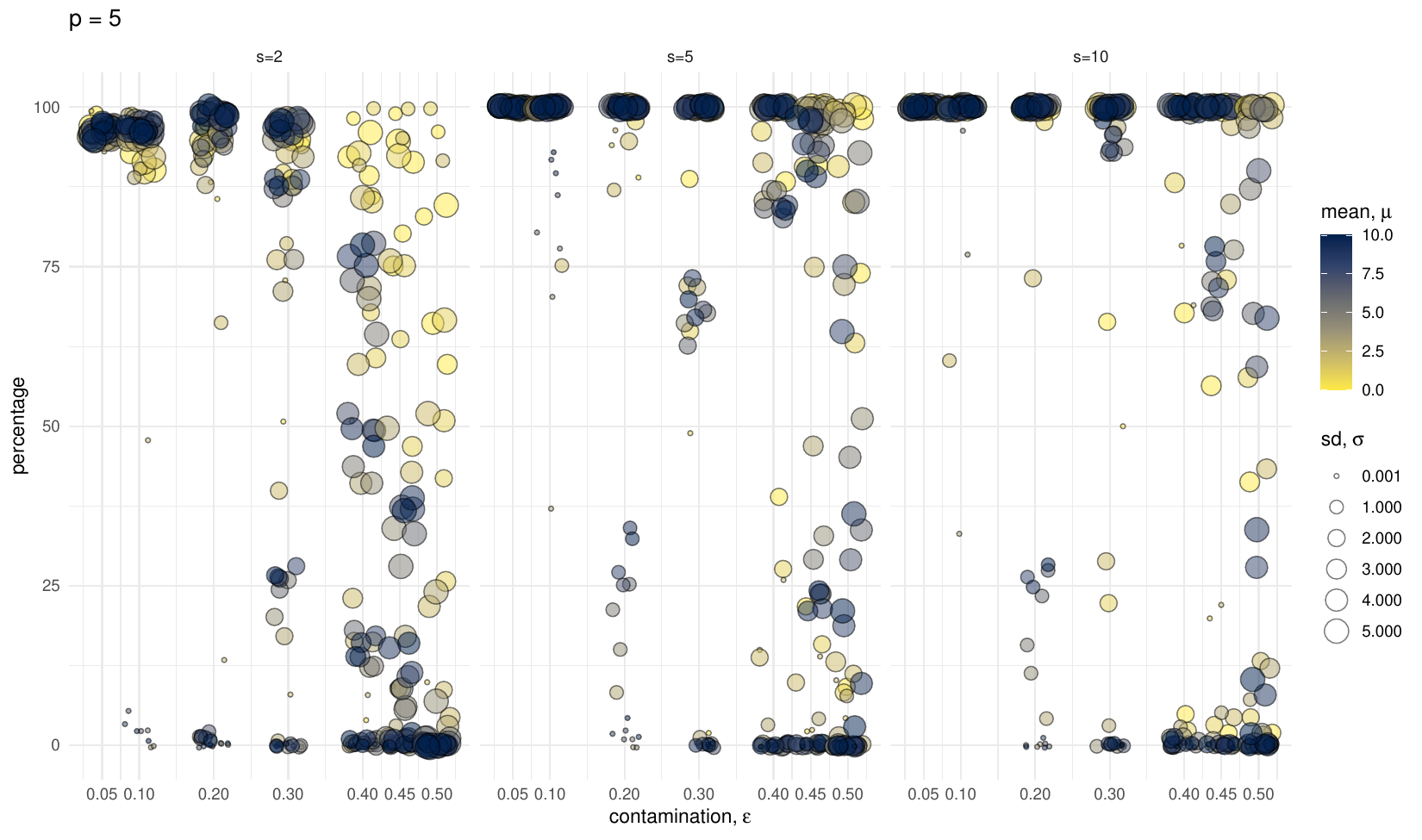}\\
  \includegraphics[width=\textwidth]{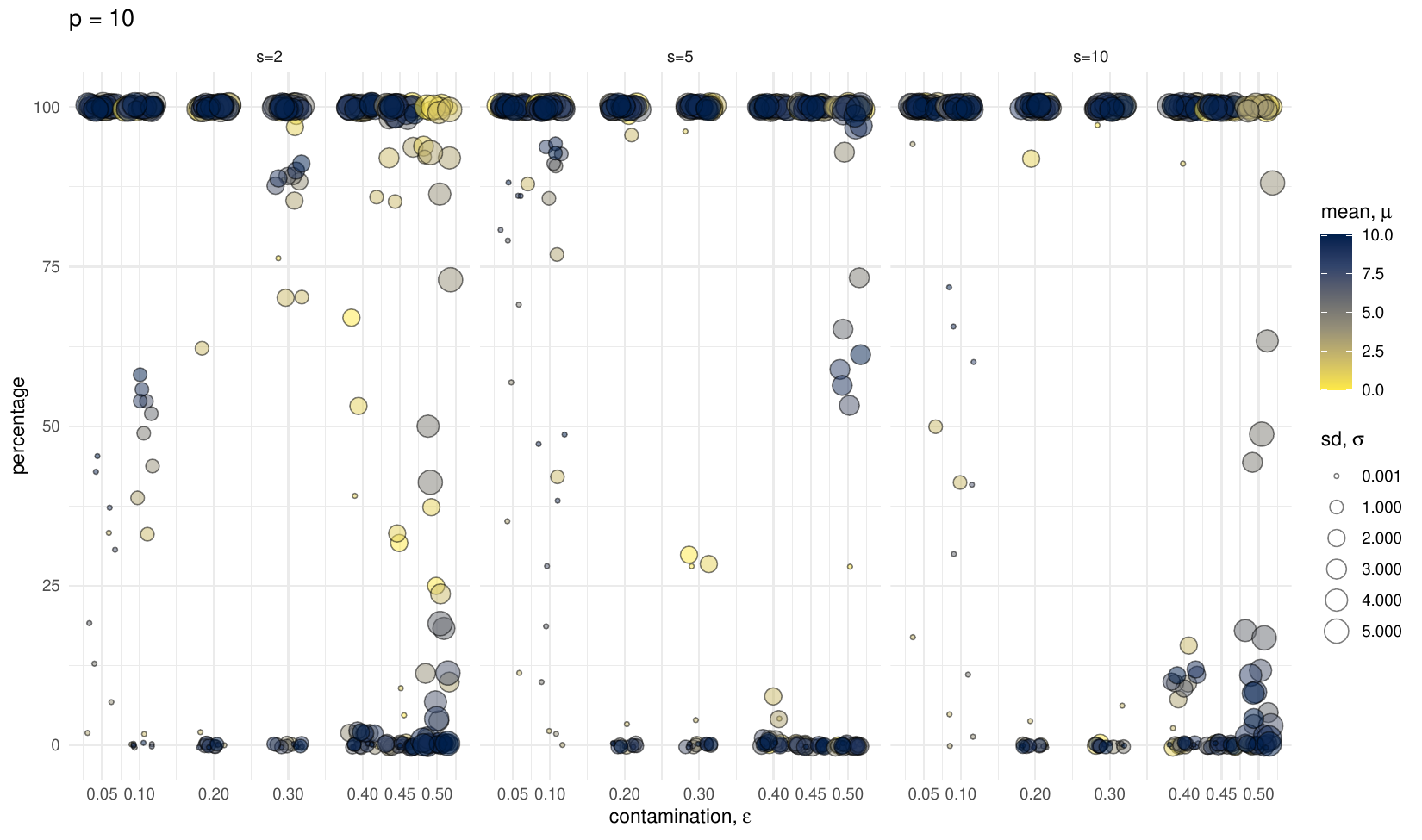}
  \caption{Monte Carlo Simulation. Percentage of robust root retrieval out of $N=100$ simulations for the proposed method starting deterministically from the deepest points as a function of contamination $\varepsilon$ ($x-$axis). Contamination average $\mu$ and scale $\sigma$ are represented by color and size of the bubbles respectively. From left to right, the subplots show the different sample size factors $s=2, 5, 10$. Number of variables $p=5$ (top) and $p=10$ (bottom). $\alpha=0.75$.}
  \label{sup:fig:depth-true-6}
\end{figure}

\begin{figure}
  \includegraphics[width=\textwidth]{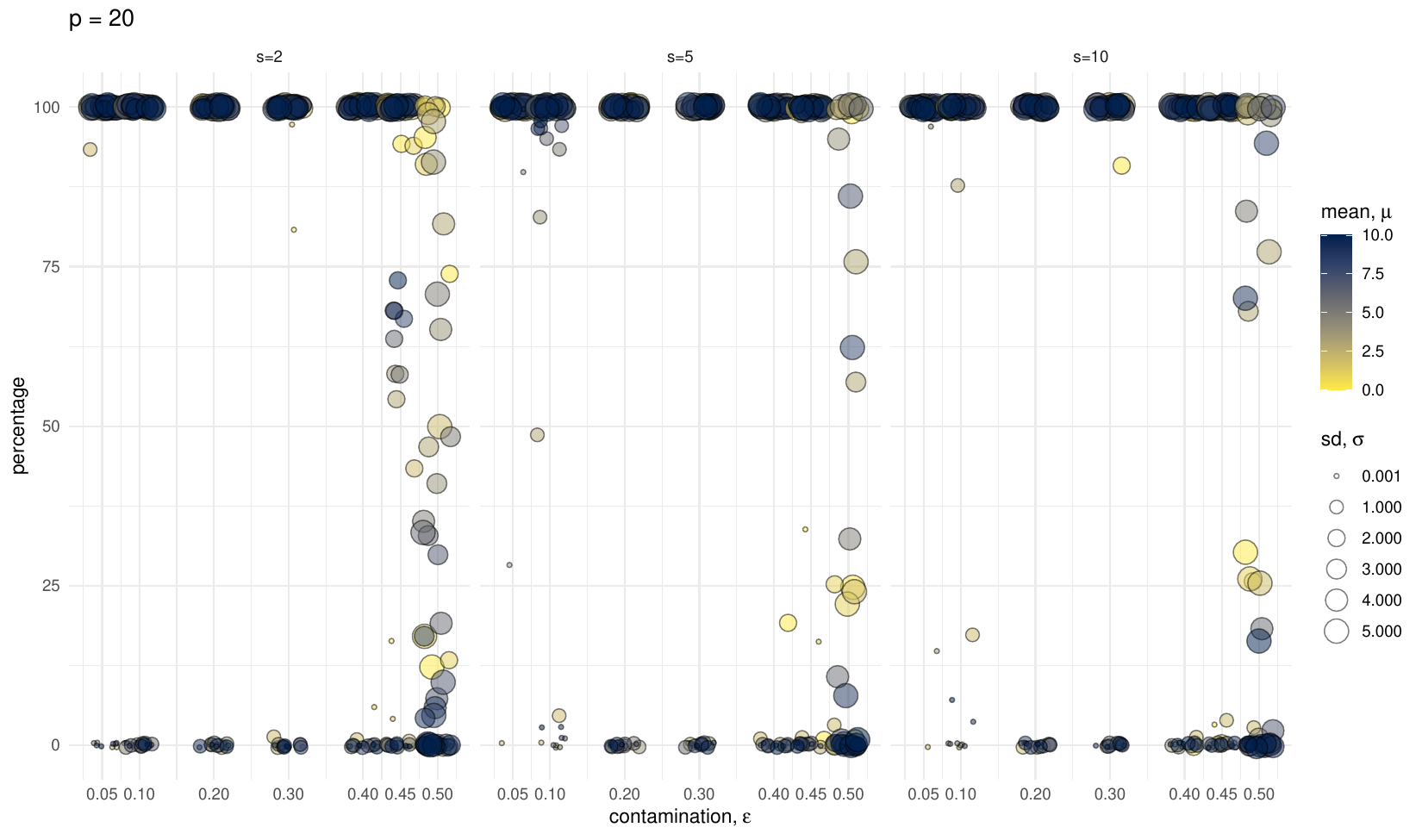}
  \caption{Monte Carlo Simulation. Percentage of robust root retrieval out of $N=100$ simulations for the proposed method starting deterministically from the deepest points as a function of contamination $\varepsilon$ ($x-$axis). Contamination average $\mu$ and scale $\sigma$ are represented by color and size of the bubbles respectively. From left to right, the subplots show the different sample size factors $s=2, 5, 10$. Number of variables $p=20$. $\alpha=0.75$.}
  \label{sup:fig:depth-true-6b}
\end{figure}

\begin{figure}
  \centering
  \includegraphics[width=\textwidth]{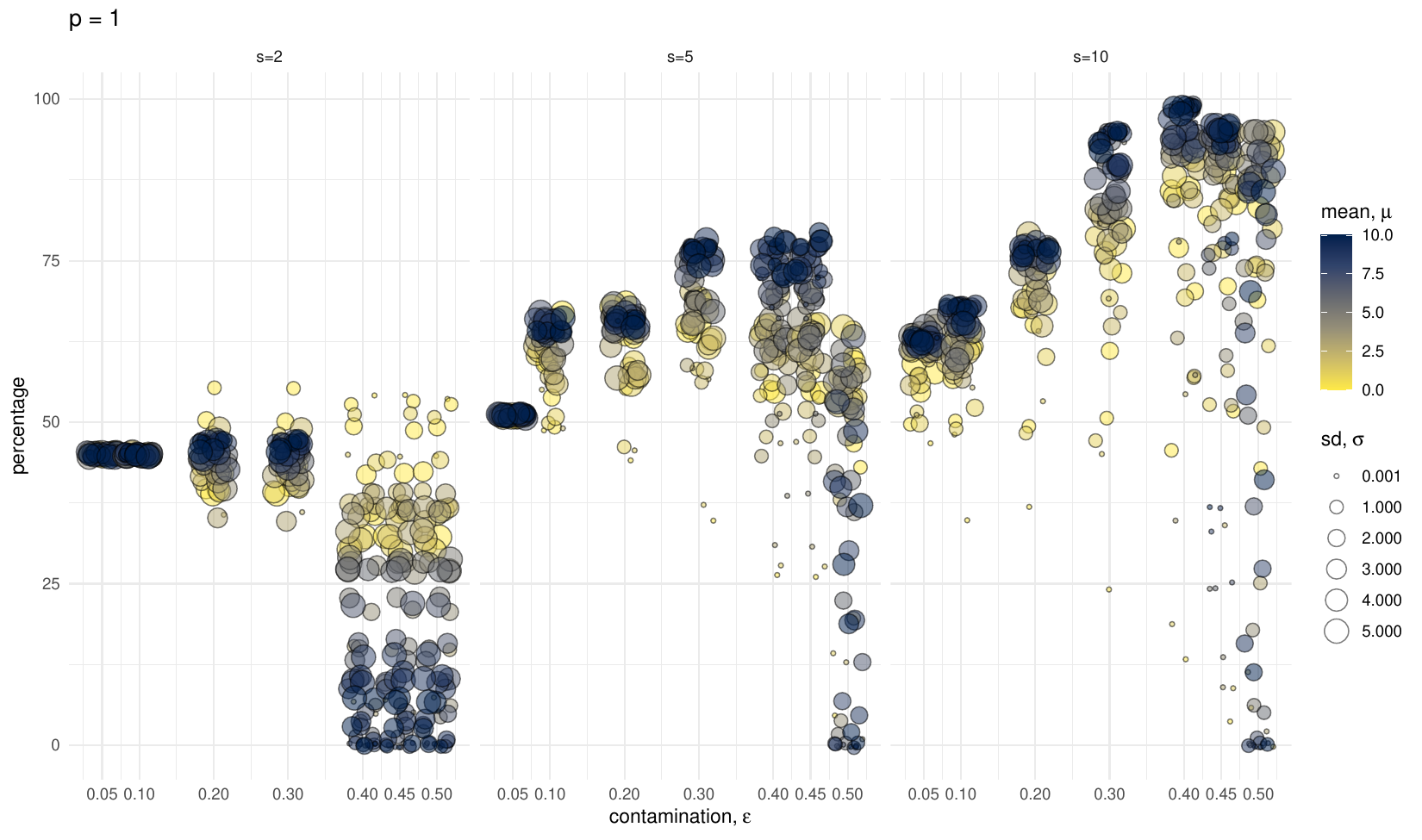}\\
  \includegraphics[width=\textwidth]{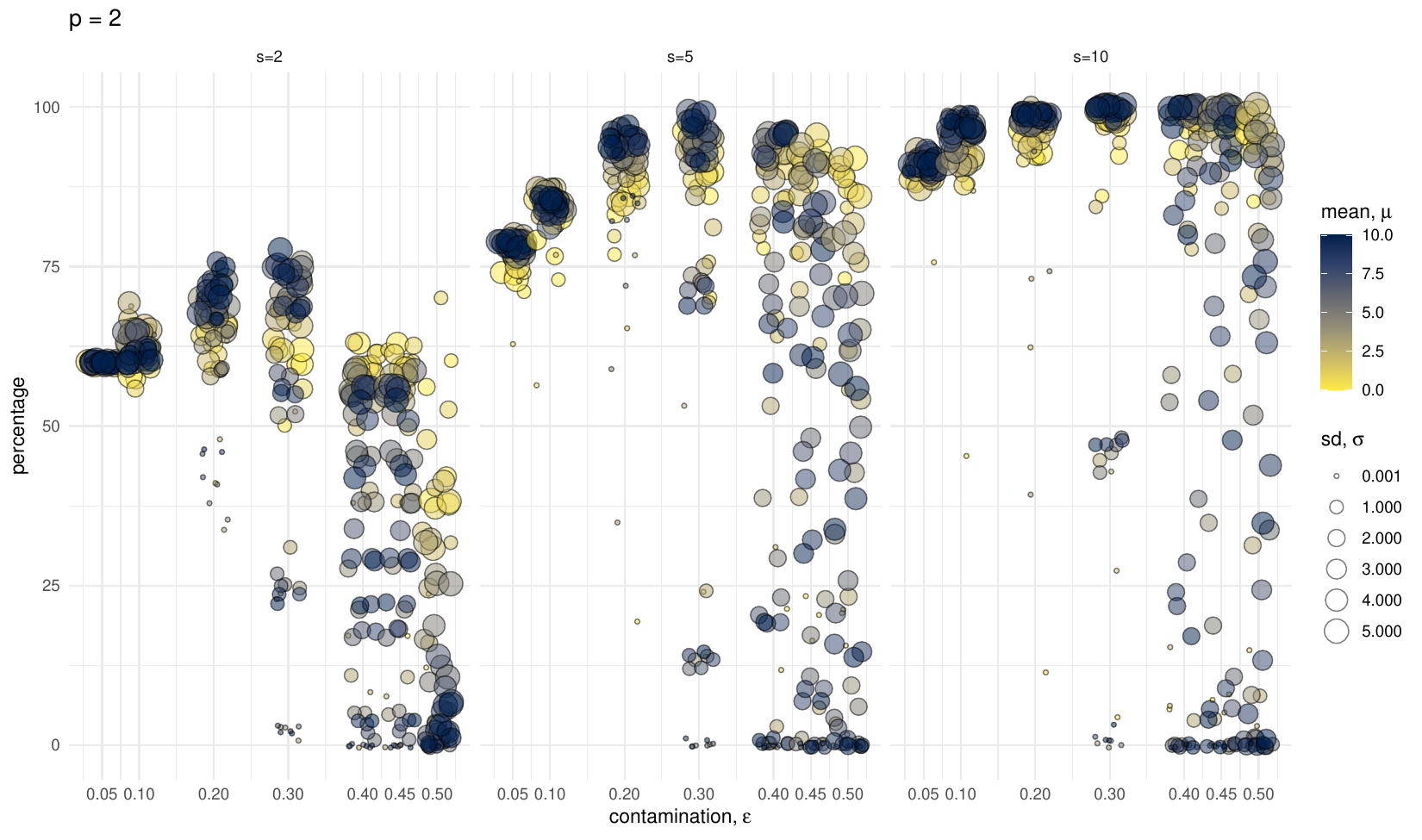} 
  \caption{Monte Carlo Simulation. Percentage of robust root retrieval out of $N=100$ simulations for the proposed method starting deterministically from the deepest points as a function of contamination $\varepsilon$ ($x-$axis). Contamination average $\mu$ and scale $\sigma$ are represented by color and size of the bubbles respectively. From left to right, the subplots show the different sample size factors $s=2, 5, 10$. Number of variables $p=1$ (top) and $p=2$ (bottom). $\alpha=1$.}
  \label{sup:fig:depth-true-7}
\end{figure}

\begin{figure}
  \centering
  \includegraphics[width=\textwidth]{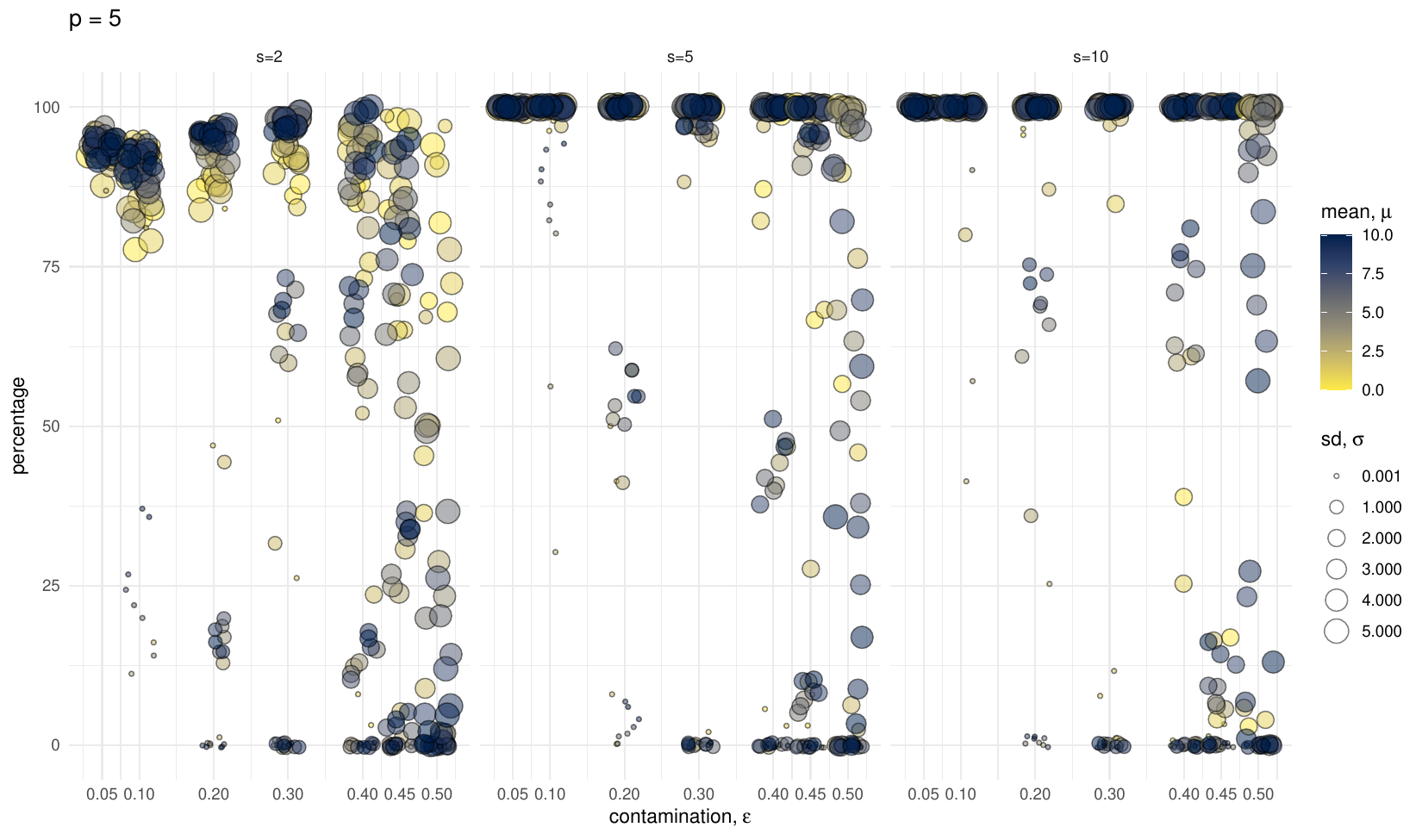}\\
  \includegraphics[width=\textwidth]{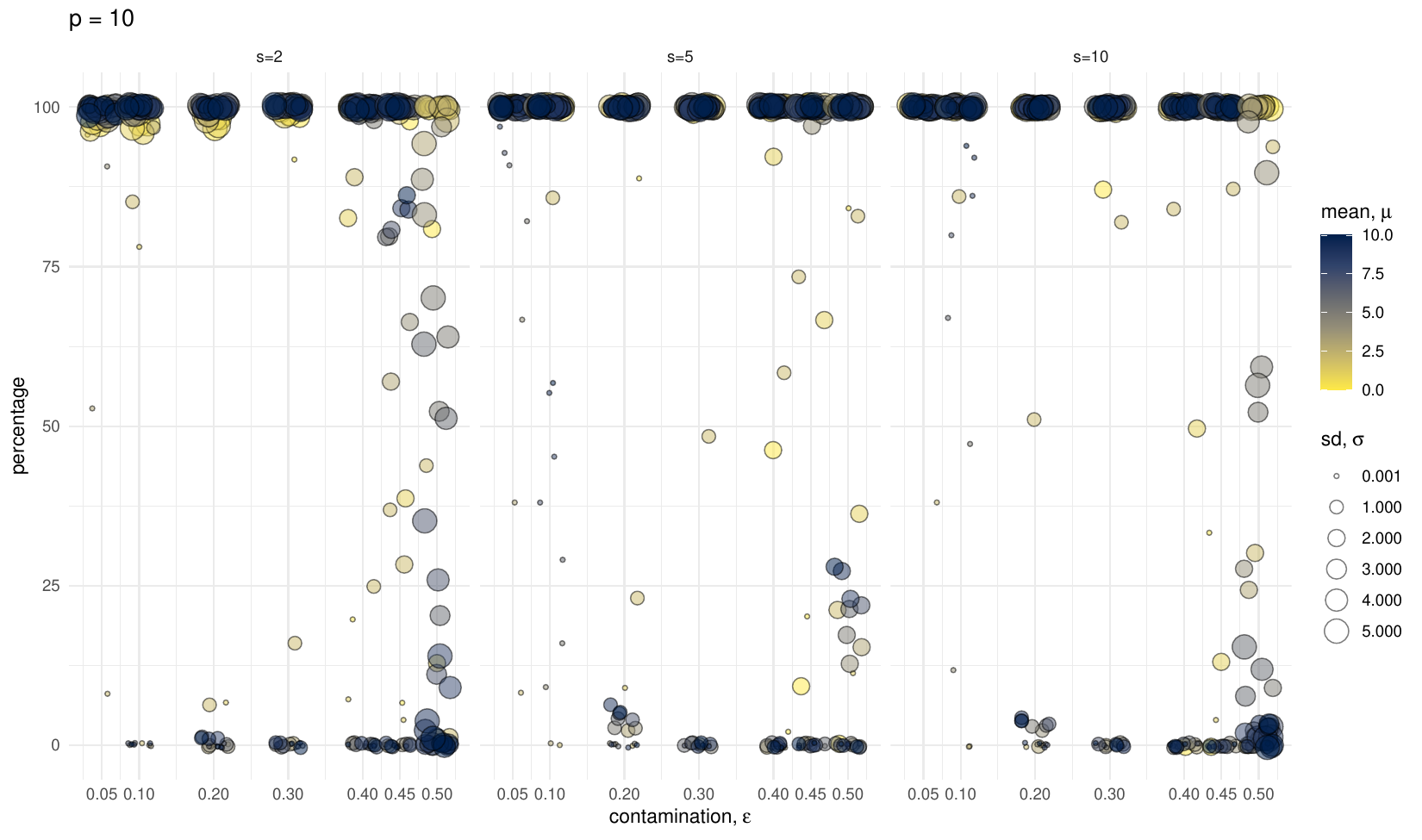}
  \caption{Monte Carlo Simulation. Percentage of robust root retrieval out of $N=100$ simulations for the proposed method starting deterministically from the deepest points as a function of contamination $\varepsilon$ ($x-$axis). Contamination average $\mu$ and scale $\sigma$ are represented by color and size of the bubbles respectively. From left to right, the subplots show the different sample size factors $s=2, 5, 10$. Number of variables $p=5$ (top) and $p=10$ (bottom). $\alpha=1$.}
  \label{sup:fig:depth-true-8}
\end{figure}

\begin{figure}
  \includegraphics[width=\textwidth]{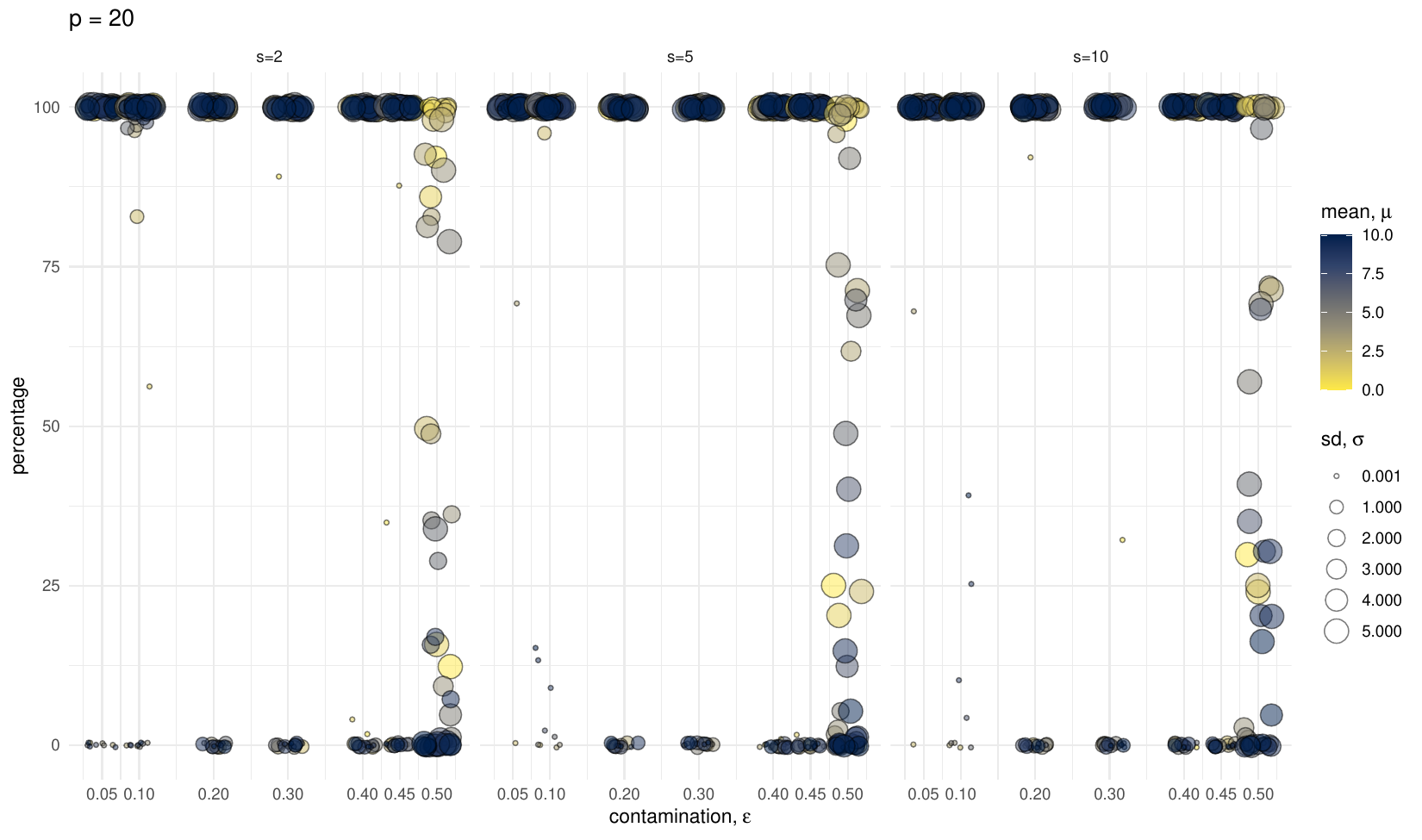}
  \caption{Monte Carlo Simulation. Percentage of robust root retrieval out of $N=100$ simulations for the proposed method starting deterministically from the deepest points as a function of contamination $\varepsilon$ ($x-$axis). Contamination average $\mu$ and scale $\sigma$ are represented by color and size of the bubbles respectively. From left to right, the subplots show the different sample size factors $s=2, 5, 10$. Number of variables $p=20$. $\alpha=1$.}
  \label{sup:fig:depth-true-8b}
\end{figure}

\end{document}